\documentclass[11pt,a4paper]{amsart}

\usepackage{amsmath, amsthm, amsfonts, amssymb}
\usepackage{comment}
\usepackage{enumerate}
\usepackage{tikz}
\usepackage[colorlinks=true, linkcolor=blue, citecolor=red, menucolor=black]{hyperref}

\usepackage{array,booktabs}

\numberwithin{equation}{section}
\newtheorem{letterthm}{Theorem}

\newtheorem{lettercor}[letterthm]{Corollary}

\newtheorem{letterconj}[letterthm]{Conjecture}

\newtheorem{theorem}{Theorem}[section]
\newtheorem{lemma}[theorem]{Lemma}
\newtheorem{corollary}[theorem]{Corollary}
\theoremstyle{definition}
\newtheorem{definition}[theorem]{Definition}
\newtheorem{remark}[theorem]{Remark}
\theoremstyle{plain}
\newtheorem{proposition}[theorem]{Proposition}

\newtheorem{conjecture}[theorem]{Conjecture}

\newtheorem*{flowsimportconjecture}{Conjecture}

\newcommand{\R}{\mathbb{R}}
\newcommand{\N}{\mathbb{N}}
\newcommand{\B}{\mathbb{B}}
\newcommand{\C}{\mathbb{C}}
\newcommand{\Z}{\mathbb{Z}}

\newcommand{\cA}{\mathcal{A}}

\newcommand{\cC}{\mathcal{C}}
\newcommand{\cE}{\mathcal{E}}

\newcommand{\cI}{\mathcal{I}}
\newcommand{\cH}{\mathcal{H}}
\newcommand{\cZ}{\mathcal{Z}}
\newcommand{\cU}{\mathcal{U}}
\newcommand{\cD}{\mathcal{D}}
\newcommand{\cO}{\mathcal{O}}
\newcommand{\cL}{\mathcal{L}}

\newcommand{\ri}{\text{\rm i}}
\newcommand{\rd}{\text{\rm d}}

\newcommand{\Ad}{\operatorname{Ad}}
\newcommand{\Inn}{\operatorname{Inn}}
\newcommand{\conv}{\operatorname{conv}}

\newcommand{\Ind}{\operatorname{Ind}}
\newcommand{\Res}{\operatorname{Res}}
\newcommand{\Rep}{\operatorname{Rep}}
\newcommand{\Cor}{\operatorname{Cor}}
\newcommand{\rL}{\mathord{\text{\rm L}}}
\newcommand{\rB}{\mathord{\text{\rm B}}}
\newcommand{\rb}{\mathord{\text{\rm b}}}
\newcommand{\rC}{\mathord{\text{\rm C}}}
\newcommand{\rW}{\mathord{\text{\rm W}}}
\newcommand{\rE}{\mathord{\text{\rm E}}}
\newcommand{\rT}{\mathord{\text{\rm T}}}
\newcommand{\Sp}{\mathord{\text{\rm Sp}}}
\newcommand{\id}{\mathord{\text{\rm id}}}
\newcommand{\op}{\mathord{\text{\rm op}}}
\newcommand{\Tr}{\mathord{\text{\rm Tr}}}
\newcommand{\supp}{\mathord{\text{\rm supp}}}
\newcommand{\ovt}{\mathbin{\overline\otimes}}

\newcommand{\II}{\mathrm{II}}
\newcommand{\III}{\mathrm{III}}
\allowdisplaybreaks
\makeatletter
\@namedef{subjclassname@2020}{%
  \textup{2020} Mathematics Subject Classification}
\def\l@subsection{\@tocline{2}{0pt}{2pc}{5pc}{}}
\makeatother

\begin{document}

\title[The classification of flows and the bicentralizer problem]{The classification of flows on $\II_1$ factors\\ and Connes' bicentralizer problem}

\begin{abstract}
We settle two long-standing open problems in von Neumann algebras.
First, we show that every outer flow with full Connes spectrum on the hyperfinite $\II_1$ factor has the Rokhlin property. By the work of Masuda
and Tomatsu, such a flow is therefore unique up to cocycle conjugacy. This settles Takesaki's classification problem for flows on the hyperfinite type $\II_1$ factor.
Drawing on type $\III$ theory, we develop a bicentralizer machinery for trace-preserving actions of locally compact groups. In the amenable case, we relate the bicentralizer conjecture to the Rokhlin property. For abelian groups, we prove an analog of Connes-St\o rmer transitivity theorem and we generalize Connes-Takesaki relative commutant theorem. A new resonance phenomenon is revealed which allows us to solve the bicentralizer conjecture for actions of $\R$. We then go back to the type $\III$ world and use this new resonance phenomenon to solve Connes' bicentralizer conjecture for all type $\III_1$ factors.
\end{abstract}

\author{Cyril Houdayer}
\address{\'Ecole normale sup\'erieure \\ D\'epartement de math\'ematiques et applications \\ Universit\'e Paris-Saclay \\ 45 rue d'Ulm \\ 75230 Paris Cedex 05 \\ France}
\email{cyril.houdayer@ens.psl.eu}
\thanks{CH is supported by ERC Advanced Grant NET 101141693}

\author{Amine Marrakchi}
\address{CNRS \\ \'Ecole normale sup\'erieure de Lyon \\ Unit\'e de math\'ematiques pures et appliquées  \\ 46 all\'ee d'Italie \\ 69364 Lyon \\ France}
\email{amine.marrakchi@ens-lyon.fr}

\dedicatory{Dedicated to the memory of Uffe Haagerup}

\subjclass[2020]{Primary 46L55, 46L10. Secondary 46L36, 46L40}
\keywords{Flows on von Neumann algebras, Locally compact group actions,
Bicentralizers, Rokhlin property, Strict outerness, Operator-valued
distributions, Crossed products, Type $\III_1$ factors}

\maketitle

\begingroup
\let\conjecture\flowsimportconjecture
\let\endconjecture\endflowsimportconjecture

\section{Introduction and statement of the main results}
\label{sec:introduction}

The classification of injective factors with separable predual is one of
the fundamental achievements of the theory of von Neumann algebras.
Connes proved that injectivity is equivalent to hyperfiniteness and
classified the injective factors of types $\II_1$, $\II_\infty$ and
$\III_\lambda$ for $\lambda \in [0,1)$ \cite{Co76}. In particular, there
is a unique injective $\II_1$ factor $R$, its infinite amplification is
the unique injective $\II_\infty$ factor, and there is a unique
injective $\III_\lambda$ factor for each $0<\lambda<1$. Building on the work of Connes
\cite{Co85}, Haagerup completed the most difficult case in the classification by
establishing uniqueness of the injective $\III_1$ factor \cite{Ha87}.

Once the underlying factors are understood, a natural problem is to
classify their automorphisms. For the hyperfinite $\II_1$ factor $R$,
Connes classified arbitrary automorphisms up to outer conjugacy
\cite{Co75}. Here outer conjugacy means conjugacy after an inner
perturbation.

The next natural step is to replace discrete time by continuous time
and classify \emph{flows}, that is, continuous one-parameter groups
$\alpha:\R\curvearrowright R$. The appropriate analogue of
outer conjugacy is cocycle conjugacy: two flows $\alpha$ and $\beta$
are cocycle conjugate if there exist $\theta\in\operatorname{Aut}(R)$
and a strongly continuous family of unitaries $(u_t)_{t\in\R}$ such
that
\[
 u_{s+t}=u_s\alpha_s(u_t),
 \qquad
 \beta_t=\theta\circ\Ad(u_t)\circ\alpha_t\circ\theta^{-1}
 \quad(s,t\in\R).
\]
The classification problem for flows on the hyperfinite $\II_1$ factor $R$ up to cocycle conjugacy has been promoted by Takesaki since the late 1970s \cite{KP25}. An important invariant for a flow $\alpha$ is its Connes spectrum $\Gamma(\alpha)$, which is a closed subgroup of $\R$. This invariant was introduced originally by Connes to classify type $\III$ factors into subtypes $\III_\lambda, \: \lambda \in [0,1]$. Kawahigashi classified flows whose
Connes spectrum is a proper subgroup of $\R$ up to stable conjugacy
\cite{Ka89}. His work is the flow analog of the classification of injective factors of type $\III_\lambda, \: \lambda \in [0,1)$. For outer flows with full Connes spectrum, which correspond to the type $\III_1$ case in this analogy, a conjecture emerged asking whether all such flows are cocycle conjugate. Kawahigashi proved this conjecture under the additional assumption
that the flow fixes a Cartan subalgebra pointwise \cite{Ka91}.

An important step for solving this flow conjecture was achieved by Masuda--Tomatsu \cite{MT16}. They introduced an abstract property for flows called the Rokhlin property \cite[Definition~4.1]{MT16} inspired by the Rokhlin property for single automorphisms studied by Connes. Then they managed to prove that there is a unique Rokhlin flow on the hyperfinite $\II_1$ factor up to cocycle conjugacy \cite[Corollaries~7.9, 5.16 and 4.13]{MT16}. A model of this unique Rokhlin flow on the hyperfinite $\II_1$ factor is given by the infinite
tensor product
\[
\sigma^\infty_t=\bigotimes_{n\in \N}\sigma_t
\qquad(t\in\R)
\]
where $\sigma:\R\curvearrowright\mathbb M_d(\C)$ is any faithful flow with $d \geq 3$.

Thanks to the work of Masuda--Tomatsu, the flow conjecture can be reformulated as follows: does every outer flow with full Connes spectrum have the Rokhlin property?

Our main result solves this conjecture affirmatively.

\begin{letterthm}\label{main flow conjecture}
Let $\alpha:\R\curvearrowright R$ be a flow on the hyperfinite $\II_1$ factor $R$. The following assertions
are equivalent.
\begin{enumerate}[\rm (i)]
\item $\alpha$ has the Rokhlin property.
\item $\alpha$ is strictly outer, that is, $R'\cap(R\rtimes_\alpha\R)=\C 1$.
\item $\Gamma(\alpha)=\R$ and $\alpha$ is outer.
\end{enumerate}
\end{letterthm}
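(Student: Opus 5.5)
We will prove the cycle of implications (i) $\Rightarrow$ (iii) $\Rightarrow$ (ii) $\Rightarrow$ (i). The first two are essentially classical. For the third, the main tool will be the bicentralizer machinery for trace-preserving actions of $\R$ developed later in the paper.

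\emph{(i) $\Rightarrow$ (iii).} Suppose $\alpha$ has the Rokhlin property in the sense of \cite[Definition~4.1]{MT16}. Then for every $p\in\R$ there is a central sequence of unitaries $(u_n)$ with $\alpha_t(u_n)-e^{\ri pt}u_n\to 0$ strongly, uniformly on compacts.

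\begin{itemize}
\item Full Connes spectrum: these approximate eigenvectors show that every $p$ lies in the spectrum of the restriction of $\alpha$ to every corner $\alpha$-invariant hereditary subalgebra, since central sequences can be cut down by projections. Hence $\Gamma(\alpha)=\R$.
\item Outerness: if $\alpha_t=\Ad(v)$ for some $t\neq 0$, then $\alpha_t$ acts trivially on central sequences. Taking $p$ with $e^{\ri pt}\neq 1$ contradicts the approximate eigenvector property.
\end{itemize}

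\emph{(iii) $\Rightarrow$ (ii).} This is the flow analogue of the Connes--Takesaki relative commutant theorem. Let $x\in R'\cap(R\rtimes_\alpha\R)$. We expand $x$ through its operator-valued Fourier decomposition $x\sim\int x(t)\lambda_t\,\rd t$, where $x(t)$ is an $R$-valued distribution. Commutation with $R$ forces $a\,x(t)=x(t)\alpha_t(a)$ for all $a\in R$. Outerness of each $\alpha_t$ with $t\neq0$, together with factoriality of $R$, then forces the support of $x(t)$ to be contained in $\{0\}$.

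A distribution supported at $0$ is a finite combination of derivatives of $\delta_0$. Using $\Gamma(\alpha)=\R$, which means the crossed product $R\rtimes_\alpha\R$ is a factor, we aim to rule out the derivative terms and the dual-action-invariant pieces. This would leave $x\in R'\cap R=\C1$.

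I expect this step to be delicate: pointwise outerness alone is not obviously enough for distributions. I would instead invoke the generalized relative commutant theorem for abelian groups stated later in the paper. That result gives $R'\cap(R\rtimes_\alpha\R)=R'\cap R$ exactly when the Connes spectrum is full and the action is outer.

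\emph{(ii) $\Rightarrow$ (i).} This is the main obstacle. The plan is to identify the Rokhlin property with triviality of the bicentralizer of $\alpha$. By the amenable-group result in the paper, Rokhlin is equivalent to the bicentralizer of the action being trivial, that is, the asymptotic centralizer of $\alpha$ on $R^\omega$ is "large". Strict outerness translates into triviality of the relative commutant computed with respect to this asymptotic centralizer.

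The resonance phenomenon for $\R$-actions is what solves the bicentralizer conjecture for $\R$. I would use it as follows: pick an approximate eigenvector for a frequency $p$ in the ultrapower, and push it by resonance to produce approximate eigenvectors at every frequency. The Connes--Størmer-type transitivity theorem then upgrades this to central Rokhlin sequences. Verifying that strict outerness supplies enough approximately invariant elements for this resonance argument to start is where I expect the main difficulty to lie.
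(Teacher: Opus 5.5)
Your scaffolding is sound. (i)$\Rightarrow$(iii) is classical. Strict outerness easily gives back (iii): the center of $R\rtimes_\alpha\R$ lies in the relative commutant, and Proposition~\ref{atomic coefficients} gives outerness. Theorem~\ref{main rokhlin and bicentralizer}, using amenability of $\R$ and injectivity of $R$, reduces the Rokhlin property to ``$\alpha$ faithful and $\rB(R,\alpha)=\C1$''. But then the whole content of the statement is the claim $\rB(R,\alpha)=\C1$ for an outer flow with $\Gamma(\alpha)=\R$ (Theorem~\ref{main flow bicentralizer}). Your proposal does not prove it, and the mechanism you sketch cannot work.

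Approximate eigen-unitaries at every frequency are not the difficulty. As soon as $\Gamma(\alpha)=\R$, Theorem~\ref{approximate cocycle vanishing theorem}, applied to the scalar cocycle $p\in\widehat\R$, already gives unitaries $v_j\in R$ with $\sup_{t\in K}\|\alpha_t(v_j)-\overline{p(t)}v_j\|_2\to0$ for every compact $K$. No resonance and no outerness are involved. The difficulty is centrality, and no transitivity argument can supply it. For $b\in B=\rB(R,\alpha)$ one has $v_jbv_j^*\to\beta_p(b)$, where $\beta$ is the dual bicentralizer action, and this limit does not depend on the chosen sequence. So central eigen-sequences at every frequency exist only if $\beta$ is trivial, that is (Theorem~\ref{fixed points relative dual bicentralizer compressed}) only if $B=B^\beta=R'\cap R=\C1$ already. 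In the paper, centrality is produced afterwards from $\rB(R,\alpha)=\C1$ and injectivity, through $\cC_\alpha\prec\cI_\alpha$, $\cI_R\prec\cC_R$ and Fell absorption (Theorem~\ref{Rokhlin and trivial bicentralizer}), not from cocycle vanishing. Likewise, ``strict outerness translates into triviality of the relative commutant of the asymptotic centralizer'' is not a reformulation; it is the bicentralizer conjecture itself.

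The idea you are missing is that resonance constrains the joint spectrum $\Sigma\subset\widehat\R\times\R$ of $\alpha|_B\times\beta$ acting on $B$ itself; it is not a device for transporting eigenvectors. What starts the argument is $\Gamma(\alpha)=\R$, which makes $\beta$ exist, not strict outerness. The paper proceeds in these steps.
\begin{enumerate}
\item It passes to $B^\dagger=(B\rtimes_\alpha\R)^\delta$, with $\delta_p=\widetilde\beta_p\circ\widehat\alpha_{p^{-1}}$, whose $\rL^2$-space the Plancherel function identifies with $\rL^2(B)$.
\item It takes the GNS representation of the singular tracial state $\rho=m\circ\rE_{\rL(\R)}$, with $m$ an invariant mean on $\widehat\R$.
\item It applies the relative fixed point theorem in that finite algebra to get $\|ax-xa\|_{2,\rho}=0$ for $a\in R$, $x\in B^\dagger$ (Theorem~\ref{mean value commutation}).
\item Comparing left and right multiplication through the Plancherel function gives $p(t')p'(t)=1$ on $\Sigma\times\Sigma$ (Lemma~\ref{resonance for bicentralizers}). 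Hence $2\Sigma\subset\mathcal R_\R$, so $\Sigma$ is discrete or lies on a coordinate axis.
\end{enumerate}
Each case is closed by the hypotheses. If $\beta$ is trivial, $B=B^\beta=\C1$. If $\alpha|_B$ is trivial, $B=B\cap R^\alpha=\rb(R,\alpha)\cap R^\alpha=\C1$ by Theorem~\ref{fixed point analytic algebraic bicentralizer} and $\Gamma(\alpha)=\R$. If $\Sigma$ is discrete, $B$ is generated by $\beta$-eigenoperators; these are intertwiners for some $\alpha_t$ (Theorem~\ref{LCA bicentralizer eigenvector intertwining compressed}) and are killed by outerness.

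Separately, your first attempt at (iii)$\Rightarrow$(ii) does not merely look delicate; it fails. The Fourier coefficient of a general element of the relative commutant is only a distribution. So $a\,x(t)=x(t)\alpha_t(a)$ cannot be evaluated at individual $t$, and outerness of each $\alpha_t$ gives no control over continuous spectrum. Nothing in that argument distinguishes $\R$ from $\R^2$, where the conclusion is false (Theorem~\ref{universal LCA counterexample}). The result you fall back on, Corollary~\ref{main relative commutant dichotomy}, holds for $\R$, not for abelian groups in general. It rests on trace preservation confining the joint spectrum of $\widetilde\alpha\times\widehat\alpha$ on $R'\cap(R\rtimes_\alpha\R)$ to $\mathcal R_\R$, together with the geometry of $\mathcal R_\R$.
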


Recently, we proved Theorem~\ref{main flow conjecture} for almost periodic flows in
\cite[Theorem~A]{HM26}, but this was a very special case and the proof there could not be adapted to
arbitrary flows. Our approach in this paper is directly inspired by the theory of type $\III$ factors and their classification. The analogy guiding our present work is summarized in
Table~\ref{type III factors and flows analogy}. In particular, we obtain flow analogs of several structure theorems for type $\III$ factors such as the Connes--Takesaki relative commutant theorem \cite{CT77} and the Connes--St{\o}rmer transitivity theorem \cite{CS78}. A crucial ingredient in the Connes--Haagerup proof of the uniqueness of the injective $\III_1$ factor is the so-called bicentralizer problem formulated by Connes for arbitrary factors of type $\III_1$ \cite{Co85} and solved by Haagerup in the injective case \cite{Ha87}. By analogy with their work, we develop a bicentralizer theory for actions of arbitrary locally compact groups on $\II_1$ factors, which we will now explain.

\subsection{A bicentralizer theory for trace-preserving actions}
Let $\alpha : G \curvearrowright (M,\tau)$ be a continuous trace-preserving action of a locally compact group $G$ on a tracial von Neumann algebra $(M,\tau)$. We define the bicentralizer of the action, denoted by $\rB(M,\alpha)$, to be the algebra of all
$a\in M$ such that
\[
\|x_i a-ax_i\|_2\longrightarrow0
\]
for every bounded net $(x_i)_i$ in $M$ satisfying
\[
\sup_{g\in K}\|\alpha_g(x_i)-x_i\|_2\longrightarrow0
\qquad\text{for every compact }K\subset G.
\]
Thus the bicentralizer records what commutes asymptotically with
all almost invariant elements. If we replace $\tau$ by a faithful normal state $\varphi$ and take $\alpha$ to be the modular flow $\sigma^\varphi$, we recover the bicentralizer $\rB(M,\varphi)$ defined by
Connes \cite{Co85}.

We formulate two conjectures highlighting the importance of the bicentralizer for the classification of actions of amenable groups on injective factors.

\begin{letterconj} \label{letterconj trivial bicentralizer}
    Let $\alpha : G \curvearrowright M$ be a continuous action of a locally compact group on a $\II_1$ factor. Suppose that $G$ is amenable. Then the following are equivalent.
    \begin{enumerate}[\rm (i)]
        \item $\alpha$ is faithful and has a trivial bicentralizer, i.e.\ $\rB(M,\alpha)=\C 1$.
        \item $\alpha$ is strictly outer, i.e.\ $M' \cap (M \rtimes_\alpha G)=\C 1$.
    \end{enumerate}
\end{letterconj}

\begin{letterconj} \label{letterconj classification}
    Let $\alpha : G \curvearrowright M$ be a continuous action of a locally compact group on a $\II_1$ factor $M$. Suppose that $G$ is amenable and $M$ is injective. Then the following are equivalent.
    \begin{enumerate}[\rm (i)]
        \item $\alpha$ has the Rokhlin property.
        \item $\alpha$ is faithful and has a trivial bicentralizer, i.e.\ $\rB(M,\alpha)=\C 1$.
        \item $\alpha$ is strictly outer, i.e.\ $M' \cap (M \rtimes_\alpha G)=\C 1$.
    \end{enumerate}
    Moreover, when $G$ is second countable and $M$ is separable, there is a unique action satisfying these equivalent properties up to cocycle conjugacy.
\end{letterconj}

We observe that a particular case of Conjecture~\ref{letterconj trivial bicentralizer} when $G$ is discrete has been solved by Popa--Shlyakhtenko--Vaes \cite[Lemmas~4.2 and~4.3]{PSV20} by using Popa's approximate free independence theorem. We extend their argument to the case where $G$ admits a compact open subgroup (see Theorem~\ref{compact open amenable trivial bicentralizer}). Conjecture~\ref{letterconj classification} is also known to be true when $G$ is discrete by the celebrated work of Ocneanu \cite{Oc85}. Very recently, Nishihara also proved Conjecture~\ref{letterconj classification} when $G$ has a compact open subgroup \cite{Ni26}. Of course, for both conjectures, the most difficult case is when $G$ is connected and not compact, a situation where one cannot separate the small scale behavior from the large scale behavior. We also mention Shimada's work that generalized Masuda--Tomatsu's work on Rokhlin flows to classify actions of arbitrary locally compact abelian groups with the Rokhlin property \cite{Sh14}.

Our next main result is the following theorem establishing the equivalence of assertions (i) and (ii) in Conjecture~\ref{letterconj classification}. This theorem is obtained through the theory of equivariant
correspondences and the technique of implementing binormal states in ultrapowers \cite[Theorem~2.1]{Ma25}, which allows us to characterize the Rokhlin property and the triviality of the bicentralizer in terms of weak containment of equivariant correspondences.

\begin{letterthm} \label{main rokhlin and bicentralizer}
    Let $\alpha : G \curvearrowright M$ be a continuous action of an amenable locally compact group on an injective $\II_1$ factor $M$. Then $\alpha$ has the Rokhlin property if and only if $\alpha$ is faithful and $\rB(M,\alpha)=\C 1$.
\end{letterthm}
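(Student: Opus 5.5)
We prove the two implications separately, working throughout in a suitable ultrapower. Fix a free ultrafilter $\omega$ and let $M^\omega_\alpha$ denote the von Neumann subalgebra of the ultrapower $M^\omega$ consisting of classes of $\alpha$-equicontinuous bounded sequences. The action $\alpha$ extends continuously to $M^\omega_\alpha$. We use the Masuda--Tomatsu type formulation of the Rokhlin property: for every compact $K\subset G$, every $\varepsilon>0$ and every unitary representation-type test datum, there exist elements of $M'\cap M^\omega_\alpha$ on which the induced action looks like the left translation action of $G$ on $L^\infty(G)$. In other words, there is an equivariant embedding of $(L^\infty(G),\lambda)$, or of a suitable Følner-type approximation of it, into $(M'\cap M^\omega_\alpha,\alpha^\omega)$.

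\emph{Rokhlin $\Rightarrow$ faithful and trivial bicentralizer.} Faithfulness is immediate: if $\alpha_g=\id$ for some $g\neq e$, then $\alpha^\omega_g$ is trivial on $M'\cap M^\omega_\alpha$. This contradicts the equivariant copy of the translation action, on which $g$ acts nontrivially. For the bicentralizer, let $a\in\rB(M,\alpha)$. By the definition of $\rB(M,\alpha)$, $a$ commutes with the fixed point algebra $(M^\omega)^{\alpha^\omega}$ of the ultrapower, since a fixed point is represented by asymptotically invariant bounded nets. The plan is to show that $M\vee (M^\omega)^{\alpha^\omega}$ has trivial relative commutant in $M$. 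We use Rokhlin elements to average: given $x\in M$ and a Følner set $F$ together with Rokhlin projections $e_g$ (approximately $\alpha_g$-translates), we build elements of the form $\int_F \alpha_g(x)\,e_g\,dg$, or unitaries obtained by a similar construction, that are almost $\alpha$-invariant. Amenability enters exactly here, through the Følner condition. Commuting $a$ with these elements, and using that the $e_g$ lie in $M'\cap M^\omega$, forces $a$ to commute with every $x\in M$ up to arbitrary error, so $a\in M'\cap M=\C1$.

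\emph{Faithful and trivial bicentralizer $\Rightarrow$ Rokhlin.} This is the substantial direction, and we carry it out with equivariant correspondences. First we characterize the Rokhlin property in terms of weak containment. We expect it to be equivalent to the trivial $M$-$M$ bimodule $L^2(M)$, equipped with the $G$-action induced by $\alpha$ and tensored with the regular representation $L^2(G)$, being weakly contained in the equivariant correspondences coming from $L^2(M^\omega_\alpha)$. To prove this characterization we take an equivariant binormal state realizing the weak containment and implement it by vectors in the ultrapower, using \cite[Theorem~2.1]{Ma25}, which produces the Rokhlin elements. Second, we show that $\rB(M,\alpha)=\C1$ yields enough asymptotically invariant elements: the $\alpha$-fixed part of $M^\omega$ generates, together with $M$, the bimodule $L^2(M)\otimes L^2(M)$ in the weak containment sense. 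This is the analogue of Haagerup's characterization of trivial bicentralizer by the condition $\overline{\{u\varphi u^*\}}\ni$ many states. Injectivity of $M$, together with amenability of $G$, gives that the coarse equivariant correspondence contains the trivial one tensored with the regular representation. Faithfulness guarantees that the regular representation, and not merely a quotient of it, appears. Combining these weak containments yields the Rokhlin property.

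We expect the main obstacle to be the second step of this direction: translating the purely "commutation" definition of the bicentralizer into a positive statement about weak containment of equivariant correspondences. This requires a Hahn--Banach/convexity argument in the style of Connes--Haagerup. Concretely, we would first try to show that if the bicentralizer is trivial then, for every $\xi\in L^2(M)$, the normal state $\tau$ lies in the weak* closed convex hull of the functionals $\langle\,\cdot\,x\xi,x\xi\rangle$ with $x$ ranging over almost invariant unitaries. The real difficulty is controlling the continuity in $g$, which is needed for connected $G$.
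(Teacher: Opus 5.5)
\textbf{Rokhlin $\Rightarrow$ trivial bicentralizer.} Your faithfulness argument is fine. The bicentralizer argument, however, breaks at its claimed conclusion.

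Take a tower $(e_g)_{g\in F}$ of orthogonal projections in $M'\cap M^\omega$ with $\tau(e_g)=|F|^{-1}$, and put $Y=\sum_{g\in F}\alpha_g(x)e_g$ with $x\in M$. Since $\rE_M(e_g)\in M'\cap M=\C$ and the $e_g$ commute with $M$, you get
\[
\|[a,Y]\|_2^2=\frac1{|F|}\sum_{g\in F}\|[a,\alpha_g(x)]\|_2^2 .
\]
So almost invariance of $Y$ only makes this F\o lner average small. The single term at the identity has weight $|F|^{-1}\to0$, and nothing forces $[a,x]=0$. Equivalently, $a=\rE_M(YaY^*)$ only says that $a$ is a F\o lner average of the $\alpha_g(u)a\alpha_g(u)^*$. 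What this does yield, using $\alpha$-invariance of $\rB(M,\alpha)$, is a central sequence $(\alpha_{g_n}^{-1}(a))_n$. That is no contradiction, since an injective factor has property Gamma.

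The missing idea is to spread along the tower a \emph{coarse} element rather than an element of $M$. For example, take a unitary $V$ in an (amplified) ultrapower with $\rE_M(VbV^*)=\tau(b)1$ for $b\in M$; it exists because $\cC_M\prec\cI_M$ for a $\II_1$ factor. Then $\rE_M(YaY^*)=\tau(a)1$. This is what the paper does at the level of correspondences:
\[
\cC_\alpha\prec\lambda_G\otimes_G\cC_\alpha\cong\Ind(\cC_M)\prec\Ind(\cI_M)\cong\lambda_G\otimes_G\cI_\alpha\prec\cI_\alpha .
\]
Amenability enters only in the first step, via $1_G\prec\lambda_G$. The paper then converts $\cC_\alpha\prec\cI_\alpha$ into $\rB(M,\alpha)=\C1$ through the ultrapower implementation theorem (Lemma~\ref{equivariant binormal bicentralizer lemma}, Theorem~\ref{correspondance trivial bicentralizer}).

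Working with weak containment also avoids two further problems in your setup. Your tower formulation is not the definition for general locally compact $G$; here Rokhlin means $\lambda_G\otimes_G\cI_\alpha\prec\cI_\alpha$. And for connected $G$ you have neither equivariant copies of F\o lner pieces of $\rL^\infty(G)$ in $M_{\omega,\alpha}$ nor the equicontinuity needed to make sense of $\int_F\alpha_g(\cdot)e_g\,\rd g$.

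\textbf{Faithful and trivial bicentralizer $\Rightarrow$ Rokhlin.} Your correspondence strategy is the right one, but your step~(3) is wrong as stated and the key mechanism is missing. Injectivity, through induction and Fell absorption, gives only
\[
\lambda_G\otimes_G\cI_\alpha\cong\Ind(\cI_M)\prec\Ind(\cC_M)\cong\lambda_G\otimes_G\cC_\alpha .
\]
To finish you need $\lambda_G\otimes_G\cC_\alpha\prec\cC_\alpha$. Amenability of $G$ cannot give this, since $1_G\prec\lambda_G$ points the wrong way; the paper's proof of this direction does not use amenability of $G$ at all.

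The paper extracts it from $\cC_\alpha\prec\cI_\alpha$ itself:
\begin{enumerate}
\item Restricting to $G$ gives $U^\alpha\otimes_G U^\alpha\prec U^\alpha$.
\item Forgetting the right action and tensoring back with $\cL_\alpha^{\op}$ gives $U^\alpha\otimes_G\cC_\alpha\prec\cC_\alpha$ (Remark~\ref{tensor product stability}).
\item $U^\alpha$ is self-conjugate, contains $1_G$, and is faithful because $\alpha$ is. The Bekka--Kaniuth--Lau--Schlichting theorem (Theorem~\ref{faithful tensor powers contain regular}) then gives $\lambda_G\prec U^\alpha$.
\end{enumerate}
Hence $\lambda_G\otimes_G\cI_\alpha\prec\lambda_G\otimes_G\cC_\alpha\prec\cC_\alpha\prec\cI_\alpha$. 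This is the actual content of ``faithfulness makes the regular representation appear'', and your sketch does not supply it.

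Two smaller points. Since Rokhlin is here defined as a weak containment, no ultrapower characterization of it is needed for this direction. And the step you flag as the main obstacle is the easy one. Proposition~\ref{conditional expectation relative action bicentralizer} already is the convexity statement, and
\[
\lambda_M(u)U^\alpha_g\lambda_M(u)^*=\lambda_M\bigl(u\alpha_g(u^*)\bigr)U^\alpha_g
\]
controls the $g$-dependence uniformly on compacts. The delicate implication is the converse $\cC_\alpha\prec\cI_\alpha\Rightarrow\rB(M,\alpha)=\C1$, which is exactly what your first direction needs.
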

See also Theorem \ref{Rokhlin and trivial bicentralizer}. Combining this theorem with Shimada's work \cite{Sh14}, we obtain the following corollary.
\begin{lettercor} \label{lettercor minimal actions}
    A second countable locally compact abelian group admits a unique minimal action on the hyperfinite $\II_1$ factor, up to cocycle conjugacy.
\end{lettercor}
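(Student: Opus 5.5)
The corollary is a combination of Theorem~\ref{main rokhlin and bicentralizer} with Shimada's classification \cite{Sh14}, so the plan is to check that each hypothesis fits. Let $G$ be second countable locally compact abelian and $R$ the hyperfinite $\II_1$ factor. Recall that an action $\alpha : G \curvearrowright R$ is \emph{minimal} if it is faithful and $(R^\alpha)' \cap R = \C 1$. Shimada \cite{Sh14} shows that actions of $G$ on $R$ with the Rokhlin property are unique up to cocycle conjugacy. So it suffices to prove two things: a minimal action exists, and every minimal action has the Rokhlin property.

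\emph{Existence.} I will take the infinite tensor product $\bigotimes_n \Ad(\pi_n)$ on $\bigotimes_n \mathbb M_{d_n}(\C)$. Here the $\pi_n$ are finite-dimensional unitary representations of $G$, built from characters, chosen so that the action is faithful and the fixed-point algebra is irreducible. This is the standard product-type construction for abelian groups. Alternatively, one can take a Rokhlin action (which exists by \cite{Sh14}) and verify that it is minimal. I expect either route to be routine.

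\emph{Minimal implies Rokhlin.} Abelian groups are amenable and $R$ is injective, so Theorem~\ref{main rokhlin and bicentralizer} applies. It remains to show $\rB(R,\alpha) = \C 1$. Every element $x \in R^\alpha$ is exactly invariant, so the constant net $x_i = x$ satisfies the almost-invariance condition. Hence every $a \in \rB(R,\alpha)$ commutes with $R^\alpha$, which gives
\[
\rB(R,\alpha) \subset (R^\alpha)' \cap R = \C 1 .
\]
Faithfulness is part of minimality, so Theorem~\ref{main rokhlin and bicentralizer} yields the Rokhlin property.

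\emph{Uniqueness and converse.} Any two minimal actions therefore have the Rokhlin property, and so they are cocycle conjugate by \cite{Sh14}. The one subtle point, which I see as the main obstacle, is making sure the unique Rokhlin model is itself minimal. Otherwise ``unique minimal action'' would be vacuous rather than follow from ``unique Rokhlin action''. Existence handles this: the product-type minimal action has the Rokhlin property by the argument above, so it is a minimal representative of the unique Rokhlin cocycle conjugacy class. Note that minimality is not obviously invariant under cocycle perturbation, but this does not matter. The statement only asks that all minimal actions lie in a single cocycle conjugacy class and that this class contains at least one minimal action, and both are now established.
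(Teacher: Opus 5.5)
Your proof is correct and follows the paper's argument. Minimality gives faithfulness and $\rB(R,\alpha)\subset(R^\alpha)'\cap R=\C1$, so Theorem~\ref{main rokhlin and bicentralizer} gives the Rokhlin property, and Shimada's classification \cite{Sh14} gives uniqueness. The paper leaves existence implicit. Your product-type route works once you take $\pi_n=1\oplus\chi_{k(n)}$ with $(\chi_k)_k$ dense in $\widehat G$ and each $k$ repeated for infinitely many $n$: the flips between equal blocks then lie in $R^\alpha$, and the Hewitt--Savage zero--one law makes $(R^\alpha)'\cap R$, which lies in the diagonal Cartan subalgebra, trivial. Your alternative of checking minimality of an arbitrary Rokhlin action is not routine, since e.g.\ a Bernoulli shift of $\Z$ on $R$ is Rokhlin but ergodic, hence not minimal.
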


In the case $G=\R$, we are able to solve the bicentralizer conjecture in complete generality, hence proving Theorem~\ref{main flow conjecture}.

\begin{letterthm}\label{main flow bicentralizer}
Let $\alpha : \R \curvearrowright M$ be a continuous flow on a $\II_1$ factor $M$. Suppose that $\Gamma(\alpha)=\R$ and $\alpha$ is outer. Then 
\[
\rB(M,\alpha)=\C 1.
\]
\end{letterthm}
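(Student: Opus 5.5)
We prove the theorem by transporting Haagerup's solution of the bicentralizer problem, and Connes' ideas behind it, to the flow setting, guided by the dictionary between $\III_1$ factors and flows with full Connes spectrum.

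\textbf{Step 1 (structure of the bicentralizer).} Set $B=\rB(M,\alpha)$. We first check that $B$ is a von Neumann subalgebra of $M$, globally invariant under $\alpha$. We then look at the relative commutant of the algebra of almost invariant elements. Concretely, in an ultrapower $M^\omega$ let $M^\omega_\alpha$ be the algebra of $\alpha$-equicontinuous sequences that are asymptotically invariant uniformly on compacts. Then
\[
B=(M^\omega_\alpha)'\cap M.
\]
This is the analog of $\rB(M,\varphi)=(M_\varphi^\omega)'\cap M$.

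\textbf{Step 2 (relative commutant theorem).} We generalize the Connes--Takesaki relative commutant theorem, using the abelian case developed in the body of the paper. Outerness of $\alpha$ together with $\Gamma(\alpha)=\R$ should give that $M^\omega_\alpha$ is ``large''. Specifically, for each $p\in\R=\widehat{\R}$ we want to find unitaries in $M^\omega$ that are approximate eigenvectors for $\alpha^\omega$ with eigenvalue $e^{\ri pt}$. This is the flow analog of Connes--Størmer transitivity. Full Connes spectrum is exactly what guarantees that spectral subspaces $M^\alpha(\widehat{E})$ for small neighbourhoods $\widehat{E}$ of $p$ produce nonzero partial isometries. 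A maximality argument in the ultrapower then upgrades these to unitaries.

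\textbf{Step 3 (spectral consequence for $B$).} Take $a\in B$. Then $a$ commutes with $M^\omega_\alpha$, and the approximate eigen-unitaries $u$ normalize $M^\omega_\alpha$. Hence $u a u^*$ is again in $B$, and its spectral behaviour is shifted by $p$. Exploiting this, we aim to show that the restricted flow $\alpha|_B$ has a spectrum that is invariant under all translations. We expect a resonance phenomenon here. Pairing $a$ against eigen-elements of frequency $p$ and $-p$ simultaneously should force $\alpha|_B$ to be trivial, so that $B\subset M^\alpha$. Alternatively it could force $B$ to have trivial spectrum, giving the same conclusion.

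\textbf{Step 4 (conclusion).} Once $B\subset M^\alpha$, each $a\in B$ commutes with $M^\omega_\alpha$. By the transitivity theorem of Step 2, we then show that $a$ also commutes with $M$. Indeed, every unitary of $M$ should be approximable modulo $M^\omega_\alpha$ using eigen-unitaries; this is the analog of $M=\overline{M_\varphi^\omega\cdot\text{(eigenoperators)}}$ in the $\III_1$ case. It follows that $B\subset M'\cap M=\C1$.

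\textbf{Main obstacle.} The hard step is Step 3, the resonance argument. In the $\III_1$ case Haagerup needed injectivity to control $\rB(M,\varphi)$, and here no injectivity is assumed. The genuinely new input must therefore be an operator-valued distribution argument. We would first compute the joint spectral measure of $a$ against almost invariant sequences twisted by frequencies $p$. We would then try to show that, for $a\in B$, this measure must be invariant under translations of $\R$. Finally, we would use that a finite (trace-bounded) measure on $\R$ cannot be translation invariant unless it vanishes. This would force the nonconstant part of $a$ to be $0$.
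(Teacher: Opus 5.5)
\textbf{There is a genuine gap.} Steps~1 and~2 are sound in substance. $\rB(M,\alpha)=(M^{\omega,\alpha})'\cap M$ is the ultrapower characterization; your $M^\omega_\alpha$ is the paper's $M^{\omega,\alpha}$. Approximate eigen-unitaries for every character are exactly what full Connes spectrum provides. The paper obtains them from Theorem~\ref{approximate cocycle vanishing theorem}; your maximality argument would additionally need a comparison theorem for projections in $M^{\omega,\alpha}$, which is itself nontrivial.

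\textbf{Step~3 fails.} If $\alpha_t(v_j)\approx e^{\ri pt}v_j$, then $\alpha_t(v_jav_j^*)\approx v_j\alpha_t(a)v_j^*$. The phases cancel, so conjugation by an approximate eigen-unitary does \emph{not} shift the $\alpha$-spectrum. The limits $\beta_p(a)=\lim_j v_jav_j^*$ define the dual bicentralizer action $\beta$ on $B$, which commutes with $\alpha|_B$ and preserves $\tau$. Hence $\beta_p(a)$ has exactly the same $\alpha$-spectral measure as $a$, and there is no translation-invariant finite measure to exploit.

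\textbf{Step~4 has a gap of the same nature.} An element $a\in B\cap M^\alpha$ commutes with $M^{\omega,\alpha}$. But commuting with the eigen-unitaries means $\beta_p(a)=a$, and this does not follow from $\alpha$-invariance. The paper treats this case via $\rB(M,\alpha)\cap M^\alpha=\rb(M,\alpha)\cap M^\alpha$ (Theorem~\ref{fixed point analytic algebraic bicentralizer}) together with $\rb(M,\alpha)\cap M^\alpha=\C1$ under full Connes spectrum.

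Two structural signs confirm that an essential ingredient is missing. First, you never use outerness, yet it is indispensable. If $\alpha_{t_0}=\Ad(v)$ with $t_0\neq0$, then $v\in\rb(M,\alpha)\subset\rB(M,\alpha)$ by Proposition~\ref{algebraic bicentralizer intertwiners}. Moreover $v$ is not scalar, because $\Gamma(\alpha)=\R$ forces $\alpha$ to be faithful. Second, nothing in your sketch is specific to $\R$. Eigen-unitaries for all characters, and the nonexistence of finite translation-invariant measures, are equally available for $G=\R^2$, where the paper points out that the conclusion fails.

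\textbf{The missing idea} is a resonance constraint on the \emph{joint} action $\alpha|_B\times\beta$ of $\R\times\widehat\R$ on $B$. By the relative fixed point theorem (Theorem~\ref{fixed points relative dual bicentralizer compressed}), $B^\beta=M'\cap M=\C1$. So this joint action is ergodic and its spectrum $\Sigma$ is a closed subgroup of $\R^2$. Lemma~\ref{resonance for bicentralizers} then shows $p(t')p'(t)=1$ for all $(p,t),(p',t')\in\Sigma$.

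This is the heart of the paper and requires substantial new machinery:
\begin{enumerate}
\item a singular tracial state $\rho=m\circ\rE_{\rL(\R)}$ on $M\rtimes_\alpha\R$, built from an invariant mean $m$ on $\widehat\R$;
\item the algebra $B^\dagger$ of fixed points of $\widetilde\beta_p\circ\widehat\alpha_{p^{-1}}$ in $B\rtimes_\alpha\R$, which lies, together with $M$, in the centralizer of $\rho$;
\item the relative fixed point theorem applied inside the GNS von Neumann algebra of $\rho$, giving $\|ax-xa\|_{2,\rho}=0$ for $a\in M$ and $x\in B^\dagger$ (Theorem~\ref{mean value commutation});
\item the Plancherel function, which converts this into the operator identity $\Omega(S\Omega S)=1$ on $\rL^2(B)\otimes\rL^2(B)$.
\end{enumerate}
The Fourier-analytic input you anticipate does appear, but it produces a restriction on a joint spectrum, not a translation invariance.

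Only at this point does one-dimensional geometry enter. Since $2\Sigma\subset\mathcal R_\R$, Proposition~\ref{resonant group for R} shows that $\Sigma$ is discrete or contained in a coordinate axis. The three cases close as follows.
\begin{enumerate}
\item If $\Sigma\subset\R\times\{0\}$, then $\beta$ is trivial and $B=B^\beta=\C1$.
\item If $\Sigma\subset\{0\}\times\R$, then $B\subset M^\alpha$, and the identification of fixed parts gives $B=\C1$.
\item If $\Sigma$ is discrete, $B$ is generated by joint eigenoperators. By Theorem~\ref{LCA bicentralizer eigenvector intertwining compressed} these are intertwiners $xy=\alpha_t(y)x$. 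Outerness forces $t=0$, so $B=B^\beta=\C1$.
\end{enumerate}
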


Finally, going back to the type $\III_1$ world with the new insight coming from the type $\II_1$ world, we are able to solve Connes' bicentralizer conjecture for arbitrary type $\III_1$ factors. The proof is essentially the same as the proof of Theorem~\ref{main flow bicentralizer}.

\begin{letterthm}\label{main Connes bicentralizer}
Let $M$ be a type $\III_1$ factor with a faithful normal state $\varphi$. Then
\[
\rB(M,\varphi)=\C 1.
\]
\end{letterthm}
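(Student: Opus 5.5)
We would prove Theorem~\ref{main Connes bicentralizer} by transporting the argument for flows (Theorem~\ref{main flow bicentralizer}) to the modular flow $\sigma^\varphi$, viewed as a state-preserving flow on $(M,\varphi)$. The dictionary is as follows.
\begin{itemize}
\item The type $\III_1$ assumption plays the role of $\Gamma(\alpha)=\R$ together with outerness. By Connes, $\Gamma(\sigma^\varphi)=S(M)=\R^*_+$ in multiplicative notation, and the centralizer-type strict outerness replaces outerness.
\item Connes' bicentralizer $\rB(M,\varphi)$ is exactly $\rB(M,\sigma^\varphi)$ in the sense of the present paper, with $\|\cdot\|_\varphi$ in place of $\|\cdot\|_2$.
\end{itemize}

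First we reduce to standard facts from Connes' and Haagerup's work. $\rB(M,\varphi)$ is independent of $\varphi$ up to a canonical isomorphism. By Haagerup's criterion, $\rB(M,\varphi)=\C1$ if and only if there is a faithful normal state $\psi$ with $M_\psi'\cap M=\C1$. The relative bicentralizer $\rB(M\subset M^\omega,\varphi)$ is preserved by $\sigma^{\varphi}$. The modular flow restricts to $\rB(M,\varphi)$, and we set $\beta=\sigma^\varphi|_{\rB(M,\varphi)}$. Since $\rB(M,\varphi)$ is contained in the relative commutant of the asymptotic centralizer, the goal is to show that $\beta$ is trivial and then that $\rB(M,\varphi)$ is trivial.

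The core step is the resonance phenomenon developed for flows. We argue by contradiction: take a nonscalar $a\in\rB(M,\varphi)$ and decompose it spectrally with respect to $\beta$. Using $\Gamma(\sigma^\varphi)=\R$, for every $\lambda$ we would produce asymptotically centralizing bounded nets $(x_i)$ in $M$ that are almost $\lambda$-eigenoperators for $\sigma^\varphi$. These come from the analog of the Connes--St{\o}rmer transitivity theorem for abelian actions proved earlier, applied to the modular flow via Connes--Takesaki's relative commutant theorem (which the paper generalizes).

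Multiplying such nets (the resonance), we obtain almost invariant elements that nonetheless fail to asymptotically commute with the spectral pieces of $a$, unless $a$ is $\beta$-invariant and central. This forces $\rB(M,\varphi)\subset M_\varphi$ and then $\rB(M,\varphi)\subset M_\varphi'\cap M$. Combining this with the fact that $\rB(M,\varphi)$ commutes with $(M^\omega)_{\varphi^\omega}$, and with Connes--St{\o}rmer transitivity (applied to states $\psi$ whose centralizers are controlled), yields $\rB(M,\varphi)=\C1$.

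The main obstacle is constructing the resonating almost-eigenvectors in the non-tracial setting. For flows on $\II_1$ factors, one can multiply almost-eigenvectors with control in $\|\cdot\|_2$. For $\varphi$, the norms $\|\cdot\|_\varphi$ and $\|\cdot\|_\varphi^\sharp$ differ, and an eigenoperator of weight $e^{\lambda}$ distorts the norm by $e^{\pm\lambda/2}$. We would handle this by working in the ultrapower $M^\omega$ with $\varphi^\omega$, using Ando--Haagerup's result that $(M^\omega)_{\varphi^\omega}$ is a $\II_1$ factor when $M$ is $\III_1$. We would then run the flow argument for $\sigma^{\varphi^\omega}$ restricted to suitable corners, where the computations become tracial up to bounded Radon--Nikodym factors.
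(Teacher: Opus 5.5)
Your preliminary reduction is fine. Since $\rB(M,\varphi)=((M^\omega)_{\varphi^\omega})'\cap M$ and $(M^\omega)_{\varphi^\omega}$ is a $\II_1$ factor (Ando--Haagerup), one gets $\rB(M,\varphi)\cap M_\varphi=\C1$. So showing that $\sigma^\varphi$ acts trivially on $\rB(M,\varphi)$ would suffice. But that statement is equivalent to the conjecture, and your core step gives no argument for it.

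Because $\rB(M,\varphi)$ is globally $\sigma^\varphi$-invariant, the spectral pieces of $a$ lie in $\rB(M,\varphi)$. By definition they therefore asymptotically commute with every almost-centralizing sequence. Exhibiting almost invariant elements that ``fail to asymptotically commute with the spectral pieces of $a$'' is thus not a step toward the contradiction; it \emph{is} the contradiction, and you offer no mechanism for it.

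Multiplying eigen-sequences cannot provide one. Suppose $X,Y\in M^\omega$ satisfy $X\varphi^\omega=e^{\lambda}\varphi^\omega X$ and $Y\varphi^\omega=e^{-\lambda}\varphi^\omega Y$. The bicentralizer flow of \cite{AHHM20} gives $Xa=\beta^\varphi_\lambda(a)X$ and $Ya=\beta^\varphi_{-\lambda}(a)Y$ for $a\in\rB(M,\varphi)$. Hence $XYa=aXY$, exactly as it must be for the centralizing element $XY$.

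This computation points to what is missing. Almost-eigen sequences act on the bicentralizer through a \emph{second} flow $\beta^\varphi$, which is distinct from $\sigma^\varphi|_{\rB(M,\varphi)}$; your notation conflates the two. The paper reduces via \cite{AHHM20} to a type $\III_1$ factor with $\rB(M,\varphi)=M$, on which $\sigma^\varphi$ and $\beta^\varphi$ commute and are both weakly mixing. It then proves that their \emph{joint} spectrum $\Sigma\subset\R^2$ satisfies $pt'+p't\in2\pi\Z$ for all $(p,t),(p',t')\in\Sigma$. Then $2\Sigma\subset\mathcal R_\R$, and Proposition~\ref{resonant group for R} forces $\Sigma$ to be discrete or contained in an axis, which contradicts weak mixing.

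Establishing that relation is the real work, and your proposed way around non-traciality does not reach it. The flow $\sigma^{\varphi^\omega}$ is trivial on $(M^\omega)_{\varphi^\omega}$, so there is no flow there to which the argument of Section~\ref{sec:joint-bicentralizer-action} could be applied. Off the centralizer, $\varphi^\omega$ is not a trace.

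The paper instead works on the continuous core $M\rtimes_{\sigma^\varphi}\R$ with the singular KMS state $\rho=m\circ\rE_{\rL(\R)}$. It uses the Plancherel function and the second crossed-product decomposition over $M^\dagger$, the fixed points of $\widetilde\beta^\varphi_s\circ\widehat{\sigma^\varphi}_{-s}$. It then passes to the GNS algebra $N$ of $\rho$, where $M\subset N$ is with expectation. There the relative fixed point theorem $\rB(M\subset N,\varphi)^{\beta^{\varphi,N}}=M'\cap N$ of \cite{Ma26} yields $\pi(M^\dagger)\subset M'\cap N$.

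Traciality is replaced by the KMS identity, the spectral estimates of Proposition~\ref{connes normal GNS embeddings}, and the relation $\varphi^{1/2}\sigma^\varphi_{\ri/2}(a)=a\varphi^{1/2}$. Comparing left and right multiplication in $\cH_m$ then gives $\Omega(S\Omega S)=1$, which is the resonance relation. None of these ingredients appears in your plan.

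Separately, Theorem~\ref{approximate cocycle vanishing theorem} is about trace-preserving actions and does not apply to $\sigma^\varphi$. The eigen-sequences you need do exist by Connes--St\o rmer, but their existence was never the difficulty.
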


Theorem~\ref{main flow bicentralizer} is very specific to the case $G=\R$ and does not generalize to arbitrary locally compact abelian groups, not even to $\R^2$. In fact, the implication 
\[ \alpha \text{ is outer and } \Gamma(\alpha)=\widehat G \quad \Longrightarrow \quad  M' \cap (M \rtimes_\alpha G)=\C 1
\]
is false for arbitrary locally compact abelian groups. The fact that it holds when $G=\R$ is a small miracle. As we will now explain, it is by investigating this question that we finally arrived at Theorem~\ref{main flow bicentralizer} and Theorem~\ref{main Connes bicentralizer}.
 
\subsection{The relative commutant theorem and the resonance property}
For a discrete group acting on a factor, Fourier coefficients make
the passage from outerness to strict outerness straightforward.
For actions of continuous groups, these coefficients need not exist pointwise. Outerness rules out nontrivial atomic contributions to the relative commutant,
but gives no direct control over its continuous spectrum without additional assumptions (see for example \cite[Theorem~A]{MV23} or
\cite[Corollary~B]{Mo25}).

To make this more precise, let $\alpha :G \curvearrowright M$ be a continuous action of a locally compact abelian group on a factor $M$. Then it is easy to see that $\alpha$ is outer if and only if the restriction of the dual action \[\widehat{\alpha} : \widehat{G} \curvearrowright M \rtimes_\alpha G\] to $M' \cap (M \rtimes_\alpha G)$ is weakly mixing. In general, the atomic spectrum of \[\widehat{\alpha}|_{M' \cap (M \rtimes_\alpha G)}\] detects the elements of $G$ that act by inner automorphisms. 

In general, there is no reason to believe that outerness would force $\alpha$ to be strictly outer, that is, $M' \cap (M \rtimes_\alpha G)=\C 1$, even if we assume that $\Gamma(\alpha)=\widehat{G}$. But despite our efforts, with assistance from OpenAI's GPT-5.6, we could not find a counterexample for $G=\R$. We thus tried to obtain a positive result by adapting the proof of the Connes--Takesaki relative commutant theorem, \cite[Chapter ${\rm II}$, Theorem~5.1]{CT77}, a celebrated result of
modular theory. With the help of OpenAI's GPT-5.6, we arrived at the following theorem.
\begin{samepage}
\begin{letterthm}\label{letterthm resonance property}
Let $\alpha:G\curvearrowright M$ be a continuous action of a locally
compact abelian group which preserves a faithful normal semifinite
trace $\tau$. Consider the joint action \[\gamma : G \times \widehat{G} \curvearrowright M' \cap (M \rtimes_\alpha G)\]
given by \[\gamma_{t,p}(x)=\widehat{\alpha}_p(u_txu_t^*), \qquad (t,p) \in G \times \widehat{G},\]
where $\widehat{\alpha}$ is the dual action and $(u_t)_{t \in G}$ are the unitaries implementing the action $\alpha$ in the crossed product. Then we have
\[
\ker(\gamma)^{\perp}
\subset
\mathcal R_G=\{(p,t)\in\widehat G\times G\mid p(t)=1\}.
\]
\end{letterthm}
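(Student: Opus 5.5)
The plan is to treat elements of $N:=M'\cap(M\rtimes_\alpha G)$ through their Fourier expansion along $G$, and to read off the joint spectrum of $\gamma$ from that expansion. The first reduction is that $\ker(\gamma)^\perp$ is the closed subgroup of $\widehat G\times G$ generated by the Arveson spectrum $\operatorname{Sp}(\gamma)$. It therefore suffices to prove a bilinear statement: whenever $(p,t)$ and $(q,s)$ lie in $\operatorname{Sp}(\gamma)$, we have $p(t)=1$, and moreover the cross terms satisfy $p(s)q(t)=1$. These identities pass to the generated closed subgroup, because the pairing $((p,t),(q,s))\mapsto p(s)q(t)$ is a bicharacter. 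So the target is a ``symplectic isotropy'' statement for $\operatorname{Sp}(\gamma)$ with respect to the Weyl pairing on $\widehat G\times G$.

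\textbf{Step 1 (Fourier coefficients).} I will use the dual weight $\widehat\tau$. Since $\tau$ is an $\alpha$-invariant trace and $G$ is abelian, hence unimodular, $\widehat\tau$ is a trace on $M\rtimes_\alpha G$. It is invariant under both $\Ad(u_t)$ and $\widehat\alpha_p$, hence under $\gamma$.

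To each $x\in N$ in the $\widehat\tau$-definition domain I associate an $M$-valued operator-valued distribution $s\mapsto a_x(s)$ with $x=\int a_x(s)u_s\,\rd s$. The condition $x\in M'$ translates into the intertwining relation $y\,a_x(s)=a_x(s)\,\alpha_s(y)$ for all $y\in M$, in the distributional sense. The action $\gamma$ then acts on coefficients by
\[
a_{\gamma_{t,p}(x)}(s)=p(s)\,\alpha_t(a_x(s)).
\]
So the $\widehat G$-part of the spectrum sees the support of $a_x$ in the $s$-variable, and the $G$-part sees the $\alpha$-spectrum of the coefficients. I expect to realize this via the $\widehat\tau$-GNS space $\rL^2(N,\widehat\tau)\subset \rL^2(M)\otimes \rL^2(G)$, on which $\gamma$ is a unitary representation of $G\times\widehat G$. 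I will take its spectral measure as the object to control, rather than the pointwise coefficients, which need not exist.

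\textbf{Step 2 (the Connes--Takesaki step).} This is the main obstacle. I must show that, on the spectral subspace where the $s$-support of $a_x$ is near $s_0$, the $\alpha$-spectrum of the coefficients is concentrated near the annihilator condition dictated by $s_0$. This is the flow analog of the key point in \cite[Chapter II, Theorem~5.1]{CT77}, where modular theory forces the relative commutant into the centralizer.

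My first attempt will use the trace. For $x\in N$, the products $a_x(s)^*\,\alpha_t(a_x(s'))$ intertwine $\alpha_s$ with $\alpha_{s'}$, because $\alpha_t$ commutes with every $\alpha_s$. Taking $\widehat\tau$-inner products of $x$ with $\gamma_{t,p}(x)$ should then produce the identity
\[
\langle \gamma_{t,p}(x),x\rangle_{\widehat\tau}=\langle \gamma_{t,p}(x'),x'\rangle_{\widehat\tau}
\]
for a suitably twisted element $x'$. The commutation relation $\widehat\alpha_p(u_t)=p(t)u_t$ should introduce the factor $p(t)$ between the two sides. I will try to obtain $x'$ by exploiting that $x\in M'$ lets me move $u_t$ across $x$ at the cost of applying $\alpha_t$ to the coefficients. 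Positivity of both sides then forces the spectral measure of $\gamma$ to sit on $\{p(t)=1\}$.

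\textbf{Step 3 (pairs and subgroup).} Applying the same computation to products $x^*y$ and $xy$ of elements of $N$ with spectra localized near $(p,t)$ and $(q,s)$ yields the cross identity $p(s)q(t)=1$. This uses that $N$ is an algebra and that spectra add under multiplication. Finally, the set of points pairing trivially with every point of $\operatorname{Sp}(\gamma)$, and with itself, is a closed subgroup containing $\operatorname{Sp}(\gamma)$, hence containing $\ker(\gamma)^\perp$. This gives $\ker(\gamma)^\perp\subset\mathcal R_G$.
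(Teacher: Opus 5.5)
Your reduction of $\ker(\gamma)^\perp$ to the closed subgroup generated by $\Sp(\gamma)$ is fine. Step~3, however, aims at a false statement. Elements of $N$ spectrally localized near two different points may have zero product, and then nothing links the two points.

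Here is a counterexample. Let $G=\Z$ act on $M=\rL^\infty(\mathbb T)\oplus\C$ by an irrational rotation on the first summand and trivially on the second. Then:
\begin{itemize}
\item $N=\rL^\infty(\mathbb T)\oplus\rL(\Z)$;
\item $(z,0)$, with $z$ the coordinate function, is a joint eigenoperator at some $(p,0)$ with $p\neq1$;
\item $(0,u_1)$ is a joint eigenoperator at $(1,1)$, and $(z,0)(0,u_1)=0$;
\item $\ker(\gamma)$ is trivial, so $\ker(\gamma)^\perp=\widehat G\times G\not\subset\mathcal R_G$.
\end{itemize}
So the displayed inclusion needs $\gamma$ to be ergodic, for instance $M$ a factor, where $N^\gamma\subset N^{\widehat\alpha}=\cZ(M)=\C1$. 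This is the setting in which the paper uses it. For arbitrary $M$, the paper proves only the pointwise inclusion $\Sp(\gamma)\subset\mathcal R_G$ (Corollary~\ref{resonance spectral inclusion}). Under ergodicity, $\Sp(\gamma)=\Gamma(\gamma)=\ker(\gamma)^\perp$ is already a closed subgroup by \eqref{eq:ergodic-spectrum-kernel}, so no cross-term argument is needed.

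The substantive gap is Step~2, which carries all the content. Two things go wrong there.

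First, $\rL^2(N,\widehat\tau)$ does not carry $\Sp(\gamma)$. For nondiscrete $G$, take any nonzero $\widehat\alpha$-eigenoperator $c\in N$, e.g.\ $c=v^*u_t$ with $\alpha_t=\Ad(v)$. Then $c^*c$ is $\widehat\alpha$-invariant and $\rT_M=\int_{\widehat G}\widehat\alpha_p\,\rd p$, so $\widehat\tau(c^*c)=\infty$. Your spectral measure therefore misses exactly the atomic spectrum relevant to Corollary~\ref{main relative commutant dichotomy}. For example, $\rL^2(N,\widehat\tau)=0$ whenever $N=\{u_{t_0}\}''$ with $t_0\neq0$ and $\alpha_{t_0}=\id$, although $(1,t_0)\in\Sp(\gamma)$. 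The paper avoids this by two-sided smoothing: $\lambda(\varphi)x\lambda(\psi)\in\mathfrak m_{\rT_M}$ for every $x\in M\rtimes_\alpha G$. This yields an injective $M$-valued distribution $\mathcal J(x)$ with $\supp\mathcal J(x)=\Sp_\gamma(x)$ (Theorem~\ref{two-sided coefficient distribution}, Proposition~\ref{joint distribution spectrum}).

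Second, your proposed identity between matrix coefficients of $\gamma_{t,p}(x)$ and $\gamma_{t,p}(x')$ puts the factor $p(t)$ on the group variables of $\gamma$. There, $(t,p)\mapsto p(t)$ is not even a character of $G\times\widehat G$. Multiplying a positive-definite function by it does not localize its spectral measure, and positivity gives nothing.

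The missing idea is that traciality forces each Fourier coefficient to be fixed by its own index, which is exactly the target you state. Formally, if $yc=c\,\alpha_s(y)$ for all $y\in M$, then $\tau(\alpha_{-s}(c)y)=\tau(c\,\alpha_s(y))=\tau(yc)=\tau(cy)$, so $\alpha_s(c)=c$.

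The paper makes this rigorous as follows:
\begin{itemize}
\item It compares $\rT_M(XY)$ and $\rT_M(YX)$ for a smoothing $X$ of $x$ and $Y=\int_G u_t^*\psi(t)\,\rd t$. These coincide, tested against $\tau(\,\cdot\,a)$ with $a\in M$, because $\widetilde\tau$ is a trace and $X$ commutes with $M$.
\item Moving $u_t^*$ across $X$ in $YX$ (your idea) converts this into the shear invariance $\nabla(x)(s+t,t)=\nabla(x)(s,t)$.
\item The partial Fourier transform in $s$ turns the shear into $(1-p(t))\mathcal J(x)=0$.
\item The division lemma then gives $\supp\mathcal J(x)\subset\mathcal R_G$ (Theorem~\ref{trace scaling resonance} with $\ell=0$).
\end{itemize}
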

\end{samepage}
We call $\mathcal R_G$ the resonance set. This theorem does not have any striking consequence when $G$ is an arbitrary locally compact abelian group. But when $G=\R$, it turns out that any subgroup of $\widehat\R \times \R=\R^2$ that is contained in \[\mathcal R_\R=\{ (p,t) \in \R^2 \mid pt \in 2\pi \Z \}\] is either contained in one of the coordinate axes or discrete. In the latter case, this forces $\gamma$ to have atomic spectrum. See Figure~\ref{fig:real-resonance-set}. When $G=\R$, we thus obtain the following dichotomy.

\begin{lettercor}\label{main relative commutant dichotomy}
Let $\alpha:\R\curvearrowright M$ be a flow on a factor $M$ that
preserves a faithful normal semifinite trace $\tau$. Then at least
one of the following assertions holds.
\begin{enumerate}[\rm (i)]
\item One has
\[
M'\cap(M\rtimes_\alpha\R)=\mathcal Z(M\rtimes_\alpha\R).
\]
\item The algebra $M'\cap(M\rtimes_\alpha\R)$ is generated by
unitaries of the form $v^*u_t$, where $t\in\R$, $v\in\mathcal U(M)$
and $\alpha_t=\Ad(v)$.
\end{enumerate}
In particular, if $\Gamma(\alpha)=\R$ and $\alpha$ is
outer, then $\alpha$ is strictly outer.
\end{lettercor}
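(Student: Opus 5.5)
Set $Q=M'\cap(M\rtimes_\alpha\R)$. The plan is to apply Theorem~\ref{letterthm resonance property} with $G=\R$, $\widehat G=\R$, so that
\[
\ker(\gamma)^\perp\subset\mathcal R_\R=\{(p,t)\in\R^2\mid pt\in 2\pi\Z\}.
\]
Here $\ker(\gamma)\subset\R\times\R$ is a closed subgroup, and $H:=\ker(\gamma)^\perp$ is a closed subgroup of $\widehat{\R^2}\cong\R^2$.

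\textbf{Step 1: classify $H$.} Any closed subgroup of $\R^2$ is isomorphic to $\R^a\times\Z^b$. If $H$ contains a line $\R(p_0,t_0)$, then $s^2p_0t_0\in 2\pi\Z$ for all $s$, so $p_0t_0=0$ and the line is a coordinate axis. If $H$ contains a point $(p_1,t_1)$ with $p_1t_1\neq 0$ together with a nonzero point $(0,t_2)$ of the axis, then $(p_1,t_1+\epsilon t_2)$ lies in $H$ for a dense set of $\epsilon$ when the axis is contained in $H$, which contradicts resonance. So if $H$ is not discrete, then $H$ lies in a coordinate axis; all other elements would also have to lie on that axis, since sums $(p,t)+(0,s)$ with $s$ ranging over a line must stay in $\mathcal R_\R$. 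This gives a dichotomy:
\begin{itemize}
\item[(a)] $H\subset\{0\}\times\R$ or $H\subset\R\times\{0\}$;
\item[(b)] $H$ is discrete.
\end{itemize}
In case (b), $\ker\gamma$ is cocompact in $\R^2$, so $\gamma$ factors through a compact quotient torus and $Q$ is spanned by $\gamma$-eigenvectors.

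\textbf{Step 2: case (a).} Suppose $H\subset\{(p,0)\}$, meaning $\gamma$ is trivial on $\{0\}\times\R$, so $\widehat\alpha$ acts trivially on $Q$. Then $Q\subset M\rtimes\R$ is fixed by the dual action, hence $Q\subset M$, so $Q=M'\cap M=\C1$, and (i) holds.

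If instead $H\subset\{(0,t)\}$, then $\gamma$ is trivial on $\R\times\{0\}$: every $x\in Q$ commutes with all $u_t$. Since $x$ also commutes with $M$, we get $x\in\mathcal Z(M\rtimes_\alpha\R)$, and (i) holds.

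\textbf{Step 3: case (b).} Take an eigenvector $x\in Q$, so that $\gamma_{s,q}(x)=\langle(s,q),(p,t)\rangle x$ for some $(p,t)\in H$. The relation $\widehat\alpha_q(x)=e^{iqt}x$ forces $x$ to be supported on the single frequency $t$ of the crossed product, that is, $x=a u_t$ with $a\in M$; this uses the standard identification of the $\widehat\alpha$-spectral subspaces. Commutation with $M$ gives $a\alpha_t(y)=ya$ for all $y\in M$. Then $a^*a\in M'\cap M=\C$ and $aa^*\in\C$ as well, so after normalizing $a=v^*$ is a unitary with $\alpha_t=\Ad(v)$. Since $Q$ is generated by these eigenvectors, (ii) holds.

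\textbf{Step 4: the final claim.} Assume $\Gamma(\alpha)=\R$ and $\alpha$ is outer. In case (ii), outerness forces $t=0$, so $Q=\C1$. In case (i), $\Gamma(\alpha)=\R$ gives that $M\rtimes_\alpha\R$ is a factor, so again $Q=\C1$.

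The main obstacle is the geometric Step 1. The key point to make rigorous there is that a closed subgroup of $\R^2$ contained in $\mathcal R_\R$ is either discrete or contained in an axis.
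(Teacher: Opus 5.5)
Your proof is correct and has the same architecture as the paper's: the resonance inclusion, then the classification of closed subgroups of $\R^2$ lying in $\mathcal R_\R$, then the two axis cases giving (i), and finally the eigenoperators $v^*u_t$ giving (ii). Two local steps differ.

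\begin{itemize}
\item \textbf{The geometric step.} You use the structure theorem for closed subgroups of $\R^2$: a nondiscrete one contains a line, that line must be a coordinate axis, and the axis then forces the whole group into it. Proposition~\ref{resonant group for R} instead argues with a sequence in $\Sigma$ tending to $0$. Both work; the paper's version avoids the structure theorem.
\item \textbf{The discrete case.} You observe that $\ker\gamma=H^\perp$ is cocompact, so $\gamma$ factors through the compact group $\R^2/\ker\gamma\cong\widehat H$. The joint eigenoperators are then weak$^*$-total by elementary Fourier analysis on a compact abelian group. The paper instead invokes Proposition~\ref{joint distribution spectrum}, which rests on the injective operator-valued distribution $\mathcal J$. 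Your route is more elementary here. The paper's tool does not need the discrete spectrum to be a subgroup, but that generality is not needed for this corollary.
\end{itemize}

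You should make one point explicit. What Section~\ref{sec:relative-commutants} actually establishes (Theorem~\ref{invariant weight resonance}) is $\Sp(\gamma|_Q)\subset\mathcal R_\R$. This becomes $\ker(\gamma)^\perp\subset\mathcal R_\R$ only through the identity $\Sp(\gamma|_Q)=(\ker\gamma)^\perp$, which requires $\gamma$ to be ergodic on $Q$. Ergodicity holds because $Q^{\widehat\alpha}=\mathcal Z(M)=\C1$; state this rather than relying silently on the introductory formulation. Also, ``$Q$ is spanned by $\gamma$-eigenvectors'' should read ``weak$^*$-densely spanned''.
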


For locally compact abelian groups other than $\R$, the dichotomy breaks down, allowing counterexamples even on the hyperfinite factor (see Subsection~\ref{sec:lca-characterization}). The strict-outerness implication is thus a small miracle of
one-dimensional geometry.

\begin{figure}[t]
\centering
\begin{tikzpicture}[x=.36cm,y=.36cm]
\begin{scope}
\clip (-10.5,-10.5) rectangle (10.5,10.5);
\foreach \n in {1,...,17} {
  \pgfmathsetmacro{\a}{2*pi*\n/10.5}
  \draw[black!65,line width=.35pt]
    plot[domain=\a:10.5,samples=140,smooth] (\x,{2*pi*\n/\x});
  \draw[black!65,line width=.35pt]
    plot[domain=-10.5:-\a,samples=140,smooth] (\x,{2*pi*\n/\x});
  \draw[black!65,line width=.35pt]
    plot[domain=\a:10.5,samples=140,smooth] (\x,{-2*pi*\n/\x});
  \draw[black!65,line width=.35pt]
    plot[domain=-10.5:-\a,samples=140,smooth] (\x,{-2*pi*\n/\x});
}
\end{scope}
\draw[->,line width=.65pt] (-10.8,0)--(11.2,0) node[below] {$p$};
\draw[->,line width=.65pt] (0,-10.8)--(0,11.2) node[left] {$t$};
\foreach \j in {-4,...,4} {
  \foreach \k in {-4,...,4} {
    \pgfmathsetmacro{\pcoord}{sqrt(2*pi)*(\j+(1+sqrt(5))*\k/2)}
    \pgfmathsetmacro{\tcoord}{sqrt(2*pi)*(\j+(1-sqrt(5))*\k/2)}
    \pgfmathtruncatemacro{\showpoint}{
      (abs(\pcoord)<10.5) && (abs(\tcoord)<10.5)}
    \ifnum\showpoint=1
      \fill[red!80!black] (\pcoord,\tcoord) circle[radius=1.3pt];
    \fi
  }
}
\node[below left,inner sep=3pt] at (0,0) {$0$};
\end{tikzpicture}
\caption{In black, the resonance set
\[\mathcal R_\R=\{(p,t)\in\R^2\mid pt\in2\pi\Z\}.\] Every subgroup of $\R^2$ that is contained in $\mathcal R_\R$ must be discrete (such as the subgroup depicted with red dots) or confined to one of the coordinate axes.}
\label{fig:real-resonance-set}
\end{figure}

Let us now explain how we obtain Theorem~\ref{main flow bicentralizer} and Theorem~\ref{main Connes bicentralizer}. For an action of
a locally compact abelian group $G$ with full Connes spectrum, we
construct a canonical \emph{dual bicentralizer action}
\[
\beta:\widehat G\curvearrowright\rB(M,\alpha).
\]
In the case of type $\III_1$ factors, this dual bicentralizer action was discovered in \cite{AHHM20} and led to several developments, see \cite[Theorem~D]{Ma20} and \cite[Theorem~E(3)]{Ma25}. In the case of actions on $\II_1$ factors with full Connes spectrum, we construct the dual bicentralizer action by first proving an analog of the Connes--St{\o}rmer transitivity theorem, in the form of an approximate $1$-cocycle vanishing theorem.

Similarly to the recent results in the type $\III_1$ case \cite{Ma26}, we show that the dual bicentralizer action \[
\beta:\widehat G\curvearrowright\rB(M,\alpha)
\] is ergodic, and we completely determine its point spectrum.

Now, since the dual bicentralizer action $\beta$ commutes with $\alpha|_{\rB(M,\alpha)}$, we can consider the joint bicentralizer action
\[ \gamma = (\alpha|_{\rB(M,\alpha)} \times \beta) : G \times \widehat{G} \curvearrowright \rB(M,\alpha).\]
Inspired by Theorem~\ref{letterthm resonance property} and with the crucial technical help of GPT-6, we show that $\gamma$ satisfies the resonance property
\[ \ker(\gamma)^\perp \subset \mathcal{R}_G.\]
When $G=\R$, this implies that $\gamma$ has pure point spectrum. Theorem~\ref{main flow bicentralizer} follows from this, and so does Theorem~\ref{main Connes bicentralizer}.

Combined with Isono's theorem \cite[Corollary~B]{Is24},
Theorem~\ref{main Connes bicentralizer} also settles the
Haagerup--St{\o}rmer conjecture: every pointwise inner automorphism
of a type $\III_1$ factor with separable predual is the composition
of an inner and a modular automorphism.

Section~\ref{sec:action-bicentralizers} introduces the bicentralizer
of a trace-preserving action and its ultrapower implementation.
Section~\ref{sec:rokhlin-property} establishes the connection with
the Rokhlin property, and Section~\ref{sec:cocycle-perturbations}
studies cocycle perturbations and approximate cocycle vanishing.
In Section~\ref{sec:abelian-groups}, we construct the dual
bicentralizer action for abelian groups.
Section~\ref{sec:relative-commutants} develops the distributional
framework and its applications to relative commutants and strict
outerness. Section~\ref{sec:joint-bicentralizer-action} uses the resonance phenomenon revealed by Section~\ref{sec:relative-commutants} to complete the proof of the flow conjecture.
Section~\ref{sec:connes-diagonal-averaging} adapts the proof to Connes' bicentralizer problem.

\subsection*{Tool and computational resource disclosure}
At an exploratory stage of this project, we used OpenAI's GPT-5.6 Sol model to
investigate whether every trace-preserving outer flow with full Connes
spectrum must be strictly outer. We first explored with the model
possible counterexamples among classical and less familiar
constructions of flows. When none survived scrutiny, we guided the
discussion toward adapting the distributional mechanism of
Connes--Takesaki. After further interaction, the model produced a
proof that we subsequently verified to be correct. The argument was
nevertheless mysterious and difficult to understand, with little
indication of the structure behind it.

Knowing that the result was true changed the nature of our task. We
set aside the original argument and reconstructed the proof
conceptually from the ground up. This led to the theory of von Neumann
algebra-valued distributions developed here. It was particularly
rewarding to realize that the same framework both proves the
trace-preserving result and recovers the Connes--Takesaki relative
commutant theorem in full generality from a new conceptual viewpoint.

The bicentralizer theory for trace-preserving actions, including the connection with the Rokhlin
property, the approximate cocycle vanishing theorem,
the dual bicentralizer action and the relative fixed point theorem, was developed entirely by the authors. AI served only as a
writing aid in this part of the work. Using this theory, we proved
the flow conjecture for minimal flows or flows fixing a maximal abelian subalgebra and certain tensor products
of flows, but the general case remained out of reach for some time. It was also clear to us from the start that any solution of the bicentralizer problem for flows on $\II_1$ factors would lead to a solution of Connes' bicentralizer problem for type $\III_1$ factors by adapting the argument. 

The decisive idea was to apply the resonance mechanism of
Section~\ref{sec:relative-commutants} to the bicentralizer algebra
itself, equipped with its joint
bicentralizer flow. We formulated this idea and asked OpenAI's newly
released GPT-6 Astra model to work out the argument. The model produced
a proof, which we checked. As before, the argument was difficult
to read and obscured the underlying structure. We rewrote it to give a conceptual
proof of the general case. The crucial technical ingredient in the proof, found by GPT-6 Astra alone, is to introduce a singular tracial state on the crossed product algebra that allowed it to apply our relative fixed point theorem for the dual bicentralizer actions in the GNS representation of that state. We have checked all arguments and take full responsibility for them.

\begin{table}[p]
\caption{Type $\III$ factors and flows on $\II_1$ factors.}
\label{type III factors and flows analogy}
\centering
\small
\renewcommand{\arraystretch}{1.1}
\begin{tabular}{@{}>{\raggedright\arraybackslash}p{0.48\textwidth}
@{\hspace{0.04\textwidth}}>{\raggedright\arraybackslash}p{0.48\textwidth}@{}}
\toprule
\textbf{Type $\III$ factors $M$.}
& \textbf{Flows on $\II_1$ factors $\alpha:\R\curvearrowright N$.}
\\
\midrule
\textbf{Type $\III_1$.}\par\smallskip
$\Gamma(\sigma^\varphi)=\R$ for some faithful normal state $\varphi$ \cite{Co73}. This forces $\sigma^\varphi$ to be outer.
&
\textbf{Full Connes spectrum.}\par\smallskip
$\Gamma(\alpha)=\R$.
But $\alpha$ is not necessarily outer.
\\
\addlinespace[0.9em]
\textbf{Connes--Takesaki relative commutant theorem.}\par\smallskip
If $M$ is of type $\III_1$ then
$M'\cap(M\rtimes_{\sigma^\varphi}\R)=\C 1$
\cite{CT77}.
&
\textbf{Strict outerness.}\par\smallskip
If $\Gamma(\alpha)=\R$ and $\alpha$ is outer then
$N'\cap(N\rtimes_\alpha\R)=\C 1$. (Corollary \ref{main relative commutant dichotomy}).
\\
\addlinespace[0.9em]
\textbf{Connes--St{\o}rmer transitivity.}\par\smallskip
On a type $\III_1$ factor, any two faithful normal states are
approximately unitarily conjugate
\cite{CS78}.
&
\textbf{Our approximate $1$-cocycle vanishing theorem.}\par\smallskip
If $\Gamma(\alpha)=\R$, then
$Z^1(\alpha)=\overline{B^1(\alpha)}$: every $1$-cocycle for $\alpha$ is an approximate coboundary
(Theorem~\ref{approximate cocycle vanishing theorem}).
\\
\addlinespace[0.9em]
\textbf{$R_\infty$-stability.}\par\smallskip
$M\cong M\ovt R_\infty$ where $R_\infty$ is the Araki-Woods factor of type $\III_1$.
&
\textbf{Rokhlin property.}\par\smallskip
The flow $\alpha$ has the Rokhlin property \cite{MT16}.
\\
\addlinespace[0.9em]
\textbf{Uniqueness of injective $R_\infty$-stable factors.}\par\smallskip
An injective separable factor absorbing $R_\infty$ is isomorphic
to $R_\infty$ \cite{Co85}.
&
\textbf{Uniqueness of Rokhlin flows on injective $\II_1$ factors.}\par\smallskip
There is a unique Rokhlin flow on the hyperfinite $\II_1$ factor up to cocycle conjugacy
\cite{MT16}.
\\
\addlinespace[0.9em]
\textbf{Connes' bicentralizer approach.}\par\smallskip
If $M$ is injective of type $\III_1$ and
$\rB(M,\varphi)=\C 1$, then $M\cong M\ovt R_\infty$
\cite{Co85}.
&
\textbf{Our bicentralizer approach.}\par\smallskip
If $N$ is injective, $\Gamma(\alpha)=\R$ and $\rB(N,\alpha)=\C 1$, then $\alpha$ has the Rokhlin property
(Theorem~\ref{main rokhlin and bicentralizer}).
\\
\addlinespace[0.9em]
\textbf{Connes' bicentralizer problem.}\par\smallskip
Every type $\III_1$ factor $M$ (not necessarily injective) satisfies
$\rB(M,\varphi)=\C 1$
\cite{Co85}. The injective case was solved by Haagerup \cite{Ha87}. The general case is solved in Theorem~\ref{main Connes bicentralizer}.
&
\textbf{Our bicentralizer theorem for flows.}\par\smallskip
Every outer flow $\alpha$ on a $\II_1$ factor $N$ (not necessarily injective) with
$\Gamma(\alpha)=\R$ satisfies
$\rB(N,\alpha)=\C 1$. This is Theorem~\ref{main flow bicentralizer}.
\\
\bottomrule
\end{tabular}
\end{table}
\clearpage

\endgroup

\begingroup
\linespread{0.96}\selectfont
\tableofcontents
\endgroup
\clearpage

\section{A bicentralizer theory for trace-preserving actions}
\label{sec:action-bicentralizers}

We introduce analytic and algebraic bicentralizers for trace-preserving
actions, including their relative versions for inclusions. The analytic
bicentralizer describes asymptotic commutation with almost invariant
elements. The algebraic bicentralizer is defined through commutants after
amplification. We conjecture that they agree for amenable groups and prove
triviality of the bicentralizer for strictly outer actions of amenable
groups with a compact open subgroup, as well as for free Bogoljubov
actions of amenable groups on free Gaussian factors.

The main tool is an ultrapower interpretation that turns almost invariant
elements into fixed points and asymptotic commutation into exact
commutation. Averaging then gives a description of the conditional
expectation onto the bicentralizer. We also develop an equivariant
ultrapower implementation of binormal states, which will connect this
theory with correspondences in the next section.

\subsection{Tracial ultrapowers and equicontinuous parts}
We recall the tracial ultrapower construction and its equivariant version.
The sequence-indexed construction is standard. See, for instance,
\cite[Section~3]{AH12}. Equicontinuous ultrapowers for continuous flows were
introduced in this context in \cite[Section~3]{MT16}. 

Let $I$ be a directed set and let $\omega$ be an ultrafilter on $I$. We write
$\lim_{i\to\omega} a_i=a$ when the net $(a_i)_{i\in I}$ converges to $a$ along
$\omega$. The ultrafilter $\omega$ is called \emph{cofinal} if
\[
\{j\in I\mid j\geq i\}\in\omega
\qquad\text{for every }i\in I.
\]
Let $(M,\tau)$ be a tracial von Neumann algebra, where $\tau$ is a faithful
normal tracial state, and set $\|x\|_2=\tau(x^*x)^{1/2}$. Define
\[
\ell^\infty(I,M)
=\left\{(x_i)_{i\in I}\mid \sup_{i\in I}\|x_i\|<\infty\right\}
\]
and
\[
\mathcal I_\omega(M)
=\left\{(x_i)_{i\in I}\in\ell^\infty(I,M)
\mathrel{\Big|}\lim_{i\to\omega}\|x_i\|_2=0\right\}.
\]
The set $\mathcal I_\omega(M)$ is a norm-closed two-sided ideal of
$\ell^\infty(I,M)$. The \emph{tracial ultrapower} of $M$ with respect to
$\omega$ is
\[
M^\omega=\ell^\infty(I,M)/\mathcal I_\omega(M).
\]
We denote the image of $(x_i)_{i\in I}$ in $M^\omega$ by $(x_i)^\omega$.
The formula
\[
\tau^\omega((x_i)^\omega)=\lim_{i\to\omega}\tau(x_i)
\]
defines a faithful normal tracial state on $M^\omega$, and we identify $M$
with the von Neumann subalgebra of $M^\omega$ given by constant nets. The trace-preserving conditional expectation $\rE^\omega : M^\omega \rightarrow M$ is given by $$\rE^\omega((x_i)^\omega)=\lim_{i\to \omega} x_i$$
where the limit is taken in the weak$^*$ topology.

We will use the following proposition throughout this paper.
\begin{proposition} \label{omega equicontinous functions}
  Let $(f_i)_{i \in I}$ be a net of continuous functions from a locally compact space $X$ into a Hilbert space $H$, with $\sup_i\|f_i(x)\|<\infty$ for every $x\in X$. If $(f_i)_{i \in I}$ is $\omega$-equicontinuous, meaning that for every $x \in X$ and every $\varepsilon > 0$, there exists a neighborhood $U$ of $x$ such that
  $$ \{ i \in I \mid \sup_{y \in U} \| f_i(x)-f_i(y)\|_2 \leq \varepsilon \} \in \omega$$ then 
  $$f=(f_i)^\omega : X\ni  x \mapsto (f_i(x))^\omega \in H^\omega$$
  is continuous. Moreover, for every compact subset $K \subset X$, we have
$$ \lim_{i \to \omega} \sup_{x \in K} \| f_i(x)\|_2=\sup_{x \in K} \|f(x)\|_2.$$
\end{proposition}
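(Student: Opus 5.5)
Two things have to be proved: that $f$ is continuous, and that the supremum formula holds on compact sets. Throughout, the norm on $H^\omega$ is the ultraproduct norm $\|(\xi_i)^\omega\| = \lim_{i\to\omega}\|\xi_i\|$. This is well defined because $(f_i(x))_i$ is bounded for each fixed $x$, by the pointwise boundedness hypothesis. I read $\|\cdot\|_2$ in the statement as this norm.

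\textbf{Continuity of $f$.} Fix $x\in X$ and $\varepsilon>0$, and choose the neighborhood $U$ given by $\omega$-equicontinuity. For $y\in U$ we have
\[
\|f(x)-f(y)\| = \lim_{i\to\omega}\|f_i(x)-f_i(y)\|.
\]
The set of indices $i$ for which $\|f_i(x)-f_i(y)\|\le\varepsilon$ contains the set in the hypothesis, so it belongs to $\omega$. Hence the limit is at most $\varepsilon$, and $f$ is continuous at $x$.

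\textbf{The inequality $\ge$.} For each $x\in K$ and each $i$ we have $\|f_i(x)\|\le\sup_{y\in K}\|f_i(y)\|$. Taking limits along $\omega$ gives
\[
\|f(x)\|\le \lim_{i\to\omega}\sup_{y\in K}\|f_i(y)\|.
\]
Taking the supremum over $x\in K$ yields $\sup_{x\in K}\|f(x)\| \le \lim_{i\to\omega}\sup_{y\in K}\|f_i(y)\|$. Note that the right-hand limit might a priori be $+\infty$; the next step shows it is finite.

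\textbf{The inequality $\le$ (the real content).} This is where compactness is used. Fix $\varepsilon>0$. For each $x\in K$, pick an open neighborhood $U_x$ from $\omega$-equicontinuity, with index set $A_x\in\omega$. By compactness, $K$ is covered by finitely many of these, say $U_{x_1},\dots,U_{x_n}$. Set $A=\bigcap_k A_{x_k}$; this is a finite intersection of sets in $\omega$, so $A\in\omega$. For $i\in A$ and any $y\in K$, choose $k$ with $y\in U_{x_k}$. Then
\[
\|f_i(y)\|\le \|f_i(x_k)\|+\varepsilon,
\qquad\text{so}\qquad
\sup_{y\in K}\|f_i(y)\|\le \max_k\|f_i(x_k)\|+\varepsilon \quad (i\in A).
\]
The right-hand side is finite, by the pointwise bound at the finitely many points $x_k$. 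The limit along $\omega$ commutes with a finite maximum. Therefore
\[
\lim_{i\to\omega}\sup_{y\in K}\|f_i(y)\| \le \max_k\|f(x_k)\|+\varepsilon \le \sup_{x\in K}\|f(x)\|+\varepsilon.
\]
Letting $\varepsilon\to0$ gives the reverse inequality, and together with the previous step the claimed equality follows.

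The only delicate point is this finite-subcover step: it is what turns pointwise $\omega$-equicontinuity into uniform control on $K$. It needs no metrizability of $X$ and no countability of $I$, only compactness of $K$ and closure of $\omega$ under finite intersections.
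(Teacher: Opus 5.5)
Your proof is correct and follows the paper's argument essentially step for step: the same $\varepsilon$-estimate for continuity, the easy inequality obtained from pointwise limits, and a finite subcover of $K$ by equicontinuity neighborhoods for the reverse inequality. The only cosmetic difference is in that last step. You pass the $\omega$-limit through $\max_k\|f_i(x_k)\|$ directly. The paper instead intersects further sets in $\omega$ to get $\|f_i(x_k)\|\leq\|f(x_k)\|+\varepsilon$, and so ends with $2\varepsilon$ rather than $\varepsilon$.
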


\begin{proof}
We first prove the continuity of $f$. Fix $x\in X$ and
$\varepsilon>0$. By $\omega$-equicontinuity, there is a neighborhood
$U$ of $x$ such that
\[
 \left\{i\in I\ \middle|\
 \sup_{y\in U}\|f_i(x)-f_i(y)\|_2\leq\varepsilon
 \right\}\in\omega.
\]
Hence, for every $y\in U$,
\[
 \|f(x)-f(y)\|_2
 =\lim_{i\to\omega}\|f_i(x)-f_i(y)\|_2
 \leq\varepsilon.
\]
Thus $f:X\to H^\omega$ is continuous.

Now let $K\subset X$ be compact. Pointwise, we have
\[
 \|f(x)\|_2=\lim_{i\to\omega}\|f_i(x)\|_2
 \qquad(x\in X),
\]
and therefore
\[
 \sup_{x\in K}\|f(x)\|_2
 \leq\lim_{i\to\omega}\sup_{x\in K}\|f_i(x)\|_2.
\]

For the reverse inequality, fix $\varepsilon>0$. For every $x\in K$,
choose a neighborhood $U_x$ of $x$ such that
\[
 \left\{i\in I\ \middle|\
 \sup_{y\in U_x}\|f_i(x)-f_i(y)\|_2\leq\varepsilon
 \right\}\in\omega.
\]
Choose $x_1,\ldots,x_n\in K$ such that
\[
 K\subset\bigcup_{k=1}^n U_{x_k}.
\]
After intersecting finitely many members of $\omega$, we obtain, for
$\omega$-almost every $i$,
\[
 \sup_{y\in U_{x_k}}\|f_i(x_k)-f_i(y)\|_2\leq\varepsilon,
 \qquad
 \|f_i(x_k)\|_2\leq\|f(x_k)\|_2+\varepsilon
\]
for every $1\leq k\leq n$. For such an $i$ and $y\in K$, choose $k$
with $y\in U_{x_k}$. Then
\[
 \|f_i(y)\|_2
 \leq\|f_i(x_k)\|_2+\|f_i(y)-f_i(x_k)\|_2
 \leq\sup_{x\in K}\|f(x)\|_2+2\varepsilon.
\]
It follows that
\[
 \lim_{i\to\omega}\sup_{x\in K}\|f_i(x)\|_2
 \leq\sup_{x\in K}\|f(x)\|_2+2\varepsilon.
\]
Letting $\varepsilon\to0$ proves the second assertion.
\end{proof}

Let $G$ be a locally compact group and
let $\alpha:G\curvearrowright(M,\tau)$ be a continuous trace-preserving
action. Here continuity means that $g\mapsto\alpha_g(x)$ is continuous in
$\|\cdot\|_2$ for every $x\in M$. We write $K\Subset G$ when $K$ is a compact
subset of $G$, and $U\ll G$ when $U$ is an open neighborhood of the identity.

A bounded net $(x_i)_{i\in I}$ in $M$ is called
\emph{$(\alpha,\omega)$-equicontinuous} if the net of continuous functions
$(g\mapsto\alpha_g(x_i))_{i \in I}$ is $\omega$-equicontinuous.
We define the \emph{$\alpha$-equicontinuous part} of the tracial ultrapower by
\[
M^\omega_\alpha
=\left\{(x_i)^\omega\in M^\omega
\mathrel{\Big|}(x_i)_{i\in I}\text{ is $(\alpha,\omega)$-equicontinuous}\right\}.
\]
This definition does not depend on the chosen representative. We also define
\[
M^{\omega,\alpha}
=\left\{(x_i)^\omega\in M^\omega
\mathrel{\Big|}\lim_{i\to\omega}
\sup_{g\in K}\|\alpha_g(x_i)-x_i\|_2=0
\text{ for every }K\Subset G\right\}.
\]

The following proposition follows easily from Proposition \ref{omega equicontinous functions}.
\begin{proposition}\label{equicontinuous ultrapower action}
The $\alpha$-equicontinuous part
$M^\omega_\alpha$ is a von Neumann subalgebra of
$M^\omega$, and the formula
\[
\alpha^\omega_g((x_i)^\omega)=(\alpha_g(x_i))^\omega
\qquad(g\in G)
\]
defines a continuous trace-preserving action
$$\alpha^\omega:G\curvearrowright M^\omega_\alpha$$ such that
\[
M^{\omega,\alpha}=(M^\omega_\alpha)^{\alpha^\omega}.
\]
Moreover, for every $f \in \rL^1(G)$ and $(x_i)^\omega \in M^\omega$, we have $$(\alpha_f(x_i))^\omega \in M^\omega_\alpha$$ and if $x=(x_i)^\omega \in M^\omega_\alpha$, we have the formula $$\alpha^\omega_f(x)=(\alpha_f(x_i))^\omega.$$
\end{proposition}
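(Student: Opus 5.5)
The plan is to verify the three assertions in turn, reducing each to Proposition~\ref{omega equicontinous functions}.

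\emph{Step 1: $M^\omega_\alpha$ is a von Neumann subalgebra.} I would first check that $M^\omega_\alpha$ is a unital $*$-subalgebra. Sums and adjoints of $(\alpha,\omega)$-equicontinuous nets are clearly equicontinuous, since $\|\alpha_g(x_i^*)-\alpha_h(x_i^*)\|_2=\|\alpha_g(x_i)-\alpha_h(x_i)\|_2$ by traciality. For products I would use
\[
\|\alpha_g(x_iy_i)-\alpha_h(x_iy_i)\|_2\leq \|x_i\|\,\|\alpha_g(y_i)-\alpha_h(y_i)\|_2+\|\alpha_g(x_i)-\alpha_h(x_i)\|_2\,\|y_i\|,
\]
together with the uniform bound on the nets. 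Well-definedness is immediate: if $(x_i)-(y_i)\in\mathcal I_\omega(M)$, then $\|\alpha_g(x_i-y_i)\|_2=\|x_i-y_i\|_2$ is independent of $g$.

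For weak closure it suffices, by Kaplansky density in the finite algebra $M^\omega$, to show that the unit ball of $M^\omega_\alpha$ is $\|\cdot\|_2$-closed. Take $x=(x_i)^\omega$ with $\|x\|\le1$ (a representative may be chosen with $\|x_i\|\le1$) and $y^{(n)}=(y^{(n)}_i)^\omega\in M^\omega_\alpha$ with $\|x-y^{(n)}\|_2<\varepsilon$. Then on an $\omega$-large set of $i$,
\[
\|\alpha_g(x_i)-\alpha_h(x_i)\|_2\le 2\varepsilon+\|\alpha_g(y^{(n)}_i)-\alpha_h(y^{(n)}_i)\|_2 ,
\]
so equicontinuity passes to the limit. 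This $3\varepsilon$-argument, with some care about intersecting members of $\omega$, is the only point that needs attention.

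\emph{Step 2: the action $\alpha^\omega$.} Equicontinuity is invariant under $\alpha_g$, so $\alpha^\omega_g$ preserves $M^\omega_\alpha$. It is a trace-preserving automorphism, and the group law holds coordinatewise. For continuity, I apply Proposition~\ref{omega equicontinous functions} with $X=G$, $H=\rL^2(M)$ and $f_i(g)=\alpha_g(x_i)$; this gives $\|\cdot\|_2$-continuity of $g\mapsto\alpha^\omega_g(x)$.

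\emph{Step 3: $M^{\omega,\alpha}$ equals the fixed points.} Applying the second part of Proposition~\ref{omega equicontinous functions} to $f_i(g)=\alpha_g(x_i)-x_i$ gives
\[
\lim_{i\to\omega}\sup_{g\in K}\|\alpha_g(x_i)-x_i\|_2=\sup_{g\in K}\|\alpha^\omega_g(x)-x\|_2
\]
whenever $x\in M^\omega_\alpha$. This yields $(M^\omega_\alpha)^{\alpha^\omega}\subset M^{\omega,\alpha}$. For the converse, I must show that an element of $M^{\omega,\alpha}$ is automatically equicontinuous. Given $g_0$, I take a compact neighborhood $K$ of $g_0$ and use
\[
\|\alpha_g(x_i)-\alpha_{g_0}(x_i)\|_2\le \|\alpha_g(x_i)-x_i\|_2+\|x_i-\alpha_{g_0}(x_i)\|_2 ,
\]
whose right-hand side is small uniformly in $g\in K$ for $\omega$-almost every $i$.

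\emph{Step 4: the $\rL^1(G)$ statement.} Let $\|x_i\|\le C$. For $f\in\rL^1(G)$,
\[
\alpha_g(\alpha_f(x_i))-\alpha_f(x_i)=\int \bigl(f(g^{-1}h)-f(h)\bigr)\alpha_h(x_i)\,\rd h ,
\]
so its $\|\cdot\|_2$-norm is at most $C\|\lambda_gf-f\|_1$. This tends to $0$ as $g\to e$, uniformly in $i$, by continuity of translation in $\rL^1$. Translating to an arbitrary base point gives $(\alpha_f(x_i))^\omega\in M^\omega_\alpha$.

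For the formula, assume $x=(x_i)^\omega\in M^\omega_\alpha$ and first take $f\in\rC_c(G)$. The integrand $h\mapsto f(h)\alpha_h(x_i)$ is equicontinuous on the compact support $\supp f$, by Step 2. I approximate the Bochner integral $\alpha^\omega_f(x)$ by Riemann sums over a partition of $\supp f$ fine enough for that equicontinuity; the same partition approximates every $\alpha_f(x_i)$ uniformly in $i$. Hence $\alpha^\omega_f(x)=(\alpha_f(x_i))^\omega$ for $f\in\rC_c(G)$. Density of $\rC_c(G)$ in $\rL^1(G)$ and the bound $\|\alpha_f\|\le\|f\|_1$ extend the formula to all $f\in\rL^1(G)$.

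The main obstacle is this Riemann-sum step in Step~4, together with the closure argument in Step~1; both are routine once the uniform estimates are set up.
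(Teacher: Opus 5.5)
Your proposal is correct and follows the route the paper indicates: the paper gives no written argument beyond saying the proposition follows easily from Proposition~\ref{omega equicontinous functions}, and your Steps~1--4 supply exactly those routine verifications. One small correction: in Step~4, ``uniformly in $i$'' should read ``for $\omega$-almost every $i$'', since $\omega$-equicontinuity only controls the indices in a set belonging to $\omega$, and that is all the ultralimit needs.
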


We identify $\rL^2(M^\omega)$ with its canonical subspace of
$\rL^2(M)^\omega$.
\begin{proposition}\label{L2 equicontinuous ultrapower}
The Hilbert space $\rL^2(M^\omega_\alpha)$ is the
$U^\alpha$-equicontinuous part of $\rL^2(M^\omega)$, that is,
\[
\rL^2(M^\omega_\alpha)
=\left\{(\xi_i)^\omega\in\rL^2(M^\omega)\ \middle|\
(g\mapsto U^\alpha_g\xi_i)_{i\in I}
\text{ is $\omega$-equicontinuous}\right\},
\]
and $$ U^{\alpha^\omega}_g\xi=(U^\alpha_g)^\omega\xi \qquad (\xi \in \rL^2(M^\omega_\alpha), \; g \in G).$$
Moreover, for every $f\in\rL^1(G)$ and
$\xi \in\rL^2(M^\omega)$, we have
\[
(U^\alpha_f)^\omega \xi \in\rL^2(M^\omega_\alpha),
\]
and if $\xi \in\rL^2(M^\omega_\alpha)$, then
\[
U^{\alpha^\omega}_f\xi=(U^\alpha_f)^\omega \xi.
\]
\end{proposition}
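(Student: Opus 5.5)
The plan is to transport the statement to Hilbert-space level using the canonical isometric identification $\rL^2(M^\omega)\subset\rL^2(M)^\omega$. We then apply Proposition~\ref{omega equicontinous functions} to the functions $g\mapsto U^\alpha_g\xi_i$. Throughout we write $\hat x\in\rL^2$ for the vector associated with $x\in M$, and use that $U^\alpha_g\hat x=\widehat{\alpha_g(x)}$ and $\|\alpha_g(x)-\alpha_h(x)\|_2=\|U^\alpha_g\hat x-U^\alpha_h\hat x\|$.

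\emph{Step 1: the inclusion ``$\subset$''.} If $x=(x_i)^\omega\in M^\omega_\alpha$, then by definition the net $(g\mapsto\alpha_g(x_i))_i$ is $\omega$-equicontinuous in $\|\cdot\|_2$. By the isometric relation above, $(g\mapsto U^\alpha_g\hat x_i)_i$ is $\omega$-equicontinuous. So $\hat x$ lies in the right-hand side $\mathcal H$, which is a closed subspace. It is linear, and closedness follows from a $3\varepsilon$ argument: uniform approximation preserves $\omega$-equicontinuity because each $U^\alpha_g$ is unitary. Taking closures gives $\rL^2(M^\omega_\alpha)\subset\mathcal H$. The same computation shows that for $x\in M^\omega_\alpha$ we have $U^{\alpha^\omega}_g\hat x=\widehat{\alpha^\omega_g(x)}=(U^\alpha_g\hat x_i)^\omega$. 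Both sides are unitary in $g$, so the identity $U^{\alpha^\omega}_g=(U^\alpha_g)^\omega$ extends by density to $\rL^2(M^\omega_\alpha)$.

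\emph{Step 2: the smoothing statements.} Let $\xi=(\xi_i)^\omega\in\rL^2(M^\omega)$ and $f\in\rL^1(G)$. Then
\[
\|U^\alpha_g U^\alpha_f\xi_i-U^\alpha_hU^\alpha_f\xi_i\|\leq\|\lambda_g f-\lambda_h f\|_1\sup_i\|\xi_i\|.
\]
The right-hand side tends to $0$ as $h\to g$, uniformly in $i$, by continuity of translation in $\rL^1(G)$. Hence $(U^\alpha_f)^\omega\xi\in\mathcal H$. The paper claims it lies in $\rL^2(M^\omega_\alpha)$, so we first need $(U^\alpha_f)^\omega$ to map $\rL^2(M^\omega)$ into $\rL^2(M^\omega)$. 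For $\xi=\hat x$ with $x=(x_i)^\omega\in M^\omega$, we have $(U^\alpha_f)^\omega\hat x=\widehat{(\alpha_f(x_i))^\omega}$. By Proposition~\ref{equicontinuous ultrapower action} this is the vector of an element of $M^\omega_\alpha$. Density of $M^\omega$ in $\rL^2(M^\omega)$ and boundedness of $(U^\alpha_f)^\omega$ (norm $\leq\|f\|_1$) then give $(U^\alpha_f)^\omega\rL^2(M^\omega)\subset\rL^2(M^\omega_\alpha)$. The formula $U^{\alpha^\omega}_f=(U^\alpha_f)^\omega$ on $\rL^2(M^\omega_\alpha)$ follows from the corresponding formula $\alpha^\omega_f(x)=(\alpha_f(x_i))^\omega$ in Proposition~\ref{equicontinuous ultrapower action}, again by density and boundedness.

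\emph{Step 3: the inclusion ``$\supset$''.} This is the main obstacle. Take $\xi=(\xi_i)^\omega\in\mathcal H\cap\rL^2(M^\omega)$ and an approximate identity $f_n\geq0$, $\int f_n=1$, with support shrinking to $e$. We will show $(U^\alpha_{f_n})^\omega\xi\to\xi$; Step 2 then places $\xi$ in the closed subspace $\rL^2(M^\omega_\alpha)$. We have
\[
\|(U^\alpha_{f_n})^\omega\xi-\xi\|=\lim_{i\to\omega}\Big\|\int f_n(g)(U^\alpha_g\xi_i-\xi_i)\,dg\Big\|\leq\lim_{i\to\omega}\sup_{g\in\supp f_n}\|U^\alpha_g\xi_i-\xi_i\|.
\]
Proposition~\ref{omega equicontinous functions}, applied with $K=\supp f_n$, turns the right-hand side into $\sup_{g\in K}\|(U^\alpha_g\xi_i)^\omega-\xi\|$. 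Equicontinuity at $e$ makes this small for small $K$. The delicate point is that $\mathcal H$ must be read as a subspace of $\rL^2(M^\omega)$, and representatives must be chosen so that the equicontinuity hypothesis applies to the given net. Independence of the representative holds because changing representatives only alters the functions on an $\omega$-null set up to uniformly small errors.
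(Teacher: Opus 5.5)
Your proof is correct, but the key inclusion $\mathcal H\subset\rL^2(M^\omega_\alpha)$ goes by a different route from the paper's.

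The paper works directly with bounded representatives. It uses the metric projection $P_R$ of $\rL^2(M)$ onto the operator-norm ball $\{a\in M\mid\|a\|\leq R\}$. This map is nonexpansive and $U^\alpha$-equivariant, so $\|U^\alpha_gP_R\xi_i-U^\alpha_hP_R\xi_i\|_2\leq\|U^\alpha_g\xi_i-U^\alpha_h\xi_i\|_2$. Hence $(P_R(\xi_i))^\omega$ lies in $M^\omega_\alpha$ with the same equicontinuity modulus. The hypothesis $\xi\in\rL^2(M^\omega)$ then gives $(P_R(\xi_i))^\omega\to\xi$ as $R\to\infty$. The paper deduces the smoothing statements only afterwards, from this characterization, truncating again with $P_R$ to see that $(U^\alpha_f\xi_i)^\omega\in\rL^2(M^\omega)$.

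You reverse the order. You first obtain $(U^\alpha_f)^\omega\rL^2(M^\omega)\subset\rL^2(M^\omega_\alpha)$ from three ingredients: density of $M^\omega$ in $\rL^2(M^\omega)$, the bound $\|(U^\alpha_f)^\omega\|\leq\|f\|_1$, and the smoothing clause of Proposition~\ref{equicontinuous ultrapower action}. You then recover an equicontinuous $\xi$ as the limit of $(U^\alpha_f)^\omega\xi$ along an approximate identity, via Proposition~\ref{omega equicontinous functions} (or simply equicontinuity at $e$). There is no circularity, since your Step 2 does not use Step 3.

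What each route buys: yours avoids the nonlinear retraction $P_R$ and the facts it needs (the $R$-ball of $M$ is $\|\cdot\|_2$-closed, convex and $U^\alpha$-invariant), and it is the standard regularization argument. The paper's route transfers equicontinuity verbatim to bounded representatives, with no limiting procedure in $f$.

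Minor points:
\begin{itemize}
\item The equicontinuity computation at the start of your Step 2 is never used, since the density argument already lands in $\rL^2(M^\omega_\alpha)$.
\item When $G$ is not first countable, the approximate identity should be a net indexed by identity neighbourhoods, with compact supports so that Proposition~\ref{omega equicontinous functions} applies.
\item Changing representatives alters the net by errors that are small on an $\omega$-large set, not ``on an $\omega$-null set''.
\end{itemize}
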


\begin{proof}
Denote the right-hand side of the first display by $H_\alpha$. Proposition
\ref{omega equicontinous functions}, applied to $U^\alpha$, shows that
$H_\alpha$ is a closed subspace of $\rL^2(M^\omega)$. Since
$M^\omega_\alpha \widehat{1} \subset H_\alpha$, we have
$\rL^2(M^\omega_\alpha)\subset H_\alpha$.

For $R>0$, let
\[
 P_R:\rL^2(M)\longrightarrow
 \{a\in M\mid\|a\|\leq R\}:\xi\longmapsto
 \operatorname*{argmin}_{\substack{a\in M\\ \|a\|\leq R}}
 \|\xi-a\|_2
\]
be the metric projection in tracial $\rL^2$. The map $P_R$ is
nonexpansive and $U^\alpha$-equivariant. Hence, if
$\xi=(\xi_i)^\omega\in H_\alpha$, then
\[
 x_R=(P_R(\xi_i))^\omega\in M^\omega_\alpha.
\]
Since $\xi\in\rL^2(M^\omega)$, its representative satisfies
\[
 \lim_{R\to\infty}\lim_{i\to\omega}
 \|\xi_i-P_R(\xi_i)\|_2=0.
\]
Thus $x_R\to\xi$ in $\|\cdot\|_2$ as $R\to\infty$, and
$H_\alpha\subset\rL^2(M^\omega_\alpha)$, proving the first assertion.

Now let $f\in\rL^1(G)$ and
$\xi=(\xi_i)^\omega\in\rL^2(M^\omega)$. Put
$\xi_{i,R}=P_R(\xi_i)$. Then
\[
 \|U^\alpha_f\xi_i-U^\alpha_f\xi_{i,R}\|_2
 \leq\|f\|_1\|\xi_i-\xi_{i,R}\|_2.
\]
The same approximation shows that $(U^\alpha_f\xi_i)^\omega$ belongs
to $\rL^2(M^\omega)$. Moreover,
\[
 \|U^\alpha_gU^\alpha_f\xi_i-U^\alpha_f\xi_i\|_2
 \leq\|f(g^{-1}\,\cdot\,)-f\|_1\|\xi_i\|_2.
\]
Translation continuity in $\rL^1(G)$ gives equicontinuity, and the
first assertion puts this vector in $\rL^2(M^\omega_\alpha)$.

Finally, the formula
\[
 U^{\alpha^\omega}_f\widehat{x}
 =(U^\alpha_f\widehat{x_i})^\omega
 \qquad(x=(x_i)^\omega\in M^\omega_\alpha)
\]
follows from Proposition~\ref{equicontinuous ultrapower action}. By density, it extends to every
$\xi=(\xi_i)^\omega\in\rL^2(M^\omega_\alpha)$.
\end{proof}

We end this section with a technical remark. If we make countability assumptions on von Neumann algebras and acting groups, one only needs to work with ultrafilters on $I=\N$. But since we will not make such countability assumptions, we have to assume that $I$ is large enough. We will assume this implicitely throughout this paper, relying on the following proposition.
\begin{proposition}\label{ultrapower cardinal bookkeeping}
Fix an infinite cardinal $\kappa$ and assume that there exists a cofinal net of finite subsets of $\kappa$ indexed by $I$.
\begin{enumerate}[\rm (i)]
  \item There exists a net $(\varepsilon_i)_{i \in I}$ of positive numbers that converges to $0$.
  \item If the topology of a locally compact group $G$ is $\kappa$-generated (admits a basis of cardinality less than $\kappa$) then there exists an increasing net $(K_i)_{i \in I} \Subset G$ that is cofinal (for every $K \Subset G$, there exists $i \in I$ such that $K \subset K_i$) and a decreasing net $(U_i)_{i \in I} \ll G$ that is cofinal (for every $U \ll G$, there exists $i \in I$ such that $U_i \subset U$).
  \item If $M$ is $\kappa$-generated (generated by a set of cardinality less than $\kappa$), then there exists an increasing net of finite subsets $(F_i)_{i \in I}$ of $M$ such that $\bigcup_{i \in I} F_i$ is a $*$-strongly dense self-adjoint $\mathbb{Q}+\ri \mathbb{Q}$-subalgebra of $M$.
\end{enumerate}
\end{proposition}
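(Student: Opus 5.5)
We use the hypothesis in the form: there are a set $I$ and a cofinal family $(S_i)_{i\in I}$ of finite subsets of $\kappa$, indexed so that $i\le j$ implies $S_i\subset S_j$. Cofinal means that every finite $S\subset\kappa$ is contained in some $S_i$, and hence in $S_j$ for all $j\ge i$. Each of the three items transfers a countable or $\kappa$-indexed object to $I$ by means of this family.

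\textbf{Item (i).} Choose an injection $\N\hookrightarrow\kappa$, possible since $\kappa$ is infinite, and write $n\in\kappa$ for the image of $n$. Set
\[
\varepsilon_i=2^{-m_i},\qquad m_i=\max\{n\in\N\mid \{0,\dots,n\}\subset S_i\},
\]
with $m_i=0$ if $0\notin S_i$. Given $n$, cofinality gives $i_0$ with $\{0,\dots,n\}\subset S_{i_0}$. Monotonicity then gives $m_j\ge n$ for all $j\ge i_0$, so $\varepsilon_i\to0$.

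\textbf{Item (ii).} Fix a basis $(V_\beta)_{\beta<\kappa}$ of the topology, and let $\mathcal N=\{\beta\mid V_\beta\text{ is an open neighbourhood of }e\}$.
\begin{itemize}
\item Decreasing neighbourhoods: put $U_i=\bigcap_{\beta\in S_i\cap\mathcal N}V_\beta$, with $U_i=G$ if the intersection is empty. Each $U_i$ is a finite intersection of open neighbourhoods of $e$, so $U_i\ll G$, and $U_i$ decreases as $i$ increases. Given $U\ll G$, pick a basic $V_\beta\subset U$ with $e\in V_\beta$, and $i$ with $\beta\in S_i$; then $U_i\subset U$.
\item Increasing compacts: local compactness lets us restrict to the $\beta$ with $\overline{V_\beta}$ compact, and these still form a basis. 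Put $K_i=\bigcup_{\beta\in S_i}\overline{V_\beta}$ over such $\beta$. This is a finite union of compacts, and it increases in $i$. Given $K\Subset G$, cover $K$ by finitely many relatively compact basic sets $V_{\beta_1},\dots,V_{\beta_n}$, then choose $i$ with $\{\beta_1,\dots,\beta_n\}\subset S_i$, which gives $K\subset K_i$.
\end{itemize}
If the basis is only of cardinality less than $\kappa$, the same argument applies after padding the index set up to $\kappa$.

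\textbf{Item (iii).} Let $X\subset M$ be a generating set with $|X|\le\kappa$, and let $A$ be the self-adjoint $(\mathbb Q+\ri\mathbb Q)$-$*$-subalgebra generated by $X\cup X^*$.
\begin{itemize}
\item Cardinality: $A$ consists of rational linear combinations of finite words, so $|A|\le\kappa$ because $\kappa$ is infinite.
\item Density: the complex $*$-algebra generated by $X$ is the $\C$-span of $A$, and $A$ is norm dense in that span. By the bicommutant theorem, that span is $*$-strongly dense in $M$, hence so is $A$.
\end{itemize}
Enumerate $A=\{a_\beta\}_{\beta<\kappa}$, with repetitions allowed, and set $F_i=\{a_\beta\mid \beta\in S_i\}$. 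Then $(F_i)_{i\in I}$ is an increasing net of finite subsets. Its union is all of $A$, since every singleton $\{\beta\}$ lies in some $S_i$.

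The only point needing care is the bookkeeping: arranging that the index net is monotone, and handling cardinality "less than $\kappa$" versus "at most $\kappa$" by padding. The topological and algebraic content is routine.
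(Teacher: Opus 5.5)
The paper states this proposition without proof, so there is no argument of the authors to compare with. Your proof is correct and is the natural bookkeeping argument. Two small points each deserve a sentence.

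First, building the monotonicity $i\leq j\Rightarrow S_i\subset S_j$ into the hypothesis is necessary, not just convenient. Suppose ``cofinal'' is read only as ``every finite subset of $\kappa$ lies in some $S_i$,'' which is the sense in which the statement itself uses the word in (ii). Then item (i) can fail. Take $I=\N\cup\{\infty\}$ with maximum $\infty$, $S_n=\{0,\dots,n\}$ and $S_\infty=\emptyset$: a net over a directed set with a largest element converges to its value there, so no positive net converges to $0$. Your reading is the right one. It is also equivalent to the reading ``$S_j$ eventually contains every finite set,'' because then $T_i=\bigcap_{j\geq i}S_j$ is finite, increasing and cofinal.

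Second, in (iii) make sure the unit (the empty word) belongs to $A$. The bicommutant theorem gives density of a $*$-algebra only when it is nondegenerate. Without $1$, the algebra generated by $X=\{(1,0)\}\subset\C^2$ is not dense in $\C^2=\rW^*(X)$.

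For (i) you could also simply take $\varepsilon_i=(1+|S_i|)^{-1}$, which avoids the injection $\N\hookrightarrow\kappa$.
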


\subsection{The analytic bicentralizer of a trace-preserving action}
\leavevmode\par

Let $G$ be a locally compact group,
let $(M,\tau)\subset(N,\tau)$ be an inclusion of tracial von Neumann algebras,
and let $\alpha:G\curvearrowright(M,\tau)$ be a continuous trace-preserving
action. No extension of $\alpha$ to $N$ is assumed.

The \emph{analytic bicentralizer} $\rB(M\subset N,\alpha)$ is the set of all
$a\in N$ such that, for every $\varepsilon>0$, there exist $K\Subset G$ and
$\delta>0$ satisfying
\begin{equation}\label{local relative action bicentralizer}
\sup_{g\in K}\|\alpha_g(u)-u\|_2<\delta
\quad\Longrightarrow\quad
\|ua-au\|_2<\varepsilon
\qquad(u\in\cU(M)).
\end{equation}
It is a von Neumann subalgebra of $N$. When $M=N$, this is the
bicentralizer $\rB(M,\alpha)$.

\begin{proposition}\label{definition relative action bicentralizer}
Let $(x_i)_{i\in I}$ be a bounded net in $M$ such that
\[
\lim_i\sup_{g\in K}\|\alpha_g(x_i)-x_i\|_2=0
\]
for every $K\Subset G$. Then
\[
\lim_i\|x_i a-a x_i\|_2=0
\]
for every $a\in\rB(M\subset N,\alpha)$.
\end{proposition}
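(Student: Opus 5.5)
The plan is to pass from the unitary condition in the definition of $\rB(M\subset N,\alpha)$ to arbitrary bounded nets. The passage goes through polar decomposition, or through writing elements as combinations of unitaries, while keeping approximate invariance under control. Fix $a\in\rB(M\subset N,\alpha)$ and $\varepsilon>0$, and take $K\Subset G$ and $\delta>0$ as in \eqref{local relative action bicentralizer}. We may assume $\sup_i\|x_i\|\leq 1$.

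\textbf{Reduction to self-adjoint nets.} Since the hypothesis and conclusion are linear and stable under adjoints, we reduce to the self-adjoint case. Indeed, $\alpha_g(x_i^*)-x_i^*=(\alpha_g(x_i)-x_i)^*$ and $\|y^*\|_2=\|y\|_2$ in a tracial algebra. Also $\|x_i^*a-ax_i^*\|_2=\|a^*x_i-x_ia^*\|_2$, and $a^*\in\rB(M\subset N,\alpha)$ because the defining condition is stable under $a\mapsto a^*$: take adjoints in $ua-au$ and note that $u\mapsto u^*$ preserves the hypothesis. So it suffices to treat $x_i=x_i^*$ with $\|x_i\|\leq 1$.

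\textbf{Passing to unitaries.} For self-adjoint $x$ with $\|x\|\leq 1$, the natural step is to write
\[
x=\tfrac12(u+u^*),\qquad u=x+\ri\sqrt{1-x^2}.
\]
The difficulty is that $x\mapsto\sqrt{1-x^2}$ is not Lipschitz in $\|\cdot\|_2$ near the spectrum points $\pm1$. To avoid this, I use $u=e^{\ri x}$ instead, or more generally functional calculus $f(x)$ by a Lipschitz function. For a Lipschitz $f$, approximation by polynomials together with trace estimates gives
\[
\|f(\alpha_g(x))-f(x)\|_2\leq C_f\|\alpha_g(x)-x\|_2,
\]
since $\alpha_g$ commutes with functional calculus. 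The cleanest route is to approximate $x$ uniformly by a linear combination of unitaries $e^{\ri s x}$. By Fourier analysis (Stone–Weierstrass) on $[-1,1]$, choose a trigonometric polynomial $p(t)=\sum_k c_k e^{\ri s_k t}$ with $|p(t)-t|<\eta$ for $t\in[-1,1]$. Then $\|x_i-\sum_k c_k e^{\ri s_k x_i}\|\leq\eta$ uniformly in $i$.

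\textbf{Applying the definition to each unitary.} Each $u_{k,i}=e^{\ri s_k x_i}\in\cU(M)$ satisfies $\alpha_g(u_{k,i})=e^{\ri s_k\alpha_g(x_i)}$, and the Duhamel estimate gives
\[
\|e^{\ri sy}-e^{\ri sz}\|_2\leq |s|\,\|y-z\|_2
\]
for self-adjoint $y,z$. Hence $\sup_{g\in K}\|\alpha_g(u_{k,i})-u_{k,i}\|_2\leq|s_k|\sup_{g\in K}\|\alpha_g(x_i)-x_i\|_2<\delta$ for $i$ large, and so $\|u_{k,i}a-au_{k,i}\|_2<\varepsilon$. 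Combining,
\[
\|x_ia-ax_i\|_2\leq 2\eta\|a\|+\sum_k|c_k|\,\varepsilon
\]
eventually. Here $\eta$ and the $c_k,s_k$ are chosen first, and only afterwards is $\varepsilon$ (hence $K,\delta$) chosen. Since $\varepsilon$ is arbitrary relative to the fixed $\sum_k|c_k|$, and then $\eta$ is arbitrary, we get $\lim_i\|x_ia-ax_i\|_2=0$.

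The only delicate point is this order of quantifiers together with the $\|\cdot\|_2$-Lipschitz estimate for the exponential. The latter follows from $e^{\ri sy}-e^{\ri sz}=\int_0^1 e^{\ri s\theta y}\,\ri s(y-z)\,e^{\ri s(1-\theta)z}\,d\theta$ and the bimodule inequality $\|vbw\|_2\leq\|b\|_2$ for unitaries $v,w$.
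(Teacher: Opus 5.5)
Your proof is correct. It uses the same underlying idea as the paper: reduce to nets of unitaries, where the conclusion is just the definition. The implementation differs, though.

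\textbf{The paper's route.} The paper observes that the bounded nets $(x_i)_i$ in $M$ with $\lim_i\sup_{g\in K}\|\alpha_g(x_i)-x_i\|_2=0$ for all $K\Subset G$ form a unital $\rC^*$-subalgebra $\mathcal A\subset\ell^\infty(I,M)$.
\begin{itemize}
\item Closure under products follows from $\|\alpha_g(xy)-xy\|_2\leq\|\alpha_g(x)-x\|_2\|y\|+\|x\|\|\alpha_g(y)-y\|_2$.
\item Closure under uniform operator-norm limits is immediate.
\end{itemize}
The unitaries of $\mathcal A$ are exactly the asymptotically invariant nets of unitaries of $M$. Every element of a unital $\rC^*$-algebra is a linear combination of unitaries, so the result follows by linearity.

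In that framework, your worry that $x\mapsto\sqrt{1-x^2}$ is not $\|\cdot\|_2$-Lipschitz never arises. The net $(\sqrt{1-x_i^2})_i$ lies in $\mathcal A$ because it is a norm limit, uniform in $i$, of polynomials in $(x_i)_i$. Hence the exact decomposition $x=\tfrac12(u+u^*)$ with $u=x+\ri\sqrt{1-x^2}$ works with no Lipschitz estimate at all.

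\textbf{Your route.} You replace this exact decomposition by an approximate one, $x_i\approx\sum_kc_ke^{\ri s_kx_i}$ uniformly in $i$. You then control each exponential quantitatively via the Duhamel bound $\|e^{\ri sy}-e^{\ri sz}\|_2\leq|s|\,\|y-z\|_2$. This gives explicit moduli, at the price of the $\eta$/$\varepsilon$ bookkeeping. You handle that bookkeeping correctly in your last paragraph, although your opening sentence fixes $\varepsilon,K,\delta$ before $\eta$, contrary to the order you later state.

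\textbf{Minor remarks.}
\begin{itemize}
\item For the self-adjoint reduction it suffices to split $x_i$ into real and imaginary parts. Both parts are again bounded asymptotically invariant nets, so the observation $a^*\in\rB(M\subset N,\alpha)$ is not needed.
\item Your side claim $\|f(\alpha_g(x))-f(x)\|_2\leq C_f\|\alpha_g(x)-x\|_2$ for Lipschitz $f$ is true. However, it is an operator-Lipschitz statement in $\|\cdot\|_2$ and does not follow from polynomial approximation alone, so it is good that your argument does not use it.
\end{itemize}
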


\begin{proof}
The assertion follows from the definition for nets of unitaries. The
bounded nets in $M$ satisfying the compact-uniform asymptotic invariance
condition form a unital $\rC^*$-algebra. Since a unital $\rC^*$-algebra is
linearly spanned by its unitaries, the result follows.
\end{proof}

\begin{proposition}\label{conditional expectation relative action bicentralizer}
Let
\[
\rE_{\rB(M\subset N,\alpha)}:N\longrightarrow
\rB(M\subset N,\alpha)
\]
be the trace-preserving conditional expectation. For every $x\in N$, the
element $\rE_{\rB(M\subset N,\alpha)}(x)$ is the unique element minimizing
$\|\cdot\|_2$ in
\[
\bigcap_{\substack{K\Subset G\\ \delta>0}}
\conv\left\{uxu^*\ \middle|\ u\in\cU(M),\
\sup_{g\in K}\|\alpha_g(u)-u\|_2<\delta\right\}.
\]
\end{proposition}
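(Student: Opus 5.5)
The plan is to first identify the candidate minimizer abstractly, and then show it coincides with $\rE_{\rB(M\subset N,\alpha)}(x)$. For $K\Subset G$ and $\delta>0$, write
\[
V_{K,\delta}=\left\{u\in\cU(M)\ \middle|\ \sup_{g\in K}\|\alpha_g(u)-u\|_2<\delta\right\}.
\]
Let $C_{K,\delta}$ be the $\|\cdot\|_2$-closed (equivalently weakly closed) convex hull of $\{uxu^*\mid u\in V_{K,\delta}\}$; I read the $\conv$ in the statement as this closed convex hull. These are bounded, weakly compact, convex sets, and they decrease as $K$ grows and $\delta$ shrinks. Their intersection $C$ is therefore a nonempty weakly compact convex set. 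In the Hilbert space $\rL^2(N)$ it has a unique element $y_0$ of minimal $\|\cdot\|_2$-norm. It remains to prove $y_0=\rE_{\rB(M\subset N,\alpha)}(x)$, which I split into two steps.

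\textbf{Step 1: every $y\in C$ has $\rE_{\rB}(y)=\rE_{\rB}(x)$, where $\rB=\rB(M\subset N,\alpha)$.} Fix $a\in\rB$ and $\varepsilon>0$, and choose $K,\delta$ as in \eqref{local relative action bicentralizer}. For $u\in V_{K,\delta}$ we get
\[
|\tau(a\,uxu^*)-\tau(ax)|=|\tau((u^*au-a)x)|\leq\varepsilon\|x\|.
\]
This estimate passes to convex combinations and to weak limits, so it holds for every element of $C_{K,\delta}\supset C$. Letting $\varepsilon\to0$ gives $\tau(ay)=\tau(ax)$ for all $a\in\rB$, that is, $\rE_{\rB}(y)=\rE_{\rB}(x)$. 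Consequently
\[
\|y\|_2\geq\|\rE_{\rB}(x)\|_2,
\]
with equality if and only if $y=\rE_{\rB}(x)$. So it suffices to show that $y_0\in\rB$, because then $y_0=\rE_{\rB}(y_0)=\rE_{\rB}(x)$ and uniqueness follows.

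\textbf{Step 2: $y_0\in\rB$.} This is the main obstacle, and I would handle it in the ultrapower, with a cofinal ultrafilter $\omega$ on a large enough index set as in Proposition~\ref{ultrapower cardinal bookkeeping}. There $M^{\omega,\alpha}$ consists of classes of asymptotically invariant nets. I expect a routine argument, using Proposition~\ref{definition relative action bicentralizer} together with a contradiction argument indexed by pairs $(K,\delta)$ and $\varepsilon$, to show that $a\in\rB$ if and only if $a$ commutes with $\cU(M^{\omega,\alpha})$ inside $N^\omega$.

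Take $v=(v_i)^\omega\in\cU(M^{\omega,\alpha})$ with each $v_i$ unitary; such a representative exists by lifting unitaries via polar decomposition. I claim that
\[
\rE^\omega(vy_0v^*)=\text{weak-}\lim_{i\to\omega}v_iy_0v_i^*
\]
lies in $C$. Indeed, fix $(K,\delta)$. We have $y_0\in C_{K,\delta/2}$, which is approximated in $\|\cdot\|_2$ by convex combinations of elements $wxw^*$ with $w\in V_{K,\delta/2}$. For $\omega$-almost every $i$ we have $v_iw\in V_{K,\delta}$, and the conjugation $z\mapsto v_izv_i^*$ is $\|\cdot\|_2$-isometric. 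Hence $v_iy_0v_i^*$ is $\varepsilon$-close to $C_{K,\delta}$ for such $i$. Taking weak limits, and using that $C_{K,\delta}$ is weakly closed and convex, gives $\rE^\omega(vy_0v^*)\in C_{K,\delta}$.

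Now set $z=\rE^\omega(vy_0v^*)\in C$. Then $\|z\|_2\leq\|vy_0v^*\|_2=\|y_0\|_2$. Since $\frac12(y_0+z)\in C$, strict convexity of the norm together with minimality of $y_0$ forces $z=y_0$. Then $\|\rE^\omega(vy_0v^*)\|_2=\|vy_0v^*\|_2$, so $vy_0v^*\in N$ and $vy_0v^*=y_0$. Thus $y_0$ commutes with $\cU(M^{\omega,\alpha})$, hence $y_0\in\rB$ by the ultrapower characterization above, which completes the proof.
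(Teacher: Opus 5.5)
Your proof is correct and follows the paper's argument. Minimality of the norm forces the minimizer to be fixed under conjugation by asymptotically invariant unitaries, via $V_{K,\delta/2}V_{K,\delta/2}\subset V_{K,\delta}$, which puts it in $\rB(M\subset N,\alpha)$, and the trace estimate then identifies it with $\rE_{\rB(M\subset N,\alpha)}(x)$. The paper runs the invariance step directly with nets and the negation of \eqref{local relative action bicentralizer}, while you route it through $\rE^\omega$ and the characterization $\rB(M\subset N,\alpha)=(M^{\omega,\alpha})'\cap N$; that characterization is proved without this proposition, so there is no circularity.
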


\begin{proof}
For $K\Subset G$ and $\delta>0$, denote the weak$^*$-closed convex hull in the
statement by $C_{K,\delta}(x)$ and put
\[
C(x)=\bigcap_{K\Subset G,\ \delta>0}C_{K,\delta}(x).
\]
This is a nonempty closed convex subset of $\rL^2(N,\tau)$, and hence has a
unique element $y$ minimizing $\|\cdot\|_2$.

Let $(v_i)_i$ be a net in $\cU(M)$ which is asymptotically
$\alpha$-invariant uniformly on compact subsets. For $u,v\in\cU(M)$,
\[
\sup_{g\in K}\|\alpha_g(vu)-vu\|_2
\leq
\sup_{g\in K}\|\alpha_g(u)-u\|_2
+\sup_{g\in K}\|\alpha_g(v)-v\|_2.
\]
It follows that every weak $\rL^2$-accumulation point of $(v_i yv_i^*)_i$
belongs to $C(x)$ and has the same norm as $y$. Thus
\[
\|v_i yv_i^*-y\|_2\longrightarrow0.
\]
The negation of \eqref{local relative action bicentralizer} would produce a
net for which this fails, so $y\in\rB(M\subset N,\alpha)$.

For $z\in\rB(M\subset N,\alpha)$ and an almost invariant unitary
$u\in\cU(M)$, traciality gives
\[
|\tau(z^*uxu^*)-\tau(z^*x)|
\leq\|uz-zu\|_2\,\|x\|_2.
\]
Using \eqref{local relative action bicentralizer}, convexity, and weak$^*$
continuity, we obtain
\[
\tau(z^*y)=\tau(z^*x)
\qquad(z\in\rB(M\subset N,\alpha)).
\]
Hence $y$ is the orthogonal projection of $x$ onto
$\rL^2(\rB(M\subset N,\alpha),\tau)$, proving the assertion.
\end{proof}

\subsection{The ultrapower interpretation of the bicentralizer}
We fix a cofinal ultrafilter
$\omega$ on a sufficiently large directed set. The inclusion $M\subset N$
induces $M^\omega\subset N^\omega$.

\begin{proposition}\label{relative bicentralizer with ultrapower}
We have
\[
\rB(M\subset N,\alpha)=(M^{\omega,\alpha})'\cap N
\]
and
\[
(M^{\omega,\alpha})'\cap N^\omega
\subset\rB(M\subset N,\alpha)^\omega.
\]
\end{proposition}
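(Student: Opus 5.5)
We will prove the two inclusions of the first identity separately, and then obtain the second assertion from a diagonal argument of the same kind.

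\emph{First identity, inclusion $\subset$.} Let $a\in\rB(M\subset N,\alpha)$ and $x=(x_i)^\omega\in M^{\omega,\alpha}$. The net $(x_i)$ is bounded and satisfies $\lim_{i\to\omega}\sup_{g\in K}\|\alpha_g(x_i)-x_i\|_2=0$ for every $K\Subset G$. Only an $\omega$-limit is available, not a limit along the directed set, so we run the argument of Proposition~\ref{definition relative action bicentralizer} along $\omega$. Write $x$ as a combination of four unitaries in $M^{\omega,\alpha}$; a unitary of $M^\omega$ lifts to a net of unitaries of $M$. For a unitary lift $(u_i)$, and for given $\varepsilon>0$ with its associated $K,\delta$ from \eqref{local relative action bicentralizer}, the set of $i$ with $\sup_{g\in K}\|\alpha_g(u_i)-u_i\|_2<\delta$ lies in $\omega$. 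Hence $\lim_{i\to\omega}\|u_ia-au_i\|_2\le\varepsilon$ for every $\varepsilon>0$, so $ua=au$ in $N^\omega$. Therefore $a\in (M^{\omega,\alpha})'\cap N$.

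\emph{First identity, inclusion $\supset$.} Suppose $a\in N$ is not in $\rB(M\subset N,\alpha)$. Then there is $\varepsilon>0$ such that for every $K\Subset G$ and $\delta>0$ some $u\in\cU(M)$ satisfies $\sup_{g\in K}\|\alpha_g(u)-u\|_2<\delta$ but $\|ua-au\|_2\ge\varepsilon$. Using Proposition~\ref{ultrapower cardinal bookkeeping}, take a cofinal increasing net $(K_i)_{i\in I}$ of compact sets and a net $\varepsilon_i\to0$, and choose $u_i$ with $\sup_{g\in K_i}\|\alpha_g(u_i)-u_i\|_2<\varepsilon_i$ and $\|u_ia-au_i\|_2\ge\varepsilon$. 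Fix $K\Subset G$; then $K\subset K_j$ for some $j$, and $\{i\ge j\}\in\omega$ by cofinality of $\omega$. It follows that $u=(u_i)^\omega\in M^{\omega,\alpha}$, while $\|ua-au\|_2\ge\varepsilon$, a contradiction.

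\emph{Second assertion.} Let $z=(z_i)^\omega\in (M^{\omega,\alpha})'\cap N^\omega$, and set $E=\rE_{\rB(M\subset N,\alpha)}$. The goal is $\lim_{i\to\omega}\|z_i-E(z_i)\|_2=0$. We use Proposition~\ref{conditional expectation relative action bicentralizer}: $E(z_i)$ is the minimal-norm point of $C(z_i)$.

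The plan is to argue by contradiction. If the limit is at least $\eta>0$, then for each pair $(K,\delta)$ the quantity $\|z_i-E(z_i)\|_2$ stays bounded below along $\omega$. We need a uniform statement of the type "an element of $N$ far from $\rB$ is moved by some almost invariant unitary", with the amount of movement depending only on $\eta$ and $\|z_i\|$.

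To get this, note that $y_i=z_i-E(z_i)$ has $E(y_i)=0$, so $0$ is the minimal point of $C(y_i)$, and $C_{K,\delta}(y_i)$ contains points of small norm. If every almost invariant $u$ nearly commuted with $y_i$, say $\|uy_iu^*-y_i\|_2<\eta/2$, then every point of $C_{K,\delta}(y_i)$ would lie within $\eta/2$ of $y_i$, which has norm at least $\eta$; this contradicts the existence of points of small norm. However, we need this uniformly in $(K,\delta)$ first and then in $i$. The minimal point is only attained after intersecting over all $(K,\delta)$, so for a fixed pair the infimum of norms over $C_{K,\delta}(y_i)$ may not be small, and this rate may depend on $i$.

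This is the main obstacle. We resolve it by a diagonal choice. For each $i$, pick $(K'_i,\delta'_i)$, with $K'_i\supset K_i$ and $\delta'_i<\varepsilon_i$, such that $C_{K'_i,\delta'_i}(y_i)$ contains a point of norm less than $\eta/4$. Then pick $u_i$ with $\sup_{g\in K'_i}\|\alpha_g(u_i)-u_i\|_2<\delta'_i$ and $\|u_iy_iu_i^*-y_i\|_2\ge\eta/2$. As before, $u=(u_i)^\omega\in M^{\omega,\alpha}$.

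It remains to obtain a contradiction from $u$. By the first part, $E(z_i)\in\rB$ commutes with $u$ in the limit, since the first part applies to elements of $\rB$ and the $E(z_i)$ are uniformly bounded; to make this uniform we approximate by finitely many elements. Since $z$ commutes with $u$, so does $y=(y_i)^\omega$, which contradicts $\|u_iy_iu_i^*-y_i\|_2\ge\eta/2$. If this uniformity fails, we instead prove the second assertion by showing directly that $\lim_{i\to\omega}E(z_i)$ equals the minimal point of the analogous convex set for $z$ in $N^\omega$.
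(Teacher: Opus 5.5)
Your proof of the first identity is correct and is essentially the paper's argument. The gap is in the second assertion, at the last step. Write $E=\rE_{\rB(M\subset N,\alpha)}$ as you do. Once you have $u=(u_i)^\omega\in M^{\omega,\alpha}$ with $\|u_iy_iu_i^*-y_i\|_2\geq\eta/2$, the relation $[u,z]=0$ only turns this into $\lim_{i\to\omega}\|[u_i,E(z_i)]\|_2\geq\eta/2$. That is a contradiction only if $(E(z_i))^\omega$ commutes with $u$, and your justification does not give this:
\begin{itemize}
\item The first part shows that $M^{\omega,\alpha}$ commutes with \emph{constant} elements of $\rB(M\subset N,\alpha)$, not with nets in it.
\item A bounded net in $\rB(M\subset N,\alpha)$ cannot be approximated by finitely many elements, because the unit ball is not $\|\cdot\|_2$-compact.
\end{itemize}
In fact $\rB(M\subset N,\alpha)^\omega$ need not commute with $M^{\omega,\alpha}$. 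Let $T$ be a mixing, non-strongly-ergodic automorphism of $(X,\mu)$, e.g.\ a Bernoulli shift. Take $M=\rL^\infty(X)$ with the induced $\Z$-action and $N=M\rtimes_\alpha\Z$ with implementing unitary $w$. Since $\|[u,\sum_k a_kw^k]\|_2^2=\sum_k\|a_k(u-\alpha_k(u))\|_2^2$, one gets $\rB(M\subset N,\alpha)=N$. Now take almost invariant sets $A_i$ of measure $1/2$ (Rokhlin lemma) and, by mixing, integers $k_i$ with $\mu(T^{k_i}A_i\triangle A_i)\geq 1/4$. Then $\|[1-2\cdot 1_{A_i},w^{k_i}]\|_2\geq1$, so this element of $\rB(M\subset N,\alpha)^\omega$ does not commute with this element of $M^{\omega,\alpha}$. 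Your closing fallback sentence is not an argument.

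The obstacle you single out is also illusory. Since $0=E(y_i)\in C(y_i)\subset C_{K,\delta}(y_i)$ for every pair $(K,\delta)$, there is no uniformity problem in $(K,\delta)$. The real problem is that passing from $z_i$ to $y_i$ does not commute with conjugation, because $uE(z_i)u^*\neq E(z_i)$.

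The repair, which is the paper's proof, is to run your convexity argument on $z_i$ itself. Put $\Delta_i=\sup\{\|vz_iv^*-z_i\|_2\mid v\in\cU(M),\ \sup_{g\in K_i}\|\alpha_g(v)-v\|_2<\varepsilon_i\}$. The set $C_{K_i,\varepsilon_i}(z_i)$ contains $E(z_i)$. It also lies in the closed $\|\cdot\|_2$-ball of radius $\Delta_i$ about $z_i$, which is convex and weak$^*$-closed on bounded sets. Hence $\|z_i-E(z_i)\|_2\leq\Delta_i$. Finally $\lim_{i\to\omega}\Delta_i=0$: otherwise almost maximizing unitaries $v_i$ define $(v_i)^\omega\in M^{\omega,\alpha}$ not commuting with $z$. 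No commutation property of $E(z_i)$ is needed.
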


\begin{proof}
Put $P=M^{\omega,\alpha}$. If
$a\in\rB(M\subset N,\alpha)$ and
$V=(v_i)^\omega\in\cU(P)$, compact-uniform asymptotic invariance and
\eqref{local relative action bicentralizer} give $Va=aV$. Conversely, if
$a\notin\rB(M\subset N,\alpha)$, choose $\eta>0$, a compact exhaustion
$(K_i)_i$, positive numbers $\varepsilon_i\to0$, and $v_i\in\cU(M)$ such
that
\[
\sup_{g\in K_i}\|\alpha_g(v_i)-v_i\|_2<\varepsilon_i,
\qquad
\|v_i a-a v_i\|_2\geq\eta.
\]
Then $(v_i)^\omega\in P$ does not commute with $a$, proving the equality.

Let $X=(x_i)^\omega\in P'\cap N^\omega$ and set
\[
\Delta_i=\sup\left\{\|x_i-vx_i v^*\|_2\ \middle|\
v\in\cU(M),\
\sup_{g\in K_i}\|\alpha_g(v)-v\|_2<\varepsilon_i\right\}.
\]
A diagonal argument shows that $\Delta_i\to0$ along $\omega$. Otherwise
almost maximizing unitaries would define an element of $P$ not commuting
with $X$. Proposition~\ref{conditional expectation relative action bicentralizer}
gives
\[
\|x_i-\rE_{\rB(M\subset N,\alpha)}(x_i)\|_2\leq\Delta_i.
\]
Therefore $X\in\rB(M\subset N,\alpha)^\omega$.
\end{proof}

We recall the standard corner identity used below.

\begin{proposition}\label{corner relative commutant}
Let $P\subset Q$ be an inclusion of von Neumann algebras and let $e\in P$
be a projection. Then
\[
(P'\cap Q)e=(ePe)'\cap eQe.
\]
\end{proposition}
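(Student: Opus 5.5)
We prove the two inclusions $(P'\cap Q)e\subset(ePe)'\cap eQe$ and $(ePe)'\cap eQe\subset(P'\cap Q)e$ separately, using only elementary von Neumann algebra facts: the bicommutant theorem and the central support of a projection.

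\emph{First inclusion.} Let $x\in P'\cap Q$. Since $e\in P$, the element $x$ commutes with $e$, so $xe=exe\in eQe$. For $y\in P$ we then compute
\[
(xe)(eye)=x\,eye=eye\,x=(eye)(ex)=(eye)(xe).
\]
Hence $xe\in(ePe)'\cap eQe$.

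\emph{Second inclusion.} Let $z\in(ePe)'\cap eQe$. We construct $x\in P'\cap Q$ with $xe=z$ in the following steps.

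\begin{enumerate}[\rm (1)]
\item Let $c\in\cZ(P)$ be the central support of $e$ in $P$. The standard fact we use is that $c$ is the projection onto $[Pe H]$, where $H$ is a Hilbert space on which $Q$ acts.
\item Define $x_0$ on the dense subspace spanned by the vectors $ye\xi$ ($y\in P$, $\xi\in H$) by
\[
x_0\Bigl(\sum_k y_k e\xi_k\Bigr)=\sum_k y_k z\xi_k .
\]
\item Check that $x_0$ is well defined and bounded, with $\|x_0\|\leq\|z\|$. Since $z\in(ePe)'$ acts on $eH$ and commutes with $ePe$, this is the usual computation
\[
\Bigl\|\sum_k y_kz\xi_k\Bigr\|^2=\sum_{k,l}\langle z^*z\,e y_l^*y_k e\,\xi_k,\xi_l\rangle\leq\|z\|^2\Bigl\|\sum_k y_ke\xi_k\Bigr\|^2 .
\]
The inequality uses the fact that $z^*z$ commutes with the positive matrix $(ey_l^*y_ke)_{k,l}$ over $ePe$, so it passes through the amplification.
\item Extend $x_0$ by $0$ on $(1-c)H$ to obtain a bounded operator $x$ on $H$. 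By construction $x$ commutes with $P$, and $xe=z$.
\end{enumerate}

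\emph{The main obstacle} is the remaining claim $x\in Q$. Take $w\in Q'$. Then $w$ commutes with $P$, and $w$ commutes with $z$ because $z\in Q$. We also have $w\in P'$, so $w$ commutes with $c$. Therefore
\[
wx(ye\xi)=w\,yz\xi=y\,zw\xi=x(ye\,w\xi)=xw(ye\xi).
\]
Here we used $z=ze$, so that $zw\xi=zew\xi$, and $ew\xi=we\xi$. On $(1-c)H$ both $wx$ and $xw$ vanish, since $w$ commutes with $c$. Hence $x\in Q''=Q$. Combined with step (4), this gives $x\in P'\cap Q$ and $z=xe\in(P'\cap Q)e$, completing the second inclusion.
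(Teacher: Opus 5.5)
Your proof is correct. The paper does not actually prove this proposition; it only recalls it as the ``standard corner identity'', so there is no argument in the text to compare with.

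What you wrote is the classical proof of the commutation theorem for reduced algebras, $(eRe)'=R'e$ on $eH$ for a projection $e\in R$. You extend $z$ to $[PeH]$ via $\sum_k y_ke\xi_k\mapsto\sum_k y_kz\xi_k$, bound this map using that $z^*z$ commutes with the positive matrix $(ey_l^*y_ke)_{l,k}$, and set it to zero on the complement of the central support. You then add the check against $Q'$ showing that the extension lies in $Q$. All the steps are sound, including $xe=z$, $x\in P'$, and the commutation with $w\in Q'$ on both $cH$ and $(1-c)H$.

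An equivalent, shorter route applies that reduction theorem directly. Put $R=(P\cup Q')''$, so that $e\in R$ and $R'=P'\cap Q''=P'\cap Q$. Since $Q'$ commutes with $P$ and with $e$, the reduced algebra $eRe$ is generated on $eH$ by $ePe$ and $Q'e$. The reduction theorem for $e\in Q$ gives $eQe=(Q'e)'$ on $eH$. Hence $(ePe)'\cap eQe=(eRe)'=R'e=(P'\cap Q)e$.

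Your version is self-contained: it effectively reproves the reduction theorem for the pair $P\subset Q$ in one pass. The second version is shorter, at the cost of quoting the reduction theorem twice.
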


\begin{proposition}\label{corner relative action bicentralizer}
Let $e\in M^\alpha$ be a nonzero projection and let
$\alpha^e:G\curvearrowright eMe$ be the corner action. Then
\[
\rB(M\subset N,\alpha)e
=\rB(eMe\subset eNe,\alpha^e).
\]
\end{proposition}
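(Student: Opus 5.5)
We will use the ultrapower description of the bicentralizer, Proposition~\ref{relative bicentralizer with ultrapower}, to reduce the statement to a relative commutant computation, and then apply the corner identity of Proposition~\ref{corner relative commutant}. Put $P=M^{\omega,\alpha}$. Since $e\in M^\alpha$, the constant net $(e)^\omega$ is exactly fixed by $\alpha$, so $e\in P$. Proposition~\ref{corner relative commutant} applied to $P\subset N^\omega$, after intersecting with $N$, gives
\[
\rB(M\subset N,\alpha)e=(P'\cap N)e=(ePe)'\cap eNe .
\]
We will justify intersecting with $N$ directly: an element of $(ePe)'\cap eNe$ has the form $x=exe$ with $x\in N$, so $x+(1-e)\cdot 0$ commutes with $P$. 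This is the only point where the corner identity is used relative to $N$ rather than $N^\omega$.

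The remaining task is to identify $ePe$ with $(eMe)^{\omega,\alpha^e}$, with the corner computed inside $N^\omega$ and the trace normalized on $eMe$. By Proposition~\ref{relative bicentralizer with ultrapower} applied to the action $\alpha^e$ on $eMe\subset eNe$, this identification will give
\[
\rB(eMe\subset eNe,\alpha^e)=\big((eMe)^{\omega,\alpha^e}\big)'\cap eNe=(ePe)'\cap eNe,
\]
which proves the statement. The two trace normalizations differ by the constant factor $\tau(e)^{-1}$, so they define the same $\|\cdot\|_2$-topology, and $(eMe)^\omega=eM^\omega e$ canonically.

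For the inclusion $ePe\subset (eMe)^{\omega,\alpha^e}$, take $X=(x_i)^\omega\in P$. Then $eXe=(ex_ie)^\omega$, and since $\alpha_g(e)=e$ we get
\[
\|\alpha_g(ex_ie)-ex_ie\|_2\le\|\alpha_g(x_i)-x_i\|_2 ,
\]
uniformly for $g$ in compact sets. For the reverse inclusion, take $Y=(y_i)^\omega$ with $y_i\in eMe$ almost invariant for $\alpha^e$ uniformly on compacts. Then $Y$, viewed in $M^\omega$, lies in $P$, and $Y=eYe$. Both steps are direct.

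We expect the main obstacle to be bookkeeping rather than substance. One has to make sure that the ultrafilter and directed set chosen for $M$ also serve for the corner algebra, which holds because $eMe$ is generated by fewer elements than $M$ (cf.\ Proposition~\ref{ultrapower cardinal bookkeeping}). One also has to check that Proposition~\ref{relative bicentralizer with ultrapower} is insensitive to rescaling the trace. If one prefers to avoid ultrapowers entirely, the same argument can be run with nets, using Proposition~\ref{definition relative action bicentralizer}. Extending an almost invariant unitary $v\in eMe$ to $v+(1-e)\in\cU(M)$ preserves almost invariance, and compressing an almost invariant unitary $u\in\cU(M)$ to $eue$ preserves almost invariance, though $eue$ is only a contraction. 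The $\rC^*$-algebra argument in Proposition~\ref{definition relative action bicentralizer} handles such bounded nets.
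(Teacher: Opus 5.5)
Your identification $ePe=(eMe)^{\omega,\alpha^e}$ and the easy inclusion $\rB(M\subset N,\alpha)e=(P'\cap N)e\subset(ePe)'\cap eNe$ are correct and match the paper. The gap is in the reverse inclusion $(ePe)'\cap eNe\subset(P'\cap N)e$, which is the real content of the proposition.

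\textbf{The justification fails.} You claim that an element $x=exe\in(ePe)'\cap eNe$ commutes with $P$. This is false unless $e$ is central in $P$. If $v\in P$ is a partial isometry with $v^*v=e$ and $vv^*\leq1-e$, then $xv=xe(1-e)ve=0$, while $v^*vx=x$, so $vx\neq0$ whenever $x\neq0$. Concretely, take $G$ trivial, $M=N=\mathbb M_2(\C)$ and $e=e_{11}$. Then $e\in(ePe)'\cap eNe$ but $e\notin P'\cap N=\C1$.

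\textbf{Why the corner identity cannot simply be intersected with $N$.} Proposition~\ref{corner relative commutant} rests on an extension $\sum_k v_kxv_k^*$ built from partial isometries $v_k\in P$ with $v_k^*v_k\leq e$. That extension lies in $N^\omega$, not in $N$.

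\textbf{The net alternative has the same problem.} Compressing an almost invariant $u$ to $eue$ only shows that $x$ asymptotically commutes with $eue$. It never produces an $a\in\rB(M\subset N,\alpha)$ with $ae=x$. In the example above, $x=e$ is not itself in the bicentralizer; only the extension $1$ is.

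\textbf{The missing ingredient} is the second assertion of Proposition~\ref{relative bicentralizer with ultrapower}: $(M^{\omega,\alpha})'\cap N^\omega\subset\rB(M\subset N,\alpha)^\omega$. The paper proceeds as follows.
\begin{enumerate}
\item Apply the corner identity inside $N^\omega$ to get $x\in(P'\cap N^\omega)e\subset\rB(M\subset N,\alpha)^\omega e$.
\item Write $x=(a_ie)^\omega$ with $a_i\in\rB(M\subset N,\alpha)$ uniformly bounded.
\item Let $a$ be the weak$^*$ ultralimit of $(a_i)_i$. Then $a\in\rB(M\subset N,\alpha)$, since this algebra is weak$^*$ closed.
\item Conclude $ae=\rE^\omega(x)=x$, using that $x\in N$ and that $\rE^\omega$ is $N$-bimodular.
\end{enumerate}
If you replace your false commutation claim with this step, the rest of your argument goes through.
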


\begin{proof}
The definition gives
$\rB(M\subset N,\alpha)\subset(M^\alpha)'\cap N$. Put
$P=M^{\omega,\alpha}$. Then
\[
(eMe)^{\omega,\alpha^e}=ePe
\]
and Proposition~\ref{relative bicentralizer with ultrapower} gives
\[
\rB(eMe\subset eNe,\alpha^e)=(ePe)'\cap eNe.
\]
This gives one inclusion. Conversely,
Proposition~\ref{corner relative commutant} gives
\[
(ePe)'\cap eN^\omega e
=\bigl(P'\cap N^\omega\bigr)e
\subset\rB(M\subset N,\alpha)^\omega e.
\]
If $x$ belongs to the relative bicentralizer of the corner, write
$x=(a_i e)^\omega$ with $a_i\in\rB(M\subset N,\alpha)$ and take the weak$^*$
ultralimit $a$ of $(a_i)_i$. Then $ae=x$.
\end{proof}

\begin{proposition}\label{equivalence projections relative bicentralizer}
Let $e,f\in M^\alpha$ be projections. Then $e$ and $f$ are equivalent in
$M^{\omega,\alpha}$ if and only if
\[
\rE_{\rB(M\subset N,\alpha)}(e)
=\rE_{\rB(M\subset N,\alpha)}(f).
\]
\end{proposition}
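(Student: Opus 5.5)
Write $P=M^{\omega,\alpha}$ and $B=\rB(M\subset N,\alpha)$, so that $B=P'\cap N$ and $P'\cap N^\omega\subset B^\omega$ by Proposition~\ref{relative bicentralizer with ultrapower}. The projections $e,f\in M^\alpha$ are constant nets, hence lie in $P$. The plan is to compare both conditions with the behaviour of the center-valued data of $e,f$ in $P$. The key observation is that $\mathcal Z(P)\subset P'\cap P\subset P'\cap N^\omega\subset B^\omega$. So the center of $P$ is controlled by $B$.

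\textbf{Forward direction.} Assume $e=v^*v$ and $f=vv^*$ with $v\in P$. For $b\in B$ we have $b\in P'$, and therefore
\[
\tau(be)=\tau^\omega(bv^*v)=\tau^\omega(vbv^*)=\tau^\omega(bvv^*)=\tau(bf),
\]
where the middle equalities use traciality and the commutation $bv=vb$. Thus $\tau(b(e-f))=0$ for all $b\in B$, that is, $\rE_B(e)=\rE_B(f)$.

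\textbf{Converse direction.} Assume $\rE_B(e)=\rE_B(f)$. Since $P$ is a finite von Neumann algebra, $e\sim f$ in $P$ if and only if $\rE_{\mathcal Z(P)}(e)=\rE_{\mathcal Z(P)}(f)$, where $\rE_{\mathcal Z(P)}$ is the center-valued trace. So it suffices to show $\tau^\omega(ze)=\tau^\omega(zf)$ for every $z\in\mathcal Z(P)$. By the observation above we can write $z=(b_i)^\omega$ with $b_i\in B$. Because $e,f$ are constant nets,
\[
\tau^\omega(ze)=\lim_{i\to\omega}\tau(b_ie)=\lim_{i\to\omega}\tau(b_i\rE_B(e)),
\]
and the same formula holds for $f$. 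These two limits coincide by hypothesis, which gives the converse.

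\textbf{Main obstacle.} The delicate step is the inclusion $\mathcal Z(P)\subset B^\omega$, that is, being allowed to use the second statement of Proposition~\ref{relative bicentralizer with ultrapower} for central elements of $P$, which live in $M^\omega\subset N^\omega$. This is fine since $\mathcal Z(P)\subset P'\cap M^\omega\subset P'\cap N^\omega$. One also needs the representatives $b_i$ to be uniformly bounded. I would get this by applying $\rE_B$ entrywise to a bounded representative of $z$: this preserves the class of $z$, because the proof of Proposition~\ref{relative bicentralizer with ultrapower} shows $\|x_i-\rE_B(x_i)\|_2\to0$ along $\omega$.
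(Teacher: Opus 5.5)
Your proof is correct. It uses the same two ingredients as the paper: the center-valued trace criterion in the finite algebra $P=M^{\omega,\alpha}$, and the inclusion $P'\cap N^\omega\subset\rB(M\subset N,\alpha)^\omega$ from Proposition~\ref{relative bicentralizer with ultrapower}. You arrange them differently, though.

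With $B=\rB(M\subset N,\alpha)$, the paper argues as follows.
\begin{itemize}
\item It puts $Q=P'\cap N^\omega$ and notes that the restriction of $\rE_Q$ to $P$ is the center-valued trace $T$ of $P$.
\item It then uses $Q\subset B^\omega$ and $B=P'\cap N\subset Q$ to obtain the pointwise identity $T(x)=\rE_B(x)$ for every $x\in M^\alpha$.
\item Both directions of the equivalence follow from this identity at once.
\end{itemize}

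You handle the two directions separately.
\begin{itemize}
\item The forward direction is a direct trace computation that needs only $B\subset P'$.
\item The converse pairs $e-f$ with central elements of $P$ lifted to $B$.
\end{itemize}
This bypasses the identification $\rE_Q|_P=T$, which the paper states without comment and which requires a (standard) averaging argument in $P$.

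What the paper's route buys is a stronger conclusion: for $x\in M^\alpha$, the element $\rE_B(x)$ itself lies in $\mathcal Z(P)$ and equals $T(x)$. Your argument only shows that $x$ and $\rE_B(x)$ have the same pairing with $\mathcal Z(P)$, which is all the equivalence needs.

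Your concern about bounded representatives is unnecessary. An element of $B^\omega\subset N^\omega$ has, by definition, a uniformly bounded representative with entries in $B$.
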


\begin{proof}
Put $P=M^{\omega,\alpha}$ and $Q=P'\cap N^\omega$, and let $T$ be the
center-valued trace of $P$. The restriction to $P$ of the trace-preserving
expectation $\rE_Q:N^\omega\to Q$ is $T$. By
Proposition~\ref{relative bicentralizer with ultrapower},
$Q\subset\rB(M\subset N,\alpha)^\omega$. Hence, for $x\in M^\alpha$,
\[
T(x)=\rE_Q(x)
=\rE_Q(\rE_{\rB(M\subset N,\alpha)}(x))
=\rE_{\rB(M\subset N,\alpha)}(x).
\]
The last equality holds because the constant element on the right belongs
to $Q$. The result follows from the center-valued trace criterion for
equivalence of projections in a finite von Neumann algebra.
\end{proof}

\subsection{The algebraic bicentralizer}
We introduce a generalization of the algebraic bicentralizer from
\cite[Section~6]{Ma25}. Throughout this subsection, $M\subset N$ is an
inclusion of von Neumann algebras, $G$ is a locally compact group, and
$\alpha:G\curvearrowright M$ is a continuous action. We also denote by
$\alpha$ the normal $*$-homomorphism
\[
\alpha:M\longrightarrow \rL^\infty(G)\ovt M,
\qquad
\alpha(x)(g)=\alpha_{g^{-1}}(x).
\]
Let $\lambda,\rho:G\to\cU(\rL^2(G))$ denote the left and right regular representations of $G$ respectively.
With the preceding choice for $\alpha$, we have
\[
\lambda_g\alpha(x)\lambda_g^*=\alpha(\alpha_g(x))
\qquad(g\in G,\ x\in M).
\]

\begin{definition}
The \emph{relative algebraic bicentralizer} of the inclusion
$M\subset N$ is the smallest von Neumann subalgebra $B\subset N$ such that
\[
\alpha(M)'\cap\bigl(\B(\rL^2(G))\ovt N\bigr)
\subset \B(\rL^2(G))\ovt B.
\]
It is denoted by $\rb(M\subset N,\alpha)$. When $M=N$, we write simply
$\rb(M,\alpha)$.
\end{definition}

Put
\[
\cC(M\subset N,\alpha)
=\alpha(M)'\cap\bigl(\B(\rL^2(G))\ovt N\bigr).
\]

\begin{proposition}\label{algebraic bicentralizer span characterization}
We have
\[
\B(\rL^2(G))\ovt\rb(M\subset N,\alpha)
=\overline{\operatorname{span}}^{\mathrm{w}^*}
\left\{\lambda_g X\ \middle|\
g\in G,\ X\in\cC(M\subset N,\alpha)\right\}.
\]
\end{proposition}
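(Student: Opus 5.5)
Let $S$ denote the weak$^*$-closed span on the right-hand side and $B=\rb(M\subset N,\alpha)$. The plan is to show $S$ is a von Neumann algebra of the form $\B(\rL^2(G))\ovt B_0$ for some von Neumann subalgebra $B_0\subset N$, and then identify $B_0$ with $B$ by minimality.

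\textbf{Step 1: $\cC=\cC(M\subset N,\alpha)$ is normalized by $\lambda$.} Conjugation $\Ad(\lambda_g)$ sends $\alpha(M)$ into itself, since $\lambda_g\alpha(x)\lambda_g^*=\alpha(\alpha_g(x))$. Hence it sends the relative commutant $\cC$ into itself. It follows that $S$ is a $*$-algebra. Indeed:
\[
(\lambda_gX)(\lambda_hY)=\lambda_{gh}\,(\lambda_h^*X\lambda_h)Y,
\qquad
(\lambda_gX)^*=\lambda_{g^{-1}}(\lambda_gX^*\lambda_g^*).
\]
Since $1\in\cC$, the space $S$ is a unital weak$^*$-closed $*$-subalgebra of $\B(\rL^2(G))\ovt N$.

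\textbf{Step 2: $S$ contains $\B(\rL^2(G))\ootimes 1$.} The subalgebra $\rL^\infty(G)\ootimes 1$ does not lie in $\cC$ in general, because $\alpha(M)\subset\rL^\infty(G)\ovt M$ need not commute with it. We use the right regular representation instead: $\rho_g\otimes 1$ commutes with $\lambda$ and with $\rL^\infty(G)\ovt M$? That is false too, since $\rho$ acts on the $\rL^\infty(G)$ leg. The correct choice is the commutant of $\lambda\otimes1$ together with the multiplication operators. Since $\alpha(x)(g)=\alpha_{g^{-1}}(x)$, we have $\alpha(M)\subset\rL^\infty(G)\ovt M$, so $\rL^\infty(G)\ootimes 1\subset\cC$: it commutes with $\rL^\infty(G)\ovt M$ because $\rL^\infty(G)$ is abelian.

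Thus $S\supset\{\lambda_g f\otimes1\}$, whose weak$^*$-span is $\B(\rL^2(G))\ootimes1$. This uses the standard fact that $\rL^\infty(G)$ and $\lambda(G)$ generate $\B(\rL^2(G))$, and that the products $\lambda_g f$ already span a dense subspace.

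\textbf{Step 3: tensor splitting and identification.} A von Neumann algebra $S$ with $\B(\rL^2(G))\ootimes 1\subset S\subset\B(\rL^2(G))\ovt N$ is of the form $\B(\rL^2(G))\ovt B_0$ with $B_0\subset N$. This is the standard commutation argument, taking $B_0$ to be the set of $(1\otimes e_{\xi})$-slices via matrix units. Both inclusions then follow.
\begin{itemize}
\item[] $\cC\subset S=\B(\rL^2(G))\ovt B_0$, so minimality gives $B\subset B_0$.
\item[] Conversely, $\cC\subset\B(\rL^2(G))\ovt B$ and $\lambda_g\in\B(\rL^2(G))\ootimes1$, so $S\subset\B(\rL^2(G))\ovt B$, whence $B_0\subset B$.
\end{itemize}

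Existence of a smallest $B$ is needed along the way. It follows because intersections of such $B$ behave well, using $\bigcap_j\big(\B\ovt B_j\big)=\B\ovt\bigcap_j B_j$ (the tensor commutation theorem). Alternatively, Step 3 itself exhibits the smallest one directly as $B_0$.

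\textbf{Main obstacle.} The main obstacle is Step 2, namely verifying that $\rL^\infty(G)\ootimes1\subset\cC$ together with $\lambda(G)$ is enough to generate all of $\B(\rL^2(G))\ootimes 1$ inside the weak$^*$ span, not merely the generated algebra. This is resolved by the density of $\operatorname{span}\{\lambda_g f\}$ in $\B(\rL^2(G))$, which holds because it is already a $*$-algebra (covariance relation) with trivial commutant.
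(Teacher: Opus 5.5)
Your argument is correct and is essentially the paper's proof: normalization of $\cC$ by the $\lambda_g$ makes the span a von Neumann algebra, the inclusion $\rL^\infty(G)\otimes1\subset\cC$ together with the $\lambda_g$ puts $\B(\rL^2(G))\otimes1$ inside it, and the resulting splitting $\B(\rL^2(G))\ovt B_0$ combined with minimality gives both inclusions. Your remark that this also proves the existence of the smallest algebra is a small bonus that the paper leaves implicit.

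Two cleanups are needed. Delete the opening of Step 2: the claims that $\rL^\infty(G)\otimes1\not\subset\cC$ and the remark about $\rho_g$ are wrong, and you retract them immediately anyway. Your ``main obstacle'' is also moot. Since Step 1 already shows $S$ is a von Neumann algebra, it suffices that $S$ contains the generators $\rL^\infty(G)\otimes1$ and $\lambda_g\otimes1$; no density statement for the span of the $\lambda_g f$ is needed.
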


\begin{proof}
Put
\[
\cA=\overline{\operatorname{span}}^{\mathrm{w}^*}
\left\{\lambda_g X\ \middle|\
g\in G,\ X\in\cC(M\subset N,\alpha)\right\}.
\]
Since
\[
\lambda_g\cC(M\subset N,\alpha)\lambda_g^*
=\cC(M\subset N,\alpha)
\]
for every $g\in G$, the linear span defining $\cA$ is a $*$-algebra.
Indeed, for $X,Y\in\cC(M\subset N,\alpha)$ and $g,h\in G$,
\[
(\lambda_g X)(\lambda_h Y)
=\lambda_{gh}(\lambda_h^*X\lambda_h)Y,
\]
and
\[
(\lambda_g X)^*
=\lambda_{g^{-1}}(\lambda_g X^*\lambda_g^*).
\]
Thus $\cA$ is a von Neumann algebra.

The algebra $\cC(M\subset N,\alpha)$ contains
$\rL^\infty(G)\otimes1$, and $\cA$ contains every $\lambda_g$. Since
\[
\bigl(\rL^\infty(G)\cup\{\lambda_g\mid g\in G\}\bigr)''
=\B(\rL^2(G)),
\]
we have $\B(\rL^2(G))\otimes1\subset\cA$. Hence there is a unique von
Neumann subalgebra $B\subset N$ such that
\[
\cA=\B(\rL^2(G))\ovt B.
\]
Since $\cC(M\subset N,\alpha)\subset\cA$, the definition gives
$\rb(M\subset N,\alpha)\subset B$. Conversely,
\[
\cC(M\subset N,\alpha)
\subset\B(\rL^2(G))\ovt\rb(M\subset N,\alpha),
\]
so every $\lambda_g X$ in the defining span of $\cA$ belongs to the same
tensor product. Therefore $B\subset\rb(M\subset N,\alpha)$, proving the
formula.
\end{proof}

The following is the analogue of \cite[Proposition~6.7]{Ma25}.

\begin{proposition}\label{algebraic bicentralizer fixed point commutant}
We have
\[
\rb(M\subset N,\alpha)\subset(M^\alpha)'\cap N.
\]
\end{proposition}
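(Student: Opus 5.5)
Since $\rb(M\subset N,\alpha)$ is the smallest von Neumann subalgebra $B\subset N$ with $\cC(M\subset N,\alpha)\subset \B(\rL^2(G))\ovt B$, it suffices to exhibit $B=(M^\alpha)'\cap N$ as such a subalgebra. This $B$ is a von Neumann subalgebra of $N$, so the whole proof reduces to the inclusion
\[
\cC(M\subset N,\alpha)\subset \B(\rL^2(G))\ovt\bigl((M^\alpha)'\cap N\bigr).
\]

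First I would show that $1\otimes M^\alpha$ commutes with $\cC(M\subset N,\alpha)$. For $y\in M^\alpha$ we have $\alpha(y)(g)=\alpha_{g^{-1}}(y)=y$ for all $g$, so $\alpha(y)=1\otimes y$. Hence every $X\in\cC(M\subset N,\alpha)=\alpha(M)'\cap(\B(\rL^2(G))\ovt N)$ commutes with $1\otimes y$. This is the key step, and it is immediate from the choice of the coaction.

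Next I would invoke the standard commutation theorem for tensor products:
\[
(1\otimes Q)'\cap(\B(H)\ovt N)=\B(H)\ovt(Q'\cap N)
\]
for a von Neumann subalgebra $Q\subset N$. One inclusion is clear. For the other, take $X$ in the left-hand side and matrix units $e_{kl}$ attached to an orthonormal basis of $H$; then the matrix entries $X_{kl}\in N$ satisfy $(e_{1k}\otimes1)X(e_{l1}\otimes1)=e_{11}\otimes X_{kl}$. Since $e_{1k}\otimes 1$ and $e_{l1}\otimes 1$ commute with $1\otimes Q$, each $X_{kl}$ commutes with $Q$. Because $X$ is determined by its entries and $\B(H)\ovt(Q'\cap N)$ is weak$^*$ closed, $X$ belongs to the right-hand side. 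Applying this with $H=\rL^2(G)$ and $Q=M^\alpha$ gives the displayed inclusion, and minimality of $\rb(M\subset N,\alpha)$ concludes the proof.

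No step is a real obstacle. The only care needed is the matrix-unit argument in the commutation theorem, which is routine. Alternatively, one could use Proposition~\ref{algebraic bicentralizer span characterization}: each generator $\lambda_gX$ commutes with $1\otimes M^\alpha$, since $\lambda_g\otimes 1$ commutes with $1\otimes N$ trivially. One would then apply the same tensor commutant identity.
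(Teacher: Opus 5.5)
Your proof is correct, and its key observation is exactly the paper's: for $a\in M^\alpha$ one has $\alpha(a)=1\otimes a$, so every element of $\cC(M\subset N,\alpha)$ commutes with $1\otimes a$.

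The only difference is how this commutation is passed to $\rb(M\subset N,\alpha)$. You use minimality directly, together with the tensor commutant identity $(1\otimes Q)'\cap(\B(H)\ovt N)=\B(H)\ovt(Q'\cap N)$ and its matrix-unit proof. The paper instead takes your stated alternative, Proposition~\ref{algebraic bicentralizer span characterization}: each $\lambda_gX$ commutes with $1\otimes a$, hence so does all of $\B(\rL^2(G))\ovt\rb(M\subset N,\alpha)$. In particular every $1\otimes b$ with $b\in\rb(M\subset N,\alpha)$ commutes with $1\otimes a$.

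On that route the tensor commutant identity is not needed at all. Your main route, in exchange, works straight from the definition and does not rely on the span characterization.
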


\begin{proof}
Let $a\in M^\alpha$. Then $\alpha(a)=1\otimes a$, so every
$X\in\cC(M\subset N,\alpha)$ commutes with $1\otimes a$. The same is true
of every $\lambda_g$, and hence of the weak$^*$-closed linear span in
Proposition~\ref{algebraic bicentralizer span characterization}. Therefore
$\B(\rL^2(G))\ovt\rb(M\subset N,\alpha)$ commutes with $1\otimes a$,
which proves the assertion.
\end{proof}

\begin{proposition}\label{algebraic bicentralizer intertwiners}
Let $x\in N$. Suppose that there exists $g\in G$ such that
\[
\alpha_g(y)x=xy
\qquad(y\in M).
\]
Then $x\in\rb(M\subset N,\alpha)$.
\end{proposition}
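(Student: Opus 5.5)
The plan is to exhibit $1\otimes x$, up to a unitary in $\B(\rL^2(G))\otimes 1$, as an element of $\cC(M\subset N,\alpha)$. Then the definition of $\rb(M\subset N,\alpha)$ gives the conclusion directly, without needing the span characterization of Proposition~\ref{algebraic bicentralizer span characterization}. The natural candidate is a right translation in the $\rL^2(G)$ variable, since right translations commute with left translations and only act on the argument of the function $h\mapsto\alpha_{h^{-1}}(y)$ from the right.

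\textbf{Step 1.} View $\alpha(y)$ and $1\otimes x$ as $N$-valued functions of $h\in G$, acting on $\rL^2(G)\ovt\rL^2(N)$ by pointwise multiplication. The hypothesis $xy=\alpha_g(y)x$, applied to $\alpha_{h^{-1}}(y)$, gives pointwise
\[
x\,\alpha_{h^{-1}}(y)=\alpha_{gh^{-1}}(y)\,x .
\]
Write $\alpha_{gh^{-1}}(y)=\alpha_{(hg^{-1})^{-1}}(y)=\alpha(y)(hg^{-1})$. This says
\[
(1\otimes x)\,\alpha(y)=F_y\,(1\otimes x),
\]
where $F_y\in\rL^\infty(G)\ovt M$ is the right translate $h\mapsto\alpha(y)(hg^{-1})$.

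\textbf{Step 2.} Let $\rho$ be the right regular representation, $(\rho_k\xi)(h)=\Delta(k)^{1/2}\xi(hk)$. Conjugation by $\rho_k\otimes1$ on multiplication operators is right translation of the argument by $k$; the modular factor cancels in the conjugation. Choose $k\in\{g,g^{-1}\}$ according to the convention so that
\[
(\rho_k\otimes1)\,F_y\,(\rho_k\otimes1)^*=\alpha(y).
\]
Since $\rho_k\otimes1$ commutes with $1\otimes x$, the operator $X=(\rho_k\otimes1)(1\otimes x)$ satisfies
\[
X\alpha(y)=(\rho_k\otimes 1)F_y(1\otimes x)=\alpha(y)\,X
\]
for all $y\in M$. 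Hence $X\in\cC(M\subset N,\alpha)$, and therefore $X\in\B(\rL^2(G))\ovt\rb(M\subset N,\alpha)$.

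\textbf{Step 3.} Multiplying by the unitary $\rho_k^*\otimes1\in\B(\rL^2(G))\otimes1$ gives
\[
1\otimes x\in\B(\rL^2(G))\ovt\rb(M\subset N,\alpha).
\]
Slicing with a normal state on $\B(\rL^2(G))$ then gives $x\in\rb(M\subset N,\alpha)$.

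The only delicate point is Step~2: matching the direction of translation with the convention $\alpha(x)(g)=\alpha_{g^{-1}}(x)$, and checking that the Haar modular factor in $\rho$ disappears under conjugation. Neither requires any earlier result beyond the definitions.
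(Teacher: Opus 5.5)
Your proof is correct and is essentially the paper's argument: the paper also shows $\rho_g\otimes x\in\cC(M\subset N,\alpha)$ using the right regular representation. Your choice of $k$ resolves to $k=g$, since $\rho_kM_f\rho_k^*=M_{f(\cdot\,k)}$. The only cosmetic differences are two: you invoke the definition of $\rb(M\subset N,\alpha)$ directly rather than Proposition~\ref{algebraic bicentralizer span characterization}, and you remove $\rho_g$ by multiplying with $\rho_g^*\otimes1$ before slicing, where the paper simply uses $\rho_g\neq0$.
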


\begin{proof}
Let $\rho:G\to\cU(\rL^2(G))$ be the right regular representation, with
\[
(\rho_g\xi)(s)=\Delta(g)^{1/2}\xi(sg)
\qquad(\xi\in\rL^2(G),\ s\in G).
\]
We claim that
\[
\rho_g\otimes x\in\cC(M\subset N,\alpha).
\]
Indeed, for $y\in M$ and $s\in G$, apply the assumed relation to
$\alpha_{g^{-1}s^{-1}}(y)$ to obtain
\[
\alpha_{s^{-1}}(y)x=x\alpha_{g^{-1}s^{-1}}(y).
\]
It follows that
\[
\alpha(y)(\rho_g\otimes x)
=(\rho_g\otimes x)\alpha(y),
\]
which proves the claim.

By Proposition~\ref{algebraic bicentralizer span characterization},
\[
\rho_g\otimes x
\in\B(\rL^2(G))\ovt\rb(M\subset N,\alpha).
\]
Since $\rho_g$ is a nonzero operator, it follows that
$x\in\rb(M\subset N,\alpha)$.
\end{proof}

\begin{proposition}\label{intermediate algebraic action bicentralizer}
Let $M\subset N\subset P$ be an intermediate inclusion of von Neumann
algebras. Then
 \[
\rb(M\subset N,\alpha)
 \subset \rb(M\subset P,\alpha)\cap N.
\]
If there exists a normal conditional expectation
$\rE:P\to N$, then equality holds and $\rE$ restricts to a conditional expectation from $\rb(M\subset P,\alpha)$ onto $\rb(M\subset N,\alpha)$.

\end{proposition}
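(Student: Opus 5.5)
Throughout, write $\cC_N=\cC(M\subset N,\alpha)$ and $\cC_P=\cC(M\subset P,\alpha)$. The plan is to compare these two relative commutants inside $\B(\rL^2(G))\ovt P$ and then use the span characterization of Proposition~\ref{algebraic bicentralizer span characterization}.

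For the inclusion, note that
\[
\cC_N=\alpha(M)'\cap\bigl(\B(\rL^2(G))\ovt N\bigr)
\]
is contained in $\cC_P$. Since $\cC_P\subset\B(\rL^2(G))\ovt\rb(M\subset P,\alpha)$ by definition, we get
\[
\cC_N\subset \B(\rL^2(G))\ovt\bigl(\rb(M\subset P,\alpha)\cap N\bigr).
\]
Here I use the standard fact that $(\B(H)\ovt B_1)\cap(\B(H)\ovt B_2)=\B(H)\ovt(B_1\cap B_2)$ for von Neumann subalgebras $B_1,B_2\subset P$. This can be justified by compressing with matrix units $e_{kl}\otimes 1$: an operator lies in $\B(H)\ovt B$ exactly when all its matrix coefficients lie in $B$. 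By minimality in the definition of $\rb(M\subset N,\alpha)$, the inclusion follows.

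For the equality, assume $\rE:P\to N$ is a normal conditional expectation and put $\widetilde\rE=\id\otimes\rE:\B(\rL^2(G))\ovt P\to\B(\rL^2(G))\ovt N$. This map is a normal conditional expectation. It is $\B(\rL^2(G))\ovt N$-bimodular, so in particular it is bimodular over $\alpha(M)\subset\rL^\infty(G)\ovt M\subset\B(\rL^2(G))\ovt N$. Hence $\widetilde\rE(\cC_P)\subset\cC_N$, because for $X\in\cC_P$ and $y\in M$,
\[
\alpha(y)\widetilde\rE(X)=\widetilde\rE(\alpha(y)X)=\widetilde\rE(X\alpha(y))=\widetilde\rE(X)\alpha(y).
\]
It also commutes with left multiplication by $\lambda_g\otimes1$. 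By Proposition~\ref{algebraic bicentralizer span characterization}, $\B(\rL^2(G))\ovt\rb(M\subset P,\alpha)$ is the weak$^*$-closed span of the elements $\lambda_gX$ with $X\in\cC_P$. Applying $\widetilde\rE$ and using its normality, we obtain
\[
\widetilde\rE\bigl(\B(\rL^2(G))\ovt\rb(M\subset P,\alpha)\bigr)\subset\overline{\operatorname{span}}^{\mathrm w^*}\{\lambda_g Y\mid g\in G,\ Y\in\cC_N\}=\B(\rL^2(G))\ovt\rb(M\subset N,\alpha).
\]
Reading off the $e_{11}$-coefficient gives $\rE(\rb(M\subset P,\alpha))\subset\rb(M\subset N,\alpha)$. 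Now take $x\in\rb(M\subset P,\alpha)\cap N$. Then $x=\rE(x)\in\rb(M\subset N,\alpha)$, which gives the reverse inclusion and hence equality. The same computation shows that $\rE$ maps $\rb(M\subset P,\alpha)$ onto $\rb(M\subset N,\alpha)$, since it fixes the latter, which sits inside the former. So $\rE$ restricts to a conditional expectation between them.

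The only delicate point is the tensor-slice bookkeeping: the intersection formula for $\B(H)\ovt B_i$, and the fact that $\id\otimes\rE$ is a well-defined normal map compatible with the weak$^*$-closed span. Both are handled by working with matrix coefficients relative to a fixed orthonormal basis of $\rL^2(G)$, or equivalently with slice maps $\omega\otimes\id$ for $\omega\in\B(\rL^2(G))_*$. Everything else is formal.
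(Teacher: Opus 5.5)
Your proposal is correct and follows essentially the same route as the paper: monotonicity $\cC_N\subset\cC_P$ for the inclusion, then $\widetilde\rE=\id\otimes\rE$ maps $\cC_P$ into $\cC_N$ and commutes with the $\lambda_g$, so the span characterization of Proposition~\ref{algebraic bicentralizer span characterization} gives $\widetilde\rE\bigl(\B(\rL^2(G))\ovt\rb(M\subset P,\alpha)\bigr)\subset\B(\rL^2(G))\ovt\rb(M\subset N,\alpha)$, and evaluating at $1\otimes x$ finishes. Your explicit use of minimality together with the intersection formula for the first inclusion, and your explicit check that the restricted expectation is onto, fill in steps the paper leaves implicit.
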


\begin{proof}
Put $\cH=\rL^2(G)$ and, for $Q\in\{N,P\}$, set
\[
\cC_Q
=\alpha(M)'\cap\bigl(\B(\cH)\ovt Q\bigr).
\]
Since $\cC_N \subset \cC_P$, we have
\[
\rb(M\subset N,\alpha)
\subset\rb(M\subset P,\alpha).
\]

Suppose that there exists a normal conditional expectation $\rE : P \rightarrow N$. Let $\widetilde{\rE}=\id_{\B(\cH)}\otimes \rE$. Since $\rE$ is $N$-bimodular and
$\alpha(M)\subset\rL^\infty(G)\ovt N$, we have
\[
\widetilde{\rE}(\cC_P)\subset\cC_N.
\]
Moreover,
\[
\widetilde{\rE}(\lambda_g X)=\lambda_g\widetilde{\rE}(X)
\qquad(g\in G,\ X\in\cC_P).
\]
Proposition~\ref{algebraic bicentralizer span characterization} therefore
implies
\[
\widetilde{\rE}\bigl(\B(\cH)\ovt\rb(M\subset P,\alpha)\bigr)
\subset\B(\cH)\ovt\rb(M\subset N,\alpha).
\]
If $x\in\rb(M\subset P,\alpha)\cap N$, then
$\widetilde{\rE}(1\otimes x)=1\otimes x$. Hence
$1\otimes x\in\B(\cH)\ovt\rb(M\subset N,\alpha)$, proving the reverse
inclusion.
\end{proof}

We next obtain the relative corner formula, corresponding to
\cite[Proposition~6.8]{Ma25}.

\begin{proposition}\label{corner algebraic action bicentralizer}
Let $e\in M^\alpha$ be a nonzero projection, and let
\[
\alpha^e:G\curvearrowright eMe
\]
be the corner action. Then
\[
\rb(eMe\subset eNe,\alpha^e)
=e\,\rb(M\subset N,\alpha)e.
\]
\end{proposition}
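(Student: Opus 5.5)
The plan is to work with the amplified commutants $\cC(M\subset N,\alpha)$ and to use the span characterization of Proposition~\ref{algebraic bicentralizer span characterization} on both sides. Put $\tilde e=1\otimes e\in\B(\rL^2(G))\ovt M$. Since $e\in M^\alpha$, we have $\alpha(e)=1\otimes e=\tilde e$, so $\tilde e$ lies in $\alpha(M)$. The restriction of $\alpha$ to $eMe$ is exactly the corner map $\alpha^e: eMe\to \rL^\infty(G)\ovt eMe$. The proof then rests on one key identity:
\[
\cC(eMe\subset eNe,\alpha^e)=\tilde e\,\cC(M\subset N,\alpha)\,\tilde e .
\]
To obtain it, apply Proposition~\ref{corner relative commutant} to the inclusion $P=\alpha(M)\subset Q=\B(\rL^2(G))\ovt N$ with the projection $\tilde e\in P$. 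This gives
\[
(P'\cap Q)\tilde e=(\tilde eP\tilde e)'\cap \tilde eQ\tilde e .
\]
Here $\tilde eP\tilde e=\alpha(eMe)$ and $\tilde e Q\tilde e=\B(\rL^2(G))\ovt eNe$, with $\tilde e$ as the unit. Since $\tilde e$ commutes with $P'\cap Q$, the left side equals $\tilde e\,\cC(M\subset N,\alpha)\tilde e$.

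\textbf{Inclusion $\subset$.} Let $X\in\cC(M\subset N,\alpha)$. By definition $X\in\B(\rL^2(G))\ovt\rb(M\subset N,\alpha)$, so $\tilde eX\tilde e\in\B(\rL^2(G))\ovt e\,\rb(M\subset N,\alpha)e$. Note that $e\,\rb(M\subset N,\alpha)e$ is a von Neumann algebra. Indeed, Proposition~\ref{algebraic bicentralizer fixed point commutant} shows that $\rb(M\subset N,\alpha)$ commutes with $e$, so this corner equals $\rb(M\subset N,\alpha)e$. By the identity above, $\cC(eMe\subset eNe,\alpha^e)$ is therefore contained in $\B(\rL^2(G))\ovt \rb(M\subset N,\alpha)e$. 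Minimality in the definition of $\rb$ then gives $\rb(eMe\subset eNe,\alpha^e)\subset e\,\rb(M\subset N,\alpha)e$.

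\textbf{Inclusion $\supset$.} By Proposition~\ref{algebraic bicentralizer span characterization}, $\B(\rL^2(G))\ovt\rb(M\subset N,\alpha)$ is the weak$^*$ closed span of the elements $\lambda_gX$ with $g\in G$ and $X\in\cC(M\subset N,\alpha)$. Compress by $\tilde e$. Since $\tilde e$ commutes with $\lambda_g$, we get $\tilde e\lambda_gX\tilde e=\lambda_g\tilde e X\tilde e$, which lies in $\lambda_g\,\cC(eMe\subset eNe,\alpha^e)$ by the key identity. Compression is weak$^*$ continuous. Applying the span characterization to the corner action, with $\lambda_g$ now acting on the unit $\tilde e$, we obtain
\[
\B(\rL^2(G))\ovt e\,\rb(M\subset N,\alpha)e\subset \B(\rL^2(G))\ovt\rb(eMe\subset eNe,\alpha^e).
\]
Slicing with a normal state on $\B(\rL^2(G))$ yields the inclusion.

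\textbf{Main obstacle.} The only delicate point is the bookkeeping of units. In the corner, $\B(\rL^2(G))\otimes 1_{eNe}$ must be identified with $\B(\rL^2(G))\otimes e$, and $\lambda_g$ must be read as $\lambda_g\otimes e$. We also need $\tilde e\in\alpha(M)$ in order to invoke Proposition~\ref{corner relative commutant}, and this is exactly where $\alpha$-invariance of $e$ is used. One should also confirm that the map $\alpha^e$ coincides with the restriction of $\alpha$ to $eMe$. This holds because each $\alpha_{g^{-1}}$ fixes $e$.
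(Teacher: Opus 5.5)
Your proof is correct and follows essentially the same route as the paper: you derive $\cC(eMe\subset eNe,\alpha^e)=\tilde e\,\cC(M\subset N,\alpha)\,\tilde e$ from the corner relative-commutant identity, and then compress the span characterization of Proposition~\ref{algebraic bicentralizer span characterization}, using that $\tilde e=1\otimes e$ commutes with every $\lambda_g$. The paper gets both inclusions at once from the span characterization, whereas you obtain one of them via minimality; your observation that $e\,\rb(M\subset N,\alpha)e=\rb(M\subset N,\alpha)e$ is a von Neumann algebra (by Proposition~\ref{algebraic bicentralizer fixed point commutant}) makes explicit a point the paper leaves implicit.
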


\begin{proof}
Put $p=1\otimes e\in\alpha(M)$. Since
\[
\alpha^e(eMe)=p\alpha(M)p,
\]
the corner relative-commutant identity gives
\begin{align*}
\cC(eMe\subset eNe,\alpha^e)
&=(p\alpha(M)p)'\cap
p\bigl(\B(\rL^2(G))\ovt N\bigr)p\\
&=p\cC(M\subset N,\alpha)p.
\end{align*}
The projection $p$ commutes with every $\lambda_g$. Hence
Proposition~\ref{algebraic bicentralizer span characterization} gives
\begin{align*}
\B(\rL^2(G))\ovt\rb(eMe\subset eNe,\alpha^e)
&=p\bigl(\B(\rL^2(G))
\ovt\rb(M\subset N,\alpha)\bigr)p\\
&=\B(\rL^2(G))\ovt
e\,\rb(M\subset N,\alpha)e.
\end{align*}
Cancelling
the first tensor factor proves the formula.
\end{proof}

The relative algebraic bicentralizer is particularly explicit when the
action on $M$ is implemented by unitaries in $N$.

\begin{proposition}\label{inner algebraic action bicentralizer}
Suppose that there is a continuous unitary representation
$U:G\to\cU(N)$ such that
\[
\alpha_g(x)=U_g x U_g^*
\qquad(g\in G,\ x\in M).
\]
Then
\[
\rb(M\subset N,\alpha)
=\bigl((M'\cap N)\vee\{U_g\mid g\in G\}\bigr)''.
\]
\end{proposition}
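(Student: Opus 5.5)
The key idea is to conjugate $\alpha(M)$ into a position where its commutant is explicit. Define the unitary $W\in\rL^\infty(G)\ovt N$ by $W(g)=U_{g^{-1}}$. For $x\in M$ we have $\alpha(x)(g)=\alpha_{g^{-1}}(x)=U_{g^{-1}}xU_{g^{-1}}^*$, so
\[
\alpha(x)=W(1\otimes x)W^*\qquad(x\in M).
\]
Continuity of $U$ makes $W$ a well-defined element of $\rL^\infty(G)\ovt N$; to see this, view it as a strongly continuous bounded function. Consequently
\[
\cC(M\subset N,\alpha)=W\bigl(\B(\rL^2(G))\ovt(M'\cap N)\bigr)W^*.
\]
Set $Q=\bigl((M'\cap N)\vee\{U_g\mid g\in G\}\bigr)''$. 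The plan is to prove the two inclusions $\rb(M\subset N,\alpha)\subset Q$ and $Q\subset\rb(M\subset N,\alpha)$.

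\textbf{The inclusion $\rb\subset Q$.} By definition of $\rb$, it suffices to show $\cC(M\subset N,\alpha)\subset\B(\rL^2(G))\ovt Q$. The unitary $W$ lies in $\rL^\infty(G)\ovt\{U_g\}''\subset\B(\rL^2(G))\ovt Q$, because it is a weak$^*$-limit of functions taking values in $\{U_g\}''$. For instance, $\rL^\infty(G)\ovt A$ is identified with the bounded weak$^*$-measurable $A$-valued functions, and $W$ takes values in $A=\{U_g\}''$. The conjugated algebra $\B(\rL^2(G))\ovt(M'\cap N)$ also lies in $\B(\rL^2(G))\ovt Q$. Therefore $\cC(M\subset N,\alpha)\subset\B(\rL^2(G))\ovt Q$, as required.

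\textbf{The inclusion $Q\subset\rb$.} First, if $y\in M'\cap N$, then $y$ satisfies the hypothesis of Proposition~\ref{algebraic bicentralizer intertwiners} with $g=e$, so $y\in\rb(M\subset N,\alpha)$. Second, each $U_g$ satisfies $\alpha_g(y)U_g=U_gyU_g^*U_g=U_gy$ for all $y\in M$, so Proposition~\ref{algebraic bicentralizer intertwiners} gives $U_g\in\rb(M\subset N,\alpha)$. Since $\rb(M\subset N,\alpha)$ is a von Neumann algebra containing these generators, it contains $Q$.

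The only delicate point is the measure-theoretic justification that $W\in\rL^\infty(G)\ovt\{U_g\}''$ and that the conjugation formula holds as an identity in $\rL^\infty(G)\ovt N$. I would handle this by identifying $\rL^\infty(G)\ovt N$ with $\rL^\infty(G,N)$, the bounded weak$^*$-measurable functions, and checking the identity pointwise. The slice map argument, applying $\omega\otimes\id$ with $\omega\in\rL^1(G)$, shows that all slices of $W$ are weak$^*$ integrals of $U_g$, hence lie in $\{U_g\}''$. This places $W$ in the tensor product.
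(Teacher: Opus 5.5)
Your proposal is correct and follows the paper's proof essentially verbatim. Both use the same decomposable unitary $g\mapsto U_{g^{-1}}$ in $\rL^\infty(G)\ovt\{U_g\}''$ to conjugate $1\otimes M$ onto $\alpha(M)$, which gives $\rb(M\subset N,\alpha)\subset Q$. The reverse inclusion comes from Proposition~\ref{algebraic bicentralizer intertwiners} applied to the $U_g$, and to $M'\cap N$ (which the paper simply calls clear).
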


\begin{proof}
Let $\cH=\rL^2(G)$ and let
$\mathcal U\in\rL^\infty(G)\ovt N$ be the decomposable unitary defined by
\[
\mathcal U(g)=U_{g^{-1}}.
\]
Then
\[
\alpha(x)=\mathcal U(1\otimes x)\mathcal U^*
\qquad(x\in M),
\]
and hence
\[
\cC(M\subset N,\alpha)
=\mathcal U\bigl(\B(\cH)\ovt(M'\cap N)\bigr)\mathcal U^*.
\]
Put
\[
B_0=\bigl((M'\cap N)\vee\{U_g\mid g\in G\}\bigr)''.
\]
Since $\mathcal U\in\rL^\infty(G)\ovt B_0$, the preceding formula gives
\[
\cC(M\subset N,\alpha)\subset\B(\cH)\ovt B_0,
\]
and therefore $\rb(M\subset N,\alpha)\subset B_0$.

Conversely, we clearly have $M'\cap N \subset \rb(M \subset N,\alpha)$ and for every $g\in G$, we also have
\[
\alpha_g(y)U_g=U_g y
\qquad(y\in M),
\]
so Proposition~\ref{algebraic bicentralizer intertwiners} gives $U_g\in\rb(M\subset N,\alpha)$. Therefore
$B_0\subset\rb(M\subset N,\alpha)$.
\end{proof}

\begin{proposition} \label{crossed product of bicentralizer}
    We have
    $$\rb(M \subset M \rtimes_\alpha G,\alpha) = \rb(M,\alpha) \rtimes_\alpha G$$
    and this algebra is generated by $M' \cap (M \rtimes_\alpha G)$ and $\{ u_g \mid g \in G \}$.
\end{proposition}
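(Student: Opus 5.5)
The second assertion follows directly from Proposition~\ref{inner algebraic action bicentralizer}. Inside $N=M\rtimes_\alpha G$ the action $\alpha$ on $M$ is implemented by the continuous unitary representation $g\mapsto u_g$. Hence
\[
\rb(M\subset M\rtimes_\alpha G,\alpha)=\bigl((M'\cap(M\rtimes_\alpha G))\vee\{u_g\mid g\in G\}\bigr)''.
\]
It therefore remains to identify this algebra with $\rb(M,\alpha)\rtimes_\alpha G$. First we check that the latter makes sense. By Proposition~\ref{algebraic bicentralizer span characterization}, $\B(\rL^2(G))\ovt\rb(M,\alpha)$ is spanned by the elements $\lambda_g X$ with $X\in\cC(M,\alpha)$. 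Conjugation by $1\otimes u$ for $u\in M$ does not preserve this, so we argue directly instead. If $X\in\cC(M,\alpha)$, then $(\id\otimes\alpha_h)(X)$ commutes with $(\id\otimes\alpha_h)\alpha(M)$. This equals $(\rho'_h\otimes 1)\alpha(M)(\rho'_h\otimes1)^*$ for a suitable translation unitary $\rho'_h$ on $\rL^2(G)$; concretely $\alpha_h(\alpha_{g^{-1}}(x))=\alpha_{(gh^{-1})^{-1}}(x)$, a right translation. Thus $(\id\otimes\alpha_h)(\cC(M,\alpha))=(\rho'_h\otimes1)^*\cC(M,\alpha)(\rho'_h\otimes1)$, which lies in $\B(\rL^2(G))\ovt\rb(M,\alpha)$. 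By minimality, $\rb(M,\alpha)$ is $\alpha$-invariant, so $\rb(M,\alpha)\rtimes_\alpha G\subset M\rtimes_\alpha G$ is a well-defined subalgebra generated by $\rb(M,\alpha)$ and the $u_g$.

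For the inclusion $\rb(M,\alpha)\rtimes_\alpha G\subset\rb(M\subset M\rtimes_\alpha G,\alpha)$, we have $u_g\in\rb(M\subset M\rtimes_\alpha G,\alpha)$ by Proposition~\ref{algebraic bicentralizer intertwiners}. We also have $\rb(M,\alpha)\subset\rb(M\subset M\rtimes_\alpha G,\alpha)$ by Proposition~\ref{intermediate algebraic action bicentralizer}.

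For the reverse inclusion it suffices, by the displayed formula, to show $M'\cap(M\rtimes_\alpha G)\subset\rb(M,\alpha)\rtimes_\alpha G$. This is the main step. I would use the Takesaki-type identification $M\rtimes_\alpha G\cong\alpha(M)\vee(\lambda_G\otimes1)\subset\B(\rL^2(G))\ovt M$. Under it, $u_g\mapsto\lambda_g\otimes1$, and $M'\cap(M\rtimes_\alpha G)$ sits inside $\alpha(M)'\cap(\B(\rL^2(G))\ovt M)=\cC(M,\alpha)$. So $M'\cap(M\rtimes_\alpha G)\subset\B(\rL^2(G))\ovt\rb(M,\alpha)$. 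Intersecting with the image of the crossed product, we need
\[
\bigl(\B(\rL^2(G))\ovt\rb(M,\alpha)\bigr)\cap\bigl(\alpha(M)\vee(\lambda_G\otimes 1)\bigr)=\alpha(\rb(M,\alpha))\vee(\lambda_G\otimes1),
\]
which is the crossed product $\rb(M,\alpha)\rtimes_\alpha G$. I expect this intersection identity to be the main obstacle. It is a crossed-product analogue of $(\B\ovt B)\cap(M\rtimes G)=B\rtimes G$ for an invariant subalgebra $B\subset M$. I would prove it with the Fourier-coefficient (operator-valued weight) description: the crossed product is the fixed-point algebra of $\B(\rL^2(G))\ovt M$ under $\Ad(\rho_s)\otimes\alpha_s$. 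Then $B\rtimes G$ is the fixed-point algebra of $\B(\rL^2(G))\ovt B$ under the same action, by Takesaki's characterization of crossed products, which applies because $B$ is globally $\alpha$-invariant. Fixed points of a subalgebra are exactly the intersection, which gives the identity.

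When $M$ is not semifinite, or no expectation onto $\rb(M,\alpha)$ exists, no extra issue arises, since the fixed-point characterization does not need one. If a normal expectation onto $\rb(M,\alpha)$ is available (e.g.\ the tracial case), one could alternatively invoke the equality case of Proposition~\ref{intermediate algebraic action bicentralizer} together with $\id\otimes\rE$.
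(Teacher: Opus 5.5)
Your proof is correct, but it is organized differently from the paper's. Both start from Proposition~\ref{inner algebraic action bicentralizer}. From there the paper argues by \emph{amplifying}:
\begin{itemize}
\item[(a)] $\rb(M\subset M\rtimes_\alpha G,\alpha)$ is invariant under the dual coaction, hence of the form $B\rtimes_\alpha G$ for some $B\subset M$;
\item[(b)] Takesaki duality makes $\B(\rL^2(G))\ovt B$ the algebra generated by $B\rtimes_\alpha G$ and $\rL^\infty(G)$, that is, by $\B(\rL^2(G))\otimes 1$ and $M'\cap(M\rtimes_\alpha G)$;
\item[(c)] this algebra is matched with $\B(\rL^2(G))\ovt\rb(M,\alpha)$ through Proposition~\ref{algebraic bicentralizer span characterization}.
\end{itemize}
Step (c) tacitly uses the further duality identity $\cC(M,\alpha)=(M'\cap(M\rtimes_\alpha G))\vee(\rL^\infty(G)\otimes1)$.

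You instead prove the two inclusions and \emph{cut down}. You need only the trivial inclusion $M'\cap(M\rtimes_\alpha G)\subset\cC(M,\alpha)$, and then you intersect with the fixed-point description $M\rtimes_\alpha G=(\B(\rL^2(G))\ovt M)^{\Ad\rho\otimes\alpha}$.

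What your route buys:
\begin{itemize}
\item It dispenses with the Landstad-type characterization of coaction-invariant subalgebras in step (a).
\item It dispenses with the identification of $\cC(M,\alpha)$ used in step (c).
\item It proves explicitly the $\alpha$-invariance of $\rb(M,\alpha)$ that the statement presupposes. Your minimality argument via right translations is correct.
\end{itemize}

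The price is the fixed-point theorem for crossed products. For non-abelian $G$ this is a separate result, usually credited to Digernes and Haagerup, rather than ``Takesaki's characterization'' as you call it, so cite it accordingly. The paper, by contrast, gets the invariance of $B$ for free from the form $B\rtimes_\alpha G$ and never needs that theorem.
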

\begin{proof}
    By the previous proposition $\rb(M \subset M \rtimes_\alpha G,\alpha)$ is generated by $M' \cap (M \rtimes_\alpha G)$ and $\{ u_g \mid g \in G \}$. In particular $\rb(M \subset M \rtimes_\alpha G)$ is invariant under the dual coaction $$\Delta : M \rtimes_\alpha G \rightarrow (M \rtimes_\alpha G) \ovt \rL(G).$$
    This means that $\rb(M \subset M \rtimes_\alpha G,\alpha) = B \rtimes_\alpha G$ for some subalgebra $B \subset M$. Realizing $M \rtimes_\alpha G$ inside $\B(\rL^2(G)) \ovt M$, we see that $\B(\rL^2(G)) \ovt B$ is the von Neumann algebra generated by $B \rtimes_\alpha G$ and $\rL^\infty(G)$. But $B \rtimes_\alpha G$ is the von Neumann algebra generated by $M' \cap (M \rtimes_\alpha G)$ and $\rL(G)$. We conclude that $\B(\rL^2(G)) \ovt B$ is the von Neumann algebra generated by $\B(\rL^2(G))$ and $M' \cap (M \rtimes_\alpha G)$, which is $\B(\rL^2(G)) \ovt \rb(M,\alpha)$ by definition.
\end{proof}

\begin{proposition} \label{strong center bicentralizer}
    Suppose that $G$ is abelian. Then the algebra 
    $$(\rb(M,\alpha) \cap M^\alpha) \rtimes_\alpha G \cong (\rb(M,\alpha) \cap M^\alpha) \ovt \rL_\alpha(G)$$
    is generated by $\cZ(M \rtimes_\alpha G)$ and $\rL_\alpha(G)$. Moreover, if $\Gamma(\alpha)=\widehat{G}$, then $\rb(M,\alpha) \cap M^\alpha=\cZ(M)^\alpha$.
\end{proposition}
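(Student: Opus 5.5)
The plan is to compute a relative commutant of $\rL_\alpha(G)$ in two different ways. Put $B=\rb(M,\alpha)$ and $C=M'\cap(M\rtimes_\alpha G)$. By Proposition~\ref{crossed product of bicentralizer},
\[
A:=B\rtimes_\alpha G=\bigl(C\cup\{u_g\mid g\in G\}\bigr)''.
\]
I will identify $\rL_\alpha(G)'\cap A$ once from the crossed product picture and once from the generators $C$ and $u_g$, and then compare.

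\emph{Step 1 (crossed product side).} Since $G$ is abelian, the $u_h$ commute with each other. An element of $B\rtimes_\alpha G$ commutes with every $u_h$ exactly when its $B$-valued Fourier coefficients are $\alpha$-invariant. So I expect
\[
\rL_\alpha(G)'\cap(B\rtimes_\alpha G)=B^\alpha\rtimes_\alpha G .
\]
To prove this rigorously, I would work in $\B(\rL^2(G))\ovt M$ and use the dual action $\widehat\alpha$, which fixes $u_h$ up to characters and commutes with $\Ad u_h$; spectral subspaces of $\widehat\alpha$ then reduce the claim to the fixed-point computation. Since $B^\alpha=B\cap M^\alpha$ and $\alpha$ is trivial on $B^\alpha$, we have $B^\alpha\rtimes_\alpha G\cong B^\alpha\ovt\rL_\alpha(G)$.

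\emph{Step 2 (generator side).} The algebra $C$ is globally invariant under $\Ad u_g$, because $\alpha_g$ preserves $M$ and hence $M'$. Moreover,
\[
C\cap\{u_g\}'=\cZ(M\rtimes_\alpha G).
\]
Choose an invariant mean $m$ on $G$ (possible since $G$ is abelian, hence amenable) and average $\Ad u_g$ over $G$. This gives a projection $\Phi$ of $A$ onto $A\cap\rL_\alpha(G)'$, which fixes $\rL_\alpha(G)$ and sends $C$ into $\cZ(M\rtimes_\alpha G)$. Since $\Phi(cu_h)=\Phi(c)u_h$, $\Phi$ maps the $*$-algebra $\operatorname{span}\{cu_h\}$ into $\operatorname{span}\{zu_h\mid z\in\cZ(M\rtimes_\alpha G)\}$.

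\emph{Main obstacle.} Concluding that $\Phi(A)\subset(\cZ(M\rtimes_\alpha G)\cup\rL_\alpha(G))''$ needs some normality or continuity of $\Phi$, which an invariant mean does not provide in general. I would get around this in one of two ways:
\begin{enumerate}[\rm (a)]
\item use a Følner-type net of averages of $\Ad u_g$ together with Kaplansky density, measuring convergence in $\rL^2$ of the dual weight on $M\rtimes_\alpha G$, which is $\Ad u$-invariant;
\item characterize $(\cZ(M\rtimes_\alpha G)\cup\rL_\alpha(G))''$ as the commutant of a suitable family of operators invariant under both $\Ad u$ and $\widehat\alpha$, and check that $\Phi(A)$ lies in it.
\end{enumerate}
Combining Steps 1 and 2 gives the first assertion, with the inclusion $\cZ(M\rtimes_\alpha G)\subset B^\alpha\rtimes_\alpha G$ following from Step 1.

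\emph{Step 3 (full Connes spectrum).} Assume $\Gamma(\alpha)=\widehat G$. This means $\widehat\alpha$ acts trivially on $\cZ(M\rtimes_\alpha G)$. Hence $\cZ(M\rtimes_\alpha G)$ lies in $(M\rtimes_\alpha G)^{\widehat\alpha}=M$, so it equals $M'\cap M\cap\{u_g\}'=\cZ(M)^\alpha$. The first part then gives
\[
B^\alpha\ovt\rL_\alpha(G)=\cZ(M)^\alpha\ovt\rL_\alpha(G)
\]
inside $M\rtimes_\alpha G$. Applying slice maps with normal functionals on $\rL_\alpha(G)$, or the tensor commutation theorem, yields $B^\alpha=\cZ(M)^\alpha$, that is, $\rb(M,\alpha)\cap M^\alpha=\cZ(M)^\alpha$.
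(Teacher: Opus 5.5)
Your Steps~1 and~3 are sound. The inclusion $(\cZ(M\rtimes_\alpha G)\cup\rL_\alpha(G))''\subset(\rb(M,\alpha)\cap M^\alpha)\rtimes_\alpha G$, which you get for free, is in fact all the paper's own proof establishes. It notes that the algebra generated by $\cZ(M\rtimes_\alpha G)$ and $\rL_\alpha(G)$ is $\widehat\alpha$-invariant, hence equals $B_0\rtimes_\alpha G$, with $B_0\subset\rb(M,\alpha)$ by Proposition~\ref{crossed product of bicentralizer} and $B_0\subset M^\alpha$ because this algebra is abelian. The reverse inclusion and the ``moreover'' part are not addressed there.

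Your Step~2 is exactly that reverse inclusion, and it remains a genuine gap: the invariant mean is not normal, and neither (a) nor (b) is carried out. No soft argument can close it, because in the generality stated (arbitrary $M$ and action) it is false. Here is a counterexample.
Let $P$ be a $\II_1$ factor with an automorphism $\beta$ all of whose nonzero powers are outer. Put $M=\B(\ell^2(\Z)\otimes\C^2)\ovt P$, with diagonal minimal projections $E_{n,i}$. Let $\theta=\Ad(S\otimes1)\otimes\beta$, with $S$ a bilateral shift such that $\theta(E_{n,i}\otimes1)=E_{n-1,i}\otimes1$. Let $v=\sum_{n,i}\mu^nz_iE_{n,i}\otimes1$, with $\mu$ not a root of unity and $z_1\notin z_0\mu^{\Z}$. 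Then $\theta(v)=\mu v$, so $\alpha_{(m,n)}=\theta^m\circ\Ad(v^n)$ is a $\Tr\otimes\tau_P$-preserving action of $\Z^2$.
Since $\alpha_{(m,n)}$ is outer for $m\neq0$, Fourier coefficients give $M'\cap(M\rtimes_\alpha\Z^2)=\{v^*u_{(0,1)}\}''$, hence $\rb(M,\alpha)=\{v\}''$. On the other hand $\cZ(M\rtimes_\alpha\Z^2)=\C$, because $\theta(v^{-n})=\mu^{-n}v^{-n}$; so the crossed product is a factor and $\Gamma(\alpha)=\widehat{\Z^2}$. Yet the spectral projection $F_0=\sum_nE_{n,0}\otimes1$ of $v$ is $\alpha$-fixed, so $\rb(M,\alpha)\cap M^\alpha=\C F_0\oplus\C F_1$, and both assertions fail. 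In this example your $\Phi$ maps $\operatorname{span}\{cu_h\}$ into $\rL_\alpha(G)$ but fixes $F_0$; that is precisely the failure of normality.

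The missing input is a \emph{finite} invariant trace. This is the setting of Sections~\ref{sec:abelian-groups} and~\ref{sec:joint-bicentralizer-action}, where the proposition is applied, and there your option (a) can be completed. Suppose $\alpha$ preserves a faithful normal tracial state $\tau$. Then $\widetilde\tau=\tau\circ\rT_M$ is an $\Ad u$-invariant trace with $\widetilde\tau(\lambda(f)^*\lambda(f))=\|f\|_2^2$. So it is semifinite on $\rL_\alpha(G)$, hence on the abelian algebra $D=(\cZ(M\rtimes_\alpha G)\cup\rL_\alpha(G))''$, which therefore carries a $\widetilde\tau$-preserving normal expectation $\rE_D$.
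Take $c\in C$ and $f\in\rL^1(G)\cap\rL^2(G)$. By the mean ergodic theorem, F\o lner averages of $\Ad u_g$ applied to $c\lambda(f)$ converge in $\rL^2(\widetilde\tau)$ to $z\lambda(f)$, where $z\in\cZ(M\rtimes_\alpha G)$ is a weak$^*$ cluster point of the averages of $c$. Now approximate $x\in\rL_\alpha(G)'\cap A$ strongly by a bounded net in the $*$-algebra $\operatorname{span}\{cu_g\}$ (Kaplansky). This shows that the $\Ad u$-invariant vector $x\lambda(f)$ lies in $\rL^2(D,\widetilde\tau)$, so $x\lambda(f)=\rE_D(x\lambda(f))\in D$. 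Letting $\lambda(f)\to1$ gives $x\in D$, and your Step~3 then goes through.
In the example above $\tau(1)=\infty$, so $\widetilde\tau$ is infinite on every nonzero element of $\rL_\alpha(G)_+$, and exactly this step collapses. Note also that ``the dual weight, which is $\Ad u$-invariant'' presupposes an $\alpha$-invariant weight on $M$, which a general action need not have. Option (b) cannot succeed without such an input either.
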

\begin{proof}
    The algebra generated by $\cZ(M \rtimes_\alpha G)$ and $\rL_\alpha(G)$ is invariant under the dual action of $\widehat{G}$, hence it is of the form $B \rtimes_\alpha G$ for some $\alpha$-invariant $B$. Clearly $B \subset \rb(M,\alpha)$ by Proposition \ref{crossed product of bicentralizer}. Since $B \rtimes_\alpha G$ is abelian, we also have $B \subset M^\alpha$.
\end{proof}

Finally, we have the tensor-product property corresponding to
\cite[Proposition~6.12]{Ma25}.

\begin{proposition}\label{tensor product algebraic action bicentralizer}
For $i\in\{1,2\}$, let $M_i\subset N_i$ be an inclusion of von Neumann
algebras and let $\alpha_i:G\curvearrowright M_i$ be a continuous action.
Then
\[
\rb(M_1\ovt M_2\subset N_1\ovt N_2,
\alpha_1\otimes\alpha_2)
\subset
\rb(M_1\subset N_1,\alpha_1)
\ovt
\rb(M_2\subset N_2,\alpha_2).
\]
\end{proposition}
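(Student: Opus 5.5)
The plan is to work directly with the definition through the commutant $\cC$ and reduce to a statement about relative commutants in tensor products. Put $\cH=\rL^2(G)$, $M=M_1\ovt M_2$, $N=N_1\ovt N_2$, $\alpha=\alpha_1\otimes\alpha_2$, and $B_k=\rb(M_k\subset N_k,\alpha_k)$. By definition it suffices to show
\[
\cC(M\subset N,\alpha)\subset \B(\cH)\ovt B_1\ovt B_2 .
\]

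The key observation is that $\alpha(M)$ contains $\alpha(M_1\otimes 1)$ and $\alpha(1\otimes M_2)$. Inside $\B(\cH)\ovt N_1\ovt N_2$, the first algebra equals $\alpha_1(M_1)$, viewed as a subalgebra of $\rL^\infty(G)\ovt N_1\subset\B(\cH)\ovt N_1$, tensored with $1_{N_2}$.

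We therefore first intersect with the commutant of $\alpha_1(M_1)\otimes 1$. Using the commutation theorem for relative commutants in tensor products, $(A\otimes 1)'\cap (Q\ovt R)=(A'\cap Q)\ovt R$ (valid since $R'$ is all of $\B$ in the standard form), this gives
\[
\cC(M\subset N,\alpha)\subset \cC(M_1\subset N_1,\alpha_1)\ovt N_2\subset \B(\cH)\ovt B_1\ovt N_2,
\]
after regrouping the tensor factors $\B(\cH)\ovt N_1\ovt N_2$. Symmetrically, using the flip of the $N_1$ and $N_2$ factors, we get $\cC(M\subset N,\alpha)\subset \B(\cH)\ovt N_1\ovt B_2$.

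We then intersect the two inclusions:
\[
(\B(\cH)\ovt B_1\ovt N_2)\cap(\B(\cH)\ovt N_1\ovt B_2)=\B(\cH)\ovt B_1\ovt B_2 .
\]
This is the standard intersection property of tensor products of von Neumann algebras (Tomita's commutation theorem applied to commutants: $(A\ovt B)\cap(C\ovt D)=(A\cap C)\ovt(B\cap D)$). By the defining minimality of $\rb(M\subset N,\alpha)$, this yields the claimed inclusion.

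The main obstacle is the first step. The embedding $\alpha_1(M_1)\otimes 1$ lives over the shared $\rL^\infty(G)$ leg, and we need the commutant of $\alpha_1(M_1)\otimes 1$ inside $(\B(\cH)\ovt N_1)\ovt N_2$ to be $\cC(M_1\subset N_1,\alpha_1)\ovt N_2$. This is exactly $(A'\cap Q)\ovt R$ with $Q=\B(\cH)\ovt N_1$ and $R=N_2$. I would justify it via a slice-map argument: for $X$ in the relative commutant and $\psi\in (N_2)_*$, the slice $(\id\otimes\psi)(X)$ commutes with $\alpha_1(M_1)$. Then $X$ lies in the Fubini product $\{X : (\id\otimes\psi)(X)\in \cC_1\ \forall\psi\}$, which equals $\cC_1\ovt N_2$ for von Neumann algebras. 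This is the only nontrivial point, and the equality of the Fubini product with the tensor product relies on $N_2$ being a von Neumann algebra.
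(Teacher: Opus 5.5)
Your proposal is correct and follows the paper's proof essentially verbatim. You restrict to the commutant of $\alpha_1(M_1)\otimes 1$ to get $\cC\subset\cC_1\ovt N_2\subset\B(\cH)\ovt B_1\ovt N_2$, do the same on the other leg, and intersect. You then conclude directly from the minimality in the definition of $\rb$ rather than through the span characterization, which is equally valid.

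One small correction: the parenthetical about standard form is unnecessary. The identity $(A\otimes1)'\cap(Q\ovt R)=(A'\cap Q)\ovt R$ holds for any von Neumann algebra $R$, since $(A\otimes 1)'=A'\ovt\B(K)$ and one then applies the tensor intersection formula. This step is therefore routine rather than the main obstacle.
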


\begin{proof}
Put $\cH=\rL^2(G)$ and
\[
\cC_i=\cC(M_i\subset N_i,\alpha_i),
\qquad
\cC=\cC(M_1\ovt M_2\subset N_1\ovt N_2,
\alpha_1\otimes\alpha_2).
\]
Put
\[
B_i=\rb(M_i\subset N_i,\alpha_i)
\qquad(i\in\{1,2\}).
\]
Since every element of $\cC$ commutes with
$\alpha_1(M_1)\otimes1$, the relative-commutant identity and
Proposition~\ref{algebraic bicentralizer span characterization} give
\[
\cC\subset\cC_1\ovt N_2
\subset\B(\cH)\ovt B_1\ovt N_2.
\]
By symmetry,
\[
\cC\subset\B(\cH)\ovt N_1\ovt B_2.
\]
Taking the intersection yields
\[
\cC\subset\B(\cH)\ovt B_1\ovt B_2.
\]
Since every $\lambda_g$ acts on the first tensor factor,
Proposition~\ref{algebraic bicentralizer span characterization} now gives
\[
\B(\cH)\ovt
\rb(M_1\ovt M_2\subset N_1\ovt N_2,
\alpha_1\otimes\alpha_2)
\subset\B(\cH)\ovt B_1\ovt B_2.
\]
Cancelling $\B(\cH)$ proves the result.
\end{proof}

\subsection{The bicentralizer conjecture}

\begin{proposition}
Let $(M,\tau) \subset (N,\tau)$ be a tracial inclusion. We have
\begin{equation}\label{algebraic bicentralizer contained in analytic}
\rb(M\subset N,\alpha)\subset\rB(M\subset N,\alpha).
\end{equation}
\end{proposition}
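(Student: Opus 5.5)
The plan is to show that $\B(\rL^2(G))\ovt\rB(M\subset N,\alpha)$ contains $\cC(M\subset N,\alpha)$. The minimality in the definition of $\rb(M\subset N,\alpha)$ then gives \eqref{algebraic bicentralizer contained in analytic}. Since a von Neumann subalgebra $B\subset N$ satisfies $X\in\B(\rL^2(G))\ovt B$ as soon as all slices $x_{\xi,\xi'}=(\omega_{\xi,\xi'}\otimes\id)(X)$ lie in $B$, it suffices to prove the following: for $X\in\cC(M\subset N,\alpha)$ and $\xi,\xi'\in\rL^2(G)$, the slice $x=x_{\xi,\xi'}$ belongs to $\rB(M\subset N,\alpha)$. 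By density and norm continuity of slicing, we may take $\xi,\xi'$ supported in a compact set $K\Subset G$. By Proposition~\ref{relative bicentralizer with ultrapower}, it is then enough to show that $\lim_{i\to\omega}\|xv_i-v_ix\|_2=0$ for every unitary $(v_i)^\omega\in M^{\omega,\alpha}$, with representatives $v_i\in\cU(M)$.

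The key estimate runs as follows. Since $X$ commutes with $\alpha(v_i)$, we have
\[
X-(1\otimes v_i)X(1\otimes v_i^*)=\bigl[X,\,1\otimes v_i-\alpha(v_i)\bigr](1\otimes v_i^*).
\]
Work in the standard representation on $\rL^2(G)\otimes\rL^2(N)$ and test against vectors $\zeta=\eta\otimes\widehat{y}$ and $\zeta'=\eta'\otimes\widehat{z}$, with $\eta,\eta'$ supported in $K$ and $y,z\in N$. The operator $1\otimes v_i-\alpha(v_i)$ is decomposable, with fibre $v_i-\alpha_{g^{-1}}(v_i)$ at $g$. Hence, using $\|ab\|_2\leq\|a\|_2\|b\|$, we get
\[
\bigl\|(1\otimes v_i-\alpha(v_i))(\eta\otimes v_i^*\widehat{y})\bigr\|
\leq\|\eta\|\,\|y\|\sup_{g\in K^{-1}}\|\alpha_g(v_i)-v_i\|_2,
\]
and similarly on the left for $\zeta'$ with the adjoint. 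Both bounds tend to $0$ along $\omega$ by the definition of $M^{\omega,\alpha}$. So the matrix coefficient $\langle (X-(1\otimes v_i)X(1\otimes v_i^*))\zeta,\zeta'\rangle$ tends to $0$. Such vectors are total and the net is bounded, so $X-(1\otimes v_i)X(1\otimes v_i^*)\to0$ weakly along $\omega$. Slicing by $\omega_{\xi,\xi'}$, which commutes with conjugation by $1\otimes v_i$, gives $v_ixv_i^*\to x$ weakly along $\omega$.

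Finally, we upgrade weak convergence to $\|\cdot\|_2$-convergence using traciality:
\[
\|xv_i-v_ix\|_2^2=\|v_ixv_i^*-x\|_2^2=2\|x\|_2^2-2\,\mathrm{Re}\,\tau(x^*v_ixv_i^*)\longrightarrow 0,
\]
since $\tau(x^*v_ixv_i^*)\to\tau(x^*x)$ by weak convergence. Hence $x\in(M^{\omega,\alpha})'\cap N=\rB(M\subset N,\alpha)$, which completes the argument.

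The main obstacle is this upgrade from weak to $2$-norm convergence. Naively, the commutation $[X,\alpha(v_i)]=0$ only transfers weakly, because $\alpha(v_i)$ approximates $1\otimes v_i$ only on compactly supported vectors and not in any ultrapower of $\B(\rL^2(G))\ovt N$. The device of conjugating, rather than commuting, is what allows us to recover the $\|\cdot\|_2$ estimate from the trace. One has to check that the right multiplication by $1\otimes v_i^*$ keeps the test vectors uniformly controlled, which holds because $\|v_i^*y\|\leq\|y\|$.
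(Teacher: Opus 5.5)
Your argument is correct and is essentially the paper's proof: approximate $\alpha(v_i)$ by $1\otimes v_i$ on tensors whose first leg has compact support, use $[X,\alpha(v_i)]=0$, and then slice. The one difference is that the paper notes that $D_i=1\otimes v_i-\alpha(v_i)\to 0$ strongly$^*$ on all of $\rL^2(G)\otimes\rL^2(N)$ (convergence on a dense set plus $\|D_i\|\leq 2$), so $[X,1\otimes v_i]\zeta=XD_i\zeta-D_iX\zeta\to 0$ for each fixed $\zeta$, and slicing at $\xi\otimes\widehat{1}$ gives $\|[x,v_i]\|_2\to 0$ directly; the weak-to-$\|\cdot\|_2$ upgrade you call the main obstacle is therefore unnecessary, although your trace argument for it is valid.
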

\begin{proof}
Indeed, let $(v_i)_i$ be a net in $\cU(M)$ which is asymptotically
$\alpha$-invariant, uniformly on compact subsets of $G$. On
$\rL^2(G)\otimes\rL^2(N)$, one has
\[
\alpha(v_i)-1\otimes v_i\longrightarrow0
\quad\text{strongly}^*.
\]
To check this, first test on tensors whose first coordinate has compact
support and whose second coordinate belongs to $N$. Compact uniform
invariance gives convergence on these vectors, and their density and
the uniform operator norm bound give the assertion. Every
$X\in\cC(M\subset N,\alpha)$ commutes with $\alpha(v_i)$, so
$[X,1\otimes v_i]\to0$ strongly$^*$. Taking normal slices on the first
tensor factor shows that each slice of $X$ asymptotically commutes
with $v_i$ in $\|\cdot\|_2$. Such slices therefore belong to
$\rB(M\subset N,\alpha)$. They generate the relative algebraic
bicentralizer, proving \eqref{algebraic bicentralizer contained in analytic}.
\end{proof}

We make the following conjecture that generalizes Conjecture \ref{letterconj trivial bicentralizer}.
\begin{conjecture}
Let $(M,\tau) \subset (N,\tau)$ be a tracial inclusion of von Neumann algebras. Let $\alpha : G \curvearrowright (M,\tau)$ be a continuous trace-preserving action of a locally compact group $G$. Suppose that $G$ is amenable. Then
$$ \rB(M \subset N,\alpha)=\rb(M \subset N,\alpha).$$
\end{conjecture}

The following result extends the discrete-group argument of
\cite[Lemmas~4.2 and~4.3]{PSV20}.  The compact open subgroup allows us
to combine Popa's free-independence argument with the discrete
homogeneous space $G/K$.

\begin{theorem}\label{compact open amenable trivial bicentralizer}
Let $G$ be an amenable locally compact group admitting a
compact open subgroup $K<G$.  Let $M$ be a $\II_1$ factor and let $\alpha:G\curvearrowright M$ be a strictly outer action.  Then
\[
 \rB(M,\alpha)=\C1.
\]
\end{theorem}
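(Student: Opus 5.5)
The plan is to reduce to the discrete-group argument of \cite[Lemmas~4.2 and~4.3]{PSV20}, using the compact open subgroup $K$ to pass to the fixed-point algebra $M^K$. By Proposition~\ref{relative bicentralizer with ultrapower}, we have $\rB(M,\alpha)=(M^{\omega,\alpha})'\cap M$. So it suffices to produce enough elements of $M^{\omega,\alpha}$ to force the relative commutant in $M$ to be trivial.

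\textbf{Step 1: reduction to $M^K$.} Since $K$ is compact and open, a net is asymptotically $\alpha$-invariant uniformly on compacts as soon as it is exactly $K$-invariant and asymptotically invariant under finitely many coset representatives. Indeed, every compact subset of $G$ is covered by finitely many cosets $gK$. So $M^{\omega,\alpha}$ contains all ultraproducts $(x_i)^\omega$ with $x_i\in M^K$ that are asymptotically invariant under the induced action of $G$ on $M^K$. This action factors through a "discrete" object in the following sense: for $g\in G$ the automorphism $\alpha_g$ maps $M^K$ onto $M^{gKg^{-1}}$, and $M^K\cap M^{gKg^{-1}}$ has finite index type relations.

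Strict outerness gives $M'\cap(M\rtimes G)=\C1$. Restricting to $K$ and using $M\rtimes K\subset M\rtimes G$, we get that $\alpha|_K$ is strictly outer, hence minimal. So $M^K$ is a $\II_1$ factor with $(M^K)'\cap M=\C1$.

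\textbf{Step 2: Popa's approximate free independence.} Let $a\in\rB(M,\alpha)$ with $\tau(a)=0$; we will show $a=0$. Following \cite{PSV20}, I will use amenability of $G$ through a Følner sequence of finite sets $F\subset G/K$ of cosets (equivalently, a Følner net for the left action on $G/K$, which exists by amenability of $G$ since $K$ is compact). Given a finite set $S$ of coset representatives and $\varepsilon>0$, I want a unitary $u\in M^K$ with the following two properties.
\begin{enumerate}[\rm (a)]
\item The elements $\alpha_g(u)$, for $g$ in a large Følner set $F$, are approximately free from one another and from $a$, relative to the appropriate algebras.
\item The average $v=|F|^{-1/2}\sum_{g\in F}\alpha_g(u)$, suitably normalised (or its polar part), is almost invariant under $S$ by the Følner property. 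Approximate freeness makes it approximately a unitary multiple of a semicircular-like element.
\end{enumerate}
Popa's theorem \cite{PSV20} requires, for this, that the subalgebras $\alpha_g(M^K)$, $g\in F$, be in "general position" relative to one another in $M$. This is where strict outerness enters: for $g\notin K$, there is no nonzero $x\in M$ with $\alpha_g(y)x=xy$ for all $y\in M^K$, since such an $x$ lies in $M'\cap(M\rtimes G)$ after multiplying by $u_g$ and averaging over $K$. This is exactly the nonintertwining hypothesis needed to find $u\in M^K$ with $E_{M^K}(\alpha_g(u)^*x\,\alpha_h(u))$ small for prescribed finite data.

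\textbf{Step 3: conclude.} Once $v_i\in M$ is constructed with $\sup_{g\in K_i}\|\alpha_g(v_i)-v_i\|_2\to0$ and $v_i$ approximately free from $a$, the relation $\|[v_i,a]\|_2\to0$ forced by $a\in\rB(M,\alpha)$ contradicts freeness unless $\|a\|_2=0$. Freeness gives $\|[v,a]\|_2^2\approx 2\|v\|_2^2\|a\|_2^2$ for centred free elements. Hence $\rB(M,\alpha)=\C1$.

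\textbf{Main obstacle.} I expect the main obstacle to be Step 2: transferring Popa's intertwining-avoidance lemma to the family $\{\alpha_g(M^K)\}_{g\in F}$. These algebras are indexed by cosets but are only defined up to the conjugates $gKg^{-1}$, which need not equal $K$. I would first try replacing $K$ by a smaller compact open subgroup $K_0\subset\bigcap_{g\in S}gKg^{-1}$ so that everything is defined on $M^{K_0}$. I would then check that the Følner averaging still works on $G/K_0$.
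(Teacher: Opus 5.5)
Your architecture matches the paper's: minimality of $\alpha|_K$ and proper outerness relative to $K$ from strict outerness, a free Haar unitary coming from $M^K$ via Popa, F\o lner averaging over $G/K$, and the contradiction $\|[v,a]\|_2^2\approx2\|v\|_2^2\|a\|_2^2$. The gap is Step~2, which carries the whole proof and which you assert rather than derive.

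Popa's theorem concerns a single inclusion $Q\subset\mathcal M$. It yields a Haar unitary in $Q^\omega$ free from $\mathcal M$, amalgamated over $Q'\cap\mathcal M$ when this is finite dimensional. It does not by itself make the translates $\alpha_{g_1}(u),\ldots,\alpha_{g_n}(u)$ free from one another and from $M$. The condition you propose to arrange, smallness of $\rE_{M^K}(\alpha_g(u)^*x\,\alpha_h(u))$, controls only words of length two in $u$, not all alternating words.

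The missing idea is to package the $n$ embeddings into one. Set $\Delta(x)=\operatorname{diag}(\alpha_{g_1}(x),\ldots,\alpha_{g_n}(x))\in\mathbb M_n(M)$ for $x\in M^K$. Your non-intertwining condition, applied to $g_j^{-1}g_i\notin K$, kills the off-diagonal entries of $\Delta(M^K)'\cap\mathbb M_n(M)$. Minimality makes the diagonal entries scalar, so this relative commutant is $D_n(\C)$. Popa's amalgamated theorem \cite[Corollary~4.4(1)]{Po14} then gives a Haar unitary $v\in(M^K)^\omega$ with $\Delta(v)$ free from $\mathbb M_n(M)$ over $D_n(\C)$. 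Evaluating cyclically reduced words through the matrix units $e_{i_{j-1},i_j}\otimes a_j$ converts this into scalar freeness of $M,\alpha^\omega_{g_1}(v),\ldots,\alpha^\omega_{g_n}(v)$. A diagonal argument over finite sets of cosets then gives one $u$ for all of $G/K$.

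The obstacle you single out is not one. For $u\in M^K$, $\alpha_g(u)$ depends only on $gK$, and $\alpha_h(\alpha_g(u))=\alpha_{hg}(u)$. So $G$ simply permutes the family indexed by $G/K$, and no shrinking of $K$ is needed.

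Two smaller points.
\begin{itemize}
\item Approximate freeness of finitely many moments in $M$ gives no operator-norm bound on $v=|F|^{-1/2}\sum_{g\in F}\alpha_g(u)$, and the definition of $\rB(M,\alpha)$ needs bounded nets. Taking the polar part does not fix this, since it is not $\|\cdot\|_2$-continuous. Work instead with exact freeness in $M^\omega$: then $v$ is a normalized sum of free Haar unitaries, so $\|v\|\leq2$ and it has bounded lifts. Alternatively, as the paper does, average the free semicircular elements $\alpha^\omega_g(uau^*)$ with $a\in M^K$ semicircular; their normalized F\o lner sums are exactly semicircular.
\item Your one-line derivation of relative proper outerness is not a proof as it stands. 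The element $x^*u_g$ commutes only with $M^K$, and averaging over $K$ does not upgrade this to commutation with $M$. The paper quotes \cite[Lemma~1.6]{Ni26} for this fact.
\end{itemize}
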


\begin{proof}
We first construct a free Bernoulli shift inside $M^\omega$.  We claim
that there exists a Haar unitary
\[
 u\in\cU((M^K)^\omega)
\]
such that the family
\[
 M,\qquad \alpha_g^\omega(u)\quad(gK\in G/K)
 \tag{\(*\)}
\]
is freely independent.  Notice that $\alpha_g^\omega(u)$ depends only
on the coset $gK$, since $u$ is fixed by $K$.

We explain the matrix argument proving the claim.  Let
$g_1K,\ldots,g_nK$ be distinct cosets and consider
\[
 \Delta:M^K\longrightarrow \mathbb M_n(M),
 \qquad
 \Delta(x)=\operatorname{diag}
 (\alpha_{g_1}(x),\ldots,\alpha_{g_n}(x)).
\]
Strict outerness implies that $\alpha|_K$ is minimal and that $\alpha$
is properly outer relative to $K$ by \cite[Lemma~1.6]{Ni26}.  We
therefore have
\[
 \Delta(M^K)'\cap \mathbb M_n(M)=D_n(\C).
 \tag{\(**\)}
\]
Indeed, if $a=(a_{ij})$ belongs to the relative commutant, then
\[
 a_{ij}\alpha_{g_j}(x)=\alpha_{g_i}(x)a_{ij}
 \qquad(x\in M^K).
\]
After applying $\alpha_{g_j}^{-1}$, a nonzero off-diagonal entry would
intertwine the identity representation of $M^K$ with
$\alpha_{g_j^{-1}g_i}|_{M^K}$.  Relative proper outerness would then
give $g_j^{-1}g_i\in K$, contrary to the choice of the cosets.  The
diagonal entries are scalar because $(M^K)'\cap M=\C1$.

We apply \cite[Theorem~0.1(a)]{Po14}, or its amalgamated formulation
\cite[Corollary~4.4(1)]{Po14}, to this inclusion. Its hypotheses hold
because $\Delta(M^K)$ is a diffuse factor and its relative commutant
$D_n(\C)$ is finite dimensional. Thus we can find a Haar unitary
\[
 v\in\cU((M^K)^\omega)
\]
such that $\Delta(v)$ is freely independent from $\mathbb M_n(M)$ relative to
$D_n(\C)$.  More precisely, inside $\mathbb M_n(M^\omega)$, the
algebras
\[
 \rW^*(D_n(\C),\Delta(v))
 \quad\text{and}\quad \mathbb M_n(M)
\]
are free with amalgamation over $D_n(\C)$ with respect to the diagonal
conditional expectation.

This relative freeness implies that
\[
 M,\quad \alpha_{g_1}^\omega(v),\ldots,
 \alpha_{g_n}^\omega(v)
 \tag{\(***\)}
\]
are freely independent in the usual scalar-valued sense. Put
$V=\Delta(v)$ and $v_i=\alpha_{g_i}^\omega(v)$.
Consider a cyclically reduced word
\[
w=a_1v_{i_1}^{k_1}\cdots a_rv_{i_r}^{k_r},
\qquad k_j\in\Z\setminus\{0\}.
\]
Each $a_j$ is either a centered element of $M$ or $1$. In the
latter case, reduction means that $i_{j-1}\ne i_j$, with
$i_0=i_r$. Set $b_j=e_{i_{j-1},i_j}\otimes a_j$. Then
\[
\rE_{D_n(\C)}(b_j)=0,
\qquad
\rE_{D_n(\C)}(V^{k_j})=0.
\]
The first equality follows from the zero scalar trace of a diagonal
coefficient or from its off-diagonal position. The second follows
because $v$ is Haar. Matrix multiplication gives
\[
b_1V^{k_1}\cdots b_rV^{k_r}=e_{i_r,i_r}\otimes w.
\]
Amalgamated freeness therefore gives $\tau(w)=0$. Traciality and
induction on word length reduce arbitrary centered reduced words
to these cyclic words and the individual Haar moments. This proves
\((***)\). Diagonalizing the finite moment tests over the finite subsets of
$G/K$ and the finite subsets of the unit ball of $M$ for $\|\cdot\|_2$ gives a
Haar unitary $u\in(M^K)^\omega$ satisfying \((*)\).

For $gK\in G/K$, put
\[
 P_{gK}=\alpha_g^\omega(uMu^*)=\alpha_g^\omega(u) M \alpha_g^{\omega}(u)^*.
\]
This is well defined because $u$ is fixed by $K$.  The freeness in
\((*)\) implies that the family consisting of $M$ and the algebras
$P_{gK}$ is freely independent.
Put
\[
 P=\bigvee_{s\in G/K}P_s\subset M^\omega.
\]
Then $P$ is freely independent from $M$. Since $K$ is
open and $u$ is $K$-fixed, this algebra is contained in $M_\alpha^\omega$.
The action permutes the freely independent factors in
\[
 P\cong *_{s\in G/K}(M,\tau).
\]
Since $(M^K)'\cap M=\C1$, the algebra $M^K$ is a diffuse factor.
Choose a centered semicircular element $a=a^*\in M^K$ of variance
one and put
\[
 a_{gK}=\alpha_g^\omega(uau^*)\in P_{gK}.
\]
Both $u$ and $a$ are $K$-fixed, so this definition is independent
of the representative of $gK$ and
$\alpha_h^\omega(a_{gK})=a_{hgK}$.
Let $L\Subset G$ and let $\varepsilon>0$. Amenability,
in the Følner form for the discrete homogeneous space $G/K$, provides
a nonempty finite set $S\subset G/K$ such that
\[
 \sup_{g\in L}\frac{|gS\mathbin\triangle S|}{|S|}<\varepsilon^2.
\]
The element
\[
 b_S=|S|^{-1/2}\sum_{s\in S}a_s
\]
is again a centered semicircular element of variance one, and freeness
gives
\[
 \sup_{g\in L}\|\alpha_g^\omega(b_S)-b_S\|_2^2
 =\sup_{g\in L}\frac{|gS\mathbin\triangle S|}{|S|}
 <\varepsilon^2.
\]
Applying the usual diagonal selection to the elements $b_S$, using the
preceding compact-uniform estimate together with their joint moments
with finite subsets of $M$, we obtain an element
\[
 b\in M^{\omega,\alpha}
\]
which is a centered semicircular element of variance one and is freely
independent from the constant copy of $M$. But, by Proposition~\ref{relative bicentralizer with ultrapower}, $b$ must commute with $\rB(M,\alpha)$. This forces $\rB(M,\alpha)=\C$.
\end{proof}

We also verify the conjecture for free Bogoljubov actions.

\begin{theorem}\label{amenable Bogoljubov bicentralizer}
Let $G$ be an amenable locally compact group and let
$\pi:G\to\cO(H_{\R})$ be a continuous orthogonal representation
with $\dim H_{\R}\geq2$. Let
$\alpha:G\curvearrowright M=\Gamma(H_{\R})''$
be the associated free Bogoljubov action. Then
\[
\rB(M,\alpha)=\C1.
\]
\end{theorem}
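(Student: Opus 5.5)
The plan mirrors the proof of Theorem~\ref{compact open amenable trivial bicentralizer}. By Proposition~\ref{relative bicentralizer with ultrapower}, $\rB(M,\alpha)=(M^{\omega,\alpha})'\cap M$. It therefore suffices to produce an element $b\in M^{\omega,\alpha}$ that is a centered semicircular of variance one, freely independent from the constant copy of $M$. Indeed, if $a\in M$ commutes with such a $b$, then freeness gives $\|ba-ab\|_2^2=2\|a-\tau(a)\|_2^2$. So $a=\tau(a)1$, and $\rB(M,\alpha)=\C1$.

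The almost invariant semicircular elements will come from second quantization. Write $H=H_\R\otimes\C$ and $s(\xi)$ for the semicircular element attached to $\xi\in H_\R$. Then $\alpha_g(s(\xi))=s(\pi_g\xi)$, and for unit vectors $\xi$ we have $\|\alpha_g(s(\xi))-s(\xi)\|_2=\|\pi_g\xi-\xi\|$. Hence unit vectors $\xi_i\in H_\R$ with $\sup_{g\in K}\|\pi_g\xi_i-\xi_i\|\to0$ yield $(s(\xi_i))^\omega\in M^{\omega,\alpha}$. Such almost invariant vectors need not exist: $\pi$ may be mixing, as in the case $G=\R$ with a Lebesgue-type spectrum. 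I would therefore first amplify the representation. The Koopman representation of $\pi^{\otimes}$ on the free Fock space contains $\pi\otimes\bar\pi$, and amenability of $G$ gives $1_G\prec\pi\otimes\bar\pi$ (Bekka's criterion). I would realize this at the level of operators. For $\eta\in H_\R$ consider the elements $x_\eta=\sum_k s(\zeta_k)s(\zeta_k')$, or more concretely Wick products $W(\xi\otimes\eta)$ in the degree-two chaos, which lie in $M$ and whose $\rL^2$ classes are exactly the tensors in $H^{\otimes2}$. Approximately invariant unit vectors in $H\otimes\bar H\cong H\otimes H$ (real structure via $H_\R$) give centered elements $y_i\in M$ in the second chaos with $\sup_{g\in K}\|\alpha_g(y_i)-y_i\|_2\to0$. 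To control operator norms, I would take the $y_i$ as truncations $P_R(y_i)$ (the metric projection from the proof of Proposition~\ref{L2 equicontinuous ultrapower}, which is $\alpha$-equivariant), or else use the Haagerup inequality on the second chaos, which bounds $\|W(\zeta)\|\le3\|\zeta\|$.

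The main obstacle is the freeness from $M$: an arbitrary almost invariant chaos element need not be asymptotically free from $M$. The fix I would try is to push the vectors off to infinity. Since $\dim H_\R\ge2$, I would choose the almost invariant tensors $\zeta_i\in H\otimes H$ to be asymptotically orthogonal, in the weak sense, to every fixed finite-dimensional subspace in each leg. Concretely, I would require $\langle\zeta_i,\xi\otimes\cdot\rangle\to0$ and $\langle\zeta_i,\cdot\otimes\xi\rangle\to0$ for each fixed $\xi$, i.e.\ weak mixing in each leg. This is achievable by replacing $\pi\otimes\bar\pi$ with $\pi\otimes\bar\pi\otimes(\pi\otimes\bar\pi)^{\otimes n}$ for large $n$, or by averaging Følner-type translates of a fixed vector.

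For $G$ noncompact amenable, the standard fact that $\pi\otimes\bar\pi$ weakly contains $1_G$ through vectors tending weakly to zero (when $\pi$ has no finite-dimensional subrepresentations) handles the weakly mixing part. A finite-dimensional part of $\pi$ gives genuinely almost invariant vectors directly, and I would treat it separately: either it is trivial, or it factors through a compact group, whose image one can still exploit via the infinite free product structure.

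Granting weak nullity, a Voiculescu-type asymptotic freeness computation shows that the ultraproduct elements are free from $M$. Mixed moments of Wick words in $\zeta_i$ and fixed $\xi$'s reduce to pairings, and every pairing linking $\zeta_i$ to a fixed vector vanishes in the limit. Finally, I would normalize to get a semicircular limit. One option is to use sums $|S|^{-1/2}\sum s(\cdot)$ over many mutually orthogonal almost invariant pieces, as in the proof of Theorem~\ref{compact open amenable trivial bicentralizer}, and invoke the free central limit theorem. Alternatively, nonsemicircularity can be ignored: any non-scalar element free from $M$ that commutes with $a$ already forces $a\in\C1$.
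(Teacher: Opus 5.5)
\textbf{The case $\dim H_\R<\infty$.} This case is allowed by the statement, and your argument fails there. Every fixed chaos, in particular $H\otimes H$, is then finite-dimensional. So a bounded net of degree-two Wick elements $W(\zeta_i)$ converges in norm along $\omega$, and its ultraproduct is the constant element $W(\lim_{i\to\omega}\zeta_i)\in M$. An element of $M$ is free from $M$ only if it is scalar. Equivalently, a unit Hilbert--Schmidt operator on a finite-dimensional space cannot tend strongly to $0$, so the leg-wise nullity you need is impossible; the invariant tensor $d^{-1/2}\sum_k e_k\otimes e_k$ just gives a fixed element of $M$.

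Your fallback does not close this gap. A nontrivial finite-dimensional $\pi$ need not have almost invariant vectors in degree one: take $G=\R$ rotating $H_\R=\R^2$. And ``exploiting the compact image via the infinite free product structure'' is not an argument. The paper uses a different input here. The group $K=\overline{\pi(G)}$ is compact, and the free Bogoljubov action of $K$ is strictly outer \cite[Corollary~B(1)]{MV23}, hence minimal \cite[Proposition~6.2]{Va01}. Therefore $\rB(M,\alpha)\subset(M^\alpha)'\cap M=(M^K)'\cap M=\C1$, using only exactly fixed elements.

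\textbf{The case $\dim H_\R=\infty$.} Here your outline matches the paper's: second-chaos elements, the bound $\|W(\eta)\|\leq3\|\eta\|$, vanishing boundary contractions, and $\lim_j\|az_j-z_ja\|_2^2=2\|a-\tau(a)1\|_2^2$. That identity makes semicircularity and full freeness unnecessary. However, the step that produces spread-out almost invariant tensors is not secured.
\begin{itemize}
\item What you need is $\|T_j\xi\|\to0$ and $\|T_j^*\xi\|\to0$ for the associated Hilbert--Schmidt operators. Weak nullity of $\zeta_j$ is not enough: $e_1\otimes e_j$ is weakly null, yet contracting its first leg with $e_1$ gives a unit vector.
\item Averaging translates of a fixed vector of $H\otimes\bar H$ need not keep its norm away from $0$.
\item Passing to $(\pi\otimes\bar\pi)^{\otimes n}$ leaves the second chaos.
\end{itemize}
Bekka's almost invariant vectors are in fact strongly null when $\pi$ has no finite-dimensional subrepresentation. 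Indeed, a weak$^*$ limit of $\Tr(T_j^*T_j\,\cdot\,)$ is an $\Ad\pi$-invariant state, and its normal part would yield a finite-dimensional subrepresentation. But when $\pi$ is a sum of finite-dimensional representations you would still need a separate construction.

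The paper avoids this case split. Take a Reiter net $(f_i)$ and real rank-$n$ projections $P_n$, and set $A_{i,n}=\int_Gf_i(g)\pi_g(P_n/n)\pi_g^*\,\rd g$. By positivity these operators keep trace one, and by convexity their operator norm is at most $1/n$. By Powers--St{\o}rmer, $T_j=A_{i,n}^{1/2}$ is almost invariant in $\|\cdot\|_{\mathrm{HS}}$ with $\|T_j\|\to0$. This works for every $\pi$ on an infinite-dimensional space.
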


\begin{proof}
Suppose first that $H_{\R}$ is finite dimensional. Put
$K=\overline{\pi(G)}\subset\cO(H_{\R})$.
The free Bogoljubov action of $K$ on $M$ is strictly outer by
\cite[Corollary~B(1)]{MV23}. Since $K$ is compact, this action is
minimal by \cite[Proposition~6.2]{Va01}. Thus
\[
\rB(M,\alpha)\subset(M^\alpha)'\cap M
=(M^K)'\cap M=\C1.
\]

Now suppose that $H_{\R}$ is infinite dimensional, and put
$H_{\C}=H_{\R}\otimes_{\R}\C$.
Choose a Reiter net $(f_i)_i$ of nonnegative functions in
$\rL^1(G)$ with integral one, so that
\[
\sup_{s\in K}\|f_i(s^{-1}\,\cdot\,)-f_i\|_1\longrightarrow0
\qquad(K\Subset G).
\]
For each $n\geq1$, choose a real orthogonal projection $P_n$
of rank $n$, and define
\[
A_{i,n}=\int_G f_i(g)\pi_g\frac{P_n}{n}\pi_g^*\,\rd g.
\]
These positive trace-class operators satisfy
\[
\Tr(A_{i,n})=1,\qquad \|A_{i,n}\|\leq n^{-1},
\]
and
\[
\sup_{s\in K}\|\pi_sA_{i,n}\pi_s^*-A_{i,n}\|_1
\leq\sup_{s\in K}\|f_i(s^{-1}\,\cdot\,)-f_i\|_1.
\]
They commute with the real conjugation on $H_{\C}$.
On the product directed set, write $T_j=A_{i,n}^{1/2}$.
The Powers--St{\o}rmer inequality gives
\[
\|T_j\|_{\mathrm{HS}}=1,\qquad
\|T_j\|\longrightarrow0,\qquad
\sup_{s\in K}\|\pi_sT_j\pi_s^*-T_j\|_{\mathrm{HS}}
\longrightarrow0.
\]
Diagonalize $T_j$ over $H_{\R}$ as
\[
T_j=\sum_k\lambda_{j,k}
\langle\,\cdot\,,e_{j,k}\rangle e_{j,k},
\]
where $\lambda_{j,k}\geq0$ and $(e_{j,k})_k$ is orthonormal. Put
\[
\eta_j=\sum_k\lambda_{j,k}e_{j,k}\otimes e_{j,k},
\qquad
z_j=W(\eta_j)
=\sum_k\lambda_{j,k}\bigl(s(e_{j,k})^2-1\bigr).
\]
Here $s(e)=\ell(e)+\ell(e)^*$, where $\ell(e)$ is the left
creation operator on the full Fock space. The Wick formula gives
\[
\begin{aligned}
z_j&=C_j+C_j^*+D_j,\\
C_j&=\sum_k\lambda_{j,k}\ell(e_{j,k})^2,\\
D_j&=\sum_k\lambda_{j,k}\ell(e_{j,k})\ell(e_{j,k})^*.
\end{aligned}
\]
Orthogonality of the creation ranges gives
$\|C_j\|=\|T_j\|_{\mathrm{HS}}=1$ and
$\|D_j\|=\|T_j\|\leq1$. The series converge in operator norm,
so $\|z_j\|\leq3$. Vectorization intertwines conjugation on
Hilbert--Schmidt operators with $\pi\otimes\pi$. Consequently,
\[
\sup_{g\in K}\|\alpha_g(z_j)-z_j\|_2
=\sup_{g\in K}\|\pi_gT_j\pi_g^*-T_j\|_{\mathrm{HS}}
\longrightarrow0.
\]

Let $\zeta=h_1\otimes\cdots\otimes h_m$, where $m\geq1$ and
$h_1,\ldots,h_m\in H_{\R}$. The free Wick product formula gives
\[
W(\zeta)z_j\Omega=\zeta\otimes\eta_j+o(1),
\qquad
z_jW(\zeta)\Omega=\eta_j\otimes\zeta+o(1)
\]
in Fock space norm. Indeed, contracting one leg of $\eta_j$ against
a fixed vector $h$ gives $T_jh$, whose norm is at most
$\|T_j\|\|h\|$. Contracting both legs gives a scalar bounded by
$\|T_j\|\|h\|\|k\|$. Every nonempty boundary contraction therefore
tends to zero. For any two fixed elementary tensors $\zeta,\zeta'$
of the same positive degree, contraction of the concatenations gives
\[
\langle\zeta\otimes\eta_j,\eta_j\otimes\zeta'\rangle
\longrightarrow0.
\]
For degree one this follows from $\|T_j^2\|\to0$, and for higher
degrees each of the two boundary contractions contributes a factor
bounded by $\|T_j\|$. The same conclusions hold for finite complex
linear combinations of real elementary tensors. Orthogonality of
distinct homogeneous degrees now gives, for every Wick polynomial $a$,
\[
\lim_j\|az_j-z_ja\|_2^2
=2\|a-\tau(a)1\|_2^2.
\]
Since $\|z_j\|\leq3$, approximation in $\rL^2(M)$ extends this
identity to every $a\in M$. If $a\in\rB(M,\alpha)$, the bounded
almost invariant net $(z_j)_j$ satisfies
$\|az_j-z_ja\|_2\to0$. Hence $a=\tau(a)1$.
\end{proof}

The equicontinuous ultraproduct construction in
\cite[Theorem~3.4]{HI20} also gives a canonical trace-preserving
equivariant embedding
\[
\Gamma((H_{\R})_{\pi,\omega})''
\longrightarrow M^\omega_\alpha,
\qquad
s((\xi_i)^\omega)\longmapsto(s(\xi_i))^\omega.
\]
Here $(H_{\R})_{\pi,\omega}$ denotes the equicontinuous part of
the Hilbert space ultrapower. The map preserves the vacuum moments,
and equicontinuity follows from
$\|s(\pi_g\xi_i)-s(\xi_i)\|_2=\|\pi_g\xi_i-\xi_i\|$.
For general amenable groups, the proof above constructs the required
fixed element directly in the algebra ultrapower using second Wick
words.

\section{Connection with the Rokhlin property}
\label{sec:rokhlin-property}

We characterize triviality of the bicentralizer and the Rokhlin property
through weak containment of equivariant correspondences. The main
consequence is that an action of an amenable locally compact group on an
injective $\II_1$ factor has the Rokhlin property precisely when it is
faithful and has trivial bicentralizer
(Theorem~\ref{Rokhlin and trivial bicentralizer}). We also identify the
Rokhlin property with faithfulness of the induced action on the
equicontinuous central sequence algebra.

The proof uses the ultrapower implementation technique to translate weak containment
into representations carried by central sequences. Induction and Fell
absorption then allow us to compare the two properties, with amenability
of the group and of the factor entering in opposite directions.

\subsection{Equivariant correspondences}
We introduce the theory of equivariant correspondences. A detailed exposition is given in \cite{DCDR24} but since it is formulated in the language of quantum groups, we give a self-contained exposition here. See also \cite{KLP10} and \cite{BCM18}.

Let $\alpha:G\curvearrowright M$ and $\beta:G\curvearrowright N$ be
two continuous actions of a locally compact group $G$ on von Neumann algebras $M$ and $N$. An equivariant $\alpha$--$\beta$--correspondence is a
quadruple
\[
 \Theta=(H,\lambda,\rho,U),
\]
where $\lambda :M\to\B(H)$ and
$\rho:N^{\op}\to\B(H)$ are commuting normal unital
representations and $U:G\to\cU(H)$ is a strongly
continuous representation such that
\[
 \begin{aligned}
 U_{g}\lambda(x)U_{g}^*
 &=\lambda(\alpha_g(x)),\\
 U_{g}\rho(y^{\op})U_{g}^*
 &=\rho(\beta_g(y)^{\op})
 \end{aligned}
\]
for $x\in M$, $y\in N$ and $g\in G$.  We denote the category of all $\alpha$--$\beta$--correspondences by
$\Cor(\alpha,\beta)$.

For an action $\alpha:G\curvearrowright M$, define the identity equivariant correspondence $\cI_\alpha \in \Cor(\alpha,\alpha)$ by
\[
 \cI_\alpha=(\rL^2(M),\lambda_M,\rho_M,U^\alpha)
\]
where $U^\alpha$ is the standard unitary implementation of $\alpha$ on $\rL^2(M)$.
Define the semicoarse equivariant correspondence $\cC_\alpha \in \Cor(\alpha,\alpha)$ by
\[
 \cC_\alpha=
 (\rL^2(M)\otimes\rL^2(M),\lambda_M\otimes1,
  1\otimes\rho_M,U^{\alpha}\otimes_G U^{\alpha}).
\]

If $\pi:G\to\cU(K)$ is a unitary representation and
$\Theta=(H,\lambda,\rho,U) \in \Cor(\alpha,\beta)$, we define the tensor product $\pi\otimes_G\Theta \in \Cor(\alpha,\beta)$ by
\[
 \pi\otimes_G\Theta
 =(K\otimes H,1\otimes\lambda,1\otimes\rho,
   \pi \otimes_G U).
\]

\subsection{Induction and Fell absorption principle}

Let $\Cor(M,N)$ denote the category of ordinary $M$--$N$
correspondences.  Forgetting $U$ defines the restriction functor
\[
 \Res : \Cor(\alpha,\beta)\longrightarrow\Cor(M,N) :
 (H,\lambda,\rho,U)\longmapsto(H,\lambda,\rho).
\]
In particular, $\Res(\cI_\alpha)=\cI_M$ and
$\Res(\cC_\alpha)=\cC_M$.

For an ordinary $M$--$N$ correspondence
$(H,\lambda,\rho) \in \Cor(M,N)$, define
\[
 \lambda^\alpha : M\longrightarrow\B(\rL^2(G,H)) :
 a\longmapsto
 \left(\xi\longmapsto
 \bigl(g\longmapsto\lambda(\alpha_{g^{-1}}(a))\xi(g)\bigr)\right),
\]
\[
 \rho^\beta : N^{\op}\longrightarrow\B(\rL^2(G,H)) :
 b^{\op}\longmapsto
 \left(\xi\longmapsto
 \bigl(g\longmapsto\rho(\beta_{g^{-1}}(b)^{\op})\xi(g)\bigr)\right),
 \]
and
\[
 \lambda_G \otimes 1 : G\longrightarrow\cU(\rL^2(G,H)) :
 g\longmapsto
 \left(\xi\longmapsto\bigl(h\longmapsto\xi(g^{-1}h)\bigr)\right).
\]
The resulting equivariant correspondence defines the induction
functor
\[
 \Ind : \Cor(M,N)\longrightarrow\Cor(\alpha,\beta) :
 (H,\lambda,\rho)\longmapsto
 (\rL^2(G,H),\lambda^\alpha,\rho^\beta,\lambda_G \otimes 1).
\]

We have the following correspondence version of Fell absorption principle.

\begin{proposition}
\label{equivariant Fell absorption}
For every $\Theta=(H,\lambda,\rho,U)\in\Cor(\alpha,\beta)$, there is a unitary equivalence
\[
 \lambda_G\otimes_G\Theta\cong\Ind(\Res\Theta).
\]
\end{proposition}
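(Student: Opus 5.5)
The approach is the standard Fell-absorption trick: write down an explicit unitary on $\rL^2(G)\otimes H=\rL^2(G,H)$ that untwists the diagonal representation $\lambda_G\otimes U$ into $\lambda_G\otimes 1$. We then check that the same unitary carries $1\otimes\lambda$ and $1\otimes\rho$ to $\lambda^\alpha$ and $\rho^\beta$. Concretely, I would define
\[
W:\rL^2(G,H)\longrightarrow\rL^2(G,H),\qquad (W\xi)(g)=U_{g}^*\,\xi(g)\qquad(g\in G).
\]
Since $g\mapsto U_g^*$ is strongly continuous and pointwise unitary, $W$ is a unitary with inverse $(W^*\xi)(g)=U_g\xi(g)$. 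Here $\lambda_G\otimes_G\Theta$ is the quadruple $(\rL^2(G)\otimes H,1\otimes\lambda,1\otimes\rho,\lambda_G\otimes U)$.

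\emph{Step 1: the group representations.} For $h\in G$,
\[
(W(\lambda_h\otimes U_h)\xi)(g)=U_g^*U_h\,\xi(h^{-1}g)=U_{h^{-1}g}^*\,\xi(h^{-1}g)=((\lambda_h\otimes1)W\xi)(g).
\]
Hence $W(\lambda_G\otimes U)W^*=\lambda_G\otimes1$.

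\emph{Step 2: the left and right actions.} For $a\in M$,
\[
(W(1\otimes\lambda(a))W^*\xi)(g)=U_g^*\lambda(a)U_g\,\xi(g)=\lambda(\alpha_{g^{-1}}(a))\xi(g),
\]
using the covariance relation $U_{g^{-1}}\lambda(a)U_{g^{-1}}^*=\lambda(\alpha_{g^{-1}}(a))$. This is exactly $\lambda^\alpha(a)$. The same computation with the covariance of $\rho$ gives $W(1\otimes\rho(b^{\op}))W^*=\rho^\beta(b^{\op})$. Therefore $W$ implements a unitary equivalence between $\lambda_G\otimes_G\Theta$ and $\Ind(\Res\Theta)$.

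There is essentially no obstacle. The only points requiring care are measure-theoretic: $W$ must be well defined on $\rL^2$ classes, which needs the measurability of $g\mapsto U_g^*\xi(g)$ and follows from strong continuity. We must also fix the convention $U_g^*$ rather than $U_g$, so that the left/right translation conventions match those fixed in the definition of $\Ind$ and the exponent $g^{-1}$ appearing in $\lambda^\alpha$.
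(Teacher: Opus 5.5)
Your proposal is correct and follows the paper's proof exactly: the paper uses the same unitary $\xi\mapsto\bigl(g\mapsto U_g^*\xi(g)\bigr)$ on $\rL^2(G,H)$ and verifies the same three intertwining relations from the covariance identities. You have filled in the direct computations that the paper leaves implicit.
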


\begin{proof}
Identify $\rL^2(G)\otimes H$ with
$\rL^2(G,H)$ and define
\[
 V : \rL^2(G)\otimes H \longrightarrow
 \rL^2(G,H) :
 \xi\longmapsto\bigl(g\longmapsto U_{g}^*\xi(g)\bigr).
\]
The covariance relations give
\[
 V(1\otimes\lambda(a))V^*=\lambda^\alpha(a),
 \qquad
 V(1\otimes\rho(b^{\op}))V^*=\rho^\beta(b^{\op}),
\]
and a direct computation gives
\[
 V(\lambda_G(g)\otimes U_{g})V^*=\lambda_G(g) \otimes 1.
\]
Thus $V$ implements the asserted equivalence.
\end{proof}

\subsection{Weak containment}

Let $\rW^*(\alpha,\beta)$ be the universal von Neumann algebra generated by $M \odot N^{\op}$ and a continuous unitary representation $u : G \rightarrow \rW^*(\alpha,\beta)$ such that the maps $x \mapsto x \odot 1$ and $y \mapsto 1 \odot y^{\op}$ are normal on $M$ and $N$ and
$$ u_g(x \odot y^{\op})u_g^*=\alpha_g(x) \odot \beta_g(y)^{\op}$$
for all $x \in M$, $y \in N$ and $g \in G$. 
For every $\alpha$--$\beta$--correspondence $\Theta=(H,\lambda,\rho,U)$ there is a unique normal representation 
$$\Pi_{\Theta} : \rW^*(\alpha,\beta) \rightarrow \B(H)$$
such that $$\Pi_{\Theta}(x \odot y^{\op})=\lambda(x)\rho(y^{\op}), \quad x \in M, \; y \in N,$$ and $$ \Pi_\Theta(u_g)=U_g, \quad g \in G.$$

Put
\[
 \begin{aligned}
 M_\alpha&=\{x\in M\mid g\mapsto\alpha_g(x)
 \text{ is norm continuous}\},\\
 N_\beta&=\{y\in N\mid g\mapsto\beta_g(y)
 \text{ is norm continuous}\}.
 \end{aligned}
\]
Then we have a natural $*$-morphism with weak$^*$-dense range
$$\bigl(M_\alpha\otimes_{\max}N_\beta^{\op}\bigr)
   \rtimes_{\alpha\otimes_G\beta^{\op}}G \rightarrow \rW^*(\alpha,\beta).$$
The range of this $*$-morphism is denoted by $\rC^*(\alpha,\beta)$. By definition, $\rC^*(\alpha,\beta)$ is a $\rC^*$-algebra and it is the norm-closed linear span of $$ \{ (a\odot b^{\op}) \cdot u(f) \mid a \in M_\alpha, b \in N_\beta \text{ and } f \in \rL^1(G) \}.$$

We will also need the following smoothing lemma. Note that the whole point of this lemma is that we do not suppose that $a \in M_\alpha$.
\begin{lemma} \label{producing element C* algebra}
    For every $a \in M$ and every $f, h \in \rL^1(G)$, we have $$u(f) (a \odot 1) u(h) \in \rC^*(\alpha,\beta).$$
    The same is true if we replace $a \odot 1$ by $1 \odot b^{\op}$ for $b \in N$.
\end{lemma}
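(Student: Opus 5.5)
Write $\pi_M:M\to\rW^*(\alpha,\beta)$ for $a\mapsto a\odot 1$, so that
\[
u_g\pi_M(a)u_g^*=\pi_M(\alpha_g(a)).
\]
The plan is to approximate $u(f)\pi_M(a)u(h)$ in norm by elements of the form $\pi_M(a')u(k)$ with $a'\in M_\alpha$ and $k\in\rL^1(G)$. Such elements already lie in $\rC^*(\alpha,\beta)$, since $1\in N_\beta$, and $\rC^*(\alpha,\beta)$ is norm closed.

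\textbf{Step 1 (reduction).} By density of $\rC_c(G)$ in $\rL^1(G)$ and the bound $\|u(f)\|\le\|f\|_1$, we may assume $f,h\in\rC_c(G)$.

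\textbf{Step 2 (moving $u(f)$ past $\pi_M(a)$).} Write
\[
u(f)\pi_M(a)=\int_G f(g)\,\pi_M(\alpha_g(a))\,u_g\,\rd g .
\]
The integral converges only weak$^*$, because $g\mapsto\alpha_g(a)$ is not norm continuous. Multiplying on the right by $u(h)$ gives
\[
u(f)\pi_M(a)u(h)=\int_G f(g)\,\pi_M(\alpha_g(a))\,u(h_g)\,\rd g,
\qquad h_g=h(g^{-1}\,\cdot\,)
\]
(up to the modular function factor). The map $g\mapsto u(h_g)$ is norm continuous, since translation is continuous in $\rL^1(G)$.

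\textbf{Step 3 (freezing $u(h_g)$).} Partition $\supp f$ into finitely many small Borel pieces $E_k$ and choose points $g_k\in E_k$. On each piece, replace $u(h_g)$ by $u(h_{g_k})$. The error is bounded in norm by
\[
\|f\|_1\,\|a\|\,\sup_k\sup_{g\in E_k}\|h_g-h_{g_k}\|_1,
\]
which is small. Here one uses that $\int_E\phi(g)\pi_M(\alpha_g(a))X_g\,\rd g$ has norm at most $\|a\|\int_E|\phi|\sup_g\|X_g\|$, a weak$^*$-integral estimate. We are left with the finite sum
\[
\sum_k \pi_M\Bigl(\int_{E_k} f(g)\,\alpha_g(a)\,\rd g\Bigr)\,u(h_{g_k}).
\]

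\textbf{Step 4 (smoothness of the coefficients).} Each coefficient $a_k=\alpha_{f\mathbf 1_{E_k}}(a)$ belongs to $M_\alpha$. Indeed, $\alpha_s(\alpha_\phi(a))=\alpha_{\phi(s^{-1}\cdot)}(a)$, and $\|\alpha_\phi(a)\|\le\|\phi\|_1\|a\|$, so norm continuity in $s$ follows from continuity of translation in $\rL^1(G)$. This is the standard fact that smoothing by an $\rL^1$ function lands in the norm-continuous part. Hence each $\pi_M(a_k)u(h_{g_k})$, being of the form $(a_k\odot 1)u(h_{g_k})$, lies in $\rC^*(\alpha,\beta)$, and so does the finite sum.

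\textbf{Step 5 (the $N$ side).} The statement for $1\odot b^{\op}$ is symmetric, replacing $\alpha$ by $\beta^{\op}$.

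\textbf{Main obstacle.} The delicate point is justifying the weak$^*$ integral manipulations and the norm estimate in Step 3 without norm continuity of $g\mapsto\alpha_g(a)$. I would do this by pairing with normal functionals, and checking that the operator-valued integrand is weak$^*$ measurable and bounded, so the norm bound follows by duality. An alternative that avoids this is to work in a faithful normal representation: there the integrals are strong integrals of bounded strongly measurable functions.
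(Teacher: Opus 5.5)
Your proof is correct and follows essentially the paper's route: the paper extends the bounded bilinear map $(f,h)\mapsto(\alpha_f(a)\odot 1)u(h)$, whose values lie in $\rC^*(\alpha,\beta)$ because $\alpha_f(a)\in M_\alpha$, to $\rL^1(G)\widehat\otimes\rL^1(G)=\rL^1(G\times G)$ and evaluates it at $F(g,k)=f(g)h(g^{-1}k)$. Your partition $\{E_k\}$ with frozen translates $h_{g_k}$ is an explicit approximation of this $F$ in $\rL^1(G\times G)$ by the elementary tensors $\sum_k (f\mathbf 1_{E_k})\otimes h_{g_k}$; also, no modular-function factor appears in $u_g u(h)=u(h(g^{-1}\,\cdot\,))$, since this is a left translate and Haar measure is left invariant.
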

\begin{proof}
  Fix $a \in M$.  The map 
  $$(f,h) \mapsto (\alpha_f(a) \odot 1)u(h)$$ from $\rL^1(G) \times \rL^1(G)$ to $\rC^*(\alpha,\beta)$ is bilinear and jointly continuous since
  $$ \| (\alpha_f(a) \odot 1)u(h)\| \leq \| \alpha_f(a)\| \|u(h)\| \leq \|f\|_1 \|h\|_1.$$
  Therefore, there is a unique continuous linear map $\Phi$ from the projective tensor product $\rL^1(G) \widehat{\otimes} \rL^1(G) =\rL^1(G \times G)$ into $\rC^*(\alpha,\beta)$ such that
$$ \Phi(f \otimes h) =  (\alpha_f(a) \odot 1)u(h) $$
for all $f,h \in \rL^1(G)$. Then we have
$$ \Phi(F) = \int_{G \times G} (\alpha_g(a) \odot 1)u_k \: F(g,k) \: \rd g \, \rd k$$
for all elementary tensors $F \in \rL^1(G) \otimes \rL^1(G)$ hence also for all $F \in \rL^1(G \times G)$ since both sides are continuous on $\rL^1(G \times G)$. In particular, the right-hand side is in $\rC^*(\alpha,\beta)$. Taking $F : (g,k) \mapsto f(g)h(g^{-1}k)$ for $f,h \in \rL^1(G)$ gives the desired conclusion.
\end{proof}

For $\Theta_1,\Theta_2 \in \Cor(\alpha,\beta)$, we thus have the following natural notion of weak containment
\[
 \Theta_1\prec\Theta_2
 \quad\Longleftrightarrow\quad 
 \Pi_{\Theta_1}|_{\rC^*(\alpha,\beta)} \prec\Pi_{\Theta_2}|_{\rC^*(\alpha,\beta) }.
\]

The following proposition provides a very convenient criterion for weak containment. It is the classical group analog of \cite[Corollary~3.8]{DCDR24}.

\begin{proposition}
\label{binormal state extension}
Let $\alpha : G \curvearrowright M$ and $\beta : G \curvearrowright N$ be two continuous actions and take $\Theta_i=(H_i,\lambda_i,\rho_i,U_i) \in \Cor(\alpha,\beta)$. Take $\xi \in H_1$ and consider the following properties.
\begin{enumerate}[\rm (i)]
\item $\Theta_1\prec\Theta_2$.
\item There is a unital completely positive map
$\vartheta:\B(H_2)\to\B(H_1)$ such that
\[
 \vartheta(\lambda_2(a))=\lambda_1(a),\quad
 \vartheta(\rho_2(b^{\op}))=\rho_1(b^{\op}),\quad
 \vartheta(U_2(f))=U_1(f)
\]
for all $a\in M$, $b\in N$ and $f\in\rC^*(G)$.

    \item There exists a state $\Phi \in \B(H_2)^*$ such that $$\Phi( \lambda_2(a)\rho_2(b^{\op}) ) =\langle \lambda_1(a) \rho_1(b^{\op}) \xi, \xi \rangle $$ 
    and 
    $$ \Phi( U_2(f))=\langle U_1(f)\xi,\xi \rangle$$
    for all $a$ in $M$, $b \in N$ and $f \in \rC^*(G)$. 
\end{enumerate}
Then we have $(\rm i) \Leftrightarrow (\rm ii) \Rightarrow (\rm iii)$ and if $\xi$ is a cyclic vector for $\Theta_1$ that is fixed by $U_1$, then $(\rm iii) \Leftrightarrow (\rm ii) \Leftrightarrow (\rm i)$.
\end{proposition}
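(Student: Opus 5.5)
We plan to argue the three implications separately. Throughout, the $\rC^*$-algebra $\rC^*(\alpha,\beta)$ is the norm-closed span of the elements $(a\odot b^{\op})u(f)$ with $a\in M_\alpha$, $b\in N_\beta$, $f\in\rL^1(G)$. For a correspondence $\Theta$, the representation $\Pi_\Theta$ restricted to $\rC^*(\alpha,\beta)$ is nondegenerate. Indeed, $U(f_i)\to 1$ strongly along an approximate identity $f_i\in\rL^1(G)$.

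\textbf{(i)$\Rightarrow$(ii).} We use Arveson's characterization of weak containment for nondegenerate representations. If $\pi_1\prec\pi_2$, the map $\pi_2(x)\mapsto\pi_1(x)$ is a well-defined $*$-homomorphism $\pi_2(\rC^*(\alpha,\beta))\to\pi_1(\rC^*(\alpha,\beta))$. By Arveson's extension theorem it extends to a u.c.p. map $\vartheta:\B(H_2)\to\B(H_1)$. This $\vartheta$ is multiplicative on the $\rC^*$-algebra $A_2=\Pi_{\Theta_2}(\rC^*(\alpha,\beta))$, so $A_2$ lies in its multiplicative domain. Elements of the form $\lambda_2(a)$ with $a\in M$, $\rho_2(b^{\op})$ and $U_2(f)$ with $f\in\rC^*(G)$ multiply $A_2$ into itself. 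This uses Lemma~\ref{producing element C* algebra} for arbitrary $a\in M$: we have $U_2(f)\lambda_2(a)U_2(h)\in A_2$.

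We then compute, using an approximate unit $U_2(f_i)$ and the multiplicative domain property:
\[
\vartheta(U_2(f_i)\lambda_2(a)U_2(f_j))=U_1(f_i)\lambda_1(a)U_1(f_j).
\]
Since $\vartheta$ is multiplicative on $A_2$, and the map $\vartheta$ restricted to the strict closure is compatible, we get the following. For $x,y\in A_2$, $\vartheta(x\lambda_2(a)y)=\vartheta(x)\vartheta(\lambda_2(a))\vartheta(y)$, and also this equals $\Pi_1(x)\lambda_1(a)\Pi_1(y)$. Cyclicity under nondegeneracy then forces $\vartheta(\lambda_2(a))=\lambda_1(a)$. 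The same argument gives the identities for $\rho_2$ and for $U_2(f)$.

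\textbf{(ii)$\Rightarrow$(i).} This direction is immediate. Since $\vartheta$ is u.c.p. and agrees with $\Pi_1$ on the generators, we get $\vartheta(\Pi_2(x))=\Pi_1(x)$ for all $x\in\rC^*(\alpha,\beta)$. Here we use the multiplicative domain again: the generators lie in it, because $\vartheta(\lambda_2(a)^*\lambda_2(a))=\lambda_1(a^*a)=\vartheta(\lambda_2(a))^*\vartheta(\lambda_2(a))$, and similarly for the other generators. It follows that $\|\Pi_1(x)\|\le\|\Pi_2(x)\|$, which gives weak containment.

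\textbf{(ii)$\Rightarrow$(iii).} Take $\Phi=\langle\vartheta(\cdot)\xi,\xi\rangle$. The product $\lambda_2(a)\rho_2(b^{\op})$ is handled by the multiplicative domain.

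\textbf{(iii)$\Rightarrow$(ii)} when $\xi$ is cyclic and $U_1$-fixed. This is the main obstacle. We first show that the generators $\lambda_2(a)$, $\rho_2(b^{\op})$ and $U_2(f)$ lie in the multiplicative domain of $\Phi$, meaning that $\Phi(x^*x)=|\Phi(x)|^2$-type identities hold in the relevant sense.

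\begin{itemize}
\item For unitaries $U_2(g)$, we have $\Phi(U_2(f))=\int f\,\rd g$ because $\xi$ is $U_1$-fixed. This gives $\Phi(U_2(g))=1$, so $\Phi$ is $U_2(g)$-invariant, that is, $\Phi(U_2(g)XU_2(g)^*)=\Phi(X)$ and $\Phi(U_2(g)X)=\Phi(X)$, by Cauchy–Schwarz.
\item Hence, using the covariance relations,
\[
\Phi(\lambda_2(a)\rho_2(b^{\op})U_2(f))=\int f(g)\,\Phi(\lambda_2(a)\rho_2(b^{\op}))\,\rd g,
\]
which matches $\langle\Pi_1((a\odot b^{\op})u(f))\xi,\xi\rangle$.
\end{itemize}

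Therefore $\Phi\circ\Pi_2$ coincides with the vector state $\langle\Pi_1(\cdot)\xi,\xi\rangle$ on $\rC^*(\alpha,\beta)$, since the latter is determined on the spanning elements. Because $\xi$ is cyclic for $\Pi_1$ (by cyclicity for $\Theta_1$ and $U_1\xi=\xi$), the GNS representation of this state is $\Pi_1$.

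The GNS representation of a state of the form $\Phi\circ\Pi_2$ is weakly contained in $\Pi_2$, since $\Phi$ is a weak$^*$ limit of vector-state combinations on $\B(H_2)$. Concretely, $\Phi|_{A_2}$ is a weak$^*$ limit of convex combinations of vector states of $\Pi_2$, so the standard Fell criterion applies. This gives (i), and hence (ii) by the first part.
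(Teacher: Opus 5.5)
Your proposal is correct and follows the paper's proof essentially step by step. For (i)$\Rightarrow$(ii) you use the Arveson extension of the quotient map, the multiplicative domain, and the two-sided smoothing lemma, then let $U_1(f_i)\to1$ strongly; (ii)$\Rightarrow$(i) and (ii)$\Rightarrow$(iii) go through the multiplicative domain; and for (iii)$\Rightarrow$(i) you observe that $\Phi$ is the trivial character on $U_2(\rC^*(G))$, so $\Phi\circ\Pi_{\Theta_2}$ agrees with the vector state of the cyclic vector $\xi$ and kernel inclusion follows.

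Two slips should be fixed, though neither affects the argument.

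\begin{enumerate}
\item For $a\notin M_\alpha$, the operator $\lambda_2(a)$ need not multiply $\Pi_{\Theta_2}(\rC^*(\alpha,\beta))$ into itself. Only the two-sided products $U_2(f)\lambda_2(a)U_2(h)$ lie there, and that is all you actually use.
\item Since $\Phi$ need not be normal, you cannot obtain $\Phi(XU_2(f))=\Phi(X)\int_G f$ by integrating $\Phi(XU_2(g))=\Phi(X)$ over $g$. Get it directly from the fact that $U_2(f)$ lies in the multiplicative domain of $\Phi$. Relatedly, $\lambda_2(a)$ and $\rho_2(b^{\op})$ are not in that multiplicative domain, contrary to your opening sentence of (iii)$\Rightarrow$(i), but you never use this.
\end{enumerate}
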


\begin{proof}[Proof of Proposition \ref{binormal state extension}]
$(\rm i) \Rightarrow (\rm ii)$. Suppose that $\Theta_1\prec\Theta_2$.  Kernel inclusion gives a
nondegenerate $*$-homomorphism  
\[
 q : \mathrm{Im}(\Pi_{\Theta_2}|_{\rC^*(\alpha,\beta)})\longrightarrow
   \mathrm{Im}(\Pi_{\Theta_1}|_{\rC^*(\alpha,\beta)})
\]
such that $$q \circ \Pi_{\Theta_2}|_{\rC^*(\alpha,\beta)} =\Pi_{\Theta_1}|_{\rC^*(\alpha,\beta)}.$$
After unitization, Arveson's extension theorem extends $q$ to a ucp
map $\vartheta:\B(H_2)\to\B(H_1)$.

For every $a \in M$ and $f,h \in \rL^1(G)$, we have
$$  U_i(f) \lambda_i(a) U_i(h) = \Pi_{\Theta_i}\left( u(f)(a \odot 1) u(h) \right)$$

This means that 
$$ q\left( U_2(f) \lambda_2(a) U_2(h) \right) = U_1(f) \lambda_1(a) U_1(h).$$
Since $U_2(f)$ and $U_2(h)$ are in the multiplicative domain of $\vartheta$, we thus have 

\[
 \begin{aligned}
 U_2(f) \vartheta(\lambda_2(a)) U_2(h) 
 &=\vartheta(U_2(f) \vartheta(\lambda_2(a)) U_2(h))\\
 &=U_1(f) \lambda_1(a) U_1(h).
 \end{aligned}
\]
Now, we can make $U_i(f)$ and $U_i(h)$ converge strongly to $1$, and we conclude that $\vartheta(\lambda_2(a)) =\lambda_1(a)$. The same argument applies to show that $\vartheta(\rho_2(b^{\op})) =\rho_1(b^{\op})$ for all $b \in N$.

$(\rm ii) \Rightarrow (\rm i)$. The multiplicative-domain identities in~$(\rm ii)$ give
\[
 \vartheta\circ\Pi_{\Theta_2}|_{\rC^*(\alpha,\beta)}
 =\Pi_{\Theta_1}|_{\rC^*(\alpha,\beta)}
\]
first on the standard generators and then on all of
$\rC^*(\alpha,\beta)$.

$(\rm ii) \Rightarrow (\rm iii)$. Take $\Phi = \langle \vartheta( \cdot) \xi, \xi \rangle$.

$(\rm iii) \Rightarrow (\rm ii)$. Conversely, suppose that $\Phi$ is as in~$(\rm iii)$, that $\xi$ is cyclic for $\Theta_1$ and fixed by $U_1$. Then the restriction of $\Phi$ to
$\rC_0(\widehat G)$ is the trivial character. So this algebra
lies in the multiplicative domain of $\Phi$.  Thus, for $a \in M_\alpha$, $b \in N_\beta$ and
$f \in \rC_0(G)$, we have
$$
\begin{aligned}
    \Phi(\lambda_2(a)\rho_2(b^{\op}) U_2(f)) &= f(1) \Phi(\lambda_2(a)\rho_2(b^{\op})) \\
 &= f(1) \langle \lambda_1(a) \rho_1(b^{\op}) \xi, \xi \rangle\\
 &=\langle \lambda_1(a) \rho_1(b^{\op}) U_1(f) \xi, \xi \rangle.
\end{aligned}
$$
This shows that $\Phi \circ \Pi_{\Theta_2}$ coincides with the coefficient $\langle \Pi_{\Theta_1}(\cdot )\xi, \xi \rangle$ on $\rC^*(\alpha,\beta)$. Since $\xi$ is cyclic for $\Pi_{\Theta_1}$, we conclude that $\Pi_{\Theta_1}|_{\rC^*(\alpha,\beta)} \prec \Pi_{\Theta_2}|_{\rC^*(\alpha,\beta)}$ as we wanted.
 
\end{proof}

\begin{proposition}\label{tensoring preserves weak containment}
For $i=1,2$ take $\Theta_i \in \Cor(\alpha,\beta)$ and $\pi_i \in \mathrm{Rep}(G)$. 
If $\pi_1 \prec \pi_2$ and $\Theta_1 \prec \Theta_2$, then
\[
 \pi_1\otimes_G\Theta_1
 \prec
 \pi_2\otimes_G\Theta_2.
\]
\end{proposition}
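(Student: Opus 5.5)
The plan is to use the ucp-map characterization of weak containment from Proposition~\ref{binormal state extension}, (i)$\Leftrightarrow$(ii), and to factor the claim into two steps:
\[
\pi_1\otimes_G\Theta_1\prec\pi_2\otimes_G\Theta_1\prec\pi_2\otimes_G\Theta_2 .
\]
Write $\Theta_i=(H_i,\lambda_i,\rho_i,U_i)$ and let $\pi_i$ act on $K_i$.

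\textbf{Second step.} By Proposition~\ref{binormal state extension}, there is a ucp map $\vartheta:\B(H_2)\to\B(H_1)$ that intertwines $\lambda_2,\rho_2$ with $\lambda_1,\rho_1$ and satisfies $\vartheta(U_2(f))=U_1(f)$ for $f\in\rC^*(G)$.
\begin{itemize}
\item[(a)] Consider the ucp map $\id_{\B(K_2)}\otimes\vartheta$ on the minimal tensor product $\B(K_2)\otimes_{\min}\B(H_2)$, and extend it by Arveson's theorem to a ucp map $\B(K_2\otimes H_2)\to\B(K_2\otimes H_1)$.
\item[(b)] This map sends $1\otimes\lambda_2(a)$ to $1\otimes\lambda_1(a)$, and likewise for $\rho$.
\item[(c)] The key point is the group part: the map must send $(\pi_2\otimes U_2)(f)$ to $(\pi_2\otimes U_1)(f)$. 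Since $U_2(\rC^*(G))$ lies in the multiplicative domain of $\vartheta$, the unitaries $U_2(g)$ lie there as well. This follows by taking strict limits of $U_2(f_n)$ with $f_n$ an approximate identity translated by $g$: using multiplicativity, $\vartheta(U_2(g))U_1(h)=U_1(\delta_g*h)$, so $\vartheta(U_2(g))=U_1(g)$, and both are unitary. Hence $\vartheta(U_2(g))=U_1(g)$ with $U_2(g)$ in the multiplicative domain.
\item[(d)] Consequently $\id\otimes\vartheta$ maps $\pi_2(g)\otimes U_2(g)$ to $\pi_2(g)\otimes U_1(g)$, and integrating against $f\in\rL^1(G)$ (weakly) gives the identity on $\rC^*(G)$.
\end{itemize}
To avoid worrying whether the Arveson extension preserves normality-type identities, I would instead check the kernel inclusion directly at the level of $\rC^*(\alpha,\beta)$: the images of the generators $(a\odot b^{\op})u(f)$ are mapped correctly, and the representation $\Pi_{\pi_2\otimes\Theta_1}$ is recovered as the composition of the ucp map with $\Pi_{\pi_2\otimes\Theta_2}$. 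By multiplicative-domain arguments this composition is then a $*$-homomorphism on $\rC^*(\alpha,\beta)$, which gives weak containment.

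\textbf{First step.} Weak containment $\pi_1\prec\pi_2$ gives a ucp map $\psi:\B(K_2)\to\B(K_1)$ with $\psi(\pi_2(f))=\pi_1(f)$. As in step (c), $\psi(\pi_2(g))=\pi_1(g)$ with $\pi_2(g)$ in the multiplicative domain. Then $\psi\otimes\id_{\B(H_1)}$, extended as above, fixes $1\otimes\lambda_1$ and $1\otimes\rho_1$ and maps $\pi_2(g)\otimes U_1(g)$ to $\pi_1(g)\otimes U_1(g)$. Integrating these identities and applying Proposition~\ref{binormal state extension}, (ii)$\Rightarrow$(i), yields $\pi_1\otimes_G\Theta_1\prec\pi_2\otimes_G\Theta_1$. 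Composing the two ucp maps proves the statement.

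\textbf{Main obstacle.} The delicate step is passing from the identities on $\rC^*(G)$ to identities on the unitaries $g\mapsto\pi(g)$, and then back to the integrated forms for the diagonal representation $g\mapsto\pi_2(g)\otimes U(g)$. This involves strict and weak limits through ucp maps that need not be normal. I would handle it via the multiplicative-domain argument above: first show $\vartheta(U_2(g)X)=U_1(g)\vartheta(X)$ for all $X$ by approximating $U_2(g)$ by $U_2(f_n)$ in the strict sense on the relevant nondegenerate $\rC^*$-algebra. Then I would verify the required identity on the generators $(a\odot b^{\op})u(f)$ by evaluating on vectors of the form $(\pi_2\otimes U)(h)\eta$, which are dense by nondegeneracy, so that only norm-continuous expressions in $g$ appear before integrating.
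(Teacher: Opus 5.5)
Your proposal is correct and follows essentially the paper's proof. You use the same two-step factorization, with $\id\otimes\vartheta$ and $F\otimes\id$ (your $\psi\otimes\id$) witnessing condition (ii) of Proposition~\ref{binormal state extension} through multiplicative-domain arguments. Your handling of the unitaries, namely that a unitary sent to a unitary by a ucp map lies in its multiplicative domain, makes explicit what the paper's ``a fortiori'' step leaves implicit.

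One caution about the final integration step. Because the extended map is not normal, the unitary identities $\pi_2(g)\otimes U_2(g)\mapsto\pi_2(g)\otimes U_1(g)$ do not by themselves determine its values on the diagonal integrated forms. You should therefore test against vectors $(\pi_2(h_1)\otimes U_1(h_2))\eta$ coming from the $G\times G$ representation, not against diagonal ones (and analogously in the other step). Their preimages lie in the multiplicative domain with known images, and the map $g\mapsto(\pi_2(g)\otimes U_2(g))(\pi_2(h_1)\otimes U_2(h_2))$ is norm continuous, so the argument closes.
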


\begin{proof}

Fix $\pi : G \rightarrow \cU(K)$ and take $\vartheta$ as in Proposition \ref{binormal state extension}. Then the multiplicative domain of $\id \otimes \vartheta : \B(K \otimes H_2) \rightarrow \B(K \otimes H_1)$ contains $1 \otimes \lambda_2(M)$, $1 \otimes \rho_2(N^{\op})$ and $1 \otimes U_2(\rC^*(G))$ as well as $\pi(\rC^*(G)) \otimes 1$. Thus it also contains $(\pi \otimes U_2)(\rC^*(G)\otimes \rC^*(G))$ and a fortiori $(\pi \otimes_G U_2)(\rC^*(G))=(\pi \otimes U_2)(\rC^*(\Delta_G))$ where $\Delta_G < G \times G$ is the diagonal subgroup. We conclude by Proposition \ref{binormal state extension} that $\pi \otimes_G \Theta_1 \prec \pi \otimes_G \Theta_2$.

Now fix $\Theta \in \Cor(\alpha,\beta)$ and suppose that $\pi_1 \prec \pi_2$. Take $F : \B(K_2) \rightarrow \B(K_1)$ a ucp map such that $F(\pi_2(f))=\pi_1(f)$ for every $f \in \rC^*(G)$. Then in a similar way, we check that $F \otimes \id $ witnesses the weak containment $\pi_1 \otimes_G \Theta \prec \pi_2 \otimes_G \Theta$.
\end{proof}
 
\begin{proposition}
\label{induction preserves weak containment}
Take $\Xi_i \in \Cor(M,N)$ for $i=1,2$ and consider their inductions $\Ind(\Xi_i) \in \Cor(\alpha,\beta)$. If $\Xi_1 \prec \Xi_2$ then $\Ind(\Xi_1) \prec \Ind(\Xi_2)$.
\end{proposition}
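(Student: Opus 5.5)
We use the ucp-map criterion of Proposition~\ref{binormal state extension} in both directions. Write $\Xi_i=(H_i,\lambda_i,\rho_i)$. Since $\Xi_1\prec\Xi_2$ as ordinary $M$--$N$ correspondences, the non-equivariant analogue of the implication (i)$\Rightarrow$(ii) (kernel inclusion on the binormal $\rC^*$-algebra plus Arveson extension, then the usual smoothing argument) gives a ucp map $\vartheta:\B(H_2)\to\B(H_1)$ with $\vartheta(\lambda_2(a))=\lambda_1(a)$ and $\vartheta(\rho_2(b^{\op}))=\rho_1(b^{\op})$ for all $a\in M$, $b\in N$. If one prefers to avoid proving this separately, apply Proposition~\ref{binormal state extension} with $G$ trivial.

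We then set
\[
\widetilde\vartheta=\id_{\B(\rL^2(G))}\otimes\vartheta:\B(\rL^2(G)\otimes H_2)\longrightarrow\B(\rL^2(G)\otimes H_1),
\]
identifying $\rL^2(G,H_i)=\rL^2(G)\otimes H_i$. This is a ucp map, and we need to check the three identities of (ii) for the induced correspondences. The regular representation part is immediate: $\lambda_G(f)\otimes1$ lies in $\B(\rL^2(G))\otimes1$, on which $\widetilde\vartheta$ acts as the identity. For the left action, $\lambda_2^\alpha(a)$ is the decomposable operator $g\mapsto\lambda_2(\alpha_{g^{-1}}(a))$, that is, the image of $\alpha(a)\in\rL^\infty(G)\ovt M$ under $\id\otimes\lambda_2$. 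We claim that $\widetilde\vartheta\circ(\id\otimes\lambda_2)=\id\otimes\lambda_1$ on $\rL^\infty(G)\ovt M$. On elementary tensors $F\otimes a$ this holds since $\widetilde\vartheta(F\otimes\lambda_2(a))=F\otimes\lambda_1(a)$. To pass to all of $\rL^\infty(G)\ovt M$ we need normality. Since $\lambda_2(M)$ lies in the multiplicative domain of $\vartheta$, the map $\lambda_2(x)\mapsto\lambda_1(x)$ is a $*$-homomorphism on it. Normality of $\widetilde\vartheta$ restricted to $\rL^\infty(G)\ovt\lambda_2(M)$ then follows because this restriction equals $\id\otimes(\lambda_1\circ\lambda_2^{-1})$ on the algebraic tensor product, and the latter extends normally. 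More concretely, $\rL^\infty(G)\otimes1$ and $1\otimes\lambda_2(M)$ both lie in the multiplicative domain of $\widetilde\vartheta$, so their generated von Neumann algebra does too once $\widetilde\vartheta$ is shown to be normal there. The same argument handles $\rho^\beta$.

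\textbf{Main obstacle.} The delicate point is this normality issue: Arveson extensions need not be normal, so we cannot directly extend from the algebraic tensor product $\rL^\infty(G)\odot M$ to $\alpha(M)$. The first thing we would try is to sidestep it by testing only on the $\rC^*$-algebra. By Proposition~\ref{binormal state extension}, it is enough that $\widetilde\vartheta$ intertwines $\Pi_{\Ind\Xi_i}$ on $\rC^*(\alpha,\beta)$. For $a\in M_\alpha$, the function $g\mapsto\alpha_{g^{-1}}(a)$ is norm continuous. Hence, after multiplying by $\lambda_G(f)$ with $f\in\rL^1(G)$ of compact support, $\lambda^\alpha(a)(\lambda_G(f)\otimes1)$ is a norm limit of finite sums $\sum_k(F_k\otimes\lambda(a_k))(\lambda_G(f)\otimes1)$ with $F_k\in\rC_0(G)$. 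This uses a uniform approximation of $\alpha_{g^{-1}}(a)$ on compact sets, together with the fact that $\lambda_G(f)$ is localized on compacts only approximately; the latter is handled by cutting off with a compactly supported $F$ which converges strictly. On such finite sums $\widetilde\vartheta$ acts correctly by the multiplicative-domain property, and the identity passes to norm limits by continuity. This gives $\widetilde\vartheta\circ\Pi_{\Ind\Xi_2}=\Pi_{\Ind\Xi_1}$ on the generators $(a\odot b^{\op})u(f)$ with $a\in M_\alpha$, $b\in N_\beta$, hence on all of $\rC^*(\alpha,\beta)$. Kernel inclusion then follows, so $\Ind(\Xi_1)\prec\Ind(\Xi_2)$.
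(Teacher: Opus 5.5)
Your strategy is the paper's: obtain $\vartheta$ from $\Xi_1\prec\Xi_2$ (Proposition~\ref{binormal state extension} with trivial group is fine), then show that $\widetilde\vartheta=\id\otimes\vartheta$ satisfies condition (ii) of that proposition for $\Ind(\Xi_i)$. However, your justification of $\widetilde\vartheta(\lambda_2^\alpha(a))=\lambda_1^\alpha(a)$ does not work as written.

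\emph{The first argument is circular.} Agreeing with the normal map $\id\otimes(\lambda_1\circ\lambda_2^{-1})$ on $\rL^\infty(G)\odot\lambda_2(M)$ says nothing about a non-normal map on the weak$^*$ closure, and that is exactly the issue.

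\emph{The fallback contains a false step.} Right multiplication by $\lambda_G(f)\otimes1$ does not localize. So for non-compact $G$, $\lambda^\alpha(a)(\lambda_G(f)\otimes1)$ is not a norm limit of sums $\sum_k(F_k\otimes\lambda(a_k))(\lambda_G(f)\otimes1)$ with $F_k\in\rC_0(G)$. Already for $M=N=\C$, $G=\R$, $a=1$, translating a fixed test vector to infinity gives $\|(1-F)\lambda_G(f)\|\geq\|\lambda_G(f)\|$ for every $F\in\rC_0(\R)$.

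\emph{How to repair it.} Your cutoff remark becomes a correct argument only if it replaces the approximation rather than patching it. Since $\rC_0(G)\otimes1$ lies in the multiplicative domain of $\widetilde\vartheta$, it suffices to prove $\widetilde\vartheta((F\otimes1)\lambda_2^\alpha(a))=(F\otimes1)\lambda_1^\alpha(a)$ for $F\in\rC_c(G)$. For $a\in M_\alpha$, the compactly supported norm-continuous function $g\mapsto F(g)\alpha_{g^{-1}}(a)$ is a uniform limit of finite sums $\sum_kF_k(g)a_k$ with $F_k\in\rC_c(G)$. Nondegeneracy of $\rC_c(G)$ on $\rL^2(G)$ then gives the identity for $\lambda_2^\alpha(a)$ itself; the factor $\lambda_G(f)$ plays no role. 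After this, $\lambda_2^\alpha(M_\alpha)$, $\rho_2^\beta(N_\beta^{\op})$ and $\lambda_G(\rL^1(G))\otimes1$ lie in the multiplicative domain, and your conclusion on $\rC^*(\alpha,\beta)$ goes through.

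The idea you are missing makes this detour unnecessary. Because $\vartheta$ is not normal, $\id\otimes\vartheta$ is not a priori defined on $\B(\rL^2(G)\otimes H_2)$, only on the minimal tensor product, so one must say what it means. The natural definition is entrywise:
\begin{itemize}
\item Fix an orthonormal basis $(e_i)$ of $\rL^2(G)$, put $T_{ij}=(\omega_{e_j,e_i}\otimes\id)(T)$ with $\omega_{\xi,\eta}=\langle\,\cdot\,\xi,\eta\rangle$, and let $\widetilde\vartheta(T)$ be the operator with entries $\vartheta(T_{ij})$.
\item This is bounded and ucp, because $\vartheta$ is completely positive and contractive on every finite truncation.
\item Clearly $\widetilde\vartheta(X\otimes1)=X\otimes1$ for $X\in\B(\rL^2(G))$.
\item By normality of $\lambda_2$, the entries of $\lambda_2^\alpha(a)$ are $\lambda_2(x_{ij})$ with $x_{ij}=\int_Ge_j(g)\overline{e_i(g)}\alpha_{g^{-1}}(a)\,\rd g\in M$. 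The map $\vartheta$ sends these to $\lambda_1(x_{ij})$, which are exactly the entries of $\lambda_1^\alpha(a)$. The same holds for $\rho^\beta$.
\end{itemize}
So condition (ii) holds for every $a\in M$ and $b\in N$, with no continuity assumption and no normality issue. This is all that the paper's ``one easily checks'' requires.
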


\begin{proof}
We use again Proposition \ref{binormal state extension}. Write
$\Xi_i=(H_i,\lambda_i,\rho_i)$. Then there exists a ucp map $\vartheta : \B(H_2) \rightarrow \B(H_1)$ such that $\vartheta \circ \lambda_2=\lambda_1$ and $\vartheta \circ \rho_2=\rho_1$. One easily checks that $$\id \otimes \vartheta : \B(\rL^2(G) \otimes H_2) \rightarrow \B(\rL^2(G) \otimes H_1)$$
witnesses the weak containment $\Ind(\Xi_1) \prec \Ind(\Xi_2)$.
\end{proof}

\subsection{The ultrapower implementation technique}
We recall the ultrapower implementation technique from \cite{Ma25}.

\begin{theorem}[{\cite[Theorem 2.1]{Ma25}}]
Let $(M,\tau)$ be a tracial von Neumann algebra. Let $\Phi \in \B(\rL^2(M))^*$ be a state such that $\Phi \circ \lambda_M$ and $\Phi \circ \rho_M$ are both normal states on $M$. Then there are an abelian von Neumann algebra
$A$ with a faithful normal tracial state, a cofinal ultrafilter $\omega$
on a sufficiently large directed set,
and a unit vector
\[
 \xi\in\rL^2((A\ovt M)^\omega)
\]
such that
\begin{equation}
 \Phi(T)=\langle (1\otimes T)^\omega\xi,\xi\rangle, \quad (T \in \B(\rL^2(M)))
\end{equation}
\end{theorem}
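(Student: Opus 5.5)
The plan is to realize $\Phi$ first as a limit of vector states coming from finite-rank objects, and then to use the normality of the two restrictions to move those vectors into an ultrapower of an amplification of $M$. The starting point is that vector states $\langle T\eta,\eta\rangle$ with $\eta\in\rL^2(M)$ a unit vector are weak$^*$-dense in the state space of $\B(\rL^2(M))$. So there is a net of unit vectors $\eta_j$ with $\langle T\eta_j,\eta_j\rangle\to\Phi(T)$ for every $T$. An ultrafilter on a directed set large enough to index this net (using Proposition~\ref{ultrapower cardinal bookkeeping} to absorb the cardinality of $\B(\rL^2(M))$) turns this into an exact formula $\Phi(T)=\lim_{j\to\omega}\langle T\eta_j,\eta_j\rangle$.

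The difficulty is that $(\eta_j)^\omega$ lives in $\rL^2(M)^\omega$, not in $\rL^2(M^\omega)$; it need not be approximable by bounded elements. This is where normality of $\Phi\circ\lambda_M$ and $\Phi\circ\rho_M$ enters. Write these as $\tau(h\,\cdot)$ and $\tau(k\,\cdot)$ with $h,k\in\rL^1(M)_+$. The left and right marginals of $\eta_j$ are $\eta_j\eta_j^*$ and $\eta_j^*\eta_j$, and they converge weakly to $h$ and $k$. The plan is to perturb the net, via a convexity/Hahn--Banach argument in the style of the proof that normal marginals allow $\rL^1$-approximation, so that the marginals converge in $\rL^1$-norm. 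Then the families $\{\eta_j\eta_j^*\}$ and $\{\eta_j^*\eta_j\}$ are uniformly integrable, and the truncations $P_R(\eta_j)$ (the metric projection used in Proposition~\ref{L2 equicontinuous ultrapower}) approximate $\eta_j$ in $\|\cdot\|_2$ uniformly in $j$ as $R\to\infty$. That uniform tail control is exactly what puts the limit in $\rL^2(M^\omega)$.

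Passing from weak to norm convergence of the marginals is the main obstacle. The first attempt is to replace $\eta_j$ by convex combinations of vector states, which is harmless for $\Phi$ because $\Phi$ is fixed and the set of states approximating it is convex. Convex combinations of vector states are not vector states, however. This is the purpose of the abelian algebra $A$: a convex combination $\sum_k t_k\langle\cdot\,\eta_k,\eta_k\rangle$ equals the single vector state of $\sum_k \sqrt{t_k}\,\delta_k\otimes\eta_k\in\rL^2(A\ovt M)$ with respect to $1\otimes T$, for $A=\ell^\infty$ of a finite set with weights $t_k$. In general one takes a diffuse abelian $A$ with trace and step functions. By Mazur's lemma, suitable convex combinations make the marginals converge in $\rL^1$-norm; the marginals of the combined vector are exactly the averaged marginals in $\rL^1(A\ovt M)$ after applying $\rE_M$ fiberwise. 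We will need to check that the $A$-direction does not break uniform integrability. This should hold because the relevant marginals are $\rL^1$-norm convergent, hence uniformly integrable.

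Finally, we assemble the pieces. We choose $A$ large, for instance $\rL^\infty([0,1])$ or an infinite tensor power, so that every finite step-function combination embeds; this lets us fix a single $A$. We then index the net over finite subsets of $\B(\rL^2(M))$ times tolerances $\varepsilon$, take a cofinal ultrafilter $\omega$, and set $\xi=(\xi_j)^\omega$. By the uniform integrability step, $\xi$ lies in $\rL^2((A\ovt M)^\omega)$. The identity $\Phi(T)=\langle(1\otimes T)^\omega\xi,\xi\rangle$ then holds by construction along $\omega$.
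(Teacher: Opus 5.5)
The paper does not prove this statement; it is quoted from \cite{Ma25}, so I can only assess your argument on its own terms. There is a genuine gap, and it sits at the step that carries the actual content of the theorem. Uniform integrability of the \emph{averaged} marginals does not transfer to the amplified vector. For $(\xi_j)^\omega$ to lie in $\rL^2((A\ovt M)^\omega)$ you need $\lim_{S\to\infty}\lim_{j\to\omega}\|\xi_j-P_S(\xi_j)\|_2=0$, that is, control of $\xi_j^*\xi_j$ in $\rL^1(A\ovt M)$. Mazur only controls $\rE_M(\xi_j\xi_j^*)$ and $\rE_M(\xi_j^*\xi_j)$, and by Jensen a conditional expectation can only make things more integrable, never less.

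Here is a concrete failure. Take $M=\rL^\infty[0,1]$, intervals $I_1,\dots,I_n$ of length $1/n$, $\eta_k=\sqrt n\,1_{I_k}$ and $t_k=1/n$. Both averaged marginals of $\omega_n=\sum_kt_k\langle\,\cdot\,\eta_k,\eta_k\rangle$ equal $\sum_kt_k|\eta_k|^2=1$ exactly. Yet your step-function realization with $\mu(e_k)=t_k$ is $\sqrt n$ times the indicator of a set of measure $1/n$ in $[0,1]^2$. Hence $\|\xi_n-P_S(\xi_n)\|_2\to1$ for every $S$, and $(\xi_n)^\omega$ is even orthogonal to $\rL^2((A\ovt M)^\omega)$. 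Changing the block weights does not help: the block over $e_k$ has height $\mu(e_k)^{-1/2}$ and carries mass $1/n$, so at most $S^2$ blocks have height $\le S$. The same $\omega_n$ is, however, realized by $\sum_k f_k\otimes 1_{I_k}$ with $(f_k)_k$ orthonormal unimodular functions in $A$, and that vector has modulus one everywhere. The missing idea is to spread the convex combination over $A$ rather than stack it in disjoint blocks, and then to control the cross terms this creates.

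Two further points. First, your opening claim that vector states are weak$^*$-dense in the state space of $\B(\rL^2(M))$ is false. For a rank-two projection $P$, the state $\tfrac12\Tr(P\,\cdot\,)$ is not such a limit, because weak$^*$ limits of vector states taking the value $1$ on $P$ restrict to pure states on $P\B(\rL^2(M))P\cong\mathbb M_2(\C)$. Start instead from normal states, which are dense. Run Mazur on the convex sets $\{\omega\text{ normal}:|\omega(T)-\Phi(T)|<\varepsilon,\ T\in F\}$ through the marginal map into $\rL^1(M)\oplus\rL^1(M)$. This gives normal states $\omega_j\to\Phi$ with $h_j\to h$ and $k_j\to k$ in norm.

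Second, here is one way to close the gap. Write $\omega_j=\sum_a\langle\,\cdot\,x_a,x_a\rangle$, so that $h_j=\sum_ax_ax_a^*$ and $k_j=\sum_ax_a^*x_a$. Take $\xi_j=\sum_a\epsilon_a\otimes x_a$ with independent random signs $\epsilon_a$ in $A=\rL^\infty(\{\pm1\}^{\N})$. For bounded $y_a$ with $h'=\sum_ay_ay_a^*$ and $k'=\sum_ay_a^*y_a$, expanding the fourth moment over pairings gives $\|\sum_a\epsilon_a\otimes y_a\|_4^4\le\tau(k'^2)+2\tau(h'^2)$. This is where normality of \emph{both} marginals genuinely enters. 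Your sketch never uses the second one, because for a single vector $\eta$ the elements $\eta\eta^*$ and $\eta^*\eta$ are equidistributed.

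Apply the bound to $(1\otimes e)\xi_j(1\otimes f)$ with $e=1_{[0,R]}(h_j)$ and $f=1_{[0,R]}(k_j)$. The coefficients $ex_af$ have marginals dominated by $h_je\le R$ and $k_jf\le R$, so this vector has $4$-norm at most $(3R)^{1/4}$. Meanwhile $\|\xi_j-(1\otimes e)\xi_j(1\otimes f)\|_2^2\le\tau(h_j(1-e))+\tau(k_j(1-f))$, which is small for large $R$, eventually in $j$, by norm convergence of the marginals. Chebyshev's inequality then gives $\lim_{S\to\infty}\lim_{j\to\omega}\|\xi_j-P_S(\xi_j)\|_2=0$, as required.
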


The abelian algebra $A$ in the previous theorem is annoying. In applications, we get rid of it by using the following lemma.

\begin{lemma}\label{abelian amplification kernel lemma}
  Let $T \in \B(\rL^2(M))$ and let $\Sigma \subset \B(\rL^2(M))$ be a set of cardinal less than $\kappa$. Suppose that there exists a tracial abelian von Neumann algebra $(A,\mu)$ and some $X \in (A \ovt M)^\omega$ such that
  $(1 \otimes S)^\omega \widehat{X}=0$ for all $S \in \Sigma$.
  
  Then there exists some $x \in M^\omega$ such that 
  $S^\omega \widehat{x}=0$  for all $S \in \Sigma$
  and 
  $$ \| x\| \leq \|X\|, \quad \|T^\omega \widehat{x}\|_2 \geq \|(1\otimes T)^\omega \widehat{X}\|_2.$$
\end{lemma}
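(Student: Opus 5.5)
The idea is to disintegrate $X$ over the abelian algebra $A$ and then select a good fiber. After shrinking to a subalgebra, we may assume $A$ is countably generated, or at least that the relevant data only involve a small part of $A$. We then realize $A \ovt M$ as $\rL^\infty(Z,\mu; M)$ for a probability space $(Z,\mu)$. The conditions $(1\otimes S)^\omega\widehat X=0$ and the norm $\|(1\otimes T)^\omega\widehat X\|_2$ are all computed fiberwise: $1\otimes S$ acts pointwise on $\rL^2(Z,\mu;\rL^2(M))$.

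\textbf{Step 1: work at a finite level.} Write $X=(X_i)^\omega$ with $\|X_i\|\le\|X\|$ and $X_i\in A\ovt M$. For each finite $F\subset\Sigma$ and $\varepsilon>0$, the set of indices $i$ with
\[
\sum_{S\in F}\|(1\otimes S)\widehat{X_i}\|_2^2<\varepsilon
\quad\text{and}\quad
\|(1\otimes T)\widehat{X_i}\|_2^2>\|(1\otimes T)^\omega\widehat X\|_2^2-\varepsilon
\]
belongs to $\omega$.

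\textbf{Step 2: choose a fiber.} For such $i$, disintegrate $X_i=(z\mapsto X_i(z))$ with $\|X_i(z)\|\le\|X\|$ almost everywhere. Then
\[
\int_Z\Big(\|T\widehat{X_i(z)}\|_2^2-C\sum_{S\in F}\|S\widehat{X_i(z)}\|_2^2\Big)\,\rd\mu(z)
>\|(1\otimes T)^\omega\widehat X\|_2^2-\varepsilon-C\varepsilon
\]
for any constant $C$. Taking $C=\varepsilon^{-1/2}$, a Chebyshev/averaging argument produces a point $z$ with
\[
\|T\widehat{X_i(z)}\|_2^2\ge\|(1\otimes T)^\omega\widehat X\|_2^2-O(\sqrt\varepsilon)
\quad\text{and}\quad
\sum_{S\in F}\|S\widehat{X_i(z)}\|_2^2\le O(\sqrt\varepsilon),
\]
using that $\|T\widehat{X_i(z)}\|_2\le\|T\|\|X\|$ is bounded. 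We then set $y_{F,\varepsilon}=X_i(z)\in M$, which satisfies $\|y_{F,\varepsilon}\|\le\|X\|$.

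\textbf{Step 3: diagonalize.} Use Proposition~\ref{ultrapower cardinal bookkeeping}. Since $|\Sigma|<\kappa$, we can index pairs $(F,\varepsilon)$ by the directed set $I$, with $F_j$ increasing and cofinal in the finite subsets of $\Sigma$ and $\varepsilon_j\to0$. We then set $x=(y_{F_j,\varepsilon_j})^\omega\in M^\omega$, which gives $\|x\|\le\|X\|$. For each $S\in\Sigma$, eventually $S\in F_j$, so $\|S\widehat{y_j}\|_2\to0$ and hence $S^\omega\widehat x=0$. The lower bound on $T$ passes to the limit, because $\|T^\omega\widehat x\|_2=\lim_\omega\|T\widehat{y_j}\|_2$ (the vectors $\widehat{y_j}$ are uniformly bounded, so the ultrapower vector is well defined).

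\textbf{Main obstacle.} The delicate point is the legitimacy of the disintegration for a general, possibly non-separable, $A$ and $M$. The key fact needed is
\[
\|(1\otimes S)\widehat{Y}\|_2^2=\int_Z\|S\widehat{Y(z)}\|_2^2\,\rd\mu(z)
\qquad\text{for } Y\in A\ovt M,
\]
which requires $1\otimes S$ to be decomposable over $A\otimes1$. This holds because $1\otimes S$ commutes with $A\otimes 1$ on $\rL^2(A)\otimes\rL^2(M)$. To avoid measurable-selection issues, I would instead approximate $X_i$ in $\|\cdot\|_2$ by a finite sum $\sum_k p_k\otimes y_k$ with orthogonal projections $p_k\in A$, $\sum_k p_k=1$, and $\|y_k\|\le\|X\|$. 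Such an approximation can be obtained via conditional expectation onto a finite-dimensional subalgebra of $A$, using $\rE_{D\otimes M}$, which is contractive in operator norm and weakly dense as $D$ increases. The displayed identity then becomes the finite sum $\sum_k\mu(p_k)\|S\widehat{y_k}\|_2^2$, and the fiber choice in Step 2 becomes a choice of an index $k$.
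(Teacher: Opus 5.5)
Your proof is correct and follows essentially the same route as the paper. The paper likewise replaces the representative of $X$ by $M$-valued step functions over $A$ (your conditional expectations onto $D\ovt M$ with $D\subset A$ finite dimensional), selects a good atom at each finite stage, and diagonalizes over cofinal finite subsets of $\Sigma$ via Proposition~\ref{ultrapower cardinal bookkeeping}. The only cosmetic difference is the atom selection: you maximize the penalized quantity $\|T\widehat{y}\|_2^2-\varepsilon^{-1/2}\sum_{S\in F}\|S\widehat{y}\|_2^2$, whereas the paper first uses Chebyshev's inequality to discard the atoms where the $\Sigma$-terms are large and then averages $\|T\widehat{y}\|_2^2$ over the remaining set.
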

\begin{proof}
If $X=0$ or $T=0$, take $x=0$. Otherwise, by rescaling, we may
suppose that $\|X\|=1$ and $\|T\|=1$. Set
$ \eta=\|(1\otimes T)^\omega\widehat X\|_2$.
Write $A=\rL^\infty(Z,\mu)$ and choose a uniformly bounded
representative $X=(X_j)^\omega$ with $\|X_j\|\leq1$ such that every
$X_j$ is a step function on $Z$ with values in $M$. 
We have
\[
 \lim_{j\to\omega}\|(1\otimes T)\widehat{X_j}\|_2^2=\eta^2,
 \qquad
 \lim_{j\to\omega}\|(1\otimes S)\widehat{X_j}\|_2^2=0
 \quad(S\in\Sigma).
\]

Choose a net $(\varepsilon_i)_{i\in I}$ in $(0,1)$ converging
to zero and an increasing net $(\Sigma_i)_{i\in I}$ of finite subsets
of $\Sigma$ which is cofinal among its finite subsets. For every
$i\in I$, choose $j \in I$ such that
\[
 \|(1\otimes T)\widehat{X_{j}}\|_2^2\geq\eta^2-\varepsilon_i,
 \qquad
 \sum_{S\in\Sigma_i}\|(1\otimes S)\widehat{X_{j}}\|_2^2
 \leq\varepsilon_i^2.
\]
Put
\[
 E=\left\{z\in Z\ \middle|\
 \sum_{S\in\Sigma_i}\|S\widehat{X_{j}(z)}\|_2^2
 \leq\varepsilon_i\right\}.
\]
Chebyshev's inequality gives $\mu(Z\setminus E)\leq\varepsilon_i$.
Since $\|X\|=\|T\|=1$, we have
\[
 \|T\widehat{X_{j}(z)}\|_2\leq 1,
\]
hence
\[
 \int_{E}\|T\widehat{X_{j}(z)}\|_2^2\,\rd\mu(z)
 \geq  \|(1\otimes T)\widehat{X_{j}}\|_2^2 - \mu(Z\setminus E) \geq \eta^2-2 \varepsilon_i
\]
We may therefore choose an element
$z\in E$ such that
\[
 \|T\widehat{X_{j}(z)}\|_2^2\geq\eta^2-2 \varepsilon_i.
\]
Set $x_i=X_{j}(z)$ and let $x=(x_i)^\omega\in M^\omega$. Then $\|x\|\leq 1$. Moreover, the cofinality of
$(\Sigma_i)_{i\in I}$ and the fact that $\varepsilon_i\to0$ imply
\[
 S^\omega\widehat x=0
 \qquad(S\in\Sigma),
\]
whereas
\[
 \|T^\omega\widehat x\|_2\geq\eta.
\]
This proves the result.
\end{proof}

We shall also need the following application of the
ultrapower implementation technique to bicentralizers.

\subsection{Actions with trivial bicentralizer}

Let $\alpha:G\curvearrowright (M,\tau)$ be a continuous trace preserving action.

\begin{lemma}\label{abelian amplification action bicentralizer}
Let $A$ be a tracial abelian von Neumann algebra. Then
\[
\rB(A\ovt M\subset A\ovt N,\id\otimes\alpha)
=A\ovt\rB(M\subset N,\alpha).
\]
\end{lemma}
\begin{proof}
Since
\[
1\otimes M^{\omega,\alpha}
\subset(A\ovt M)^{\omega,\id\otimes\alpha},
\]
Proposition
\ref{relative bicentralizer with ultrapower} gives
\[
\begin{aligned}
\rB(A\ovt M\subset A\ovt N,\id\otimes\alpha)
&=\bigl((A\ovt M)^{\omega,\id\otimes\alpha}\bigr)'
  \cap(A\ovt N)\\
&\subset(1\otimes M^{\omega,\alpha})'\cap(A\ovt N)\\
&=A\ovt\rB(M\subset N,\alpha).
\end{aligned}
\]

For the reverse inclusion, we have to show that $1\otimes\rB(M\subset N,\alpha)$ commutes with
$(A\ovt M)^{\omega,\id\otimes\alpha}$. Choose a dense subgroup $G_0\subset G$
of cardinality less than $\kappa$, and choose $f\in\rL^1(G)$ with
$f\geq0$ and $\int_Gf(g)\,\rd g=1$. Take
\[
 \Sigma=\{U^\alpha_f-\id \}\cup\{U^\alpha_g-\id \mid g\in G_0\}.
\]

Fix $b\in\rB(M\subset N,\alpha)$ and define
$D_b:\rL^2(M)\to\rL^2(N)$ by
\[
 D_b\widehat x=\widehat{xb-bx}.
\]
Let $T=(D_b^*D_b)^{1/2}\in\B(\rL^2(M))$. If $x\in M^\omega$
satisfies $S^\omega\widehat x=0$ for every $S\in\Sigma$, then
$(U^\alpha_f-\id)^\omega\widehat x=0$ shows that
\[
 x=(\alpha_f(x_i))^\omega\in M^\omega_\alpha.
\]
Since $(U^\alpha_g-\id)^\omega\widehat x=0$ for every $g\in G_0$ and the action
$\alpha^\omega$ is continuous on $M^\omega_\alpha$, it follows that
$x\in M^{\omega,\alpha}$. Hence $xb=bx$, and therefore
\[
 T^\omega\widehat x=0.
\]

Now let $X\in(A\ovt M)^{\omega,\id\otimes\alpha}$. Then
\[
 (1\otimes S)^\omega\widehat X=0
 \qquad(S\in\Sigma).
\]
Lemma \ref{abelian amplification kernel lemma} gives $x\in M^\omega$
such that $S^\omega\widehat x=0$ for every $S\in\Sigma$ and
\[
 \|(1\otimes T)^\omega\widehat X\|_2
 \leq\|T^\omega\widehat x\|_2=0.
\]
By the definition of $T$, this says
\[
 [X,1\otimes b]=0.
\]
Thus $1\otimes\rB(M\subset N,\alpha)$ commutes with
$(A\ovt M)^{\omega,\id\otimes\alpha}$.
\end{proof}

\begin{lemma}
\label{equivariant binormal bicentralizer lemma}
Let $\Phi\in\B(\rL^2(M))^*$ be a state whose restrictions to
$\lambda_M(M)$ and $\rho_M(M)$ are normal.  Suppose that
\begin{equation}\label{identity spectral state}
 \Phi(U^\alpha(f))=\int_G f(g)\,\rd g
 \qquad(f\in\rL^1(G)).
\end{equation}
Then, for all $x\in\rB(M,\alpha)$ and $y\in M$,
\begin{equation}\label{binormal bicentralizer identity}
 \Phi(\lambda_M(x)\rho_M(y))=\Phi(\rho_M(xy)).
\end{equation}
In particular,
$\Phi(\lambda_M(x))=\Phi(\rho_M(x))$ for every
$x\in\rB(M,\alpha)$.
\end{lemma}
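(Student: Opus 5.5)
The plan is to realize $\Phi$ as a vector state in an ultrapower via \cite[Theorem~2.1]{Ma25}, to show that the implementing vector is invariant under the ultrapower action, and then to use that invariant vectors commute with the bicentralizer by Proposition~\ref{relative bicentralizer with ultrapower} and Lemma~\ref{abelian amplification action bicentralizer}.

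\textbf{Step 1: realize $\Phi$ as a vector state.} Apply \cite[Theorem~2.1]{Ma25} to $\Phi$. This gives a tracial abelian $(A,\mu)$, a cofinal ultrafilter $\omega$ and a unit vector $\xi\in\rL^2((A\ovt M)^\omega)$ with
\[
\Phi(T)=\langle(1\otimes T)^\omega\xi,\xi\rangle
\qquad(T\in\B(\rL^2(M))).
\]

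\textbf{Step 2: show that $\xi$ is invariant.} The hypothesis \eqref{identity spectral state} says that the spectral measure of $g\mapsto\Phi(U^\alpha_g)$ is the Dirac mass at the trivial representation. Hence $\Phi(U^\alpha_g)=1$ for every $g\in G$, since it is a continuous positive definite function whose integral against every $f$ equals $\int f$. It follows that
\[
\|(1\otimes U^\alpha_g)^\omega\xi-\xi\|_2^2=2-2\,\mathrm{Re}\,\Phi(U^\alpha_g)=0 .
\]
Likewise, for $f\geq0$ with $\int f=1$, the vector $(1\otimes U^\alpha_f)^\omega\xi$ has norm at most $1$ and inner product $1$ with $\xi$, so it equals $\xi$. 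By Proposition~\ref{L2 equicontinuous ultrapower}, applied to the action $\id\otimes\alpha$, $\xi$ lies in $\rL^2((A\ovt M)^\omega_{\id\otimes\alpha})$ and is fixed by $U^{(\id\otimes\alpha)^\omega}$.

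\textbf{Step 3: approximate $\xi$ by fixed elements.} Apply the metric projection $P_R$ onto the ball of radius $R$, as in the proof of Proposition~\ref{L2 equicontinuous ultrapower}. Since $P_R$ is nonexpansive and equivariant, it maps fixed vectors to fixed vectors. Thus $\xi$ is an $\rL^2$-limit of bounded elements
\[
X_R\in\bigl((A\ovt M)^\omega_{\id\otimes\alpha}\bigr)^{(\id\otimes\alpha)^\omega}=(A\ovt M)^{\omega,\id\otimes\alpha}.
\]
This identification uses Proposition~\ref{equicontinuous ultrapower action}. I expect this to be the main technical point: one has to make sure the fixed vectors of the equicontinuous $\rL^2$ are exactly $\rL^2$ of the fixed point algebra. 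If the metric-projection argument gives trouble, I would instead use the trace-preserving conditional expectation onto the fixed point algebra together with the mean ergodic theorem.

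\textbf{Step 4: use the bicentralizer.} By Lemma~\ref{abelian amplification action bicentralizer} and Proposition~\ref{relative bicentralizer with ultrapower}, for $x\in\rB(M,\alpha)$ the element $1\otimes x$ commutes with every $X_R$. Therefore
\[
(1\otimes\lambda_M(x))^\omega\widehat{X_R}=(1\otimes\rho_M(x))^\omega\widehat{X_R}.
\]
These are bounded operators, so letting $R\to\infty$ yields
\[
(1\otimes\lambda_M(x))^\omega\xi=(1\otimes\rho_M(x))^\omega\xi .
\]

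\textbf{Step 5: conclude.} Since $\lambda_M(x)$ commutes with $\rho_M(y)$, we compute
\[
\Phi(\lambda_M(x)\rho_M(y))=\langle(1\otimes\rho_M(y))^\omega(1\otimes\lambda_M(x))^\omega\xi,\xi\rangle
=\langle(1\otimes\rho_M(y)\rho_M(x))^\omega\xi,\xi\rangle .
\]
Finally $\rho_M(y)\rho_M(x)=\rho_M(xy)$, because $\rho_M$ is an anti-homomorphism, which gives \eqref{binormal bicentralizer identity}. Taking $y=1$ gives the last assertion.
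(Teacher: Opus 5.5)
Your proposal is correct and is essentially the paper's proof. You implement $\Phi$ via \cite[Theorem~2.1]{Ma25}, show that the implementing vector lies in the equicontinuous part and is $(\id\otimes\alpha)^\omega$-invariant, and then use Lemma~\ref{abelian amplification action bicentralizer} and Proposition~\ref{relative bicentralizer with ultrapower} to get $(1\otimes x)\xi=\xi(1\otimes x)$; your metric-projection step just spells out what the paper asserts in one line.

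The one flaw is the first sentence of Step~2. $\Phi$ is not assumed normal on $\{U^\alpha_g\}''$, so you cannot assume a priori either that $g\mapsto\Phi(U^\alpha_g)$ is continuous or that $\Phi(U^\alpha(f))=\int_G f(g)\Phi(U^\alpha_g)\,\rd g$. Drop that sentence and get invariance from your Cauchy--Schwarz argument instead. That argument gives $(1\otimes U^\alpha(f))^\omega\xi=\xi$ for $f\geq0$ with $\int f=1$; the paper obtains the same identity by a multiplicative-domain argument. By Proposition~\ref{L2 equicontinuous ultrapower}, this places $\xi$ in the equicontinuous part. There, continuity of $U^{(\id\otimes\alpha)^\omega}$ together with $U^{(\id\otimes\alpha)^\omega}_f\xi=\xi$ for all such $f$ yields $U^{(\id\otimes\alpha)^\omega}_g\xi=\xi$.
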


\begin{proof}
By the tracial ultrapower implementation theorem, there are an abelian von Neumann algebra
$A$ with a faithful normal tracial state, a cofinal ultrafilter $\omega$
on a sufficiently large directed set,
and a unit vector
\[
 \xi\in\rL^2((A\ovt M)^\omega)
\]
such that
\begin{equation}\label{ultrapower implementation formula}
 \Phi(T)=\langle (1\otimes T)^\omega\xi,\xi\rangle, \quad (T \in \B(\rL^2(M)))
\end{equation}
The hypothesis extends by continuity to the trivial character
$1_G$ of $\rC^*(G)$. The multiplicative-domain argument therefore
gives
\[
 (1\otimes U^\alpha(f))^\omega\xi=1_G(f)\xi
 \qquad(f\in\rC^*(G)).
\]
Choose $h\in\rL^1(G)$ with $\int_Gh(g)\,\rd g=1$. Starting with a
representative $\xi=(\xi_i)^\omega$, Proposition
\ref{L2 equicontinuous ultrapower},
applied to $\id\otimes\alpha$, shows that
\[
 \xi=((1\otimes U^\alpha(h))\xi_i)^\omega
 \in\rL^2((A\ovt M)^\omega_{\id\otimes\alpha}).
\]
Here the equality follows from the preceding multiplicative-domain
relation. Proposition \ref{L2 equicontinuous ultrapower} also identifies the coordinatewise integrated
representation on this Hilbert space with the standard implementation of
$(\id\otimes\alpha)^\omega$. Its integrated representation acts
on $\xi$ by the trivial character, so $\xi$ is fixed by this action.
Therefore
\[
 \xi\in\rL^2((A\ovt M)^{\omega,\id\otimes\alpha}).
\]
Propositions \ref{abelian amplification action bicentralizer} and
\ref{relative bicentralizer with ultrapower} give
\[
 (1\otimes x)\xi=\xi(1\otimes x)
 \qquad(x\in\rB(M,\alpha)).
\]
Using \eqref{ultrapower implementation formula}, we obtain
\[
 \begin{aligned}
 \Phi(\lambda_M(x)\rho_M(y))
 &=\langle(1\otimes x)\xi(1\otimes y),\xi\rangle\\
 &=\langle\xi(1\otimes xy),\xi\rangle
 =\Phi(\rho_M(xy)),
 \end{aligned}
\]
as claimed.  Compare also \cite[Lemma~1.1]{Ma26}.
\end{proof}

\begin{theorem}\label{correspondance trivial bicentralizer}
The following are equivalent.
\begin{enumerate}[\rm (i)]
\item $\rB(M,\alpha)=\C1$.
\item There exists a {\rm ucp} map
\[
 \vartheta:\B(\rL^2(M))\longrightarrow\B(\rL^2(M))
\]
such that, for all $a,b\in M$ and $f\in\rL^1(G)$,
\[
 \vartheta(\lambda_M(a))=\tau(a)1,
 \qquad
 \vartheta(\rho_M(b))=\rho_M(b),
 \qquad
 \vartheta(U^\alpha(f))=U^\alpha(f).
\]
\item There is a state $\Phi\in\B(\rL^2(M))^*$ such that
\begin{equation}\label{coarse product moments}
 \Phi(\lambda_M(a)\rho_M(b))=\tau(a)\tau(b)
 \qquad(a,b\in M)
\end{equation}
and
\begin{equation}\label{coarse identity spectrum}
 \Phi(U^\alpha(f))=\int_G f(g) \: \rd g,
 \qquad(f\in\rL^1(G)).
\end{equation}
\item $\cC_\alpha\prec\cI_\alpha$.
\end{enumerate}
\end{theorem}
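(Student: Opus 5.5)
The plan is to prove the cycle (i)$\Rightarrow$(iii)$\Rightarrow$(i), and then link (ii), (iii) and (iv) through Proposition~\ref{binormal state extension}.

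\emph{Equivalence of (ii), (iii) and (iv).} The semicoarse correspondence $\cC_\alpha$ has the cyclic vector $\xi=\widehat 1\otimes\widehat 1$, which is fixed by $U^\alpha\otimes_G U^\alpha$. Its coefficients are
\[
\langle(\lambda_M(a)\otimes\rho_M(b))\xi,\xi\rangle=\tau(a)\tau(b),
\qquad
\langle U(f)\xi,\xi\rangle=\int_G f.
\]
Proposition~\ref{binormal state extension}, applied with $\Theta_1=\cC_\alpha$ and $\Theta_2=\cI_\alpha$, therefore gives (iv)$\Leftrightarrow$(iii). It also gives (iv)$\Leftrightarrow$ the existence of a ucp map $\vartheta:\B(\rL^2(M))\to\B(\rL^2(M)\otimes\rL^2(M))$ sending $\lambda_M(a)\mapsto\lambda_M(a)\otimes1$, $\rho_M(b)\mapsto 1\otimes\rho_M(b)$ and $U^\alpha(f)\mapsto(U^\alpha\otimes_G U^\alpha)(f)$. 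To reach (ii) as stated, I compress by $P\otimes1$, where $P$ is the projection onto $\C\widehat1$, and identify $(P\otimes 1)(\rL^2(M)\otimes\rL^2(M))$ with $\rL^2(M)$. Since $P$ commutes with $U^\alpha$ and $P\lambda_M(a)P=\tau(a)P$, the compressed ucp map satisfies the three identities in (ii). For (ii)$\Rightarrow$(iii), take $\Phi=\langle\vartheta(\cdot)\widehat1,\widehat1\rangle$. The only thing to check is $\Phi(\lambda_M(a)\rho_M(b))=\tau(a)\tau(b)$. This holds because $\lambda_M(a)$ and $\rho_M(b)$ lie in the multiplicative domain, since their images are $*$-homomorphic: $\vartheta(\lambda_M(a^*a))=\tau(a^*a)\ge|\tau(a)|^2$, and equality is needed. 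This step is delicate, so instead I would route (ii)$\Rightarrow$(iii) by using multiplicative domain only for $\rho_M$ and $U^\alpha$. Then $\Phi(\lambda_M(a)\rho_M(b))=\langle\vartheta(\lambda_M(a))\rho_M(b)\widehat1,\widehat1\rangle=\tau(a)\tau(b)$, because $\rho_M(b)$ and $U^\alpha(f)$ are in the multiplicative domain.

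\emph{(iii)$\Rightarrow$(i).} Lemma~\ref{equivariant binormal bicentralizer lemma} applies to $\Phi$: the restrictions of $\Phi$ to $\lambda_M(M)$ and $\rho_M(M)$ are $\tau$, hence normal. For $x\in\rB(M,\alpha)$ and $y\in M$ it gives
\[
\tau(x)\tau(y)=\Phi(\lambda_M(x)\rho_M(y))=\Phi(\rho_M(xy))=\tau(xy).
\]
Taking $y=x^*$ yields $\|x\|_2^2=|\tau(x)|^2$, so $x\in\C1$.

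\emph{(i)$\Rightarrow$(iii).} This is the main step. We need a state on $\B(\rL^2(M))$ that is coarse on $\lambda_M(M)\odot\rho_M(M)$ and trivial on $U^\alpha$. The natural candidate is a weak$^*$ limit of vector states $\langle\cdot\,\widehat{v_i},\widehat{v_i}\rangle$, where $v_i\in\cU(M)$ is almost $\alpha$-invariant uniformly on compacta. Triviality on $U^\alpha(f)$ then follows from almost invariance, and
\[
\langle\lambda_M(a)\rho_M(b)\widehat{v_i},\widehat{v_i}\rangle=\tau(v_i^*av_ib).
\]
So it suffices to find an element $V$ of the fixed point algebra $P=M^{\omega,\alpha}$ with $\tau^\omega(V^*aVb)=\tau(a)\tau(b)$ for all $a,b\in M$, or approximately so along a net. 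By Proposition~\ref{relative bicentralizer with ultrapower} and (i), we have $P'\cap M=\C1$. The second inclusion there gives $P'\cap M^\omega\subset(\C1)^\omega=\C1$, so $P$ is a factor with trivial relative commutant in $M^\omega$. Consequently $\rE_{P'\cap M^\omega}(a)=\tau(a)$, and averaging $uau^*$ over $u\in\cU(P)$ has $\tau(a)1$ as its unique norm-minimizer. Hence, by a Dixmier-type argument in the finite factor, convex combinations of $u a u^*$ with $u\in\cU(P)$ approach $\tau(a)$ in $\|\cdot\|_2$. This produces a state
\[
\Phi=\sum_k t_k\langle\cdot\,\widehat{u_k^*},\widehat{u_k^*}\rangle
\]
satisfying \eqref{coarse product moments} approximately on finitely many $a,b$, while still satisfying \eqref{coarse identity spectrum} exactly, since the $u_k$ are $\alpha^\omega$-fixed. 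A weak$^*$ limit over finite sets and tolerances, after lifting ultrapower unitaries to nets in $M$ (the vector states on $\rL^2(M)^\omega$ restrict through $T\mapsto T^\omega$), gives the desired $\Phi$. The obstacle is making the joint two-sided averaging of $\tau(u^*aub)$ work for pairs $(a,b)$ simultaneously; I would handle this by applying the Dixmier approximation in $M^\omega\ovt M^{\op}$ restricted to $\lambda\rho$-coefficients, or by using the identity $\sum t_k\tau(u_k^*au_kb)=\tau(\Psi(a)b)$ with $\Psi$ the averaging map.
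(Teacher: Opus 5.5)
Your proof is correct, but it is organized differently from the paper's.

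\textbf{How the routes differ.} The paper proves (i)$\Rightarrow$(ii) directly. Proposition~\ref{conditional expectation relative action bicentralizer} gives convex combinations $\sum_k t_{i,k}u_{i,k}au_{i,k}^*\to\tau(a)1$, with unitaries $u_{i,k}\in\cU(M)$ that are almost invariant on larger and larger compact sets. Any point-ultraweak accumulation point of the ucp maps $\sum_k t_{i,k}\Ad(\lambda_M(u_{i,k}))$ then satisfies (ii). There the $U^\alpha$ identity holds only in the limit, via $\lambda_M(u)U^\alpha_g\lambda_M(u)^*=\lambda_M(u\alpha_g(u^*))U^\alpha_g$. The paper then proves (ii)$\Rightarrow$(iii)$\Rightarrow$(i) and (iii)$\Leftrightarrow$(iv) exactly as you do.

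You instead prove (i)$\Rightarrow$(iii) by relative Dixmier averaging over $\cU(M^{\omega,\alpha})$, using $(M^{\omega,\alpha})'\cap M^\omega\subset\rB(M,\alpha)^\omega=\C1$. Your state $\sum_kt_k\langle T^\omega\widehat{u_k^*},\widehat{u_k^*}\rangle$ is exactly the vector state at $\widehat1$ of the paper's ucp map, transported to the ultrapower. There, exact $\alpha^\omega$-invariance makes \eqref{coarse identity spectrum} hold exactly at every stage.

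Since you never build the map in (ii) from (i), you need an extra link (iv)$\Rightarrow$(ii). Your compression $V^*\vartheta(\cdot)V$, with $V\xi=\widehat1\otimes\xi$, handles this correctly: $V^*(\lambda_M(a)\otimes1)V=\tau(a)1$, $V^*(1\otimes\rho_M(b))V=\rho_M(b)$, and $(U^\alpha_g\otimes U^\alpha_g)V=VU^\alpha_g$. The paper does not make this observation explicit. It shows that (ii) is just the weak-containment map for $\cC_\alpha\prec\cI_\alpha$ cut down to one tensor leg, so (ii)$\Leftrightarrow$(iv) holds without reference to the bicentralizer.

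\textbf{What each route costs.} Your route relies on the ultrapower machinery, namely Proposition~\ref{relative bicentralizer with ultrapower}, whose proof itself rests on Proposition~\ref{conditional expectation relative action bicentralizer}. The paper's argument stays inside $M$ and produces (ii) explicitly.

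\textbf{Small corrections.}
\begin{enumerate}
\item The first sentence of your (ii)$\Rightarrow$(iii) paragraph claims $\lambda_M(M)$ lies in the multiplicative domain. This is false, as you noticed, since $\tau(a^*a)\neq|\tau(a)|^2$ in general. Delete it; only $\rho_M(M)$ is needed there.
\item The ``obstacle'' you flag at the end is not one. With $\Psi=\sum_kt_k\Ad(u_k)$ you have $\sum_kt_k\tau^\omega(u_kau_k^*b)=\tau^\omega(\Psi(a)b)$. The error is therefore at most $\|\Psi(a)-\tau(a)1\|_2\|b\|_2$, uniformly in $b$. So you only need $\Psi(a)\approx\tau(a)1$ for finitely many $a$ at once. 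That follows by composing averaging maps, each of which is $\|\cdot\|_2$-contractive, trace-preserving, and fixes scalars.
\item With the vectors $\widehat{u_k^*}$, the coefficient is $\tau(u_kau_k^*b)$, not $\tau(u_k^*au_kb)$.
\end{enumerate}
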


\begin{proof}
$(\rm i)\Rightarrow(\rm ii)$. Choose a net
$(\varepsilon_i)_i$ of positive numbers decreasing to zero and an
increasing cofinal net $(K_i)_i$ of compact subsets of $G$.
Thanks to Proposition~\ref{conditional expectation relative action bicentralizer}, we may choose, for every $i$,
unitaries $u_{i,1},\ldots,u_{i,n_i}\in M$ and positive numbers
$t_{i,1},\ldots,t_{i,n_i}$ with sum one such that
\[
 \sup_{g\in K_i}\|\alpha_g(u_{i,k})-u_{i,k}\|_2
 \leq\varepsilon_i
 \qquad(1\leq k\leq n_i)
\]
and
\[
 \sum_{k=1}^{n_i}t_{i,k}u_{i,k}au_{i,k}^*
 \longrightarrow\tau(a)1
 \qquad(a\in M)
\]
in $\|\cdot\|_2$. Define
\[
 \vartheta_i(T)=\sum_{k=1}^{n_i}t_{i,k}
 \lambda_M(u_{i,k})T\lambda_M(u_{i,k})^*
 \qquad(T\in\B(\rL^2(M))).
\]
Each $\vartheta_i$ is ucp. Since the left and right representations
commute,
\[
 \vartheta_i(\rho_M(b))=\rho_M(b)
 \qquad(b\in M),
\]
whereas
\[
 \vartheta_i(\lambda_M(a))
 =\lambda_M\left(\sum_{k=1}^{n_i}t_{i,k}
 u_{i,k}au_{i,k}^*\right)
 \longrightarrow\tau(a)1
\]
ultraweakly. Moreover, covariance gives
\[
 \lambda_M(u_{i,k})U_g^\alpha\lambda_M(u_{i,k})^*
 =\lambda_M(u_{i,k}\alpha_g(u_{i,k}^*))U_g^\alpha.
\]
The approximate invariance above implies, uniformly for $g\in K_i$,
that the right-hand side converges strongly to $U_g^\alpha$. Since the
sets $K_i$ are cofinal, integration yields
\[
 \vartheta_i(U^\alpha(f))\longrightarrow U^\alpha(f)
 \qquad(f\in\rL^1(G))
\]
strongly. Any point-ultraweak accumulation point $\vartheta$ of
$(\vartheta_i)_i$ is therefore a ucp map satisfying all the identities
in~$(\rm ii)$.

We next prove $(\rm ii)\Longrightarrow(\rm iii)$. Suppose that~$(\rm ii)$ holds and define
\[
 \Phi(T)=\langle\vartheta(T)\widehat1,\widehat1\rangle
 \qquad(T\in\B(\rL^2(M))).
\]
Since $\vartheta$ restricts to the identity on $\rho_M(M)$, this algebra is
contained in the multiplicative domain of $\vartheta$. Hence, for all
$a,b\in M$,
\[
 \Phi(\lambda_M(a)\rho_M(b))
 =\langle\vartheta(\lambda_M(a))\rho_M(b)\widehat1,
          \widehat1\rangle
 =\tau(a)\tau(b).
\]
Moreover,
\[
 \Phi(U^\alpha(f))
 =\langle U^\alpha(f)\widehat1,\widehat1\rangle
 =\int_G f(g)\,\rd g
 \qquad(f\in\rL^1(G)),
\]
so~$(\rm iii)$ holds.

We now prove $(\rm iii)\Rightarrow(\rm i)$. Let $x\in\rB(M,\alpha)$. The two marginals
of $\Phi$ are $\tau$, so Lemma
\ref{equivariant binormal bicentralizer lemma} gives, for every
$y\in M$,
\[
 \tau(xy)=\Phi(\rho_M(xy))
 =\Phi(\lambda_M(x)\rho_M(y))
 =\tau(x)\tau(y).
\]
Taking $y=(x-\tau(x)1)^*$ yields
$\|x-\tau(x)1\|_2=0$.  Hence $\rB(M,\alpha)=\C1$.

Finally, $(\rm iii)\Longleftrightarrow(\rm iv)$ follows from Proposition
\ref{binormal state extension}, since the vector
$\xi_0=\widehat1\otimes\widehat1$ is cyclic for $\cC_\alpha$ and fixed by
$U^\alpha\otimes U^\alpha$, and its vector state has precisely the values
\eqref{coarse product moments}--\eqref{coarse identity spectrum}.
\end{proof}

We use the following result from \cite[Corollary~1.8]{BLS92}.

\begin{theorem}
\label{faithful tensor powers contain regular}
Let $\pi$ be a faithful unitary representation of a locally compact group
$G$ such that $\pi^* \prec \pi$ and $1_G \prec \pi$ and $\pi \otimes_G \pi \prec \pi$. Then $\lambda_G \prec \pi$.
\end{theorem}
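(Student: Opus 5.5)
The plan is to verify Fell's criterion directly: show that every coefficient $g\mapsto\langle\lambda_G(g)f,f\rangle$ with $f\in\rC_c(G)$ is a limit, uniformly on compact subsets, of coefficients of representations weakly contained in $\pi$. Let $\mathcal P_\pi$ be the set of continuous positive-type functions associated with representations weakly contained in $\pi$. By the hypotheses $\pi^*\prec\pi$ and $\pi\otimes_G\pi\prec\pi$, together with the fact that weak containment is preserved under finite tensor products and direct sums, $\mathcal P_\pi$ has the following properties:
\begin{itemize}
\item it is a convex cone, stable under complex conjugation and pointwise products;
\item it contains the constants, since $1_G\prec\pi$;
\item it is closed under uniform convergence on compacta.
\end{itemize}
It is also stable under ``smearing''. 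If $\psi\in\mathcal P_\pi$ is realized as $\psi(g)=\langle\sigma(g)\eta,\eta\rangle$ with $\sigma\prec\pi$, and $f\in\rC_c(G)$, then the vector $\eta_f=\int f(x)\sigma(x)\eta\,\rd x$ has coefficient
\[
\Psi(g)=\iint f(x)\overline{f(y)}\,\psi(y^{-1}gx)\,\rd x\,\rd y ,
\]
so $\Psi\in\mathcal P_\pi$. Formally, replacing $\psi$ by a Dirac mass at $e$ turns $\Psi$ into the $\lambda_G$-coefficient of $f$. So the goal is to produce elements of $\mathcal P_\pi$ that behave like approximate Dirac masses.

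Fix $f\in\rC_c(G)$ and $K\Subset G$. Then $\Psi(g)$ for $g\in K$ only involves $\psi$ on the compact set $K'=(\supp f)^{-1}K\,\supp f$. So it suffices to find, for every $U\ll G$, a function $\psi\in\mathcal P_\pi$ with $\psi\geq0$ such that $\psi$ restricted to $K'$ concentrates near $e$:
\[
\int_{K'\setminus U}\psi\leq\varepsilon\int_{U}\psi .
\]

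To build $\psi$, use faithfulness. For each $h\in K'\setminus U$, pick a unit vector $\xi_h$ with $\pi(h)\xi_h\neq\xi_h$, and put $\varphi_h(g)=|\langle\pi(g)\xi_h,\xi_h\rangle|^2$. This function lies in $\mathcal P_\pi$ (it is a coefficient of $\pi\otimes_G\pi^*$), takes values in $[0,1]$, satisfies $\varphi_h(e)=1$, and satisfies $\varphi_h<1$ on a neighborhood of $h$. By compactness of $K'\setminus U$, finitely many such functions suffice. Their product $\varphi$ lies in $\mathcal P_\pi$, satisfies $0\leq\varphi\leq1$ and $\varphi(e)=1$, and obeys $\varphi\leq1-\delta$ on $K'\setminus U$ for some $\delta>0$. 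By continuity, $\varphi\geq1-\delta/2$ on some $V\ll G$ with $V\subset U$.

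Then set $\psi_n=\varphi^n/c_n$ with $c_n=\int_{V}\varphi^n$. These satisfy
\[
\int_{K'\setminus U}\psi_n\leq\frac{(1-\delta)^n\,|K'|}{(1-\delta/2)^n\,|V|}\longrightarrow0 ,
\]
and $\int_U\psi_n\geq1$. After an additional normalization by $\int_U\psi_n$, the corresponding $\Psi_n$ converge to
\[
g\mapsto\iint f(x)\overline{f(y)}\,\rho_U(y^{-1}gx)\,\rd x\,\rd y ,
\]
up to an error controlled by the oscillation of $f$ on $U$-translates. Here $\rho_U$ stands for the limiting mass distribution, which is supported in $U$ and has total mass one. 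Letting $U$ shrink, uniform continuity of $f$ gives uniform convergence on $K$ to $\langle\lambda_G(\cdot)f,f\rangle$, up to the modular-function bookkeeping for left versus right translation.

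The main obstacle is exactly this concentration step. A positive-type function from $\mathcal P_\pi$ has no integrability or decay at infinity, so one cannot normalize it globally in $\rL^1(G)$. The remedy is the localization to the compact set $K'$ noted above, combined with the power trick: compare $(1-\delta)^n$ against $(1-\delta/2)^n$ on a fixed small neighborhood $V$. Some care is needed with the Haar-measure normalizations and with $\Delta_G$ in the nonunimodular case. One must also check that the family of vectors $\eta_f$ produced this way yields weak containment of all of $\lambda_G$ and not just of a subrepresentation; this holds because the functions $f\in\rC_c(G)$ give cyclic vectors whose coefficients are approximated.
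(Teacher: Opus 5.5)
Your route differs from the paper's. The paper gives no argument here; it imports the statement from \cite[Corollary~1.8]{BLS92}, a result from the theory of invariant subalgebras of the Fourier--Stieltjes algebra. You give a self-contained proof via Fell's criterion. The set $\mathcal P_\pi$ is a cone containing the constants ($1_G\prec\pi$) and closed under conjugation ($\pi^*\prec\pi$), products ($\pi\otimes_G\pi\prec\pi$) and compact-uniform limits. Faithfulness plus compactness yields $\varphi\in\mathcal P_\pi$ peaking at $e$ on a compact set, and normalized powers of $\varphi$ act as an approximate identity there. This shows where each hypothesis enters and needs no countability or amenability; the citation buys brevity. The strategy works, but one step is wrong as written and the final estimate should be made explicit.

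\textbf{The choice of $\xi_h$.} Choosing $\xi_h$ with $\pi(h)\xi_h\neq\xi_h$ does not give $\varphi_h(h)<1$. If $\xi_h$ is an eigenvector of $\pi(h)$ with eigenvalue $c\neq1$, then $|\langle\pi(h)\xi_h,\xi_h\rangle|^2=1$. You need $\xi_h$ that is not an eigenvector of $\pi(h)$. Such a vector exists because $\pi(h)$ is not scalar for $h\neq e$: if $\pi(h)=c1$, evaluating at $h$ and $e$ the approximation of $1$ by coefficients of $\pi$ forces $c=1$, and faithfulness then forces $h=e$. Alternatively, take $\varphi_h=\tfrac12\bigl(1+\operatorname{Re}\langle\pi(\cdot)\xi_h,\xi_h\rangle\bigr)$. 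It lies in $\mathcal P_\pi$, takes values in $[0,1]$, and satisfies $\varphi_h(h)=1-\tfrac14\|\pi(h)\xi_h-\xi_h\|^2<1$.

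\textbf{The final estimate.} Do not appeal to a limiting distribution $\rho_U$, since normalized powers of $\varphi$ need not converge; estimate directly instead. Take $U$ relatively compact, enlarge $K'$ to contain $\overline U$, and set $\tilde\psi_n=\varphi^n/\int_U\varphi^n$. Then $\int_U\tilde\psi_n=1$, and $\int_{K'\setminus U}\tilde\psi_n\to0$ by your power estimate. For $g\in K$, substituting $x=g^{-1}yw$ gives
\[
\tilde\Psi_n(g)=\int_{K'}\tilde\psi_n(w)F_g(w)\,\rd w,
\qquad
F_g(w)=\int_Gf(g^{-1}yw)\overline{f(y)}\,\rd y .
\]
Moreover $F_g(e)=\langle\lambda_G(g)f,f\rangle$, $|F_g(w)-F_g(e)|\leq\|f\|_2\|f(\cdot\,w)-f\|_2$, and $|F_g(w)|\leq\|f\|_\infty\|f\|_1$. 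Hence
\[
\sup_{g\in K}\bigl|\tilde\Psi_n(g)-\langle\lambda_G(g)f,f\rangle\bigr|
\leq\|f\|_2\sup_{w\in U}\|f(\cdot\,w)-f\|_2+\|f\|_\infty\|f\|_1\int_{K'\setminus U}\tilde\psi_n ,
\]
and no modular function appears. Since $\rC_c(G)$ is dense in $\rL^2(G)$, Fell's criterion applied to these diagonal coefficients gives $\lambda_G\prec\pi$ in full. Your worry about obtaining only a subrepresentation therefore does not arise.
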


\begin{corollary}
  Suppose that $\rB(M,\alpha)=\C$. Then $$U^\alpha \otimes_G U^\alpha \prec U^\alpha.$$
  Thus if $\alpha$ is faithful, we have $\lambda_G \prec U^\alpha$.
\end{corollary}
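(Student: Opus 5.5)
The strategy is to read off weak containments from the equivalence $\cC_\alpha\prec\cI_\alpha$ in Theorem~\ref{correspondance trivial bicentralizer} by looking only at the group representations. That is, we forget the bimodule structure, and then apply Theorem~\ref{faithful tensor powers contain regular} to $\pi=U^\alpha$.

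\textbf{Step 1: $U^\alpha\otimes_G U^\alpha\prec U^\alpha$.}
Assume $\rB(M,\alpha)=\C1$. Theorem~\ref{correspondance trivial bicentralizer} gives $\cC_\alpha\prec\cI_\alpha$, and Proposition~\ref{binormal state extension} then gives a ucp map $\vartheta:\B(\rL^2(M)\otimes\rL^2(M))^{\text{target}}$ in the appropriate direction. Concretely, $\vartheta:\B(\rL^2(M))\to\B(\rL^2(M)\otimes\rL^2(M))$ satisfies
\[
\vartheta(U^\alpha(f))=(U^\alpha\otimes_G U^\alpha)(f)\qquad(f\in\rC^*(G)).
\]
Restricting $\vartheta$ to $U^\alpha(\rC^*(G))$, it becomes a $*$-homomorphism on this algebra, since these elements lie in the multiplicative domain. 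This gives a surjection
\[
U^\alpha(\rC^*(G))\longrightarrow (U^\alpha\otimes_GU^\alpha)(\rC^*(G))
\]
compatible with the two representations of $\rC^*(G)$. Equivalently, there is a kernel inclusion
\[
\ker U^\alpha\subset\ker(U^\alpha\otimes_GU^\alpha),
\]
which is exactly $U^\alpha\otimes_GU^\alpha\prec U^\alpha$.

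\textbf{Step 2: checking the remaining hypotheses.} For the second assertion we verify the hypotheses of Theorem~\ref{faithful tensor powers contain regular} for $\pi=U^\alpha$.
\begin{enumerate}[\rm (a)]
\item Faithfulness. The representation $U^\alpha$ is faithful because $\alpha$ is faithful: if $U^\alpha_g=1$, then $\alpha_g=\id$, so $g=e$.
\item $1_G\prec U^\alpha$. This holds trivially, since $\widehat1$ is a $U^\alpha$-fixed vector.
\item $\overline{U^\alpha}\prec U^\alpha$. The map $J:\widehat x\mapsto\widehat{x^*}$ is an antiunitary commuting with $U^\alpha$, since $\alpha_g$ is a $*$-automorphism. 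Hence $\overline{U^\alpha}\cong U^\alpha$.
\item $U^\alpha\otimes_GU^\alpha\prec U^\alpha$. This is Step 1.
\end{enumerate}
Theorem~\ref{faithful tensor powers contain regular} then yields $\lambda_G\prec U^\alpha$.

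\textbf{Expected obstacle.} The only point needing care is the multiplicative-domain step in Step 1. We need that the ucp map from Proposition~\ref{binormal state extension}~(ii) is multiplicative on the whole $\rC^*$-algebra $U^\alpha(\rC^*(G))$, not merely on individual generators. The ucp map sends $U^\alpha(f)$ to $(U^\alpha\otimes_GU^\alpha)(f)$, and these images form a representation. Therefore $\vartheta(a^*a)=\vartheta(a)^*\vartheta(a)$ for all $a$ in this $\rC^*$-algebra, which gives multiplicativity. Alternatively, one can argue directly from kernel inclusion, using the definition of $\prec$ through $\rC^*(\alpha,\beta)$ together with the fact that $u(\rC^*(G))\subset$ its multiplier algebra. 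In that formulation the point to check is that the homomorphism $q$ appearing in the proof of Proposition~\ref{binormal state extension} extends to the multiplier level and sends $U^\alpha(f)$ to $(U^\alpha\otimes_G U^\alpha)(f)$.
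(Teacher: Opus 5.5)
Your proof is correct and is the paper's argument: you restrict the weak containment $\cC_\alpha\prec\cI_\alpha$ from Theorem~\ref{correspondance trivial bicentralizer} to the group representations, then apply Theorem~\ref{faithful tensor powers contain regular} using the fixed vector $\widehat1$ and the self-conjugacy of $U^\alpha$ via $J$. The multiplicative-domain discussion is harmless but unnecessary, since linearity of $\vartheta$ together with $\vartheta(U^\alpha(f))=(U^\alpha\otimes_G U^\alpha)(f)$ already gives $\ker U^\alpha\subset\ker(U^\alpha\otimes_G U^\alpha)$.
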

\begin{proof}
  Restrict the weak containment $\cC_\alpha \prec \cI_\alpha$ to the unitary representations of $G$. Also $U^\alpha$ is equal to its contragredient and it contains the trivial representation $1_G$.
\end{proof}

\begin{remark} \label{tensor product stability}
Let $$\cL_\alpha = (\rL^2(M),\lambda_M,U^\alpha) \in \Rep(\alpha)$$ be the standard covariant representation of $\alpha$. 
Item~$(\rm iv)$ in Theorem~\ref{correspondance trivial bicentralizer} implies, in
particular, that
\[
 U^\alpha\otimes_G \cL_\alpha \prec \cL_\alpha.
\]
Indeed, $\cL_\alpha$ and $U^\alpha \otimes_G \cL_\alpha$ are obtained from $\cI_\alpha$ and $\cC_\alpha$ respectively, by forgetting the right action of $M$. Thus, if $\cC_\alpha \prec \cI_\alpha$ then $U^\alpha\otimes_G \cL_\alpha \prec \cL_\alpha$ by restriction. By tensoring on the right with $\cL_\alpha^{\op}$ we also obtain
\[
 U^\alpha\otimes_G \cC_\alpha \prec \cC_\alpha.
\]
Thus, if $\alpha$ is faithful, then \[
 \lambda_G \otimes_G \cC_\alpha \prec \cC_\alpha.
\]
\end{remark}

\subsection{The Rokhlin property}
Let $G$ be an arbitrary locally compact group and let $\alpha:G\curvearrowright (M,\tau)$ be a continuous trace-preserving action on a tracial von Neumann algebra $(M,\tau)$.

\begin{definition}
  We say that the action $\alpha$ has the Rokhlin property if $$\lambda_G \otimes_G \cI_\alpha \prec \cI_\alpha.$$
\end{definition}

 We use the notation $M_{\omega,\alpha}=M' \cap M^{\omega}_\alpha$ for a large enough cofinal ultrafilter $\omega$ and denote by $\alpha_\omega$ the restriction of $\alpha^\omega$ to $M_{\omega,\alpha}$.

\begin{theorem}\label{Rokhlin representation criterion}
Suppose that $M$ is a $\II_1$ factor, and let $\rho=U^{\alpha_\omega}$
be the standard representation of
$\alpha_\omega:G\curvearrowright M_{\omega,\alpha}$.
For every unitary representation $\pi\in\Rep(G)$, we have
\[
\pi\otimes_G\cI_\alpha\prec\cI_\alpha
\quad\Longleftrightarrow\quad
\pi\prec\rho.
\]
\end{theorem}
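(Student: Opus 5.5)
We prove both implications with the weak containment criterion of Proposition~\ref{binormal state extension}, applied with $\Theta_1=\pi\otimes_G\cI_\alpha$ and $\Theta_2=\cI_\alpha$. The ultrapower implementation theorem translates the resulting states on $\B(\rL^2(M))$ into vectors in ultrapowers. Throughout, $\pi:G\to\cU(K)$ is fixed.

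\textbf{($\Leftarrow$).} Suppose $\pi\prec\rho$. By Proposition~\ref{tensoring preserves weak containment} it suffices to prove $\rho\otimes_G\cI_\alpha\prec\cI_\alpha$.

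We realize $\rho\otimes_G\cI_\alpha$ concretely. Since $M$ is a $\II_1$ factor, $M$ and $M_{\omega,\alpha}$ commute inside $M^\omega_\alpha$ and are tracially independent. Hence the von Neumann algebra they generate is $M\ovt M_{\omega,\alpha}$, and it is $\alpha^\omega$-invariant. On $\rL^2(M\ovt M_{\omega,\alpha})\cong\rL^2(M)\otimes\rL^2(M_{\omega,\alpha})$ we have:
\begin{itemize}
\item left and right multiplication by $M$ act on the first factor;
\item $U^{\alpha^\omega}$ acts as $U^\alpha\otimes U^{\alpha_\omega}$.
\end{itemize}
This equivariant $M$--$M$ correspondence is exactly $\rho\otimes_G\cI_\alpha$, up to the flip. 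It is a subcorrespondence of $\rL^2(M^\omega_\alpha)$, equipped with its bimodule structure over $M$ and the representation $U^{\alpha^\omega}$.

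It remains to show that $\rL^2(M^\omega_\alpha)$ is weakly contained in $\cI_\alpha$. Take a vector $\xi=(\xi_i)^\omega$ in it. The states $T\mapsto\langle T\xi_i,\xi_i\rangle$ on $\B(\rL^2(M))$ converge along $\omega$, on the generators $\lambda_M(a)\rho_M(b)U^\alpha(f)$, to the coefficient of $\xi$. This uses Proposition~\ref{L2 equicontinuous ultrapower} to identify $U^{\alpha^\omega}(f)$ with $(U^\alpha(f))^\omega$ on equicontinuous vectors. Coefficients of $\Pi_{\cI_\alpha}$ thus approximate coefficients of the ultrapower representation on $\rC^*(\alpha,\alpha)$, which is weak containment.

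\textbf{($\Rightarrow$).} Suppose $\pi\otimes_G\cI_\alpha\prec\cI_\alpha$ and take a unit vector $\eta\in K$. The vector $\eta\otimes\widehat1$ gives a state $\Phi$ on $\B(\rL^2(M))$ with the following properties, by Proposition~\ref{binormal state extension}(iii):
\begin{itemize}
\item $\Phi(\lambda_M(a)\rho_M(b))=\tau(ab)$;
\item $\Phi(U^\alpha(f))=\langle\pi(f)\eta,\eta\rangle$.
\end{itemize}
In particular both marginals of $\Phi$ equal $\tau$, so the ultrapower implementation theorem applies. It gives an abelian $(A,\mu)$ and a unit vector $\zeta\in\rL^2((A\ovt M)^\omega)$ with $\Phi(T)=\langle(1\otimes T)^\omega\zeta,\zeta\rangle$.

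The identity $\Phi(\lambda_M(a)\rho_M(b))=\tau(ab)$ gives $\langle a\zeta b,\zeta\rangle=\tau(ab)$. A multiplicative-domain argument then shows that $\|a\zeta-\zeta a\|_2=0$, so $\zeta$ commutes with $M$. The same kind of argument, together with Proposition~\ref{L2 equicontinuous ultrapower}, puts $\zeta$ in the equicontinuous part. Indeed, we can replace $\zeta$ by $(1\otimes U^\alpha(h))^\omega\zeta$ for suitable $h$ forming an approximate unit, keeping coefficients close. So $\zeta$ lies in $\rL^2$ of the relative commutant of $M$ in $(A\ovt M)^\omega_{\id\otimes\alpha}$, and
\[
\langle U^{(\id\otimes\alpha)^\omega}(f)\zeta,\zeta\rangle=\langle\pi(f)\eta,\eta\rangle .
\]

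We then remove $A$ by a selection in the spirit of Lemma~\ref{abelian amplification kernel lemma}. We need a vector in $\rL^2(M_{\omega,\alpha})$ with the same coefficient, so that this coefficient of $\pi$ is approximated by coefficients of $\rho$. To get it, fiberwise we pick points $z$ where the commutator with finitely many elements of $M$ is small and the coefficient is close. Convexity makes this possible: the coefficient of $\zeta$ is an average over $z$ of fiber coefficients, which are positive definite functions. Hence $\pi$'s coefficient lies in the closed convex hull of coefficients of $\rho$ (or of $\rho\otimes\rho$). Since $\rho\otimes_G\rho\prec\rho$, by tensor-product closure of $M_{\omega,\alpha}$-type ultrapowers, this gives $\pi\prec\rho$.

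\textbf{Main obstacle.} The hardest step is this last disintegration. It must handle $\rL^2$ vectors rather than bounded elements, and must preserve centrality and equicontinuity simultaneously. I would first reduce to bounded vectors via the metric projection $P_R$ from Proposition~\ref{L2 equicontinuous ultrapower}, which is $U^\alpha$-equivariant. Centrality is then handled as approximate commutation with a finite set, controlled through Chebyshev exactly as in Lemma~\ref{abelian amplification kernel lemma}.
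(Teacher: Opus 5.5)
Your ($\Leftarrow$) direction is the paper's argument. Factoriality makes multiplication embed $\rho\otimes_G\cI_\alpha$ into $\rL^2(M^\omega_\alpha)$, whose coefficients are ultralimits of coefficients of $\cI_\alpha$. The first half of your ($\Rightarrow$) also follows the paper: the state $\Phi$, the implementing vector $\zeta$, and its centrality and equicontinuity. Together these give $\zeta\in\rL^2(C_A)$ with $C_A=(1\otimes M)'\cap(A\ovt M)^\omega_{\id\otimes\alpha}$, carrying the same coefficient as $\eta$.

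The gap is in the step you flag as the main obstacle, removing $A$. This is where the paper does its real work: it proves separately that $\rho_A\prec\rho$ for the standard representation $\rho_A$ on $\rL^2(C_A)$. Your sketch of this step does not work as written, for three concrete reasons.

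(1) Selecting one fiber per index cannot in general yield ``a vector in $\rL^2(M_{\omega,\alpha})$ with the same coefficient''. The coefficient of $\zeta$ is a $\mu$-average of fiber coefficients, and a single fiber reproduces only one of them.

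(2) Nothing in your plan makes the selected fibers an $(\alpha,\omega)$-equicontinuous net. Equicontinuity of $\zeta$ controls a supremum over $g$ of an average over $Z$, not the average of the fiberwise suprema. So Chebyshev does not transfer it to individual fibers, and $P_R$ only provides boundedness.

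(3) The appeal to $\rho\otimes_G\rho\prec\rho$ is superfluous, since finite sums of coefficients of $\rho$ already witness weak containment. Worse, the paper deduces it in Corollary~\ref{Rokhlin central sequence faithfulness} \emph{from} this theorem, so you cannot use it here without an independent proof.

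Two ideas are missing.

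First, test one linear inequality at a time. $\rho_A\prec\rho$ is equivalent to $\|\rho_A(f)\|\leq\|\rho(f)\|=:c$ for all $f\in\rL^1(G)$. If this fails for some $f$, there is a vector on which the fiber-average of $d(x)=\|U^\alpha(f)\widehat x\|_2^2-c^2\|x\|_2^2$ exceeds some $\delta>0$. At the same time, commutators with elements of $M$ tend to $0$ on average. Since $|d|$ is uniformly bounded, Chebyshev yields a \emph{single} fiber with $d>\delta/2$ and small commutators with a prescribed finite set. No convex combinations are needed.

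Second, apply the smoothing you already used for global equicontinuity fiberwise, on bounded representatives. Take $Y\in C_A$ with step-function representatives $Y_j=\sum_r e_{j,r}\otimes y_{j,r}$, where $\|y_{j,r}\|\leq C$. Put $X=(\id\otimes\alpha)^\omega_h(Y)$ with $h\geq0$ of integral one. Then every fiber $x_{j,r}=\alpha_h(y_{j,r})$ satisfies $\|\alpha_g(x_{j,r})-x_{j,r}\|_2\leq C\|h(g^{-1}\cdot)-h\|_1$. Hence any selection of fibers, made along increasing finite subsets of a dense subalgebra of $M$ with decreasing tolerances, defines an element of $M_{\omega,\alpha}$ contradicting $\|\rho(f)\|=c$.

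Your convex-hull route can also be made rigorous, using Carath\'eodory with finitely many selected branches per index. It still needs this fiberwise smoothing.
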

\begin{proof}
We first explain why an abelian amplification does not enlarge the
central sequence representation up to weak containment. Let $(A,\mu)$
be a tracial abelian von Neumann algebra, put
$\widetilde\alpha=\id_A\otimes\alpha$, and let $\rho_A$ be the
standard representation on
\[
\rL^2(C_A),\qquad
C_A=(1\otimes M)'\cap(A\ovt M)^\omega_{\widetilde\alpha}.
\]
We claim that $\rho_A\prec\rho$.

Fix $f\in\rL^1(G)$ and put $c=\|\rho(f)\|$. If
$\|\rho_A(f)\|>c$, bounded vectors are dense, so there are
$Y\in C_A$ and $h\in\rC_c(G)$, with $h\geq0$ and $\int_Gh=1$,
such that $X=\widetilde\alpha_h^\omega(Y)$ satisfies
\[
\|\rho_A(f)\widehat X\|_2^2-c^2\|X\|_2^2>\delta
\]
for some $\delta>0$. Here $h$ is chosen from an approximate identity.
Choose a uniformly bounded representative $(Y_j)_j$ whose terms are
finite step functions in the abelian variable. This is possible by
applying conditional expectations onto finite-dimensional subalgebras
of $A$ and approximating in $\|\cdot\|_2$. Write
\[
Y_j=\sum_r e_{j,r}\otimes y_{j,r},\qquad
X_j=\sum_r e_{j,r}\otimes x_{j,r},\qquad
x_{j,r}=\alpha_h(y_{j,r}),
\]
where the $e_{j,r}$ form a partition of $1$ and
$\|y_{j,r}\|\leq C$ for a fixed $C$. The representatives $(X_j)_j$
are uniformly equicontinuous, since
\[
\|\alpha_g(x_{j,r})-x_{j,r}\|_2
\leq C\|h(g^{-1}\,\cdot)-h\|_1.
\]
Put
\[
d(x)=\|U^\alpha(f)\widehat x\|_2^2-c^2\|x\|_2^2.
\]
For $\omega$-almost every $j$, we have
\[
\sum_r\mu(e_{j,r})d(x_{j,r})>\delta,
\]
whereas, for every $a\in M$,
\[
\lim_{j\to\omega}\sum_r\mu(e_{j,r})
\|[a,x_{j,r}]\|_2^2=0.
\]
The numbers $|d(x_{j,r})|$ have a common bound. Given a finite
subset $F\subset M$ and $\varepsilon>0$, Chebyshev's inequality
therefore allows us to choose $j,r$ such that
\[
\max_{a\in F}\|[a,x_{j,r}]\|_2<\varepsilon,
\qquad d(x_{j,r})>\delta/2.
\]
Indeed, make the total weight of the components failing the first
inequality small enough that their contribution to the preceding
weighted sum has absolute value less than $\delta/2$.
Using the standing assumption on the index set, make these choices
along increasing finite subsets of a strongly dense subalgebra of
$M$ and decreasing tolerances. The selected elements are uniformly
bounded and equicontinuous. They define $x\in M_{\omega,\alpha}$
with
\[
\|\rho(f)\widehat x\|_2^2-c^2\|x\|_2^2\geq\delta/2,
\]
a contradiction. This proves the claim.

Suppose now that $\pi\otimes_G\cI_\alpha\prec\cI_\alpha$,
and fix a unit vector $\xi_0$ in the space of $\pi$.
Proposition~\ref{binormal state extension}, applied to
$\xi_0\otimes\widehat1$, gives a state
$\Phi\in\B(\rL^2(M))^*$ such that
\[
\Phi(\lambda_M(a)\rho_M(b))=\tau(ab),\qquad
\Phi(U^\alpha(f))=\langle\pi(f)\xi_0,\xi_0\rangle
\]
for $a,b\in M$ and $f\in\rL^1(G)$. By ultrapower implementation
\cite[Theorem~2.1]{Ma25}, this state is implemented by a unit vector
$\eta\in\rL^2((A\ovt M)^\omega)$ for a tracial abelian algebra $A$.
The first identity implies $(1\otimes a)\eta=\eta(1\otimes a)$
for every $a\in M$. The second identity, an approximate identity in
$\rL^1(G)$, and Proposition~\ref{L2 equicontinuous ultrapower}
show that $\eta$ belongs to the equicontinuous part. Its bounded
truncations still commute with $1\otimes M$, so $\eta\in\rL^2(C_A)$
and
\[
\langle\rho_A(f)\eta,\eta\rangle
=\langle\pi(f)\xi_0,\xi_0\rangle.
\]
Equality of these coefficients identifies the cyclic representations
generated by $\eta$ and $\xi_0$. The claim gives that this cyclic
subrepresentation of $\pi$ is weakly contained in $\rho$.
Since $\xi_0$ was arbitrary, $\pi\prec\rho$.

For the converse, factoriality implies
\[
\rE^\omega(x^*y)=\tau^\omega(x^*y)1
\qquad(x,y\in M_{\omega,\alpha}).
\]
Consequently multiplication defines an isometric equivariant
$M$--$M$ bimodule map
\[
\rL^2(M_{\omega,\alpha})\otimes\rL^2(M)
\longrightarrow\rL^2(M^\omega_\alpha),\qquad
\widehat x\otimes\widehat a\longmapsto\widehat{xa}.
\]
The correspondence on the target is weakly contained in
$\cI_\alpha$. To see this, lift a bounded equicontinuous vector to
its uniformly bounded representatives and use
Proposition~\ref{L2 equicontinuous ultrapower} to compute its
coefficients for integrated group operators. The resulting coefficients
are ultralimits of coefficients of $\cI_\alpha$.
Density of bounded vectors gives the asserted weak containment.
It follows that $\rho\otimes_G\cI_\alpha\prec\cI_\alpha$.
Thus $\pi\prec\rho$ and
Proposition~\ref{tensoring preserves weak containment} give
$\pi\otimes_G\cI_\alpha\prec\cI_\alpha$.
\end{proof}

The following corollary shows that our definition of the Rokhlin property coincides with the one given in \cite{MT16} for flows, in \cite{Sh14} for abelian groups and in \cite{Ni26} for groups admitting a compact open subgroup.

\begin{corollary}\label{Rokhlin central sequence faithfulness}
Suppose that $M$ is a $\II_1$ factor. Then $\alpha$ has the Rokhlin
property if and only if $\alpha_\omega$ is faithful.
\end{corollary}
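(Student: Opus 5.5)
The plan is to combine Theorem~\ref{Rokhlin representation criterion} with $\pi=\lambda_G$ and a standard fact relating faithfulness of a representation to weak containment of the regular representation. By that theorem and the definition of the Rokhlin property, $\alpha$ has the Rokhlin property if and only if $\lambda_G\prec\rho$. Here $\rho=U^{\alpha_\omega}$ is the standard representation of $\alpha_\omega$ on $\rL^2(M_{\omega,\alpha})$. So it suffices to show that $\lambda_G\prec\rho$ holds if and only if $\alpha_\omega$ is faithful.

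\emph{From $\lambda_G\prec\rho$ to faithfulness.} Suppose $\lambda_G\prec\rho$ and let $g\neq 1$ act trivially on $M_{\omega,\alpha}$. Then $\rho_g=1$, and weak containment passes to the kernel of the representation at the group level. Since coefficients of $\lambda_G$ are uniform limits on compacta of sums of coefficients of $\rho$, we would get $\lambda_G(g)=1$, which is absurd. Hence $\alpha_\omega$ is faithful.

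\emph{From faithfulness to $\lambda_G\prec\rho$.} Suppose $\alpha_\omega$ is faithful. I would invoke Theorem~\ref{faithful tensor powers contain regular} with $\pi=\rho$, which requires four checks.
\begin{enumerate}[\rm (a)]
\item $\rho$ is faithful: $\rho_g=1$ forces $\alpha_{\omega,g}=\id$ on $M_{\omega,\alpha}$, since $\alpha_{\omega,g}(x)\widehat{1}=\rho_g\widehat{x}$.
\item $1_G\prec\rho$, because $\widehat{1}$ is an invariant vector.
\item $\rho^*\cong\rho$, via the antiunitary $\widehat{x}\mapsto\widehat{x^*}$, which commutes with $\rho$.
\item $\rho\otimes_G\rho\prec\rho$. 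This is the main step. Apply Theorem~\ref{Rokhlin representation criterion} with $\pi=\rho\otimes_G\rho$. It then suffices to show $\rho\otimes_G\rho\otimes_G\cI_\alpha\prec\cI_\alpha$. The proof of that theorem already established $\rho\otimes_G\cI_\alpha\prec\cI_\alpha$, through the multiplication embedding $\rL^2(M_{\omega,\alpha})\otimes\rL^2(M)\to\rL^2(M^\omega_\alpha)$. Tensoring this weak containment with $\rho$ and using Proposition~\ref{tensoring preserves weak containment} gives
\[
\rho\otimes_G(\rho\otimes_G\cI_\alpha)\prec\rho\otimes_G\cI_\alpha\prec\cI_\alpha ,
\]
which is what we need.
\end{enumerate}
Theorem~\ref{faithful tensor powers contain regular} then yields $\lambda_G\prec\rho$, and hence the Rokhlin property.

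The point I expect to need the most care is the use of $\rho\otimes_G\cI_\alpha\prec\cI_\alpha$. One has to make sure it is genuinely available, either as an extracted statement or as the instance $\pi=\rho$ of the converse direction of Theorem~\ref{Rokhlin representation criterion}, which is trivially valid because $\rho\prec\rho$. That instance avoids any circularity. A second point to check is that the ultrafilter is large enough, so that $M_{\omega,\alpha}$ is defined consistently throughout.
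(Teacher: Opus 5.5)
Your proposal is correct and follows the paper's proof essentially step for step. You get $\rho\otimes_G\rho\prec\rho$ the same way: apply Theorem~\ref{Rokhlin representation criterion} to $\rho\prec\rho$, tensor with $\rho$ via Proposition~\ref{tensoring preserves weak containment}, and apply the theorem again. You then invoke Theorem~\ref{faithful tensor powers contain regular} for one direction and read faithfulness off $\lambda_G\prec\rho$ for the other. Your extra justifications (group-kernel inclusion under weak containment, $\rho^*\cong\rho$ via $\widehat{x}\mapsto\widehat{x^*}$, and $\rho_g=1$ if and only if $(\alpha_\omega)_g=\id$) only spell out points the paper states without proof.
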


\begin{proof}
Put $\rho=U^{\alpha_\omega}$. Since
$\rho\prec\rho$,  Theorem \ref{Rokhlin representation criterion} gives
\[
 \rho\otimes_G\cI_\alpha\prec\cI_\alpha.
\]
Tensoring this weak containment by $\rho$ and using Proposition
\ref{tensoring preserves weak containment}, we obtain
\[
 \rho\otimes_G\rho\otimes_G\cI_\alpha
 \prec \rho\otimes_G\cI_\alpha
 \prec \cI_\alpha.
\]
Another application of Theorem
\ref{Rokhlin representation criterion} yields
$$\rho\otimes_G\rho\prec\rho.$$
Since $\rho$ is a Koopman representation, it contains the constants
and is equivalent to its contragredient representation.
If $\alpha_\omega$ is faithful then Theorem \ref{faithful tensor powers contain regular} gives
\[
 \lambda_{G}\prec\rho.
\]
Applying Theorem \ref{Rokhlin representation criterion} once more gives
\[
 \lambda_{G}\otimes_G\cI_\alpha\prec\cI_\alpha.
\]
Conversely, if
$\alpha$ is Rokhlin, Theorem
\ref{Rokhlin representation criterion} gives
$\lambda_G\prec\rho$. Thus $\rho$ is faithful.
\end{proof}

\begin{theorem}\label{Rokhlin and trivial bicentralizer}
Let $M$ be a $\II_1$ factor. Consider the following properties.
\begin{enumerate}[\rm (i)]
  \item $\alpha$ has the Rokhlin property.
  \item $\alpha$ is faithful and $\rB(M,\alpha)=\C1$.
\end{enumerate}
If $G$ is amenable then $(\rm i) \Rightarrow (\rm ii)$. If $M$ is amenable then $(\rm ii) \Rightarrow (\rm i)$.
\end{theorem}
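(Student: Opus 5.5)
The plan is to work entirely with the weak-containment characterizations already established. These are Theorem~\ref{correspondance trivial bicentralizer}, which says $\rB(M,\alpha)=\C1$ iff $\cC_\alpha\prec\cI_\alpha$, and the definition of the Rokhlin property, $\lambda_G\otimes_G\cI_\alpha\prec\cI_\alpha$. The two implications then become comparisons between $\cC_\alpha$, $\lambda_G\otimes_G\cI_\alpha$ and $\cI_\alpha$, mediated by Fell absorption (Proposition~\ref{equivariant Fell absorption}) and induction (Proposition~\ref{induction preserves weak containment}).

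\textbf{(i) $\Rightarrow$ (ii), $G$ amenable.} Faithfulness comes first. By Corollary~\ref{Rokhlin central sequence faithfulness}, $\alpha_\omega$ is faithful. If $\alpha_g=\id$, then $\alpha^\omega_g$ is trivial on $M^\omega_\alpha$, hence on $M_{\omega,\alpha}$, so $g=e$.

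For the bicentralizer, amenability gives $1_G\prec\lambda_G$. Proposition~\ref{tensoring preserves weak containment} then yields $\lambda_G\otimes_G\cC_\alpha\succ 1_G\otimes_G\cC_\alpha=\cC_\alpha$. By Fell absorption, $\lambda_G\otimes_G\cC_\alpha\cong\Ind(\Res\cC_\alpha)=\Ind(\cC_M)$, and likewise $\lambda_G\otimes_G\cI_\alpha\cong\Ind(\cI_M)$.

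It therefore suffices to show $\Ind(\cC_M)\prec\cI_\alpha$. I would obtain this from the Rokhlin property, which gives $\Ind(\cI_M)\cong\lambda_G\otimes_G\cI_\alpha\prec\cI_\alpha$, together with $\cC_M\prec\cI_M$. For the latter I would argue through $\rB(M)$ with trivial action: this is not automatic, since it is equivalent to $M$ having no property~(T)-like obstruction.

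Instead I would use the Rokhlin central sequences directly. By Theorem~\ref{Rokhlin representation criterion}, $\lambda_G\prec\rho=U^{\alpha_\omega}$. Rokhlin elements give almost invariant elements of $M^\omega_\alpha$ whose $G$-translates are "spread out", and averaging over these should produce elements of $M^{\omega,\alpha}$ which mix $M$ enough to force $\rB(M,\alpha)=\C1$. Concretely, I would build from $\lambda_G\prec\rho$ and amenability (via F\o lner/Reiter functions) a state $\Phi$ as in Theorem~\ref{correspondance trivial bicentralizer}(iii). A vector in $\rL^2(M_{\omega,\alpha})\otimes\rL^2(M)$ that is almost $\lambda_G$-like, tensored with a Reiter vector, gives an almost invariant vector whose $M$-bimodule moments are $\tau(a)\tau(b)$, because $M_{\omega,\alpha}$ and $M$ are $\tau$-independent ($\rE^\omega(x^*y)=\tau^\omega(x^*y)1$). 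This is the main obstacle: extracting the coarse moments from the central sequence representation.

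\textbf{(ii) $\Rightarrow$ (i), $M$ amenable.} Triviality of $\rB(M,\alpha)$ gives $\cC_\alpha\prec\cI_\alpha$. By Remark~\ref{tensor product stability}, faithfulness gives $\lambda_G\otimes_G\cC_\alpha\prec\cC_\alpha$. Since $M$ is injective, $\cI_M\prec\cC_M$, so induction together with Fell absorption gives
\[
\lambda_G\otimes_G\cI_\alpha\cong\Ind(\cI_M)\prec\Ind(\cC_M)\cong\lambda_G\otimes_G\cC_\alpha\prec\cC_\alpha\prec\cI_\alpha,
\]
which is the Rokhlin property.
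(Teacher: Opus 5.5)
Your direction (ii)$\Rightarrow$(i) is exactly the paper's argument. Your faithfulness argument in (i)$\Rightarrow$(ii), via Corollary~\ref{Rokhlin central sequence faithfulness}, is also correct; the paper leaves that step implicit.

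\textbf{The gap: the bicentralizer half of (i)$\Rightarrow$(ii).} You correctly reduce it to $\cC_M\prec\cI_M$, but then you drop this route on the claim that it is not automatic and tied to property (T). That claim is false. For any $\II_1$ factor $M$ and $u\in\cU(M)$, the vector state of $\widehat u$ in $\cI_M$ is $a\otimes b^{\op}\mapsto\tau(u^*aub)$. Dixmier averaging makes convex combinations of these states converge on $M\odot M^{\op}$ to $\tau(a)\tau(b)$. That limit is the state of the cyclic vector $\widehat1\otimes\widehat1$ of $\cC_M$, which gives $\cC_M\prec\cI_M$. Equivalently, your own idea of ``arguing through $\rB(M)$ with trivial action'' works immediately: Theorem~\ref{correspondance trivial bicentralizer} for the trivial group has bicentralizer $M'\cap M=\C1$. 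Property (T) is about isolation of $\cI_M$, and injectivity is the opposite containment $\cI_M\prec\cC_M$; neither is relevant here. With $\cC_M\prec\cI_M$ in hand, Proposition~\ref{induction preserves weak containment} and Fell absorption close the chain, which is precisely the paper's proof:
\[
\cC_\alpha\prec\lambda_G\otimes_G\cC_\alpha\cong\Ind(\cC_M)\prec\Ind(\cI_M)\cong\lambda_G\otimes_G\cI_\alpha\prec\cI_\alpha.
\]

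\textbf{Why your replacement fails.} Elements of $M_{\omega,\alpha}$ commute with $M$, so vectors built from them carry identity-type moments, not coarse ones. For $x\in M_{\omega,\alpha}$ and $b\in M$,
\[
\langle a\,\widehat{xb}\,c,\widehat{xb}\rangle=\tau^\omega(x^*x)\,\tau(b^*abc).
\]
The independence $\rE^\omega(x^*y)=\tau^\omega(x^*y)1$ only splits off the central-sequence factor; it never produces $\tau(a)\tau(c)$. Tensoring with a Reiter vector in $\rL^2(G)$ does not change the $M$-bimodule moments either. To reach coarse moments you would still have to Dixmier-average over $b$, which is exactly the containment $\cC_M\prec\cI_M$ you set aside.
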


\begin{proof}
$(\rm i) \Rightarrow (\rm ii)$ when $G$ is amenable. Since convex averages of the vector states associated with
$\widehat u$, $u\in\cU(M)$, converge on $M\odot M^{\op}$ to the
product trace, the cyclic vector state of $\cC_M$, we have
\begin{equation}\label{factor coarse below identity}
 \cC_M\prec\cI_M.
\end{equation}
Proposition
\ref{induction preserves weak containment} and Fell absorption then give
\[
 \lambda_G\otimes_G\cC_\alpha
 \cong\Ind(\cC_M)
 \prec \Ind(\cI_M)
 \cong\lambda_G\otimes_G\cI_\alpha.
\]
The Rokhlin property for $\alpha$ says
\begin{equation}\label{Rokhlin identity absorption}
 \lambda_G\otimes_G\cI_\alpha \prec \cI_\alpha
\end{equation}
and since $G$ is amenable, we have $1_G \prec \lambda_G$ hence
\begin{equation}\label{amenable coarse absorption}
 \cC_\alpha  \prec \lambda_G\otimes_G\cC_\alpha.
\end{equation}
We conclude that
\[
 \cC_\alpha
 \prec \cI_\alpha,
\]
and Theorem \ref{correspondance trivial bicentralizer} finishes the
proof.

$(\rm ii) \Rightarrow (\rm i)$ when $M$ is amenable. The amenability of $M$ means that 
$$\cI_M \prec \cC_M.$$  Proposition
\ref{induction preserves weak containment} and Fell absorption give
\[
 \lambda_G\otimes_G\cI_\alpha
 \cong\Ind(\cI_M)
 \prec \Ind(\cC_M)
 \cong\lambda_G\otimes_G\cC_\alpha.
\]
Thanks to Remark \ref{tensor product stability} we have
$$\lambda_G\otimes_G\cC_\alpha \prec \cC_\alpha$$
 and by Theorem \ref{correspondance trivial bicentralizer}, we have
$$\cC_\alpha\prec\cI_\alpha.$$  We conclude that
$$\lambda_G\otimes_G\cI_\alpha\prec\cI_\alpha.$$
\end{proof}

\section{Cocycle perturbations}
\label{sec:cocycle-perturbations}

We study the effect of cocycle perturbations on the bicentralizer.
An approximate coboundary induces a canonical isomorphism between the
bicentralizers of the original and perturbed actions. These isomorphisms
will provide the dual bicentralizer action for abelian groups. We also
give a criterion for the existence of ergodic perturbations by approximate
coboundaries in terms of the fixed part of the bicentralizer, and prove
approximate $1$-cocycle vanishing for Rokhlin actions of amenable groups.

The basic observation is that approximate coboundaries become exact
coboundaries in an equicontinuous ultrapower. On the bicentralizer,
conjugation by the implementing unitaries converges pointwise to a map
independent of their choice, giving the canonical isomorphism. A Baire category argument
produces ergodic perturbations. For approximate cocycle vanishing,
a matrix amplification reduces the problem to equivalence of
equal-trace projections in a fixed point ultrapower.

\subsection{Cocycles and approximate coboundaries}

Throughout this section, $G$ is a locally compact group,
$(M,\tau)$ is a tracial von Neumann algebra, and
$\alpha:G\curvearrowright(M,\tau)$ is a continuous trace-preserving action.
We use the standard cocycle conventions. Compare
\cite[Sections~2--3]{MT16}.

A \emph{$1$-cocycle} for $\alpha$ is a $\|\cdot\|_2$-continuous map
$u:G\to\cU(M)$ satisfying
\[
u_{gh}=u_g\alpha_g(u_h)
\qquad\text{for all }g,h\in G.
\]
We denote the set of all $1$-cocycles for $\alpha$ by $Z^1(\alpha)$. Notice
that every $u\in Z^1(\alpha)$ satisfies $u_e=1$.

Every cocycle $u\in Z^1(\alpha)$ gives a new continuous trace-preserving
action $\alpha^u:G\curvearrowright(M,\tau)$, called the \emph{cocycle
perturbation} of $\alpha$ by $u$, defined by
\[
\alpha^u_g=\Ad(u_g)\circ\alpha_g,
\qquad
\alpha^u_g(x)=u_g\alpha_g(x)u_g^*
\quad(g\in G,\ x\in M).
\]
Indeed, the cocycle relation gives
\[
\alpha^u_g\circ\alpha^u_h
=\Ad\bigl(u_g\alpha_g(u_h)\bigr)\circ\alpha_{gh}
=\Ad(u_{gh})\circ\alpha_{gh}
=\alpha^u_{gh}.
\]
Continuity follows from the continuity of $u$ and $\alpha$, and
$\alpha^u$ preserves $\tau$ because $\alpha$ is trace preserving and inner
automorphisms preserve $\tau$.

For every $v\in\cU(M)$, the map
\[
\partial_\alpha(v)_g=v\alpha_g(v^*)
\qquad(g\in G)
\]
belongs to $Z^1(\alpha)$. Such a cocycle is called a \emph{coboundary}, and
we write
\[
B^1(\alpha)
=\{\partial_\alpha(v)\mid v\in\cU(M)\}.
\]
If $u=\partial_\alpha(v)$ is a coboundary, then its cocycle perturbation is
conjugate to the original action:
\[
\alpha^u_g
=\Ad(v)\circ\alpha_g\circ\Ad(v^*)
\qquad(g\in G).
\]

The natural topology on $Z^1(\alpha)$ is the topology of uniform
$\|\cdot\|_2$-convergence on compact subsets of $G$.
A neighborhood basis at $u\in Z^1(\alpha)$ is given by
\[
\mathcal V(u,K,\varepsilon)
=\left\{w\in Z^1(\alpha)\ \middle|\
\sup_{g\in K}\|u_g-w_g\|_2<\varepsilon\right\},
\qquad K\Subset G,\quad \varepsilon>0.
\]
This description by compact sets, rather than by a sequence of compact sets,
does not require $G$ to be second countable or $\sigma$-compact.

A cocycle $u\in Z^1(\alpha)$ is called an \emph{approximate coboundary} if
it belongs to the closure of $B^1(\alpha)$ in this topology. Equivalently,
$u$ is an approximate coboundary if, for every $K\Subset G$ and every
$\varepsilon>0$, there exists $v\in\cU(M)$ such that
\[
\sup_{g\in K}
\|u_g-v\alpha_g(v^*)\|_2<\varepsilon.
\]
We denote the set of approximate coboundaries by
$\overline{B^1(\alpha)}$.

\begin{proposition}\label{approximate coboundary ultrapower}
Let $\omega$ be a cofinal ultrafilter on a large enough directed set $I$. Let $u\in Z^1(\alpha)$. We have 
\begin{align*}
\left\{ V\in\cU(M^\omega_\alpha)\ \mid\ u =\partial_{\alpha^\omega}(V) \right\} = \left\{ (v_i)^{\omega} \in \cU(M^\omega)\ \mid\
\lim_{i \to \omega} \partial_{\alpha}(v_i)=u \right\}.
\end{align*}
In particular, $u \in \overline{B^1(\alpha)}$ if and only if $u \in B^1(\alpha^\omega)$.
\end{proposition}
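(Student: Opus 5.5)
We prove the two inclusions separately. Throughout, $\lim_{i\to\omega}\partial_\alpha(v_i)=u$ means convergence in the topology of $Z^1(\alpha)$ along $\omega$: for every $K\Subset G$,
\[
\lim_{i\to\omega}\sup_{g\in K}\|v_i\alpha_g(v_i^*)-u_g\|_2=0.
\]

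\emph{Inclusion $\supset$.} Let $(v_i)_i$ be a net of unitaries with $\partial_\alpha(v_i)\to u$ along $\omega$, uniformly on compacts.
\begin{enumerate}[(1)]
\item We first check that $(v_i)_i$ is $(\alpha,\omega)$-equicontinuous. For $g,h\in G$, invariance of $\|\cdot\|_2$ under unitaries and under $\alpha$ gives
\[
\|\alpha_g(v_i)-\alpha_h(v_i)\|_2=\|v_i\alpha_{g}(v_i^*)-v_i\alpha_h(v_i^*)\|_2 .
\]
Fix $g$ and a compact neighborhood $W$ of $g$. The right-hand side is within $2\sup_{k\in W}\|\partial_\alpha(v_i)_k-u_k\|_2$ of $\|u_g-u_h\|_2$. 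The first quantity is small for $\omega$-almost every $i$, and the second is small for $h$ near $g$ by continuity of $u$. This gives $\omega$-equicontinuity, so $V=(v_i)^\omega\in\cU(M^\omega_\alpha)$.
\item Next we identify the coboundary. By definition of $\alpha^\omega$ we have
\[
V\alpha^\omega_g(V^*)=(v_i\alpha_g(v_i^*))^\omega,
\]
and this equals the constant $u_g$ because $\|v_i\alpha_g(v_i^*)-u_g\|_2\to0$ along $\omega$. Hence $u=\partial_{\alpha^\omega}(V)$.
\end{enumerate}

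\emph{Inclusion $\subset$.} Let $V\in\cU(M^\omega_\alpha)$ satisfy $u=\partial_{\alpha^\omega}(V)$.
\begin{enumerate}[(1)]
\item We lift $V$ to a net of unitaries. Take any bounded representative and replace it by the unitaries of its polar decompositions (the standard lifting of unitaries in tracial ultrapowers). This gives $V=(v_i)^\omega$ with $v_i\in\cU(M)$. The net is automatically $(\alpha,\omega)$-equicontinuous, since equicontinuity does not depend on the representative.
\item Pointwise convergence is immediate: for each $g$,
\[
\lim_{i\to\omega}\|v_i\alpha_g(v_i^*)-u_g\|_2=\|V\alpha^\omega_g(V^*)-u_g\|_2=0 .
\]
\item To upgrade to uniform convergence on compacts, apply Proposition \ref{omega equicontinous functions} to the functions
\[
f_i(g)=v_i\alpha_g(v_i^*)-u_g\in\rL^2(M).
\]
These are uniformly bounded. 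They are $\omega$-equicontinuous because
\[
\|f_i(g)-f_i(h)\|_2\le\|\alpha_g(v_i)-\alpha_h(v_i)\|_2+\|u_g-u_h\|_2,
\]
where the first term is controlled by equicontinuity of $(v_i)$ and the second by continuity of $u$. The proposition gives
\[
\lim_{i\to\omega}\sup_{g\in K}\|f_i(g)\|_2=\sup_{g\in K}\|(f_i(g))^\omega\|_2=0
\]
for every $K\Subset G$, which is the required convergence.
\end{enumerate}

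\emph{The ``in particular''.} If $u\in\overline{B^1(\alpha)}$, use Proposition \ref{ultrapower cardinal bookkeeping} to pick an increasing cofinal net $(K_i)_i$ of compacts and $\varepsilon_i\to0$. Choose $v_i\in\cU(M)$ with
\[
\sup_{g\in K_i}\|u_g-v_i\alpha_g(v_i^*)\|_2<\varepsilon_i .
\]
Cofinality of $\omega$ gives $\lim_{i\to\omega}\partial_\alpha(v_i)=u$, and the inclusion $\supset$ shows $u\in B^1(\alpha^\omega)$. Conversely, if $u\in B^1(\alpha^\omega)$, the inclusion $\subset$ yields unitaries $v_i$ with $\partial_\alpha(v_i)\to u$ uniformly on compacts along $\omega$. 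Then every basic neighborhood $\mathcal V(u,K,\varepsilon)$ meets $B^1(\alpha)$, so $u\in\overline{B^1(\alpha)}$.

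The main obstacle is step (3) of the inclusion $\subset$: passing from pointwise to compact-uniform convergence. It is handled entirely by equicontinuity together with Proposition \ref{omega equicontinous functions}.
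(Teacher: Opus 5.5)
Your proof is correct and follows the paper's argument. For one inclusion you get equicontinuity of the net from compact-uniform convergence of $\partial_\alpha(v_i)$. For the other you apply Proposition~\ref{omega equicontinous functions} to $f_i(g)=v_i\alpha_g(v_i^*)-u_g$, which upgrades pointwise convergence to compact-uniform convergence.

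The differences are cosmetic. The paper checks equicontinuity at the identity using $\|\alpha_h(v_i)-v_i\|_2=\|v_i\alpha_h(v_i)^*-1\|_2$ and $u_e=1$, whereas you check it at each point $g$. You also spell out the ``in particular'' clause, which the paper leaves implicit.
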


\begin{proof}
Let $V=(v_i)^\omega$ belong to the set on the right, and suppose that
$g\mapsto v_i\alpha_g(v_i)^*$ converges to $u$ uniformly on compact
subsets along $\omega$. Pointwise
convergence gives $u=\partial_{\alpha^\omega}(V)$. Moreover, for
$h\in G$,
\[
 \|\alpha_h(v_i)-v_i\|_2
 =\|v_i\alpha_h(v_i)^*-1\|_2.
\]
The compact-uniform convergence, together with the continuity of $u$
and $u_e=1$, therefore shows that
$(v_i)_{i\in I}$ is $(\alpha,\omega)$-equicontinuous. Hence
$V\in\cU(M^\omega_\alpha)$.

Conversely, let $V\in\cU(M_\alpha^\omega)$ satisfy
$u=\partial_{\alpha^\omega}(V)$, and choose an
$(\alpha,\omega)$-equicontinuous unitary representative
$V=(v_i)^\omega$. The net $(f_i)_{i \in I}$ of continuous maps
\[
 f_i:G\longrightarrow M,
 \qquad f_i(g)=v_i\alpha_g(v_i)^*-u_g,
\]
is $\omega$-equicontinuous. Since
$u=\partial_{\alpha^\omega}(V)$, we have
$(f_i(g))^\omega=0$ for every $g\in G$. Proposition
\ref{omega equicontinous functions} therefore gives, for every
$K\Subset G$,
\[
 \lim_{i\to\omega}\sup_{g\in K}
 \|v_i\alpha_g(v_i)^*-u_g\|_2=0.
\]
This proves the equality of the two sets.
\end{proof}

\begin{proposition}\label{topology and action on cocycles}
The space $Z^1(\alpha)$ is complete for the uniformity of compact-uniform
$\|\cdot\|_2$-convergence.  If $G$ is second countable and $M$ has
separable predual, then $Z^1(\alpha)$ is a Polish space.

The group $\cU(M)$ acts continuously on $Z^1(\alpha)$ by
\[
 (v\mathbin{\cdot} w)_g
 =v w_g\alpha_g(v^*)
 \qquad(v\in\cU(M),\ w\in Z^1(\alpha),\ g\in G).
\]
For every $u\in Z^1(\alpha)$, right multiplication by $u$ defines a
$\cU(M)$-equivariant homeomorphism
\[
Z^1(\alpha^u)\longrightarrow Z^1(\alpha) : w \mapsto wu.
\]
\end{proposition}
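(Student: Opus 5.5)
We check the three assertions of the proposition in turn. Each one reduces to the elementary identity
\[
u_g-w_g=u_g\alpha_g(u_h)\ldots
\]
type manipulations of the cocycle relation, together with the fact that left and right multiplication by unitaries are $\|\cdot\|_2$-isometries in a tracial algebra.

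\textbf{Completeness.} Let $(w^\lambda)$ be a Cauchy net in $Z^1(\alpha)$ for compact-uniform $\|\cdot\|_2$-convergence.
\begin{itemize}
\item For each $g$, the net $(w^\lambda_g)$ is Cauchy in $\|\cdot\|_2$ inside the unit ball of $M$. That ball is $\|\cdot\|_2$-complete, and the $\|\cdot\|_2$-limit of unitaries is a unitary. So $w_g=\lim_\lambda w^\lambda_g$ exists in $\cU(M)$.
\item The convergence is uniform on compacts, so $g\mapsto w_g$ is continuous on each compact set, hence continuous. Here local compactness is used: every point has a compact neighbourhood.
\item To pass to the limit in the cocycle relation, write
\[
w^\lambda_g\alpha_g(w^\lambda_h)-w_g\alpha_g(w_h)=(w^\lambda_g-w_g)\alpha_g(w^\lambda_h)+w_g\alpha_g(w^\lambda_h-w_h).
\]
Its $\|\cdot\|_2$-norm is at most $\|w^\lambda_g-w_g\|_2+\|w^\lambda_h-w_h\|_2$, because $\alpha_g$ preserves $\tau$ and multiplication by a unitary is a $\|\cdot\|_2$-isometry. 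This tends to $0$, so $w\in Z^1(\alpha)$.
\end{itemize}

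\textbf{Polishness.} Assume $G$ is second countable and $M_*$ is separable.
\begin{itemize}
\item $G$ is then $\sigma$-compact with an exhausting sequence of compacts $(K_n)$, so the uniformity is given by the metric
\[
d(u,w)=\sum_n 2^{-n}\min\Bigl(1,\sup_{g\in K_n}\|u_g-w_g\|_2\Bigr).
\]
By the previous step $(Z^1(\alpha),d)$ is complete.
\item $Z^1(\alpha)$ is a closed subset of $C(G,(\cU(M),\|\cdot\|_2))$ with the compact-open topology. This space is separable metrizable, since $G$ is second countable locally compact and $(\cU(M),\|\cdot\|_2)$ is Polish. A subspace of a separable metrizable space is separable.
\end{itemize}
This separability step is the only point I expect to need care, and I would just cite the standard fact about $C(X,Y)$ rather than redo it.

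\textbf{The $\cU(M)$-action and the right-multiplication map.}
\begin{itemize}
\item $v\cdot w$ is a cocycle by direct computation:
\[
v w_g\alpha_g(v^*)\,\alpha_g(v w_h\alpha_h(v^*))=v w_g\alpha_g(w_h)\alpha_{gh}(v^*)=v w_{gh}\alpha_{gh}(v^*).
\]
The relations $1\cdot w=w$ and $v\cdot(v'\cdot w)=(vv')\cdot w$ are immediate.
\item For joint continuity, estimate
\[
\sup_{g\in K}\|v w_g\alpha_g(v^*)-v'w'_g\alpha_g(v'^*)\|_2\le 2\|v-v'\|_2+\sup_{g\in K}\|w_g-w'_g\|_2 .
\]
The two terms with $v$ on the left and $\alpha_g(v^*)$ on the right are controlled by $\|v-v'\|_2$, using trace invariance and the identity $\|xy\|_2=\|yx\|_2$ for unitaries. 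Note that the $\|\cdot\|_2$-topology on $\cU(M)$ is the strong topology.
\item For $w\in Z^1(\alpha^u)$, the map $(wu)_g=w_gu_g$ lies in $Z^1(\alpha)$:
\[
w_gu_g\alpha_g(w_hu_h)=w_g\bigl(u_g\alpha_g(w_h)u_g^*\bigr)u_g\alpha_g(u_h)=w_g\alpha^u_g(w_h)u_{gh}=w_{gh}u_{gh}.
\]
The inverse map is $w'\mapsto w'u^*$, which a similar computation shows lands in $Z^1(\alpha^u)$.
\item Both maps are isometric for the compact-uniform $\|\cdot\|_2$ uniformity, because right multiplication by $u_g$ is an isometry. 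Hence $w\mapsto wu$ is a homeomorphism.
\item For equivariance, the action on $Z^1(\alpha^u)$ is $(v\cdot w)_g=vw_g\alpha^u_g(v^*)=vw_gu_g\alpha_g(v^*)u_g^*$. Multiplying on the right by $u_g$ gives $vw_gu_g\alpha_g(v^*)=(v\cdot(wu))_g$, as required.
\end{itemize}
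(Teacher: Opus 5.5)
Your argument is correct. The paper states this proposition without proof and treats it as routine, so your direct verification is the intended reasoning: pointwise $\|\cdot\|_2$-completeness of the unit ball, separability through $C(G,(\cU(M),\|\cdot\|_2))$, the joint-continuity estimate, and the cocycle and equivariance identities for $w\mapsto wu$.

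Two cosmetic fixes. Choose the exhausting compacts with $K_n\subset\operatorname{int}K_{n+1}$, so that every compact set lies in some $K_n$ and $d$ really induces the compact-uniform uniformity. Also drop the garbled opening ``identity'' $u_g-w_g=u_g\alpha_g(u_h)\ldots$, which plays no role in the proof.
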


\subsection{Bicentralizer isomorphisms for approximate coboundaries}

Throughout this subsection, $G$ is a locally compact group,
$(M,\tau)\subset(N,\tau)$ is an inclusion of tracial von Neumann algebras,
and $\alpha:G\curvearrowright(M,\tau)$ is a continuous trace-preserving
action.

\begin{proposition}\label{relative bicentralizer isomorphism}
Let $u\in\overline{B^1(\alpha)}$. If $(v_i)_i$ is a net in $\cU(M)$ such
that
\[
\partial_\alpha(v_i)\longrightarrow u
\]
uniformly on compact subsets of $G$, then, for every
$x\in\rB(M\subset N,\alpha)$, the limit
\[
\beta_u(x)=\lim_i v_i x v_i^*
\]
exists in $\|\cdot\|_2$ and is independent of the choice of $(v_i)_{i \in I}$. This
formula defines a trace-preserving isomorphism
\[
\beta_u:\rB(M\subset N,\alpha)
\longrightarrow\rB(M\subset N,\alpha^u).
\]
Moreover, the following statements hold.
\begin{enumerate}[\rm (i)]
\item If $v\in\overline{B^1(\alpha^u)}$, then $vu\in\overline{B^1(\alpha)}$ and
\[
\beta_{vu}=\beta_v\circ\beta_u.
\]
\item Suppose that $\alpha$ extends to a continuous trace-preserving action
on $(N,\tau)$, still denoted by $\alpha$. Then the isomorphism $\beta_u$ is
equivariant: for every $g\in G$,
\[
 \beta_u\circ\alpha_g=\alpha_g^u\circ\beta_u
 \quad\text{on }\rB(M\subset N,\alpha).
\]
\item For every $x\in\rB(M\subset N,\alpha)$, the map
\[
\overline{B^1(\alpha)}\longrightarrow N,
\qquad
w\longmapsto\beta_w(x),
\]
is continuous for compact-uniform $\|\cdot\|_2$-convergence on cocycles
and the $\|\cdot\|_2$-topology on $N$.
\end{enumerate}
\end{proposition}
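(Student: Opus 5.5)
The plan is to work in the equicontinuous ultrapower and use Proposition~\ref{approximate coboundary ultrapower}. Fix a net $(v_i)_i$ with $\partial_\alpha(v_i)\to u$ uniformly on compacts. Lift it to an ultrafilter: for any cofinal ultrafilter $\omega$ on the index set, $V=(v_i)^\omega\in\cU(M^\omega_\alpha)$ and $u=\partial_{\alpha^\omega}(V)$. To show existence and independence of the limit, I compare two such nets $(v_i)$ and $(w_j)$, reindexed on a common product directed set. The key identity is
\[
\alpha_g(v_i^*w_i)=\alpha_g(v_i)^*\alpha_g(w_i)\approx (u_g^*v_i)^*(u_g^*w_i)=v_i^*w_i
\]
uniformly on compacts. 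Thus $(v_i^*w_i)_i$ is a bounded net of unitaries that is asymptotically $\alpha$-invariant, uniformly on compacts. By Proposition~\ref{definition relative action bicentralizer}, it asymptotically commutes with every $x\in\rB(M\subset N,\alpha)$, so $\|v_ixv_i^*-w_ixw_i^*\|_2\to0$.

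Taking $(w_j)$ to be a subnet or shifted copy of $(v_i)$ gives the Cauchy property, hence existence of the limit in $\|\cdot\|_2$. The general comparison then gives independence. Concretely, I will argue via ultrafilters: $VxV^*\in N^\omega$ is the same for all choices, and in fact it is the constant $\lim$. Since every ultrafilter gives the same value, the net converges. It is also worth checking that $VxV^*$ lies in $N$: if $V,W$ are two ultrapower implementers, $V^*W\in M^{\omega,\alpha}$ commutes with $x$ by Proposition~\ref{relative bicentralizer with ultrapower}. Using $V$ and its ``shift'', this shows $VxV^*$ is a constant. I expect this independence/convergence step to be the main technical point, namely passing from ultralimits along every ultrafilter to an honest net limit.

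Next I show that $\beta_u$ is multiplicative, $*$-preserving and trace preserving. This is clear from the formula $\beta_u(x)=VxV^*$. Then I show that its range lies in $\rB(M\subset N,\alpha^u)$. By Proposition~\ref{relative bicentralizer with ultrapower}, it suffices to show that $V M^{\omega,\alpha}V^*=M^{\omega,\alpha^u}$. This holds because $\alpha^{u,\omega}_g(VyV^*)=u_g\alpha^\omega_g(V)\alpha_g^\omega(y)\alpha^\omega_g(V)^*u_g^*=V\alpha^\omega_g(y)V^*$. Equicontinuity and invariance therefore transfer, and the conjugation by $V$ is applied to elements of $M^\omega_\alpha$. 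Then $VxV^*$ commutes with $VM^{\omega,\alpha}V^*=M^{\omega,\alpha^u}$, and it lies in $N$, so it belongs to $(M^{\omega,\alpha^u})'\cap N$.

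For (i), note that $u^{-1}$, or rather the cocycle $u^*$-type inverse, relates $\alpha$ and $\alpha^u$. If $W$ implements $v$ for $\alpha^u$, then $WV$ implements $vu$ for $\alpha$. Indeed,
\[
WV\alpha^\omega_g(WV)^*=W\,u_g\alpha^\omega_g(W)^*u_g^*\,u_g=v_gu_g.
\]
The composition formula follows, and taking $v=u^{-1}$-perturbation shows that $\beta_u$ is bijective. For (ii), apply $\alpha_g$ to $v_ixv_i^*$ and use $\alpha_g(v_i)\approx u_g^*v_i$ to get $\alpha^u_g(\beta_u(x))=\beta_u(\alpha_g(x))$. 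For (iii), continuity follows from the same commutation estimate. If $w$ is close to $w'$ on a large compact set, then choose implementers $v,v'$ with $\partial(v),\partial(v')$ close to $w,w'$. Then $v^*v'$ is almost invariant on that compact set. The uniform quantitative version of the bicentralizer definition~\eqref{local relative action bicentralizer} then gives $\|vxv^*-v'xv'^*\|_2<\varepsilon$.
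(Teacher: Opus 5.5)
Your proposal is correct, and its core is the paper's own argument. Existence and independence of the limit come from the fact that $v_i^*w_i$ is asymptotically $\alpha$-invariant uniformly on compacts, hence asymptotically commutes with $x$. Applied to the net compared with itself on $I\times I$, this already makes $(v_ixv_i^*)_i$ Cauchy in $\rL^2$, which is exactly what the paper does. Your items (ii) and (iii) are also argued as in the paper. So the step you flag as the ``main technical point'' (passing from ultralimits along every ultrafilter to a net limit, via a ``shift'' of $V$) is not needed; the shift idea is vague on a general directed set and can simply be dropped.

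Where you genuinely differ is in the range inclusion and in (i). The paper stays with nets throughout. It checks that $\partial_\alpha(v_iu_i)\to vu$ whenever $\partial_{\alpha^u}(v_i)\to v$, which gives $\beta_{vu}(x)=\lim_i v_i\beta_u(x)v_i^*$. The case $v=1$ of this identity says that $\beta_u(x)$ asymptotically commutes with every asymptotically $\alpha^u$-invariant unitary net, so $\beta_u(x)\in\rB(M\subset N,\alpha^u)$; bijectivity then comes from $\beta_{u^*}$. You instead use $VM^{\omega,\alpha}V^*=M^{\omega,\alpha^u}$ together with Proposition~\ref{relative bicentralizer with ultrapower}, and let $WV$ implement $vu$. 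This is conceptually clean, but two points need care.
\begin{itemize}
\item The inclusion $(M^{\omega,\alpha^u})'\cap N\subset\rB(M\subset N,\alpha^u)$ needs $\omega$ on a sufficiently large directed set. So does realizing $v$ by some $W$ through Proposition~\ref{approximate coboundary ultrapower}. You therefore cannot take $\omega$ on the index set of the given net, as you announce.
\item Once $\omega$ is large, $V$ and $W$ are arbitrary implementers, so you must know that $VxV^*=\beta_u(x)$ for every $V$ with $\partial_{\alpha^\omega}(V)=u$. Your own remark gives this: for two implementers, $V^*V'\in M^{\omega,\alpha}$ commutes with $x$. Apply it against one implementer built from a reindexed copy of the original net, for which $VxV^*$ is the constant $\beta_u(x)$.
\end{itemize}
What the paper's route buys is that it never needs a large ultrafilter for this proposition. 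In (ii) you should also record that $\alpha_g(x)\in\rB(M\subset N,\alpha)$; this follows from the local condition \eqref{local relative action bicentralizer} after replacing $K$ by $gKg^{-1}$.
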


\begin{proof}
Choose a net $(u_i)_{i \in I}$ in $\cU(M)$ such that $\partial_\alpha (u_i)$ converges to $u$. Then $\partial_\alpha(u_j^*u_i) \to 1$ when $i,j$ go to infinity.

Consequently,
\[
\|u_i x u_i^*-u_j x u_j^*\|_2
=\|(u_j^*u_i)x-x(u_j^*u_i)\|_2 \to 0
\]
because $x \in \rB(M \subset N,\alpha)$. This means that $(u_i x u_i^*)_i$ is Cauchy in $\rL^2(N,\tau)$. Its terms are uniformly
bounded, so its limit belongs to $N$. The same comparison, applied to two
different implementing nets, proves that the
limit is independent of the choice of $(u_i)_{i \in I}$. Denote it by
$\beta_u(x)$.

 If $(v_i)_{i \in I}$ satisfies $\partial_{\alpha^u}(v_i) \to v \in \overline{B^1(\alpha^u)}$, then $\partial_{\alpha}(v_iu_i) \to vu$, hence 
$$ \beta_{vu}(x)=\lim_i v_iu_ixu_i^*v_i^*=\lim_i v_i \beta_u(x)v_i^*.$$
This shows first, when $v=1$, that $\beta_u(x) \in \rB(M \subset N, \alpha^u)$ and secondly for an arbitrary $v$ that
$$\beta_{vu}(x)=\beta_v(\beta_u(x)).$$

The map $\beta_u$ is unital, trace preserving, $*$-preserving, and
multiplicative, by the defining limit and continuity of multiplication
on bounded sets for $\|\cdot\|_2$. Moreover, $\beta_{u^*}$ is an inverse of $\beta_u$ by the previous relation. Hence $\beta_u$ is a trace-preserving
isomorphism.

We prove item~$(\rm ii)$.  The definition of the relative bicentralizer
shows that $\rB(M\subset N,\alpha)$ is $\alpha$-invariant.  Let $(v_i)_i$
implement $u$ and put $d_{i,g}=v_i\alpha_g(v_i^*)$.  For
$x\in\rB(M\subset N,\alpha)$ and $g\in G$, we have
\[
 \|\alpha_g^u(v_i x v_i^*)-v_i\alpha_g(x)v_i^*\|_2
 \leq2\|x\|_\infty\|u_g-d_{i,g}\|_2.
\]
Passing to the limit gives
\[
 \alpha_g^u(\beta_u(x))=\beta_u(\alpha_g(x)).
\]

Finally, fix $u$, $x$, and $\varepsilon>0$. The comparison used above to
prove that the defining limit is independent of the implementing net
gives $K\Subset G$ and $\delta>0$ such that
\[
 \sup_{g\in K}\|\partial_\alpha(s)_g-u_g\|_2<\delta
 \quad\Longrightarrow\quad
 \|sxs^*-\beta_u(x)\|_2<\varepsilon
\]
for every $s\in\cU(M)$. If $w\in\overline{B^1(\alpha)}$ satisfies
\[
\sup_{g\in K}\|w_g-u_g\|_2<\delta/2,
\]
choose an implementing unitary $s\in\cU(M)$ such that
\[
\sup_{g\in K}\|\partial_\alpha(s)_g-w_g\|_2<\delta/2,
\qquad
\|sxs^*-\beta_w(x)\|_2<\varepsilon.
\]
The displayed implication gives
\[
\|\beta_w(x)-\beta_u(x)\|_2<2\varepsilon,
\]
which proves item~$(\rm iii)$.
\end{proof}

\begin{proposition}\label{tensor product relative action bicentralizer}
For $i\in\{1,2\}$, let $M_i\subset N_i$ be inclusions of tracial von
Neumann algebras and let
$\alpha_i:G\curvearrowright(M_i,\tau_i)$ be trace-preserving actions. Put
\[
M=M_1\ovt M_2,
\quad N=N_1\ovt N_2,
\quad\alpha=\alpha_1\otimes\alpha_2.
\]
Then
\[
\rB(M\subset N,\alpha)
\subset
\rB(M_1\subset N_1,\alpha_1)
\ovt\rB(M_2\subset N_2,\alpha_2).
\]
Moreover, if
$u_i\in\overline{B^1(\alpha_i)}$ for $i\in\{1,2\}$, then
$u=u_1 \otimes u_2 \in \overline{B^1(\alpha)}$ and
\[
\beta_u(x)
=\bigl(\beta_{u_1}\otimes
\beta_{u_2}\bigr)(x)
\]
for every $x\in\rB(M\subset N,\alpha)$.
\end{proposition}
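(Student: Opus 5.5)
The plan is to use the ultrapower description of the bicentralizer, Proposition~\ref{relative bicentralizer with ultrapower}, for the inclusion. Put $P_k=M_k^{\omega,\alpha_k}$ for $k=1,2$. The constant embeddings $M_1\to M$, $x\mapsto x\otimes1$, and $M_2\to M$ map asymptotically $\alpha_k$-invariant bounded nets to asymptotically $\alpha$-invariant nets. This is because
\[
\|\alpha_g(x_i\otimes1)-x_i\otimes1\|_2=\|\alpha_{1,g}(x_i)-x_i\|_2 .
\]
Hence $P_1\otimes1$ and $1\otimes P_2$ both sit inside $M^{\omega,\alpha}\subset N^\omega$. It follows that
\[
\rB(M\subset N,\alpha)=(M^{\omega,\alpha})'\cap N\subset (P_1\otimes 1)'\cap(1\otimes P_2)'\cap (N_1\ovt N_2).
\]
It remains to identify $(P_1\otimes1)'\cap(N_1\ovt N_2)$ with $\rB(M_1\subset N_1,\alpha_1)\ovt N_2$. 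For $a\in N_1\ovt N_2$, slice the second leg by the normal functionals $\varphi_{\eta,\zeta}$ on $N_2$, with $\eta,\zeta\in N_2$, given by $y\mapsto\tau_2(\zeta^* y\eta)$. Commutation of $a$ with $(x_i)^\omega\otimes1$ should pass to the slices $(\id\otimes\varphi)(a)\in N_1$. The expected route is the Fourier expansion of $a$ along an orthonormal basis of $\rL^2(N_2)$: if $\|[x_i\otimes1,a]\|_2\to0$, then each coefficient satisfies $\|[x_i,a_\eta]\|_2\to0$. So every coefficient lies in $(P_1)'\cap N_1=\rB(M_1\subset N_1,\alpha_1)$. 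Since these coefficients determine $a$, we get $a\in\rB(M_1\subset N_1,\alpha_1)\ovt N_2$ by the standard slice-map characterization of tensor products. Intersecting with the symmetric statement gives the first inclusion, using
\[
(B_1\ovt N_2)\cap(N_1\ovt B_2)=B_1\ovt B_2 .
\]
This last identity follows by applying the conditional expectations $\rE_{B_1}\otimes\id$ and $\id\otimes\rE_{B_2}$, which commute.

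For the second part, choose unitaries $v_i^{(k)}\in\cU(M_k)$ with $\partial_{\alpha_k}(v^{(k)}_i)\to u_k$ uniformly on compacts, indexed over a product directed set. Then $v_i=v_i^{(1)}\otimes v_i^{(2)}$ satisfies
\[
\partial_\alpha(v_i)_g=\partial_{\alpha_1}(v^{(1)}_i)_g\otimes\partial_{\alpha_2}(v^{(2)}_i)_g .
\]
This converges to $u_{1,g}\otimes u_{2,g}$ uniformly on compacts, since on unitaries $\|a\otimes b-a'\otimes b'\|_2\le\|a-a'\|_2+\|b-b'\|_2$. Here $u_g=u_{1,g}\otimes u_{2,g}$ is clearly a cocycle for $\alpha$, so $u\in\overline{B^1(\alpha)}$. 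By Proposition~\ref{relative bicentralizer isomorphism}, $\beta_u(x)=\lim_i v_ixv_i^*$ for $x\in\rB(M\subset N,\alpha)$. On elementary tensors $x=x_1\otimes x_2$ with $x_k\in\rB(M_k\subset N_k,\alpha_k)$, this limit equals $\beta_{u_1}(x_1)\otimes\beta_{u_2}(x_2)$.

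For a general $x$ in $\rB(M\subset N,\alpha)$, we know $x$ lies in $B_1\ovt B_2$ by the first part. We approximate $x$ in $\|\cdot\|_2$ by finite sums of elementary tensors from $B_1\odot B_2$. The maps $x\mapsto v_ixv_i^*$ are $\|\cdot\|_2$-isometries, and the limit defining $\beta_{u_1}\otimes\beta_{u_2}$ is a trace-preserving isomorphism of $B_1\ovt B_2$, hence also an isometry. An $\varepsilon/3$ argument therefore gives that $\lim_i v_ixv_i^*$ exists and equals $(\beta_{u_1}\otimes\beta_{u_2})(x)$.

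I expect the main obstacle to be the slice step in the first paragraph. There one must check that asymptotic commutation in $\|\cdot\|_2$ of $N_1\ovt N_2$ descends to each Fourier coefficient $a_\eta$. This works because $(x_i\otimes1)$ acts diagonally on the expansion $a=\sum_\eta a_\eta\otimes\eta$ in $\rL^2(N_1)\otimes\rL^2(N_2)$. The commutator expansion is then orthogonal, so that
\[
\|[x_i\otimes1,a]\|_2^2=\sum_\eta\|[x_i,a_\eta]\|_2^2 .
\]
One also has to check that the coefficients $a_\eta$ are bounded operators, which holds because they arise from normal slices.
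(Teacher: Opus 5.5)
Your proposal is correct and follows the paper's proof: you apply the ultrapower description $\rB(M\subset N,\alpha)=(M^{\omega,\alpha})'\cap N$ to the copies of $M_1^{\omega,\alpha_1}$ and $M_2^{\omega,\alpha_2}$ inside $M^{\omega,\alpha}$, then take implementing unitaries $v^{(1)}\otimes v^{(2)}$ over the product directed set and conclude by $\|\cdot\|_2$-density from elementary tensors. The only difference is that you spell out the relative-commutant computation $(P_1\otimes1)'\cap(1\otimes P_2)'\cap(N_1\ovt N_2)\subset B_1\ovt B_2$ via Fourier coefficients and commuting conditional expectations, whereas the paper asserts it in a single line.
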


\begin{proof}
For a sufficiently large ultrapower,
\[
M_1^{\omega,\alpha_1}\ovt M_2^{\omega,\alpha_2}
\subset M^{\omega,\alpha}.
\]
Proposition~\ref{relative bicentralizer with ultrapower} gives
\begin{align*}
\rB(M\subset N,\alpha)
&\subset
\bigl(M_1^{\omega,\alpha_1}\ovt
M_2^{\omega,\alpha_2}\bigr)'\cap N\\
&=\rB(M_1\subset N_1,\alpha_1)
\ovt\rB(M_2\subset N_2,\alpha_2).
\end{align*}
For $i\in\{1,2\}$, choose a net $(v_{i,j_i})_{j_i\in J_i}$ in
$\cU(M_i)$ such that
\[
\partial_{\alpha_i}(v_{i,j_i})\longrightarrow u_i
\]
uniformly on compact subsets of $G$. On the product directed set
$J_1\times J_2$, put
\[
w_{(j_1,j_2)}=v_{1,j_1}\otimes v_{2,j_2}.
\]
Then
\[
\partial_\alpha(w_{(j_1,j_2)})_g
=\partial_{\alpha_1}(v_{1,j_1})_g
 \otimes\partial_{\alpha_2}(v_{2,j_2})_g,
\]
so $\partial_\alpha(w_{(j_1,j_2)})\to u$ uniformly on compact subsets.
In particular, $u\in\overline{B^1(\alpha)}$. By
Proposition~\ref{relative bicentralizer isomorphism}, the left-hand side
of the asserted formula is the limit in $\|\cdot\|_2$ of
\[
(v_{1,j_1}\otimes v_{2,j_2})x
(v_{1,j_1}^*\otimes v_{2,j_2}^*).
\]
For elementary tensors in
$\rB(M_1\subset N_1,\alpha_1)\odot
\rB(M_2\subset N_2,\alpha_2)$, this limit is
$\beta_{u_1}\otimes\beta_{u_2}$. The same conclusion
holds on the von Neumann tensor product by density and continuity in $\|\cdot\|_2$,
and hence for every $x\in\rB(M\subset N,\alpha)$.
\end{proof}

\subsection{Ergodic cocycle perturbations}

\begin{definition}
The action $\alpha:G\curvearrowright(M,\tau)$ is called \emph{full} if the
coboundary map
\[
 \partial_\alpha:\cU(M)\longrightarrow Z^1(\alpha),
 \qquad
 \partial_\alpha(v)_g=v\alpha_g(v^*),
\]
is open onto its range.  Equivalently, $\alpha$ is full if, for every
$\varepsilon>0$, there exist $K\Subset G$ and $\delta>0$ such that, for
every $v\in\cU(M)$,
\begin{equation}\label{explicit openness of the coboundary map}
 \sup_{g\in K}\|\alpha_g(v)-v\|_2<\delta
 \quad\Longrightarrow\quad
 \operatorname{dist}_2(v,\cU(M^\alpha))<\varepsilon.
\end{equation}
\end{definition}

\begin{proposition}\label{closed coboundaries and fixed point spectral gap}
Let $\omega$ be a cofinal ultrafilter on a sufficiently large directed
set. Then $\alpha$ is full if and only if
\[
 (M^\alpha)^\omega=M^{\omega,\alpha}.
\]
If $\alpha$ is full then $B^1(\alpha)$ is closed in $Z^1(\alpha)$. If $G$ is
second countable and $M$ has separable predual, then the converse also
holds: $B^1(\alpha)$ is closed if and only if $\alpha$ is full.

Finally, if $u \in \overline{B^1(\alpha)}$ then $\alpha$ is full if and only if $\alpha^u$ is full.
\end{proposition}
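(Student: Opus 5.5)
The plan is to prove the four assertions in order, using the ultrapower description of approximate coboundaries (Proposition~\ref{approximate coboundary ultrapower}) and the explicit form \eqref{explicit openness of the coboundary map} of fullness.

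\textbf{Equivalence with $(M^\alpha)^\omega=M^{\omega,\alpha}$.} The inclusion $(M^\alpha)^\omega\subset M^{\omega,\alpha}$ always holds. Suppose $\alpha$ is full and $X=(x_i)^\omega\in M^{\omega,\alpha}$. Since $M^{\omega,\alpha}$ is a von Neumann algebra, it is spanned by its unitaries, so we may take $X$ unitary. We lift $X$ to unitaries $v_i$ with $\lim_{i\to\omega}\sup_{g\in K}\|\alpha_g(v_i)-v_i\|_2=0$ for every $K\Subset G$, using the standard polar-decomposition lifting. Then \eqref{explicit openness of the coboundary map} gives $\lim_{i\to\omega}\operatorname{dist}_2(v_i,\cU(M^\alpha))=0$, so $X\in(M^\alpha)^\omega$. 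Conversely, if fullness fails, there are $\varepsilon>0$ and, for the cofinal nets $(K_i)$ and $(\varepsilon_i)$ of Proposition~\ref{ultrapower cardinal bookkeeping}, unitaries $v_i$ with
\[
\sup_{g\in K_i}\|\alpha_g(v_i)-v_i\|_2<\varepsilon_i,
\qquad
\operatorname{dist}_2(v_i,\cU(M^\alpha))\ge\varepsilon.
\]
Then $V=(v_i)^\omega\in M^{\omega,\alpha}$. It remains to show $V\notin(M^\alpha)^\omega$. If $V$ did lie in $(M^\alpha)^\omega$, it would be a unitary there, so it would lift to unitaries of $M^\alpha$ close in $\|\cdot\|_2$ along $\omega$, a contradiction.

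\textbf{Closedness of $B^1(\alpha)$.} Let $u\in\overline{B^1(\alpha)}$. By Proposition~\ref{approximate coboundary ultrapower}, $u=\partial_{\alpha^\omega}(V)$ for some $V=(v_i)^\omega\in\cU(M^\omega_\alpha)$, with $\partial_\alpha(v_i)\to u$. Choose $v_i$ and $v_j$ so that $\partial_\alpha(v_j^*v_i)$ is close to $1$ on a large compact set; this is the Cauchy trick from the proof of Proposition~\ref{relative bicentralizer isomorphism}. Fullness then gives $v_j^*v_i$ within $\varepsilon$ of some $w_{ij}\in\cU(M^\alpha)$. Fix $j$ and set $v=v_jw$, where $w$ is a weak limit point of the $w_{ij}$. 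This approximation is too weak as it stands, so it is cleaner to argue in the ultrapower. Write
\[
V=(v_j w_i)^\omega+\text{small}.
\]
Passing to the limit along a subsequence requires care. The right approach is to iterate with $\varepsilon_n=2^{-n}$ and build a norm-convergent (in $\|\cdot\|_2$) sequence of unitaries $s_n\in\cU(M)$ with $\partial_\alpha(s_n)\to u$ and $\|s_{n+1}-s_n w_n\|_2<2^{-n}$ for some $w_n\in\cU(M^\alpha)$. Since right multiplication by $\cU(M^\alpha)$ does not change $\partial_\alpha$, we may replace $s_{n+1}$ by $s_{n+1}w_n^*$, so that the sequence is Cauchy. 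Its limit $s$ satisfies $\partial_\alpha(s)=u$. This is essentially the standard fact that an open orbit map of a Polish group, or more generally of a complete metrizable group, has closed image. Here $\cU(M)$ is complete for $\|\cdot\|_2$, and that completeness is exactly what the argument uses.

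\textbf{Converse in the separable case.} If $G$ is second countable and $M$ is separable, then $Z^1(\alpha)$ is Polish by Proposition~\ref{topology and action on cocycles}. If $B^1(\alpha)$ is closed, it is Polish and equals the orbit of $1$ under the Polish group $\cU(M)$. Effros' theorem, or the open mapping theorem for Polish group actions with Polish orbit, then gives that $\cU(M)\to\cU(M)\cdot 1$, $v\mapsto\partial_\alpha(v)$, is open. This is fullness. I expect this step to be the main technical point, and I plan to invoke Effros' theorem rather than reprove it.

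\textbf{Invariance under $u\in\overline{B^1(\alpha)}$.} We use the first criterion. Write $u=\partial_{\alpha^\omega}(V)$ with $V\in\cU(M^\omega_\alpha)$. Then $\alpha^{u,\omega}=\Ad(V)\circ\alpha^\omega\circ\Ad(V^*)$ on the equicontinuous part, which coincides with $M^\omega_{\alpha^u}$. Hence
\[
M^{\omega,\alpha^u}=VM^{\omega,\alpha}V^*.
\]
The constant algebra transforms in the same way: $(M^{\alpha^u})^\omega=V(M^\alpha)^\omega V^*$. For this, a fixed point $x$ of $\alpha^u$ gives the fixed point $V^*xV$ of $\alpha^\omega$, and we must show it is constant-approximable. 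That step is less obvious, so instead I would argue directly from \eqref{explicit openness of the coboundary map}. Via Proposition~\ref{topology and action on cocycles}, the homeomorphism $Z^1(\alpha^u)\to Z^1(\alpha)$, $w\mapsto wu$, intertwines the two coboundary maps up to the right translation by $V$ in the ultrapower. Openness of one coboundary map onto its range then transfers to the other.
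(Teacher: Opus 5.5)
The first three assertions follow the paper's proof. The two directions of the ultrapower criterion use the same lifting arguments. Your recursive correction $s_{n+1}\mapsto s_{n+1}w_n^*$ is the paper's observation that $(v_i\,\cU(M^\alpha))_i$ is Cauchy in the complete quotient $\cU(M)/\cU(M^\alpha)$. The separable converse is Effros' theorem.

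Two small remarks on the closedness step:
\begin{itemize}
\item What makes it work is not a general fact about open orbit maps. The orbit map of $\R_{>0}$ acting on $[0,\infty)$ by multiplication is open onto the non-closed orbit $(0,\infty)$. What matters is that $\cU(M)$ acts isometrically on $Z^1(\alpha)$, so openness at $1$ is uniform.
\item Since $G$ need not be $\sigma$-compact, a single sequence of compact sets need not exhaust $G$. Either run the argument with nets in the quotient, as the paper does, or note the following. Fullness gives $\sup_{g\in G}\|\partial_\alpha(t)_g-\partial_\alpha(s_n)_g\|_2\leq 2^{1-n}$ for every $t\in\cU(M)$ whose coboundary is close enough to $u$ on the $n$-th compact set. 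This forces $\partial_\alpha(s)=u$ on every compact set.
\end{itemize}

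The genuine gap is in the final assertion. The homeomorphism $w\mapsto wu$ turns $\partial_{\alpha^u}$ into the orbit map $v\mapsto v\cdot u$ in $Z^1(\alpha)$. Fullness of $\alpha$, by contrast, concerns the orbit map $v\mapsto v\cdot 1$. These two maps differ by a right translation of $\cU(M)$ only when $u$ lies in the orbit of $1$, that is, $u\in B^1(\alpha)$. For $u$ merely in $\overline{B^1(\alpha)}$, the implementing unitary $V$ lies in $\cU(M^\omega_\alpha)$, so $v\mapsto vV$ does not map $\cU(M)$ into itself. Openness of $\partial_\alpha$ on $\cU(M)$ then says nothing about $\partial_{\alpha^\omega}$ restricted to the coset $\cU(M)V$. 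Making your sketch work would need exactly the identity $(M^{\alpha^u})^\omega=V(M^\alpha)^\omega V^*$ that you set aside as unclear.

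The missing idea is to use the closedness you have just proved:
\begin{itemize}
\item If $\alpha$ is full, then $B^1(\alpha)$ is closed, so $u=\partial_\alpha(v)$ for some $v\in\cU(M)$. Then $\alpha^u_g=\Ad(v)\circ\alpha_g\circ\Ad(v^*)$ is conjugate to $\alpha$, hence full. Your translation argument is now valid with $V=v\in\cU(M)$.
\item For the converse, note that $u^*\in Z^1(\alpha^u)$ and $(\alpha^u)^{u^*}=\alpha$. Moreover $u^*\in\overline{B^1(\alpha^u)}$, because $\partial_\alpha(v_i)\to u$ implies $\partial_{\alpha^u}(v_i^*)_g=v_i^*u_g\alpha_g(v_i)u_g^*\to u_g^*$ compact-uniformly. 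Now apply the first direction to $\alpha^u$ and $u^*$.
\end{itemize}
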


\begin{proof}
Assume first that $\alpha$ is full. Let
$V=(v_i)^\omega\in\cU(M^{\omega,\alpha})$, where the $v_i$ are unitaries
and are asymptotically $\alpha$-invariant uniformly on compact subsets of
$G$. Fullness implies
\[
 \lim_{i\to\omega}
 \operatorname{dist}_2(v_i,\cU(M^\alpha))=0.
\]
Choosing $w_i\in\cU(M^\alpha)$ asymptotically realizing this distance, we
obtain $V=(w_i)^\omega\in(M^\alpha)^\omega$. Since a von Neumann algebra
is linearly spanned by its unitaries, this proves
$M^{\omega,\alpha}\subset(M^\alpha)^\omega$. The reverse inclusion is
automatic.

Conversely, suppose that $\alpha$ is not full. There are $\varepsilon_0>0$
and a net $(v_i)_i$ in $\cU(M)$ such that
\[
 \partial_\alpha(v_i)\longrightarrow1
\]
uniformly on compact subsets of $G$, whereas
\[
 \operatorname{dist}_2(v_i,\cU(M^\alpha))\geq\varepsilon_0
 \qquad(i).
\]
This net defines a unitary
$V=(v_i)^\omega\in M^{\omega,\alpha}$. If
$(M^\alpha)^\omega=M^{\omega,\alpha}$, then
$V\in(M^\alpha)^\omega$. Unitary lifting inside $(M^\alpha)^\omega$ would
give unitaries $w_i\in M^\alpha$ such that
$\|v_i-w_i\|_2\to0$ along $\omega$, a contradiction. This proves the
equivalence.

We next show that fullness implies the closedness of $B^1(\alpha)$. Put
$H=\cU(M^\alpha)$. The quotient
$\cU(M)/H$ is complete for the metric
\[
 d(vH,wH)=\inf_{a\in H}\|v-wa\|_2,
\]
because the metric induced by $\|\cdot\|_2$ on $\cU(M)$ is complete and bi-invariant and
$H$ is closed. Let $(\partial_\alpha(v_i))_i$ be a net in $B^1(\alpha)$
converging to $u\in Z^1(\alpha)$. For $i,j$ and $g\in G$, we have
\[
 \|\partial_\alpha(v_i)_g-\partial_\alpha(v_j)_g\|_2
 =\|\partial_\alpha(v_j^*v_i)_g-1\|_2.
\]
Fullness therefore implies that $(v_iH)_i$ is a Cauchy net in
$\cU(M)/H$. Let $vH$ be its limit. By continuity of the coboundary map,
\[
 u=\lim_i\partial_\alpha(v_i)=\partial_\alpha(v),
\]
so $u\in B^1(\alpha)$. Hence $B^1(\alpha)$ is closed.

Assume finally that $G$ is second countable and $M_*$ is separable.
Proposition~\ref{topology and action on cocycles} shows that $Z^1(\alpha)$
is Polish and that the Polish group $\cU(M)$ acts continuously on it.
The orbit of the trivial cocycle is precisely $B^1(\alpha)$, and its
orbit map is $\partial_\alpha$. If $B^1(\alpha)$ is closed, this orbit is a
closed,
hence Polish, subspace of $Z^1(\alpha)$. Effros' open mapping theorem for
Polish group actions \cite[Theorem~2.1]{Ef65} implies that the orbit map
$\partial_\alpha:\cU(M)\to B^1(\alpha)$ is open. Thus $\alpha$ is full.

It remains to prove the final assertion. Let $u\in\overline{B^1(\alpha)}$. Suppose first that $\alpha$ is
full. By the preceding implication,
$B^1(\alpha)$ is closed, and hence $u\in B^1(\alpha)$. Thus $\alpha^u$ is
conjugate to $\alpha$, so it is full. Now, observe that $u^* \in \overline{B^1(\alpha^u)}$ and $\alpha=(\alpha^u)^{u^*}$. So by the same argument if $\alpha^u$ is full then $\alpha$ is also full.
\end{proof}

\begin{theorem}\label{ergodic approximate coboundaries and fixed bicentralizer}
Let $G$ be a second countable locally compact group and let $(M,\tau)$ be
a tracial von Neumann algebra with separable predual.  Let
$\alpha:G\curvearrowright(M,\tau)$ be a faithful continuous
trace-preserving action. Suppose that $\alpha$ is not full, that is $B^1(\alpha)$ is not closed in $Z^1(\alpha)$. Then the following properties are equivalent.
\begin{enumerate}[\rm (i)]
\item There is $v\in\overline{B^1(\alpha)}$ such that $\alpha^v$ is
ergodic.
\item The set
\[
 \{v\in\overline{B^1(\alpha)}\mid \alpha^v\text{ is ergodic}\}
\]
is a dense $G_\delta$ subset of $\overline{B^1(\alpha)}$.
\item $\rB(M,\alpha)\cap M^\alpha=\C1$.
\end{enumerate}
\end{theorem}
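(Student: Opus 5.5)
The plan is to prove (ii)$\Rightarrow$(i)$\Rightarrow$(iii)$\Rightarrow$(ii). The implication (ii)$\Rightarrow$(i) is immediate once we know $\overline{B^1(\alpha)}$ is a nonempty Baire space. It is a closed subset of the Polish space $Z^1(\alpha)$ (Proposition~\ref{topology and action on cocycles}), so it is Polish.

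For (i)$\Rightarrow$(iii), take $v\in\overline{B^1(\alpha)}$ with $\alpha^v$ ergodic. Proposition~\ref{relative bicentralizer isomorphism}(ii), with $N=M$, gives a trace-preserving isomorphism $\beta_v:\rB(M,\alpha)\to\rB(M,\alpha^v)$ satisfying $\beta_v\circ\alpha_g=\alpha^v_g\circ\beta_v$. Hence $\beta_v$ maps $\rB(M,\alpha)\cap M^\alpha$ into $\rB(M,\alpha^v)\cap M^{\alpha^v}=\C1$. Injectivity of $\beta_v$ then gives (iii).

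For (iii)$\Rightarrow$(ii), first show the ergodic set is $G_\delta$. Fix a $\|\cdot\|_2$-dense sequence $(x_n)$ in the unit ball of $M$ and a dense sequence $(g_k)$ in $G$. The action $\alpha^w$ is non-ergodic iff some element of the unit ball with $\|y-\tau(y)\|_2\geq 1/m$ is fixed. Writing the fixed condition as $\sup_k\|\alpha^w_{g_k}(y)-y\|_2=0$, and using compactness of the unit ball in the weak topology together with lower semicontinuity, non-ergodicity becomes a countable union of closed sets. Concretely, ergodicity is equivalent to: for every $m$ there exist $k_1,\dots,k_r$ and $\delta>0$ such that $\max_j\|\alpha^w_{g_{k_j}}(y)-y\|_2<\delta$ forces $\|y-\tau(y)\|_2<1/m$. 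Continuity of $w\mapsto\alpha^w_g(y)$ makes each such condition open. I expect some care is needed here, but it is routine.

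Density is the main step, and here one uses both non-fullness and (iii). Since the homeomorphisms $w\mapsto wu$ and the $\cU(M)$-action preserve the structure, and since $\beta$ transports (iii) from $\alpha$ to $\alpha^u$ for $u\in\overline{B^1(\alpha)}$, it suffices to show the following: for every $\varepsilon>0$, $K\Subset G$ and finite $F$, there exists $v\in\overline{B^1(\alpha)}$ within $(K,\varepsilon)$ of $1$ such that $\alpha^v$ is "$1/m$-ergodic" in the sense of the open sets above. Then Baire's theorem concludes.

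To construct $v$, I work in the ultrapower. Because $\alpha$ is not full, Proposition~\ref{closed coboundaries and fixed point spectral gap} gives $M^{\omega,\alpha}\neq(M^\alpha)^\omega$. Any unitary $V\in\cU(M^{\omega,\alpha})$ is a coboundary $\partial_{\alpha^\omega}(V)=1$. I then try to find $V\in M^\omega_\alpha$ with $\partial_{\alpha^\omega}(V)$ close to $1$ on $K$, such that $\Ad(V)$ moves $M^\alpha$ far away from itself. The point is that, by Proposition~\ref{conditional expectation relative action bicentralizer}, averaging $uxu^*$ over almost-invariant unitaries $u$ converges to $\rE_{\rB(M,\alpha)}(x)$. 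Combined with (iii), for $x\in M^\alpha$ this gives $\rE_{\rB(M,\alpha)\cap M^\alpha}(x)=\tau(x)$, so these averages kill the nonconstant part. The perturbation $\alpha^{\partial(v)}=\Ad(v)\alpha\Ad(v^*)$ has fixed points $vM^\alpha v^*$. I therefore plan to use an infinite product or iterative Baire argument: successively conjugate by almost-invariant unitaries $v_1,v_2,\dots$, chosen with rapidly shrinking $(K_n,\delta_n)$ so that the product cocycle converges in $\overline{B^1(\alpha)}$. Each conjugation should push the near-fixed elements towards the bicentralizer, and ultimately to scalars.

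The hardest part is making this quantitative. One must show that near-fixed points of the limit perturbation must lie, approximately, in $\rB\cap M^\alpha$, which is $\C$ by (iii). I would attempt this via the ultrapower description $\rB=(M^{\omega,\alpha})'\cap M$ of Proposition~\ref{relative bicentralizer with ultrapower}, applied to a generic $v$.
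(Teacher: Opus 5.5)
Your implications (ii)$\Rightarrow$(i) and (i)$\Rightarrow$(iii) are correct and coincide with the paper's: the equivariant isomorphism $\beta_v$ transports $\rB(M,\alpha)\cap M^\alpha$ onto $\rB(M,\alpha^v)\cap M^{\alpha^v}$. The problems are in (iii)$\Rightarrow$(ii).

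\textbf{The $G_\delta$ description is wrong.} Your condition asks for finitely many $g_{k_j}$ and $\delta>0$ such that $\max_j\|\alpha^w_{g_{k_j}}(y)-y\|_2<\delta$ forces $\|y-\tau(y)\|_2<1/m$ for \emph{all} $y$ in the unit ball. That is a uniform spectral-gap condition, not ergodicity. By the last assertion of Proposition~\ref{closed coboundaries and fixed point spectral gap}, every $\alpha^w$ with $w\in\overline{B^1(\alpha)}$ is non-full. So when $\alpha^w$ is ergodic, there are unitaries that are almost invariant on any finite set yet lie at $\|\cdot\|_2$-distance at least some $\varepsilon_0>0$ from $\mathbb{T}1$. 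Under the standing hypothesis the set you describe is therefore empty, and the density statement you reduce to (``$1/m$-ergodic in the sense of the open sets above'') is false.

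The correct open sets are $\{w\mid F_{x_n}(w)<1/m\}$, where $F_x(w)=\|\rE_{M^{\alpha^w}}(x)\|_2$ and $(x_n)$ is dense in the centered unit ball. Each $F_x$ is upper semicontinuous: $\rE_{M^{\alpha^w}}(x)$ is the minimal-norm point of the closed convex hull of the $\alpha^w$-orbit of $x$, so $F_x$ is an infimum of the continuous maps $w\mapsto\|\sum_j t_j\alpha^w_{g_j}(x)\|_2$.

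\textbf{The density step is not an argument.} A coboundary perturbation never shrinks the fixed-point algebra, since $\alpha^{\partial_\alpha(v)}$ has fixed algebra $vM^\alpha v^*$. So ergodicity can only appear in a limit, and you give no reason why the limit of your iterated conjugations is ergodic. Your averaging remark does not supply one. It holds equally in the full case, where averages of almost invariant conjugates of $y\in M^\alpha$ tend to $\tau(y)$ while every such conjugate stays near $M^\alpha$. Non-fullness has to be used structurally.

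The paper constructs nothing. The common continuity points $\cC$ of the $F_{x_n}$ form a dense $G_\delta$: an upper semicontinuous function on a Polish space is continuous on a dense $G_\delta$, and $|F_x-F_{x'}|\leq\|x-x'\|_2$ extends continuity to all $F_x$. One then proves that every $v\in\cC$ gives an ergodic action, as follows.
\begin{enumerate}
\item Put $\gamma=\alpha^v$. Non-fullness gives $(M^\gamma)^\omega\subsetneq M^{\omega,\gamma}$.
\item Condition (iii), transported by $\beta_v$, together with Proposition~\ref{relative bicentralizer with ultrapower}, gives $\cZ(M^{\omega,\gamma})\cap(M^\gamma)^\omega=\C1$.
\item Take $y\in M^\gamma\ominus\C1$. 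The von Neumann algebra generated by the conjugates $UyU^*$, $U\in\cU(M^{\omega,\gamma})$, is invariant under all inner automorphisms of $M^{\omega,\gamma}$. The structure of such subalgebras, combined with the identity in the previous step, shows it cannot lie in $(M^\gamma)^\omega$. Hence some $U=(u_i)^\omega\in\cU(M^{\omega,\gamma})$ has $UyU^*\notin(M^\gamma)^\omega$.
\item Set $q^{(i)}_g=u_i^*\gamma_g(u_i)v_g$. Then $q^{(i)}\in\overline{B^1(\alpha)}$, $q^{(i)}\to v$ along $\omega$, and $\alpha^{q^{(i)}}$ has fixed algebra $u_i^*M^\gamma u_i$.
\item Consequently $\lim_{i\to\omega}F_y(q^{(i)})^2=\|y\|_2^2-\operatorname{dist}_2(UyU^*,(M^\gamma)^\omega)^2<F_y(v)^2$, contradicting continuity of $F_y$ at $v$.
\end{enumerate}
What your proposal is missing is this combination: non-fullness and (iii) are used together to push a nonscalar fixed element off the fixed-point ultrapower by an almost invariant unitary. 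Generic continuity of the $F_x$ then turns that into ergodicity, with no iterative construction.
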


\begin{proof}
Proposition \ref{relative bicentralizer isomorphism}, item~$(\rm ii)$, yields
\begin{equation}\label{fixed bicentralizer cocycle invariance}
 \beta_v\bigl(\rB(M,\alpha)\cap M^\alpha\bigr)
 =\rB(M,\alpha^v)\cap M^{\alpha^v}.
\end{equation}
If $\alpha^v$ is ergodic, the algebra on the right of
\eqref{fixed bicentralizer cocycle invariance} is scalar.  This proves
$(\rm i)\Rightarrow(\rm iii)$. Since $(\rm ii)\Rightarrow(\rm i)$ is immediate, it remains only to show that $(\rm iii) \Rightarrow (\rm ii)$. Let
$$\cE=\{u\in\overline{B^1(\alpha)}\mid M^{\alpha^u}=\C1\}.$$
First we show that $\cE$ is a $G_\delta$-subset of the Polish space $\overline{B^1(\alpha)}$. For $u\in\overline{B^1(\alpha)}$ and $x \in M$, put
\[
 F_x(u)=\|\rE_{M^{\alpha^u}}(x)\|_2.
\]
The closed convex hull of the $\alpha^u$-orbit of $x$ has
$\rE_{M^{\alpha^u}}(x)$ as its unique element of minimal norm. 

 Therefore
\[
 F_x(u)=\inf\left\{
 \left\|\sum_{j=1}^rt_j\alpha^u_{g_j}(x)\right\|_2
 \ \middle|\ r\geq1,\ t_j\geq0,\ \sum_jt_j=1,\ g_j\in G
 \right\}.
\]
Every expression inside the infimum is continuous in $u$.  Thus $F_x$ is
upper semicontinuous. 

Choose a $\|\cdot\|_2$-dense sequence $(x_n)_n$ in the centered part of
the unit ball of $M$. Then we have
$$\cE
 =\bigcap_{n,m\geq1}\{u\in\overline{B^1(\alpha)}\mid F_{x_n}(u)<m^{-1}\}$$
 and  the upper semicontinuity of the functions $F_{x_n}$ shows that this is a $G_\delta$ subset of $\overline{B^1(\alpha)}$.

Now, assume that property~$(\rm iii)$ is satisfied.  We prove that $\cE$ is dense. Let
\[
 \cC=\bigcap_{x \in M}\{v\in\overline{B^1(\alpha)}\mid F_x
 \text{ is continuous at }v\}.\]
 The estimate
\begin{equation}\label{fixed expectation norm Lipschitz estimate}
 \left|\|\rE_{M^{\alpha^v}}(x_n)\|_2
       -\|\rE_{M^{\alpha^v}}(y)\|_2\right|
 \leq\|x_n-y\|_2
\end{equation}
shows that
 \[ \cC = \bigcap_{n\geq1}\{v\in\overline{B^1(\alpha)}\mid F_{x_n}
 \text{ is continuous at }v\}.
\]
Since an upper semicontinuous real-valued function on a Polish space has a
dense $G_\delta$ set of continuity points, we have that $\cC$
is a dense $G_\delta$ subset of $\overline{B^1(\alpha)}$. We will show that $\cC \subset \cE$.

Take $v\in\cC$ and put $\beta=\alpha^v$. We want to show that $\beta$ is ergodic. Since $\alpha$ is not full, $\beta$ is not full, hence
\begin{equation}\label{generic no spectral gap}
 (M^\beta)^\omega\subsetneq M^{\omega,\beta}.
\end{equation}
Equation \eqref{fixed bicentralizer cocycle invariance} and property~$(\rm iii)$
give
\begin{equation}\label{generic fixed bicentralizer intersection}
 \rB(M,\beta)\cap M^\beta=\C1.
\end{equation}
The algebra $\rB(M,\beta)$ is $\beta$-invariant.  Consequently, the
conditional expectations onto $M^\beta$ and $\rB(M,\beta)$ commute.  It follows from
\eqref{generic fixed bicentralizer intersection} that
\begin{equation}\label{ultrapower fixed bicentralizer intersection}
 (M^\beta)^\omega\cap\rB(M,\beta)^\omega=\C1.
\end{equation}
Proposition \ref{relative bicentralizer with ultrapower} gives
\[
 (M^{\omega,\beta})'\cap M^\omega
 \subset\rB(M,\beta)^\omega.
\]
Combining this inclusion with
\eqref{ultrapower fixed bicentralizer intersection}, we obtain
\begin{equation}\label{center of fixed ultrapower misses fixed sequences}
 \cZ(M^{\omega,\beta})\cap(M^\beta)^\omega=\C1.
\end{equation}

Suppose that we have some
$y\in M^\beta \ominus \C$.  We claim that there is $U\in\cU(M^{\omega,\beta})$ such that
\begin{equation}\label{fixed element escapes fixed point ultrapower}
 UyU^*\notin(M^\beta)^\omega.
\end{equation}
Otherwise, the von Neumann algebra
\[
 D=\{UyU^*\mid U\in\cU(M^{\omega,\beta})\}''
\]
would be contained in $(M^\beta)^\omega$ and invariant under every inner
automorphism of $M^{\omega,\beta}$. The standard description of
von Neumann subalgebras invariant under every inner automorphism gives a projection $e\in\cZ(M^{\omega,\beta})\cap D$ such that
$$ De=M^{\omega,\beta}e, \qquad De^{\perp} \subset \cZ(M^{\omega,\beta})e^{\perp}.$$
Equation
\eqref{center of fixed ultrapower misses fixed sequences} forces $e=1$, hence $M^{\omega,\beta}=D$. In particular,
$M^{\omega,\beta}\subset(M^\beta)^\omega$, in contradiction with
\eqref{generic no spectral gap}.  The claim follows.

Choose $U$ as in
\eqref{fixed element escapes fixed point ultrapower}, write
$U=(u_i)^\omega$, and put
\[
 d=\operatorname{dist}_2(UyU^*,(M^\beta)^\omega)>0.
\]
Define
\[
 c^{(i)}_g=u_i^*\beta_g(u_i),
 \qquad
 q^{(i)}_g=c^{(i)}_g v_g.
\]
Then $q^{(i)}\in\overline{B^1(\alpha)}$ and $q^{(i)}\to v$
compact-uniformly along
$\omega$ because $U \in M^{\omega,\beta}$.  Moreover,
\[
 M^{\alpha^{q^{(i)}}}=u_i^*M^\beta u_i
\]
and hence 
\begin{equation}\label{fixed expectation drops along almost invariant conjugacy}
 \lim_{i\to\omega}
 \|\rE_{M^{\alpha^{q^{(i)}}}}(y)\|_2^2
 =\|\rE_{(M^\beta)^\omega}(UyU^*)\|_2^2
 =\|y\|_2^2-d^2.
\end{equation}
The left side is therefore strictly smaller in the limit than its value
$\|y\|_2^2$ at $v$.  This contradicts the continuity of $F_y$ at $v$.  Thus $M^\beta=\C1$.
We have shown that $\cC$ is contained in $\cE$.  The set $\cE$ is consequently dense,
which proves $(\rm iii)\Rightarrow(\rm ii)$.
\end{proof}

\subsection{An approximate 1-cocycle vanishing theorem}

\begin{theorem}
Let $\alpha : G \curvearrowright M$ be a continuous action on a $\II_1$ factor $M$. Suppose that $G$ is amenable and that $\alpha$ has the Rokhlin property. Then 
$$Z^1(\alpha)=\overline{B^1(\alpha)}.$$
\end{theorem}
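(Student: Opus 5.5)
The plan is to follow the hint in the section introduction: pass to a $2\times 2$ matrix amplification and reduce the claim to equivalence of two equal-trace projections in the fixed point ultrapower. Throughout, fix $u\in Z^1(\alpha)$. Let $\widetilde M=\mathbb M_2(M)$ with its normalized trace, and define the continuous trace-preserving action
\[
 \gamma_g\begin{pmatrix}a&b\\ c&d\end{pmatrix}
 =\begin{pmatrix}\alpha_g(a)&\alpha_g(b)u_g^*\\ u_g\alpha_g(c)&\alpha^u_g(d)\end{pmatrix}.
\]
This is the usual balanced action of the pair $\alpha$, $\alpha^u$. The projections $e=e_{11}$ and $f=e_{22}$ lie in $\widetilde M^\gamma$ and have equal trace $1/2$.

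Suppose we find a partial isometry $W\in\widetilde M^{\omega,\gamma}$ with $W^*W=e$ and $WW^*=f$. Then $W=e_{21}\otimes V$ for some unitary $V\in M^\omega_\alpha$, and $\gamma^\omega$-invariance of $W$ reads $u_g\alpha^\omega_g(V)=V$. This is exactly $u=\partial_{\alpha^\omega}(V^*)$. Proposition~\ref{approximate coboundary ultrapower} then gives $u\in\overline{B^1(\alpha)}$. It therefore suffices to show that $e\sim f$ in $\widetilde M^{\omega,\gamma}$.

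By Proposition~\ref{equivalence projections relative bicentralizer}, applied with $M=N=\widetilde M$, this holds as soon as $\rE_{\rB(\widetilde M,\gamma)}(e)=\rE_{\rB(\widetilde M,\gamma)}(f)$. The cleanest route is to prove $\rB(\widetilde M,\gamma)=\C1$, so that both expectations equal $\tfrac12$. Since $G$ is amenable and $\alpha$ is Rokhlin, Theorem~\ref{Rokhlin and trivial bicentralizer} gives $\rB(M,\alpha)=\C1$. Proposition~\ref{corner relative action bicentralizer} yields $\rB(\widetilde M,\gamma)e=\rB(M,\alpha)=\C e$. Since $\widetilde M$ is a factor, the central support of $e$ in $\widetilde M^\gamma$ controls the rest.

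\textbf{Main obstacle.} The subtle point is that $e$ need not have central support $1$ in $\widetilde M^\gamma$, so the corner formula does not immediately force $\rB(\widetilde M,\gamma)=\C1$. Instead I would transfer the Rokhlin property to $\gamma$ and rerun the implication $(\rm i)\Rightarrow(\rm ii)$ of Theorem~\ref{Rokhlin and trivial bicentralizer} for $\gamma$. By Corollary~\ref{Rokhlin central sequence faithfulness}, the Rokhlin property is faithfulness of the central sequence action $\alpha_\omega$ on $M_{\omega,\alpha}$. The embedding $x\mapsto 1_2\otimes x$ maps $M_{\omega,\alpha}$ into $\widetilde M_{\omega,\gamma}$ equivariantly, because central sequences asymptotically commute with $u_g$ uniformly on compacts. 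By equicontinuity, $\gamma^\omega$ acts on $1\otimes x$ as $1\otimes\alpha^\omega_g(x)$ up to an error tending to zero. Hence $\gamma_\omega$ is faithful and $\gamma$ is Rokhlin, and Theorem~\ref{Rokhlin and trivial bicentralizer} gives $\rB(\widetilde M,\gamma)=\C1$.

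Some care is needed to check that $u_g\alpha_g(x_i)u_g^*-\alpha_g(x_i)\to0$ uniformly on compacts along $\omega$ for a central equicontinuous net $(x_i)$. I expect this to follow from Proposition~\ref{omega equicontinous functions} together with the continuity of $u$. With $\rB(\widetilde M,\gamma)=\C1$, the argument of the second paragraph concludes the proof.
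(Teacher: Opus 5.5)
Your proposal is correct and is essentially the paper's proof: the same $2\times 2$ twisted action, the Rokhlin property transferred to it through faithfulness of the central sequence action (Corollary~\ref{Rokhlin central sequence faithfulness}), triviality of its bicentralizer from Theorem~\ref{Rokhlin and trivial bicentralizer}, and equivalence of $e_{11}$ and $e_{22}$ in the fixed point ultrapower (the paper phrases this last step as factoriality of $N^{\omega,\theta}$ rather than invoking Proposition~\ref{equivalence projections relative bicentralizer}). Two minor points: the invariance relation $u_g\alpha^\omega_g(V)=V$ gives $u=\partial_{\alpha^\omega}(V)$, not $\partial_{\alpha^\omega}(V^*)$; and the equivariance of $x\mapsto 1\otimes x$ needs no uniform estimate, since $\alpha^\omega_g(x)\in M'\cap M^\omega$ commutes exactly with $u_g\in M$.
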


\begin{proof}
Let $u\in Z^1(\alpha)$.  On
$N=\mathbb M_2(M)$ consider the cocycle perturbation
\[
 \theta_g
 =\Ad\begin{pmatrix}1&0\\0&u_g\end{pmatrix}
  \circ(\id_{\mathbb M_2}\otimes\alpha_g).
\]
Thus
\[
 \theta_g\begin{pmatrix}a&b\\c&d\end{pmatrix}
 =\begin{pmatrix}
   \alpha_g(a)&\alpha_g(b)u_g^*\\
   u_g\alpha_g(c)&u_g\alpha_g(d)u_g^*
  \end{pmatrix}.
\]
Under the canonical identification
\[
 N_{\omega,\theta}=1_{\mathbb M_2}\otimes M_{\omega,\alpha},
\]
the action $\theta_\omega$ corresponds to $\alpha_\omega$: the cocycle
commutes with every element of $M_{\omega,\alpha}$.  Thus the Koopman
representations of $\theta_\omega$ and $\alpha_\omega$ agree.
Corollary~\ref{Rokhlin central sequence faithfulness} shows that
$\theta$ is Rokhlin.

Since $G$ is amenable, Theorem \ref{Rokhlin and trivial bicentralizer}
gives
\[
 \rB(N,\theta)=\C1.
\]
In particular, $N^{\omega,\theta}$ is a factor.  The projections
$e_{11},e_{22}\in N^{\omega,\theta}$ have the same trace, so they are
equivalent in this factor.  Hence there is
$W\in N^{\omega,\theta}$ such that
\[
 W^*W=e_{11},\qquad WW^*=e_{22}.
\]
Consequently, $W=e_{21}V$ for some unitary
$V\in M^\omega_\alpha$.  The equality $\theta_g^\omega(W)=W$ gives
\[
 u_g\alpha_g^\omega(V)=V,
\]
and therefore
\[
 u_g=V\alpha_g^\omega(V^*)
 \qquad(g\in G).
\]
Thus $u\in B^1(\alpha^\omega)$.  Proposition
\ref{approximate coboundary ultrapower} implies that
$u\in\overline{B^1(\alpha)}$.  The reverse inclusion is automatic.
\end{proof}

\section{The dual bicentralizer action for abelian groups}
\label{sec:abelian-groups}
In this section, the locally compact group $G$ is assumed to be abelian, $(M,\tau)$ is a tracial von Neumann algebra, and $\alpha:G\curvearrowright(M,\tau)$ is a continuous trace-preserving action.

For trace-preserving actions of locally compact abelian groups with full
Connes spectrum, we prove that every unitary $1$-cocycle is an approximate
coboundary. The proof compares the fixed parts of the analytic and algebraic
bicentralizers. This gives an analogue of Connes--St{\o}rmer transitivity \cite{CS78}
and defines a canonical action of the dual group on the bicentralizer. 

We then determine the fixed points and eigenoperators of this dual bicentralizer action. Approximate eigenstates reduce these questions to commutation and
intertwining relations in the original algebra. In particular, the dual bicentralizer
action is ergodic on the bicentralizer of a factor, and proper outerness
of the original action is equivalent to weak mixing of the dual bicentralizer action.
We also relate the correspondence formulation of the Rokhlin property
to eigenunitaries in the central sequence algebra. 

Finally, we end the section with an application of the dual bicentralizer action to the intermediate subfactor property.

\subsection{Approximate 1-cocycle vanishing for abelian groups}

\begin{lemma}\label{conditional expectation on algebraic bicentralizer}
Let $\widetilde\tau$ be the canonical trace on $M \rtimes_\alpha G$. For $a,b\in M$,
the equality $\rE_{\rb(M,\alpha)}(a)=\rE_{\rb(M,\alpha)}(b)$ holds if and only if
\[
 \widetilde\tau(ahz)=\widetilde\tau(bhz)
\]
for all $z\in M' \cap (M \rtimes_\alpha G)$ and $h\in\rL_\alpha(G)$ with
$\widetilde\tau(|h|)<\infty$. If $a,b\in M^\alpha$, it suffices to
require these equalities for $z\in\cZ(M \rtimes_\alpha G)$.
\end{lemma}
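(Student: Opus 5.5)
Since $\rb(M,\alpha)\subset M$, the equality $\rE_{\rb(M,\alpha)}(a)=\rE_{\rb(M,\alpha)}(b)$ means that $c=a-b$ satisfies $\tau(cy)=0$ for every $y\in\rb(M,\alpha)$. The plan is to move this orthogonality into the crossed product. Proposition~\ref{crossed product of bicentralizer} gives
\[
\rb(M\subset M\rtimes_\alpha G,\alpha)=\rb(M,\alpha)\rtimes_\alpha G,
\]
and this algebra is generated by $M'\cap(M\rtimes_\alpha G)$ and $\rL_\alpha(G)$. The canonical trace $\widetilde\tau$ restricts to the semifinite trace of $\rb(M,\alpha)\rtimes_\alpha G$. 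The corresponding $\widetilde\tau$-preserving conditional expectation $\widetilde\rE$ from $M\rtimes_\alpha G$ onto this crossed product restricts on $M$ to $\rE_{\rb(M,\alpha)}$; this is just the dual weight construction. So $\rE_{\rb(M,\alpha)}(c)=0$ if and only if $\widetilde\rE(c)=0$, which holds if and only if $\widetilde\tau(cw)=0$ for all $w$ in a $\widetilde\tau$-integrable, weak$^*$-total part of $\rb(M,\alpha)\rtimes_\alpha G$.

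\textbf{Step 1: the algebraic span of $hz$ is dense.} Since $u_g$ normalizes $M'\cap(M\rtimes_\alpha G)$, the set $\rL_\alpha(G)\cdot(M'\cap(M\rtimes_\alpha G))$ spans a $*$-algebra, exactly as in the proof of Proposition~\ref{algebraic bicentralizer span characterization}. It is therefore weak$^*$-dense in $\rb(M,\alpha)\rtimes_\alpha G$. If $\widetilde\tau(|h|)<\infty$, then $hz$ is $\widetilde\tau$-trace class, since $\widetilde\tau(|hz|)\le\|z\|\,\widetilde\tau(|h|)$. The functional $x\mapsto\widetilde\tau(xhz)$ is then normal.

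\textbf{Step 2: the equivalence.} The forward direction is immediate: $\widetilde\tau(chz)=\widetilde\tau(\widetilde\rE(c)hz)=0$. For the converse, assume $\widetilde\tau(chz)=0$ for all such $h,z$. Products $h'hz$ with $h'\in\rL_\alpha(G)$ are again of the same form, so we can reach $\widetilde\tau(c\,h'hz\,h'')$ with integrable $h$. This pairing vanishes on the $\widetilde\tau$-integrable part of the algebra generated, and that part is $\sigma$-weakly dense in $\rL^1$ of the crossed product. Hence $\widetilde\rE(c)=0$.

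The main technical obstacle is precisely this $\rL^1$-density. Elements $h$ of the form $\lambda(f)$ with $f\in\rL^1\cap\rL^2$ and $\widehat f$ integrable give $\widetilde\tau(|h|)<\infty$. Such $h$ form an approximate unit in $\rL_\alpha(G)$. For $x$ in the crossed product, $h_ixh_i$ converges to $x$ while remaining integrable, and a Kaplansky-type argument then closes the step.

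\textbf{Step 3: the fixed-point case.} Now let $a,b\in M^\alpha$. Then $c$ commutes with $\rL_\alpha(G)$. Since $\widetilde\tau$ is invariant under the dual action and $c$ is dual-fixed, we may replace $z$ by its average under the inner action $\Ad(u_g)$, as follows. Write $\widetilde\tau(chz)=\widetilde\tau(c\,u_gh z u_g^*)$ and average over $g$ using amenability of the abelian group $G$; the integrability of $h$ keeps this legitimate. This replaces $z$ by an element of
\[
(M'\cap(M\rtimes_\alpha G))\cap\rL_\alpha(G)'=\cZ(M\rtimes_\alpha G).
\]
Justifying this averaging step, that is, that the weak$^*$ limit of the averages exists and the pairing passes to the limit, is the delicate point. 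I would use the $\rL^2(\widetilde\tau)$ mean ergodic theorem applied to $z\,h^{1/2}$ type vectors.
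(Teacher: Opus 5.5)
Your proposal is correct. The general case follows the paper's route, and the fixed-point case takes a genuinely different, more self-contained one.

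\textbf{General case.} This is the paper's argument. You pass to $\rb(M,\alpha)\rtimes_\alpha G$ via Proposition~\ref{crossed product of bicentralizer}, note that its $\widetilde\tau$-preserving expectation restricts to $\rE_{\rb(M,\alpha)}$ on $M$, and use that the normal functionals $\widetilde\tau(hz\,\cdot)$ separate points of that crossed product. The paper simply asserts this separation; your Kaplansky/approximate-unit sketch is the right justification. The cleanest form observes that $\operatorname{span}\{u_gz\}$ (rather than $\operatorname{span}\{hz\}$) is a weak$^*$-dense $*$-algebra, and that $hu_g$ is again $\widetilde\tau$-integrable.

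\textbf{Fixed-point case.} Here the routes differ. The paper notes that $\rE_{\rb(M,\alpha)}(a)\in\rb(M,\alpha)\cap M^\alpha$. It then invokes Proposition~\ref{strong center bicentralizer}, namely that $(\rb(M,\alpha)\cap M^\alpha)\rtimes_\alpha G$ is generated by $\cZ(M\rtimes_\alpha G)$ and $\rL_\alpha(G)$, and repeats the separation argument. You instead average $z$ over the inner action $\Ad(u_g)$ along a F\o lner net, replace it by a central element, and reduce to the first part.

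The paper's route is shorter once that structural fact is granted. However, it needs the inclusion $\rb(M,\alpha)\cap M^\alpha\subset B$, where $B\rtimes_\alpha G=(\cZ(M\rtimes_\alpha G)\cup\rL_\alpha(G))''$. The displayed proof of Proposition~\ref{strong center bicentralizer} only records the opposite inclusion $B\subset\rb(M,\alpha)\cap M^\alpha$. Your averaging argument avoids this dependence, and applied to $x-\rE_B(x)$ it actually proves the missing inclusion.

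\textbf{Two corrections to your Step~3.}
\begin{itemize}
\item The identity $\widetilde\tau(chz)=\widetilde\tau(c\,u_ghzu_g^*)$ does not come from the dual action. It holds because $\widetilde\tau$ is a trace, hence $\Ad(u_g)$-invariant; $c\in M^\alpha$ commutes with $u_g$; and $h$ commutes with $u_g$ since $G$ is abelian.
\item The averaging step is not delicate, and no mean ergodic theorem is needed. The F\o lner averages of $u_gzu_g^*$ stay in the weak$^*$-closed set $M'\cap(M\rtimes_\alpha G)$. The functional $y\mapsto\widetilde\tau(chy)$ is normal, because $ch$ is $\widetilde\tau$-integrable, and it is constant along these averages. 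Any weak$^*$ cluster point is $\Ad(u_g)$-invariant by the F\o lner estimate, hence lies in $\cZ(M\rtimes_\alpha G)$.
\end{itemize}
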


\begin{proof}
By Poposition \ref{crossed product of bicentralizer}, we have $$\widetilde{\tau}(ahz)=\widetilde{\tau}(\rE_{\rb(M,\alpha) \rtimes_\alpha G}(a) hz).$$
Moreover, since $a \in M$, we have $$\rE_{\rb(M,\alpha) \rtimes_\alpha G}(a)=\rE_{\rb(M,\alpha)}(a).$$
Therefore
$$\widetilde{\tau}(ahz)=\widetilde{\tau}(bhz)$$
if and only if
$$\widetilde{\tau}(\rE_{\rb(M,\alpha) \rtimes_\alpha G}(a) hz) =\widetilde{\tau}(\rE_{\rb(M,\alpha) \rtimes_\alpha G}(a) hz)$$
for all $z\in M' \cap (M \rtimes_\alpha G)$ and $h\in\rL_\alpha(G)$ with
$\widetilde\tau(|h|)<\infty$.

The conclusion follows from the fact that the normal linear functionals $\widetilde{\tau}(hz \cdot)$ for $h \in \rL_\alpha(G)$ and $z \in M' \cap (M \rtimes_\alpha G)$ separate the points of $\rb(M,\alpha) \rtimes_\alpha G$.

If $a \in M^\alpha$, then $\rE_{\rb(M,\alpha) \rtimes_\alpha G}(a) \in \rb(M,\alpha) \cap M^\alpha$. Therefore, we can apply Proposition \ref{strong center bicentralizer} and conclude with the same argument.
\end{proof}

\begin{lemma} \label{lemma equivalence in the crossed product}
Take $a \in M_+ \cap M^\alpha$ with $\tau(a)=1$. For every neighbourhood
$\Omega\subset\widehat G$ of $1$, there exists
$\Phi\in\B(\rL^2(M))^+_*$ such that
\[ \Phi(\rho_M(x))=\tau(a x), \quad (x \in M^\alpha),\]
\[\Phi(\lambda_M(x))=\tau(\rE_{\rb(M,\alpha)}(a)x), \quad (x \in M^\alpha),\]
\[\Phi(U^\alpha(1_\Omega))=1.
\] 
\end{lemma}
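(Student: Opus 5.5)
The plan is to build $\Phi$ as a convex combination of vector functionals $\langle\,\cdot\,\xi_k,\xi_k\rangle$ with $\xi_k\in\rL^2(M)$. For a single vector $\xi\in\rL^2(M)$ one has $\langle\rho_M(x)\xi,\xi\rangle=\tau(\xi^*\xi\,x)$ and $\langle\lambda_M(x)\xi,\xi\rangle=\tau(\xi\xi^*x)$, read through the affiliated $\rL^1$-densities. Since we only test against $x\in M^\alpha$, it suffices to arrange three things:
\[
 \textstyle\sum_k\rE_{M^\alpha}(\xi_k^*\xi_k)=a,\qquad
 \sum_k\rE_{M^\alpha}(\xi_k\xi_k^*)=\rE_{M^\alpha}\bigl(\rE_{\rb(M,\alpha)}(a)\bigr),\qquad
 U^\alpha(1_\Omega)\xi_k=\xi_k .
\]
Here $\rE_{\rb(M,\alpha)}(a)\in\rb(M,\alpha)\cap M^\alpha$, because $\rb(M,\alpha)$ is $\alpha$-invariant and the two expectations commute. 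Set $b=\rE_{\rb(M,\alpha)}(a)$. The third condition says that each $\xi_k$ has $\alpha$-spectrum in $\Omega$.

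First, compare $a$ and $b$ in the crossed product. Since $\rE_{\rb(M,\alpha)}(a)=\rE_{\rb(M,\alpha)}(b)$ and $a,b\in M^\alpha$, Lemma~\ref{conditional expectation on algebraic bicentralizer} gives $\widetilde\tau(ahz)=\widetilde\tau(bhz)$ for all $z\in\cZ(M\rtimes_\alpha G)$ and all $h\in\rL_\alpha(G)$ of finite trace.

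Choose a symmetric neighbourhood $\Omega_0$ with $\Omega_0\Omega_0\subset\Omega$, and let $q=1_{\Omega_0}\in\rL_\alpha(G)$ be the spectral projection. It has finite trace and commutes with $a$ and $b$, since $a,b\in M^\alpha$ commute with every $u_g$. Taking $h=qh'$ shows that $qaq$ and $qbq$ have the same center-valued trace in the finite von Neumann algebra $q(M\rtimes_\alpha G)q$. The center of this corner is $\cZ(M\rtimes_\alpha G)q$ because $q$ is a projection in $M\rtimes_\alpha G$.

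By Dixmier's averaging theorem in the finite algebra $qQq$, with $Q=M\rtimes_\alpha G$, $qbq$ is then a norm limit of convex combinations $\sum_kt_kw_k\,qaq\,w_k^*$ with $w_k\in\cU(qQq)$. More precisely, both $qaq$ and $qbq$ lie in the norm-closed convex hull of the unitary orbits of their common center-valued trace. Hence, up to arbitrarily small errors, we obtain elements $y_k=\sqrt{t_k}\,w_kq a^{1/2}\in qQ$ with
\[
 \textstyle\sum_ky_k^*y_k=a^{1/2}qa^{1/2},\qquad \sum_ky_ky_k^*\approx qbq .
\]

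Next, transfer these crossed-product elements to vectors in $\rL^2(M)$ with $\alpha$-spectrum in $\Omega$. The tool is Fell absorption (Proposition~\ref{equivariant Fell absorption}): the covariant representation $(\lambda_M,U^\alpha)$ together with $\rho_M$ realizes $Q$ on $\rL^2(M)$, and $qQq$-elements localized near $\Omega_0$ are sent into spectral subspaces contained in $U^\alpha(1_\Omega)\rL^2(M)$. Concretely, I would approximate each $y_k$ by finite sums $\sum x_j u(f_j)$ with $x_j\in M$ and $\widehat{f_j}$ supported in $\Omega_0$. I would then take $\xi_k$ to be the corresponding vectors $\sum_j U^\alpha(\cdot)\widehat{x_j}$ obtained by applying these operators to $\widehat{a^{1/2}}$. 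Finally I would check that the marginal identities on $M^\alpha$ reduce to the crossed-product identities above, using $\widetilde\tau(x\,u(f))=\tau(x)\,\widehat f(1)$-type formulas for $x\in M$.

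The errors coming from Dixmier averaging and from the approximation of $y_k$ should be removed by a compactness argument. The set of positive normal functionals satisfying the three constraints up to $\varepsilon$ is bounded, since each has norm $\tau(a)=1$. So I would take a limit point and then show that the limit is still normal. Normality is not automatic, and I would secure it by keeping the densities dominated by fixed elements, which is where $\tau(a)=1$ and $q$ being of finite trace enter.

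I expect the transfer step to be the main obstacle: converting equality of center-valued traces in the semifinite crossed product $M\rtimes_\alpha G$ into genuine vectors of $\rL^2(M)$, rather than of $\rL^2(M)\otimes\rL^2(G)$, with the required spectral localization, while keeping $\Phi$ normal. If this direct transfer fails, I would instead construct $\Phi$ on $\rL^2(M)\otimes\rL^2(G)$. There I would compress by a unit vector of $\rL^2(G)$ whose Fourier transform is supported in $\Omega_0$, and use that $\lambda_G\otimes_G U^\alpha\cong 1\otimes U^\alpha$ after the unitary $V$ of Proposition~\ref{equivariant Fell absorption}, which turns the compression into a normal functional on $\B(\rL^2(M))$.
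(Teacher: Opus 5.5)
Your first steps match the paper: the identity $\widetilde\tau(ahz)=\widetilde\tau(bhz)$ from Lemma~\ref{conditional expectation on algebraic bicentralizer}, a finite-trace spectral projection $q=1_{\Omega_0}$ (take $\Omega_0$ compact), and equal center-valued traces of $qa$ and $qb$ in $q(M\rtimes_\alpha G)q$. The Dixmier step after that is false as stated. Equal center-valued traces do not put $qb$ in the norm-closed convex hull of the unitary orbit of $qa$. For instance, in a $\II_1$ factor a projection of trace $\tfrac12$ has the same trace as $\tfrac12 1$, but it is not in the convex hull of the orbit of $\tfrac12 1$. Your ``more precisely'' sentence would even force $qa$ to be central, since the unitary orbit of a central element is a single point. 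Dixmier averaging only moves each element towards the center. What you need is the exact comparison of positive elements with equal center-valued trace in a finite von Neumann algebra, which the paper obtains by a maximality argument. It gives partial isometries $v_i\in q(M\rtimes_\alpha G)q$ and $\mu_i>0$ with $\sum_i\mu_iv_i^*v_i=qa$ and $\sum_i\mu_iv_iv_i^*=qb$. Because this is exact, it also removes your limiting step and the normality problem that came with it.

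The transfer step is the real content of the lemma, and it is missing. Your main route cannot work. The pair $(\lambda_M,U^\alpha)$ does not integrate to a normal representation of the von Neumann crossed product: already for $M=\C$ and $G$ noncompact, the trivial representation is not normal on $\rL(G)\cong\rL^\infty(\widehat G)$. So $q$ and the $y_k$ have no meaning on $\rL^2(M)$. Moreover, vectors such as $U^\alpha(f)\widehat c$ do not see $\widetilde\tau$: for example $U^\alpha(f)\widehat1=\widehat f(1)\widehat1$, whereas $\widetilde\tau(\lambda(f)^*\lambda(f))=\|f\|_2^2$. Your fallback has two problems. Compressing by a single vector of $\rL^2(G)$ does not reproduce the marginals. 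And Fell absorption actually gives $\lambda_G\otimes_G U^\alpha\cong\lambda_G\otimes1$, not $1\otimes U^\alpha$.

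The missing idea is to stay in $\rL^2(M\rtimes_\alpha G,\widetilde\tau)=\rL^2(M)\otimes\rL^2(G)$ and take the partial trace over $\rL^2(G)$:
\[
\Phi(T)=\mu(\Omega_0)^{-1}\sum_i\mu_i\langle(T\otimes1)\widehat{v_i},\widehat{v_i}\rangle .
\]
This is automatically normal, and it is of your form: its vectors are the slices of $\widehat{v_i}$ along an orthonormal basis of $\rL^2(G)$. The marginals follow from two identities. First, $\rho_{M\rtimes_\alpha G}(x)=\rho_M(x)\otimes1$. Second, for $x\in M^\alpha$, $\lambda_{M\rtimes_\alpha G}(x)=\lambda_M(x)\otimes1$; this is where testing only on $M^\alpha$ is used. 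Together they give $\mu(\Omega_0)^{-1}\widetilde\tau(qax)=\tau(ax)$ and $\tau(bx)$. For the spectral condition, each $\widehat{v_i}$ lies in the ranges of both $\lambda_{M\rtimes_\alpha G}(q)=(1\otimes\lambda_G)(1_{\Omega_0})$ and $\rho_{M\rtimes_\alpha G}(q)=(U^\alpha\otimes\lambda_G)(1_{\Omega_0})$. The product rule for these two commuting representations then gives $\Sp_{U^\alpha\otimes1}(\widehat{v_i})\subset\Omega_0\Omega_0\subset\Omega$.
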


\begin{proof}
Choose a compact symmetric neighborhood of the identity $K \Subset\widehat G$ such
that $K \cdot K\subset\Omega$ and define a projection $p=u^\alpha(1_K) \in M \rtimes_\alpha G$. 

Let $b=\rE_{\rb(M,\alpha)}(a)$. The positive elements $ap=pa$ and $bp=pb$ in $M \rtimes_\alpha G$ satisfy the relation $\widetilde{\tau}(apz)=\widetilde{\tau}(bpz)$ for every $z \in \cZ(M \rtimes_\alpha G)$. Thus $ap$ and $bp$ have the same center-valued trace in the finite
corner $p(M\rtimes_\alpha G)p$. Thus by a classical maximality argument, we can find partial isometries $v_i$
in this corner and numbers $\mu_i>0$ such that
\[
 \sum_i\mu_i v_i^*v_i= ap,
 \qquad
 \sum_i\mu_i v_iv_i^*= bp.
\]
The sums converge strongly, and
$\sum_i\mu_i\widetilde\tau(v_i^*v_i)=\mu(K)$.
Identify $\rL^2(M\rtimes_\alpha G)$ with
$\rL^2(M)\otimes\rL^2(G)$ and let
\[\Phi \in \B(\rL^2(M))_* : T \mapsto \mu(K)^{-1}\sum_{i \in I} \mu_i \langle (T \otimes 1) \widehat{v_i},\widehat{v_i} \rangle. \]
Take $x \in M^\alpha$. By using the relation $\rho_{M \rtimes_\alpha G}(x)=\rho_M(x) \otimes 1$, we see that $\Phi$ satisfies
\[
 \Phi(\rho_M(x))=\mu(K)^{-1}\sum_{i \in I} \mu_i \widetilde{\tau}(v_i^*v_ix)=\mu(K)^{-1} \widetilde{\tau}(pax)=\tau(ax).
\]
Similarly, by using the relation $\lambda_{M \rtimes_\alpha G}(x)=\lambda_M(x) \otimes 1$, which holds because $x \in M^\alpha$, we see that
\[
 \Phi(\lambda_M(x))=\mu(K)^{-1}\sum_{i \in I} \mu_i \widetilde{\tau}(xv_iv_i^*)=\mu(K)^{-1} \widetilde{\tau}(xbp)=\tau(bx).
\]
It remains to check that
\[
\Phi(U^\alpha(1_\Omega))=1.
\]
Since $v_i=p v_i p$ for all $i \in I$, the vector $\widehat{v_i}$ lies in the range of the following two projections $$\lambda_{M \rtimes_\alpha G}(p)=(1 \otimes \lambda_G)(1_K), \quad \rho_{M \rtimes_\alpha G}(p)=(U^\alpha \otimes \lambda_G)(1_K),$$
hence
$$ \Sp_{1 \otimes \lambda_G}(\widehat{v_i}) \subset K, \quad \Sp_{U^\alpha \otimes_G \lambda_G}(\widehat{v_i}) \subset K.$$
Then the product rule implies that 
$$ \Sp_{U^\alpha \otimes 1}(\widehat{v_i}) \subset K^2 \subset \Omega.$$
In particular, we have $\Phi(U^\alpha(1_\Omega))=1$.
\end{proof}

\begin{theorem}\label{fixed point analytic algebraic bicentralizer}
   We have $\rB(M,\alpha) \cap M^\alpha=\rb(M,\alpha) \cap M^\alpha$.
\end{theorem}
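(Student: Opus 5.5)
We already know from \eqref{algebraic bicentralizer contained in analytic} that $\rb(M,\alpha)\subset\rB(M,\alpha)$, so only the inclusion $\rB(M,\alpha)\cap M^\alpha\subset\rb(M,\alpha)$ needs proof. The plan is to produce, for each $a\in M^\alpha_+$ with $\tau(a)=1$, a binormal state $\Phi$ on $\B(\rL^2(M))$ to which Lemma~\ref{equivariant binormal bicentralizer lemma} applies. Its two marginals on $M^\alpha$ should be $\tau(a\,\cdot)$ and $\tau(\rE_{\rb(M,\alpha)}(a)\,\cdot)$. That lemma then forces these marginals to agree on $\rB(M,\alpha)$, which is exactly the information we need.

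First, for each compact neighbourhood $\Omega$ of $1$ in $\widehat G$, Lemma~\ref{lemma equivalence in the crossed product} gives a normal positive functional $\Phi_\Omega$ with $\Phi_\Omega(U^\alpha(1_\Omega))=1$ and the prescribed marginals on $M^\alpha$. Since $\Phi_\Omega(1)=\Phi_\Omega(\rho_M(1))=\tau(a)=1$, each $\Phi_\Omega$ is a state. Next we take a weak$^*$ accumulation point $\Phi$ as $\Omega$ shrinks to $\{1\}$.

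\begin{itemize}
\item Spectral condition. Since $\Phi_\Omega$ is supported on the spectral subspace $U^\alpha(1_\Omega)$, we get $\Phi_\Omega(U^\alpha(f))=\int_{\widehat G}\widehat f\,\rd E_\Omega$ for some probability measure concentrated on $\Omega$. In the limit this gives $\Phi(U^\alpha(f))=\widehat f(1)=\int_G f$, which is \eqref{identity spectral state}.
\item Normality of the marginals on all of $M$. This is the point needing care, since the lemma asks for normal restrictions to $\lambda_M(M)$ and $\rho_M(M)$, not just to $M^\alpha$. I would go back to the construction of $\Phi_\Omega$ and dominate it uniformly in $\Omega$. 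For $\rho_M$, the formula for $\Phi_\Omega$ is valid for every $x\in M$ and gives $\Phi_\Omega(\rho_M(x))=\mu(K)^{-1}\widetilde\tau(pax)=\tau(ax)$, because $p\in\rL_\alpha(G)$ and $\widetilde\tau(\lambda(h)y)$ only sees the $G$-Fourier coefficient at the identity. For $\lambda_M$, I would use $\sum_i\mu_iv_iv_i^*=bp\le\|b\|\,p$. This should give $\Phi_\Omega(\lambda_M(x^*x))\le\|a\|\,\tau(x^*x)$, uniformly in $\Omega$, by comparing with the vector state of $\widehat p$ after normalizing by $\mu(K)$. Uniform domination by a fixed normal functional passes to the weak$^*$ limit, so both marginals of $\Phi$ are normal.
\end{itemize}

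I expect this verification of normality of the $\lambda_M$-marginal to be the main obstacle. The reason is that for non-fixed $x$, $\lambda_M(x)\otimes1$ is not $\lambda_{M\rtimes_\alpha G}(x)$ under the identification, so the twisting by $\alpha$ must be controlled.

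Now Lemma~\ref{equivariant binormal bicentralizer lemma} applies to $\Phi$, and gives $\Phi(\lambda_M(x))=\Phi(\rho_M(x))$ for $x\in\rB(M,\alpha)$. Taking $x\in\rB(M,\alpha)\cap M^\alpha$ and evaluating the marginals yields
\[
\tau(\rE_{\rb(M,\alpha)}(a)x)=\tau(ax).
\]
By linearity this identity extends to all $a\in M^\alpha$, since $M^\alpha$ is spanned by its positive elements. Finally, for $x\in\rB(M,\alpha)\cap M^\alpha$ we choose $a=x^*$. This gives $\tau(\rE_{\rb(M,\alpha)}(x^*)x)=\|x\|_2^2$, that is, $\|\rE_{\rb(M,\alpha)}(x)\|_2^2=\|x\|_2^2$. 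Hence $x=\rE_{\rb(M,\alpha)}(x)\in\rb(M,\alpha)$, which proves the theorem.
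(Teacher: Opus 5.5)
\textbf{There is a genuine gap in your proof.} Your overall strategy is the paper's: take an accumulation point of the states $\Phi_\Omega$ from Lemma~\ref{lemma equivalence in the crossed product}, then apply Lemma~\ref{equivariant binormal bicentralizer lemma}. Your treatment of the spectral condition, of the $\rho_M$-marginal, and of the final step is fine. The gap is where you expected it: normality of $\Phi\circ\lambda_M$ on all of $M$. Each $\Phi_\Omega\circ\lambda_M$ is normal, so what you need is uniform absolute continuity in $\Omega$. Your proposed domination does not give it.

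The identity $\sum_i\mu_iv_iv_i^*=bp\leq\|b\|p$ controls only left multiplication operators. Concretely, it gives $\sum_i\mu_i\langle\lambda_{M\rtimes_\alpha G}(z)\widehat{v_i},\widehat{v_i}\rangle=\widetilde\tau(zbp)\leq\|b\|\,\widetilde\tau(pzp)$ for $z\geq0$ in the crossed product. Now use the identification in which $\rho_{M\rtimes_\alpha G}(x)=\rho_M(x)\otimes1$, which is the one your $\rho_M$ computation needs. There, $\lambda_{M\rtimes_\alpha G}(y)$ acts on the fibre over $t$ by left multiplication by an $\alpha$-translate of $y$. By contrast, $\lambda_M(y)\otimes1$ is untwisted, and it is not a left multiplication operator at all: it fails to commute with right multiplication by $u_s$ unless $\alpha_s(y)=y$. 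So comparing with the vector state of $\widehat p$ gives $\Phi_\Omega(\lambda_M(y))\leq\|b\|\tau(y)$ only for $y\in M^\alpha_+$, which you already know.

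Nothing else in the construction bounds the untwisted marginal. The vectors $\widehat{v_i}$ lie under $(1\otimes\lambda_G)(1_K)$, so their coefficients are spread in the $G$-variable over roughly the range where the near-invariance from $\Sp_{U^\alpha\otimes1}(\widehat{v_i})\subset K^2$ degrades. There is therefore no reason for the twist to become negligible as $\Omega$ shrinks. Without normality on all of $M$, the ultrapower implementation behind Lemma~\ref{equivariant binormal bicentralizer lemma} cannot be invoked.

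\textbf{The missing idea.} You do not need to control $\Phi$ off $M^\alpha$; you can erase the non-fixed part afterwards.
\begin{enumerate}
\item Since $G$ is abelian, hence amenable, an invariant mean gives a ucp map $\vartheta$ in the point-weak$^*$ closed convex hull of $\{\Ad(U^\alpha_g)\mid g\in G\}$. It satisfies $\vartheta(\lambda_M(x))=\lambda_M(\rE_{M^\alpha}(x))$ and $\vartheta(\rho_M(x))=\rho_M(\rE_{M^\alpha}(x))$.
\item Because $G$ is abelian, each $\Ad(U^\alpha_g)$ fixes $U^\alpha(f)$, and hence so does $\vartheta$. So replacing $\Phi$ by $\Phi\circ\vartheta$ preserves \eqref{identity spectral state}.
\item Since $a$ and $\rE_{\rb(M,\alpha)}(a)$ lie in $M^\alpha$, the marginals of $\Phi\circ\vartheta$ are $x\mapsto\tau(ax)$ and $x\mapsto\tau(\rE_{\rb(M,\alpha)}(a)x)$ on all of $M$. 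These are normal.
\end{enumerate}
This is how the paper proceeds. It even yields $\tau(ax)=\tau(\rE_{\rb(M,\alpha)}(a)x)$ for every $x\in\rB(M,\alpha)$, that is, $\rE_{\rB(M,\alpha)}(a)=\rE_{\rb(M,\alpha)}(a)$. Your restriction to $x\in\rB(M,\alpha)\cap M^\alpha$ already suffices for the theorem once normality is secured.
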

\begin{proof}
By homogeneity, take $a \in M_+ \cap M^\alpha$ with $\tau(a)=1$. For every $\Omega \ll \widehat{G}$, take a state $\Phi_\Omega$ as in Lemma
\ref{lemma equivalence in the crossed product}. Let $\Phi$ be a weak$^*$ accumulation point of the net $(\Phi_\Omega)_{\Omega \ll \widehat{G}}$ where we direct the neighbourhoods of the identity $\Omega \ll \widehat G$ by reverse inclusion.  Then we have
\[ \Phi(\rho_M(x))=\tau(a x), \quad (x \in M^\alpha),\]
\[\Phi(\lambda_M(x))=\tau(\rE_{\rb(M,\alpha)}(a)x), \quad (x \in M^\alpha),\]
\[
 \Phi(U^\alpha(f))=f(1)
 \qquad(f\in\rC_0(\widehat G)).
\]
Now, take a ucp $\vartheta \in \mathrm{UCP}(\B(\rL^2(M)))$ in the weak$^*$-closed convex hull of $\{ \Ad(U^{\alpha}_g) \mid g \in  G\}$ such that $$\vartheta(\lambda_M(x))=\lambda_M(\rE_{M^\alpha}(x)),$$
$$\vartheta(\rho_M(x))=\rho_M(\rE_{M^\alpha}(x)).$$
Then, after replacing $\Phi$ by $\Phi \circ \vartheta$, and using the fact that $a$ and $\rE_{\rb(M,\alpha)}(a)$ are in $M^\alpha$, we obtain 
\[ \Phi(\rho_M(x))=\tau(a x), \quad (x \in M),\]
\[\Phi(\lambda_M(x))=\tau(\rE_{\rb(M,\alpha)}(a)x), \quad (x \in M),\]
\[
 \Phi(U^\alpha(f))=f(1)
 \qquad(f\in\rC_0(\widehat G)).
\]
 Lemma
\ref{equivariant binormal bicentralizer lemma} therefore gives for every $x \in \rB(M,\alpha)$
\[
 \tau(ax)=\Phi(\rho_M(x))=\Phi(\lambda_M(x))
 =\tau(\rE_{\rb(M,\alpha)}(a)x).
\]
This means that $\rE_{\rB(M,\alpha)}(a)=\rE_{\rb(M,\alpha)}(a)$.
If, in addition, $a\in\rB(M,\alpha)$, the left-hand side equals $a$.
Decomposing arbitrary fixed elements of $\rB(M,\alpha)$ into linear
combinations of positive ones gives
$\rB(M,\alpha)\cap M^\alpha\subset\rb(M,\alpha)\cap M^\alpha$.
The reverse inclusion follows from
$\rb(M,\alpha)\subset\rB(M,\alpha)$.
\end{proof}

\begin{corollary}\label{fixed projection equivalence algebraic bicentralizer}
Let $\alpha:G\curvearrowright M$ be a trace-preserving action of a
locally compact abelian group, and let $\omega$ be a large enough cofinal
ultrafilter. Two projections $p,q\in M^\alpha$ are equivalent in
$M^{\omega,\alpha}$ if and only if
\[
 \rE_{\rb(M,\alpha)}(p)=\rE_{\rb(M,\alpha)}(q).
\]
If $\Gamma(\alpha)=\widehat G$, they are equivalent in
$M^{\omega,\alpha}$ if and only if they are equivalent in $M$.
\end{corollary}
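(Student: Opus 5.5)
The first assertion will follow by chaining three results already established. Proposition~\ref{equivalence projections relative bicentralizer}, applied with $N=M$, says that two projections $p,q\in M^\alpha$ are equivalent in $M^{\omega,\alpha}$ if and only if $\rE_{\rB(M,\alpha)}(p)=\rE_{\rB(M,\alpha)}(q)$. The task is therefore to replace $\rB$ by $\rb$ in this criterion, and the tool for that is Theorem~\ref{fixed point analytic algebraic bicentralizer}.

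The key observation is that both bicentralizers are $\alpha$-invariant. For $\rB(M,\alpha)$ this is clear from the definition. For $\rb(M,\alpha)$ it follows from Proposition~\ref{crossed product of bicentralizer}, since $\Ad(u_g)$ preserves the generators. Each algebra is therefore globally invariant, so its trace-preserving conditional expectation commutes with $\alpha$. Hence for $p\in M^\alpha$ both $\rE_{\rB(M,\alpha)}(p)$ and $\rE_{\rb(M,\alpha)}(p)$ are $\alpha$-fixed.

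Since the two expectations also commute with $\rE_{M^\alpha}$, we get
\[
\rE_{\rB(M,\alpha)}(p)=\rE_{\rB(M,\alpha)\cap M^\alpha}(p),
\qquad
\rE_{\rb(M,\alpha)}(p)=\rE_{\rb(M,\alpha)\cap M^\alpha}(p).
\]
By Theorem~\ref{fixed point analytic algebraic bicentralizer} the two intersections coincide, so the right-hand sides agree. This settles the first assertion.

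For the second assertion, assume $\Gamma(\alpha)=\widehat G$. Proposition~\ref{strong center bicentralizer} then gives $\rb(M,\alpha)\cap M^\alpha=\cZ(M)^\alpha$. The criterion becomes
\[
\rE_{\cZ(M)^\alpha}(p)=\rE_{\cZ(M)^\alpha}(q).
\]
Let $T$ be the center-valued trace of $M$. Since $\alpha$ preserves $\tau$, it commutes with $T$, so $T(p)\in\cZ(M)^\alpha$ whenever $p\in M^\alpha$. Moreover $\rE_{\cZ(M)^\alpha}(p)=\rE_{\cZ(M)^\alpha}(T(p))=T(p)$. The criterion is thus $T(p)=T(q)$, which is exactly equivalence in $M$.

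The only step I expect to need care is the invariance of $\rb(M,\alpha)$ and of its conditional expectation under $\alpha$, which I justify through the crossed product description above. I also note that the proof of Proposition~\ref{strong center bicentralizer} as written only establishes one inclusion explicitly. Here I rely on its stated conclusion $\rb(M,\alpha)\cap M^\alpha=\cZ(M)^\alpha$ under full Connes spectrum.
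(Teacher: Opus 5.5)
Your proof is correct and follows the paper's route. You apply Proposition~\ref{equivalence projections relative bicentralizer} with $N=M$, identify $\rE_{\rB(M,\alpha)}$ and $\rE_{\rb(M,\alpha)}$ on $M^\alpha$ via Theorem~\ref{fixed point analytic algebraic bicentralizer}, and finish with Proposition~\ref{strong center bicentralizer} and the center-valued trace criterion. The only difference is that the paper reads off $\rE_{\rB(M,\alpha)}(a)=\rE_{\rb(M,\alpha)}(a)$ for $a\in M^\alpha$ from the \emph{proof} of that theorem, whereas you derive it from its \emph{statement} using $\alpha$-invariance of both bicentralizers, which is equally valid; the paper's argument also rests on the stated conclusion of Proposition~\ref{strong center bicentralizer}, just as yours does.
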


\begin{proof}
The proof of Theorem~\ref{fixed point analytic algebraic bicentralizer}
shows that the expectations onto the analytic and algebraic
bicentralizers agree on $M^\alpha$. The first assertion therefore
follows from Proposition~\ref{equivalence projections relative bicentralizer}.

For the second assertion, by Proposition \ref{strong center bicentralizer} we have
$$ \rE_{\rb(M,\alpha)}(a)=\rE_{\cZ(M)}(a)
 \qquad(a\in M^\alpha).$$
The map $\rE_{\cZ(M)}$ is the normalized center-valued trace of the
finite algebra $M$. Consequently, the first assertion and the
center-valued trace criterion for equivalence in $M$ give the result.
\end{proof}

\begin{corollary} \label{haagerup stormer for flows}
Suppose that
$(M^\alpha)'\cap M\subset M^\alpha$. Then $\rB(M,\alpha)=\rb(M,\alpha)$ and if we assume that $\Gamma(\alpha)=\widehat{G}$ then $\rB(M,\alpha)=\C 1$.
\end{corollary}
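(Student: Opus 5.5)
The plan is to trap $\rB(M,\alpha)$ inside a commutant and then use the previous results on fixed points.

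\textbf{Step 1: $\rB(M,\alpha)\subset M^\alpha$.} Put $A=M^\alpha$. By the very definition of the analytic bicentralizer (the argument at the start of the proof of Proposition~\ref{corner relative action bicentralizer}), we have
\[
\rB(M,\alpha)\subset (M^\alpha)'\cap M .
\]
Indeed, every unitary of $M^\alpha$ is exactly $\alpha$-invariant, so every $a\in\rB(M,\alpha)$ commutes with it. Combining this with the hypothesis $(M^\alpha)'\cap M\subset M^\alpha$ gives
\[
\rB(M,\alpha)\subset M^\alpha,
\qquad\text{hence}\qquad
\rB(M,\alpha)=\rB(M,\alpha)\cap M^\alpha .
\]

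\textbf{Step 2: $\rB(M,\alpha)=\rb(M,\alpha)$.} Apply Theorem~\ref{fixed point analytic algebraic bicentralizer}:
\[
\rB(M,\alpha)=\rB(M,\alpha)\cap M^\alpha=\rb(M,\alpha)\cap M^\alpha\subset\rb(M,\alpha).
\]
The reverse inclusion $\rb(M,\alpha)\subset\rB(M,\alpha)$ is \eqref{algebraic bicentralizer contained in analytic}. This proves $\rB(M,\alpha)=\rb(M,\alpha)$. It also shows that this common algebra is contained in $M^\alpha$, so
\[
\rB(M,\alpha)=\rb(M,\alpha)\cap M^\alpha .
\]

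\textbf{Step 3: triviality when $\Gamma(\alpha)=\widehat G$.} Now assume $\Gamma(\alpha)=\widehat G$. Proposition~\ref{strong center bicentralizer} gives
\[
\rb(M,\alpha)\cap M^\alpha=\cZ(M)^\alpha ,
\qquad\text{so}\qquad
\rB(M,\alpha)=\cZ(M)^\alpha .
\]
The statement should be read with $M$ a factor, as is standing in the intended applications, or more generally with $\cZ(M)^\alpha=\C1$, which follows e.g.\ from ergodicity of $\alpha$ on the center. Under that reading $\cZ(M)^\alpha=\C1$ and we get $\rB(M,\alpha)=\C1$.

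\textbf{Main obstacle.} The only delicate point is Step 3. Proposition~\ref{strong center bicentralizer} is where the full Connes spectrum enters. Its proof as written only establishes that $(\rb(M,\alpha)\cap M^\alpha)\rtimes_\alpha G$ is generated by $\cZ(M\rtimes_\alpha G)$ and $\rL_\alpha(G)$. One then still needs the standard fact that $\Gamma(\alpha)=\widehat G$ forces
\[
\cZ(M\rtimes_\alpha G)=\cZ(M)^\alpha\otimes 1 .
\]
From this, the algebra $\rb(M,\alpha)\cap M^\alpha$ must be exactly $\cZ(M)^\alpha$. I would invoke this fact as the known characterization of full Connes spectrum via the center of the crossed product.
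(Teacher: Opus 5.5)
Your proof is correct and is essentially the paper's: $\rB(M,\alpha)\subset(M^\alpha)'\cap M\subset M^\alpha$, then Theorem~\ref{fixed point analytic algebraic bicentralizer}, then Proposition~\ref{strong center bicentralizer}; the only cosmetic difference is that the paper puts $\rb(M,\alpha)$ inside $M^\alpha$ via Proposition~\ref{algebraic bicentralizer fixed point commutant}, where you close the loop with \eqref{algebraic bicentralizer contained in analytic}. Your factoriality caveat is justified: the paper's last line tacitly uses $\cZ(M)^\alpha=\C1$, and the conclusion fails without it (e.g.\ $M=R\oplus R$ with the same minimal flow on both summands, where $\cZ(M)\subset\rB(M,\alpha)$), but your diagnosis of Proposition~\ref{strong center bicentralizer} is slightly off, since what Step~3 actually borrows is the inclusion $\rb(M,\alpha)\cap M^\alpha\subset\cZ(M)$, which is exactly the half its printed proof does not establish and which the identity $\cZ(M\rtimes_\alpha G)=\cZ(M)^\alpha$ does not give on its own.
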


\begin{proof}
We have
\[
 \rB(M,\alpha)\subset(M^\alpha)'\cap M\subset M^\alpha.
\]
Proposition \ref{algebraic bicentralizer fixed point commutant} likewise
gives $\rb(M,\alpha)\subset M^\alpha$.  Theorem
\ref{fixed point analytic algebraic bicentralizer} now yields
$\rB(M,\alpha)=\rb(M,\alpha)$. If $\Gamma(\alpha)=\widehat G$, then Proposition 2.23 gives $\rb(M,\alpha)=\C$.
\end{proof}

The following theorem is the analog of Connes-St\o rmer transitivity theorem for type $\III_1$ factors \cite{CS78}. In fact, one can adapt our proof to obtain a new proof of Connes-St\o rmer theorem. Corollary \ref{haagerup stormer for flows} provides an analog of the main result of Haagerup-St\o rmer \cite{HS90}. Again our proof can be adapted to provide a unified proof of their result without disintegration and type distinction.
\begin{theorem}\label{approximate cocycle vanishing theorem}
Let $\alpha:G\curvearrowright M$ be a trace-preserving action of a
locally compact abelian group. If $\Gamma(\alpha)=\widehat G$, then
$Z^1(\alpha)=\overline{B^1(\alpha)}$.
\end{theorem}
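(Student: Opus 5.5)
The plan follows the $2\times 2$ matrix trick used for the Rokhlin case in Section~\ref{sec:cocycle-perturbations}, with Corollary~\ref{fixed projection equivalence algebraic bicentralizer} taking over the role played there by triviality of the bicentralizer. Let $u\in Z^1(\alpha)$. On $N=\mathbb M_2(M)$, with trace $\tau_N=\tfrac12(\tau\circ\Tr)$, consider the trace-preserving action
\[
 \theta_g=\Ad\begin{pmatrix}1&0\\0&u_g\end{pmatrix}\circ(\id_{\mathbb M_2}\otimes\alpha_g)\qquad(g\in G).
\]
The goal is to produce a partial isometry $W\in N^{\omega,\theta}$ with
\[
 W^*W=e_{11},\qquad WW^*=e_{22}.
\]
Given such a $W$, we write $W=e_{21}V$ with $V\in\cU(M^\omega_\alpha)$. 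The fixed-point relation $\theta^\omega_g(W)=W$ then reads $u_g\alpha^\omega_g(V)=V$, that is $u=\partial_{\alpha^\omega}(V)$, so Proposition~\ref{approximate coboundary ultrapower} gives $u\in\overline{B^1(\alpha)}$.

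To obtain $W$, apply Corollary~\ref{fixed projection equivalence algebraic bicentralizer} to $\theta$, which acts by an abelian group. The projections $e_{11},e_{22}$ lie in $N^\theta$, so they are equivalent in $N^{\omega,\theta}$ as soon as two conditions hold: $\Gamma(\theta)=\widehat G$, and $e_{11}\sim e_{22}$ in $N$. The second condition is clear from $e_{21}\in N$. One point needs care: Corollary~\ref{fixed projection equivalence algebraic bicentralizer} is stated for factors or finite algebras with $\rE_{\cZ}$ as center-valued trace, and here $N$ is finite. The needed identity $\rE_{\rb(N,\theta)}=\rE_{\cZ(N)}$ on $N^\theta$ comes from Proposition~\ref{strong center bicentralizer}, which in turn requires $\Gamma(\theta)=\widehat G$. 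Granting that, $e_{11}$ and $e_{22}$ have the same center-valued trace $\tfrac12$ in $N$, so they are equivalent.

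The main obstacle is therefore to show $\Gamma(\theta)=\widehat G$ from $\Gamma(\alpha)=\widehat G$. I would argue through the crossed product. The action $\theta$ is a cocycle perturbation of $\id\otimes\alpha$, so $N\rtimes_\theta G\cong \mathbb M_2(\C)\otimes(M\rtimes_\alpha G)$, equivariantly for the dual actions. The Connes spectrum is characterized by the dual action fixing the center of the crossed product; more robustly, it is the intersection over nonzero $\theta$-invariant projections $e$ of $\Sp(\theta^e)$. Both descriptions are invariant under cocycle perturbation and under tensoring with the trivial action on $\mathbb M_2$. So I would first show $\Gamma(\id_{\mathbb M_2}\otimes\alpha)=\Gamma(\alpha)$, using that corners of invariant projections of $\mathbb M_2(M)$ are cut down by $e_{11}$-type reductions. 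Then I would invoke the standard cocycle-invariance of $\Gamma$ via the crossed product isomorphism. If the paper's version of Proposition~\ref{strong center bicentralizer} uses $\Gamma$ only through the condition that $\cZ(N\rtimes_\theta G)$ is fixed by the dual action, I would verify that condition directly: the center of $\mathbb M_2\otimes(M\rtimes_\alpha G)$ is $\cZ(M\rtimes_\alpha G)$, which is dual-invariant by hypothesis. This last route seems cleanest and is the one I would try first.
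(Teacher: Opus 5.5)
Your proposal is correct and follows essentially the same route as the paper. Both arguments use the $2\times2$ cocycle perturbation $\theta$ on $\mathbb M_2(M)$, transfer $\Gamma(\theta)=\widehat G$ through the equivariant isomorphism $\mathbb M_2(M)\rtimes_\theta G\cong\mathbb M_2(\C)\otimes(M\rtimes_\alpha G)$ and the amplified center, apply Corollary~\ref{fixed projection equivalence algebraic bicentralizer} to get $W\in N^{\omega,\theta}$, and conclude with Proposition~\ref{approximate coboundary ultrapower}.

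The one step you state without justification is $V\in\cU(M^\omega_\alpha)$. As the paper notes, it holds because continuity of $g\mapsto\operatorname{diag}(1,u_g)$ makes $\theta$-equicontinuity of a representative of $W=e_{21}V$ equivalent to $\alpha$-equicontinuity of the corresponding representative of $V$.
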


\begin{proof}
Take $v\in Z^1(\alpha)$ and equip $N=\mathbb M_2(\C)\ovt M$ with
its normalized tensor product trace. Define
\[
 \gamma_g
 =\Ad\begin{pmatrix}1&0\\0&v_g\end{pmatrix}
   \circ(\id\otimes\alpha_g).
\]
The standard isomorphism
\[
 N\rtimes_\gamma G
 \longrightarrow\mathbb M_2(\C)\ovt(M\rtimes_\alpha G)
\]
fixes $N$ and sends its canonical group unitary at $g$ to
$\operatorname{diag}(u_g,v_gu_g)$. It intertwines the dual actions.
Since the center of a matrix amplification is the amplified center,
we have
$\Gamma(\gamma)=\Gamma(\alpha)=\widehat G$.

The projections $e_{11},e_{22}$ belong to $N^\gamma$ and are equivalent
in $N$. Corollary~\ref{fixed projection equivalence algebraic bicentralizer}
gives a partial isometry $W\in N^{\omega,\gamma}$ satisfying
\[
 W^*W=e_{11},\qquad WW^*=e_{22}.
\]
Write $W=e_{21}V$ with $V\in\cU(M^\omega)$. Its invariance gives
\[
 v_g\alpha_g^\omega(V)=V,
 \qquad
 v_g=V\alpha_g^\omega(V^*)
 \qquad(g\in G).
\]
The cocycle $g\mapsto\operatorname{diag}(1,v_g)$ is continuous, so
cocycle perturbation preserves the equicontinuous ultrapower.
Consequently $V\in M^\omega_\alpha$. Proposition
\ref{approximate coboundary ultrapower} now gives
$v\in\overline{B^1(\alpha)}$.
\end{proof}

We can now define the dual bicentralizer action.
\begin{corollary} \label{full dual bicentralizer action}\label{dual unitary}
Suppose that $\Gamma(\alpha)=\widehat G$. Then we have a well-defined continuous trace preserving action
\[
\beta:\widehat{G}\curvearrowright\rB(M\subset N,\alpha) : p\longmapsto\beta_p
\]
for every tracial inclusion $M \subset N$. Moreover, for
$u\in\overline{B^1(\alpha)}$ and $p\in \widehat G$, we have
\[
\beta_p\circ\beta_u
=\beta_u\circ\beta_p.
\]
\end{corollary}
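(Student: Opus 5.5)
The idea is to define $\beta_p$ through the perturbation of $\alpha$ by the scalar cocycle given by the character $p$. For $p\in\widehat G$, the map $g\mapsto \overline{p(g)}1$ (or $p(g)1$, depending on the sign convention) is a scalar $1$-cocycle for $\alpha$. The associated perturbed action is $\alpha$ itself, because $\Ad$ of a scalar is trivial. By Theorem~\ref{approximate cocycle vanishing theorem}, $\Gamma(\alpha)=\widehat G$ gives $Z^1(\alpha)=\overline{B^1(\alpha)}$, so this scalar cocycle, which we call $p$, lies in $\overline{B^1(\alpha)}$. Proposition~\ref{relative bicentralizer isomorphism} then yields a trace-preserving isomorphism
\[
\beta_p:\rB(M\subset N,\alpha)\to\rB(M\subset N,\alpha^p)=\rB(M\subset N,\alpha).
\]
Concretely, $\beta_p(x)=\lim_i v_ixv_i^*$, where $v_i\in\cU(M)$ satisfy $v_i\alpha_g(v_i^*)\to p(g)$ uniformly on compacts.

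The action property comes from item (i) of Proposition~\ref{relative bicentralizer isomorphism}. Since $\alpha^p=\alpha$, the scalar cocycle $q$ also belongs to $\overline{B^1(\alpha^p)}$. The product cocycle $qp$ is the scalar cocycle of the character $qp$, so $\beta_{qp}=\beta_q\circ\beta_p$. The trivial character gives the trivial cocycle, which can be implemented by $v_i=1$, so $\beta_1=\id$. Hence $p\mapsto\beta_p$ is a group homomorphism into the trace-preserving automorphisms of $\rB(M\subset N,\alpha)$.

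Continuity comes from item (iii). The map $\widehat G\to\overline{B^1(\alpha)}$, $p\mapsto (g\mapsto p(g)1)$, is continuous, because the topology of $\widehat G$ is uniform convergence on compacts of $G$ and $\|p(g)1-q(g)1\|_2=|p(g)-q(g)|$. Composing with the continuous map $w\mapsto\beta_w(x)$ of item (iii) gives $\|\cdot\|_2$-continuity of $p\mapsto\beta_p(x)$ for each $x$. This is the required continuity of the action.

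For the commutation relation, fix $u\in\overline{B^1(\alpha)}$ and $p\in\widehat G$. The scalar cocycle $p$ also lies in $Z^1(\alpha^u)$. We need $\Gamma(\alpha^u)=\widehat G$ to obtain $p\in\overline{B^1(\alpha^u)}$. We can sidestep this: item (i) shows $pu=up\in\overline{B^1(\alpha)}$, so if $p\in\overline{B^1(\alpha^u)}$ it follows directly. To get that membership, take nets with $\partial_\alpha(w_j)\to p$ and $\partial_\alpha(v_i)\to u$ uniformly on compacts. One checks $\partial_{\alpha^u}(v_iw_jv_i^*)\to p$ along a suitable diagonal. Alternatively, transport via $\alpha^u\cong$ the action $\Ad(V)\alpha^\omega\Ad(V^*)$ given by Proposition~\ref{approximate coboundary ultrapower}: it realizes $p$ as a coboundary $\partial_{\alpha^{\omega}}(W)$ with $W\in M^\omega_\alpha$, and $VWV^*$ implements $p$ for $(\alpha^u)^\omega$.

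Given this, item (i) applied in both orders gives
\[
\beta_p\circ\beta_u=\beta_{pu}=\beta_{up}=\beta_u\circ\beta_p,
\]
where on the left-hand side $\beta_p$ is computed for $\alpha^u$, and scalar cocycles commute with everything. The main obstacle is this last step, showing that $p$ remains an approximate coboundary for $\alpha^u$ and that the $\beta_p$ defined for $\alpha^u$ is the intended map. I would handle it in the ultrapower, where all the relevant cocycles become exact coboundaries. The remaining points are routine bookkeeping: $\Gamma$ is preserved under approximate coboundary perturbations, and the convention for $\widehat G$ (sign of $p$) does not affect anything.
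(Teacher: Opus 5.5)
Your proof is correct and is essentially the paper's argument. As in the paper, $\beta_p$ is the isomorphism of Proposition~\ref{relative bicentralizer isomorphism} for the scalar cocycle $p$ (an approximate coboundary by Theorem~\ref{approximate cocycle vanishing theorem}), items (i) and (iii) give the action and its continuity, and $\beta_p\circ\beta_u=\beta_{pu}=\beta_{up}=\beta_u\circ\beta_p$ gives the commutation.

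The point you flag is that $p\in\overline{B^1(\alpha^u)}$, so that the left-hand $\beta_p$ (for $\alpha^u$) is defined. The paper leaves this implicit. Your direct check works and needs no diagonal, since $\sup_{g\in K}\|\partial_{\alpha^u}(v_iw_jv_i^*)_g-p(g)\|_2\leq 2\sup_{g\in K}\|u_g-\partial_\alpha(v_i)_g\|_2+\sup_{g\in K}\|\partial_\alpha(w_j)_g-p(g)\|_2$.
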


\begin{proof}
We use Proposition~\ref{relative bicentralizer isomorphism}. Scalar cocycles do not change the action on $M$, so each
$\beta_p$ is an automorphism. The composition and
continuity assertions follow from
Proposition~\ref{relative bicentralizer isomorphism}. Finally, scalar cocycles commute with $u$, and hence
\[
\beta_p\circ\beta_u
=\beta_{pu}
=\beta_{up}
=\beta_u\circ\beta_p.
\]
\end{proof}

\subsection{Approximate eigenstates for locally compact abelian groups}

Throughout this subsection, let $G$ be a locally compact abelian group and
let $U:G\to\cU(H)$ be a strongly continuous unitary representation.
No countability assumption is imposed on $G$.

We write
\[
U:\rC_0(\widehat G)\longrightarrow\B(H)
\]
for the corresponding spectral representation.
We use the Fourier convention
\[
U_g=\int_{\widehat G}\overline{p(g)}\,\rd\rE(p),
\qquad
U(\widehat f)=\int_G f(g)U_g\,\rd g
\quad(f\in\rL^1(G)).
\]

\begin{definition}\label{dual translation definition}
For $p\in\widehat G$ and $f\in\rC_0(\widehat G)$, define the left
translate $L_pf\in\rC_0(\widehat G)$ by
\[
(L_pf)(q)=f(pq)
\qquad(q\in\widehat G).
\]
\end{definition}

\begin{lemma}\label{LCA spectral gap density lemma}
For every
neighbourhood $V$ of $1$ in $\widehat G$, there is a nonnegative
probability density $h\in\rL^1(G)$ such that
\begin{equation}\label{LCA spectral gap density}
\inf_{q\notin V}|1-\widehat h(q)|>0.
\end{equation}
\end{lemma}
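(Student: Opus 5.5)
The idea is to build $h$ as a product of finitely many positive-definite bumps, each of whose Fourier transforms separates a direction of $\widehat G\setminus V$ from $1$. Take any nonnegative probability density $h\in\rL^1(G)$. Then $\widehat h(q)=\int_G h(g)\overline{q(g)}\,\rd g$ is continuous and $\widehat h(1)=1$. Moreover $|\widehat h(q)|\le 1$, with equality only if $q$ is constant on the support of $h$. We must control $|1-\widehat h(q)|$ uniformly on the closed set $\widehat G\setminus V$, which need not be compact. The Riemann--Lebesgue lemma handles the part near infinity: $\widehat h(q)\to 0$ as $q\to\infty$, so $|1-\widehat h|\ge 1/2$ outside some compact set $C\subset\widehat G$. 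It therefore suffices to arrange $\widehat h(q)\neq 1$ for every $q$ in the compact set $C\setminus V$ and then invoke continuity and compactness.

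We use the fact that $\operatorname{Re}(1-\widehat h(q))=\int h(g)(1-\operatorname{Re}q(g))\,\rd g$. This quantity is strictly positive as soon as $q$ is not identically $1$ on a set of positive $h$-measure. Choose $h$ to be a positive continuous probability density with compact support $S$, where $S$ is a compact neighbourhood of the identity.

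If $q\neq 1$, then $q$ is not identically $1$ on any open set. Indeed, if $q\equiv 1$ on an open set $W$, then $q\equiv 1$ on the open subgroup generated by $W$, and this does not force $q=1$. That is the subtle point: when $G$ is not connected, a character can be trivial on an open subgroup. To handle it, take $h$ to be strictly positive everywhere rather than compactly supported, for example $h=\sum_n 2^{-n}h_n$ with $h_n$ continuous probability densities whose supports exhaust a $\sigma$-compact open subgroup $G_0$. One can even avoid any countability assumption by noting that only finitely many characters matter, as follows.

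The cleanest route is a finite compactness argument. For each $q_0\in\widehat G\setminus\{1\}$, pick $g_0\in G$ with $q_0(g_0)\neq 1$, and set $h_{q_0}=\tfrac12(k+k*\delta_{g_0})$ with $k$ a continuous compactly supported probability density concentrated near $e$. Then $\widehat{h_{q_0}}(q)=\widehat k(q)\tfrac12(1+\overline{q(g_0)})$, so $\operatorname{Re}(1-\widehat{h_{q_0}})>0$ on a neighbourhood $N_{q_0}$ of $q_0$. Cover the compact set $C\setminus V$ by finitely many such neighbourhoods $N_{q_1},\dots,N_{q_m}$, and take the convex combination $h=\tfrac1m\sum_j h_{q_j}$. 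Each term satisfies $\operatorname{Re}(1-\widehat{h_{q_j}})\ge 0$ everywhere, and some term is strictly positive at each point of $C\setminus V$. Hence $\operatorname{Re}(1-\widehat h)>0$ on $C\setminus V$, and by compactness it has a positive minimum there. Since $h\in\rL^1$, Riemann--Lebesgue still applies and gives the bound on the complement of some compact set. Enlarging $C$ to absorb that compact set before choosing the cover, the two estimates together give \eqref{LCA spectral gap density}.

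The only point requiring care is the ordering of choices. Fix $k$ first and use Riemann--Lebesgue for $k$ alone: since $|\widehat{h_{q_0}}|\le|\widehat k|$, we get $|\widehat h|\le|\widehat k|\le 1/2$ off a compact set $C$ that is independent of the later choices. Only then choose the finite cover of $C\setminus V$. With this ordering the argument closes.
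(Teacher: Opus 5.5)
Your final construction in the last two paragraphs is correct, but it takes a different route from the paper's. The paper describes $V$ through the compact-open topology: it chooses $K\Subset G$ and $\delta>0$ with $\{q\mid\sup_{g\in K}|q(g)-1|<\delta\}\subset V$, enlarges $K$ to an open $\sigma$-compact subgroup $G_0$, and takes a single density that is positive almost everywhere on $G_0$ and zero off it. Then $\{\widehat h=1\}=G_0^\perp\subset V$, and Riemann--Lebesgue plus compactness conclude.

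You never identify basic neighbourhoods of $\widehat G$ or build an open subgroup. Instead you fix $k$ first and use $|\widehat h|\le|\widehat k|$ to fix the compact set $C$ independently of later choices. You then separate each point of $C\setminus V$ from $1$ by a two-point average $\tfrac12(k+k(\cdot-g_0))$, and patch finitely many of these using $\operatorname{Re}(1-\widehat h)\ge0$.

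Your argument is softer and fully elementary, and it yields a compactly supported $h$. The paper's argument is a one-shot explicit construction with no covering step; your finite set $\{g_j\}$ plays the role of its $K$.

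Two small repairs are needed. First, $C\setminus V$ is compact only when $V$ is open, so replace $V$ by its interior at the start; this only strengthens the conclusion. Second, delete the third paragraph. Its opening claim, that a nontrivial character cannot be identically $1$ on an open set, is false, as you yourself note. Its density $\sum_n2^{-n}h_n$ is not positive everywhere. It would only work after choosing $G_0$ to contain a compact set controlling $V$ --- which is exactly the paper's route.
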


\begin{proof}
Choose a compact set $K\subset G$ and $\delta>0$ such that
\[
\left\{q\in\widehat G\ \middle|\
\sup_{g\in K}|q(g)-1|<\delta\right\}\subset V.
\]
Let $H<G$ be the open $\sigma$-compact subgroup generated by a
relatively compact identity neighbourhood. The image of $K$ in the
discrete group $G/H$ is finite. Adjoining finitely many
representatives gives an open $\sigma$-compact subgroup $G_0$ which
contains $K$. Take a probability density which is positive almost
everywhere on $G_0$ and vanishes off $G_0$. Its Fourier transform is
equal to one precisely on $G_0^\perp$. Since
$G_0^\perp\subset V$ and the Fourier transform vanishes at infinity,
compactness gives \eqref{LCA spectral gap density}.
\end{proof}

\begin{definition}\label{strongly invariant state definition}
For a Radon probability measure $\mu$ on $G$, denote by
\[
\sigma_\mu(T)=\int_G U_gTU_g^*\,\rd\mu(g)
\]
the integrated normal map on $\B(H)$. A state $\Phi$ on $\B(H)$ is
called \emph{strongly $\Ad(U)$-invariant} when
$\Phi\circ\sigma_\mu=\Phi$ for every Radon probability measure $\mu$ on
$G$.
\end{definition}

\begin{definition}\label{approximate eigenstate definition}
A state $\Phi$ on $\B(H)$ is an \emph{approximate $U$-eigenstate} if there
are unit vectors $\xi_i\in H$ and characters $p_i\in\widehat G$ such
that
\[
\Phi=\mathop{\mathrm{w}^*\!-\lim}_i
\langle\,\cdot\,\xi_i,\xi_i\rangle
\]
and
\begin{equation}\label{general approximate eigenvectors}
\lim_i\sup_{g\in K}
\|U_g\xi_i-p_i(g)\xi_i\|=0
\qquad(K\Subset G).
\end{equation}
\end{definition}

\begin{lemma}\label{LCA approximate eigenstate spectral characterization}
Let $\Phi$ be represented by unit vectors $\xi_i\in H$ and characters
$p_i\in\widehat G$ as in Definition~\ref{approximate eigenstate definition}, and let $\rE$ be the
spectral measure of $U$. Then $\Phi$ is an approximate $U$-eigenstate if
and only if
\[
\lim_i\|\rE(p_i^{-1}V)\xi_i-\xi_i\|=0
\]
for every neighbourhood $V$ of $1\in\widehat G$. In particular,
\begin{equation}\label{translated spectral concentration}
\lim_i\|U(L_{p_i}f)\xi_i-f(1)\xi_i\|=0
\qquad(f\in\rC_0(\widehat G)).
\end{equation}
\end{lemma}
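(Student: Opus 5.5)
The idea is to pass from approximate eigenvectors to approximate fixed vectors by twisting with the character. Set $\eta_i=p_i^{-1}\cdot$ in the sense that we consider the representation $U^{(i)}_g=\overline{p_i(g)}U_g$. Its spectral measure is the translate of $\rE$: with our Fourier convention $U_g=\int\overline{q(g)}\,\rd\rE(q)$, we get $\overline{p_i(g)}U_g=\int\overline{(p_iq)(g)}\,\rd\rE(q)$. Hence $\rE^{(i)}(V)=\rE(p_i^{-1}V)$ and $U^{(i)}(f)=U(L_{p_i}f)$. Condition \eqref{general approximate eigenvectors} then says exactly that $\sup_{g\in K}\|U^{(i)}_g\xi_i-\xi_i\|\to0$ for every compact $K$. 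The statement is therefore reduced to the classical equivalence, for a net of representations $U^{(i)}$ and unit vectors $\xi_i$, between
\begin{itemize}
\item[(a)] $\xi_i$ being asymptotically invariant uniformly on compacta, and
\item[(b)] $\|\rE^{(i)}(V)\xi_i-\xi_i\|\to0$ for every neighbourhood $V$ of $1$.
\end{itemize}
This equivalence is uniform in the representation, which is what makes the net version work.

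\emph{(a)$\Rightarrow$(b).} Fix $V$ and take the density $h$ of Lemma~\ref{LCA spectral gap density lemma}, with $c=\inf_{q\notin V}|1-\widehat h(q)|>0$. Then
\[
\|(1-\rE^{(i)}(V))\xi_i\|\leq c^{-1}\|(1-U^{(i)}(\widehat h))\xi_i\|\leq c^{-1}\int_G h(g)\|U^{(i)}_g\xi_i-\xi_i\|\,\rd g .
\]
The first inequality holds because $|1-\widehat h|\geq c$ off $V$. Since $h\in\rL^1$, split the integral into a compact $K$ carrying all but $\varepsilon$ of the mass, where the integrand is uniformly small by (a), and the remainder, bounded by $2\varepsilon$. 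This gives (b). The one point to check is that $\widehat h$ is the Fourier transform matching the convention $U(\widehat f)=\int f(g)U_g\,\rd g$.

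\emph{(b)$\Rightarrow$(a).} Fix $K\Subset G$ and $\varepsilon>0$. By continuity of the pairing $G\times\widehat G\to\mathbb T$ on compacta (the compact-open topology on $\widehat G$), there is a neighbourhood $V$ of $1$ with $\sup_{g\in K,q\in V}|q(g)-1|<\varepsilon$. Then for $g\in K$,
\[
\|U^{(i)}_g\xi_i-\xi_i\|\leq\|(U^{(i)}_g-1)\rE^{(i)}(V)\xi_i\|+2\|(1-\rE^{(i)}(V))\xi_i\|\leq\varepsilon+2\|(1-\rE^{(i)}(V))\xi_i\|,
\]
uniformly in $g\in K$. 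Letting $i\to\infty$ and then $\varepsilon\to0$ gives (a).

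\emph{Consequence \eqref{translated spectral concentration}.} We have $U(L_{p_i}f)=U^{(i)}(f)=\int f\,\rd\rE^{(i)}$. Given $\varepsilon$, choose $V$ with $|f-f(1)|<\varepsilon$ on $V$. Then
\[
\|U^{(i)}(f)\xi_i-f(1)\xi_i\|\leq\varepsilon+2\|f\|_\infty\|(1-\rE^{(i)}(V))\xi_i\|\to\varepsilon .
\]
None of these steps is a real obstacle. The only delicate points are bookkeeping: matching the Fourier and translation conventions, and handling non-$\sigma$-compact $G$. The latter is already absorbed into Lemma~\ref{LCA spectral gap density lemma} and into the use of compacta only.
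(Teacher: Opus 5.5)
Your proof is correct and is essentially the paper's argument: your twisted operator $U^{(i)}(\widehat h)=\int_G h(g)\overline{p_i(g)}U_g\,\rd g$ is exactly the paper's $A_i$, whose spectral multiplier $\widehat h(p_iq)$ stays at distance at least $c$ from $1$ off $p_i^{-1}V$, and your converse uses the same choice of an identity neighbourhood on which characters are uniformly close to $1$ on a prescribed compact set. Your explicit $\varepsilon$-splitting for \eqref{translated spectral concentration} simply spells out what the paper calls the continuous functional calculus.
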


\begin{proof}
Spectral localization implies
\eqref{general approximate eigenvectors} by choosing an identity
neighbourhood on which the characters are uniformly close to one on
a prescribed compact subset of $G$. Conversely, fix $V$ and choose
$h$ as in Lemma \ref{LCA spectral gap density lemma}. Put
\[
A_i=\int_Gh(g)\overline{p_i(g)}U_g\,\rd g.
\]
Compact-uniform approximation and tightness of $h(g)\,\rd g$ give
$\|(A_i-1)\xi_i\|\to0$. The spectral multiplier of $A_i-1$ at
$q\in\widehat G$ is $\widehat h(p_iq)-1$. On the complement of
$p_i^{-1}V$, its modulus is bounded below by the constant in
\eqref{LCA spectral gap density}. Hence
$\|(1-\rE(p_i^{-1}V))\xi_i\|\to0$. This proves the equivalence, and the
continuous functional calculus gives
\eqref{translated spectral concentration}.
\end{proof}

\begin{proposition}\label{LCA approximate eigenstate theorem}
A state on $\B(H)$ is strongly $\Ad(U)$-invariant if and only if it
belongs to the weak$^*$ closed convex hull of the approximate
$U$-eigenstates.
\end{proposition}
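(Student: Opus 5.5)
Two inclusions have to be proved. Throughout I use that strong invariance is preserved under weak$^*$ limits and convex combinations: for each fixed Radon probability measure $\mu$ the map $\sigma_\mu$ is normal, so $\Phi\mapsto\Phi\circ\sigma_\mu$ is weak$^*$ continuous and affine.

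\emph{Approximate eigenstates are strongly invariant.} Let $\Phi$ be an approximate $U$-eigenstate given by unit vectors $\xi_i$ and characters $p_i$. First take $\mu$ with compact support $K$. Then
\[
U_g^*\xi_i=\overline{p_i(g)}\,\xi_i+o(1)
\]
uniformly for $g\in K$. The phases cancel, so $\langle U_gTU_g^*\xi_i,\xi_i\rangle=\langle T\xi_i,\xi_i\rangle+o(1)\|T\|$ uniformly on $K$. Integrating against $\mu$ gives $\Phi\circ\sigma_\mu=\Phi$. A general Radon probability measure is approximated in total variation by compactly supported ones, and $\|\sigma_\mu-\sigma_\nu\|\le\|\mu-\nu\|$, which gives the full claim. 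The set of strongly invariant states is therefore a weak$^*$ closed convex set containing all approximate eigenstates, hence it contains their closed convex hull.

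\emph{Strongly invariant states lie in the hull.} I argue by Hahn--Banach. Suppose a strongly invariant $\Phi$ is not in the closed convex hull $\mathcal C$. Since $\mathcal C$ is a weak$^*$ compact convex set of states, there is a self-adjoint $T\in\B(H)$ and $c\in\R$ with
\[
\Phi(T)>c\ge\Psi(T)\qquad\text{for every }\Psi\in\mathcal C .
\]
The goal is to contradict this by producing an approximate eigenstate $\Psi$ with $\Psi(T)\ge\Phi(T)$.

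The mechanism for producing it is a spectral decomposition of $\Phi$ along $\widehat G$. Fix a neighbourhood $V$ of $1$ and a finite partition of unity $(f_k)$ on $\widehat G$ by functions in $\rC_c(\widehat G)$ supported in translates $p_kV$, together with a remainder near infinity; arrange that each $f_k^{1/2}$ is continuous. Strong invariance should force $\Phi$ to be asymptotically block diagonal with respect to these spectral pieces, that is,
\[
\Phi(T)\approx\sum_k\Phi\bigl(U(f_k^{1/2})\,T\,U(f_k^{1/2})\bigr).
\]
The reason is that an off-diagonal term $\Phi(U(f_k^{1/2})TU(f_l^{1/2}))$ with disjoint supports transforms under $\Phi\circ\Ad(U_g)$ by the frequency difference. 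Averaging over $g$ with a density $h$ as in Lemma~\ref{LCA spectral gap density lemma}, built from a compact set witnessing that the supports are separated, then kills these terms. The remainder near infinity is controlled by choosing the compact part large enough.

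Each diagonal piece is a positive functional $\Phi_k$ whose vector representatives, after GNS and normalization, are spectrally concentrated in $p_kV$. By Lemma~\ref{LCA approximate eigenstate spectral characterization}, such states are close to approximate eigenstates once $V$ shrinks. Letting $V$ shrink along a net then writes $\Phi$ as a limit of convex combinations of states that become approximate eigenstates. Some limiting piece must have $\Psi(T)\ge\Phi(T)>c$, which contradicts $\Psi(T)\le c$.

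I expect the main obstacle to be this last step. $\Phi$ is a general, possibly singular, state, so it is not a vector state and cannot be decomposed spectrally directly. The first thing I would try is to represent $\Phi$ as a weak$^*$ limit of normal states $\omega_j$ that are asymptotically $\Ad(U)$-invariant uniformly on compacta; such states exist by strong invariance combined with the Day--Namioka convexity trick in the predual. I would then decompose each $\omega_j$ as a density matrix, diagonalising it in approximately spectrally localized vectors via $U(f_k^{1/2})$, and diagonalize over $(j,V)$. The compact-uniform control in Definition~\ref{approximate eigenstate definition} would come from a fixed $h$ as in Lemma~\ref{LCA spectral gap density lemma}.
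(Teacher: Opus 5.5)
Your proof that approximate eigenstates, and hence their weak$^*$ closed convex hull, are strongly invariant is correct and is the paper's argument. The Hahn--Banach framing of the converse is also sound, as is the idea of extracting an approximate eigenstate from vectors localized in shrinking sets $p_VV$. The gap is the asymptotic block diagonalization, which carries the entire converse.

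\emph{The remainder at infinity.} It cannot be made small by enlarging the compact part. Take $G=\R$ with $U$ the regular representation, and unit vectors $\xi_n$ whose Fourier transforms are supported in $[n-1/n,n+1/n]$. Any weak$^*$ cluster point $\Phi$ of their vector states is an approximate $U$-eigenstate that vanishes on $U(\rC_0(\widehat G))$. So every compactly localized piece $\Phi\bigl(U(f_k^{1/2})\,T\,U(f_k^{1/2})\bigr)$ is zero, and the remainder carries all the mass for every finite partition.

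\emph{The cross terms.} The mechanism you give for the block formula does not apply. Adjacent pieces of a continuous partition of unity have overlapping supports, so their frequency differences accumulate at $1$ and no spectral-gap averaging kills those cross terms. Moreover $\sum_kU(f_k^{1/2})\neq1$, so $T-\sum_kU(f_k^{1/2})TU(f_k^{1/2})$ is not a sum of off-diagonal terms. The finite block identity can be rescued differently: writing $\Phi=\Phi\circ\sigma_{h_i}$ for a Reiter net shows that $U(\rC_0(\widehat G))$ lies in the centralizer of $\Phi$, since $\sigma_{h_i}(X)$ asymptotically commutes in norm with each $U_s$, uniformly on compacta. But then the normalized remainder piece $X\mapsto\Phi(P_\infty XP_\infty)/\Phi(P_\infty^2)$ is again a strongly invariant state, now supported at spectral infinity, so the argument is circular.

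\emph{The fallback and the missing idea.} Your fallback through normal approximants $\omega_j$ is where the actual work lies, and the proposal does not supply it. A normal state needs as many blocks as its spectral spread requires. Almost invariance at level $\eta$ on a compact set only yields a commutator error of order $\eta$ per block, and these errors accumulate. Nothing in your plan bounds the number of blocks in terms of $\eta$; composing with averaging maps $\sigma_h$ leaves the spectral distribution $f\mapsto\omega_j(U(f))$ unchanged.

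The missing idea is the paper's Hilbert--Schmidt device, combined with dualizing instead of decomposing $\Phi$:
\begin{enumerate}
\item Take normal almost invariant states $\Phi_i\to\Phi$ with densities $d_i$. The Powers--St\o rmer inequality makes $\zeta_i=d_i^{1/2}\in H\otimes\overline H$ almost invariant for $W_g=U_g\otimes\overline{U_g}$. Hence $\|Q_V\zeta_i\|\to0$, where $Q_V$ is the spectral projection of $W$ off $V$.
\item Assume $\Psi(T)\le0$ for every approximate eigenstate $\Psi$. Your compactness argument then gives one $V$ with $\rE(pV)T\rE(pV)\le\varepsilon\rE(pV)$ for all $p$ simultaneously.
\item The $2\times2$ block estimate upgrades this to $T\le2\varepsilon1+c(1-\rE(pV))$ for every $p$, with $c=\|T\|+\|T\|^2/\varepsilon$.
\item Integrating over the spectral measure of the second tensor factor gives $T\otimes1\le2\varepsilon1+cQ_V$. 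Hence $\Phi(T)=\lim_i\langle(T\otimes1)\zeta_i,\zeta_i\rangle\le2\varepsilon$.
\end{enumerate}
This treats all spectral locations at once, including infinity, with no partition of unity. A device giving control uniform in the number of spectral pieces is exactly what your plan lacks.
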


\begin{proof}
This is the locally compact abelian version of the approximate
eigenstate theorem from \cite[Theorem~3.2]{Ma20}. We give the argument because
the spectral characterization in Lemma
\ref{LCA approximate eigenstate spectral characterization}
replaces the translations of a self-adjoint generator.

An approximate eigenstate is strongly invariant. Indeed,
Definition \ref{approximate eigenstate definition} shows that the corresponding
vector states are asymptotically invariant in norm, uniformly on
compact subsets of $G$. Every Radon probability measure on $G$ is
tight, so the same holds after integration against any Radon probability
measure. Strong invariance is preserved by convex combinations and
weak$^*$ limits.

Conversely, let $\Phi$ be strongly invariant. Choose a Reiter net of
probability densities $h_i\in\rL^1(G)$. Normal states are weak$^*$
dense in the state space of $\B(H)$. For each $i$, approximate $\Phi$
by normal states and compose them with $\sigma_{h_i}$. A diagonal net
of the resulting normal states $\Phi_i$ converges weak$^*$ to $\Phi$
because $\Phi\circ\sigma_{h_i}=\Phi$. The Reiter property gives
\[
\sup_{g\in K}
\|\Phi_i-\Phi_i\circ\Ad(U_g)\|\longrightarrow0
\qquad(K\Subset G).
\]

Let $d_i$ be the density of $\Phi_i$ and regard
$\zeta_i=d_i^{1/2}$ as a unit vector in
\[
\operatorname{HS}(H)=H\otimes\overline H.
\]
The Powers--St\o rmer inequality shows that $(\zeta_i)_i$ is almost
invariant, uniformly on compact sets, for
\[
W_g=U_g\otimes\overline{U_g}.
\]
For $h\in\rL^1(G)$, we use the integrated notation
$W(h)=\int_Gh(g)W_g\,\rd g$.

Suppose that a self-adjoint $T\in\B(H)$ satisfies
$\Psi(T)\leq0$ for every approximate $U$-eigenstate $\Psi$. Fix
$\varepsilon>0$. There is a symmetric neighbourhood $V$ of
$1\in\widehat G$ such that
\begin{equation}\label{localized approximate eigenstate inequality}
\rE(pV)T\rE(pV)\leq\varepsilon \rE(pV)
\qquad(p\in\widehat G).
\end{equation}
Otherwise, by directing the identity neighbourhoods by reverse
inclusion, we could find $p_V\in\widehat G$ and a unit vector
$\xi_V\in \rE(p_VV)H$ with
$\langle T\xi_V,\xi_V\rangle>\varepsilon$. Any weak$^*$ accumulation
point of the corresponding vector states would be an approximate
$U$-eigenstate with approximate eigencharacters $p_V^{-1}$, which is a
contradiction.

Put $c=\|T\|+\|T\|^2/\varepsilon$. The usual two by two block
estimate applied to \eqref{localized approximate eigenstate inequality}
gives
\[
T\leq2\varepsilon1+c(1-\rE(pV))
\qquad(p\in\widehat G).
\]
Applying the spectral theorem to the second tensor factor gives
\begin{equation}\label{tensor localized approximate eigenstate inequality}
T\otimes1\leq2\varepsilon1+cQ_V,
\end{equation}
where $Q_V$ is the spectral projection of $W$ associated with
$\widehat G\setminus V$. Almost invariance of $\zeta_i$ implies
$\|Q_V\zeta_i\|\to0$. To see this directly, choose a nonnegative
probability density $h$ as in Lemma
\ref{LCA spectral gap density lemma}. Since
$\|W(h)\zeta_i-\zeta_i\|\to0$, the spectral theorem gives the claimed
convergence.

It follows from \eqref{tensor localized approximate eigenstate inequality}
that
\[
\Phi(T)=\lim_i\langle(T\otimes1)\zeta_i,\zeta_i\rangle
\leq2\varepsilon.
\]
Letting $\varepsilon$ tend to zero and applying Hahn--Banach
separation proves that $\Phi$ belongs to the weak$^*$ closed convex
hull of the approximate $U$-eigenstates.
\end{proof}

\begin{lemma}\label{LCA spectral centering lemma}
Let $S\subset\B(H)$ be a subspace. Suppose that, for
every $p\in\widehat G$, there is a unital completely positive map
$\vartheta_p:\B(H)\to\B(H)$ such that
\[
\vartheta_p|_S=\id_S,
\qquad
\vartheta_p(U(f))=U(L_pf)
\quad(f\in\rC_0(\widehat G)).
\]
If $\Phi$ is a strongly $\Ad(U)$-invariant state on $\B(H)$, then
there is a state $\widetilde\Phi$ on $\B(H)$ such that
\[
\widetilde\Phi|_S=\Phi|_S,
\qquad
\widetilde\Phi(U(f))=f(1)
\quad(f\in\rC_0(\widehat G)).
\]
\end{lemma}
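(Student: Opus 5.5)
The plan is to reduce to approximate eigenstates via Proposition~\ref{LCA approximate eigenstate theorem} and then recenter each eigenstate using the maps $\vartheta_p$. We first treat a single approximate $U$-eigenstate $\Psi$, given by unit vectors $\xi_i$ and characters $p_i$ as in Definition~\ref{approximate eigenstate definition}. By Lemma~\ref{LCA approximate eigenstate spectral characterization}, \eqref{translated spectral concentration} holds:
\[
\|U(L_{p_i}f)\xi_i-f(1)\xi_i\|\to0 \qquad (f\in\rC_0(\widehat G)).
\]
Put $\Psi_i=\langle\vartheta_{p_i}(\cdot)\xi_i,\xi_i\rangle$; each $\Psi_i$ is a state. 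On $S$ we have $\Psi_i|_S=\langle\cdot\,\xi_i,\xi_i\rangle|_S$, and this converges to $\Psi|_S$. On the spectral algebra,
\[
\Psi_i(U(f))=\langle U(L_{p_i}f)\xi_i,\xi_i\rangle\to f(1).
\]
Any weak$^*$ accumulation point $\widetilde\Psi$ of $(\Psi_i)_i$ then satisfies $\widetilde\Psi|_S=\Psi|_S$ and $\widetilde\Psi(U(f))=f(1)$.

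Next, for a finite convex combination $\sum_k t_k\Psi_k$ of approximate eigenstates, we take $\sum_k t_k\widetilde\Psi_k$. This state again agrees with the combination on $S$ and takes the value $f(1)$ on $U(f)$.

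Finally, for a general strongly invariant $\Phi$, Proposition~\ref{LCA approximate eigenstate theorem} gives a net $\Phi_j$ of such convex combinations converging weak$^*$ to $\Phi$. Let $\widetilde\Phi_j$ be the corresponding centered states from the previous step, and let $\widetilde\Phi$ be a weak$^*$ accumulation point of $(\widetilde\Phi_j)_j$, which exists by compactness of the state space. On $S$, evaluation is weak$^*$ continuous and $\widetilde\Phi_j|_S=\Phi_j|_S\to\Phi|_S$, so $\widetilde\Phi|_S=\Phi|_S$. The identity $\widetilde\Phi(U(f))=f(1)$ passes to the limit in the same way.

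The main obstacle is the first step. It needs the hypothesis $\vartheta_p(U(f))=U(L_pf)$ with the exact translate $L_{p_i}$ matching the eigencharacters. The orientation convention matters here: the eigenvector concentrates spectrally on $p_i^{-1}V$, so we must check that $L_{p_i}$, not $L_{p_i^{-1}}$, is the translate matching Lemma~\ref{LCA approximate eigenstate spectral characterization}. If the conventions disagree, we use $\vartheta_{p_i^{-1}}$ instead. Beyond this, only the identity $\langle\vartheta_p(T)\xi,\xi\rangle=\langle T\xi,\xi\rangle$ for $T\in S$ is used, which is immediate from $\vartheta_p|_S=\id_S$.
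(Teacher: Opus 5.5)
Your proposal is correct and is essentially the paper's argument: recenter each approximate eigenstate by the states $\langle\vartheta_{p_i}(\cdot)\xi_i,\xi_i\rangle$, using \eqref{translated spectral concentration} for the spectral part and $\vartheta_{p_i}|_S=\id_S$ for $S$, then extend to every strongly invariant state because the set of states admitting such a centered companion is convex and weak$^*$ closed, via Proposition~\ref{LCA approximate eigenstate theorem}. Your orientation check comes out in your favour: since the vectors $\xi_i$ concentrate on $p_i^{-1}V$ and $(L_{p_i}f)(p_i^{-1}q)=f(q)$, the translate $L_{p_i}$ is the one that matches, so $\vartheta_{p_i}$ is the right choice.
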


\begin{proof}
Suppose first that $\Phi$ is an approximate $U$-eigenstate and choose
$\xi_i$ and $p_i$ as in Definition
\ref{approximate eigenstate definition}. Any weak$^*$ accumulation
point of the states
\[
T\longmapsto
\langle\vartheta_{p_i}(T)\xi_i,\xi_i\rangle
\]
agrees with $\Phi$ on $S$. Lemma
\ref{LCA approximate eigenstate spectral characterization}, specifically
\eqref{translated spectral concentration}, shows that its restriction
to $U(\rC_0(\widehat G))$ is evaluation at the trivial character.

The set of states for which such a centered state exists is weak$^*$
closed and convex. Proposition \ref{LCA approximate eigenstate theorem}
therefore gives the conclusion for every strongly invariant state.
\end{proof}

\subsection{Fixed points of the dual bicentralizer action}

Throughout this subsection, let $G$ be a locally compact abelian group,
let $(M,\tau)\subset(N,\tau)$ be an inclusion of tracial von Neumann
algebras, and let
$\alpha:G\curvearrowright(M,\tau)$ be a continuous trace-preserving
action. We assume that $\Gamma(\alpha)=\widehat{G}$. By Corollary
\ref{full dual bicentralizer action}, we have a continuous
trace-preserving dual bicentralizer action
\[
\beta:\widehat G\curvearrowright
\rB(M\subset N,\alpha).
\]
Our goal is to prove that its fixed point algebra is $M'\cap N$. No
extension of $\alpha$ to $N$ will be used.

Let $\rE_M:N\to M$ be the trace-preserving conditional expectation,
let $\lambda_M,\rho_M$ and $\lambda_N,\rho_N$ denote the standard
left and right representations of $M$ and $N$, respectively,
and let
\[
V:\rL^2(M)\longrightarrow\rL^2(N),
\qquad
V\widehat x=\widehat x\quad(x\in M),
\]
be the canonical isometry. For every $x\in M$, we have
\[
V\lambda_M(x)=\lambda_N(x)V,
\qquad
V\rho_M(x)=\rho_N(x)V,
\]
whereas, for every $x\in N$,
\[
V^*\lambda_N(x)V=\lambda_M(\rE_M(x)),
\qquad
V^*\rho_N(x)V=\rho_M(\rE_M(x)).
\]
For $a,b\in N$, put
\[
\Lambda(a,b)=V^*\lambda_N(a)\rho_N(b)V
\in\B(\rL^2(M)).
\]
Thus
\[
\Lambda(x,1)=\lambda_M(\rE_M(x)),
\qquad
\Lambda(1,x)=\rho_M(\rE_M(x))
\qquad(x\in N).
\]
We denote by $U^\alpha$ the standard implementation of $\alpha$ on
$\rL^2(M)$ and by $U^\alpha(f)$ its integrated spectral
representation for $f\in\rC_0(\widehat G)$.

We first record the compressed version of the binormal implementation
argument.

\begin{lemma}\label{relative binormal implementation lemma compressed}
Let $\Phi$ be a state on $\B(\rL^2(M))$ such that
\[
\Phi\circ\lambda_M=\tau,
\qquad
\Phi\circ\rho_M=\tau,
\]
and
\begin{equation}\label{relative identity spectral state compressed}
\Phi(U^\alpha(f))=f(1)
\qquad(f\in\rC_0(\widehat G)).
\end{equation}
Then
\begin{equation}\label{relative binormal implementation identity compressed}
\Phi(\Lambda(x,y))=\tau(xy)
\end{equation}
for every $x\in\rB(M\subset N,\alpha)$ and $y\in N$.
\end{lemma}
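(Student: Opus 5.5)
We will copy the proof of Lemma~\ref{equivariant binormal bicentralizer lemma}, now inside $N$. The new difficulty is that $\Phi$ lives on $\B(\rL^2(M))$, whereas $x\in N$.

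\textbf{Step 1: implementation and fixed vector.} By the ultrapower implementation theorem \cite[Theorem~2.1]{Ma25} we obtain a tracial abelian $(A,\mu)$, a cofinal ultrafilter $\omega$ and a unit vector $\xi\in\rL^2((A\ovt M)^\omega)$ with
\[
\Phi(T)=\langle(1\otimes T)^\omega\xi,\xi\rangle \qquad (T\in\B(\rL^2(M))).
\]
The hypothesis \eqref{relative identity spectral state compressed} says that $\Phi$ restricted to $U^\alpha(\rC_0(\widehat G))$ is the trivial character. The multiplicative domain argument then gives $(1\otimes U^\alpha(f))^\omega\xi=f(1)\xi$. Proposition~\ref{L2 equicontinuous ultrapower} applied to $\id\otimes\alpha$ shows, exactly as in Lemma~\ref{equivariant binormal bicentralizer lemma}, that
\[
\xi\in\rL^2\bigl((A\ovt M)^{\omega,\id\otimes\alpha}\bigr).
\]
The marginal conditions $\Phi\circ\lambda_M=\Phi\circ\rho_M=\tau$ say that $\xi$ has left and right "densities" equal to $1$. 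I expect this to let us approximate $\xi$ in $\rL^2$ by bounded elements $X_k\in(A\ovt M)^{\omega,\id\otimes\alpha}$, for instance by spectral truncation of the polar parts. This step needs checking.

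\textbf{Step 2: commutation inside $N$.} View $\xi$ in $\rL^2((A\ovt N)^\omega)$ through the isometry $(1\otimes V)^\omega$. Lemma~\ref{abelian amplification action bicentralizer} gives
\[
\rB(A\ovt M\subset A\ovt N,\id\otimes\alpha)=A\ovt\rB(M\subset N,\alpha).
\]
Combined with Proposition~\ref{relative bicentralizer with ultrapower} for this inclusion, this shows that $1\otimes x$ commutes with $(A\ovt M)^{\omega,\id\otimes\alpha}$ for every $x\in\rB(M\subset N,\alpha)$. Passing to the $\rL^2$-limit of bounded approximants, we get
\[
(1\otimes x)\xi=\xi(1\otimes x)
\]
as vectors in $\rL^2((A\ovt N)^\omega)$.

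\textbf{Step 3: conclusion.} Since $V$ intertwines the $M$-actions and $\Lambda(x,y)=V^*\lambda_N(x)\rho_N(y)V$, we compute
\[
\Phi(\Lambda(x,y))=\langle(1\otimes x)\xi(1\otimes y),\xi\rangle_{\rL^2((A\ovt N)^\omega)}=\langle\xi(1\otimes xy),\xi\rangle.
\]
This last quantity is $\Phi(\Lambda(1,xy))=\Phi(\rho_M(\rE_M(xy)))=\tau(\rE_M(xy))=\tau(xy)$.

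The main obstacle is the bookkeeping in the second equality. The ultrapower formula involves $1\otimes\Lambda(x,y)$ acting on $(A\ovt M)^\omega$, and one must justify identifying this with compressing $\lambda_N(x)\rho_N(y)$ applied coordinatewise to representatives of $\xi$. This should follow because $V^\omega$ is the coordinatewise isometry and everything is bounded. The other point to check is the density argument in Step 1, which passes from bounded elements to the vector $\xi$. There the left and right normality of the marginals is essential: it gives uniform control on $\|\xi(1\otimes y)\|_2\le\|y\|_2$-type estimates, which allow extension from bounded elements.
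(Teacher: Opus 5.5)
Your proposal is correct and is essentially the paper's proof. The paper also implements $\Phi$ by a vector $\xi$ in an abelian-amplified ultrapower and uses the spectral condition to place $\xi$ in $\rL^2((A\ovt M)^{\omega,\id\otimes\alpha})$. It then invokes Lemma~\ref{abelian amplification action bicentralizer} with Proposition~\ref{relative bicentralizer with ultrapower} to get $(1\otimes x)\xi=\xi(1\otimes x)$ and computes exactly as you do.

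The two points you flag are routine. Bounded approximants exist by definition, since $\xi$ lies in the $\rL^2$-space of the von Neumann algebra $(A\ovt M)^{\omega,\id\otimes\alpha}$, and multiplication on either side by bounded elements is bounded on $\rL^2((A\ovt N)^\omega)$, so the commutation passes to the limit. The marginal conditions are therefore needed only to apply \cite[Theorem~2.1]{Ma25} and in the final identity $\Phi(\rho_M(\rE_M(xy)))=\tau(xy)$, not for any density estimate.
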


\begin{proof}
The proof is similar to the proof of Lemma
\ref{equivariant binormal bicentralizer lemma}. The tracial ultrapower implementation theorem
\cite[Theorem~2.1]{Ma25} gives an abelian tracial von Neumann algebra
$A$, a cofinal ultrafilter $\omega$, and a unit vector
\[
\xi\in\rL^2((A\ovt M)^\omega)
\]
such that
\begin{equation}\label{relative implementation formula compressed}
\Phi(T)=
\langle(1\otimes T)^\omega\xi,\xi\rangle
\qquad(T\in\B(\rL^2(M))).
\end{equation}
Exactly as in the proof of Lemma
\ref{equivariant binormal bicentralizer lemma}, we obtain

\begin{equation}\label{relative abelian amplification commutation compressed}
(1\otimes x)\xi=\xi(1\otimes x)
\qquad(x\in\rB(M\subset N,\alpha)).
\end{equation}
Now, for $x\in\rB(M\subset N,\alpha)$ and $y\in N$, formulas
\eqref{relative implementation formula compressed} and
\eqref{relative abelian amplification commutation compressed} give
\[
\begin{aligned}
\Phi(\Lambda(x,y))
&=\langle(1\otimes\lambda_N(x)\rho_N(y))^\omega\xi,\xi\rangle\\
&=\langle(1\otimes x)\xi(1\otimes y),\xi\rangle\\
&=\langle\xi(1\otimes xy),\xi\rangle\\
&=\Phi\bigl(V^*\rho_N(xy)V\bigr)
=\Phi\bigl(\rho_M(\rE_M(xy))\bigr)
=\tau(xy).
\end{aligned}
\]
\end{proof}

\begin{lemma}\label{relative LCA spectral shift maps compressed}
For every $p\in\widehat G$, there is a unital completely positive map
\[
\theta_p:\B(\rL^2(M))\longrightarrow\B(\rL^2(M))
\]
such that, for $a,b\in N$ and $f\in\rC_0(\widehat G)$,
\begin{align}
\theta_p(\Lambda(a,b))
&=\Lambda\bigl(\beta_p(
\rE_{\rB(M\subset N,\alpha)}(a)),b\bigr),
\label{relative LCA theta legs compressed}\\
\theta_p(U^\alpha(f))&=U^\alpha(L_{p^{-1}}f).
\label{relative LCA theta corner spectrum compressed}
\end{align}
There is also a unital completely positive map
\[
\Pi:\B(\rL^2(M))\longrightarrow\B(\rL^2(M))
\]
such that
\begin{align}
\Pi(\Lambda(a,z))
&=\Lambda(\rE_{\rB(M\subset N,\alpha)^\beta}(a),z),
\label{relative LCA fixed map legs compressed}\\
\Pi(\Lambda(z,b))
&=\Lambda(z,\rE_{\rB(M\subset N,\alpha)^\beta}(b)),\notag\\
\Pi(U^\alpha(f))&=U^\alpha(f).
\label{relative LCA fixed map spectrum compressed}
\end{align}
for all $a,b\in N$ and
$z\in\rB(M\subset N,\alpha)^\beta$.
\end{lemma}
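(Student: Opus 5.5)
We construct $\theta_p$ as a point-ultraweak limit of inner conjugations by unitaries implementing the scalar cocycle $p$. By Theorem~\ref{approximate cocycle vanishing theorem} (applicable since $\Gamma(\alpha)=\widehat G$), the scalar cocycle $g\mapsto p(g)1$, or $\overline{p(g)}1$ depending on the Fourier convention, lies in $\overline{B^1(\alpha)}$. So there is a net $(v_i)_i$ in $\cU(M)$ with $v_i\alpha_g(v_i^*)\to p(g)$ uniformly on compact sets, and by Proposition~\ref{relative bicentralizer isomorphism} we have $\beta_p(x)=\lim_i v_ixv_i^*$ in $\|\cdot\|_2$ for $x\in\rB(M\subset N,\alpha)$. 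We set
\[
\theta_{p,i}(T)=\lambda_M(v_i)T\lambda_M(v_i)^*,
\]
which is ucp, and let $\theta_p$ be a point-ultraweak accumulation point.

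We first check the spectral identity. By covariance, $\lambda_M(v_i)U^\alpha_g\lambda_M(v_i)^*=\lambda_M(v_i\alpha_g(v_i^*))U^\alpha_g$, and this converges in norm to $p(g)U^\alpha_g$, uniformly on compact sets. Integrating against $f\in\rL^1(G)$ multiplies the Fourier transform by the character, which is the translation $L_{p^{\pm1}}$. By density this gives $\theta_p(U^\alpha(f))=U^\alpha(L_{p^{-1}}f)$, with the sign matching the paper's convention.

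For the legs, $\lambda_M(v_i)$ commutes with $\rho_N(b)$ after compressing by $V$, since $V\lambda_M(v_i)=\lambda_N(v_i)V$. Hence
\[
\theta_{p,i}(\Lambda(a,b))=\Lambda(v_iav_i^*,b).
\]
For $a$ in the bicentralizer, $v_iav_i^*\to\beta_p(a)$ in $\|\cdot\|_2$, which gives strong convergence of $\Lambda$ on bounded sets. For general $a\in N$ we need $v_iav_i^*$ to converge weakly to $\beta_p(\rE_{\rB}(a))$, where $\rB=\rB(M\subset N,\alpha)$. This is the main obstacle, since weak convergence of $v_iav_i^*$ must pass through $\Lambda(\cdot,b)$.

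Weak convergence is enough there, because $\Lambda(\cdot,b)$ is weakly continuous on bounded sets. To obtain it, we compose with averaging. By Proposition~\ref{conditional expectation relative action bicentralizer}, $\rE_{\rB}(a)$ lies in the weak closure of the convex hull of $\{uau^*\}$ over $\alpha$-almost invariant $u$. So we replace $v_i$ by convex combinations $\sum_k t_k v_iu_kau_k^*v_i^*$ with $u_k$ almost invariant, using the same device as in the proof of Theorem~\ref{correspondance trivial bicentralizer}. That is, we define
\[
\theta_{p,i}(T)=\sum_k t_{i,k}\lambda_M(v_iu_{i,k})T\lambda_M(v_iu_{i,k})^*.
\]
Each unitary $v_iu_{i,k}$ still implements $p$ approximately, since $\partial_\alpha(v_iu)\approx\partial_\alpha(v_i)$ when $u$ is almost invariant. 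Hence the spectral identity persists. The net is chosen diagonally so that $\sum_k t_{i,k}u_{i,k}au_{i,k}^*\to\rE_{\rB}(a)$ on finite sets of $a$. Then the left leg tends to $\lim_i v_i\rE_{\rB}(a)v_i^*=\beta_p(\rE_{\rB}(a))$. Equicontinuity in $i$ is uniform because $\beta_p$ is a limit in $\|\cdot\|_2$ on the bicentralizer.

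For $\Pi$, we average $\theta_p$ over $\widehat G$. The dual bicentralizer action is continuous (Corollary~\ref{full dual bicentralizer action}), so we choose an invariant mean $m$ on $\widehat G$; abelian groups are amenable. We set $\Pi=\int\theta_p\,\rd m(p)$ in the point-weak$^*$ sense, composed with $\theta$ for $p=1$. Applied to $\Lambda(a,z)$, this gives $\Lambda(\int\beta_p(\rE_{\rB}(a))\,\rd m,z)$, and the mean of the $\beta$-orbit equals $\rE_{\rB^\beta}(a)$ by the mean ergodic theorem in $\rL^2$.

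The spectral part is a problem, since $\int U^\alpha(L_{p^{-1}}f)\,\rd m$ need not equal $U^\alpha(f)$. To fix this, we take $\Pi$ built instead from unitaries implementing cocycles $\partial_\alpha(u)\to1$, and average over $\beta$-orbits through Proposition~\ref{relative bicentralizer isomorphism}. The fixed points are reached only through approximate coboundaries whose limit is trivial; the composition $\beta_p$ followed by $\beta_{p^{-1}}$ ``realized by a different net'' is not a viable route, since $\beta$ is independent of the implementing net. So we average $\lambda_M(v_i^{(p)}w_i^{(p^{-1})})$ with $w$ taken in $\rho_M$. The right-leg symmetric formula is proved by the same argument using $\rho_M$-conjugations, which commute with $\lambda$. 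This is how we plan to get the simultaneous left/right statement, and we expect this bookkeeping to be the delicate part.
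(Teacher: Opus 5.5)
\textbf{The $\theta_p$ half is sound.} It is essentially the paper's argument. The paper first takes a point-ultraweak limit $\zeta$ of the averaging maps $\sum_j t_{i,j}\Ad\lambda_M(v_{i,j})$ and only then conjugates by $\lambda_M(w_i)$. You fold both steps into one diagonal net $\Ad\lambda_M(v_iu_{i,k})$. Your ordering requires the averages $F_i(a)$ to converge to $\rE_{\rB(M\subset N,\alpha)}(a)$ in $\|\cdot\|_2$, simultaneously on finite sets, so that $v_iF_i(a)v_i^*\to\beta_p(\rE_{\rB(M\subset N,\alpha)}(a))$. This follows from Proposition~\ref{conditional expectation relative action bicentralizer} applied to $\mathrm{diag}(a_1,\dots,a_n)\in\mathbb M_n(N)$. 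The paper's ordering needs only ultraweak convergence of the averages, because after $\zeta$ the leg already lies in the bicentralizer. A minor correction: $\lambda_M(v_i\alpha_g(v_i^*))\to p(g)$ strongly, not in norm, which is enough.

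\textbf{The genuine gap is the construction of $\Pi$.} You correctly observe that averaging $\theta_p$ over $p$ destroys the spectral identity, but your proposed fix does not work as stated. Unitaries $u\in\cU(M)$ with $\partial_\alpha(u)\to1$ act on $\rB(M\subset N,\alpha)$ as $\beta_1=\id$, since the limit in Proposition~\ref{relative bicentralizer isomorphism} is independent of the net. Conjugating by $\lambda_M(u)$ or $\rho_M(u)$ therefore fixes any leg already in the bicentralizer, and cannot produce $\rE_{\rB(M\subset N,\alpha)^\beta}$. Likewise, $\lambda_M(v^{(p)}w^{(p^{-1})})$ read literally acts on the left leg by $\beta_p\beta_{p^{-1}}=\id$.

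What is needed is to act by $\beta$ on \emph{both} legs: on the left leg through $\lambda_M$, and on the right leg through $\rho_M$, so that the two spectral shifts cancel. Concretely, if $w_i\alpha_g(w_i^*)\to\overline{p(g)}$ compact-uniformly, then
\[
\rho_M(w_i)U^\alpha_g\rho_M(w_i)^*=\rho_M\bigl(\alpha_g(w_i^*)w_i\bigr)U^\alpha_g\longrightarrow\overline{p(g)}\,U^\alpha_g,
\qquad
\rho_M(w_i)\Lambda(a,b)\rho_M(w_i)^*=\Lambda(a,w_i^*bw_i),
\]
with $w_i^*bw_i\to\beta_p(b)$ in $\|\cdot\|_2$ for $b\in\rB(M\subset N,\alpha)$. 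So right conjugation shifts the spectrum by $L_p$, which cancels the $L_{p^{-1}}$ coming from $\theta_p$.

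The paper packages this as $\theta_p'(T)=J_M\theta_p(J_MTJ_M)J_M$, using $J_M\lambda_M(w)J_M=\rho_M(w^*)$ and $J_NV=VJ_M$. This also transports the averaging to the right leg, which yields $\rE_{\rB(M\subset N,\alpha)}(b)$ for general $b\in N$. It then sets $\Theta_p=\theta_p\circ\theta_p'$, so that
\[
\Theta_p(\Lambda(a,b))=\Lambda\bigl(\beta_p(\rE_{\rB(M\subset N,\alpha)}(a)),\beta_p(\rE_{\rB(M\subset N,\alpha)}(b))\bigr),
\qquad
\Theta_p(U^\alpha(f))=U^\alpha(f).
\]
$\Pi$ is the average of $\Theta_p$ over an invariant mean on $\ell^\infty(\widehat G)$. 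If one leg lies in $\rB(M\subset N,\alpha)^\beta$, it is fixed and the expression is linear in the remaining $\beta$-orbit. The mean of that orbit is $\rE_{\rB(M\subset N,\alpha)^\beta}$.

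Your phrase ``with $w$ taken in $\rho_M$'' points toward $\Ad(\lambda_M(v^{(p)})\rho_M(w^{(p^{-1})}))$, which is exactly this $\Theta_p$. However, the proposal neither identifies the cancellation mechanism nor verifies the two displayed identities or the right-leg averaging. It also surrounds the idea with incorrect claims about trivial coboundaries. As written, the $\Pi$ half of the lemma is not proved.
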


\begin{proof}
We first construct a unital completely positive map
$\zeta:\B(\rL^2(M))\to\B(\rL^2(M))$ satisfying
\begin{equation}\label{relative LCA averaging map compressed}
\zeta(\Lambda(a,b))
=\Lambda\bigl(\rE_{\rB(M\subset N,\alpha)}(a),b\bigr),
\qquad
\zeta(U^\alpha(f))=U^\alpha(f).
\end{equation}
Fix a cofinal ultrafilter $\omega$ on a sufficiently large directed
set and put
\[
P=M^{\omega,\alpha},
\qquad
Q=P'\cap N^\omega.
\]
Let $\rE_Q:N^\omega\to Q$ be the trace-preserving conditional
expectation and let $\rE^\omega:N^\omega\to N$ be the canonical
expectation. Proposition \ref{relative bicentralizer with ultrapower}
gives
$Q\subset\rB(M\subset N,\alpha)^\omega$. Since every constant element
of $\rB(M\subset N,\alpha)$ belongs to $Q$, we have
\begin{equation}\label{relative ultrapower expectation descent compressed}
\rE^\omega\circ\rE_Q|_N
=\rE_{\rB(M\subset N,\alpha)}.
\end{equation}

The relative Dixmier theorem places $\rE_Q$ in the
point-$\|\cdot\|_2$ closure of the finite convex combinations of
$\Ad(v)$ with $v\in\cU(P)$. Lifting the finitely many unitaries and
selecting coordinates gives finite convex averages
\[
F_i(a)=\sum_{j=1}^{n_i}t_{i,j}v_{i,j}av_{i,j}^*,
\qquad
v_{i,j}\in\cU(M),
\]
which converge point-ultraweakly to
$\rE_{\rB(M\subset N,\alpha)}$ and satisfy
\begin{equation}\label{relative averaging compact fixedness compressed}
\max_{1\leq j\leq n_i}\sup_{g\in K}
\|\alpha_g(v_{i,j})-v_{i,j}\|_2\longrightarrow0
\qquad(K\Subset G).
\end{equation}
Consider
\[
\zeta_i(T)=\sum_{j=1}^{n_i}t_{i,j}
\lambda_M(v_{i,j})T\lambda_M(v_{i,j}^*)
\qquad(T\in\B(\rL^2(M))).
\]
The intertwining relations for $V$ give
\[
\zeta_i(\Lambda(a,b))=\Lambda(F_i(a),b).
\]
Moreover,
\[
\lambda_M(v_{i,j})U^\alpha_g\lambda_M(v_{i,j}^*)
=\lambda_M(v_{i,j}\alpha_g(v_{i,j}^*))U^\alpha_g.
\]
Equation \eqref{relative averaging compact fixedness compressed},
followed by integration against $\rL^1(G)$, shows that every
point-ultraweak accumulation point of $(\zeta_i)_i$ satisfies
\eqref{relative LCA averaging map compressed}.

Fix $p\in\widehat G$. By Corollary
\ref{full dual bicentralizer action}, the scalar cocycle $p$ is an
approximate coboundary. Choose a net $(w_i)_i$ in $\cU(M)$ such that
\begin{equation}\label{relative character implementers compressed}
\sup_{g\in K}
\|w_i\alpha_g(w_i^*)-p(g)1\|_2\longrightarrow0
\qquad(K\Subset G).
\end{equation}
A point-ultraweak accumulation point of the maps
\[
T\longmapsto\lambda_M(w_i)\zeta(T)\lambda_M(w_i^*)
\]
is a map $\theta_p$ satisfying
\eqref{relative LCA theta legs compressed}. The approximate unitary
formula in Proposition \ref{relative bicentralizer isomorphism}
identifies the first entry with $\beta_p$. Equation
\eqref{relative character implementers compressed} gives
\[
\lambda_M(w_i)U^\alpha_g\lambda_M(w_i^*)\longrightarrow p(g)U^\alpha_g
\]
strongly, uniformly for $g$ in compact sets. Under the chosen Fourier
convention, multiplication by $p(g)$ replaces the spectral variable
$q$ by $p^{-1}q$. Integration therefore gives
\eqref{relative LCA theta corner spectrum compressed}.

Let $J_M$ and $J_N$ be the canonical conjugations on $\rL^2(M)$
and $\rL^2(N)$, respectively, and define
\[
\theta'_p(T)=J_M\theta_p(J_MTJ_M)J_M.
\]
Using $J_NV=VJ_M$, one obtains
\[
\theta'_p(\Lambda(a,b))
=\Lambda\bigl(a,\beta_p(
\rE_{\rB(M\subset N,\alpha)}(b))\bigr),
\qquad
\theta'_p(U^\alpha(f))=U^\alpha(L_pf).
\]
Consequently, $\Theta_p=\theta_p\circ\theta'_p$ satisfies
\begin{align*}
\Theta_p(\Lambda(a,b))
&=\Lambda\Bigl(
\beta_p(\rE_{\rB(M\subset N,\alpha)}(a)),
\beta_p(\rE_{\rB(M\subset N,\alpha)}(b))\Bigr),\\
\Theta_p(U^\alpha(f))&=U^\alpha(f).
\end{align*}

Choose a left invariant mean $m$ on $\ell^\infty(\widehat G)$ and
define $\Pi$ by
\[
\langle\Pi(T)\xi,\eta\rangle
=m\bigl(p\mapsto\langle\Theta_p(T)\xi,\eta\rangle\bigr).
\]
The invariant mean applied to the matrix coefficients of
$\beta$ gives the trace-preserving expectation onto its fixed
point algebra. If one entry of $\Lambda(a,b)$ belongs to
$\rB(M\subset N,\alpha)^\beta$, the displayed formula for
$\Theta_p$ is linear in the only nonconstant orbit. Averaging
therefore gives \eqref{relative LCA fixed map legs compressed}. The
formula for $U^\alpha(f)$ gives
\eqref{relative LCA fixed map spectrum compressed}.
\end{proof}

\begin{lemma}\label{strong relative Dixmier state compressed}
There is a strongly $\Ad(U^\alpha)$-invariant state $\Phi_0$ on
$\B(\rL^2(M))$ such that
\begin{equation}\label{relative Dixmier moments compressed}
\Phi_0(\Lambda(a,b))
=\tau(\rE_{M'\cap N}(a)b)
\qquad(a,b\in N),
\end{equation}
where $\rE_{M'\cap N}:N\to M'\cap N$ is the trace-preserving
conditional expectation.
\end{lemma}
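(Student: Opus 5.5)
We will build $\Phi_0$ as a limit of vector states coming from relative Dixmier averages by unitaries of $M$ that are almost $\alpha$-invariant. Strong invariance will then be forced by a Reiter-type smoothing.

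\textbf{Step 1: the relative Dixmier averages.} The relative Dixmier theorem for the inclusion $M\subset N$ places $\rE_{M'\cap N}$ in the point-$\|\cdot\|_2$ closure of convex combinations $F(a)=\sum_j t_j v_jav_j^*$ with $v_j\in\cU(M)$. Here we use all of $\cU(M)$, with no invariance imposed. Consider the ucp maps
\[
\zeta(T)=\sum_j t_j\lambda_M(v_j)T\lambda_M(v_j^*)
\]
on $\B(\rL^2(M))$. By the intertwining relations for $V$ they satisfy $\zeta(\Lambda(a,b))=\Lambda(F(a),b)$. Fix a point-ultraweak accumulation point $\zeta_\infty$, so that
\[
\zeta_\infty(\Lambda(a,b))=\Lambda(\rE_{M'\cap N}(a),b).
\]
Put $\Psi=\langle\zeta_\infty(\cdot)\widehat1,\widehat1\rangle$. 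The algebra $\rho_N(N)$ commutes with the $\lambda_M(v_j)$ after compression, and $\Lambda(x,1)=\lambda_M(\rE_M x)$. Together these give
\[
\Psi(\Lambda(a,b))=\langle \lambda_M(\rE_M\rE_{M'\cap N}(a))\rho_M(\rE_M b)\widehat 1,\widehat1\rangle .
\]
This is not yet the target formula. It is better to use directly the vector $\widehat1\in\rL^2(N)$ compressed, which gives $\tau(F(a)b)\to\tau(\rE_{M'\cap N}(a)b)$. Indeed, $\langle\Lambda(F(a),b)\widehat1,\widehat1\rangle=\langle V^*\lambda_N(F(a))\rho_N(b)V\widehat1,\widehat1\rangle=\tau(F(a)b)$. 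So $\Psi$ satisfies \eqref{relative Dixmier moments compressed}.

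\textbf{Step 2: smoothing to get strong invariance.} Precompose $\Psi$ with an averaging over $\Ad(U^\alpha)$. Choose a Reiter net $(h_k)$ of probability densities on $G$ and an invariant mean $m$ on $G$. Define
\[
\Phi_0(T)=m\bigl(g\mapsto\Psi(U^\alpha_gTU^{\alpha*}_g)\bigr),
\]
or alternatively a weak$^*$ limit of $\Psi\circ\sigma_{h_k}$. Since $G$ is abelian, hence amenable, any weak$^*$ accumulation point of $\Psi\circ\sigma_{h_k}$ is invariant under every $\sigma_\mu$. Translation by $g$ changes $h_k$ by a vanishing $\rL^1$ amount, and a Radon probability $\mu$ is tight, so one integrates this estimate over compact sets. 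This gives strong $\Ad(U^\alpha)$-invariance.

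\textbf{Step 3: the moments survive averaging.} This is the main point to check. We have
\[
U^\alpha_g\Lambda(a,b)U^{\alpha*}_g=\Lambda(\alpha_g(a),\alpha_g(b))
\]
only if $\alpha$ extends to $N$, and the paper insists that no extension is used. So instead we first apply $\zeta_\infty$ and only then average. Conjugating $\lambda_M(\rE_M x)$ and $\rho_M(\rE_M y)$ by $U^\alpha_g$ yields $\lambda_M(\alpha_g\rE_M x)$, but $\Lambda(a,b)$ with general $a,b\in N$ is not covariant.

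The fix is to choose the Dixmier unitaries $v_j$ so that the moment functional $T\mapsto\langle\zeta(T)\widehat1,\widehat1\rangle$ is already almost $\Ad(U^\alpha_g)$-invariant on $\Lambda(N,N)$. One option is to average $\Psi$ over $g$ and check that
\[
\Psi(U^\alpha_g\Lambda(a,b)U^{\alpha*}_g)=\tau(\rE_{M'\cap N}(a)b)
\]
for each $g$. Write $\Psi(S)=\lim\sum_jt_j\langle S\,\widehat{v_j^*},\widehat{v_j^*}\rangle$. With $S=U_g\Lambda(a,b)U_g^*$ this becomes $\lim\sum_j t_j\langle\Lambda(a,b)\widehat{\alpha_g^{-1}(v_j^*)},\widehat{\alpha_g^{-1}(v_j^*)}\rangle=\lim\tau(F'(a)b)$, where $F'$ uses the unitaries $\alpha_g^{-1}(v_j)\in\cU(M)$. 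The family $\{\alpha_g^{-1}(v_j)\}$ is again a Dixmier family for $M\subset N$, because $\rE_{M'\cap N}$ is the unique minimal-norm point of the closed convex hull of $\{uau^*:u\in\cU(M)\}$, and that hull is preserved under replacing $u$ by $\alpha_g^{-1}(u)$. To make the limit independent of $g$ and uniform in it, we take $F$ to approximate $\rE_{M'\cap N}$ within $\varepsilon$ on a finite set, and then apply Dixmier again to $\alpha_g^{-1}$-conjugates. A cleaner route is to iterate: replace the family by one nearly minimizing simultaneously for all $\alpha_g$-translates, using that $\sum_j t_j v_j(\cdot)v_j^*$ composed with an arbitrary further unitary average still lands near $\rE_{M'\cap N}$, since $\rE_{M'\cap N}$ is fixed by all $\Ad u$, $u\in\cU(M)$.

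I expect this uniformity in $g$ to be the main obstacle. The remedy I would try first is to work in the ultrapower $N^\omega$ with $P=M^\omega$, where $\rE_{P'\cap N^\omega}$ restricted to $N$ equals $\rE_{M'\cap N}$. An exact averaging there makes the moments invariant under every $\Ad(U^\alpha_g)$ before the final smoothing of Step 2.
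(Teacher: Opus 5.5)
Steps 1 and 2 are fine, using the Reiter-smoothing version of Step 2. The gap is in Step 3, which you rightly flag as the crux: it rests on a false claim.

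Write $\Psi$ as a weak$^*$ limit of the normal states $\Psi_i(T)=\sum_jt_{i,j}\langle T\widehat{v_{i,j}^*},\widehat{v_{i,j}^*}\rangle$. Then $\Psi_i(U^\alpha_g\Lambda(a,b)U^{\alpha*}_g)=\tau(F'_{i,g}(a)b)$ with $F'_{i,g}=\sum_jt_{i,j}\Ad(\alpha_g^{-1}(v_{i,j}))$, so you need $F'_{i,g}(a)\to\rE_{M'\cap N}(a)$.

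Your justification does not give this. Invariance of the convex hull of $\{uau^*\mid u\in\cU(M)\}$ under $u\mapsto\alpha_g^{-1}(u)$ only says that this \emph{set} is preserved. It says nothing about where the particular points $F'_{i,g}(a)$ lie, and when $\alpha_g$ does not extend to $N$ they need not approach $\rE_{M'\cap N}(a)$.

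Here is a counterexample. Let $M=\rL^\infty(\mathbb T^2)\subset N=M\rtimes_R\Z$, with $R(x,y)=(x+\theta,y)$, $\theta$ irrational, and implementing unitary $w$; then $M'\cap N=M$. Let $\Z$ act on $M$ by the coordinate flip. Take $v_j=e^{2\pi\ri jx}$, $1\leq j\leq n$, with equal weights. Then $F_n(fw^m)=c_{n,m}fw^m$ with $|c_{n,m}|=\bigl|\frac1n\sum_{j=1}^ne^{2\pi\ri jm\theta}\bigr|$, so $F_n\to\rE_M$. But the flipped unitaries $e^{2\pi\ri jy}$ commute with $w$, so $F'_{n,1}(w)=w$. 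Hence $\Psi(\Lambda(w,w^*))=0$ while $\Psi(U^\alpha_1\Lambda(w,w^*)U^{\alpha*}_1)=1$. Every Reiter smoothing of $\Psi$ then tends to $1/2$ at $\Lambda(w,w^*)$, instead of $\tau(\rE_{M'\cap N}(w)w^*)=0$. The hypothesis $\Gamma(\alpha)=\widehat G$ plays no role here: neither your argument nor the paper's proof of the lemma uses it.

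Your route can be made to work when $\alpha$ extends to a continuous trace-preserving action on $N$, since then $F'_{i,g}=\alpha_g^{-1}\circ F_i\circ\alpha_g$ and $\alpha_g$ preserves $M'\cap N$. The lemma, however, is needed precisely without such an extension. Your remedies do not help. $F'_{i,g}$ is a conjugated family, not of the form $F''\circ F_i$, so composing with further averages is irrelevant. In the ultrapower, the translates $\alpha^\omega_g(U)$ of your averaging unitaries face the same problem, since $\alpha^\omega_g$ still does not act on $N^\omega$.

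The missing idea is that the averaging unitaries must be chosen \emph{after}, and adapted to, the smoothing measure, so that their translates are Dixmier on average. The paper does this as follows. For $K\Subset G$, a finite $F\subset N$ and $\varepsilon>0$, it first takes a probability density $\mu$ with $\sup_{k\in K}\|\mu(k^{-1}\,\cdot\,)-\mu\|_1\leq\varepsilon$. It then applies the Hilbert-space convexity argument to $\pi(M)\subset\rL^\infty(G,\mu)\ovt N$, where $\pi(x)(g)=\alpha_g(x)$. Since $\alpha_g(M)=M$, the relative commutant is $\rL^\infty(G,\mu)\ovt(M'\cap N)$, with expectation $\id\otimes\rE_{M'\cap N}$. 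This yields $u_1,\dots,u_n\in\cU(M)$ and weights $t_i$ with
\[
\sum_{a\in F}\int_G\Bigl\|\rE_{M'\cap N}(a)-\sum_it_i\alpha_g(u_i)a\alpha_g(u_i)^*\Bigr\|_2^2\,\rd\mu(g)<\varepsilon.
\]
The normal state $T\mapsto\int_G\sum_it_i\langle T\widehat{\alpha_g(u_i)^*},\widehat{\alpha_g(u_i)^*}\rangle\,\rd\mu(g)$ has nearly correct moments on $F$. Composing it with $\Ad(U^\alpha_k)$ merely translates $\mu$, so it is $\varepsilon$-invariant for $k\in K$. Weak$^*$ cluster points over $(K,F,\varepsilon)$ give $\Phi_0$.

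This ordering also avoids a second problem with fixing $\Psi$ first. Since $\Psi$ is singular, the moments of $\Psi\circ\sigma_h$ cannot be read off from those of the individual $\Psi\circ\Ad(U^\alpha_g)$. In the paper's construction, all averaging is done on normal states before passing to the limit.
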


\begin{proof}
Take $K \Subset G$ and $\varepsilon >0$. Since $G$ is amenable we can take $\mu$ a probability measure on $G$ that is absolutely continuous with respect to Haar measure and such that 
$$ \sup_{k \in K} \| \mu(k^{-1} \cdot )- \mu \|_1 \leq \varepsilon.$$
Consider the embedding $$\pi : M \rightarrow \rL^\infty(G,\mu) \ovt N$$
that sends $x \in M$ to $g \mapsto \alpha_{g}(x)$. Observe that $$\pi(M)' \cap (\rL^\infty(G,\mu) \ovt N)=\rL^\infty(G,\mu) \ovt (M' \cap N)$$
Moreover, the $\mu \otimes \tau$-preserving conditional expectation on this relative commutant is $\id \otimes \rE_{M' \cap N}$. By applying the usual Hilbert space convexity argument to this inclusion, for every finite set $F \subset N$,  we obtain unitaries
$u_1,\ldots,u_n\in\cU(M)$ and numbers $t_1,\ldots,t_n\geq0$ with
$\sum_it_i=1$ such that
$$ \sum_{a \in F} \left \| 1\otimes \rE_{M' \cap N}(a) - \sum_{i=1}^n t_i\pi(u_i)(1 \otimes a) \pi(u_i)^* \right \|_{\mu \otimes \tau}^2 < \varepsilon $$
or equivalently
\begin{equation}\label{integrated relative Dixmier estimate}
\sum_{a\in F}\int_G
\left\|
\rE_{M' \cap N}(a) - \sum_{i=1}^nt_i\alpha_g(u_i)a\alpha_g(u_i)^* \right\|_2^2 \: \rd\mu(g) 
<\varepsilon.
\end{equation}

For these data, define a normal state on $\B(\rL^2(M))$ by
\begin{equation}\label{Reiter relative state}
\Phi_{K,F,\varepsilon}(T)
=\int_G\sum_{i=1}^nt_i
\left\langle
T\widehat{\alpha_g(u_i)^*},
\widehat{\alpha_g(u_i)^*}
\right\rangle \, \rd \mu(g).
\end{equation}
Then we have
\begin{equation}\label{moments estimate}
\Phi_{\mu,F,\varepsilon}
  (V^*\lambda_N(a)\rho_N(b)V)
=\int_G\sum_{i=1}^nt_i
\tau\bigl(\alpha_g(u_i)a\alpha_g(u_i)^*b\bigr)
\, \rd \mu(g).
\end{equation}
It follows from \eqref{integrated relative Dixmier estimate} and \eqref{moments estimate} that
every weak$^*$ accumulation point $\Phi_0$ of the net $\Phi_{K,F,\varepsilon}$ over the directed set of all triples $(K,F,\varepsilon)$ satisfies
\eqref{relative Dixmier moments compressed}.

Moreover,for $k\in K$, composing the state in
\eqref{Reiter relative state} with $\Ad(U^\alpha_k)$ only translates the measure $\mu$. Consequently,
\begin{equation}\label{reiter estimate}
\|\Phi_{K,F,\varepsilon}\circ\Ad(U^{\alpha}_k)
-\Phi_{K,F,\varepsilon}\|
\leq\|\mu(k^{-1} \cdot) -\mu\|_1 \leq \varepsilon.
\end{equation}

The estimate is uniform for $k$ in compact subsets. Tightness of
Radon probability measures then gives the same convergence after
integration against any Radon probability measure on $G$. The accumulation point $\Phi_0$ is therefore strongly
$\Ad(U^\alpha)$-invariant and has all the required properties.
\end{proof}

\begin{theorem}\label{fixed points relative dual bicentralizer compressed}
The fixed point algebra of the dual relative bicentralizer action is
the relative commutant:
\[
\rB(M\subset N,\alpha)^\beta=M'\cap N.
\]
\end{theorem}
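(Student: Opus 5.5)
The plan is to prove the two inclusions separately. The inclusion $M'\cap N\subset\rB(M\subset N,\alpha)^\beta$ is direct. For the reverse inclusion we build a state $\Psi$ on $\B(\rL^2(M))$ satisfying the hypotheses of Lemma~\ref{relative binormal implementation lemma compressed}, starting from the Dixmier-type state $\Phi_0$ of Lemma~\ref{strong relative Dixmier state compressed}. We then compare the two resulting formulas for $\Psi(\Lambda(x,y))$.

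\emph{Easy inclusion.} If $x\in M'\cap N$, then $x$ commutes with every unitary of $M$. Hence $x\in\rB(M\subset N,\alpha)$, and for every $p\in\widehat G$ the defining limit in Proposition~\ref{relative bicentralizer isomorphism} gives $\beta_p(x)=\lim_i w_ixw_i^*=x$. So $M'\cap N\subset\rB(M\subset N,\alpha)^\beta$. In particular $\rE_{M'\cap N}\circ\rE_{\rB(M\subset N,\alpha)^\beta}=\rE_{M'\cap N}$, which will be used at the end.

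\emph{Centering $\Phi_0$.} Write $B=\rB(M\subset N,\alpha)$ and let $S$ be the linear span of the operators $\Lambda(z,b)$ with $z\in B^\beta$ and $b\in N$. This contains $\rho_M(M)=\Lambda(1,M)$. For $p\in\widehat G$, put $\vartheta_p=\theta_{p^{-1}}$ from Lemma~\ref{relative LCA spectral shift maps compressed}.
\begin{itemize}
\item For $z\in B^\beta$ we have $\beta_{p^{-1}}(\rE_B(z))=z$, so \eqref{relative LCA theta legs compressed} shows $\vartheta_p|_S=\id_S$.
\item By \eqref{relative LCA theta corner spectrum compressed}, $\vartheta_p(U^\alpha(f))=U^\alpha(L_pf)$.
\end{itemize}
Since $\Phi_0$ is strongly $\Ad(U^\alpha)$-invariant, Lemma~\ref{LCA spectral centering lemma} yields a state $\widetilde\Phi$ with $\widetilde\Phi|_S=\Phi_0|_S$ and $\widetilde\Phi(U^\alpha(f))=f(1)$.

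\emph{Correcting the left marginal.} The state $\widetilde\Phi$ need not have $\tau$ as its $\lambda_M$-marginal, because $\lambda_M(x)=\Lambda(x,1)$ does not lie in $S$. So we pass to $\Psi=\widetilde\Phi\circ\Pi$, with $\Pi$ from Lemma~\ref{relative LCA spectral shift maps compressed}.
\begin{itemize}
\item By \eqref{relative LCA fixed map spectrum compressed}, $\Psi(U^\alpha(f))=f(1)$.
\item For $x\in M$, using \eqref{relative LCA fixed map legs compressed} and \eqref{relative Dixmier moments compressed},
\[
\Psi(\lambda_M(x))=\Phi_0\bigl(\Lambda(\rE_{B^\beta}(x),1)\bigr)=\tau\bigl(\rE_{M'\cap N}\rE_{B^\beta}(x)\bigr)=\tau(x).
\]
\item Similarly, $\Psi(\rho_M(x))=\Phi_0\bigl(\Lambda(1,\rE_{B^\beta}(x))\bigr)=\tau(x)$.
\end{itemize}
Thus Lemma~\ref{relative binormal implementation lemma compressed} applies to $\Psi$.

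\emph{Comparing two formulas.} Take $x\in B^\beta$ and $y\in N$. Lemma~\ref{relative binormal implementation lemma compressed} gives $\Psi(\Lambda(x,y))=\tau(xy)$. On the other hand, $\Pi(\Lambda(x,y))=\Lambda(x,\rE_{B^\beta}(y))$ lies in $S$, so
\[
\Psi(\Lambda(x,y))=\Phi_0\bigl(\Lambda(x,\rE_{B^\beta}(y))\bigr)=\tau\bigl(\rE_{M'\cap N}(x)\rE_{B^\beta}(y)\bigr)=\tau\bigl(\rE_{M'\cap N}(x)y\bigr).
\]
The last equality holds because $\rE_{M'\cap N}(x)\in B^\beta$. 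Hence $\tau\bigl((x-\rE_{M'\cap N}(x))y\bigr)=0$ for all $y\in N$. Taking $y=(x-\rE_{M'\cap N}(x))^*$ gives $x=\rE_{M'\cap N}(x)\in M'\cap N$.

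The step I expect to need the most care is the bookkeeping in the centering and correction stages. One must check that each operator being evaluated genuinely lies in $S$, so that $\widetilde\Phi$ may be replaced by $\Phi_0$, and that the left-leg and right-leg formulas for $\Pi$ are used in the correct slots. The analytic content is entirely contained in the lemmas already established.
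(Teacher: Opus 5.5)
Your proposal is correct and follows the paper's proof essentially step for step. It uses the same subspace spanned by the $\Lambda(z,a)$ with $z\in\rB(M\subset N,\alpha)^\beta$, centers $\Phi_0$ via the maps $\theta_{p^{-1}}$ and Lemma~\ref{LCA spectral centering lemma}, corrects the marginals by composing with $\Pi$, and concludes by comparing the two expressions for $\Lambda(z,a)$ using $M'\cap N\subset\rB(M\subset N,\alpha)^\beta$; your slot bookkeeping for $\Pi$ is correct.
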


\begin{proof}
Every element of $M'\cap N$ belongs to
$\rB(M\subset N,\alpha)$. If $p\in\widehat G$ and $(w_i)_i$ satisfies
\eqref{relative character implementers compressed}, the approximate
unitary formula gives
\[
\beta_p(d)=\lim_iw_idw_i^*=d
\qquad(d\in M'\cap N).
\]
Thus
\[
M'\cap N\subset\rB(M\subset N,\alpha)^\beta.
\]

Let $\Phi_0$ be the state from Lemma
\ref{strong relative Dixmier state compressed}, and let $\mathcal S$
be the linear span
\[
\left\{\Lambda(z,a)\ \middle|\
z\in\rB(M\subset N,\alpha)^\beta,\ a\in N\right\}.
\]
Equations \eqref{relative LCA theta legs compressed} and
\eqref{relative LCA theta corner spectrum compressed} show that every
$\theta_{p^{-1}}$ fixes $\mathcal S$ pointwise and sends
$U^\alpha(f)$ to $U^\alpha(L_pf)$. Lemma
\ref{LCA spectral centering lemma} gives a state
$\Phi_1$ on $\B(\rL^2(M))$ which agrees with $\Phi_0$ on
$\mathcal S$ and satisfies
\[
\Phi_1(U^\alpha(f))=f(1)
\qquad(f\in\rC_0(\widehat G)).
\]
In particular,
\begin{equation}\label{centered relative moments compressed}
\Phi_1(\Lambda(z,a))
=\tau(\rE_{M'\cap N}(z)a)
\end{equation}
for $z\in\rB(M\subset N,\alpha)^\beta$ and $a\in N$.

Let $\Pi$ be the map from Lemma
\ref{relative LCA spectral shift maps compressed} and put
$\Phi_2=\Phi_1\circ\Pi$. For $a\in M$, equations
\eqref{relative LCA fixed map legs compressed} and
\eqref{centered relative moments compressed} give
\[
\begin{aligned}
\Phi_2(\lambda_M(a))
&=\Phi_1(\Lambda(\rE_{\rB(M\subset N,\alpha)^\beta}(a),1))\\
&=\tau(\rE_{M'\cap N}(\rE_{\rB(M\subset N,\alpha)^\beta}(a)))\\
&=\tau(a),\\
\Phi_2(\rho_M(a))
&=\Phi_1(\Lambda(1,\rE_{\rB(M\subset N,\alpha)^\beta}(a)))
=\tau(\rE_{\rB(M\subset N,\alpha)^\beta}(a))=\tau(a).
\end{aligned}
\]
Equation \eqref{relative LCA fixed map spectrum compressed} also gives
\[
\Phi_2(U^\alpha(f))=f(1)
\qquad(f\in\rC_0(\widehat G)).
\]
Lemma \ref{relative binormal implementation lemma compressed} now
yields
\begin{equation}\label{centered relative bicentralizer identity compressed}
\Phi_2(\Lambda(x,a))=\tau(xa)
\qquad(x\in\rB(M\subset N,\alpha),\ a\in N).
\end{equation}

Fix $z\in\rB(M\subset N,\alpha)^\beta$. Since
$\rE_{\rB(M\subset N,\alpha)^\beta}(z)=z$, formula
\eqref{relative LCA fixed map legs compressed} gives, for every
$a\in N$,
\[
\begin{aligned}
\tau(za)
&=\Phi_2(\Lambda(z,a))\\
&=\Phi_1(\Lambda(z,\rE_{\rB(M\subset N,\alpha)^\beta}(a)))\\
&=\tau(\rE_{M'\cap N}(z)\rE_{\rB(M\subset N,\alpha)^\beta}(a))
=\tau(\rE_{M'\cap N}(z)a).
\end{aligned}
\]
The last equality uses
$\rE_{M'\cap N}(z)\in M'\cap N
\subset\rB(M\subset N,\alpha)^\beta$. Taking
$a=(z-\rE_{M'\cap N}(z))^*$ gives
$z=\rE_{M'\cap N}(z)\in M'\cap N$. This proves the reverse inclusion.
\end{proof}

\begin{theorem}\label{LCA bicentralizer eigenvector intertwining compressed}
Take $g\in G$ and $x \in N$. The following are equivalent :
\begin{enumerate}
  \item $x\in\rB(M\subset N,\alpha)$ and $\beta_p(x)=p(g)x$ for all $p\in\widehat G$.
  \item $xy=\alpha_g(y)x$ for all $y\in M$.
\end{enumerate}
In consequence, the almost periodic part of 
$$\beta : \widehat{G} \curvearrowright \rB(M \subset N,\alpha)$$ is contained in $\rb(M \subset N,\alpha)$.
\end{theorem}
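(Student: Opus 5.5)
The plan is to get the implication (2)$\Rightarrow$(1) by a direct computation, and to obtain (1)$\Rightarrow$(2) by applying Theorem~\ref{fixed points relative dual bicentralizer compressed} to an auxiliary twisted inclusion, chosen so that intertwiners become relative commutant elements. Throughout, for $p\in\widehat G$ we fix a net $(w_i)_i$ in $\cU(M)$ as in \eqref{relative character implementers compressed}. Such a net exists because $\Gamma(\alpha)=\widehat G$, via Corollary~\ref{full dual bicentralizer action}. We then use the approximate unitary formula $\beta_p(x)=\lim_i w_ixw_i^*$ of Proposition~\ref{relative bicentralizer isomorphism}.

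(2)$\Rightarrow$(1). Let $x$ satisfy $xy=\alpha_g(y)x$ for all $y\in M$. Take a unitary $u\in\cU(M)$ with $\sup_{h\in K}\|\alpha_h(u)-u\|_2<\delta$, where $K\ni g$. Then
\[
\|xu-ux\|_2=\|(\alpha_g(u)-u)x\|_2\leq\|x\|\,\|\alpha_g(u)-u\|_2 ,
\]
which is small, so $x\in\rB(M\subset N,\alpha)$. Applying the intertwining relation to $w_i^*$ gives $xw_i^*=\alpha_g(w_i^*)x$. Hence
\[
w_ixw_i^*=w_i\alpha_g(w_i^*)\,x\longrightarrow p(g)x
\]
in $\|\cdot\|_2$, which is exactly $\beta_p(x)=p(g)x$.

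(1)$\Rightarrow$(2). Consider the tracial inclusion
\[
\pi:M\longrightarrow \mathbb M_2(\C)\ovt N,\qquad \pi(y)=\operatorname{diag}(y,\alpha_g(y)),
\]
with the normalized trace. It is trace preserving because $\alpha_g$ preserves $\tau$. We keep the same action $\alpha$ on $M\cong\pi(M)$, so the Connes spectrum is still $\widehat G$ and the dual action $\beta$ exists on $\rB(\pi(M)\subset\mathbb M_2(N),\alpha)$. Put $X=e_{21}\otimes x$. For almost invariant unitaries $u$ we have
\[
\pi(u)X-X\pi(u)=e_{21}\otimes(\alpha_g(u)x-xu).
\]
This is close to $e_{21}\otimes(ux-xu)$, which tends to $0$ because $x\in\rB(M\subset N,\alpha)$. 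Hence $X$ lies in the relative bicentralizer. Next,
\[
\pi(w_i)X\pi(w_i)^*=e_{21}\otimes\alpha_g(w_i)xw_i^*,
\]
and $\alpha_g(w_i)\approx\overline{p(g)}\,w_i$ in $\|\cdot\|_2$, by \eqref{relative character implementers compressed} at the single point $g$. So this tends to $e_{21}\otimes\overline{p(g)}\,\beta_p(x)=X$. Thus $X$ is $\beta$-fixed for every $p$. Theorem~\ref{fixed points relative dual bicentralizer compressed} then gives $X\in\pi(M)'\cap\mathbb M_2(N)$. Reading off the $(2,1)$ entry of $\pi(y)X=X\pi(y)$ yields $\alpha_g(y)x=xy$.

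The step I expect to need the most care is checking that the dual bicentralizer action and the fixed point theorem really apply verbatim to the twisted inclusion $\pi$. In particular, $\beta$ on the twisted inclusion must be computed by the same implementing nets $\pi(w_i)$. This holds because Proposition~\ref{relative bicentralizer isomorphism} only involves the action on $M$ and the ambient algebra, and no extension of $\alpha$ to $N$ is required. One must also handle the uniform-on-bounded-sets estimates when replacing $\alpha_g(w_i)$ by $\overline{p(g)}w_i$; these are harmless since $\|x\|<\infty$.

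Finally, for the consequence: by Pontryagin duality every character of $\widehat G$ has the form $p\mapsto p(g)$ for some $g\in G$. Hence every eigenvector of $\beta$ satisfies (1) for some $g$, so by (2) and Proposition~\ref{algebraic bicentralizer intertwiners} it lies in $\rb(M\subset N,\alpha)$. The almost periodic part is the von Neumann algebra generated by these eigenvectors, so it is contained in $\rb(M\subset N,\alpha)$.
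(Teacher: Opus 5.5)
Your proposal is correct and follows the paper's proof essentially step for step. For (2)$\Rightarrow$(1) it uses the same direct computation. For (1)$\Rightarrow$(2) it uses the same twisted embedding $y\mapsto\operatorname{diag}(y,\alpha_g(y))$ into $\mathbb M_2(N)$, the same off-diagonal element $e_{21}\otimes x$, the same $\beta$-fixedness computation with the implementing nets, and the same appeal to Theorem~\ref{fixed points relative dual bicentralizer compressed}. You also spell out the final consequence explicitly, via Pontryagin duality and Proposition~\ref{algebraic bicentralizer intertwiners}, whereas the paper leaves that step implicit.
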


\begin{proof}
Suppose first that $(\rm ii)$ holds. For every asymptotically
$\alpha$-invariant net $(v_i)_i$ in $\cU(M)$, one has
\[
\|v_ix-xv_i\|_2
=\|(v_i-\alpha_g(v_i))x\|_2\longrightarrow0.
\]
Thus $x\in\rB(M\subset N,\alpha)$. If $(w_i)_i$ approximately
implements a character $p\in\widehat G$, then
\[
w_ixw_i^*=w_i\alpha_g(w_i^*)x\longrightarrow p(g)x,
\]
which gives $\beta_p(x)=p(g)x$ and proves $(\rm i)$.

Conversely, assume $(\rm i)$.
Define a trace-preserving embedding
\[
\theta:M\longrightarrow M_2(N),
\qquad
\theta(a)=
\begin{pmatrix}
a&0\\
0&\alpha_g(a)
\end{pmatrix}.
\]
Since $G$ is abelian, the formula
\[
\gamma_h(\theta(a))=\theta(\alpha_h(a))
\qquad(h\in G,\ a\in M)
\]
defines a continuous trace-preserving action on $\theta(M)$. The systems
$(M,\alpha)$ and $(\theta(M),\gamma)$ are equivariantly isomorphic, so
$\Gamma(\gamma)=\Gamma(\alpha)=\widehat G$. Put
\[
C=\rB(\theta(M)\subset M_2(N),\gamma)
\]
and denote its dual relative bicentralizer action by $\widetilde\beta$.

Consider the lower-left corner matrix
\[
X=
\begin{pmatrix}
0&0\\
x&0
\end{pmatrix}
\in M_2(N).
\]
We claim that $X\in C$. Indeed, for $u\in\cU(M)$,
\[
\left[\theta(u),X\right]
=
\begin{pmatrix}
0&0\\
\alpha_g(u)x-xu&0
\end{pmatrix}.
\]
If $\theta(u)$ is sufficiently $\gamma$-invariant on a compact set
containing $g$, then $u$ is sufficiently $\alpha$-invariant on the same
compact set. Hence $\alpha_g(u)-u$ is small in $\|\cdot\|_2$, while the
definition of $\rB(M\subset N,\alpha)$ makes $ux-xu$ small. Since
\[
\alpha_g(u)x-xu=(\alpha_g(u)-u)x+(ux-xu),
\]
the claim follows.

Fix $p\in\widehat G$ and choose a net $(w_i)_i$ in $\cU(M)$ such that
\[
w_i\alpha_h(w_i^*)\longrightarrow p(h)1
\]
in $\|\cdot\|_2$, uniformly for $h$ in compact subsets of $G$. The
unitaries $\theta(w_i)$ implement $p$ approximately for $\gamma$. The
approximate unitary formula and the eigenvector assumption give
\[
\begin{aligned}
\widetilde\beta_p(X)
&=\lim_i
\begin{pmatrix}
0&0\\
\alpha_g(w_i)xw_i^*&0
\end{pmatrix}\\
&=\begin{pmatrix}
0&0\\
\overline{p(g)}\,p(g)x&0
\end{pmatrix}
=X.
\end{aligned}
\]
Here we used
$\alpha_g(w_i)w_i^*\to\overline{p(g)}1$ and
$w_ixw_i^*\to\beta_p(x)=p(g)x$ in $\|\cdot\|_2$.
Thus $X\in C^{\widetilde\beta}$. Theorem
\ref{fixed points relative dual bicentralizer compressed} gives
\[
X\in\theta(M)'\cap M_2(N).
\]
Comparing the lower-left corners in $X\theta(y)=\theta(y)X$ yields
\[
xy=\alpha_g(y)x
\qquad(y\in M),
\]
as required.
\end{proof}

\begin{corollary}\label{LCA outer iff dual bicentralizer weakly mixing compressed}
Assume that $N=M$ and that $M$ is a factor. Then $\alpha$ is outer if
and only if the dual bicentralizer action
\[
\beta:\widehat G\curvearrowright\rB(M,\alpha)
\]
is weakly mixing.
\end{corollary}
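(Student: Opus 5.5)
The plan is to read the corollary directly off Theorem~\ref{LCA bicentralizer eigenvector intertwining compressed} with $N=M$. Recall that an action of a locally compact abelian group on a tracial algebra is weakly mixing exactly when there are no eigenvectors other than the constants: the Koopman representation on $\rL^2(\rB(M,\alpha))\ominus\C\widehat 1$ should have no nonzero eigenvectors. For a trace-preserving action, an $\rL^2$-eigenvector can be replaced by a bounded eigenoperator by taking spectral projections of $|\xi|$ and polar parts. I would state this reduction briefly. By Theorem~\ref{fixed points relative dual bicentralizer compressed}, the fixed point algebra is $\rB(M,\alpha)^\beta=M'\cap M=\C1$ because $M$ is a factor. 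So $\beta$ is ergodic, and weak mixing is equivalent to the absence of nonzero eigenoperators $x\in\rB(M,\alpha)$ with $\beta_p(x)=\chi(p)x$ for a nontrivial character $\chi$ of $\widehat G$.

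By Pontryagin duality, each such $\chi$ has the form $p\mapsto p(g)$ for a unique $g\in G$, with $g\neq e$ when $\chi$ is nontrivial. Theorem~\ref{LCA bicentralizer eigenvector intertwining compressed} then says that a nonzero eigenoperator with eigencharacter $p\mapsto p(g)$ exists if and only if there is a nonzero $x\in M$ with $xy=\alpha_g(y)x$ for all $y\in M$.

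Two directions remain. First, suppose $\alpha_g=\Ad(v)$ for some $g\neq e$ and some unitary $v$. Then $x=v$ satisfies the intertwining relation, which produces a nontrivial eigenoperator, so $\beta$ is not weakly mixing. Second, suppose a nonzero $x$ satisfies $xy=\alpha_g(y)x$ for all $y\in M$. Then $x^*x\in M'\cap M=\C1$ and likewise $xx^*\in\C1$, using the relation together with its adjoint. Hence $x$ is a scalar multiple of a unitary $v$ with $\alpha_g=\Ad(v)$, which means $\alpha_g$ is inner.

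Assembling these, $\beta$ is weakly mixing if and only if $\alpha_g$ is not inner for every $g\neq e$. That is exactly what it means for $\alpha$ to be outer. The main point needing care is the reduction from $\rL^2$-eigenvectors to bounded eigenoperators, which is standard. Everything else follows from the two cited theorems.
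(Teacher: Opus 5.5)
Your proposal is correct and follows the paper's proof. Both arguments get ergodicity of $\beta$ from Theorem~\ref{fixed points relative dual bicentralizer compressed}, reduce weak mixing to the absence of bounded nonscalar eigenoperators (the paper observes that ergodicity makes $|\xi|$ scalar for an $\rL^2$-eigenvector), and then use Theorem~\ref{LCA bicentralizer eigenvector intertwining compressed} together with factoriality to identify eigenoperators at $g\neq e$ with innerness of $\alpha_g$.
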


\begin{proof}
Theorem~\ref{fixed points relative dual bicentralizer compressed} gives
$\rB(M,\alpha)^\beta=\C1$. For an ergodic trace-preserving action,
the absolute values of an $\rL^2$ eigenvector are scalar, so every
nonzero eigenvector is a scalar multiple of a unitary in the algebra.
Since $\widehat G$ is abelian, weak mixing is therefore equivalent
to the absence of nonscalar eigenoperators. By
Theorem~\ref{LCA bicentralizer eigenvector intertwining compressed},
an eigenoperator at $g\ne e$ exists exactly when $\alpha_g$ is inner.
On a factor, this is precisely the obstruction to proper outerness.
\end{proof}

\begin{corollary}\label{LCA outer diagonal action trivial bicentralizer compressed}
Suppose that $M_1$ and $M_2$ are tracial factors and that
\[
(M,\tau)=(M_1 \ovt M_2,\tau_1 \otimes \tau_2), \quad \alpha=\alpha_1\otimes_G \alpha_2
\]
for continuous trace-preserving actions $\alpha_i:G\curvearrowright
(M_i,\tau_i)$ satisfying
$\Gamma(\alpha)=\Gamma(\alpha_1)=\Gamma(\alpha_2)=\widehat G$.
Then $\alpha$ is outer if and only if
\[
\rB(M,\alpha)=\C1.
\]
\end{corollary}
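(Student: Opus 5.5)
The plan is to treat the two implications separately; both reduce to Theorem~\ref{LCA bicentralizer eigenvector intertwining compressed} and Theorem~\ref{fixed points relative dual bicentralizer compressed}. For the direction $\rB(M,\alpha)=\C1\Rightarrow$ outer, suppose $\alpha_g=\Ad(v)$ for some $g\neq e$ and $v\in\cU(M)$. Then $\alpha_g(y)v=vy$ for all $y\in M$. By Theorem~\ref{LCA bicentralizer eigenvector intertwining compressed} (with $N=M$, applied to $x=v$), $v\in\rB(M,\alpha)=\C1$, so $\alpha_g=\id$. But then every spectral subspace of $\alpha$ is annihilated by $g$. Hence $\Gamma(\alpha)\subset\{g\}^\perp\neq\widehat G$ (characters of an LCA group separate points), which contradicts $\Gamma(\alpha)=\widehat G$. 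So $\alpha$ is outer. Only $\Gamma(\alpha)=\widehat G$ is used here.

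For the converse, assume $\alpha$ is outer. First, Proposition~\ref{tensor product relative action bicentralizer} gives
\[
\rB(M,\alpha)\subset B_1\ovt B_2,\qquad B_i=\rB(M_i,\alpha_i).
\]
Second, we identify the dual bicentralizer action $\beta$ of $\alpha$ (Corollary~\ref{full dual bicentralizer action}). The scalar cocycle $p\in\widehat G$ for $\alpha$ can be written both as $p\otimes1$ and as $1\otimes p$. The tensor formula of Proposition~\ref{tensor product relative action bicentralizer}, applied with $(u_1,u_2)=(p,1)$ and with $(1,p)$, therefore gives
\[
\beta_p(x)=(\beta^1_p\otimes\id)(x)=(\id\otimes\beta^2_p)(x)\qquad(x\in\rB(M,\alpha)).
\]
Here $\beta^i$ is the dual bicentralizer action on $B_i$, and the identities are read in $B_1\ovt B_2$. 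Consequently every $x\in\rB(M,\alpha)$ is fixed by the product action $p\mapsto\beta^1_p\otimes\beta^2_{p^{-1}}$ of $\widehat G$ on $B_1\ovt B_2$.

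The main step is to show that this fixed-point algebra is trivial. Since $M_i$ is a factor, Theorem~\ref{fixed points relative dual bicentralizer compressed} makes each $\beta^i$ ergodic. On the Hilbert space level, a vector of $\rL^2(B_1)\otimes\rL^2(B_2)$ fixed by $U^{\beta^1}\otimes U^{\beta^2}\circ(\,\cdot\,)^{-1}$ corresponds to a Hilbert--Schmidt intertwiner between $U^{\beta^1}$ and the conjugate of $U^{\beta^2}$. Such an intertwiner is compact, so it lives on the atomic parts. The fixed vector therefore lies in the closed span of vectors $\eta\otimes\zeta$, where $\eta\in B_1$ is a $\beta^1$-eigenoperator and $\zeta\in B_2$ is a $\beta^2$-eigenoperator, both with the same eigencharacter of $\widehat G$. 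That character is of the form $p\mapsto p(g)$ for some $g\in G$ by Pontryagin duality. I expect the main care to be needed here: making this standard compact-intertwiner argument precise for actions of a possibly non-second-countable group $\widehat G$.

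It remains to rule out every eigencharacter with $g\neq e$. Theorem~\ref{LCA bicentralizer eigenvector intertwining compressed} (with $N=M_i$) shows that a nonzero eigenoperator $\eta$ at $g$ satisfies $\eta y=\alpha_{1,g}(y)\eta$ for all $y\in M_1$, and similarly for $\zeta$ with $\alpha_{2,g}$. Then $\eta\otimes\zeta$ is a nonzero intertwiner between $\id_M$ and $\alpha_g$. By ergodicity of $\beta^i$ the eigenoperators can be taken unitary (as in Corollary~\ref{LCA outer iff dual bicentralizer weakly mixing compressed}), so $\alpha_g=\Ad(\eta\otimes\zeta)$ is inner, and outerness of $\alpha$ forces $g=e$. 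Eigenoperators at the trivial character are fixed points of $\beta^i$, hence scalars. The fixed algebra is therefore $\C1$, so $\rB(M,\alpha)=\C1$.
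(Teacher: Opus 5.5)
Your proposal is correct and is essentially the paper's proof. The same tensor inclusion from Proposition~\ref{tensor product relative action bicentralizer} and the relation $(\beta^1_p\otimes\beta^2_{p^{-1}})(x)=x$ confine $\rB(M,\alpha)$ to matched eigenoperators, and your converse is the weak-mixing criterion of Corollary~\ref{LCA outer iff dual bicentralizer weakly mixing compressed} written out. The only differences are cosmetic: the paper concludes that $\beta$ is almost periodic and invokes that corollary on $M$, whereas you apply Theorem~\ref{LCA bicentralizer eigenvector intertwining compressed} on each $M_i$ and tensor the intertwiners; and the compact-intertwiner step you flagged needs no countability, since finite-dimensional unitary representations of an abelian group split into characters.
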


\begin{proof}
Let $\beta : \widehat{G} \curvearrowright \rB(M,\alpha)$ be the bicentralizer flow of $\alpha$. For $i\in\{1,2\}$, denote by
$\beta^{(i)}:\widehat G\curvearrowright\rB(M_i,\alpha_i)$
the dual bicentralizer flow of $\alpha_i$.
Proposition \ref{tensor product relative action bicentralizer} gives
\[
\rB(M,\alpha)
\subset
\rB(M_1,\alpha_1)\ovt\rB(M_2,\alpha_2)
\]
and
$$ \beta_p(x)=(\beta^{(1)}_p \otimes \id)(x)=(\id \otimes \beta^{(2)}_p)(x), \quad (p \in\widehat G).$$
In particular,
\[
(\beta^{(1)}_p\otimes\beta^{(2)}_{p^{-1}})(x)=x
\qquad
(x\in\rB(M,\alpha)).
\]
This shows that $\rB(M,\alpha)$ is contained in the tensor product of the almost periodic parts of $\beta^{(1)}$ and $\beta^{(2)}$. In particular, $\beta$ is almost periodic. By Corollary~\ref{LCA outer iff dual bicentralizer weakly mixing compressed}, if $\alpha$ is properly outer, $\beta$ must be weakly mixing. We conclude that $\rB(M,\alpha)$ must be trivial.

Conversely, if $\rB(M,\alpha)=\C1$, then $\beta$ is weakly mixing.
The same corollary therefore implies that $\alpha$ is properly outer.
\end{proof}

\subsection{Intermediate subfactor property}

Let $G$ be a locally compact abelian group and let
$\alpha:G\curvearrowright(M,\tau)$ be a continuous trace-preserving
action on a $\II_1$ factor.  We denote by $(u_g)_{g\in G}$ the canonical
implementing representation in
\[
 P=M\rtimes_\alpha G
\]
and put
\[
 \rL_\alpha(G)=\{u_g\mid g\in G\}''\subset P.
\]
The canonical faithful normal conditional expectation
$\rE_{\rL_\alpha(G)}:P\to\rL_\alpha(G)$ is characterized, on compactly
supported Fourier integrals, by
\begin{equation}\label{canonical expectation onto group algebra}
 \rE_{\rL_\alpha(G)}\left(\int_G x(g)u_g\,\rd g\right)
 =\int_G\tau(x(g))u_g\,\rd g.
\end{equation}

\begin{theorem}\label{intermediate subfactor property}
Assume that $\alpha$ is strictly outer and that
$\rB(M,\alpha)=\C1$.  If $N$ is an intermediate factor
\[
 M\subset N\subset M\rtimes_\alpha G,
\]
then there is a closed subgroup $H<G$ such that
\[
 N=M\rtimes_{\alpha|_H}H.
\]
\end{theorem}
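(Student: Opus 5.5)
We follow the standard Galois-correspondence strategy (as for discrete groups, or Izumi--Longo--Popa for compact groups). The idea is to show that $N$ is generated by $M$ together with $N\cap\rL_\alpha(G)$. Since $N\cap \rL_\alpha(G)$ is a von Neumann subalgebra of the abelian algebra $\rL_\alpha(G)\cong\rL^\infty(\widehat G)$, it will then remain to prove that it is of the form $\rL_\alpha(H)$ for a closed subgroup $H<G$.

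\textbf{Step 1: the expectation onto $\rL_\alpha(G)$ leaves $N$ invariant.} Let $P=M\rtimes_\alpha G$ and let $\rE_N:P\to N$ be the trace-preserving expectation; the crossed product is semifinite, and we work with its canonical trace. We want to show $\rE_{\rL_\alpha(G)}(N)\subset N$ and that $\rE_N$ and $\rE_{\rL_\alpha(G)}$ commute. To do this, take $V\in M^{\omega,\alpha}$ an almost invariant unitary. On compactly supported Fourier integrals $x=\int x(g)u_g\,\rd g$ we have
\[
VxV^*=\int x'(g) u_g\,\rd g,\qquad x'(g)=Vx(g)\alpha_g^\omega(V^*)\approx Vx(g)V^*.
\]
Since $\rB(M,\alpha)=\C1$, the proof of Theorem~\ref{correspondance trivial bicentralizer} (i)$\Rightarrow$(ii) provides convex combinations of $\Ad(v_{i,k})$ with $v_{i,k}\in\cU(M)$ almost $\alpha$-invariant, converging pointwise to $\tau(\cdot)1$ on $M$. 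Applying these averages coefficientwise, they converge to $\rE_{\rL_\alpha(G)}$ on $P$, uniformly enough on compactly supported integrals. Each $\Ad(v)$ with $v\in M\subset N$ preserves $N$, so by weak closure $\rE_{\rL_\alpha(G)}(N)\subset N$. The same averages preserve $\rE_N$ because $v\in N$, which gives commutation of the two expectations.

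\textbf{Step 2: $N$ is generated by $M$ and $L:=N\cap\rL_\alpha(G)$.} For $x\in N$ and $a\in M$, the element $\rE_{\rL_\alpha(G)}(xa)$ lies in $L$ by Step 1. Using the Fourier coefficients of $x$, we aim to show $x\in (M\cup L)''$. Here strict outerness enters: $M'\cap P=\C1$ means $P$ has no room outside $M\vee\rL_\alpha(G)$-type decompositions. Concretely, consider the $M$-bimodule $\rL^2(N)\subset\rL^2(P)\cong\rL^2(M)\otimes\rL^2(G)$; it is invariant under left and right multiplication by $M$. The commutant of $M\cup JMJ$ on $\rL^2(P)$ is contained in $\rL^\infty(\widehat G)$-type multipliers, namely the projections $\rE(\Omega)$ of the dual spectral measure. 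This is the step where $M'\cap P=\C1$ is used, via the identification of the bimodule structure of $\rL^2(P)$ as a direct integral of the bimodules $\rL^2(M)_{\alpha_g}$. These bimodules are pairwise disjoint because $\alpha$ is outer, so the commutant is $\rL^\infty(G)$. Hence the projection onto $\rL^2(N)$ is $1\otimes 1_S$ in the appropriate Fourier picture. Combining this with Step 1 identifies $\rL^2(N)=\overline{ML}$.

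\textbf{Step 3: $L=\rL_\alpha(H)$.} Since $N$ is an algebra, the set $S\subset G$ of Step 2 (defined up to null sets) must satisfy $S\cdot S\subset S$ and $S^{-1}=S$ essentially. Equivalently, $L$ is a von Neumann subalgebra of $\rL(G)$ invariant under the comultiplication, because Fourier coefficients multiply along $G$. By Takesaki--Tatsuuma, a coinvariant subalgebra of $\rL(G)$ for abelian $G$ is $\rL(H)$ for a closed subgroup $H$. Then $N=M\rtimes_{\alpha|_H}H$.

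\textbf{Main obstacle.} We expect Step 2 to be the hardest. For continuous $G$ there are no pointwise Fourier coefficients, so the conditional expectations $x\mapsto \rE_{\rL_\alpha(G)}(xa^*)$ must replace coefficient extraction. Proving that $N$-elements are recovered from these requires a distributional argument together with the bicentralizer-averaging maps of Step 1. We would first try to realize $\rE_N$ as an $M$-bimodular map commuting with the dual coaction, and then use that every $M$-bimodular normal map on $P$ is a multiplier from $\rL^\infty(G)$, which is where strict outerness is used.
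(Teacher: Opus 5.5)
\textbf{There is a genuine gap in Steps~2--3:} the Hilbert space $\rL^2(N)$ they rely on is zero in the relevant cases. The canonical trace $\Tr_P=\tau\circ\rT_M$ need not be semifinite on $N$. So there is no trace-preserving expectation $\rE_N$, and $N\cap\rL^2(P,\Tr_P)$ is typically $\{0\}$. Take $N=M\rtimes_{\alpha|_H}H$ with $H$ not open. Then $\widehat\alpha$ acts trivially on $N$ along $H^\perp\cong\widehat{G/H}$, which is noncompact. Hence $\rT_M=\int_{\widehat G}\widehat\alpha_p\,\rd p$ is infinite on every nonzero element of $N_+$. Examples are $N=M$, or $N=M\rtimes_{\alpha|_{c\Z}}c\Z$ inside $M\rtimes_\alpha\R$.

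Consequently your projection onto $\rL^2(N)$, the set $S$ and the relation $S\cdot S\subset S$ carry no information whenever the subgroup to be found is not open. This already happens for $N=M$ as soon as $G$ is not discrete. The fallback in your last paragraph fails for the same reason. For non-discrete $G$ there is not even a normal conditional expectation $P\to M$: since $M'\cap P=\C1$, it would have to be proportional to the unbounded operator-valued weight $\rT_M$.

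Separately, your justification that pairwise disjoint fibers $\rL^2(M)_{\alpha_g}$ force the bimodule commutant to be diagonal is wrong for continuous direct integrals, and it only uses outerness. Theorem~\ref{universal LCA counterexample} gives outer actions with diffuse $M'\cap P$. Left multiplication by such elements commutes with both $M$-actions on $\rL^2(P)$ but is not diagonal: a diagonal one would be $\widehat\alpha$-fixed, hence in $M\cap M'=\C1$. Strict outerness does make that commutant diagonal, via $\rb(M,\alpha)=\C1$, but this does not repair the previous point.

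\textbf{The missing idea} is to extract the subgroup from $A=N\cap\rL_\alpha(G)$ inside $P$ itself, never passing to $\rL^2(N)$. Write $E=\rE_{\rL_\alpha(G)}$. Step~1 gives $E(N)\subset N$, hence $E(N)=A$. The paper then shows that $A$ is invariant under the dual action. Strict outerness makes $P$ a factor, so $\Gamma(\alpha)=\widehat G$. Theorem~\ref{approximate cocycle vanishing theorem}, applied to the scalar cocycle $p\in\widehat G$, gives unitaries $w_i\in\cU(M)\subset N$ with $\Ad(w_i)\to\widehat\alpha_{p^{-1}}$ pointwise ultraweakly on $\rL_\alpha(G)$. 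Translation invariance then gives $A=\rL_\alpha(H)$ for a closed subgroup $H$.

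The paper concludes with the Fourier-support characterization
\[M\rtimes_{\alpha|_H}H=\{x\in P\mid E(axb)\in\rL_\alpha(H)\ \text{for all } a,b\in M\}.\]
This is proved by applying $\widehat\alpha_p$ for $p\in H^\perp$ and using that the maps $y\mapsto E(ayb)$ separate the points of $P$. Since $E(aNb)\subset A$, one gets $N\subset M\rtimes_{\alpha|_H}H$, and the reverse inclusion is clear.

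\textbf{Step~1 also needs more care.} It is the paper's first step, but convergence of the averages $\sum_j t_{i,j}\Ad(v_{i,j})$ on compactly supported Fourier integrals does not pass to the weak$^*$ closure. Moreover, $N$ may contain no such integral at all, as in the $c\Z$ example. The paper obtains ultraweak convergence on all of $P$ by compressing with finite-trace projections $e\in\rL_\alpha(G)$ and using $\|[v_{i,j},e]\|_{2,\Tr_P}\to0$, which follows from Plancherel.
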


\begin{proof}
Write $E=\rE_{\rL_\alpha(G)}$ and let $\Tr_P$ be the canonical
semifinite trace on $P$. We first prove that $E(N)\subset N$.
The proof of Theorem
\ref{correspondance trivial bicentralizer} provides finite convex averages
\[
 F_i=\sum_{j=1}^{n_i}t_{i,j}\Ad(v_{i,j}),
 \qquad v_{i,j}\in\cU(M),
\]
such that
\[
 \max_j\sup_{g\in K}\|\alpha_g(v_{i,j})-v_{i,j}\|_2\longrightarrow0
 \qquad(K\Subset G)
\]
and $F_i(a)\to\tau(a)1$ in $\|\cdot\|_2$ for every $a\in M$.
Both $F_i$ and $E$ preserve $\Tr_P$ and induce contractions on
$\rL^2(P,\Tr_P)$. For $a\in M$, uniformly for $g$ in compact
subsets of $G$, we have
\[
 \|F_i(au_g)u_g^*-\tau(a)1\|_2
 =\left\|\sum_jt_{i,j}v_{i,j}a\alpha_g(v_{i,j}^*)-\tau(a)1\right\|_2
 \longrightarrow0.
\]
The Plancherel formula therefore gives $F_i(y)\to E(y)$ in
$\rL^2(P,\Tr_P)$ for compactly supported Fourier integrals.
Such integrals are dense in this Hilbert space, so the same convergence
holds for every $y\in\rL^2(P,\Tr_P)$.

Let $e\in\rL_\alpha(G)$ be a projection with $\Tr_P(e)<\infty$,
and let $f_e\in\rL^2(G)$ be its inverse Fourier transform.
For $v\in\cU(M)$, Plancherel gives
\[
\|[v,e]\|_{2,\Tr_P}^2
=\int_G|f_e(g)|^2\|v-\alpha_g(v)\|_2^2\,\rd g.
\]
This follows first for Fourier transforms of compactly supported
functions and then by $\rL^2$ approximation. Compact-uniform almost
invariance and the bound $\|v-\alpha_g(v)\|_2\leq2$ imply
\[
\max_j\|[v_{i,j},e]\|_{2,\Tr_P}\longrightarrow0.
\]
For every $x\in P$, it follows that
\[
\|eF_i(x)e-F_i(exe)\|_{2,\Tr_P}
\leq2\|x\|\max_j\|[v_{i,j},e]\|_{2,\Tr_P}
\longrightarrow0.
\]
Since $exe\in\rL^2(P,\Tr_P)$ and $E(exe)=eE(x)e$, we obtain
$eF_i(x)e\to eE(x)e$ in $\rL^2(P,\Tr_P)$.
Finite-trace projections in $\rL_\alpha(G)$ increase strongly to $1$.
The uniform bound $\|F_i(x)\|\leq\|x\|$ now gives
$F_i(x)\to E(x)$ ultraweakly for every $x\in P$.
In particular,
\begin{equation}\label{group expectation almost invariant convex hull}
 \begin{aligned}
 E(x)&\in
 \bigcap_{\substack{K\Subset G\\ \delta>0}}
 \overline{\conv}^{\,\mathrm{w}^*}
 \Bigl\{v x v^*\ \Bigm|\ v\in\cU(M),\\[-2pt]
 &\hspace{115pt}
 \sup_{g\in K}\|\alpha_g(v)-v\|_2<\delta\Bigr\}
 \qquad(x\in P).
 \end{aligned}
\end{equation}

Since $M\subset N$, every conjugate $v x v^*$ occurring in
\eqref{group expectation almost invariant convex hull} belongs to $N$ when
$x\in N$.  Hence
\[
 \rE_{\rL_\alpha(G)}(N)\subset N.
\]
Consequently
\[
 \rE_A=\rE_{\rL_\alpha(G)}|_N:N\longrightarrow A,
 \qquad A=N\cap\rL_\alpha(G),
\]
is a faithful normal conditional expectation.

We next prove that $A$ is invariant under the dual action.  Strict
outerness implies that $P$ is a factor, and hence that
$\Gamma(\alpha)=\widehat G$.  The approximate cocycle vanishing theorem,
Theorem \ref{approximate cocycle vanishing theorem}, applied to the scalar
cocycle $p\in\widehat G$, yields a net $(w_i)_i$ in $\cU(M)$ such that
\begin{equation}\label{intermediate approximate eigenunitaries}
 \sup_{g\in K}\|w_i\alpha_g(w_i^*)-p(g)1\|_2\longrightarrow0
 \qquad(K\Subset G).
\end{equation}
Put $\delta=\widehat\alpha_{p^{-1}}$. Our dual-action convention
gives $\delta(u_g)=p(g)u_g$. Plancherel and
\eqref{intermediate approximate eigenunitaries} imply
\[
\Ad(w_i)(y)\longrightarrow\delta(y)
\quad\text{in }\rL^2(P,\Tr_P)
\qquad(y\in\rL^2(\rL_\alpha(G),\Tr_P)).
\]
Indeed, the squared error for a Fourier integral with coefficient
$f\in\rL^2(G)$ is
\[
\int_G|f(g)|^2
\|w_i\alpha_g(w_i^*)-p(g)1\|_2^2\,\rd g,
\]
which tends to zero by compact-uniform convergence and an
$\rL^2$ tail estimate. For a finite-trace projection
$e\in\rL_\alpha(G)$ and $x\in\rL_\alpha(G)$, we have
\[
\begin{aligned}
&\|\delta(e)\Ad(w_i)(x)\delta(e)-\Ad(w_i)(exe)\|_{2,\Tr_P}\\
&\qquad\leq2\|x\|\|\Ad(w_i)(e)-\delta(e)\|_{2,\Tr_P}
\longrightarrow0.
\end{aligned}
\]
Applying the preceding $\rL^2$ convergence to $exe$ and then letting
$e$ increase to $1$ proves point-ultraweak convergence
$\Ad(w_i)\to\delta$ on the whole group algebra.
If $x\in A$, every $w_i xw_i^*$ belongs to $N$, so
$\widehat\alpha_{p^{-1}}(x)\in N$. Since the dual action preserves
$\rL_\alpha(G)$ and $p$ was arbitrary, we conclude that
\[
 \widehat\alpha_p(A)=A
 \qquad(p\in\widehat G).
\]
Under the Fourier identification
$\rL_\alpha(G)=\rL^\infty(\widehat G)$, the dual action is the translation
action of $\widehat G$.  The translation-invariant von Neumann
subalgebras of $\rL^\infty(\widehat G)$ are precisely the algebras
associated with quotient groups.  Therefore there is a closed subgroup
$H<G$ such that
\begin{equation}\label{intermediate group algebra}
 A=\rL_\alpha(H).
\end{equation}

We finish with the standard Fourier-support characterization
\begin{equation}\label{crossed product Fourier support characterization}
 M\rtimes_{\alpha|_H}H
 =\left\{x\in P\ \middle|\
 \rE_{\rL_\alpha(G)}(a x b)\in\rL_\alpha(H)
 \text{ for all }a,b\in M\right\}.
\end{equation}
The inclusion from left to right follows first on the elements $a u_h$
with $h\in H$, and then by normality of $E$.
For the converse, let $H^\perp<\widehat G$ be the annihilator of $H$.
The condition
on the right implies, for every $p\in H^\perp$ and $a,b\in M$, that
\[
 \rE_{\rL_\alpha(G)}(a\widehat\alpha_p(x)b)
 =\widehat\alpha_p(\rE_{\rL_\alpha(G)}(a x b))
 =\rE_{\rL_\alpha(G)}(a x b).
\]
The maps $y\mapsto E(ayb)$, with $a,b\in M$, separate the points
of $P$. Indeed, if they all vanish at $y$, then
\[
\Tr_P\bigl(\lambda(f)^*ayb\lambda(h)\bigr)
=\Tr_P\bigl(\lambda(f)^*E(ayb)\lambda(h)\bigr)=0
\]
for $f,h\in\rC_c(G)$, where $\lambda(f)=\int_Gf(g)u_g\,\rd g$.
The vectors $a\lambda(f)$ span a dense subspace of
$\rL^2(P,\Tr_P)$, so these identities force $y=0$.
Hence $\widehat\alpha_p(x)=x$ for every
$p\in H^\perp$, and duality gives
\[
 x\in P^{\widehat\alpha|_{H^\perp}}
 =M\rtimes_{\alpha|_H}H.
\]
This proves \eqref{crossed product Fourier support characterization}.

Clearly $M\rtimes_{\alpha|_H}H\subset N$ by
\eqref{intermediate group algebra}.  Conversely, if $x\in N$ and
$a,b\in M$, then $a x b\in N$ and therefore
\[
 \rE_{\rL_\alpha(G)}(a x b)\in A=\rL_\alpha(H).
\]
Formula \eqref{crossed product Fourier support characterization} gives
$x\in M\rtimes_{\alpha|_H}H$, and the proof is complete.
\end{proof}

\section{The relative commutant theorem and the resonance property}
\label{sec:relative-commutants}

We study relative commutants in crossed products through Fourier
coefficient distributions. Smoothing on both sides gives an injective
distributional representation whose support records the joint spectrum
of the extended action and the dual action. In the trace-preserving case,
commutation relations confine this spectrum to the resonance set.
We also extend the distributional framework to actions preserving a
faithful normal semifinite weight.

For flows, the geometry of the resonance set gives the relative
commutant dichotomy and shows that outerness together with full Connes
spectrum implies strict outerness. The same framework recovers the
Connes--Takesaki relative commutant theorem. We also determine exactly
which second countable locally compact abelian groups satisfy this
strict-outerness implication. For every remaining group, we construct
an outer action on the hyperfinite $\II_1$ factor with factorial crossed
product and diffuse relative commutant.

\subsection{Preliminaries for locally compact abelian group actions}
\label{sec:preliminaries}

Throughout Subsections~\ref{sec:preliminaries}--\ref{sec:lca-characterization},
let $G$ be a locally
compact abelian group, written additively, and let $\widehat G$ be its
Pontryagin dual, written multiplicatively. We fix Haar measures in
Plancherel duality. Subsection~\ref{sec:preliminaries} fixes the
notation and results used in Subsections~\ref{sec:spectral-rigidity}
and~\ref{sec:lca-characterization}. In
Subsections~\ref{sec:preliminaries}--\ref{sec:spectral-rigidity}, $M$ is an arbitrary
von Neumann algebra and $\alpha:G\curvearrowright M$ is an arbitrary  continuous action. Semifiniteness and trace
assumptions enter only in the commutation arguments of
Subsection~\ref{sec:spectral-rigidity} and
are restated whenever they are used.

\subsubsection{Schwartz--Bruhat calculus}
\label{subsec:schwartz-bruhat}

Let $H$ be a locally compact abelian group. A
Bruhat stage is given by an open compactly generated subgroup
$H_0\subset H$ and a compact subgroup $K\subset H_0$ such that
$H_0/K\cong\R^a\times\Z^b\times\mathbb T^c\times F$, where $F$ is
finite. The corresponding stage space consists of the
functions on $H$ that are supported in $H_0$, are $K$-invariant, and
whose descent to $H_0/K$ is smooth with all Lie derivatives rapidly
decreasing in the $\R^a\times\Z^b$ variables. The Schwartz--Bruhat
space $\mathcal S(H)$ is the inductive limit of these stage spaces.
Thus $\mathcal S(\R^a)$ is the usual Schwartz space,
$\mathcal S(\Z^b)$ consists of rapidly decreasing sequences,
$\mathcal S(\mathbb T^c)=\rC^\infty(\mathbb T^c)$, and
$\mathcal S(\mathbb Q_p^a)$ consists of the locally constant compactly
supported functions. Every element of $\mathcal S(H)$ belongs to one
Bruhat stage \cite[Section~9 and Proposition~11(a)]{Br61}.

Put $\mathcal D(H)=\mathcal S(H)\cap\rC_c(H)$, with its Bruhat LF
topology (not the topology induced by $\mathcal S(H)$), and write
$\mathcal S'(H)$ for the continuous dual of
$\mathcal S(H)$. More generally, if $X$ is a Banach space, an
$X$-valued tempered Schwartz--Bruhat distribution on $H$ is a continuous
linear map $T:\mathcal S(H)\to X$. We denote the space of these maps
by $\mathcal S'(H,X)$. We similarly write $\mathcal D'(H,X)$ for the
continuous linear maps from $\mathcal D(H)$ to $X$. A distribution
vanishes on an open set $U$
when $T(\varphi)=0$ for every $\varphi\in\mathcal D(H)$ supported in $U$,
and its support is the complement of the largest such set
\cite[Chapter ${\rm I}$, Section~2, p.~61]{Sc57}. Thus the support of
an element of $\mathcal S'(H)$ is tested with functions in
$\mathcal D(H)$.

Haar measures on $H$ and $\widehat H$ are always taken in Plancherel
duality. For $\varphi\in\rL^1(H)$ and
$\psi\in\rL^1(\widehat H)$, our Fourier convention is
\[
\widehat\varphi(\chi)=\int_H \varphi(h)\overline{\chi(h)}\,\rd h,
\qquad
\check\psi(h)=\int_{\widehat H}
\chi(h)\psi(\chi)\,\rd\chi.
\]
Fourier transformation is a topological isomorphism from
$\mathcal S(H)$ onto $\mathcal S(\widehat H)$. We denote it by
$\mathcal F_H$ and write $\mathcal F_H(\varphi)=\widehat\varphi$. Its inverse is
$\psi\mapsto\check\psi$. On distributions we use the transpose
convention $\langle\widehat T,\psi\rangle
=\langle T,\check\psi\rangle$ for $T\in\mathcal S'(H)$ and
$\psi\in\mathcal S(\widehat H)$.
The
space $\mathcal S(H)$ is nuclear and embeds continuously and densely
into $\rL^1(H)\cap\rL^2(H)$, while $\mathcal D(H)$ is dense in
$\mathcal S(H)$. The kernel theorem and the stage construction give
\[
\mathcal S(H_1)\mathbin{\widehat\otimes}_{\iota}
\mathcal S(H_2)
\cong\mathcal S(H_1\times H_2),
\]
where $\widehat\otimes_{\iota}$ is the completed inductive tensor
product. The corresponding result for $\mathcal D$ is
\cite[Theorem~3]{Br61}. The same stagewise proof applies to
$\mathcal S$ using \cite[Section~9, Proposition~11(a)]{Br61}. Thus algebraic
tensor products are dense in both product spaces. Pullback by a
topological group isomorphism and pointwise multiplication are
continuous. In
particular, $\mathcal D(H)\mathcal S(H)\subset\mathcal D(H)$. Finally,
if $C\subset H$ is compact and $U$ is a neighborhood of $C$, some
$\varphi\in\mathcal D(H)$ equals $1$ near $C$ and is supported in $U$.

We shall also use the following elementary division fact.
Let $L$ be a second countable locally compact abelian group, let
$\Phi\in\mathcal D(L)$, and let $m:L\to\C$ be a function which, on a
Bruhat stage containing $\Phi$, is smooth in the Lie variables and
locally constant in the totally disconnected variables near
$\supp(\Phi)$. If $m$ has no zero on $\supp(\Phi)$, choose
$\rho\in\mathcal D(L)$ which equals $1$ near $\supp(\Phi)$ and is
supported where $m$ does not vanish. After passing to a common
refinement of the stages containing $\Phi$ and $\rho$, ordinary
compactly supported smooth multiplication gives
\[
\Psi=\rho m^{-1}\Phi\in\mathcal D(L),
\qquad
m\Psi=\Phi.
\]
If $X$ is a Banach space and
$\mathcal K\in\mathcal D'(L,X)$ satisfies $m\mathcal K=0$, then
\[
\langle\mathcal K,\Phi\rangle
=\langle m\mathcal K,\Psi\rangle=0.
\]
Therefore $\supp(\mathcal K)\subset m^{-1}(\{0\})$.

Define the evaluation function
\[
w_H : \widehat H\times H\longrightarrow\mathbb T :
(\chi,h)\longmapsto\chi(h).
\]
Multiplication by $w_H$ preserves both test-function spaces because
partial inverse Fourier transformation in the $H$-coordinate
conjugates it on $\mathcal S$ to pullback by
$s_H(\chi,\eta)=(\chi,\eta\chi)$.
It then preserves $\mathcal D$ because it does not enlarge supports.
After refining a Bruhat stage, $w_H$ is smooth in the Lie variables
and locally constant in the totally disconnected variables. Hence the
division fact applies to the multipliers built from $w_H$.

\subsubsection{Arveson spectra}

Fix a locally compact abelian group $H$. Let
$\theta:H\curvearrowright M$ be a continuous
action on a von Neumann algebra. We use the individual and global
Arveson spectra from \cite[Definition ${\rm XI}$.1.2]{Ta03}. For
$\varphi\in\rL^1(H)$, define
$\theta_\varphi(x)=\int_H \varphi(h)\theta_h(x)\,\rd h$ for $x\in M$, where the
integral is taken in the ultraweak sense. The annihilator
ideal and the individual and global spectra are
\[
\begin{aligned}
\mathcal I_\theta(x)
&=\{\varphi\in\rL^1(H)\mid\theta_\varphi(x)=0\},\\
\Sp_\theta(x)
&=\{\chi\in\widehat H\mid
\widehat\varphi(\chi)=0\text{ for every }\varphi\in\mathcal I_\theta(x)\},\\
\Sp(\theta)
&=\overline{\bigcup_{x\in M}\Sp_\theta(x)}.
\end{aligned}
\]
We write
$M^\theta(E)=\{x\in M\mid\Sp_\theta(x)\subset E\}$ for
$E\subset\widehat H$.

We recall five standard consequences. First, the cutoff formula is
\cite[Lemma ${\rm XI}$.1.3(ii)]{Ta03}:
\begin{equation}\label{eq:Arveson-cutoff}
\Sp_\theta(\theta_\varphi(x))
\subset
\Sp_\theta(x)\cap\supp(\widehat\varphi).
\end{equation}
Second, if $\chi\in\Sp(\theta)$ and $U\subset\widehat H$ is a
neighborhood of $\chi$, then
\cite[Lemma ${\rm XI}$.1.3(v)]{Ta03} yields a nonzero
$x\in M^\theta(U)$. Using the Schwartz--Bruhat calculus from
\S\,\ref{subsec:schwartz-bruhat}, one
may take $x=\theta_\varphi(y)$ with $y\in M$,
$\varphi\in\mathcal S(H)$,
$\widehat\varphi\in\mathcal D(\widehat H)$ supported in $U$, and
$\theta_\varphi(y)\neq0$.

Third, suppose that $H=H_1\times H_2$. Let $\theta^{(j)}$ be the
restriction to the $j$-th factor and let
$\operatorname{pr}_j : \widehat H_1\times\widehat H_2
\longrightarrow\widehat H_j :
(\chi_1,\chi_2)\longmapsto\chi_j$
be the coordinate projection. An approximate identity in the other
factor gives
\begin{equation}\label{eq:coordinate-restriction}
\Sp_{\theta^{(j)}}(x)
\subset
\overline{\operatorname{pr}_j(\Sp_\theta(x))}
\qquad(x\in M).
\end{equation}
This follows from \cite[Lemma ${\rm XI}$.1.3(i)--(ii)]{Ta03} by
applying an approximate identity in the other factor. Fourth, for
$\chi\in\widehat H$, \cite[Lemma ${\rm XI}$.1.11]{Ta03} identifies the
one-point spectral subspace:
\begin{equation}\label{eq:one-point-spectrum}
M^\theta(\{\chi\})
=\{x\in M\mid
\theta_h(x)=\overline{\chi(h)}x\text{ for every }h\in H\}.
\end{equation}

Finally, if $\theta$ is ergodic, then its Arveson spectrum agrees with
its Connes spectrum and
\begin{equation}\label{eq:ergodic-spectrum-kernel}
\Sp(\theta)=\Gamma(\theta)=(\ker\theta)^\perp.
\end{equation}
This follows from \cite[Lemma ${\rm XI}$.2.2(iii) and
Theorem ${\rm XI}$.2.7(i)]{Ta03}. Here
\[
(\ker\theta)^\perp
=\{\chi\in\widehat H\mid
\chi(h)=1\text{ for every }h\in\ker\theta\}.
\]

\begin{proposition}\label{equivariant distribution spectrum}
Let $N$ be a von Neumann algebra and let
$\mathcal J:M\longrightarrow\mathcal S'(\widehat H,N)$ be an
injective linear map. Assume that
$x\mapsto\langle\mathcal J(x),\psi\rangle$ is bounded and normal for
every $\psi\in\mathcal S(\widehat H)$ and that
\[
\mathcal J(\theta_h(x))
=\bigl[\chi\longmapsto\overline{\chi(h)}\bigr]\mathcal J(x)
\]
for every $x\in M$ and $h\in H$. The following assertions hold.
\begin{enumerate}[\rm (i)]
\item One has
\[
\supp(\mathcal J(x))=\Sp_\theta(x)
\qquad(x\in M).
\]

\item If $x\in M$ and $\supp(\mathcal J(x))$ is discrete, then
\[
x\in
\left(
\bigcup_{\chi\in\supp(\mathcal J(x))}
M^\theta(\{\chi\})
\right)''.
\]
In particular, if $\Sp(\theta)$ is discrete, then
\[
M=
\left(
\bigcup_{\chi\in\Sp(\theta)}M^\theta(\{\chi\})
\right)''.
\]
Moreover, $M^\theta(\{\chi\})\ne\{0\}$ for every
$\chi\in\Sp(\theta)$.
\end{enumerate}
\end{proposition}

\begin{proof}
For $\varphi\in\mathcal S(H)$, normality of the evaluations of
$\mathcal J$ and the covariance assumption give
\begin{equation}\label{eq:equivariant-distribution-multiplier}
\mathcal J(\theta_\varphi(x))
=\widehat\varphi\,\mathcal J(x).
\end{equation}

We first prove assertion~$(\rm i)$. We begin by showing that
$\supp(\mathcal J(x))\subset\Sp_\theta(x)$. Let
$\psi\in\mathcal D(\widehat H)$ have support disjoint from
$\Sp_\theta(x)$. Choose
$\widehat\varphi\in\mathcal D(\widehat H)$ which equals $1$ near
$\supp(\psi)$ and whose support remains disjoint from
$\Sp_\theta(x)$, and let $\varphi$ be its inverse Fourier transform.
Formula \eqref{eq:Arveson-cutoff} gives
\[
\Sp_\theta(\theta_\varphi(x))
\subset
\Sp_\theta(x)\cap\supp(\widehat\varphi)=\varnothing.
\]
An element with empty individual spectrum is zero, so
$\theta_\varphi(x)=0$. Equation
\eqref{eq:equivariant-distribution-multiplier} therefore gives
\[
\langle\mathcal J(x),\psi\rangle
=\langle\widehat\varphi\mathcal J(x),\psi\rangle=0.
\]
This proves the first inclusion.

Conversely, let $\chi_0\notin\supp(\mathcal J(x))$. Choose
$\widehat\varphi\in\mathcal D(\widehat H)$ supported in the
complement of $\supp(\mathcal J(x))$ with
$\widehat\varphi(\chi_0)\neq0$, and let $\varphi$ be its inverse
Fourier transform. Then
$\widehat\varphi\mathcal J(x)=0$. Indeed, for every
$\psi\in\mathcal S(\widehat H)$, the function
$\widehat\varphi\psi$ belongs to $\mathcal D(\widehat H)$ and is
supported in an open set on which $\mathcal J(x)$ vanishes. Hence
\eqref{eq:equivariant-distribution-multiplier} and injectivity give
$\theta_\varphi(x)=0$. Since
$\widehat\varphi(\chi_0)\neq0$, the definition of the individual
Arveson spectrum gives $\chi_0\notin\Sp_\theta(x)$. This proves the
reverse inclusion and assertion~$(\rm i)$.

We now prove assertion~$(\rm ii)$. Suppose that
$\supp(\mathcal J(x))$ is
discrete. For $\varphi\in\mathcal S(H)$ with
$\widehat\varphi\in\mathcal D(\widehat H)$, a finite partition of
unity near
$\supp(\widehat\varphi)\cap\supp(\mathcal J(x))$ decomposes
$\widehat\varphi\mathcal J(x)$ into distributions supported at single
points. Formula
\eqref{eq:equivariant-distribution-multiplier}, injectivity, and
assertion~$(\rm i)$ show that $\theta_\varphi(x)$ is a finite sum of
the corresponding eigenoperators. An $\rL^1(H)$-approximate identity in
$\mathcal S(H)$ with compactly supported Fourier transforms proves the
first claim.

If $\Sp(\theta)$ is discrete, assertions~$(\rm i)$ and $(\rm ii)$
apply to every $x\in M$. Finally, spectral localization in a
neighborhood which meets $\Sp(\theta)$ only at $\chi$ gives a nonzero
element of $M^\theta(\{\chi\})$ for every $\chi\in\Sp(\theta)$.
\end{proof}

\subsection{Fourier coefficient distributions and spectral rigidity}
\label{sec:spectral-rigidity}

Write $u:G\to\mathcal U(M\rtimes_\alpha G)$ for the canonical
unitary representation, characterized by
$u_txu_t^*=\alpha_t(x)$ for $x\in M$ and $t\in G$.
We normalize the dual action by
\[
\widehat\alpha_\nu(x)=x,
\qquad
\widehat\alpha_\nu(u_t)=\overline{\nu(t)}u_t
\]
for $x\in M$, $t\in G$ and $\nu\in\widehat G$. We write
$\widetilde\alpha_t=\Ad(u_t)$ for the extension of $\alpha_t$ to
$M\rtimes_\alpha G$. The actions $\widetilde\alpha$ and
$\widehat\alpha$ commute and preserve the relative commutant
\[
M'\cap(M\rtimes_\alpha G).
\]
We identify $\widehat{G\times\widehat G}$ with
$\widehat G\times G$ through
\[
((r,\nu),(p,t))\longmapsto p(r)\nu(t).
\]
In view of \eqref{eq:one-point-spectrum}, a nonzero operator $x \in M$ is a
joint eigenoperator at $(p,t)$ precisely when
\[
\widetilde\alpha_r(x)=\overline{p(r)}x,
\qquad
\widehat\alpha_\nu(x)=\overline{\nu(t)}x
\]
for every $r\in G$ and $\nu\in\widehat G$. Equivalently,
\[
\widetilde\alpha_r\widehat\alpha_\nu(x)
=\overline{p(r)\nu(t)}x.
\]

\subsubsection{Operator-valued weights}
Let $M\subset N$ be an inclusion of von Neumann algebras and let
$\rT_M:N_+\longrightarrow\widehat M_+$ be a normal operator-valued
weight \cite{Ha79}. The subscript records its target algebra. We use the standard
domains
\[
\mathfrak n_{\rT_M}
=\{a\in N\mid \rT_M(a^*a)\in M_+\},
\qquad
\mathfrak m_{\rT_M}=\operatorname{span}(\mathfrak n_{\rT_M}^*\mathfrak n_{\rT_M}),
\]
on the second of which $\rT_M$ is an $M$-valued linear map. The space
$\mathfrak m_{\rT_M}$ is an $M$-bimodule, and $\rT_M$ is $M$-bimodular there:
\[
\rT_M(ayb)=a\rT_M(y)b
\qquad(a,b\in M,\ y\in\mathfrak m_{\rT_M}).
\]

\begin{lemma}
    If $a,b\in\mathfrak n_{\rT_M}$, then
\begin{equation}\label{eq:operator-valued-cauchy-schwarz-matrix}
\begin{pmatrix}
\rT_M(a^*a)&\rT_M(a^*b)\\
\rT_M(b^*a)&\rT_M(b^*b)
\end{pmatrix}\geq 0.
\end{equation}
Consequently,
\begin{equation}\label{eq:operator-valued-cauchy-schwarz}
\|\rT_M(a^*b)\|^2
\leq
\|\rT_M(a^*a)\|\,\|\rT_M(b^*b)\|.
\end{equation}
\end{lemma}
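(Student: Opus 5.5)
The plan is to derive the positivity of the $2\times 2$ matrix directly from complete positivity of $\rT_M$ on $2\times 2$ matrices, and then to deduce the norm inequality from the standard operator Cauchy--Schwarz bound for positive block matrices.

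For the positivity, put $X=\begin{pmatrix} a & b\\ 0&0\end{pmatrix}\in\mathbb M_2(N)$, so that
\[
X^*X=\begin{pmatrix} a^*a & a^*b\\ b^*a & b^*b\end{pmatrix}\geq 0.
\]
The amplification $\id_{\mathbb M_2}\otimes\rT_M$ is a normal operator-valued weight from $\mathbb M_2(N)$ to $\mathbb M_2(M)$; its positivity on $\mathbb M_2(N)_+$ is exactly complete positivity of $\rT_M$, which is standard for operator-valued weights (see \cite{Ha79}). Since $a,b\in\mathfrak n_{\rT_M}$, the diagonal entries $a^*a,b^*b$ lie in $\mathfrak m_{\rT_M}$. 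By polarization the off-diagonal entries $a^*b,b^*a$ do too, so every entry of $X^*X$ has a bounded image. Hence $(\id\otimes\rT_M)(X^*X)$ is the entrywise matrix displayed in \eqref{eq:operator-valued-cauchy-schwarz-matrix}, and it is positive.

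If one prefers to avoid quoting complete positivity, there is an alternative route. Represent $\rT_M$ through normal weights: for every normal positive functional $\omega\in M_*^+$, the composition $\omega\circ\rT_M$ is a normal weight on $N$. Positivity of a $2\times2$ matrix $\begin{pmatrix}A&C\\C^*&B\end{pmatrix}$ over $M$ can be tested on vectors $\xi\oplus\eta$. This reduces the claim to checking that
\[
\langle \rT_M(a^*a)\xi,\xi\rangle+2\,\mathrm{Re}\langle \rT_M(a^*b)\eta,\xi\rangle+\langle \rT_M(b^*b)\eta,\eta\rangle\geq0 .
\]
This inequality would follow from the KSGNS construction for $\rT_M$ restricted to $\mathfrak n_{\rT_M}$: the map $(x,y)\mapsto \rT_M(x^*y)$ is an $M$-valued semi-inner product on the right $M$-module $\mathfrak n_{\rT_M}$. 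In particular $\rT_M((a\xi'+b\eta')^*(a\xi'+b\eta'))\geq0$ for $\xi',\eta'\in M$, and density arguments then pass from elements of $M$ to vectors.

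For the norm inequality, I would apply the elementary fact that if $\begin{pmatrix}A&C\\C^*&B\end{pmatrix}\geq0$, then $|\langle C\eta,\xi\rangle|^2\le\langle A\xi,\xi\rangle\langle B\eta,\eta\rangle$. Taking suprema over unit vectors gives $\|C\|^2\le\|A\|\|B\|$, which is \eqref{eq:operator-valued-cauchy-schwarz}. The only subtle point is the domain issue, namely that $a^*b\in\mathfrak m_{\rT_M}$; the polarization identity $4a^*b=\sum_{k=0}^3 i^k (b+i^k a)^*(b+i^k a)$, together with the fact that $\mathfrak n_{\rT_M}$ is a linear space, settles it.
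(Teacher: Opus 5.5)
Your main argument is the paper's proof. You write the block matrix as $X^*X$ (the paper writes it as a column times a row), note that its entries lie in $\mathfrak m_{\rT_M}$, invoke Haagerup's positivity of the entrywise $2\times2$ amplification \cite[Lemma~4.5(2)]{Ha79}, and finish with the standard estimate for positive $2\times2$ operator matrices. Your alternative route via bimodularity is also valid once the ``density'' step is replaced by the standard criterion that $T\in\mathbb M_2(M)$ is positive as soon as $\sum_{i,j}c_i^*T_{ij}c_j\geq0$ for all $c_1,c_2\in M$.
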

\begin{proof}
    The matrix
\[
\begin{pmatrix}
a^*a&a^*b\\
b^*a&b^*b
\end{pmatrix}
=
\begin{pmatrix}a^*\\ b^*\end{pmatrix}
\begin{pmatrix}a&b\end{pmatrix}
\]
is positive and has entries in $\mathfrak m_{\rT_M}$. Positivity of the
entrywise $2\times2$ amplification of $\rT_M$ is proved in
\cite[Lemma~4.5(2), pp.~200--201]{Ha79}. This gives
\eqref{eq:operator-valued-cauchy-schwarz-matrix}, and the standard
estimate for a positive $2\times2$ operator matrix gives
\eqref{eq:operator-valued-cauchy-schwarz}.
\end{proof}

\begin{lemma}\label{operator-valued weight Banach norms}
The formula
\[
q_{\rT_M}(a)=\|a\|+\|\rT_M(a^*a)\|^{1/2}
\qquad(a\in\mathfrak n_{\rT_M})
\]
defines a Banach space norm on $\mathfrak n_{\rT_M}$.

There is, up to equivalence, a unique Banach space norm
$\|\cdot\|_{\rT_M}$ on $\mathfrak m_{\rT_M}$ such that
\[
\|y\|\leq\|y\|_{\rT_M},
\qquad
\|\rT_M(y)\|\leq\|y\|_{\rT_M}
\qquad(y\in\mathfrak m_{\rT_M})
\]
and
\[
\|a^*b\|_{\rT_M}\leq q_{\rT_M}(a)q_{\rT_M}(b)
\qquad(a,b\in\mathfrak n_{\rT_M}).
\]
\end{lemma}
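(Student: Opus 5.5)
The plan is to treat the three assertions in turn: $q_{\rT_M}$ is a complete norm on $\mathfrak n_{\rT_M}$; a suitable projective-type norm exists on $\mathfrak m_{\rT_M}$; and any two such norms are equivalent. The tools are \eqref{eq:operator-valued-cauchy-schwarz}, normality of $\rT_M$, and the closed graph theorem.

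\emph{The norm $q_{\rT_M}$.} Homogeneity is clear, and $q_{\rT_M}(a)\geq\|a\|$ gives definiteness. For subadditivity, expand
\[
\rT_M((a+b)^*(a+b))=\rT_M(a^*a)+\rT_M(a^*b)+\rT_M(b^*a)+\rT_M(b^*b)
\]
and bound the cross terms by \eqref{eq:operator-valued-cauchy-schwarz}. This gives
\[
\|\rT_M((a+b)^*(a+b))\|^{1/2}\leq\|\rT_M(a^*a)\|^{1/2}+\|\rT_M(b^*b)\|^{1/2}.
\]
The same computation shows that $\mathfrak n_{\rT_M}$ is a linear space.

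For completeness, let $(a_n)$ be $q_{\rT_M}$-Cauchy. It is norm Cauchy, so $a_n\to a$ in $N$. Fix a normal state $\varphi$ on $M$. Then $\varphi\circ\rT_M$ is a normal weight on $N$, hence $\sigma$-weakly, and a fortiori norm, lower semicontinuous. Therefore
\[
\varphi\bigl(\rT_M((a-a_n)^*(a-a_n))\bigr)\leq\liminf_m\varphi\bigl(\rT_M((a_m-a_n)^*(a_m-a_n))\bigr)\leq\varepsilon_n^2,
\]
with $\varepsilon_n\to0$. Taking the supremum over normal states $\varphi$ shows that $\rT_M((a-a_n)^*(a-a_n))$ is a bounded element of $M_+$ of norm at most $\varepsilon_n^2$, since an element of $\widehat M_+$ with uniformly bounded normal-state values is bounded. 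Thus $a-a_n\in\mathfrak n_{\rT_M}$, so $a\in\mathfrak n_{\rT_M}$ and $q_{\rT_M}(a-a_n)\to0$.

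\emph{Existence on $\mathfrak m_{\rT_M}$.} For $y\in\mathfrak m_{\rT_M}$ define
\[
\|y\|_{\rT_M}=\inf\Bigl\{\sum_k q_{\rT_M}(a_k)q_{\rT_M}(b_k)\ \Bigm|\ y=\sum_k a_k^*b_k,\ a_k,b_k\in\mathfrak n_{\rT_M},\ \sum_k q_{\rT_M}(a_k)q_{\rT_M}(b_k)<\infty\Bigr\},
\]
where the series converge in norm.
\begin{itemize}
\item The inequality $\|a^*b\|_{\rT_M}\leq q_{\rT_M}(a)q_{\rT_M}(b)$ holds by construction.
\item From $\|a_k^*b_k\|\leq\|a_k\|\|b_k\|$ we get $\|y\|\leq\|y\|_{\rT_M}$, which in particular gives definiteness.
\item Using \eqref{eq:operator-valued-cauchy-schwarz} termwise, together with normality of $\rT_M$ to pass $\rT_M$ through the norm-convergent sum, gives $\|\rT_M(y)\|\leq\|y\|_{\rT_M}$.
\end{itemize}

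The main obstacle is to check that an infinite representation $\sum_k a_k^*b_k$ really defines an element of $\mathfrak m_{\rT_M}$. This is needed both for the infimum to make sense and for completeness. First normalize each pair so that $q_{\rT_M}(a_k)=q_{\rT_M}(b_k)$. Then polarize:
\[
a_k^*b_k=\tfrac14\sum_{j=0}^{3}\ri^j\,(b_k+\ri^j a_k)^*(b_k+\ri^j a_k).
\]
With $c_k^{(j)}=b_k+\ri^ja_k$ one has $\sum_k q_{\rT_M}(c_k^{(j)})^2<\infty$. Each $x_j=\sum_k c_k^{(j)*}c_k^{(j)}$ is then a norm-convergent sum of positive elements, and normality gives
\[
\rT_M(x_j)=\sum_k\rT_M\bigl(c_k^{(j)*}c_k^{(j)}\bigr),
\]
a norm-convergent sum in $M_+$. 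Hence $x_j^{1/2}\in\mathfrak n_{\rT_M}$ and $x_j\in\mathfrak m_{\rT_M}$.

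With this in hand, the triangle inequality is immediate by concatenating representations. Completeness follows by the standard criterion: if $\sum_n\|y_n\|_{\rT_M}<\infty$, choose near-optimal representations of each $y_n$ and concatenate them. The resulting series represents an element $y\in\mathfrak m_{\rT_M}$, and tails of the concatenated series show $\|y-\sum_{n\leq N}y_n\|_{\rT_M}\to0$.

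\emph{Uniqueness up to equivalence.} Let $\|\cdot\|_1,\|\cdot\|_2$ be two Banach norms on $\mathfrak m_{\rT_M}$, each dominating the operator norm. The identity map $(\mathfrak m_{\rT_M},\|\cdot\|_1)\to(\mathfrak m_{\rT_M},\|\cdot\|_2)$ has closed graph. Indeed, if $y_n\to y$ in $\|\cdot\|_1$ and $y_n\to z$ in $\|\cdot\|_2$, then both convergences hold in operator norm, so $y=z$. The closed graph theorem makes the identity bounded in both directions. Only the domination $\|y\|\leq\|y\|_{\rT_M}$ is used here; the other two properties are needed only for existence.
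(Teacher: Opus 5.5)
Your proof is correct. Your treatment of $q_{\rT_M}$ is the same as the paper's, but you depart from it in the other two parts.

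\textbf{Existence.} The paper takes the infimum only over finite representations $y=\sum_{j=1}^n a_j^*b_j$, so completeness is not automatic. It introduces an auxiliary norm $\|\cdot\|_{\rT_M,+}$, defined through decompositions of $y$ into scalar multiples of positive elements of $\mathfrak m_{\rT_M}$. It proves this norm complete by applying normality of $\rT_M$ to absolutely summable series of positive elements, and then shows by polarization that $\|\cdot\|_{\rT_M,+}$ and $\|\cdot\|_{\rT_M}$ are equivalent.

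You instead allow countable representations in the definition, in the manner of a projective tensor norm. Completeness then follows formally by concatenating near-optimal representations. The analytic content (balancing $q_{\rT_M}(a_k)=q_{\rT_M}(b_k)$, polarizing, and applying normality to summable positive series) is concentrated in your check that such a series lies in $\mathfrak m_{\rT_M}$. The ingredients are the same, but your route dispenses with the auxiliary norm and the equivalence estimates.

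One presentational point: normality by itself only passes $\rT_M$ through increasing sums of positive elements. The identity $\rT_M(y)=\sum_k\rT_M(a_k^*b_k)$ is what your bound $\|\rT_M(y)\|\leq\|y\|_{\rT_M}$ needs. You should derive it explicitly from the polarized sums $x_j$ before that bullet, rather than after it.

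\textbf{Uniqueness.} The paper uses the inequality $\|a^*b\|'\leq q_{\rT_M}(a)q_{\rT_M}(b)$ to show that its finite-representation norm dominates any other admissible norm, so it is the largest one. It then invokes the open mapping theorem. Your closed graph argument uses only domination of the operator norm. It therefore shows more generally that any two complete norms on $\mathfrak m_{\rT_M}$ whose topologies are finer than the operator norm topology are equivalent.
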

\begin{proof}
Put $p_{\rT_M}(a)=\|\rT_M(a^*a)\|^{1/2}$ for $a\in\mathfrak n_{\rT_M}$.
Inequality \eqref{eq:operator-valued-cauchy-schwarz} gives
\[
p_{\rT_M}(a+b)^2
\leq p_{\rT_M}(a)^2+2p_{\rT_M}(a)p_{\rT_M}(b)+p_{\rT_M}(b)^2.
\]
Thus $p_{\rT_M}$ is subadditive, and $q_{\rT_M}$ is a norm.

We prove that it is complete. Let $(a_n)_n$ be a $q_{\rT_M}$-Cauchy
sequence. It converges in norm to some $a\in N$. Fix $\varepsilon>0$
and take $n_0$ such that
$q_{\rT_M}(a_n-a_m)\leq\varepsilon$ for $n,m\geq n_0$. For
$m\geq n_0$ and $\omega\in M_*^+$, lower semicontinuity of the normal
weight $\omega\circ \rT_M$ gives
\[
\begin{aligned}
(\omega\circ \rT_M)((a-a_m)^*(a-a_m))
&\leq\liminf_n
(\omega\circ \rT_M)((a_n-a_m)^*(a_n-a_m))\\
&\leq\varepsilon^2\|\omega\|.
\end{aligned}
\]
It follows that $a-a_m\in\mathfrak n_{\rT_M}$ and
$p_{\rT_M}(a-a_m)\leq\varepsilon$. We also have
$\|a-a_m\|\leq\varepsilon$. Hence $a\in\mathfrak n_{\rT_M}$ and
$q_{\rT_M}(a-a_m)\leq2\varepsilon$, proving completeness.

For $y\in\mathfrak m_{\rT_M}$, define
\[
\|y\|_{\rT_M}
=\inf\left\{
\sum_{j=1}^n q_{\rT_M}(a_j)q_{\rT_M}(b_j)
\ \middle|\
y=\sum_{j=1}^n a_j^*b_j,\quad
a_j,b_j\in\mathfrak n_{\rT_M}
\right\}.
\]
This is finite by the definition of $\mathfrak m_{\rT_M}$. Moreover,
\eqref{eq:operator-valued-cauchy-schwarz} gives
\[
\|y\|\leq\|y\|_{\rT_M},
\qquad
\|\rT_M(y)\|\leq\|y\|_{\rT_M},
\]
so $\|\cdot\|_{\rT_M}$ is a norm. Its definition also gives
$\|a^*b\|_{\rT_M}\leq q_{\rT_M}(a)q_{\rT_M}(b)$.

It remains to prove completeness. For $y\in\mathfrak m_{\rT_M}$, put
\[
\|y\|_{\rT_M,+}
=\inf\left\{
\sum_{j=1}^n |z_j|\bigl(\|c_j\|+\|\rT_M(c_j)\|\bigr)
\ \middle|\
y=\sum_{j=1}^n z_jc_j,\quad c_j\in\mathfrak m_{\rT_M}^+
\right\}.
\]
Here $z_1,\ldots,z_n\in\C$.
Polarization gives
$\mathfrak m_{\rT_M}=\operatorname{span}(\mathfrak m_{\rT_M}^+)$, so this
quantity is finite. It is a norm that dominates both $\|y\|$ and
$\|\rT_M(y)\|$. It is also complete. Indeed, choose nearly optimal
positive decompositions for an absolutely summable series in this
norm and split their coefficients into positive and negative real and
imaginary parts. It is then enough to consider
$c_n\in\mathfrak m_{\rT_M}^+$ such that
\[
\sum_n\bigl(\|c_n\|+\|\rT_M(c_n)\|\bigr)<+\infty.
\]
The series $\sum_n c_n$ converges in norm to a positive element $c$.
Normality of $\rT_M$ gives
\[
\rT_M(c)=\sup_N\sum_{n=1}^N \rT_M(c_n)=\sum_n\rT_M(c_n)\in M_+.
\]
Thus $c\in\mathfrak m_{\rT_M}^+$. The same estimate applied to the tails
gives
\[
\left\|c-\sum_{n=1}^N c_n\right\|_{\rT_M,+}
\leq
\left\|\sum_{n>N}c_n\right\|
+\left\|\sum_{n>N}\rT_M(c_n)\right\|
\longrightarrow0.
\]
Hence $\|\cdot\|_{\rT_M,+}$ is complete.

For $d\in\mathfrak n_{\rT_M}$, one has
\[
\|d^*d\|_{\rT_M,+}
\leq\|d\|^2+\|\rT_M(d^*d)\|
\leq q_{\rT_M}(d)^2.
\]
For $a,b\in\mathfrak n_{\rT_M}$ and $r>0$, polarization gives
\[
a^*b
=\frac14\sum_{k=0}^3(-\ri)^k
\bigl(ra+\ri^k r^{-1}b\bigr)^*
\bigl(ra+\ri^k r^{-1}b\bigr).
\]
It follows that
\[
\|a^*b\|_{\rT_M,+}
\leq\bigl(rq_{\rT_M}(a)+r^{-1}q_{\rT_M}(b)\bigr)^2.
\]
Optimizing in $r$ yields
\[
\|a^*b\|_{\rT_M,+}\leq4q_{\rT_M}(a)q_{\rT_M}(b),
\]
and hence
\[
\|y\|_{\rT_M,+}\leq4\|y\|_{\rT_M}
\qquad(y\in\mathfrak m_{\rT_M}).
\]
Conversely, if $c\in\mathfrak m_{\rT_M}^+$, then
$c^{1/2}\in\mathfrak n_{\rT_M}$ and
\[
\|c\|_{\rT_M}
\leq q_{\rT_M}(c^{1/2})^2
\leq2\bigl(\|c\|+\|\rT_M(c)\|\bigr).
\]
Therefore
\[
\|y\|_{\rT_M}\leq2\|y\|_{\rT_M,+}
\qquad(y\in\mathfrak m_{\rT_M}).
\]
The two norms are equivalent, so $\|\cdot\|_{\rT_M}$ is complete.

Finally, let $\|\cdot\|_{\rT_M}'$ be another Banach space norm with the
stated properties. For every representation
$y=\sum_{j=1}^n a_j^*b_j$, one has
\[
\|y\|_{\rT_M}'
\leq\sum_{j=1}^n q_{\rT_M}(a_j)q_{\rT_M}(b_j).
\]
Thus $\|y\|_{\rT_M}'\leq\|y\|_{\rT_M}$. The open mapping theorem applied to the
identity map between these two Banach spaces gives the reverse
inequality up to a constant. This proves uniqueness up to
equivalence.
\end{proof}

We now specialize the preceding discussion to the crossed product. Let
\[
\rT_M:(M\rtimes_\alpha G)_+\longrightarrow\widehat M_+
\]
be the canonical Haar operator-valued weight.
We use Haagerup's construction of $\rT_M$ and the formula in
\cite[Theorem~1.1(a)--(b)]{Ha78b}, together with the ultraweak density
statement in \cite[Lemma~2.3(e)--(f)]{Ha78a}.

For $\varphi\in\mathcal S(G)$, put
$\lambda(\varphi)=\int_G \varphi(g)u_g\,\rd g$.
This denotes the integrated form of the strongly continuous unitary
representation and satisfies
$\|\lambda(\varphi)\|\leq\|\varphi\|_1$.

\subsubsection{Two-point coefficients}

We use two-sided Schwartz--Bruhat smoothing to define an
operator-valued distribution. Write
$\mathcal L_\sigma(M\rtimes_\alpha G,M)$ for the Banach space of
bounded normal linear maps from $M\rtimes_\alpha G$ to $M$, equipped
with the operator norm.

When $G$ is discrete, $\rT_M$ is the canonical conditional expectation
and the ordinary Fourier coefficients $\rT_M(xu_t^*)$ are defined for
every $x\in M\rtimes_\alpha G$. In the nondiscrete setting, the formal expression
$\rT_M(xu_t^*)$ need not be defined because $\rT_M$ is an unbounded
operator-valued weight and $xu_t^*$ need not belong to
$\mathfrak m_{\rT_M}$. Smearing on only one side does not remove this domain
obstruction. Although $x\lambda(\psi)\in\mathfrak n_{\rT_M}$, there is no
reason for it to belong to $\mathfrak m_{\rT_M}$, the domain on which $\rT_M$ is
$M$-valued and linear. Two-sided smoothing gives
\[
\lambda(\varphi)x\lambda(\psi)
=\bigl(\lambda(\varphi)^*\bigr)^*x\lambda(\psi)
\in\mathfrak n_{\rT_M}^*\mathfrak n_{\rT_M}\subset\mathfrak m_{\rT_M}.
\]
The following theorem shows that two-sided smoothing lands in
$\mathfrak m_{\rT_M}$. After applying $\rT_M$, these operators yield bounded
normal evaluations which assemble into a distribution defined for every
$x\in M\rtimes_\alpha G$, whereas $\rT_M(xu_t^*)$ need not be defined.

\begin{theorem}\label{two-sided coefficient distribution}
The following assertions hold.
\begin{enumerate}[\rm (i)]
\item For every $x\in M\rtimes_\alpha G$ and
$\Phi\in\mathcal S(G\times G)$, the ultraweak integral
\[
\int_{G\times G}\Phi(s,t)u_sxu_t\,\rd s\,\rd t
\]
belongs to $\mathfrak m_{\rT_M}$.
\item There is a unique injective linear map
\[
\Delta:M\rtimes_\alpha G\longrightarrow\mathcal S'(G\times G,M)
\]
whose value $\langle\Delta(x),\Phi\rangle$ is obtained by applying
$\rT_M$ to the integral in~$(\rm i)$.
For every $\Phi\in\mathcal S(G\times G)$, the evaluation
$x\mapsto\langle\Delta(x),\Phi\rangle$ belongs to
$\mathcal L_\sigma(M\rtimes_\alpha G,M)$, and these evaluations depend
continuously on $\Phi$ in this space.
\item If $\varphi\in\rC_c(G,M)$ and
$$x=\int_G\varphi(t)u_t\,\rd t,$$ then $\Delta(x)$ is represented by
the bounded $M$-valued function
\begin{equation}\label{eq:two-point-core-formula}
\Delta(x)(g,h)=\alpha_g\bigl(\varphi(-g-h)\bigr).
\end{equation}
\end{enumerate}
\end{theorem}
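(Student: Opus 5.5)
The plan is to reduce everything to the one-variable smoothing $\lambda(\varphi)$ and Haagerup's formula for $\rT_M$ on $\mathfrak n_{\rT_M}$. The key input is that $\lambda(\varphi)\in\mathfrak n_{\rT_M}$ for $\varphi\in\mathcal S(G)\subset\rL^1(G)\cap\rL^2(G)$, with $\rT_M(\lambda(\varphi)^*\lambda(\varphi))=\|\varphi\|_2^2\,1$. This follows from \cite[Theorem~1.1]{Ha78b}, since $\rT_M$ restricted to $\rL_\alpha(G)$ is the Plancherel weight tensored with the identity. Consequently
\[
q_{\rT_M}(x\lambda(\psi))\leq\|x\|\bigl(\|\psi\|_1+\|\psi\|_2\bigr),
\]
because $\rT_M(\lambda(\psi)^*x^*x\lambda(\psi))\leq\|x\|^2\rT_M(\lambda(\psi)^*\lambda(\psi))$.

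For an elementary tensor $\Phi=\varphi\otimes\psi$, the integral in (i) equals $\lambda(\varphi)x\lambda(\psi)=(\lambda(\overline{\varphi}^{\,*}))^*\,x\lambda(\psi)$, where $\overline{\varphi}^{\,*}$ is the appropriate involution of $\varphi$. This product lies in $\mathfrak n_{\rT_M}^*\mathfrak n_{\rT_M}\subset\mathfrak m_{\rT_M}$. Lemma~\ref{operator-valued weight Banach norms} then gives
\[
\|\lambda(\varphi)x\lambda(\psi)\|_{\rT_M}\leq C\|x\|\,N(\varphi)N(\psi),
\]
with $N=\|\cdot\|_1+\|\cdot\|_2$ a continuous seminorm on $\mathcal S(G)$. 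So the bilinear map $(\varphi,\psi)\mapsto\lambda(\varphi)x\lambda(\psi)$ into the Banach space $(\mathfrak m_{\rT_M},\|\cdot\|_{\rT_M})$ is continuous. It therefore extends to the completed projective tensor product, which coincides with the inductive one since $\mathcal S$ is nuclear, and hence to $\mathcal S(G\times G)$. The extension agrees with the ultraweak integral: both sides are continuous into $N$ for $\|\cdot\|_{\rT_M}\geq\|\cdot\|$, and the integral is continuous from $\rL^1(G\times G)$, which contains $\mathcal S(G\times G)$ continuously. This gives (i).

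For (ii), I would set $\langle\Delta(x),\Phi\rangle$ to be $\rT_M$ applied to that element. The bound $\|\rT_M(y)\|\leq\|y\|_{\rT_M}$ gives continuity in $\Phi$, uniformly for $\|x\|\leq1$, so $\Delta(x)\in\mathcal S'(G\times G,M)$ and the evaluations depend continuously on $\Phi$ in operator norm. Normality in $x$ is checked on elementary tensors. For $\omega\in M_*$, the map $x\mapsto\omega(\rT_M(\lambda(\varphi)x\lambda(\psi)))$ is of the form $x\mapsto\langle x\,\Lambda(\lambda(\psi)),\Lambda(\lambda(\varphi)^*)\rangle$-type vector functionals in the GNS construction of the weights $\omega\circ\rT_M$, which are normal. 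Normality then passes to general $\Phi$ by norm limits in $\mathcal L_\sigma$.

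Injectivity comes next. If $\Delta(x)=0$, then $\rT_M(a^*xb)=0$ for all $a,b$ of the form $\lambda(\psi)m$ with $m\in M$, by $M$-bimodularity. Such elements span an ultraweakly dense subset of $\mathfrak n_{\rT_M}$, using \cite[Lemma~2.3]{Ha78a}. Composing with a faithful normal semifinite weight on $M$ yields the dual weight, whose GNS map has dense range on these elements, so $x=0$.

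For (iii), I would compute directly: $u_s\varphi(r)u_ru_t=\alpha_s(\varphi(r))u_{s+r+t}$. Applying $\rT_M$, which picks the coefficient at the group identity (Haagerup's formula $\rT_M(\lambda(f)^*\lambda(f))=\|f\|_2^2$ in distributional form), forces $r=-s-t$. This gives
\[
\int\Phi(s,t)\,\alpha_s(\varphi(-s-t))\,\rd s\,\rd t,
\]
which is formula \eqref{eq:two-point-core-formula}. To make this rigorous, I would use polarization of the identity $\rT_M(\lambda(f)^*y\lambda(h))=\int\!\!\int\overline{f}\,h\cdots$ for $y\in M$, then integrate over $r$.

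I expect the main obstacle to be the rigorous identification in (iii) and the normality claim. Both require handling $\rT_M$ on non-positive elements carefully, through Haagerup's explicit construction rather than formal manipulation.
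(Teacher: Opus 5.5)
Your proposal is correct and follows essentially the same route as the paper. In both, Haagerup's Plancherel identity puts $\lambda(\varphi)$ and $\lambda(\varphi)^*$ in $\mathfrak n_{\rT_M}$, and the Banach norm $\|\cdot\|_{\rT_M}$ of Lemma~\ref{operator-valued weight Banach norms} together with the Schwartz--Bruhat kernel theorem gives (i) and the continuity and normality in (ii). Injectivity comes from density of the vectors $\Lambda_{\widetilde\phi}(\lambda(\psi)y)$ in the GNS space of the dual weight, and (iii) from Haagerup's convolution-square formula on elementary tensors followed by density.

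The differences are cosmetic. You get normality from the GNS representations of the normal weights $\omega\circ\rT_M$ instead of polarizing with $c=\lambda(\varphi)^*+\ri^k\lambda(\psi)$. In (iii) you apply Haagerup's formula to $\rT_M(\lambda(f)^*y\lambda(h))$ with $y\in M$ and integrate in $r$ using normality, rather than writing $x\lambda(h)=\int_G b(q)u_q\,\rd q$. Your appeal to nuclearity to identify the projective and inductive tensor products is not needed: a jointly continuous bilinear map is separately continuous, so it already extends to $\mathcal S(G)\mathbin{\widehat\otimes}_{\iota}\mathcal S(G)\cong\mathcal S(G\times G)$.
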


\begin{proof}
For the duration of the proof, denote the integral in
assertion~$(\rm i)$ by $Q_\Phi(x)$.
We first prove assertion~$(\rm i)$, together with the boundedness and
normality statement in assertion~$(\rm ii)$.
By \cite[Theorem~1.1(b)]{Ha78b}, the operator-valued weight
$\rT_M$ satisfies the formula given there. Applying
\cite[Lemma~2.3(b)]{Ha78a} in that formula gives
\begin{equation}\label{eq:Haar-weight-Plancherel}
\rT_M(\lambda(\varphi)^*\lambda(\varphi))
=\rT_M(\lambda(\varphi)\lambda(\varphi)^*)
=\|\varphi\|_2^2 1
\end{equation}
for every $\varphi\in\mathcal S(G)$. In particular, both
$\lambda(\varphi)$ and $\lambda(\varphi)^*$ belong to
$\mathfrak n_{\rT_M}$. Let $\varphi,\psi\in\mathcal S(G)$ and
$x\in M\rtimes_\alpha G$. Then
$x\lambda(\psi)\in\mathfrak n_{\rT_M}$ because
$\mathfrak n_{\rT_M}$ is a left ideal, while
$\lambda(\varphi)^*\in\mathfrak n_{\rT_M}$. Hence
$\lambda(\varphi)x\lambda(\psi)
=(\lambda(\varphi)^*)^*x\lambda(\psi)$ belongs to
$\mathfrak n_{\rT_M}^*\mathfrak n_{\rT_M}$. Moreover,
\[
\rT_M(\lambda(\psi)^*x^*x\lambda(\psi))
\leq \|x\|^2\|\psi\|_2^2 1.
\]
Inequality \eqref{eq:operator-valued-cauchy-schwarz}, applied with
$a=\lambda(\varphi)^*$ and $b=x\lambda(\psi)$, gives
\[
\|\rT_M(\lambda(\varphi)x\lambda(\psi))\|
\leq\|\varphi\|_2\|\psi\|_2\|x\|.
\]
Thus
\[
C_{\varphi,\psi}:M\rtimes_\alpha G\longrightarrow M,
\qquad
C_{\varphi,\psi}(x)=\rT_M(\lambda(\varphi)x\lambda(\psi)),
\]
is bounded. For $c\in\mathfrak n_{\rT_M}$, the map
$x\mapsto \rT_M(c^*xc)$ is positive and normal. Polarization with
$c=\lambda(\varphi)^*+\ri^k\lambda(\psi)$, where $0\leq k\leq3$,
expresses $C_{\varphi,\psi}$ on $(M\rtimes_\alpha G)_+$ as a linear
combination of normal maps. It therefore proves normality of
$C_{\varphi,\psi}$.

The Schwartz--Bruhat kernel theorem recalled in
\S\,\ref{subsec:schwartz-bruhat} will be applied with a Banach
topology on $\mathfrak m_{\rT_M}$. Fix the Banach norm $\|\cdot\|_{\rT_M}$
provided by Lemma~\ref{operator-valued weight Banach norms}.
Formula \eqref{eq:Haar-weight-Plancherel} and the defining estimate for
this norm give
\[
\begin{aligned}
\|\lambda(\varphi)x\lambda(\psi)\|_{\rT_M}
&\leq q_{\rT_M}(\lambda(\varphi)^*)q_{\rT_M}(x\lambda(\psi))\\
&\leq\|x\|
\bigl(\|\varphi\|_1+\|\varphi\|_2\bigr)
\bigl(\|\psi\|_1+\|\psi\|_2\bigr).
\end{aligned}
\]
Consequently,
\[
(\varphi,\psi)\longmapsto
\bigl(x\mapsto\lambda(\varphi)x\lambda(\psi)\bigr)
\]
is a continuous bilinear map from
$\mathcal S(G)\times\mathcal S(G)$ to the Banach space of bounded
linear maps from $M\rtimes_\alpha G$ to
$(\mathfrak m_{\rT_M},\|\cdot\|_{\rT_M})$. The Schwartz--Bruhat kernel theorem
extends this map to $\mathcal S(G\times G)$. After composition with the
continuous inclusion $\mathfrak m_{\rT_M}\subset M\rtimes_\alpha G$, this
extension agrees with the ultraweak integral defining $Q_\Phi$. Indeed,
the two maps agree on elementary tensors, those tensors are dense, and
\[
\|Q_\Phi(x)\|\leq\|\Phi\|_1\|x\|.
\]
Hence $Q_\Phi(x)\in\mathfrak m_{\rT_M}$, and continuity of $\rT_M$ for
$\|\cdot\|_{\rT_M}$ proves both boundedness and operator-norm continuous
dependence of $\rT_M\circ Q_\Phi$ on $\Phi$. To prove normality,
approximate $\Phi$ by finite sums of elementary tensors.
The corresponding maps after applying $\rT_M$ are finite sums of the
maps $C_{\varphi,\psi}$ and hence are normal. They converge in
operator norm, and bounded normal maps form an operator-norm closed
space. Thus $\rT_M\circ Q_\Phi$ is normal.

We now complete assertion~$(\rm ii)$. For each fixed $x$, the
prescription in assertion~$(\rm ii)$ defines an
$M$-valued tempered Schwartz--Bruhat distribution. This gives the
asserted linear map $\Delta$, and the same formula proves uniqueness
as well as all the continuity and normality statements.

We next record a domain observation that will also be used below. Let
$\phi$ be a faithful normal semifinite weight on $M$ and put
$\widetilde\phi=\phi\circ \rT_M$. If $a,b\in\mathfrak n_\phi$ and
$y\in\mathfrak m_{\rT_M}$, then
\begin{equation}\label{eq:composition-weight-domain}
b^*ya\in\mathfrak m_{\widetilde\phi},
\qquad
\widetilde\phi(b^*ya)=\phi\bigl(b^*\rT_M(y)a\bigr).
\end{equation}
Indeed, first let $y=c^*d$ with
$c,d\in\mathfrak n_{\rT_M}$. Both $cb$ and $da$ belong to
$\mathfrak n_{\widetilde\phi}$. For example,
\[
\widetilde\phi((da)^*da)
=\phi\bigl(a^*\rT_M(d^*d)a\bigr)
\leq\|\rT_M(d^*d)\|\phi(a^*a).
\]
The assertion follows by linearity from
$b^*c^*da=(cb)^*(da)$.

It remains to prove injectivity in assertion~$(\rm ii)$. The vectors
\[
\Lambda_{\widetilde\phi}(\lambda(\psi)y),
\qquad \psi\in\mathcal S(G),\quad y\in\mathfrak n_\phi,
\]
span a dense subspace of the dual-weight GNS space. The same is true
with $\lambda(\psi)$ replaced by $\lambda(\psi)^*$
\cite[Lemma~2.5(iv), Lemma~2.12, and Theorem~3.2(i)]{Ha78a}.
Suppose that $\Delta(x)=0$. For
$\varphi,\psi\in\mathcal S(G)$ and $v,w\in\mathfrak n_\phi$, put
\[
A=\lambda(\varphi)^*v,
\qquad
C=\lambda(\psi)w.
\]
Formula \eqref{eq:Haar-weight-Plancherel} and $M$-bimodularity give
$A,C\in\mathfrak n_{\widetilde\phi}$. Applying
\eqref{eq:composition-weight-domain} to
$y=\lambda(\varphi)x\lambda(\psi)$, $a=w$, and $b=v$ gives
\[
\begin{aligned}
\left\langle
\pi_{\widetilde\phi}(x)\Lambda_{\widetilde\phi}(C),
\Lambda_{\widetilde\phi}(A)\right\rangle
&=\widetilde\phi(A^*xC)\\
&=\phi\left(v^*\rT_M(\lambda(\varphi)x\lambda(\psi))w\right)=0.
\end{aligned}
\]
These matrix coefficients vanish for vectors in two dense subspaces.
Hence $\pi_{\widetilde\phi}(x)=0$, and faithfulness of
$\pi_{\widetilde\phi}$ gives $x=0$.

We finally prove assertion~$(\rm iii)$. Suppose that
\[
x=\int_G\varphi(r)u_r\,\rd r
\qquad\text{with }\varphi\in\rC_c(G,M).
\]
First take $\Phi=f\otimes h$ with $f,h\in\mathcal D(G)$ and put
\[
b(q)=\int_G\varphi(r)h(q-r)\,\rd r.
\]
Then $b\in\rC_c(G,M)$ and Fubini's theorem gives
\[
x\lambda(h)=\int_G b(q)u_q\,\rd q.
\]
Both $\lambda(f)^*$ and $x\lambda(h)$ are integrated compactly
supported core elements. Thus the convolution-square formula in
\cite[Theorem~1.1(b)]{Ha78b}, followed by polarization, applies to
$Q_\Phi(x)=(\lambda(f)^*)^*x\lambda(h)$ and gives
\[
\begin{aligned}
\langle\Delta(x),\Phi\rangle
&=\int_G f(s)\alpha_s(b(-s))\,\rd s\\
&=\int_{G\times G}f(s)h(t)
\alpha_s\bigl(\varphi(-s-t)\bigr)\,\rd s\,\rd t.
\end{aligned}
\]
The same identity holds for finite sums of such tensors. The left
side is continuous in the Schwartz--Bruhat topology by the
$\mathfrak m_{\rT_M}$-norm estimate proved above. The right side is
bounded in norm by $\|\varphi\|_\infty\|\Phi\|_1$ and is therefore
continuous in the same topology. Since
$\mathcal D(G)\otimes\mathcal D(G)$ is dense in
$\mathcal S(G\times G)$, the identity extends to every Schwartz--Bruhat
test function.
This proves assertion~$(\rm iii)$.
\end{proof}

We use the following coordinate change.

\subsubsection{Adapted coordinates and the joint spectrum}

Define
\[
\Theta:G\times G\longrightarrow G\times G,
\qquad
\Theta(g,h)=(g,-g-h),
\]
which is an involutive topological group automorphism since
$\Theta^2=\id$. Define
$\nabla(x)\in\mathcal S'(G\times G,M)$ by
\begin{equation}\label{eq:adapted-distribution}
\langle\nabla(x),\Phi\rangle
=\langle\Delta(x),\Phi\circ\Theta\rangle.
\end{equation}
For $\Phi\in\mathcal S(G\times G)$, the change of variables
$(g,h)=(s,-s-t)$ in \eqref{eq:adapted-distribution} gives
\begin{equation}\label{eq:adapted-direct-formula}
\langle\nabla(x),\Phi\rangle
=\rT_M\left(
\int_{G\times G}
\Phi(s,t)\widetilde\alpha_s(x)u_t^*\,\rd s\,\rd t
\right).
\end{equation}
Indeed, the operator inside $\rT_M$ is the integral in assertion~$(\rm i)$
of Theorem~\ref{two-sided coefficient distribution}, with $\Phi$
replaced by $\Phi\circ\Theta$, and hence belongs to $\mathfrak m_{\rT_M}$.

The map $x\mapsto\nabla(x)$ is injective, every evaluation is bounded
and normal, and the evaluations depend continuously on $\Phi$ in operator
norm. If $\varphi\in\rC_c(G,M)$ and
$x=\int_G\varphi(t)u_t\,\rd t$, formula
\eqref{eq:two-point-core-formula} gives
\[
\nabla(x)(s,t)=\alpha_s(\varphi(t)).
\]

For $r\in G$, $\nu\in\widehat G$, and
$\Phi\in\mathcal S(G\times G)$, the two commuting actions satisfy
\begin{align}
\langle\nabla(\widetilde\alpha_r(x)),\Phi\rangle
&=\left\langle\nabla(x),
\bigl[(s,t)\longmapsto\Phi(s-r,t)\bigr]\right\rangle,
\label{eq:adapted-extension-covariance}\\
\langle\nabla(\widehat\alpha_\nu(x)),\Phi\rangle
&=\left\langle\nabla(x),
\bigl[(s,t)\longmapsto
\overline{\nu(t)}\Phi(s,t)\bigr]\right\rangle.
\label{eq:adapted-dual-covariance}
\end{align}
Both identities follow directly from
\eqref{eq:adapted-direct-formula}. Translations and multiplication by
characters preserve $\mathcal S(G\times G)$, so every smoothed
operator arising below belongs to $\mathfrak m_{\rT_M}$. The identity
\[
\widetilde\alpha_s(\widetilde\alpha_r(x))u_t^*
=\widetilde\alpha_{s+r}(x)u_t^*
\]
and translation in the $s$-variable give
\eqref{eq:adapted-extension-covariance}. For the second identity,
commutation of the actions and
$\widehat\alpha_\nu(u_t^*)=\nu(t)u_t^*$ give
\[
\widetilde\alpha_s(\widehat\alpha_\nu(x))u_t^*
=\overline{\nu(t)}\,
\widehat\alpha_\nu\bigl(\widetilde\alpha_s(x)u_t^*\bigr).
\]
Formula \eqref{eq:adapted-direct-formula} and dual invariance of $\rT_M$
now give \eqref{eq:adapted-dual-covariance}.

For $\Phi\in\mathcal S(\widehat G\times G)$, put
\[
(\check{\mathcal F}_1\Phi)(s,t)
=\int_{\widehat G}p(s)\Phi(p,t)\,\rd p
\]
and define
\[
\mathcal J:M\rtimes_\alpha G\longrightarrow
\mathcal S'(\widehat G\times G,M),
\qquad
\langle\mathcal J(x),\Phi\rangle
=\langle\nabla(x),\check{\mathcal F}_1\Phi\rangle.
\]
Thus $\mathcal J$ is the partial Fourier transform in the $s$-variable.

\begin{proposition}\label{joint distribution spectrum}
For every $\Phi\in\mathcal S(\widehat G\times G)$, the map
$x\mapsto\langle\mathcal J(x),\Phi\rangle$ is bounded and normal.
The map $\mathcal J$ is injective and satisfies
\begin{equation}\label{eq:joint-distribution-covariance}
\mathcal J(\widetilde\alpha_r\widehat\alpha_\nu(x))
=\bigl[(p,t)\longmapsto
\overline{p(r)\nu(t)}\bigr]\mathcal J(x)
\end{equation}
for $x\in M\rtimes_\alpha G$, $r\in G$, and
$\nu\in\widehat G$.
Moreover, one has
\[
\supp(\mathcal J(x))
=\Sp_{\widetilde\alpha\times\widehat\alpha}(x)
\subset\widehat G\times G
\qquad(x\in M\rtimes_\alpha G).
\]
If $N\subset M\rtimes_\alpha G$ is an invariant von Neumann
subalgebra and the spectrum of the restriction of
$\widetilde\alpha\times\widehat\alpha$ to $N$ is discrete, then
\[
N=
\left(
\bigcup_{(p,t)\in\widehat G\times G}
N^{\widetilde\alpha\times\widehat\alpha}(\{(p,t)\})
\right)''.
\]
Every point of this spectrum indexes a nonzero joint eigenspace in
$N$.
\end{proposition}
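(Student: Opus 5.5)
The plan is to reduce everything to Proposition~\ref{equivariant distribution spectrum}, applied with $H=G\times\widehat G$ (so $\widehat H=\widehat G\times G$ under the stated identification), $\theta=\widetilde\alpha\times\widehat\alpha$, target algebra $M$, and $\mathcal J$ in place of the abstract map.

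\textbf{Step 1: boundedness, normality and injectivity.} The partial inverse Fourier transform $\check{\mathcal F}_1$ is a topological isomorphism $\mathcal S(\widehat G\times G)\to\mathcal S(G\times G)$. Each evaluation $x\mapsto\langle\mathcal J(x),\Phi\rangle$ is therefore the evaluation of $\nabla(x)$ at a Schwartz--Bruhat function, hence bounded and normal by Theorem~\ref{two-sided coefficient distribution} transported through $\Theta$. The same theorem shows that $\nabla$ is injective. Since $\check{\mathcal F}_1$ is onto, $\mathcal J(x)=0$ forces $\nabla(x)=0$ and hence $x=0$.

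\textbf{Step 2: covariance.} From \eqref{eq:adapted-extension-covariance} and \eqref{eq:adapted-dual-covariance}, the action of $\widetilde\alpha_r\widehat\alpha_\nu$ on $\nabla(x)$ is the transpose of the map $\Phi(s,t)\mapsto\overline{\nu(t)}\Phi(s-r,t)$. Conjugating by $\check{\mathcal F}_1$, translation by $r$ in $s$ becomes multiplication by $\overline{p(r)}$ in the dual variable: a change of variables in $\int p(s)\Psi(p,t)\,\rd p$ gives $(\check{\mathcal F}_1\Psi)(s-r,t)=\check{\mathcal F}_1[\overline{p(r)}\Psi](s,t)$. Multiplication by $\overline{\nu(t)}$ commutes with $\check{\mathcal F}_1$. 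Together these give \eqref{eq:joint-distribution-covariance}. The only care needed is with sign and conjugation conventions; they must match the identification $((r,\nu),(p,t))\mapsto p(r)\nu(t)$ and the covariance form required in Proposition~\ref{equivariant distribution spectrum}.

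\textbf{Step 3: the support formula.} With Steps~1 and~2 in place, Proposition~\ref{equivariant distribution spectrum}(i) gives directly $\supp(\mathcal J(x))=\Sp_{\widetilde\alpha\times\widehat\alpha}(x)$.

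\textbf{Step 4: invariant subalgebras with discrete spectrum.} Let $N$ be invariant with discrete restricted spectrum. Restrict $\mathcal J$ to $N$; it stays injective, normal and covariant. Apply Proposition~\ref{equivariant distribution spectrum}(ii) to $\theta|_N$, using that the individual spectra computed in $N$ agree with those computed in $M\rtimes_\alpha G$, because $\theta_\varphi$ preserves $N$. This yields the generation statement and the nonvanishing of joint eigenspaces at each spectral point.

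The main obstacle is the convention bookkeeping in Step~2. One must get exactly the multiplier $\overline{p(r)\nu(t)}$ rather than its conjugate or an inverted character, and this is what makes the abstract proposition applicable verbatim. Everything else is a direct transfer of Theorem~\ref{two-sided coefficient distribution}.
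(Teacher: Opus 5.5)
Your proposal is correct and follows the paper's proof essentially verbatim. You transfer boundedness, normality and injectivity from Theorem~\ref{two-sided coefficient distribution} through $\Theta$ and the partial Fourier transform, read off the covariance from \eqref{eq:adapted-extension-covariance}--\eqref{eq:adapted-dual-covariance}, and then apply Proposition~\ref{equivariant distribution spectrum}(i), and (ii) for the restriction to $N$. Your convention check in Step~2 is also right: $p(s-r)=p(s)\overline{p(r)}$ yields exactly the multiplier $\overline{p(r)\nu(t)}=\overline{\chi(h)}$ that Proposition~\ref{equivariant distribution spectrum} requires.
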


\begin{proof}
Boundedness, normality and injectivity follow from Theorem
\ref{two-sided coefficient distribution}, because the coordinate
change and partial Fourier transformation are topological
isomorphisms of the test-function spaces. Formulas
\eqref{eq:adapted-extension-covariance} and
\eqref{eq:adapted-dual-covariance} give
\eqref{eq:joint-distribution-covariance}.
Assertion~$(\rm i)$ of Proposition
\ref{equivariant distribution spectrum}, applied to the action
$\widetilde\alpha\times\widehat\alpha$ of
$G\times\widehat G$, now gives the asserted support equality.
The last two assertions follow from assertion~$(\rm ii)$ of
Proposition~\ref{equivariant distribution spectrum}, applied to the
restriction of $\mathcal J$ to $N$.
\end{proof}

\subsubsection{An orbital refinement}

Let $H$ be a second countable locally compact abelian group and let
$\delta:H\curvearrowright M$ be a pointwise ultraweakly continuous
action commuting with $\alpha$.
Write $\widetilde\delta$ for its extension to
$M\rtimes_\alpha G$ which fixes every
$u_g$. There is a unique distribution
\[
\nabla^\delta(x)\in\mathcal S'(G\times G\times H,M)
\]
such that
\begin{equation}\label{eq:orbital-refinement}
\langle\nabla^\delta(x),\Phi\otimes\psi\rangle
=\left\langle\nabla(\widetilde\delta_\psi(x)),\Phi\right\rangle
\end{equation}
for $\Phi\in\mathcal S(G\times G)$ and $\psi\in\mathcal S(H)$. Here
\[
\widetilde\delta_\psi(x)
=\int_H\psi(h)\widetilde\delta_h(x)\,\rd h.
\]
The kernel theorem and the estimate
$\|\widetilde\delta_\psi\|\leq\|\psi\|_1$ show that every evaluation
of $\nabla^\delta$ is bounded and normal, with operator norm depending
continuously on the test function.

For
$\Phi\in\mathcal S(\widehat G\times G\times\widehat H)$, put
\[
(\check{\mathcal F}_{1,3}\Phi)(s,t,h)
=\int_{\widehat G\times\widehat H}
p(s)q(h)\Phi(p,t,q)\,\rd p\,\rd q
\]
and define
\[
\langle\mathcal J^\delta(x),\Phi\rangle
=\langle\nabla^\delta(x),\check{\mathcal F}_{1,3}\Phi\rangle.
\]

\begin{proposition}\label{orbital joint distribution spectrum}
The map $\mathcal J^\delta$ is injective, and all its evaluations are
bounded and normal. One has
\begin{equation}\label{eq:orbital-joint-covariance}
\mathcal J^\delta(
\widetilde\alpha_r\widehat\alpha_\nu\widetilde\delta_h(x))
=\bigl[(p,t,q)\longmapsto
\overline{p(r)\nu(t)q(h)}\bigr]\mathcal J^\delta(x)
\end{equation}
for $x\in M\rtimes_\alpha G$, $r\in G$, $\nu\in\widehat G$, and
$h\in H$. Moreover,
\[
\supp(\mathcal J^\delta(x))
=\Sp_{\widetilde\alpha\times\widehat\alpha\times\widetilde\delta}(x)
\subset\widehat G\times G\times\widehat H
\qquad(x\in M\rtimes_\alpha G).
\]
\end{proposition}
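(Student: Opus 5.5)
The plan is to reduce everything to Proposition~\ref{joint distribution spectrum} and Proposition~\ref{equivariant distribution spectrum}, applied to the product group $G\times\widehat G\times H$ acting by $\widetilde\alpha\times\widehat\alpha\times\widetilde\delta$. The only genuinely new ingredients are injectivity of $\mathcal J^\delta$ and the covariance formula \eqref{eq:orbital-joint-covariance}. The support identity is then a direct application of Proposition~\ref{equivariant distribution spectrum}(i) with $\theta=\widetilde\alpha\times\widehat\alpha\times\widetilde\delta$, after identifying the dual of $G\times\widehat G\times H$ with $\widehat G\times G\times\widehat H$ via $((r,\nu,h),(p,t,q))\mapsto p(r)\nu(t)q(h)$.

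\textbf{Boundedness and normality.} These pass from $\nabla^\delta$ to $\mathcal J^\delta$, because $\check{\mathcal F}_{1,3}$ is a topological isomorphism $\mathcal S(\widehat G\times G\times\widehat H)\to\mathcal S(G\times G\times H)$. The continuous dependence of the evaluations on the test function, already recorded for $\nabla^\delta$, is preserved under composition with this isomorphism.

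\textbf{Covariance.} By density of elementary tensors and the continuity of evaluations, it suffices to test $\nabla^\delta$ on $\Phi\otimes\psi$.
\begin{itemize}
\item Since $\widetilde\delta$ commutes with $\widetilde\alpha$ and $\widehat\alpha$ (it fixes the $u_g$ and commutes with $\alpha$), we have $\widetilde\delta_\psi(\widetilde\alpha_r\widehat\alpha_\nu x)=\widetilde\alpha_r\widehat\alpha_\nu\widetilde\delta_\psi(x)$. Formulas \eqref{eq:adapted-extension-covariance} and \eqref{eq:adapted-dual-covariance} then give translation by $r$ in $s$ and multiplication by $\overline{\nu(t)}$ in the $(s,t)$ variables.
\item For $h\in H$, we have $\widetilde\delta_\psi(\widetilde\delta_h x)=\widetilde\delta_{\psi(\cdot-h)}(x)$, which is a translation in the third variable.
\item After the partial Fourier transform $\check{\mathcal F}_{1,3}$, the translations in $s$ and in the $H$-variable become multiplication by the characters $\overline{p(r)}$ and $\overline{q(h)}$. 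The signs follow the same convention already used for $\mathcal J$.
\end{itemize}
This yields \eqref{eq:orbital-joint-covariance}.

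\textbf{Injectivity.} This is the step needing care. Suppose $\mathcal J^\delta(x)=0$. Then $\nabla^\delta(x)=0$, so $\nabla(\widetilde\delta_\psi(x))=0$ for every $\psi\in\mathcal S(H)$. Injectivity of $\nabla$ (Theorem~\ref{two-sided coefficient distribution}) gives $\widetilde\delta_\psi(x)=0$ for all $\psi$. Take an approximate identity $\psi_n\in\mathcal S(H)$, which exists since $\mathcal S(H)$ is dense in $\rL^1(H)$. Then $\widetilde\delta_{\psi_n}(x)\to x$ ultraweakly by continuity of $\widetilde\delta$, so $x=0$.

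\textbf{Continuity of the extension.} One point to verify is that $\widetilde\delta$ is pointwise ultraweakly continuous on $M\rtimes_\alpha G$. This follows because $\widetilde\delta$ is implemented spatially by $1\otimes U^\delta$ in a standard realization of the crossed product, or alternatively by checking continuity on the generators $xu_g$ and using boundedness.

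With these three properties in hand, the hypotheses of Proposition~\ref{equivariant distribution spectrum} hold for the product action, and assertion~(i) there yields the claimed support equality.
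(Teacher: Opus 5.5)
Your proposal is correct and follows the paper's proof essentially step for step. Boundedness and normality come from the construction. Injectivity comes from injectivity of $\nabla$ together with an approximate identity in $\mathcal S(H)$. Covariance comes from the covariance formulas for $\nabla$ plus a change of variables in the $H$-orbit. The support identity is assertion~(i) of Proposition~\ref{equivariant distribution spectrum} applied to the action of $G\times\widehat G\times H$. Your remark that $\widetilde\delta$ is pointwise ultraweakly continuous, because it is spatially implemented by $1\otimes U^\delta$, fills in a point the paper leaves implicit.
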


\begin{proof}
The continuity and normality statements follow from the construction.
If $\mathcal J^\delta(x)=0$, then
$\nabla(\widetilde\delta_\psi(x))=0$ for every
$\psi\in\mathcal S(H)$. Injectivity of $\nabla$ gives
$\widetilde\delta_\psi(x)=0$. Applying an approximate identity chosen
in $\mathcal S(H)$ and using ultraweak convergence gives $x=0$.

The covariance formulas above and a change of variables in the
$H$-orbit give \eqref{eq:orbital-joint-covariance}. Assertion~$(\rm i)$
of Proposition \ref{equivariant distribution spectrum}, applied to the action
$\widetilde\alpha\times\widehat\alpha\times\widetilde\delta$ of
$G\times\widehat G\times H$, now gives the asserted support equality.
\end{proof}

\subsubsection{Trace-scaling actions}
Suppose that $\tau$ is a faithful normal semifinite trace on $M$
satisfying $\tau\circ\alpha_s=e^{\ell(s)}\tau$ for every $s\in G$,
where $\ell:G\to\R$ is a continuous homomorphism. Consider first
$$x=\int_G \varphi(t)u_t\,\rd t\in
M'\cap(M\rtimes_\alpha G)$$ with
$\varphi\in\rC_c(G,M)$. Commutation with $M$ gives
$\varphi(t)\alpha_t(a)=a\varphi(t)$ for every $a\in M$ and $t\in G$.
For $a\in M\cap\rL^1(M,\tau)$, cyclicity and the scaling relation give
\[
\begin{aligned}
\tau(\varphi(t)\alpha_t(a))
&=\tau(a\varphi(t))
=\tau(\varphi(t)a)\\
&=e^{-\ell(t)}
\tau(\alpha_t(\varphi(t))\alpha_t(a)).
\end{aligned}
\]
Hence $\alpha_t(\varphi(t))=e^{\ell(t)}\varphi(t)$. Applying $\alpha_s$ gives
the translation relation
\[
e^{\ell(t)}\nabla(x)(s,t)=\nabla(x)(s+t,t).
\]
When $\ell=0$, this is the translation invariance that produces the
resonance relation. The following theorem extends the calculation to
an arbitrary element of the relative commutant.

\begin{theorem}\label{trace scaling resonance}
Let $\alpha:G\curvearrowright M$ be a continuous action of a locally
compact abelian group. Suppose that there are a continuous
homomorphism $\ell:G\to\R$ and a faithful normal semifinite trace
$\tau$ on $M$ satisfying $\tau\circ\alpha_s=e^{\ell(s)}\tau$ for
every $s\in G$.
Put $G_0=\ker\ell$. Then
\[
M'\cap(M\rtimes_\alpha G)
\subset M\rtimes_{\alpha|_{G_0}}G_0.
\]
Moreover,
\[
\Sp\left((\widetilde\alpha\times\widehat\alpha)
\big|_{M'\cap(M\rtimes_\alpha G)}\right)
\subset
\{(p,t)\in\widehat G\times G_0\mid p(t)=1\}.
\]
\end{theorem}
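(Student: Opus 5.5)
The plan is to turn the heuristic computation preceding the statement into an identity between distributions. For $x\in M'\cap(M\rtimes_\alpha G)$ I will show
\[
e^{\ell(t)}\nabla(x)(s,t)=\nabla(x)(s+t,t)
\]
in $\mathcal S'(G\times G,M)$, meaning that the two sides agree when paired with test functions. Then I will read off both conclusions by partial Fourier transform and the division fact of \S\ref{subsec:schwartz-bruhat}.

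\textbf{Step 1: the translation identity.} Fix $\Phi\in\mathcal S(G\times G)$ and put
\[
Y_\Phi=\int\Phi(s,t)\widetilde\alpha_s(x)u_t^*\,\rd s\,\rd t\in\mathfrak m_{\rT_M},
\]
so that $\langle\nabla(x),\Phi\rangle=\rT_M(Y_\Phi)$ by \eqref{eq:adapted-direct-formula}.
\begin{itemize}
\item Since $x$ commutes with $M$, so does each $\widetilde\alpha_s(x)$. Hence $aY_\Phi$ can be rewritten by moving $a$ past $\widetilde\alpha_s(x)$ and then past $u_t^*$, which turns $a$ into $\alpha_t(a)$.
\item I then pair with $\tau$. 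For $a,b\in\mathfrak n_\tau\cap\mathfrak n_\tau^*$, apply \eqref{eq:composition-weight-domain} with $\phi=\tau$ and use that $\widetilde\tau=\tau\circ\rT_M$ is the dual weight. This weight satisfies $\widetilde\tau(u_t y u_t^*)=e^{\ell(t)}\widetilde\tau(y)$, or its trace-scaling analogue; its modular behaviour is governed by $\ell$ through the scaling of $\tau$.
\item Comparing $\tau(b\,\rT_M(aY_\Phi))$ with $\tau(b\,\rT_M(Y_\Phi\,\cdot\,))$ via cyclicity of $\tau$ should give
\[
\rT_M(Y_{\Phi})\,=\,\rT_M(Y_{\Phi'}),\qquad \Phi'(s,t)=e^{\ell(t)}\Phi(s-t,t).
\]
\end{itemize}
It is safest to first verify this on the dense core $x=\int\varphi(t)u_t\,\rd t$ using \eqref{eq:two-point-core-formula}, where it is exactly the computation displayed before the theorem. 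However, the core is not contained in the relative commutant, so the general case must be done directly with the weight identities.

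I expect Step 1 to be the main obstacle. The difficulties are domain issues: justifying cyclicity of the semifinite trace on products of $\rT_M(Y_\Phi)$ with $\mathfrak n_\tau$-elements, and making rigorous the interchange of $a$ with the $u_t^*$ inside the smeared integral. I would try to reduce everything to the pairing $\tau(b^*\,\cdot\,a)$ and use the normality and continuity of evaluations from Theorem~\ref{two-sided coefficient distribution}.

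\textbf{Step 2: partial Fourier transform.} Pass to $\mathcal J(x)$ by transforming in $s$. Translation by $t$ in $s$ becomes multiplication by $\overline{p(t)}$, or $p(t)$ depending on convention, so the identity becomes
\[
\bigl(e^{\ell(t)}-p(t)\bigr)\mathcal J(x)=0.
\]
The multiplier $m(p,t)=e^{\ell(t)}-p(t)$ is built from $w_G$ and $\ell$, and it is smooth or locally constant on stages. By the division fact, $\supp\mathcal J(x)\subset m^{-1}(0)$. Since $|p(t)|=1$, the zero set is $\{\ell(t)=0,\ p(t)=1\}$.

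\textbf{Step 3: conclusions.} By Proposition~\ref{joint distribution spectrum}, $\supp\mathcal J(x)=\Sp_{\widetilde\alpha\times\widehat\alpha}(x)$. Taking the closure of the union over $x$ gives the spectral inclusion, because the set $\{(p,t): t\in G_0,\ p(t)=1\}$ is closed.

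For the first assertion, use \eqref{eq:coordinate-restriction}. The $\widehat\alpha$-spectrum of $x$ lies in $G_0$, so $x$ is fixed by $\widehat\alpha|_{G_0^\perp}$: for $\nu\in G_0^\perp$, the multiplier $\overline{\nu(t)}$ equals $1$ on the support, and injectivity of $\mathcal J$ then gives $\widehat\alpha_\nu(x)=x$. By duality (the same fixed-point identification used in the proof of Theorem~\ref{intermediate subfactor property}), this puts $x$ in $M\rtimes_{\alpha|_{G_0}}G_0$.
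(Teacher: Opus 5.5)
Your architecture is the paper's: a shear identity for $\nabla(x)$, partial Fourier transform, the division fact, Proposition~\ref{joint distribution spectrum}, coordinate restriction, and the dual fixed-point theorem. The gap is Step~1, which carries the whole content of the theorem, and the mechanism you propose for it does not close. Moving $a$ through the integrand gives
\[
aY_\Phi=\int_{G\times G}\Phi(s,t)\,\widetilde\alpha_s(x)u_t^*\alpha_t(a)\,\rd s\,\rd t,
\]
and its right coefficient depends on the integration variable $t$. The $M$-bimodularity of $\rT_M$ cannot pull such a coefficient out. Cyclicity of $\tau$ only acts after $\rT_M$ has been applied, that is, on $t$-independent factors. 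You could rewrite the integrand as $\widetilde\alpha_t\bigl(\widetilde\alpha_{s-t}(x)u_t^*a\bigr)$ and use $\widetilde\tau\circ\Ad(u_t)=e^{\ell(t)}\widetilde\tau$ pointwise, but this does not help either. The integrands are not individually in the domain of $\widetilde\tau$; only the smeared operator is. Moreover, for $\ell\neq0$ the dual weight $\widetilde\tau$ is not a trace, so the only cyclicity available in the crossed product is its KMS condition.

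The missing idea is to move the smeared \emph{group unitaries} across $x$, not $a$, using that KMS condition. Take $\psi\in\mathcal D(G)$ and put
$X=\int\varphi(s)\widetilde\alpha_s(x)\,\rd s\in M'\cap(M\rtimes_\alpha G)$, $Y=\int\psi(t)u_t^*\,\rd t$ and $Z=\int e^{\ell(t)}\psi(t)u_t^*\,\rd t$. Haagerup's dual-weight modular formula puts $M$ in the centralizer of $\widetilde\tau$. It also makes $Y$ entire with $\sigma^{\widetilde\tau}_{\ri}(Y)=Z$; this is where your scaling relation really enters. For $a\in M_+\cap\rL^1(M,\tau)$, apply KMS to $X$ and $a^{1/2}Ya^{1/2}$, using $[X,a^{1/2}]=0$ and \eqref{eq:composition-weight-domain}. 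This gives $\tau(\rT_M(XY)a)=\tau(\rT_M(ZX)a)$, hence $\rT_M(XY)=\rT_M(ZX)$. Since $u_t^*\widetilde\alpha_s(x)=\widetilde\alpha_{s-t}(x)u_t^*$, this is exactly the shear identity with kernel $e^{\ell(t)}\varphi(s+t)\psi(t)$.

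There are three smaller problems.
\begin{enumerate}
\item Your $\Phi'(s,t)=e^{\ell(t)}\Phi(s-t,t)$ contradicts your own pointwise identity. The correct kernel is $e^{\ell(t)}\Phi(s+t,t)$; the zero set of the multiplier is unaffected.
\item The identity cannot be posed for all $\Phi\in\mathcal S(G\times G)$, because $e^{\ell(t)}\Phi$ need not be a test function. This is why $\psi$ must have compact support, and why density of elementary tensors is invoked only after the Fourier transform.
\item In Step~3, the fact that $\overline{\nu(t)}=1$ on $\supp\mathcal J(x)$ does not give $(\overline{\nu(t)}-1)\mathcal J(x)=0$ for a distribution; consider $(e^{-\ri t}-1)\delta_0'\neq0$ on $\R$. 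Conclude instead by restricting $\widehat\alpha$ to $G_0^\perp$ and applying \eqref{eq:coordinate-restriction} and \eqref{eq:one-point-spectrum}, as the paper does.
\end{enumerate}
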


\begin{proof}
Take $x\in M'\cap(M\rtimes_\alpha G)$. For
$\varphi\in\mathcal S(G)$ and $\psi\in\mathcal D(G)$, put
\[
\begin{aligned}
X&=\int_G\widetilde\alpha_s(x)\varphi(s)\,\rd s,\\
Y&=\int_Gu_t^*\psi(t)\,\rd t,\\
Z&=\int_Gu_t^*e^{\ell(t)}\psi(t)\,\rd t.
\end{aligned}
\]
Let $\widetilde\tau=\tau\circ \rT_M$. The dual-weight modular formula
\cite[Theorem~3.2(ii)]{Ha78a} shows that
$Y$ is entire for $\sigma^{\widetilde\tau}$ and that
$\sigma_{\ri}^{\widetilde\tau}(Y)=Z$. A direct calculation identifies
$XY$ and $ZX$ with the two-sided integrals associated with the kernels
\[
\begin{aligned}
\Phi_1(r,q)&=\varphi(r)\psi(-r-q),\\
\Phi_2(r,q)&=e^{\ell(-r-q)}\psi(-r-q)\varphi(-q).
\end{aligned}
\]
Since $\psi$ has compact support, $e^\ell\psi\in\mathcal D(G)$.
Each kernel is obtained from a tensor product of functions in
$\mathcal S(G)$ by pullback under a topological group automorphism of
$G\times G$. Thus both kernels belong to $\mathcal S(G\times G)$, and
$XY,ZX\in\mathfrak m_{\rT_M}$ by
Theorem~\ref{two-sided coefficient distribution}. Since
$\mathfrak m_{\rT_M}$ is an
$M$-bimodule, formula \eqref{eq:composition-weight-domain} shows that
two-sided cutdowns by elements of $M\cap\rL^2(M,\tau)$ belong to the
domain of $\widetilde\tau$. Moreover,
$X\in M'\cap(M\rtimes_\alpha G)$, the algebra $M$
lies in the centralizer of $\widetilde\tau$, and
$\sigma_{\ri}^{\widetilde\tau}(Y)=Z$. Apply the KMS identity to $X$
and $a^{1/2}Ya^{1/2}$. For every
$a\in M_+\cap\rL^1(M,\tau)$,
\[
\begin{aligned}
\tau(\rT_M(XY)a)
&=\widetilde\tau(a^{1/2}XYa^{1/2})\\
&=\widetilde\tau\left(
\sigma_{\ri}^{\widetilde\tau}(a^{1/2}Ya^{1/2})X
\right)\\
&=\widetilde\tau\left(
a^{1/2}\sigma_{\ri}^{\widetilde\tau}(Y)a^{1/2}X
\right)\\
&=\widetilde\tau(a^{1/2}ZXa^{1/2})
=\tau(\rT_M(ZX)a).
\end{aligned}
\]
All the expressions involving $\widetilde\tau$ are in
$\mathfrak m_{\widetilde\tau}$ by
\eqref{eq:composition-weight-domain}, so this calculation does not use
an undefined one-sided value of the weight. The second equality is
obtained by commuting $X$ with the left copy of $a^{1/2}$ and applying
KMS to $X$ and $a^{1/2}Ya^{1/2}$. This latter element is entire and
the third equality uses that $M$ lies in the centralizer of
$\widetilde\tau$. Finally, $\sigma_{\ri}^{\widetilde\tau}(Y)=Z$, and
commuting $X$ with the right copy of $a^{1/2}$ gives the fourth
equality.
These functionals separate $M$, so $\rT_M(XY)=\rT_M(ZX)$.

The definition of the adapted distribution gives
\[
\rT_M(XY)=\langle\nabla(x),\varphi\otimes\psi\rangle.
\]
On the other hand,
\[
\begin{aligned}
ZX
&=\int_{G\times G}
u_t^*\widetilde\alpha_s(x)
e^{\ell(t)}\psi(t)\varphi(s)\,\rd s\,\rd t\\
&=\int_{G\times G}
\widetilde\alpha_{s-t}(x)u_t^*
e^{\ell(t)}\psi(t)\varphi(s)\,\rd s\,\rd t\\
&=\int_{G\times G}
\widetilde\alpha_s(x)u_t^*
e^{\ell(t)}\psi(t)\varphi(s+t)\,\rd s\,\rd t.
\end{aligned}
\]
Consequently,
\begin{equation}\label{eq:trace-scaling-shear}
\langle\nabla(x),\varphi\otimes\psi\rangle
=\left\langle\nabla(x),
\bigl[(s,t)\longmapsto
e^{\ell(t)}\varphi(s+t)\psi(t)\bigr]\right\rangle.
\end{equation}
We now pass to the partial Fourier transform. Let
$a\in\mathcal D(\widehat G)$ and $b\in\mathcal D(G)$, and put
$\varphi=\check a\in\mathcal S(G)$ and $\psi=b$. The identity
\[
\check{\mathcal F}_1\bigl(
[(p,t)\longmapsto e^{\ell(t)}p(t)a(p)b(t)]
\bigr)(s,t)
=e^{\ell(t)}\varphi(s+t)\psi(t)
\]
shows that \eqref{eq:trace-scaling-shear} gives
\[
\langle\mathcal J(x),a\otimes b\rangle
=\left\langle\mathcal J(x),
\bigl[(p,t)\longmapsto
e^{\ell(t)}p(t)a(p)b(t)\bigr]\right\rangle.
\]
Write
\[
m(p,t)=1-e^{\ell(t)}p(t)
\qquad((p,t)\in\widehat G\times G).
\]
The elementary-tensor identity above says that
$\langle\mathcal J(x),m(a\otimes b)\rangle=0$. Multiplication by $m$
is continuous on $\mathcal D(\widehat G\times G)$. Indeed,
multiplication by $p(t)$ was treated in
\S\,\ref{subsec:schwartz-bruhat}, while $e^{\ell(t)}$ is smooth
in the Lie variables and locally constant in the totally disconnected
variables on every Bruhat stage. Density of finite sums of elementary
tensors therefore gives
\begin{equation}\label{eq:trace-scaling-multiplier}
\left\langle\mathcal J(x),m\Phi\right\rangle=0
\qquad\bigl(\Phi\in\mathcal D(\widehat G\times G)\bigr).
\end{equation}
If $\Phi\in\mathcal D(\widehat G\times G)$ is supported where $m$ does
not vanish, the division fact above gives
$\Psi\in\mathcal D(\widehat G\times G)$ with $m\Psi=\Phi$. Equation
\eqref{eq:trace-scaling-multiplier} then gives
$\langle\mathcal J(x),\Phi\rangle=0$. Hence
$\supp(\mathcal J(x))\subset m^{-1}(\{0\})$. Proposition
\ref{joint distribution spectrum} gives
\[
\Sp_{\widetilde\alpha\times\widehat\alpha}(x)
\subset
\{(p,t)\in\widehat G\times G
\mid e^{\ell(t)}p(t)=1\}.
\]
Since $|p(t)|=1$, the equality in the last set forces
$\ell(t)=0$ and $p(t)=1$.

Since $G_0$ is closed, the coordinate-restriction formula
\eqref{eq:coordinate-restriction} gives
$\Sp_{\widehat\alpha}(x)\subset G_0$. If
$t\in\Sp_{\widehat\alpha}(x)$ and $\nu\in G_0^\perp$, then
$t\in G_0$ and hence $\nu(t)=1$. Thus the restricted action has only
the trivial frequency on $x$, and \eqref{eq:one-point-spectrum} gives
$\widehat\alpha_\nu(x)=x$ for every $\nu\in G_0^\perp$. The dual
fixed-point theorem
\cite[Theorem ${\rm X}$.2.3]{Ta03} therefore
gives
$x\in(M\rtimes_\alpha G)^{G_0^\perp}
=M\rtimes_{\alpha|_{G_0}}G_0$.

Finally, $x$ was arbitrary in $M'\cap(M\rtimes_\alpha G)$, and the
set
\[
\{(p,t)\in\widehat G\times G_0\mid p(t)=1\}
\]
is closed. Taking the closure of the union of the individual spectra
proves the asserted spectrum inclusion for the restricted action.
\end{proof}

\begin{corollary}\label{resonance spectral inclusion}
Assume that $\alpha$ preserves a faithful normal semifinite trace on
$M$. Then
\[
\Sp\left((\widetilde\alpha\times\widehat\alpha)
\big|_{M'\cap(M\rtimes_\alpha G)}\right)
\subset
\{(p,t)\in\widehat G\times G\mid p(t)=1\}.
\]
\end{corollary}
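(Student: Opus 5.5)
This is the special case $\ell=0$ of Theorem~\ref{trace scaling resonance}, so I would derive it directly from that theorem. Let $\tau$ be a faithful normal semifinite trace on $M$ with $\tau\circ\alpha_s=\tau$ for every $s\in G$. Then the scaling relation $\tau\circ\alpha_s=e^{\ell(s)}\tau$ holds with the trivial homomorphism $\ell\equiv0$. This $\ell$ is obviously a continuous homomorphism $G\to\R$, so all hypotheses of Theorem~\ref{trace scaling resonance} are met.

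Apply Theorem~\ref{trace scaling resonance} with this choice. In this case $G_0=\ker\ell=G$, so its first conclusion $M'\cap(M\rtimes_\alpha G)\subset M\rtimes_{\alpha|_{G_0}}G_0$ is empty. Its second conclusion reads
\[
\Sp\left((\widetilde\alpha\times\widehat\alpha)\big|_{M'\cap(M\rtimes_\alpha G)}\right)
\subset\{(p,t)\in\widehat G\times G_0\mid p(t)=1\}
=\{(p,t)\in\widehat G\times G\mid p(t)=1\},
\]
which is exactly the assertion.

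There is no real obstacle. The only thing to check is the bookkeeping of identifications: the spectrum is taken in $\widehat{G\times\widehat G}\cong\widehat G\times G$ via the pairing $((r,\nu),(p,t))\mapsto p(r)\nu(t)$ fixed at the start of Subsection~\ref{sec:spectral-rigidity}, and this is the same convention used in Theorem~\ref{trace scaling resonance}. With that, the corollary follows with no further argument.
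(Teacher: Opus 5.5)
Your proposal is correct and is exactly the paper's proof: the paper also derives the corollary by applying Theorem~\ref{trace scaling resonance} with $\ell=0$. Then $G_0=\ker\ell=G$, so the theorem's spectral inclusion is precisely the stated one.
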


\begin{proof}
Apply Theorem~\ref{trace scaling resonance} with $\ell=0$.
\end{proof}

\subsubsection{Invariant weights}

Suppose first that $\alpha$ preserves a faithful normal semifinite
weight $\phi$ and consider
$$x=\int_G\varphi(t)u_t\,\rd t\in
M'\cap(M\rtimes_\alpha G)$$ with
$\varphi\in\rC_c(G,M)$. Commutation with $M$ gives
$\varphi(t)\alpha_t(a)=a\varphi(t)$ for every $a\in M$ and $t\in G$.
Fix $t\in G$ and put $c=\varphi(t)$. Assume formally that $c$ is
entire for $\sigma^\phi$ and take $a\in M$ so that the weight values
below are defined. The KMS identity, the commutation relation applied
to $\alpha_{-t}(a)$, and invariance of $\phi$ give
\[
\phi\bigl(a\sigma_{-\ri}^\phi(c)\bigr)
=\phi(ca)=\phi(\alpha_{-t}(a)c)=\phi(a\alpha_t(c)).
\]
Thus, formally,
\[
\alpha_t(\varphi(t))=\sigma_{-\ri}^\phi(\varphi(t)).
\]
Since $\alpha$ commutes with $\sigma^\phi$ and
$\nabla(x)(s,t)=\alpha_s(\varphi(t))$, this becomes
\[
\nabla(x)(s+t,t)
=\sigma_{-\ri}^\phi(\nabla(x)(s,t)).
\]
We use the following modular smoothing and KMS facts to justify this
calculation.

\begin{lemma}\label{modular smoothing and KMS testing}
Let $N$ be a von Neumann algebra, let $\phi$ be a faithful normal
semifinite weight on $N$, and put $\sigma=\sigma^\phi$. The following
assertions hold.
\begin{enumerate}[\rm (i)]
\item Let $y\in N$ and let $\varphi\in\mathcal S(\R)$ have compactly
supported Fourier transform. Put
\[
\sigma_\varphi(y)=\int_\R \varphi(v)\sigma_v(y)\,\rd v.
\]
Then $\sigma_\varphi(y)$ is entire for $\sigma$. Fourier inversion
extends $\varphi$ to an entire function. For $z\in\C$, put
\[
\varphi_z(v)=\varphi(v-z)
\qquad(v\in\R).
\]
Then $z\mapsto\varphi_z$ is entire with values in $\mathcal S(\R)$ and
\begin{equation}\label{eq:analytic-modular-smoothing}
\sigma_z(\sigma_\varphi(y))
=\sigma_{\varphi_z}(y)
=\int_\R \varphi(v-z)\sigma_v(y)\,\rd v
\qquad(z\in\C).
\end{equation}
Moreover,
\begin{equation}\label{eq:complex-translate-Fourier-transform}
\widehat{\varphi_z}(q)=e^{-\ri qz}\widehat\varphi(q).
\end{equation}
If $P\subset N$ is a globally $\sigma$-invariant von Neumann
subalgebra and $y\in P'\cap N$, then
$\sigma_z(\sigma_\varphi(y))\in P'\cap N$ for every $z\in\C$.

\item If $A\in N$ is entire for $\sigma$ and
$C\in\mathfrak m_\phi$, then
\[
AC,\ C\sigma_{-\ri}(A)\in\mathfrak m_\phi,
\qquad
\phi(AC)=\phi\bigl(C\sigma_{-\ri}(A)\bigr).
\]

\item The Tomita algebra $\mathcal T_\phi$ consists of the entire
elements $a\in N$ such that
\[
\sigma_z(a)\in\mathfrak n_\phi\cap\mathfrak n_\phi^*
\qquad(z\in\C).
\]
For $a,b\in\mathcal T_\phi$, the functional
\[
d\longmapsto\phi(b^*da)
=\left\langle
\pi_\phi(d)\Lambda_\phi(a),\Lambda_\phi(b)
\right\rangle
\qquad(d\in N)
\]
is normal. These functionals separate the points of $N$.
\end{enumerate}
\end{lemma}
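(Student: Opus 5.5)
This lemma collects standard facts of Tomita--Takesaki theory, so the plan is to verify each item directly from the definitions rather than import new machinery.

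\textbf{Item (i).} Since $\widehat\varphi$ is compactly supported, Fourier inversion
\[
\varphi(v)=\frac1{2\pi}\int_\R e^{\ri qv}\widehat\varphi(q)\,\rd q
\]
extends $\varphi$ to an entire function $\varphi(w)$. The estimate $|\varphi(v-z)|\lesssim e^{R|\operatorname{Im}z|}$, together with the analogous bounds on derivatives and on $v$-weighted versions obtained by integrating by parts in $q$, shows that $z\mapsto\varphi_z$ is entire with values in $\mathcal S(\R)$. Formula \eqref{eq:complex-translate-Fourier-transform} then follows by computing the Fourier transform for real $z$ and using analytic continuation in $z$.

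To see that $\sigma_\varphi(y)$ is entire, note that $t\mapsto\sigma_t(\sigma_\varphi(y))=\sigma_{\varphi_t}(y)$ for real $t$ by translation invariance of Lebesgue measure. The map $z\mapsto\sigma_{\varphi_z}(y)$ is entire in the ultraweak sense, because $\psi\mapsto\sigma_\psi(y)$ is a bounded linear map from $\rL^1(\R)$ to $N$, and $z\mapsto\varphi_z$ is entire into $\mathcal S(\R)\subset\rL^1(\R)$. This gives \eqref{eq:analytic-modular-smoothing}.

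For the relative commutant statement, $\sigma_v(y)\in P'\cap N$ for every real $v$ because $P$ is globally $\sigma$-invariant. The space $P'\cap N$ is ultraweakly closed, so the integral $\sigma_{\varphi_z}(y)$ stays in it.

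\textbf{Item (ii).} This is the standard KMS-type identity for weights. I will cite Takesaki, Vol.~II (Lemma VIII.3.18 / Theorem VIII.3.? on $\mathfrak m_\phi$ being a bimodule over the analytic elements). Concretely, for $C=b^*a$ with $a,b\in\mathfrak n_\phi$, I have $AC=(bA^*)^*a$. Since $A^*$ is entire, $bA^*\in\mathfrak n_\phi$, using
\[
\Lambda_\phi(bA^*)=J\pi(\sigma_{-\ri/2}(A^*))^*J\,\Lambda_\phi(b).
\]
Similarly $C\sigma_{-\ri}(A)=b^*(a\sigma_{-\ri}(A))$ lies in $\mathfrak m_\phi$. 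Equality of the two weight values then reduces to the identity
\[
\langle\Lambda_\phi(a),\Lambda_\phi(bA^*)\rangle=\langle\Lambda_\phi(a\sigma_{-\ri}(A)),\Lambda_\phi(b)\rangle,
\]
which follows from the right-multiplication formula
\[
\Lambda_\phi(xc)=J\sigma_{-\ri/2}(c)^*J\,\Lambda_\phi(x).
\]
The identity extends to all of $\mathfrak m_\phi$ by linearity.

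\textbf{Item (iii).} The description of $\mathcal T_\phi$ is the definition of the Tomita algebra. Normality of $d\mapsto\phi(b^*da)$ is immediate from the vector formula. For separation of points, I will use that $\Lambda_\phi(\mathcal T_\phi)$ is dense in $\rL^2(N,\phi)$: smooth elements of $\mathfrak n_\phi\cap\mathfrak n_\phi^*$ by Gaussian averages $\sigma_{g_n}$, as in item (i). Then if $\phi(b^*da)=0$ for all $a,b\in\mathcal T_\phi$, we get $\pi_\phi(d)=0$, and hence $d=0$ because $\pi_\phi$ is faithful.

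\textbf{Main obstacle.} The only delicate point is the domain bookkeeping in item (ii). I will handle it by citing the standard right-multiplication formula for $\Lambda_\phi$ by analytic elements rather than re-proving it.
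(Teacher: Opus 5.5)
Your route is the paper's own. For (i) you use complex translation of $\varphi$ through its compactly supported Fourier transform and the $\rL^1$ bound. For (ii) you decompose $C$ into terms $b^*a$ with $a,b\in\mathfrak n_\phi$ and use bounded right-multiplication operators together with an adjoint relation. For (iii) you use the GNS formula, density of $\Lambda_\phi(\mathcal T_\phi)$ and faithfulness of $\pi_\phi$.

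There is, however, a sign error in the one formula that carries (ii). With $\sigma_t=\Delta_\phi^{\ri t}\,\cdot\,\Delta_\phi^{-\ri t}$, the correct statement is $\Lambda_\phi(xc)=J_\phi\pi_\phi(\sigma_{\ri/2}(c))^*J_\phi\Lambda_\phi(x)$, which is the formula the paper uses. You can check it on $\phi=\Tr(h\,\cdot\,)$. There $xch^{1/2}=xh^{1/2}\cdot h^{-1/2}ch^{1/2}$ and $h^{-1/2}ch^{1/2}=\sigma_{\ri/2}(c)$. Right multiplication by your $\sigma_{-\ri/2}(c)$ would instead give $xhch^{-1/2}$.

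This is not cosmetic. Write $R_c$ for the operator with $\Lambda_\phi(xc)=R_c\Lambda_\phi(x)$. In your convention $R_{A^*}=J_\phi\pi_\phi(\sigma_{\ri/2}(A))J_\phi$, and its adjoint is the operator attached to $\sigma_{\ri}(A)$. Your computation $\phi(AC)=\langle\Lambda_\phi(a),\Lambda_\phi(bA^*)\rangle$ would then end at $\phi(C\sigma_{\ri}(A))$. That is the KMS identity with the opposite sign, not the stated one.

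With the correct sign, $R_{A^*}=J_\phi\pi_\phi(\sigma_{-\ri/2}(A))J_\phi$ and $R_{A^*}^*=R_{\sigma_{-\ri}(A)}$. This is exactly the relation $R_B=R_{A^*}^*$ with $B=\sigma_{-\ri}(A)$ in the paper, and it gives $\langle\Lambda_\phi(a),\Lambda_\phi(bA^*)\rangle=\langle\Lambda_\phi(a\sigma_{-\ri}(A)),\Lambda_\phi(b)\rangle$, i.e. $\phi(AC)=\phi(C\sigma_{-\ri}(A))$.

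Once this is fixed, the rest of your argument goes through as written. This includes the domain statements $bA^*,\ a\sigma_{-\ri}(A)\in\mathfrak n_\phi$, which only need the same formula.
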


\begin{proof}
We first prove assertion~$(\rm i)$. Let $K$ be the support of
$\widehat\varphi$. For $z\in\C$, the function
\[
q\longmapsto e^{-\ri qz}\widehat\varphi(q)
\]
has support in $K$, and its dependence on $z$ is entire in the
Fr\'echet space of smooth functions supported in $K$. Fourier inversion
and
\eqref{eq:complex-translate-Fourier-transform} show that
$z\mapsto\varphi_z$ is entire with values in $\mathcal S(\R)$. The estimate
\[
\left\|\int_\R h(v)\sigma_v(y)\,\rd v\right\|
\leq\|h\|_1\|y\|
\qquad(h\in\rL^1(\R))
\]
shows that $z\mapsto\sigma_{\varphi_z}(y)$ is norm entire. For
$r\in\R$, a
change of variables gives
\[
\sigma_r(\sigma_\varphi(y))=\sigma_{\varphi_r}(y),
\]
which proves \eqref{eq:analytic-modular-smoothing}. If $P$ is
globally invariant and $y\in P'\cap N$, then
$\sigma_v(y)\in P'\cap N$ for every $v\in\R$. Formula
\eqref{eq:analytic-modular-smoothing} proves the commutant assertion.

We now prove assertion~$(\rm ii)$. Write $C$ as a finite sum of elements
$r^*s$ with $r,s\in\mathfrak n_\phi$. It is enough to consider
$C=r^*s$. Put $B=\sigma_{-\ri}(A)$. For an entire element $a$, the
standard right-multiplication formula
\[
\Lambda_\phi(ra)
=J_\phi\pi_\phi(\sigma_{\ri/2}(a))^*J_\phi\Lambda_\phi(r)
\qquad(r\in\mathfrak n_\phi)
\]
shows that $ra\in\mathfrak n_\phi$. Denote the corresponding bounded
operator by $R_a$. Applied to $A^*$ and $B$, the formula gives
\[
rA^*,sB\in\mathfrak n_\phi,
\qquad
R_B=R_{A^*}^*.
\]
Hence
\[
AC=(rA^*)^*s,
\qquad
CB=r^*(sB)
\]
belong to $\mathfrak m_\phi$. Moreover,
\[
\begin{aligned}
\phi(AC)
&=\langle\Lambda_\phi(s),R_{A^*}\Lambda_\phi(r)\rangle\\
&=\langle R_B\Lambda_\phi(s),\Lambda_\phi(r)\rangle
=\phi(CB).
\end{aligned}
\]
This proves assertion~$(\rm ii)$. We finally prove
assertion~$(\rm iii)$. If
$a,b\in\mathcal T_\phi$ and $d\in N$, then
$da\in\mathfrak n_\phi$ and $b^*da\in\mathfrak m_\phi$. The displayed
formula in assertion~$(\rm iii)$ is the GNS formula, and it defines a
normal functional because $\pi_\phi$ is normal. The subspace
$\Lambda_\phi(\mathcal T_\phi)$ is dense in the GNS Hilbert space and
$\pi_\phi$ is faithful. The corresponding matrix coefficients
therefore separate $N$.
\end{proof}

The following theorem extends the calculation to an arbitrary element
of the relative commutant.

\begin{theorem}\label{invariant weight resonance}
Let $\alpha:G\curvearrowright M$ be a continuous action of a locally
compact abelian group which preserves a faithful normal semifinite
weight $\phi$. Then
\[
M'\cap(M\rtimes_\alpha G)
\subset M_\phi\rtimes_\alpha G.
\]
Moreover, one has
\[
\Sp\left((\widetilde\alpha\times\widehat\alpha)
\big|_{M'\cap(M\rtimes_\alpha G)}\right)
\subset
\{(p,t)\in\widehat G\times G\mid p(t)=1\}.
\]
\end{theorem}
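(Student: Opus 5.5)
We will follow the proof of Theorem~\ref{trace scaling resonance} closely, replacing the trace-scaling KMS argument by the weight KMS argument sketched just before Lemma~\ref{modular smoothing and KMS testing}. Put $\widetilde\phi=\phi\circ\rT_M$ on $P=M\rtimes_\alpha G$. Since $\phi\circ\alpha_t=\phi$, the dual-weight modular formula \cite[Theorem~3.2(ii)]{Ha78a} gives $\sigma^{\widetilde\phi}_v(u_t)=u_t$ and $\sigma^{\widetilde\phi}|_M=\sigma^\phi$. It also gives that $\sigma^{\widetilde\phi}$ commutes with $\widetilde\alpha$ and $\widehat\alpha$, and that $\rT_M\circ\sigma^{\widetilde\phi}_v=\sigma^\phi_v\circ\rT_M$ on $\mathfrak m_{\rT_M}$. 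Consequently $M'\cap P$ is globally $\sigma^{\widetilde\phi}$-invariant and $\nabla(\sigma^{\widetilde\phi}_v(x))=\sigma^\phi_v\circ\nabla(x)$.

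Fix $x\in M'\cap P$ and replace it by $\sigma^{\widetilde\phi}_\chi(x)$ with $\chi\in\mathcal S(\R)$ having compactly supported Fourier transform. By Lemma~\ref{modular smoothing and KMS testing}(i) this element is entire, stays in $M'\cap P$, and $\sigma_z(\sigma_\chi(x))=\sigma_{\chi_z}(x)$. An approximate identity recovers $x$. Form $X=\int\widetilde\alpha_s(x)\varphi(s)\,\rd s$, which is entire and lies in $M'\cap P$, and $Y=\int u_t^*\psi(t)\,\rd t$, which is fixed by $\sigma^{\widetilde\phi}$. For $a,b$ in the Tomita algebra $\mathcal T_\phi$, apply Lemma~\ref{modular smoothing and KMS testing}(ii) in $P$ with $A=X$ and $C=b^*Ya$. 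Membership $C\in\mathfrak m_{\widetilde\phi}$ follows from \eqref{eq:composition-weight-domain}. Then use that $X$ commutes with $b^*$ and $a$, and that $\sigma_{-\ri}(X)$ still commutes with $M$. This yields
\[
\phi\bigl(b^*\rT_M(XY)a\bigr)=\widetilde\phi(b^*XYa)=\widetilde\phi\bigl(b^*Y\sigma^{\widetilde\phi}_{-\ri}(X)a\bigr)=\phi\bigl(b^*\rT_M(Y\sigma_{-\ri}(X))a\bigr).
\]
By Lemma~\ref{modular smoothing and KMS testing}(iii) these functionals separate $M$, so $\rT_M(XY)=\rT_M(Y\sigma_{-\ri}(X))$. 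Moving $u_t^*$ past $\widetilde\alpha_s(x)$ as in the proof of Theorem~\ref{trace scaling resonance}, this becomes the shear identity
\[
\langle\nabla(x),\varphi\otimes\psi\rangle=\bigl\langle\nabla(\sigma_{-\ri}x),[(s,t)\mapsto\varphi(s+t)\psi(t)]\bigr\rangle .
\]

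Next we pass to the orbital refinement. Take $H=\R$ and $\delta=\sigma^\phi$, which commutes with $\alpha$ and extends to $\widetilde\delta=\sigma^{\widetilde\phi}$ because it fixes the $u_t$. In the variable $q\in\widehat\R$, formula \eqref{eq:complex-translate-Fourier-transform} turns $\sigma_{-\ri}$ into multiplication by $e^{-q}$, up to a sign convention. Applying $\check{\mathcal F}_{1,3}$ to the shear identity then gives $\langle\mathcal J^\delta(x),m\Phi\rangle=0$ with multiplier $m(p,t,q)=1-e^{\pm q}p(t)$. The division fact and Proposition~\ref{orbital joint distribution spectrum} then yield
\[
\Sp_{\widetilde\alpha\times\widehat\alpha\times\widetilde\delta}(x)\subset\{(p,t,q)\mid e^{\pm q}p(t)=1\}=\{q=0,\ p(t)=1\}.
\]
The coordinate-restriction formula \eqref{eq:coordinate-restriction} gives $\Sp_{\widetilde\delta}(x)\subset\{0\}$, so $x$ is fixed by $\sigma^{\widetilde\phi}$ by \eqref{eq:one-point-spectrum}. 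It also gives the spectrum inclusion for $\widetilde\alpha\times\widehat\alpha$. Finally, $P^{\sigma^{\widetilde\phi}}=M_\phi\rtimes_\alpha G$, since $\widetilde\phi$ is the dual weight and its centralizer is the crossed product of the centralizer, which is a standard fact for invariant weights. Hence $x\in M_\phi\rtimes_\alpha G$.

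The main obstacle is the rigorous KMS step in the crossed product. One must check that $Y\sigma_{-\ri}(X)$ lands in $\mathfrak m_{\rT_M}$ as a two-sided smoothed integral. This needs the complex-shifted kernel $\chi_{-\ri}$ to remain in $\mathcal S$, which holds because $\widehat\chi$ has compact support. One must also justify the orbital distribution with the complex translate. I would first do the computation on elementary tensors $\varphi\otimes\psi\otimes\chi$, and then extend by density using the continuity of $\nabla^\delta$.
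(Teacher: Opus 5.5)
Your proposal is correct and follows the paper's proof essentially step for step. The shared steps are: the dual weight $\widetilde\phi=\phi\circ\rT_M$; modular smoothing to produce entire elements of the relative commutant; KMS testing against Tomita-algebra elements to get $\rT_M(XY)=\rT_M(Y\sigma_{-\ri}(X))$; the shear identity for the orbital distribution with $\delta=\sigma^\phi$; division by the multiplier $1-p(t)e^{-q}$; and the identification of the centralizer of the dual weight with $M_\phi\rtimes_\alpha G$.

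The one detail you leave implicit is that $Y$ itself lies in $\mathfrak m_{\rT_M}$, which is needed for $C=b^*Ya\in\mathfrak m_{\widetilde\phi}$. The paper obtains it by writing $Y$ as the two-sided smoothed integral of $1$ with kernel $\rho(s)\eta(-s-t)$. Conversely, the membership $Y\sigma_{-\ri}(X)\in\mathfrak m_{\rT_M}$, which you flag as the main obstacle, is immediate from Theorem~\ref{two-sided coefficient distribution}(i) applied to the bounded element $\sigma_{-\ri}(x)$.
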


\begin{proof}
Take $x\in M'\cap(M\rtimes_\alpha G)$. Put
$\widetilde\phi=\phi\circ \rT_M$ and
$\sigma=\sigma^{\widetilde\phi}$. Invariance of $\phi$ implies that
$\alpha$ commutes with $\sigma^\phi$, and the dual-weight modular
formula \cite[Theorem~3.2(ii)]{Ha78a} shows that
$\sigma$ extends $\sigma^\phi$ and fixes the canonical unitaries. Thus
$M$ is globally invariant under $\sigma$, and $\widetilde\alpha$
commutes with $\sigma$.
For $\varphi\in\mathcal S(\R)$, put
\[
\sigma_\varphi(x)=\int_\R \varphi(v)\sigma_v(x)\,\rd v.
\]
The estimate $\|\sigma_\varphi(x)\|\leq\|\varphi\|_1\|x\|$ and the
operator-norm
continuity of the evaluations of $\nabla$ show that the map
$(F,\varphi)\mapsto\langle\nabla(\sigma_\varphi(x)),F\rangle$ is jointly
continuous on
$\mathcal S(G\times G)\times\mathcal S(\R)$. Hence the kernel theorem
gives a unique distribution
\[
\nabla^\sigma(x)\in\mathcal S'(G\times G\times\R,M)
\]
satisfying
\begin{equation}\label{eq:modular-orbital-distribution}
\langle\nabla^\sigma(x),F\otimes\varphi\rangle
=\langle\nabla(\sigma_\varphi(x)),F\rangle
\end{equation}
for $F\in\mathcal S(G\times G)$ and $\varphi\in\mathcal S(\R)$. This is
\eqref{eq:orbital-refinement} for $\delta=\sigma^\phi$.

Take $\xi,\eta\in\mathcal D(G)$ and $\varphi\in\mathcal S(\R)$ whose
Fourier transform is compactly supported. Put
\[
\begin{aligned}
x_\xi&=\int_G\xi(s)\widetilde\alpha_s(x)\,\rd s,\\
X&=\sigma_\varphi(x_\xi),\\
Y&=\int_Gu_t^*\eta(t)\,\rd t.
\end{aligned}
\]
The relative commutant is globally invariant under both
$\widetilde\alpha$ and $\sigma$, so $x_\xi$ belongs to it. Lemma
\ref{modular smoothing and KMS testing} shows that $X$ is entire for
$\sigma$. Put
\[
\psi=\varphi_{-\ri}=\varphi(\,\cdot+\ri),
\qquad
Z=\sigma_{-\ri}(X)=\sigma_\psi(x_\xi).
\]
Lemma~\ref{modular smoothing and KMS testing} also gives
$\psi\in\mathcal S(\R)$,
\begin{equation}\label{eq:imaginary-translate-Fourier-transform}
\widehat\psi(q)=e^{-q}\widehat\varphi(q),
\end{equation}
and $X,Z\in M'\cap(M\rtimes_\alpha G)$.

Choose $\rho\in\mathcal S(G)$ with $\int_G\rho(s)\,\rd s=1$ and put
\[
\Phi_\eta(s,t)=\rho(s)\eta(-s-t).
\]
Then $\Phi_\eta\in\mathcal S(G\times G)$, and a change of variables
identifies $Y$ with the integral in assertion~$(\rm i)$ of Theorem
\ref{two-sided coefficient distribution} for $x=1$ and kernel
$\Phi_\eta$. Hence $Y\in\mathfrak m_{\rT_M}$.

Since $\widetilde\alpha$ commutes with $\sigma$, a direct calculation
gives
\[
\begin{aligned}
XY&=\int_{G\times G}
\widetilde\alpha_s(\sigma_\varphi(x))u_t^*
\xi(s)\eta(t)\,\rd s\,\rd t,\\
YZ&=\int_{G\times G}
\widetilde\alpha_s(\sigma_\psi(x))u_t^*
\xi(s+t)\eta(t)\,\rd s\,\rd t.
\end{aligned}
\]
Both kernels belong to $\mathcal D(G\times G)$, so
Theorem~\ref{two-sided coefficient distribution} gives
$XY,YZ\in\mathfrak m_{\rT_M}$.

Fix $a,b\in\mathcal T_\phi$ and put
$C=b^*Ya$. Formula \eqref{eq:composition-weight-domain}, applied to
$Y$, $XY$, and $YZ$, gives
\[
C,\ b^*XYa,\ b^*YZa\in\mathfrak m_{\widetilde\phi}.
\]
Since $X$ and $Z$ commute with $M$, one has
$XC=b^*XYa$ and $CZ=b^*YZa$. Assertion~$(\rm ii)$ of Lemma
\ref{modular smoothing and KMS testing}, applied to $X$ and $C$, gives
the middle equality below. Formula \eqref{eq:composition-weight-domain}
identifies the endpoint values:
\[
\begin{aligned}
\phi(b^*\rT_M(XY)a)
&=\widetilde\phi(b^*XYa)
=\widetilde\phi(XC)\\
&=\widetilde\phi(CZ)
=\widetilde\phi(b^*YZa)
=\phi(b^*\rT_M(YZ)a).
\end{aligned}
\]
Assertion~$(\rm iii)$ of Lemma
\ref{modular smoothing and KMS testing} shows that the functionals
$d\mapsto\phi(b^*da)$ separate $M$. Hence
$\rT_M(XY)=\rT_M(YZ)$.

By \eqref{eq:modular-orbital-distribution} and the direct formula for
$\nabla$,
\[
\rT_M(XY)=\langle\nabla^\sigma(x),
\xi\otimes\eta\otimes\varphi\rangle.
\]
Moreover,
\[
\begin{aligned}
YZ
&=\int_{G\times G\times\R}
u_t^*\widetilde\alpha_s(\sigma_v(x))
\eta(t)\xi(s)\varphi(v+\ri)\,\rd t\,\rd s\,\rd v\\
&=\int_{G\times G\times\R}
\widetilde\alpha_s(\sigma_v(x))u_t^*
\eta(t)\xi(s+t)\varphi(v+\ri)\,\rd t\,\rd s\,\rd v.
\end{aligned}
\]
Consequently,
\begin{equation}\label{eq:modular-orbital-shear}
\langle\nabla^\sigma(x),\xi\otimes\eta\otimes\varphi\rangle
=\left\langle\nabla^\sigma(x),
\bigl[(s,t,v)\longmapsto
\xi(s+t)\eta(t)\varphi(v+\ri)\bigr]\right\rangle.
\end{equation}

Identify $\widehat\R$ with $\R$ through the characters
$v\mapsto e^{\ri qv}$. For
$\Phi\in\mathcal S(\widehat G\times G\times\R)$, put
\[
(\check{\mathcal F}_{1,3}\Phi)(s,t,v)
=\int_{\widehat G\times\R}
p(s)e^{\ri qv}\Phi(p,t,q)\,\rd p\,\rd q
\]
and define
\[
\langle\mathcal J^\sigma(x),\Phi\rangle
=\langle\nabla^\sigma(x),\check{\mathcal F}_{1,3}\Phi\rangle.
\]
For fixed $\eta$ and $\varphi$, both sides of
\eqref{eq:modular-orbital-shear} depend continuously on
$\xi\in\mathcal S(G)$. Density of $\mathcal D(G)$ in $\mathcal S(G)$
therefore extends that identity to every $\xi\in\mathcal S(G)$.
Let $a\in\mathcal D(\widehat G)$, $b\in\mathcal D(G)$, and
$c\in\mathcal D(\R)$, and take
$\xi=\check a$, $\eta=b$, and $\varphi=\check c$. Formula
\eqref{eq:imaginary-translate-Fourier-transform} gives
\[
\begin{aligned}
&\check{\mathcal F}_{1,3}\bigl(
[(p,t,q)\longmapsto p(t)e^{-q}a(p)b(t)c(q)]\bigr)(s,t,v)\\
&\hspace{35mm}=\check a(s+t)b(t)\check c(v+\ri).
\end{aligned}
\]
Equation \eqref{eq:modular-orbital-shear} therefore gives
\[
\left\langle\mathcal J^\sigma(x),
\bigl[(p,t,q)\longmapsto
(1-p(t)e^{-q})a(p)b(t)c(q)\bigr]\right\rangle=0.
\]
Put
\[
m(p,t,q)=1-p(t)e^{-q}
\qquad((p,t,q)\in\widehat G\times G\times\R).
\]
Multiplication by $m$ is continuous on
$\mathcal D(\widehat G\times G\times\R)$. On each Bruhat stage,
multiplication by $p(t)$ has the regularity established in
\S\,\ref{subsec:schwartz-bruhat}, while $e^{-q}$ is smooth.
Density of finite sums of elementary tensors gives
\[
m\mathcal J^\sigma(x)=0
\qquad\text{in }
\mathcal D'(\widehat G\times G\times\R,M).
\]
If $\Phi\in\mathcal D(\widehat G\times G\times\R)$ is supported where
$m$ does not vanish, the division fact gives
$\Psi\in\mathcal D(\widehat G\times G\times\R)$ with
$m\Psi=\Phi$. Hence
$\langle\mathcal J^\sigma(x),\Phi\rangle=0$, and
$\supp(\mathcal J^\sigma(x))\subset m^{-1}(\{0\})$. Proposition
\ref{orbital joint distribution spectrum} now gives
\[
\Sp_{\widetilde\alpha\times\widehat\alpha
\times\sigma}(x)
\subset
\{(p,t,q)\mid p(t)e^{-q}=1\}.
\]
The last equality forces $q=0$ and $p(t)=1$. Projection to the
$q$-coordinate, formula \eqref{eq:coordinate-restriction} and
\eqref{eq:one-point-spectrum} show that
$x\in(M\rtimes_\alpha G)_{\widetilde\phi}$. The fixed-point
description of crossed products
\cite[Corollary~${\rm X}$.1.22]{Ta03} gives
\[
(M\rtimes_\alpha G)_{\widetilde\phi}=M_\phi\rtimes_\alpha G.
\]
This proves the first assertion. Projection to the $(p,t)$-coordinates gives
\[
\Sp_{\widetilde\alpha\times\widehat\alpha}(x)
\subset
\{(p,t)\in\widehat G\times G\mid p(t)=1\}.
\]
Since $x\in M'\cap(M\rtimes_\alpha G)$ was arbitrary, taking the
closure of the union of these individual spectra gives the asserted
spectrum of the restricted action.
\end{proof}

\subsubsection{The relative commutant for flows}

Theorem~\ref{invariant weight resonance} yields the relative-commutant
dichotomy for flows on factors. For a locally compact abelian group $G$ define the resonance set
$$ \mathcal{R}_G=\{ (p,t) \in \widehat{G} \times G \mid p(t)=1\}.$$

\begin{proposition} \label{resonant group for R}
    Identify $\widehat\R$ with $\R$ by associating $s\in\R$ with the character $r\mapsto e^{\ri rs}$. Let $\Sigma < \R \times \R$ be a closed subgroup such that $$\Sigma \subset \mathcal{R}_\R=\{ (s,t) \in \R^2 \mid st \in 2\pi \Z \}.$$
    Then either $\Sigma \subset \R \times \{0\}$, or $\Sigma \subset \{0\} \times \R$ or $\Sigma$ is discrete.
\end{proposition}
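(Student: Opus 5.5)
Since $\Sigma$ is a closed subgroup of $\R^2$, the structure theorem for closed subgroups of $\R^n$ applies: $\Sigma$ is a direct sum $V\oplus D$, where $V$ is a linear subspace and $D$ is discrete. (Equivalently, the identity component $\Sigma_0$ is a linear subspace and $\Sigma/\Sigma_0$ is discrete.) If $V=0$, then $\Sigma$ is discrete and we are done. So the work lies in the cases $\dim V\geq 1$.

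First I would show that $V$ must be a coordinate axis. Suppose $V$ contains a vector $(a,b)$ with $a\neq 0$ and $b\neq 0$. The whole line $\{(\lambda a,\lambda b)\mid \lambda\in\R\}$ lies in $\Sigma\subset\mathcal R_\R$, so $\lambda^2 ab\in 2\pi\Z$ for every $\lambda\in\R$. But the map $\lambda\mapsto\lambda^2ab$ is continuous, nonconstant on $[0,1]$ and takes values in the discrete set $2\pi\Z$, which is impossible. Hence every nonzero vector of $V$ has a zero coordinate. If $\dim V=2$, then $V=\R^2$ contains $(1,1)$, a contradiction. So $\dim V=1$, and $V=\R\times\{0\}$ or $V=\{0\}\times\R$.

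Next I would show that $\Sigma$ equals $V$. By symmetry, take $V=\R\times\{0\}$ and let $(s,t)\in\Sigma$. For every $\lambda\in\R$, the element $(s+\lambda,t)$ lies in $\Sigma$, so $(s+\lambda)t\in 2\pi\Z$ for all $\lambda$. The same continuity argument forces $t=0$. Hence $\Sigma\subset\R\times\{0\}$. The case $V=\{0\}\times\R$ gives $\Sigma\subset\{0\}\times\R$ in the same way.

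There is no serious obstacle here. The only point to justify is the structure theorem for closed subgroups of $\R^2$, which is classical (e.g.\ Bourbaki). Alternatively, one can bypass it. If $\Sigma$ is not discrete, there are nonzero elements $\sigma_n\in\Sigma$ with $\sigma_n\to 0$. After normalizing $\sigma_n/\|\sigma_n\|$ and passing to a subsequence, the integer multiples $\lfloor\lambda/\|\sigma_n\|\rfloor\sigma_n\in\Sigma$ converge to $\lambda v$ for a unit vector $v$ and every $\lambda\in\R$. Since $\Sigma$ is closed, it contains the line $\R v$, and the continuity argument then applies to this line.
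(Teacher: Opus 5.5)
Your proof is correct, but it takes a different route from the paper's.

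The paper never produces a line inside $\Sigma$. Assuming $\Sigma$ is not discrete, it picks nonzero $(p_n,t_n)\in\Sigma$ converging to $0$. Since $p_nt_n\in 2\pi\Z$ tends to $0$, it vanishes eventually, so after passing to a subsequence either all $t_n=0$ or all $p_n=0$. In the first case, comparing $(p,t)$ with $(p+p_n,t)$ gives $p_nt\in 2\pi\Z$ with $p_n\neq 0$ and $p_n\to 0$, which forces $t=0$. Thus discreteness of $2\pi\Z$ is used only at small scale, directly on a null sequence.

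You instead first upgrade non-discreteness to a whole line $\R v\subset\Sigma$, either through the decomposition $\Sigma=V\oplus D$ or through your rescaling limit. You then use connectedness (the intermediate value theorem) twice: once to show that $V$ is a coordinate axis, and once to show that $\Sigma\subset V$.

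The paper's argument is shorter, avoids the structure theorem, and works verbatim for subgroups that are not closed. Your argument genuinely needs the closedness hypothesis. That hypothesis is available here, and the paper checks it when it applies the proposition to $2\Sigma$. In return, your route yields directly that a nondiscrete $\Sigma$ equals one of the two axes. This also follows from the paper's conclusion, since a closed nondiscrete subgroup of $\R$ is all of $\R$.
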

\begin{proof}
Suppose that $\Sigma$ is not discrete. Choose a sequence
$((p_n,t_n))_{n\in\N}$ in $\Sigma\setminus\{(0,0)\}$ converging to
$(0,0)$. Since
$p_nt_n\in2\pi\Z$, one has $p_nt_n=0$ eventually. After passing to a
subsequence, either $t_n=0$ for every $n\in\N$, or $p_n=0$ for every
$n\in\N$.

In the first case, fix $(p,t)\in\Sigma$. Since $(p_n,0)\in\Sigma$ and
$\Sigma$ is a subgroup, one has $(p+p_n,t)\in\Sigma$ for every
$n\in\N$. Applying the resonance relation to $(p,t)$ and
$(p+p_n,t)$ gives $p_nt\in2\pi\Z$. Letting $n$ tend to infinity gives
$t=0$, and hence
$\Sigma\subset\R\times\{0\}$. 

In the second case, the same argument gives $\Sigma \subset \{0\} \times \R$.
\end{proof}

\begin{proof}[Proof of Theorem
\ref{main relative commutant dichotomy}]
Put $B=M'\cap(M\rtimes_\alpha\R)$.
Identify $\widehat\R$ with $\R$ by associating $s\in\R$ with the
character $r\mapsto e^{\ri rs}$. Since $M$ is a factor,
the dual fixed-point formula and the crossed-product generators give
\[
B^{\widehat\alpha}=\C1,
\qquad
B^{\widetilde\alpha}=\mathcal Z(M\rtimes_\alpha\R).
\]
Hence $\widetilde\alpha\times\widehat\alpha$ is ergodic on $B$.
Put
\[
\beta=(\widetilde\alpha\times\widehat\alpha)|_B,
\qquad
K=\ker(\beta)<\R^2.
\]
Its global Arveson spectrum, its Connes spectrum, and the annihilator
of its kernel coincide by \eqref{eq:ergodic-spectrum-kernel}. Thus
\[
\Sigma
=\Sp(\beta)=\Gamma(\beta)=K^\perp
\subset\R^2
\]
is a closed subgroup. Theorem
\ref{invariant weight resonance} gives
\[
\Sigma\subset
\mathcal{R}_\R=\{(p,t)\in\R^2\mid pt\in2\pi\Z\}.
\]
By Proposition \ref{resonant group for R}, we have three cases to consider. Suppose first that $\Sigma\subset\R\times\{0\}$. The coordinate-restriction formula
\eqref{eq:coordinate-restriction} and
\eqref{eq:one-point-spectrum} give
$B=B^{\widehat\alpha}=\C1$, so
$B=\mathcal Z(M\rtimes_\alpha\R)$. If
$\Sigma\subset\{0\}\times\R$ then
$B=B^{\widetilde\alpha}=\mathcal Z(M\rtimes_\alpha\R)$.
This proves assertion~$(\rm i)$ whenever
$\Sigma$ is not discrete.

We next prove assertion~$(\rm ii)$ when $\Sigma$ is discrete.
Proposition
\ref{joint distribution spectrum}, applied with $N=B$, shows that the
joint eigenspaces generate $B$.

If $0\neq c\in B$ is a joint
eigenoperator at $(p,t)$, then $c^*c$
and $cc^*$ belong to
$B^{\widetilde\alpha\times\widehat\alpha}=\C1$.
After rescaling, $c$ is unitary. Since
$\widehat\alpha_s(c)=e^{-\ri st}c$, the element
$x=cu_t^*$ is fixed by the dual action and hence belongs to $M$.
It is unitary, and the relation $c\in M'$ gives
$yx=x\alpha_t(y)$ for every $y\in M$. Thus
$\alpha_t=\Ad(x^*)$, and
\[
c=v^*u_t
\]
for the unitary $v=x^*\in M$. Since the joint eigenspaces generate
$B$, these unitaries generate $B$. This proves
assertion~$(\rm ii)$.

Finally, assume that $M\rtimes_\alpha\R$ is a factor and that
$\alpha$ is outer. In the first case,
$B=\mathcal Z(M\rtimes_\alpha\R)=\C1$. In the second case, outerness
excludes every inner time $t\neq0$, while the eigenspace at $t=0$ is
scalar. Hence again $B=\C1$, which means that $\alpha$ is strictly
outer.
\end{proof}

We finish Subsection~\ref{sec:spectral-rigidity} by making precise the
relation with the Connes--Takesaki relative commutant theorem announced
in Section~\ref{sec:introduction}.

\begin{proposition}\label{modular flow specialization}
Let $P$ be a von Neumann algebra and let $\phi$ be a faithful normal
semifinite weight on $P$. Then
\[
P'\cap(P\rtimes_{\sigma^\phi}\R)
=\mathcal Z(P\rtimes_{\sigma^\phi}\R).
\]
\end{proposition}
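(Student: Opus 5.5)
We apply Theorem~\ref{invariant weight resonance} with $M=P$, $G=\R$, $\alpha=\sigma^\phi$ and the invariant weight $\phi$ itself, since $\sigma^\phi$ preserves $\phi$. The first assertion of that theorem gives
\[
P'\cap(P\rtimes_{\sigma^\phi}\R)\subset P_\phi\rtimes_{\sigma^\phi}\R .
\]
Here $P_\phi$ is the centralizer of $\phi$, that is, the fixed points of $\sigma^\phi$. On $P_\phi$ the action $\sigma^\phi$ is trivial, so $P_\phi\rtimes_{\sigma^\phi}\R\cong P_\phi\ovt\rL(\R)$, and the canonical unitaries $u_t$ commute with $P_\phi$.

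The key is the spectral statement. Set $B=P'\cap(P\rtimes_{\sigma^\phi}\R)$. An element $x\in B$ commutes with $P$, and $\widetilde\alpha_s=\Ad(u_s)$ agrees with $\sigma^{\widetilde\phi}_s$ on the crossed product by the dual-weight formula. I would show that $\widetilde\alpha$ acts trivially on $B$, i.e.\ $B$ commutes with every $u_s$, and run it through the distribution. For $x\in B$ the first assertion puts $x$ in the centralizer of $\widetilde\phi$. Indeed, the proof of Theorem~\ref{invariant weight resonance} shows $\Sp_{\sigma^{\widetilde\phi}}(x)=\{0\}$, hence $\sigma^{\widetilde\phi}_v(x)=x$ by \eqref{eq:one-point-spectrum}. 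Since $\sigma^{\widetilde\phi}_v=\Ad(u_v)$ on $P\rtimes_{\sigma^\phi}\R$ (the dual weight's modular group is implemented by $u_v$ on $P$ and fixes each $u_t$), we get $u_vxu_v^*=x$ for all $v$.

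Thus $x$ commutes with $P$ and with all $u_v$, hence with the whole crossed product, so $x\in\mathcal Z(P\rtimes_{\sigma^\phi}\R)$. The reverse inclusion is trivial.

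The main obstacle is identifying $\sigma^{\widetilde\phi}$ with $\Ad(u)$ on the crossed product. On $P$ this holds by definition, since $u_vyu_v^*=\sigma^\phi_v(y)$. On the $u_t$ both maps are trivial: $\sigma^{\widetilde\phi}$ fixes the $u_t$ by \cite[Theorem~3.2(ii)]{Ha78a}, and $\Ad(u_v)$ fixes them because $\R$ is abelian. Normality then extends the equality to the generated von Neumann algebra. The remaining care is to check that the centralizer conclusion $x\in(P\rtimes\R)_{\widetilde\phi}$ was established for every $x\in B$, not only for smoothed elements. This is what the first assertion of Theorem~\ref{invariant weight resonance} provides via the orbital distribution $\mathcal J^\sigma$.
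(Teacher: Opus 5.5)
Your proof is correct and is essentially the paper's: invoke the first assertion of Theorem~\ref{invariant weight resonance} to get $P'\cap(P\rtimes_{\sigma^\phi}\R)\subset P_\phi\rtimes_{\sigma^\phi}\R$, then observe that this algebra commutes with every $u_s$, so an element that also commutes with $P$ is central. The detour through $\sigma^{\widetilde\phi}=\Ad(u)$ is valid but unnecessary. As you already noted in your first paragraph, $P_\phi\rtimes_{\sigma^\phi}\R$ is generated by $P_\phi$ and the $u_t$, all of which commute with each $u_s$, and this is exactly how the paper concludes.
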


\begin{proof}
Let $(u_t)_{t\in\R}$ be the canonical unitaries in
$P\rtimes_{\sigma^\phi}\R$. Since the modular action preserves
$\phi$ and is trivial on $P_\phi$, Theorem
\ref{invariant weight resonance} gives
\[
P'\cap(P\rtimes_{\sigma^\phi}\R)
\subset P_\phi\rtimes_{\mathrm{id}}\R
\subset P\rtimes_{\sigma^\phi}\R.
\]
The middle algebra commutes with $u_s$ for every $s\in\R$. Every
element on the left also commutes with $P$. Since the crossed product
is generated by $P$ and the unitaries $(u_s)_{s\in\R}$, this gives
\[
P'\cap(P\rtimes_{\sigma^\phi}\R)
\subset\mathcal Z(P\rtimes_{\sigma^\phi}\R).
\]
The reverse inclusion is immediate.
\end{proof}

\subsection{Strict-outerness rigidity for locally compact abelian groups}
\label{sec:lca-characterization}

\subsubsection{Crossed products and atomic coefficients}
\label{subsec:crossed-product-actions}

Let $M$ be a von Neumann algebra and let
$\alpha:G\curvearrowright M$ be a pointwise ultraweakly continuous
action. Put $B=M'\cap(M\rtimes_\alpha G)$ and write
$u:G\to\mathcal U(M\rtimes_\alpha G)$ for the canonical unitary
representation, characterized by
$u_txu_t^*=\alpha_t(x)$ for $x\in M$ and $t\in G$. Recall that
$\widetilde\alpha_r=\Ad(u_r)$ and that the dual action is normalized by
\[
\widehat\alpha_\nu(x)=x,
\qquad
\widehat\alpha_\nu(u_t)=\overline{\nu(t)}u_t
\]
for $x\in M$, $r,t\in G$ and $\nu\in\widehat G$. These actions
commute and preserve $B$. The dual fixed-point formula
\cite[Theorem ${\rm X}$.2.3]{Ta03} gives
\begin{equation}\label{eq:fixed-algebras}
B^{\widehat\alpha}=\mathcal Z(M),
\qquad
B^{\widetilde\alpha}=\mathcal Z(M\rtimes_\alpha G).
\end{equation}
Indeed, an element of $B^{\widetilde\alpha}$ commutes with $M$ and
with $u_r$ for every $r\in G$, and hence is central in the crossed product. The
converse is immediate.

We identify $\widehat{G\times\widehat G}$ with
$\widehat G\times G$ through
\[
((r,\nu),(p,t))\longmapsto p(r)\nu(t).
\]
Thus the first joint spectral coordinate is dual to
$\widetilde\alpha$, while the second is dual to $\widehat\alpha$.

The following proposition identifies the $\widehat\alpha$-eigenspaces.

\begin{proposition}\label{atomic coefficients}
For $t\in G$, put
\[
B_t=\{b\in B\mid
\widehat\alpha_\nu(b)=\overline{\nu(t)}b
\text{ for every }\nu\in\widehat G\}.
\]
Then
\[
B_t=\{xu_t\mid x\in M,\ yx=x\alpha_t(y)
\text{ for every }y\in M\}.
\]
If $M$ is a factor, then $B_t\neq0$ if and only if
$\alpha_t\in\Inn(M)$. If
$\alpha_t=\Ad(v)$ for some $v\in\mathcal U(M)$, then
$B_t=\C v^*u_t$.
\end{proposition}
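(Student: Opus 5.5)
The plan is to reduce the description of $B_t$ to the dual fixed-point formula by multiplying with $u_t^*$. First take $b\in B_t$ and put $x=bu_t^*$. With the normalization $\widehat\alpha_\nu(u_t)=\overline{\nu(t)}u_t$, we get
\[
\widehat\alpha_\nu(x)=\overline{\nu(t)}\,b\,\nu(t)u_t^*=x
\qquad(\nu\in\widehat G).
\]
The dual fixed-point theorem \cite[Theorem~X.2.3]{Ta03} therefore gives $x\in M$ and $b=xu_t$. Since $b\in M'$, for every $y\in M$ we have $yxu_t=xu_ty=x\alpha_t(y)u_t$. Cancelling $u_t$ gives $yx=x\alpha_t(y)$.

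For the converse, take $x\in M$ with $yx=x\alpha_t(y)$ for all $y\in M$. Then $b=xu_t$ commutes with $M$ by the same computation read backwards, so $b\in B$. Moreover $\widehat\alpha_\nu(b)=x\,\overline{\nu(t)}u_t=\overline{\nu(t)}b$, so $b\in B_t$. This proves the displayed equality.

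Now assume $M$ is a factor. If $\alpha_t=\Ad(v)$, the relation $yx=x\,vyv^*$ says exactly that $xv\in M'\cap M=\C1$. Hence $x\in\C v^*$ and $B_t=\C v^*u_t$, which is nonzero.

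Conversely, suppose $0\neq b=xu_t\in B_t$. The key observation is that $x^*x$ and $xx^*$ are central:
\begin{itemize}
\item $x^*x$ commutes with $\alpha_t(M)=M$, because $x^*y=\alpha_t(y)x^*$ gives $x^*x\,\alpha_t(y)=x^*yx=\alpha_t(y)x^*x$;
\item $xx^*$ commutes with $M$, because $xx^*y=x\alpha_t(y)x^*=yxx^*$.
\end{itemize}
By factoriality both are nonzero scalars, and after rescaling $x$ is a unitary. The relation then reads $\alpha_t(y)=x^*yx$, so $\alpha_t=\Ad(x^*)$ is inner.

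There is no real obstacle here. The only points needing care are the sign conventions in the dual action, and the fact that the fixed-point theorem applies to $bu_t^*$ with no need for integrability.
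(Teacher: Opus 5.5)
Your proof is correct and follows the paper's argument essentially step for step: multiply by $u_t^*$ and invoke the dual fixed-point theorem, then show that $x^*x$ and $xx^*$ are central, and identify $B_t$ through $xv\in M'\cap M$ when $\alpha_t=\Ad(v)$. The one step you leave implicit is that the scalars $x^*x=c1$ and $xx^*=d1$ coincide, which is what lets you rescale $x$ to a unitary; this follows from $c=\|x\|^2=d$, whereas the paper gets it from $(xx^*)x=x(x^*x)$.
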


\begin{proof}
If $b\in M\rtimes_\alpha G$ satisfies
$\widehat\alpha_\nu(b)=\overline{\nu(t)}b$ for every
$\nu\in\widehat G$, then $bu_t^*$ is fixed by the dual action.
Since $(M\rtimes_\alpha G)^{\widehat\alpha}=M$, one has
$b=xu_t$ for some $x\in M$. The relation $b\in M'$ is equivalent to
\begin{equation}\label{eq:atomic-intertwiner}
yx=x\alpha_t(y)
\qquad(y\in M).
\end{equation}
This proves the general formula.

Assume now that $M$ is a factor and $x\neq0$. Applying
\eqref{eq:atomic-intertwiner} to $y^*$ and then taking adjoints gives
$x^*y=\alpha_t(y)x^*$ for every $y\in M$. Hence $x^*x$ and $xx^*$
are central. Thus $x^*x=c1$ and $xx^*=d1$ for some $c,d>0$, and
$(xx^*)x=x(x^*x)$ gives $c=d$. Therefore $c^{-1/2}x$ is a unitary
that implements $\alpha_t^{-1}$, so $\alpha_t$ is inner.

Conversely, if $\alpha_t=\Ad(v)$, then
$v^*u_ty=v^*\alpha_t(y)u_t=yv^*u_t$ for every $y\in M$. Thus
$v^*u_t\in B$ and it is a $t$-eigenoperator for $\widehat\alpha$.
If $xu_t$ is any other such eigenoperator, then
\eqref{eq:atomic-intertwiner} implies that
$xv\in\mathcal Z(M)$. Therefore $x\in\C v^*$.
\end{proof}

\subsubsection{The resonance projection property}

For a general second countable locally compact abelian group $G$, we
use the following property. Recall that
\[
\mathcal R_G
=\{(p,t)\in\widehat G\times G\mid p(t)=1\}.
\]
We say that a second countable locally compact abelian
group $G$ has the \emph{resonance projection property} when, for
every closed subgroup $\Sigma<\widehat G\times G$ contained in
$\mathcal R_G$, either $\Sigma$ is discrete or at least one of its
coordinate projections is discrete.
Requiring every nondiscrete resonant subgroup to lie on a coordinate
axis would be too strong: that property already fails for many compact
and discrete groups. We require instead that one coordinate projection
be discrete.

\begin{proposition}\label{LCA resonance projection classification}
For a second countable locally compact abelian group $G$, the
following assertions are equivalent.
\begin{enumerate}[\rm (i)]
\item The group $G$ has the resonance projection property.
\item Every closed subgroup of $G$ is discrete or cocompact.
\item The group $G$ is compact, discrete, or isomorphic to
$\R\times F$ for a finite abelian group $F$.
\end{enumerate}
\end{proposition}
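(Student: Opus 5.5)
I will prove the cycle (iii)$\Rightarrow$(ii)$\Rightarrow$(i)$\Rightarrow$(iii).

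\emph{(iii)$\Rightarrow$(ii).} If $G$ is compact, every closed subgroup is cocompact. If $G$ is discrete, every subgroup is discrete. For $G=\R\times F$, let $H<G$ be closed. Its image in $\R$ under the first projection, taken modulo the finite kernel, is a closed subgroup of $\R$ up to finite index. So it is $0$, $a\Z$, or $\R$. In the first two cases $H$ is discrete, and in the last $H$ is cocompact.

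\emph{(ii)$\Rightarrow$(i).} Take a closed subgroup $\Sigma<\mathcal R_G$, so $p(t)=1$ for all $(p,t)\in\Sigma$. Let $A=\overline{\mathrm{pr}_1(\Sigma)}<\widehat G$ and $T=\overline{\mathrm{pr}_2(\Sigma)}<G$. Using bimultiplicativity of the pairing on the subgroup $\Sigma$, each $(p,t)\in\Sigma$ satisfies $p(t)=1$. Adding elements shows $p(t')p'(t)=1$ for $(p,t),(p',t')\in\Sigma$. This does not force $A\perp T$ directly. Instead I use the following consequence: if $(p_n,t_n)\to0$ in $\Sigma$ with $p_n=0$, then $p(t_n)=1$ for every $p\in\mathrm{pr}_1(\Sigma)$. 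This is the same argument as in Proposition~\ref{resonant group for R}.

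Suppose $\Sigma$ is not discrete and neither projection is discrete. Note that (ii) is self-dual: a closed $H<G$ is discrete iff $\widehat G/H^\perp$ is discrete, iff $H^\perp$ is cocompact. So every closed subgroup of $\widehat G$ is also discrete or cocompact. Hence $A$ and $T$ are cocompact and nondiscrete. The plan is to show that the connected component $\Sigma_0$, or more generally the small elements of $\Sigma$, must lie in $\Sigma\cap(\widehat G\times\{0\})$ or $\Sigma\cap(\{0\}\times G)$. Writing $\Sigma_1=\Sigma\cap(\widehat G\times 0)$ and $\Sigma_2=\Sigma\cap(0\times G)$, the resonance relation gives $\mathrm{pr}_1(\Sigma_1)\subset T^\perp$ and $\mathrm{pr}_2(\Sigma_2)\subset A^\perp$. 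A cocompact subgroup has a compact annihilator, which I expect to kill a nondiscrete $\Sigma_i$ against the cocompactness of the other projection.

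The main obstacle is handling small elements that lie on neither axis, where $p_n(t_n)=1$ holds but neither coordinate vanishes. For $G=\R$ this was excluded by the discreteness of $2\pi\Z$. In general I would use the structure theorem $G\cong\R^n\times G_1$, with $G_1$ having a compact open subgroup, together with (ii) to force $n\le1$.

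\emph{(i)$\Rightarrow$(iii).} I argue by contrapositive, building a nondiscrete resonant subgroup with nondiscrete projections.

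\begin{itemize}
\item If $G\supset\R^2$ as a closed subgroup: take $\Sigma=\{(x,0,0,y)\}\subset\R^2\times\R^2$. Here $p(t)=e^{\ri(x\cdot0+0\cdot y)}=1$, and both projections are lines. This requires extending characters from the quotient, so I would take $\Sigma=(\R\times 0)^\perp$-type pairs using $H\times H^\perp$.
\item The general device: for a closed $H<G$, the subgroup $\Sigma=H^\perp\times H$ is resonant. Its projections are $H^\perp$ and $H$. Thus (i) forces, for every closed $H$, that $H$ or $H^\perp$ is discrete. Since $H^\perp$ is discrete iff $H$ is cocompact, this is exactly (ii).
\end{itemize}

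So (i)$\Rightarrow$(ii) is immediate via $H^\perp\times H$. It remains to show (ii)$\Rightarrow$(iii) structurally. Write $G=\R^n\times G_1$. Here $n\ge2$ gives a line that is neither discrete nor cocompact, so $n\le1$. If $n=1$ and $G_1$ is infinite, then $\R\times\{0\}$ fails when $G_1$ is noncompact. When $G_1$ is compact infinite, the subgroup $\Z\times G_1$-type choices fail, so (ii) forces $G_1$ to be finite. If $n=0$, let $K$ be compact open. If $G$ is neither compact nor discrete, then $K$ is infinite and $G/K$ is infinite, so $K$ is neither discrete nor cocompact.

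This gives (i)$\Leftrightarrow$(ii)$\Leftrightarrow$(iii). With this route, the hard converse (ii)$\Rightarrow$(i) reduces to checking the three model cases of (iii) directly. I would do that check, as in Proposition~\ref{resonant group for R}, instead of the general argument above.
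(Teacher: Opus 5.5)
Once the abandoned general attempt at (ii)$\Rightarrow$(i) is set aside, your assembled route is the paper's. You prove (i)$\Rightarrow$(ii) with the resonant subgroup $H^\perp\times H$ and the fact that $H^\perp$ is discrete iff $H$ is cocompact. You prove (ii)$\Rightarrow$(iii) with the structure theorem $G\cong\R^n\times G_1$. You prove (iii)$\Rightarrow$(i) by checking the model cases.

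One step of (ii)$\Rightarrow$(iii) fails as written. When $n=1$ and $G_1$ is compact and infinite, $\Z\times G_1$ is cocompact (the quotient is $\R/\Z$), so it does not contradict (ii). The correct witness is $\{0\}\times G_1$:
\begin{itemize}
\item it is compact, hence closed;
\item it is nondiscrete, since an infinite compact group is nondiscrete;
\item it is not cocompact, since the quotient is $\R$.
\end{itemize}
The paper avoids splitting $G_1$ into compact and noncompact cases. It applies (ii) to $\{0\}\times C$ for a compact open subgroup $C<G_1$, which forces $C$ to be finite and hence $G_1$ to be discrete. Cocompactness of the nondiscrete subgroup $\R\times\{0\}$ then makes $G_1$ compact, hence finite. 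Your case split works equally well once the witness is corrected.

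Second, for (iii)$\Rightarrow$(i) you only promise the $\R\times F$ case, and it is not literally Proposition~\ref{resonant group for R}. Using metrizability, pick a sequence in $\Sigma$ tending to the identity. Its $F$- and $\widehat F$-coordinates eventually vanish because these groups are finite and discrete. The real argument, comparing resonance at $z$ and at $z+z_j$, then gives $\operatorname{pr}_G(\Sigma)\subset\{0\}\times F$ or $\operatorname{pr}_{\widehat G}(\Sigma)\subset\{0\}\times\widehat F$. The conclusion is therefore a finite, hence discrete, projection, not containment in a coordinate axis.

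Finally, the abandoned paragraph contains a wrong intermediate claim: that $H$ is discrete iff $\widehat G/H^\perp$ is discrete. In fact $\widehat G/H^\perp\cong\widehat H$ is \emph{compact} iff $H$ is discrete. Your conclusion there is still right, but that paragraph should simply be deleted.
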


\begin{proof}
We first prove $(\rm i)\Rightarrow(\rm ii)$. Assume that $G$ has the
resonance projection property. For
every closed subgroup $K<G$, the product
$K^\perp\times K$ is a closed subgroup of $\mathcal R_G$.
If $K$ is nondiscrete and noncocompact, then both coordinate
projections of this product are nondiscrete.  Indeed,
$K^\perp\cong\widehat{G/K}$, and the dual of a locally compact
abelian group is discrete exactly when the group is compact.  The
product is nondiscrete as well, contradicting the resonance projection
property.

We next prove $(\rm ii)\Rightarrow(\rm iii)$. By the principal
structure theorem for locally compact abelian groups
\cite[Theorem~24.30]{HR79}, write $G\cong\R^n\times H$, where $H$
has a compact open subgroup $C$.  Suppose that every closed
subgroup of $G$ is discrete or cocompact.  If $n\geq2$, a coordinate
copy of $\R$ is neither discrete nor cocompact.  Thus $n\leq1$.
If $n=1$ and $C$ is infinite, then $\{0\}\times C<G$ is neither
discrete nor cocompact, because its quotient still has an
$\R$-coordinate.  Hence $C$ is finite and $H$ is discrete.  The
subgroup $\R\times\{0\}$ must then be cocompact, so $H$ is finite.
If $n=0$, a finite $C$ makes $H$ discrete.  An infinite $C$ is
nondiscrete and therefore cocompact.  Since $C$ is open, the compact
quotient $H/C$ is also discrete, hence finite. Thus $H$ is compact.

Finally, we prove $(\rm iii)\Rightarrow(\rm i)$. Assume $(\rm iii)$.
The resonance projection
property is immediate for compact $G$, since
$\widehat G$ is discrete, and for discrete $G$.  Let
$G=\R\times F$ with $F$ finite, and identify
$\widehat G=\R\times\widehat F$.  Let
$\Sigma<\widehat G\times G$ be a closed nondiscrete subgroup contained
in $\mathcal R_G$.  The ambient group is metrizable, so there is a
sequence $(z_j)_{j\in\N}$ in $\Sigma\setminus\{(1,0)\}$ converging to
the identity.  The finite
coordinates eventually vanish, so we may write
$z_j=((p_j,1),(t_j,0))$, where $p_j,t_j\to0$.  The resonance relation
gives $p_jt_j\in2\pi\Z$, and hence
$p_jt_j=0$ eventually.  After taking a subsequence, either
$t_j=0\neq p_j$ for every $j\in\N$, or $p_j=0\neq t_j$ for every
$j\in\N$.
In the first case, applying resonance to $z+z_j$ for
$z=((p,\eta),(t,f))\in\Sigma$ gives
$p_jt\in2\pi\Z$.  It follows that $t=0$, so
$\operatorname{pr}_G(\Sigma)\subset\{0\}\times F$, which is
discrete.  The second case similarly gives
$\operatorname{pr}_{\widehat G}(\Sigma)
\subset\{0\}\times\widehat F$.
\end{proof}

We now apply this property to the relative commutant.

\subsubsection{Rigidity for groups with the resonance projection property}

\begin{proposition}\label{resonance projection implies rigidity}
Let $G$ have the resonance projection property.  Let $M$ be a diffuse
semifinite factor and let $\alpha:G\curvearrowright M$ be a
trace-preserving outer action.  If
$M\rtimes_\alpha G$ is a factor, then $\alpha$ is strictly outer.
\end{proposition}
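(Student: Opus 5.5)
Put $B=M'\cap(M\rtimes_\alpha G)$ and let $\beta=(\widetilde\alpha\times\widehat\alpha)|_B$ be the joint action of $G\times\widehat G$. By \eqref{eq:fixed-algebras}, $B^{\widehat\alpha}=\mathcal Z(M)=\C1$ and $B^{\widetilde\alpha}=\mathcal Z(M\rtimes_\alpha G)=\C1$. So $\beta$ is ergodic, and in fact each coordinate action is separately ergodic. By \eqref{eq:ergodic-spectrum-kernel}, $\Sigma=\Sp(\beta)=\Gamma(\beta)=\ker(\beta)^\perp$ is a closed subgroup of $\widehat G\times G$. Since $\alpha$ preserves a faithful normal semifinite trace, Corollary~\ref{resonance spectral inclusion} gives $\Sigma\subset\mathcal R_G$. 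The resonance projection property then splits the argument into three cases: $\Sigma$ is discrete, $\operatorname{pr}_G(\Sigma)$ is discrete, or $\operatorname{pr}_{\widehat G}(\Sigma)$ is discrete.

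\emph{Case $\Sigma$ discrete.} Proposition~\ref{joint distribution spectrum}, applied with $N=B$, shows that $B$ is generated by joint eigenoperators. Let $c$ be such an eigenoperator at $(p,t)$. Then $c$ is in particular an $\widehat\alpha$-eigenoperator at $t$, so $c\in B_t$. Proposition~\ref{atomic coefficients} forces $t=0$ by outerness, and then $B_0=\C1$. Hence $B=\C1$.

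\emph{Case $\operatorname{pr}_G(\Sigma)=:D$ discrete.} By \eqref{eq:coordinate-restriction}, the $\widehat\alpha$-spectrum of every $x\in B$ lies in $D$, so $\Sp(\widehat\alpha|_B)$ is discrete. I would apply Proposition~\ref{equivariant distribution spectrum}(ii) to $\widehat\alpha|_B$, using as $\mathcal J$ the restriction of the second-coordinate marginal of the distribution from Proposition~\ref{joint distribution spectrum}. Alternatively, apply Proposition~\ref{joint distribution spectrum} directly to the subgroup generated by the eigenspaces. Either way, $B$ is generated by $\widehat\alpha$-eigenoperators in the spaces $B_t$ with $t\in D$, and outerness again leaves only $B_0=\C1$.

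\emph{Case $\operatorname{pr}_{\widehat G}(\Sigma)$ discrete.} The same reasoning shows that $B$ is generated by $\widetilde\alpha$-eigenoperators. Let $c$ be such an eigenoperator, $\widetilde\alpha_r(c)=\overline{p(r)}c$. Then $c^*c$ and $cc^*$ are $\widetilde\alpha$-fixed, hence scalar, so $c$ is a multiple of a unitary. The unitary $c$ commutes with $M$ and satisfies $u_rcu_r^*=\overline{p(r)}c$, so $\Ad(c)$ fixes $M$ and acts on $u_r$ as multiplication by $p(r)$. In other words, $\Ad(c)=\widehat\alpha_{\bar p}$ is an inner dual automorphism implemented by an element of $B$. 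I then need to show $p=1$. This step is where trace preservation and diffuseness should enter, and I expect it to be the main obstacle.

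What I would try first: $c$ and $\widehat\alpha_\nu(c)$ lie in the same one-dimensional $\widetilde\alpha$-eigenspace, because $\widehat\alpha$ commutes with $\widetilde\alpha$, ergodicity makes these eigenspaces one-dimensional, and $\widehat\alpha$ preserves each of them. So $c$ is a joint eigenoperator, and this reduces to the first case unless its $G$-coordinate $t$ is nontrivial. If $t\neq0$, then $c\in B_t$, and Proposition~\ref{atomic coefficients} gives $\alpha_t$ inner, contradicting outerness. If $t=0$, then $c\in B_0=\C1$, so $p=1$. Everything therefore hinges on the claim that $\widehat\alpha$ preserves the one-dimensional eigenspaces. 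This holds because $\widehat\alpha_\nu(c)$ is again a $\widetilde\alpha$-eigenoperator at $p$, and two such unitaries $c,c'$ give $c^*c'\in B^{\widetilde\alpha}=\C1$. Once this is checked, every generator is scalar, $B=\C1$, and $\alpha$ is strictly outer. I am not sure where diffuseness is needed in this argument; it may only serve to exclude degenerate cases in the paper's proof of the generation statements.
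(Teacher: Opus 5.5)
Your argument follows the paper's proof in all essentials: the same ergodicity set-up, the inclusion $\Sigma\subset\mathcal R_G$ from Corollary~\ref{resonance spectral inclusion}, the same three-way split, and the same device of upgrading a one-coordinate eigenoperator to a joint one via one-dimensionality of the eigenspaces. Trace preservation is used only in that inclusion, and the paper's proof does not use diffuseness either. In the case where $\operatorname{pr}_G(\Sigma)$ is discrete you are more direct than the paper: a nonzero element of $B_t$ already contradicts outerness by Proposition~\ref{atomic coefficients}.

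The weak point is the claim, used in both projected cases, that $B$ is generated by single-coordinate eigenoperators. Your proposed justification fails as stated, for three reasons. A ``second-coordinate marginal'' of $\mathcal J(x)$ would require pairing with the constant function $1$ in the $\widehat G$-variable, which does not lie in $\mathcal S(\widehat G)$ unless $\widehat G$ is compact. Freezing a test function in that variable instead destroys the injectivity required by Proposition~\ref{equivariant distribution spectrum}(ii). And Proposition~\ref{joint distribution spectrum} does not apply, because $\Sigma$ itself need not be discrete.

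The claim is nevertheless true by standard Arveson theory: smoothing with compactly supported $\widehat\varphi$ gives finite spectrum, and finite spectrum splits into eigenoperators. The paper avoids it entirely. Take $\chi\in\Sp(\widetilde\alpha|_B)$; by \eqref{eq:coordinate-restriction} this lies in $\operatorname{pr}_{\widehat G}(\Sigma)$, which is closed and discrete. Isolate $\chi$ by a neighborhood and apply \cite[Lemma~${\rm XI}$.1.3(v)]{Ta03} to get $0\neq c\in B$ with $\Sp_{\widetilde\alpha}(c)\subset\{\chi\}$, which is an eigenoperator by \eqref{eq:one-point-spectrum}. Your argument then forces $\chi=1$, so $\Sp(\widetilde\alpha|_B)=\{1\}$, and \eqref{eq:one-point-spectrum} gives $B=B^{\widetilde\alpha}=\C1$ with no generation statement. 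The case of $\widehat\alpha$ is symmetric.

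One smaller point: before invoking Pontryagin duality, check that $\nu\mapsto\lambda(\nu)$, defined by $\widehat\alpha_\nu(c)=\lambda(\nu)c$, is continuous. For instance, $\lambda(\nu)=\omega(\widehat\alpha_\nu(c))/\omega(c)$ for any $\omega\in B_*$ with $\omega(c)\neq0$.
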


\begin{proof}
Retain the notation of \S\,\ref{subsec:crossed-product-actions}.
Since $M$ and $M\rtimes_\alpha G$ are factors,
\eqref{eq:fixed-algebras} gives
$B^{\widetilde\alpha}=B^{\widehat\alpha}=\C1$.
Thus $\widetilde\alpha\times\widehat\alpha$ is ergodic on $B$, and
if
\[
\beta=(\widetilde\alpha\times\widehat\alpha)|_B,
\qquad
K=\ker(\beta)<G\times\widehat G,
\]
then \eqref{eq:ergodic-spectrum-kernel} gives
\[
\Sigma
=\Sp(\beta)=\Gamma(\beta)=K^\perp
<\widehat G\times G
\]
as a closed subgroup. Corollary
\ref{resonance spectral inclusion} gives
$K^\perp=\Sigma\subset\mathcal R_G$.

If $\Sigma$ is discrete, Proposition
\ref{joint distribution spectrum} shows that the joint eigenspaces
generate $B$ and that every point of $\Sigma$ is the
eigenvalue of a nonzero joint eigenoperator. If
$(\chi,t)\in\Sigma\setminus\{(1,0)\}$, choose such an eigenoperator
$c$. If $t\neq0$, Proposition \ref{atomic coefficients} makes
$\alpha_t$ inner, contrary to outerness. If $t=0$, then
$c\in B^{\widehat\alpha}=\C1$ and $\chi=1$, which is again a
contradiction. Hence
$\Sigma=\{(1,0)\}$. The generation assertion of Proposition
\ref{joint distribution spectrum} and
$B^{\widetilde\alpha\times\widehat\alpha}=\C1$ give $B=\C1$.

Suppose next that
$P=\operatorname{pr}_{\widehat G}(\Sigma)$ is discrete.  A discrete
subgroup of a locally compact group is closed.  For every $b\in B$,
formula \eqref{eq:coordinate-restriction} gives
\(
 \Sp_{\widetilde\alpha}(b)
 \subset\overline{\operatorname{pr}_{\widehat G}(
 \Sp_{\widetilde\alpha\times\widehat\alpha}(b))}
 \subset P
\).
Taking the closed union over $b\in B$ gives
$\Sp(\widetilde\alpha|_B)\subset P$.
Fix $\chi\in\Sp(\widetilde\alpha|_B)$ and choose an open neighborhood $U$ of
$\chi$ such that $U\cap P=\{\chi\}$. By
\cite[Lemma ${\rm XI}$.1.3(v)]{Ta03}, there is
$0\neq c\in B$ with $\Sp_{\widetilde\alpha}(c)\subset U$.  Since
$\Sp_{\widetilde\alpha}(c)
\subset\Sp(\widetilde\alpha|_B)\subset P$, one has
$\Sp_{\widetilde\alpha}(c)\subset\{\chi\}$.  Formula
\eqref{eq:one-point-spectrum} therefore gives
$\widetilde\alpha_r(c)=\overline{\chi(r)}c$ for every $r\in G$.
Since $B^{\widetilde\alpha}=\C1$, both $c^*c$ and $cc^*$ are
scalar, so after rescaling $c$ is unitary.  Moreover, the
$\chi$-eigenspace is one-dimensional: if $d$ is another element of
that eigenspace, then $dc^*\in B^{\widetilde\alpha}=\C1$.
Since $\widetilde\alpha$ and $\widehat\alpha$ commute, there is a continuous character
$\lambda:\widehat G\to\mathbb T$ such that
$\widehat\alpha_\nu(c)=\lambda(\nu)c$.  To see continuity, choose
$\omega\in B_*$ with $\omega(c)\neq0$.  Then
$\lambda(\nu)=\omega(\widehat\alpha_\nu(c))/\omega(c)$, which is
continuous in $\nu$.
Pontryagin duality gives $t\in G$ with
$\lambda(\nu)=\overline{\nu(t)}$.  Thus $c$ is a joint eigenoperator
at $(\chi,t)$.  Outerness and Proposition
\ref{atomic coefficients} force $t=0$.  It follows that
$c\in B^{\widehat\alpha}=\C1$, and hence $\chi=1$.  Therefore
$\Sp(\widetilde\alpha|_B)=\{1\}$, so
$B=B^{\widetilde\alpha}=\C1$.

Suppose finally that $Q=\operatorname{pr}_G(\Sigma)$ is discrete.
The coordinate-restriction formula gives
$\Sp(\widehat\alpha|_B)\subset Q$.
Fix $t\in\Sp(\widehat\alpha|_B)$ and choose an open neighborhood $U$ of $t$
such that $U\cap Q=\{t\}$. By
\cite[Lemma ${\rm XI}$.1.3(v)]{Ta03}, there is $0\neq c\in B$ with
$\Sp_{\widehat\alpha}(c)\subset U$. Since
$\Sp_{\widehat\alpha}(c)\subset Q$, its spectrum is contained in
$\{t\}$. Formula \eqref{eq:one-point-spectrum} gives
$\widehat\alpha_\nu(c)=\overline{\nu(t)}c$ for every
$\nu\in\widehat G$.
Since $B^{\widehat\alpha}=\C1$, both $c^*c$ and $cc^*$ are scalar,
so $c$ is unitary after rescaling. If $d$ is another element of the
$t$-eigenspace, then $dc^*\in B^{\widehat\alpha}=\C1$. Thus this
eigenspace is one-dimensional. It is invariant under
$\widetilde\alpha$, so there is a continuous character
$\chi\in\widehat G$ such that
$\widetilde\alpha_r(c)=\overline{\chi(r)}c$ for every $r\in G$.
To verify continuity, choose $\omega\in B_*$ with $\omega(c)\neq0$.
Then
\[
\overline{\chi(r)}
=\frac{\omega(\widetilde\alpha_r(c))}{\omega(c)}
\qquad(r\in G),
\]
which is continuous in $r$.
Thus $c$ is a joint eigenoperator at $(\chi,t)$.
Proposition \ref{atomic coefficients} and outerness force $t=0$.
Then $c\in B^{\widehat\alpha}=\C1$, which also forces $\chi=1$.
Therefore $\Sp(\widehat\alpha|_B)=\{0\}$ and
$B=B^{\widehat\alpha}=\C1$.
\end{proof}

\subsubsection{Counterexamples on the hyperfinite
$\II_1$ factor}

We now show that every group excluded by Proposition
\ref{LCA resonance projection classification} has a counterexample
on the hyperfinite type $\II_1$ factor.  The same construction applies
to every nondiscrete noncocompact closed subgroup.

We use the fact that every second countable locally compact group
admits an essentially free mixing pmp Gaussian action
\cite[Remark~1.1]{KPV15}.  One can take an infinite product of the
Gaussian action associated with the real left regular representation.

We also record the factoriality criterion used at the end of the
construction.

\begin{lemma}\label{central action factoriality}
Let $H$ be a second countable locally compact group and let
$(\kappa,w)$ be a Borel cocycle action of $H$ on a von Neumann algebra
$P$ with separable predual.  Suppose that the induced action on
$\mathcal Z(P)=\rL^\infty(Y)$ is ergodic and essentially free in the
sense that almost every point of $Y$ has trivial stabilizer.  Then
$P\rtimes_{\kappa,w}H$ is a factor.
\end{lemma}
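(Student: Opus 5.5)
To prove Lemma~\ref{central action factoriality}, I would reduce factoriality of the crossed product to two statements: the relative commutant $P'\cap(P\rtimes_{\kappa,w}H)$ is contained in $\mathcal Z(P)$, and the fixed points of $\mathcal Z(P)$ under the induced action are scalars. Granting both, any central element $z$ of $P\rtimes_{\kappa,w}H$ commutes with $P$, so $z\in\mathcal Z(P)=\rL^\infty(Y)$. Since $z$ also commutes with the implementing unitaries $u_h$ (the $2$-cocycle $w$ does not affect conjugation on the center), $z$ is invariant under the induced action of $H$ on $Y$. Ergodicity then forces $z\in\C1$. The second statement is exactly the ergodicity hypothesis, so the real content is the first one, a relative commutant computation that uses essential freeness.

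For the inclusion $P'\cap(P\rtimes_{\kappa,w}H)\subset\mathcal Z(P)$, I would realize the crossed product on $\rL^2(H,\rL^2(P))$ and use that it contains $\rL^\infty(Y)\subset P$. The relative commutant is contained in $\mathcal Z(P)'\cap(P\rtimes H)$, so it suffices to show that this algebra equals $P$; the relative commutant is then $P'\cap P=\mathcal Z(P)$. Commuting with $\rL^\infty(Y)$ means, heuristically, that a Fourier ``coefficient'' $x(h)$ of $x$ satisfies $f\,x(h)=x(h)\,\kappa_h(f)$ for $f\in\rL^\infty(Y)$. This forces $x(h)$ to be supported on the fixed-point set $\{y\mid hy=y\}$. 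For $h\ne e$ this set should be negligible in the appropriate integrated sense.

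To make this rigorous without pointwise Fourier coefficients, I would disintegrate over the measure groupoid. The crossed product carries the abelian subalgebra $\rL^\infty(Y)$ together with the unitaries $u_h$, which generate a copy of the groupoid algebra $\rL^\infty(Y)\rtimes H$. Concretely, $P\rtimes_{\kappa,w}H$ acts on $\rL^2(H\times Y)\otimes K$ after decomposing $P$ over its center. Essential freeness implies that the unit space $\{(e,y)\}$ has the property that the relative commutant of $\rL^\infty(Y)$ in $\B(\rL^2(H))\ovt\rL^\infty(Y)$ twisted by the action is $\rL^\infty(H\times Y)$. Here I would use that $\rL^\infty(Y)$, viewed inside $\rL^\infty(H\times Y)$ via $f\mapsto[(h,y)\mapsto f(h^{-1}y)]$ together with $1\otimes\rL^\infty(Y)$, separates points of $H\times Y$ off the set $\{(h,y)\mid hy=y\}$, which has measure zero.

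Alternatively, I would use the dual coaction or an $\rL^2$-averaging argument, as in the classical proof for free ergodic pmp actions (Zimmer, or Feldman--Moore in the groupoid setting). I expect this step to be the main obstacle: the group is continuous and the twisting is only Borel, so one must be careful with measurability. I would pass to the standard Borel model via separability of the predual and second countability of $H$, and argue with the Plancherel weight, or with $\rT_P$ as in Theorem~\ref{two-sided coefficient distribution}, to define coefficients in the distributional sense.
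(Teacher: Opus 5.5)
Your reduction is correct and standard. If $\cZ(P)'\cap(P\rtimes_{\kappa,w}H)=P$, a central element lies in $P'\cap P=\cZ(P)$ and is fixed by $\Ad(u_h)|_{\cZ(P)}=\kappa_h|_{\cZ(P)}$; you are right that $w$ plays no role there. Ergodicity then finishes. But for non-discrete $H$, the equality $\cZ(P)'\cap(P\rtimes_{\kappa,w}H)=P$ is the whole content of the lemma, and you do not prove it. You list possible tools and call it the main obstacle, which leaves a genuine gap. The heuristics you offer do not close it:
\begin{itemize}
\item There are no pointwise Fourier coefficients. Knowing that a ``coefficient'' vanishes off a null subset of $H\times Y$ says nothing, because every element of $P$ is itself concentrated on the null set $\{e\}\times Y$. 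What must be shown is that an element so concentrated actually lies in $P$.
\item The distributional machinery of Theorem~\ref{two-sided coefficient distribution} is built for genuine actions of abelian groups. The lemma allows non-abelian $H$ and a Borel $2$-cocycle.
\item $\rL^\infty(Y)$ and the $u_h$ do not generate a copy of $\rL^\infty(Y)\rtimes H$ when $w$ is nontrivial, since $u_hu_ku_{hk}^*=w(h,k)\in\cU(P)$.
\end{itemize}

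To close the gap, first remove the twist as the paper does. On $Q=P\ovt\B(\rL^2(H))$, the unitaries $v_h$ built from $w$ and $\lambda_h$ turn $(\kappa\otimes\id,w\otimes1)$ into a genuine action $\beta$. It satisfies $Q\rtimes_\beta H\cong(P\rtimes_{\kappa,w}H)\ovt\B(\rL^2(H))$ and has the same restriction to $\cZ(Q)=\cZ(P)\otimes1$, so you may assume $w=1$. Realize $P\rtimes_\kappa H$ on $\rL^2(H)\otimes\rL^2(P)$ by $\pi(x)\xi(h)=\kappa_{h^{-1}}(x)\xi(h)$ and $(\lambda_s\xi)(h)=\xi(s^{-1}h)$.

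Your separation idea gives the first half. The algebra $1\otimes\cZ(P)$ lies in the commutant, and together with $\pi(\cZ(P))$ it generates the pull-back of $\rL^\infty(Y\times Y)$ under $(h,y)\mapsto(hy,y)$. This map is injective on the conull set where $\mathrm{Stab}(y)$ is trivial; that, not $\{hy=y\}$, is the relevant exceptional set. Hence the generated algebra is all of $\rL^\infty(H)\ovt\cZ(P)$.

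The missing half uses the rest of the commutant. An element $X\in\cZ(P)'\cap(P\rtimes H)$ commutes with $\rL^\infty(H)\otimes1$ and with $1\otimes JPJ$, so it is a field $h\mapsto X(h)\in P$. The unitaries $(R_t\xi)(h)=\Delta(t)^{1/2}U^\kappa_t\xi(ht)$ also lie in the commutant, and commuting with them gives $X(h)=\kappa_t(X(ht))$. Fubini then yields $X=\pi(x_0)$ for some $x_0\in P$.

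This is a hands-on proof of the center formula $\cZ(Q\rtimes_\beta H)=\cZ(Q)^\beta$ for principal transformation groupoids, which the paper simply quotes after the same untwisting step. With it supplied, your route becomes a self-contained version of the paper's.
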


\begin{proof}
Put $Q=P\ovt\B(\rL^2(H))$, $a_h=\kappa_h\otimes\id$, and
$u(h,k)=w(h,k)\otimes1$. Let $\lambda_h$ be the left regular
representation on $\rL^2(H)$. If $M_F$ denotes multiplication by a
bounded measurable $P$-valued function $F$ on $H$, define
\[
v_h=M_{s\mapsto w(h,h^{-1}s)^*}(1\otimes\lambda_h)\in\mathcal U(Q).
\]
For $l=(hk)^{-1}s$, the multiplication coefficient of
$v_ha_h(v_k)u(h,k)$ at $s$ is
\[
w(h,kl)^*\kappa_h(w(k,l)^*)w(h,k)=w(hk,l)^*.
\]
Here we used the cocycle identity
$w(h,k)w(hk,l)=\kappa_h(w(k,l))w(h,kl)$. Consequently,
\[
v_ha_h(v_k)u(h,k)=v_{hk},
\qquad
\beta_h=\Ad(v_h)\circ a_h
\]
defines a genuine Borel action on $Q$. Since $Q$ has separable
predual, its automorphism group is Polish and this Borel homomorphism
is continuous. The action of $\beta$ on
$\mathcal Z(Q)=\mathcal Z(P)\otimes1$ is the given action on the
center.

In the twisted crossed product by $(a,u)$, multiplying its
implementing unitary at $h$ by $v_h$ gives a genuine implementing
representation for $\beta$. These generators yield
\[
Q\rtimes_\beta H
\cong Q\rtimes_{a,u}H
\cong(P\rtimes_{\kappa,w}H)\ovt\B(\rL^2(H)).
\]
Disintegrate $Q$ over its center. Essential freeness makes the
transformation groupoid $H\ltimes Y$ principal after discarding an
invariant null set. The untwisted center formula
\cite[Theorem~3.2(iii)]{Ya92} therefore gives
\[
\mathcal Z(Q\rtimes_\beta H)
=\mathcal Z(Q)^\beta
=\rL^\infty(Y)^H=\C1.
\]
Removing the type $\mathrm I$ tensor factor proves the claim.
\end{proof}

\begin{theorem}\label{universal LCA counterexample}
Let $G$ be a second countable locally compact abelian group and let
$K<G$ be a closed subgroup.  Suppose that $K$ is nondiscrete and
$G/K$ is noncompact.  Then there is a trace-preserving outer action
$\alpha:G\curvearrowright R$ on the hyperfinite type $\II_1$ factor
such that $R\rtimes_\alpha G$ is a factor and
$R'\cap(R\rtimes_\alpha G)$ is diffuse.
\end{theorem}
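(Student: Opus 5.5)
We construct $\alpha$ as a cocycle-twisted product of a Gaussian action of $G$ with a "rotation" along $K$, so that $K$ acts nearly innerly in a way that produces a diffuse relative commutant. Concretely, fix an essentially free mixing pmp Gaussian action $G\curvearrowright(X,\mu)$ (as in \cite[Remark~1.1]{KPV15}). The plan is to build $R$ as a crossed product $P=\rL^\infty(X)\rtimes_{\kappa,w}\Lambda$, or more simply as $R\cong R\ovt R_0$, on which $G$ acts by
\[
\alpha_g=\sigma_g\otimes\gamma_g ,
\]
where $\sigma$ is a Rokhlin-type outer action of $G$ on $R$ whose dual bicentralizer is trivial, and $\gamma$ is chosen so that the restriction $\gamma|_K$ is \emph{implemented} by a strongly continuous unitary representation $K\ni k\mapsto w_k\in\cU(R_0)$ that does \emph{not} extend to $G$. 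Outerness of $\alpha$ then comes from outerness of $\sigma$ (an automorphism $\sigma_g\otimes\gamma_g$ is inner only if $\sigma_g$ is, by a tensor-product argument using Proposition~\ref{tensor product relative action bicentralizer}-type slicing). The relative commutant will contain the elements $(1\otimes w_k^*)u_k$ for $k\in K$, which fail to commute with $M$ only through the $\sigma$-leg. So instead I will make the $\sigma$-leg trivial on $K$ in a controlled way: take $\sigma$ to factor through $G\to G/K$ with essentially free action of $G/K$, and $\gamma_k=\Ad(w_k)$ on $K$.

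The key steps are as follows. First, choose the Gaussian action of $G/K$ on $(X,\mu)$, essentially free and mixing, and a Gaussian action of $G$ (via $\widehat{G}$-spectral data) on $R_0=\rL(\widehat K)$-type algebra: take $R_0=\rL^\infty(Y)\rtimes\Gamma$ hyperfinite with $w:K\to\cU(R_0)$ a continuous unitary representation with diffuse spectral measure, and let $G$ act on $R_0$ by an action $\gamma$ with $\gamma_k=\Ad(w_k)$, obtained by inducing/extending via a Borel section of $G\to G/K$ together with a $2$-cocycle, which is where Lemma~\ref{central action factoriality} for cocycle actions enters. Second, form $M=\rL^\infty(X)\ovt R_0$ with the product action and check $M$ is the hyperfinite $\II_1$ factor (injective by amenability of $G$, factor by ergodicity/freeness), or tensor with $R$ to ensure factoriality. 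Third, show $R\rtimes_\alpha G$ is a factor: writing $R\rtimes_\alpha G$ as an iterated crossed product, first by $K$ (which acts innerly on the $R_0$-leg and trivially on $X$), we get $(M\rtimes K)\cong M\ovt \rL(K)$ twisted, whose center is $\rL^\infty(X)\ovt \rL(K)$ essentially; then the residual cocycle action of $G/K$ on this center is the product of the free mixing action on $X$ and a dual translation on $\widehat K$, ergodic and essentially free because $G/K$ is noncompact and the Gaussian action is mixing. Lemma~\ref{central action factoriality} then gives factoriality. Fourth, the relative commutant contains the abelian algebra generated by $\{w_k^*u_k\mid k\in K\}\cong\rL(K)$, which is diffuse since $K$ is nondiscrete; outerness of each $\alpha_g$ for $g\neq0$ follows from freeness of the $X$-leg for $g\notin K$ and, for $g\in K\setminus\{0\}$, needs a separate twist (tensor with an additional outer $K$-action whose inner part cancels only at the level of the relative commutant) — here I would instead let $\alpha_k=\Ad(w_k)\otimes\theta_k$ with $\theta$ a properly outer action on a further factor and then use diagonal elements, adjusting the construction.

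The main obstacle is reconciling outerness of every $\alpha_g$ with the existence of a diffuse relative commutant: elements of $R'\cap(R\rtimes G)$ supported on $K$ force near-innerness along $K$, so the diffuse commutant must come from the continuous (non-atomic) part of the joint spectrum, which by Corollary~\ref{resonance spectral inclusion} lies in $\mathcal R_G$ and can be nondiscrete with both projections nondiscrete, e.g.\ $K^\perp\times K$. I would aim to realize exactly that: arrange $\Sp$ of the joint action on the relative commutant to be $K^\perp\times K$, with each $\alpha_k$ ($k\in K$) approximately inner but not inner (so $w$ lives in an ultrapower-type continuous decomposition, e.g.\ $\alpha_k$ is a limit of inner automorphisms implemented by a direct integral over $\widehat K$ with Lebesgue-type spectrum), and verify outerness via Proposition~\ref{atomic coefficients}: the eigenspaces $B_t$ vanish because the spectral measure of the implementing field has no atoms.
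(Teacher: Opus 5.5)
There is a genuine gap. The whole difficulty of the statement is to produce an action in which $\alpha_k$ is outer for every $k\in K\setminus\{0\}$ while $R'\cap(R\rtimes_\alpha G)$ is diffuse, and your proposal never does this.

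\begin{itemize}
\item Your one concrete construction, with the first leg factoring through $G/K$ and $\gamma_k=\Ad(w_k)$, makes $\alpha_k=\Ad(1\otimes w_k)$ inner. So it is not a counterexample.
\item The suggested repair, $\alpha_k=\Ad(w_k)\otimes\theta_k$ on $R_0\ovt Q$ with $\theta$ properly outer, removes the elements $(w_k^*\otimes1)u_k$ from the relative commutant, since conjugation by them acts as $\theta_k$ on the second leg. After untwisting the inner leg, $(R_0\ovt Q)\rtimes K\cong R_0\ovt(Q\rtimes_\theta K)$. At the level of $K$ the relative commutant is then just $Q'\cap(Q\rtimes_\theta K)$, so the problem is only transferred to $\theta$.
\item Your last paragraph correctly guesses the target: joint spectrum $K^\perp\times K$, with each $\alpha_k$ outer but ``continuously inner''. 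However, a direct integral over $\widehat K$ with Lebesgue-type spectrum is not a construction. Proposition~\ref{atomic coefficients} only reformulates outerness; it does not verify it.
\end{itemize}

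There are also secondary errors.
\begin{itemize}
\item $\rL^\infty(X)\ovt R_0$ has diffuse center $\rL^\infty(X)$, so it is not $R$, and tensoring with $R$ does not change that. Such a center would make the relative commutant trivially diffuse, which is exactly why the theorem insists on a factor.
\item Amenability of $G$ has nothing to do with injectivity of $M$.
\item A translation action of $G/K$ on $\rL(K)\cong\rL^\infty(\widehat K)$ would require $\gamma_g(w_k)\in\mathbb T w_k$, which you never arrange.
\end{itemize}

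The missing idea is how to make the $K$-action pointwise outer while its crossed product has diffuse center. In the paper, $\alpha|_K$ is a dual-type action on a group measure space factor, built as follows.
\begin{itemize}
\item Pick a countable $\Lambda<\widehat G$ with $r(\Lambda)$ dense in $\widehat K$ and $\Lambda\cap K^\perp$ infinite.
\item Let $\Lambda$ act on $Y\times Z$ by $\rho_{(r(\xi),0)}\otimes\eta_\xi$, where $\rho$ is a free mixing action of $\widehat K\times G/K$ and $\eta$ is Bernoulli.
\item Put $M=\rL^\infty(Y\times Z)\rtimes\Lambda\cong R$, and let $g\in G$ act by $\rho_{(0,g+K)}$ on the Cartan subalgebra and by $u_\xi\mapsto\overline{\xi(g)}u_\xi$.
\end{itemize}
For $k\in K\setminus\{0\}$, $\alpha_k$ fixes the Cartan subalgebra. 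It is outer by an eigenfunction argument using density of $r(\Lambda)$ and mixing of $\rho|_{\widehat K}$; this is where nondiscreteness of $K$ enters.

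Yet $M\rtimes_{\alpha|_K}K\cong(\rL^\infty(Y\times Z)\ovt\rL^\infty(\widehat K))\rtimes\Lambda$ is a skew product. Because the $\Lambda$-action on $Y$ was routed through $\widehat K$, the coordinate $y_0=\rho_{(-q,0)}(y)$ is $\Lambda$-invariant. Hence $\rL^\infty(Y)$ is central in $M\rtimes K$ and lies in $R'\cap(R\rtimes_\alpha G)$.

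Factoriality of $R\rtimes_\alpha G$ then follows from Lemma~\ref{central action factoriality}. Indeed, $G/K$ acts on this center by $\rho|_{G/K}$, which is free and ergodic because $G/K$ is noncompact. Nothing in your proposal plays the role of this coupling between the $\Lambda$-action on $Y$ and translation on $\widehat K$.
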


\begin{proof}
Put $L=\widehat K$ and $H=G/K$.  The restriction map in the exact
sequence
\[
 0\longrightarrow K^\perp\cong\widehat H
 \longrightarrow\widehat G
 \stackrel{r}{\longrightarrow}L
 \longrightarrow0
\]
is onto.  The groups $L$ and $H$ are noncompact.  Choose a countable
subgroup $\Lambda<\widehat G$ such that $r(\Lambda)$ is dense in $L$
and $\Lambda_0=\Lambda\cap K^\perp$ is infinite.  This is possible by
taking lifts of a countable dense
subset of $L$ and adjoining a countably infinite subgroup of
$K^\perp\cong\widehat H$.  We regard $\Lambda$ as a discrete group.

Choose an essentially free mixing pmp action
$\rho:L\times H\curvearrowright(Y,\mu)$ on a nonatomic standard
probability space and a nontrivial Bernoulli action
$\eta:\Lambda\curvearrowright(Z,\nu)$. Since $L$ and $H$ are
noncompact, the restrictions $\rho|_L$ and $\rho|_H$ are mixing and
hence ergodic. Since $\Lambda_0$ is infinite, the restriction
$\eta|_{\Lambda_0}$ is mixing and hence ergodic. The Bernoulli action
$\eta$ is essentially free. Put
$A=\rL^\infty(Y\times Z)$. For $\xi\in\Lambda$ and $h\in H$, define
$\sigma_\xi=\rho_{(r(\xi),0)}\otimes\eta_\xi$ and
$\theta_h=\rho_{(0,h)}\otimes\id$.
The action $\sigma$ is free because $\eta$ is free. It is ergodic
because restriction to the infinite subgroup $\Lambda_0$ first removes
the $Z$-coordinate and density of $r(\Lambda)$ then reduces the
$Y$-coordinate to the $L$-fixed functions. Thus
$M=A\rtimes_\sigma\Lambda$ is a type $\II_1$ factor. The group
$\Lambda$ is countable and amenable, so its orbit relation is
hyperfinite by \cite[Main Theorem]{CFW81}. Hence the group measure
space factor is hyperfinite and $M\cong R$.

For $g\in G$, set $\alpha_g(a)=\theta_{g+K}(a)$ for $a\in A$ and
$\alpha_g(u_\xi)=\overline{\xi(g)}u_\xi$ for $\xi\in\Lambda$.
The covariance relations show that this is an action.  It preserves
the canonical trace. The formulas are continuous in $g$ on the
algebraic crossed product. Since the automorphisms are
$\rL^2$-isometries and the algebraic crossed product is
$\rL^2$-dense, the action is pointwise $\rL^2$-continuous and hence
pointwise ultraweakly continuous on $M$.

We first prove outerness.  If $\alpha_g=\Ad(v)$, then $v$ normalizes
the Cartan algebra $A$.  Put $h=g+K$.  If $h\neq0$, the restriction
of $\Ad(v)$ to $A$ would place the transformation $\theta_h$ in the
full group of the orbit relation of $\sigma$ by
\cite[Proposition~2.9]{FM77}. For every
$\xi\in\Lambda$, the equality set of $\theta_h$ and $\sigma_\xi$ is
null by freeness of $\rho$ and $\eta$.  Countability of $\Lambda$
gives a contradiction.

It remains to consider $g\in K$.  In this case $\alpha_g$ fixes $A$
pointwise, so $v\in A$.  Comparing on the unitaries $u_\xi$ gives
$\sigma_\xi(v)=\xi(g)v$ for every $\xi\in\Lambda$.
For $\xi\in\Lambda_0$, the scalar on the right is one.  Ergodicity of
$\eta|_{\Lambda_0}$ therefore makes $v$ independent of the $Z$-coordinate.
The remaining equation makes it an eigenfunction for the dense
$r(\Lambda)$-subaction of the mixing action $\rho|_L$.  Continuity
extends the corresponding finite-dimensional invariant subspace from
$r(\Lambda)$ to $L$.  Mixing of $\rho|_L$ then implies that $v$ is
scalar.  It follows that $\xi(g)=1$ for every $\xi\in\Lambda$.
Density of $r(\Lambda)$ in $\widehat K$ gives $g=0$.

We next compute the center after crossing by $K$.  Put
$P=M\rtimes_{\alpha|_K}K$.  We regard $P$ as the von Neumann subalgebra of
$M\rtimes_\alpha G$ generated by $M$ and the unitaries
$(V_k)_{k\in K}$.  This representation of
$M\rtimes_{\alpha|_K}K$ is faithful even when $K$ is not open.
Indeed, choose a Borel section $s:H\to G$.  Weil disintegration
expresses the restricted regular representation as a direct integral
over $H$ of regular $K$-covariant representations whose
representations of $M$ are precomposed by $\alpha_{-s(h)}$.  Every
fiber representation is faithful, hence so is their direct integral.

Since $\alpha_k$ fixes $A$ pointwise for every $k\in K$, the
faithful representation above identifies
$A\rtimes_{\mathrm{id}}K=A\ovt\rL(K)$ with the subalgebra of $P$
generated by $A$ and $\rL(K)$. The generators satisfy
\[
 \begin{aligned}
 u_\xi a u_\xi^*&=\sigma_\xi(a),
 &V_k u_\xi V_k^*&=\overline{\xi(k)}u_\xi,\\
 u_\xi V_k u_\xi^*&=r(\xi)(k)V_k
 \end{aligned}
 \qquad(a\in A,\ k\in K,\ \xi\in\Lambda).
\]
Identify $\rL(K)$ with $\rL^\infty(L)$ by sending $V_k$ to the
function $q\mapsto\overline{q(k)}$.  For $l\in L$, let
$\tau_l(\varphi)(q)=\varphi(q-l)$. The generator relations give a
surjective homomorphism
\[
\Pi:(A\ovt\rL^\infty(L))\rtimes_{\widetilde\sigma}\Lambda
\longrightarrow P,
\]
where $\widetilde\sigma_\xi=\sigma_\xi\otimes\tau_{r(\xi)}$.
This homomorphism is faithful. Indeed, the dual action of
$\widehat\Lambda$ on $M$ commutes with $\alpha|_K$, so it extends to
$P$ by fixing the unitaries $V_k$. It fixes $A$ and multiplies each
$u_\xi$ by the conjugate of the corresponding character value.
The span of $(A\ovt\rL^\infty(L))u_\xi$ over $\xi\in\Lambda$ is
ultraweakly dense in $P$.  Hence the fixed-point algebra of the
dual action is $A\ovt\rL^\infty(L)$, and averaging gives a faithful
conditional expectation $\rE_P$ onto this algebra. Let $\rE$ be the
canonical faithful expectation on the crossed product. The map $\Pi$
intertwines $\rE$ and $\rE_P$ and is the identity on
$A\ovt\rL^\infty(L)$. If $x\geq0$ and $\Pi(x)=0$, then
$\Pi(\rE(x))=\rE_P(\Pi(x))=0$, and hence $\rE(x)=0$. Faithfulness of $\rE$
gives $x=0$, so $\Pi$ is injective.
Thus
\begin{equation}\label{eq:universal skew product decomposition}
 P\cong
 \bigl(A\ovt\rL^\infty(L)\bigr)\rtimes_{\widetilde\sigma}\Lambda.
\end{equation}
We use the convention
$(\widetilde\sigma_\xi f)(x)=f(\xi^{-1}\mathbin{\cdot}x)$ for the
corresponding action on points. The change of variables
$y_0=\rho_{(-q,0)}(y)$ gives
\begin{equation}\label{eq:universal skew product coordinates}
 \begin{aligned}
 \xi\cdot(y,z,q)
 &=\bigl(\rho_{(r(\xi),0)}(y),\eta_\xi(z),q+r(\xi)\bigr),\\
 \xi\cdot(y_0,z,q)
 &=\bigl(y_0,\eta_\xi(z),q+r(\xi)\bigr).
 \end{aligned}
\end{equation}
The action on $Z\times L$ is free. Indeed, a fixed point for $\xi$
forces $r(\xi)=0$, and essential freeness of $\eta$ then forces
$\xi=0$. To prove ergodicity, let
$F\in\rL^\infty(Z\times L)$ be invariant. Invariance under
$\Lambda_0$ and ergodicity of $\eta|_{\Lambda_0}$ give
$F(z,q)=g(q)$ for some $g\in\rL^\infty(L)$. Invariance under
$\Lambda$ gives $g(q+r(\xi))=g(q)$ for every $\xi\in\Lambda$.
Density of $r(\Lambda)$ in $L$ and continuity of translations in
$\rL^2_{\mathrm{loc}}(L)$ imply that $g$ is constant. Thus
$C=\rL^\infty(Z\times L)\rtimes\Lambda$ is a factor. In the
coordinates of
\eqref{eq:universal skew product coordinates}, decomposition
\eqref{eq:universal skew product decomposition} gives
\begin{equation}\label{eq:universal intermediate center}
 P\cong\rL^\infty(Y)\ovt C,
 \qquad
 \mathcal Z(P)=\rL^\infty(Y).
\end{equation}
Thus $M'\cap(M\rtimes_\alpha G)$ contains the diffuse unital
subalgebra $\mathcal Z(P)=\rL^\infty(Y)$.

Finally, choose a Borel section $s:H\to G$ with $s(0)=0$ and put
$c(h,l)=s(h)+s(l)-s(h+l)\in K$.  Define
\[
 \begin{aligned}
 \kappa_h&=\Ad(V_{s(h)})|_P,
 &w(h,l)&=V_{c(h,l)},\\
 \kappa_h\kappa_l&=\Ad(w(h,l))\kappa_{h+l}.
 \end{aligned}
\]
The equality
\[
c(h,l)+c(h+l,m)=c(l,m)+c(h,l+m)
\]
gives
\[
w(h,l)w(h+l,m)
=\kappa_h(w(l,m))w(h,l+m).
\]
Thus $(\kappa,w)$ is a Borel cocycle action. The defining regular
representations give
$M\rtimes_\alpha G\cong P\rtimes_{\kappa,w}H$. Indeed, Weil's
integration formula and the Borel coordinates
$g=s(g+K)+k$ identify the two regular representation spaces
unitarily. In these coordinates the regular generators are
intertwined, which proves faithfulness as well as surjectivity. The
isomorphism is the identity on $P$ and sends the twisted
implementing unitary associated with $h$ to $V_{s(h)}$.

Different choices of a lift induce the same automorphism of
$\mathcal Z(P)$.  Under
\eqref{eq:universal intermediate center}, the resulting action of
$H$ is exactly $\rho|_H$.  Indeed, in the coordinates
$(y_0,z,q)$, conjugation by $V_{s(h)}$ sends $y_0$ to
$\rho_{(0,h)}(y_0)$. Under
$P=\rL^\infty(Y)\ovt C$, the center is
$\rL^\infty(Y)\ovt\C1$, so the restriction of $\kappa_h$ to the
center is $\rho_{(0,h)}$. This action is essentially
free and ergodic.  Lemma \ref{central action factoriality} therefore
gives $\mathcal Z(M\rtimes_\alpha G)=\C1$. Thus
$M\rtimes_\alpha G$ is a factor.

It remains to prove that the entire relative commutant
$B=M'\cap(M\rtimes_\alpha G)$ is diffuse. The dual action on $B$
is ergodic because
\[
B^{\widehat\alpha}=B\cap M=\mathcal Z(M)=\C1.
\]
By Proposition~\ref{atomic coefficients} and outerness of $\alpha$,
every eigenoperator for this action is scalar. Suppose that $B$ has
a minimal projection. The atomic summand of $B$ is then nonzero and invariant
under every automorphism, so ergodicity makes $B$ atomic. Since $B$
has separable predual, its center has countably many atoms. The dual
action is transitive on these atoms, since the sum over any orbit is
an invariant central projection. The stabilizer $J<\widehat G$ of a
central atom $z$ is open. Indeed, a normal state supported on $z$
takes only the values $0$ and $1$ on its translates, and continuity
makes its value equal to $1$ near the identity. If there is more
than one central atom, a nontrivial character of the discrete abelian
quotient $\widehat G/J$ gives a nonscalar eigenoperator in
$\mathcal Z(B)$, a contradiction.

Thus $B$ is a type $\mathrm I$ factor. For each $\nu\in\widehat G$,
choose a unitary $v\in B$ implementing $\widehat\alpha_\nu|_B$.
Commutativity of the dual action implies that
$\widehat\alpha_\eta(v)v^*$ is scalar for every $\eta\in\widehat G$.
These scalars form a continuous character, so $v$ is an
eigenoperator and must be scalar. The action is therefore trivial,
and ergodicity gives $B=\C1$. This contradicts the diffuse unital
subalgebra $\mathcal Z(P)\subset B$. Hence $B$ has no minimal
projections and is diffuse.
\end{proof}

\begin{theorem}\label{main LCA classification}
Let $G$ be a second countable locally compact abelian group. The
following assertions are equivalent.
\begin{enumerate}[\rm (i)]
\item For every diffuse semifinite factor $M$ and every trace-preserving
action $\alpha:G\curvearrowright M$, the action $\alpha$ is strictly
outer if and only if $\Gamma(\alpha)=\widehat G$ and $\alpha$ is outer.
\item Every closed subgroup of $G$ is either discrete or cocompact.
\item The group $G$ is compact, discrete, or isomorphic to
$\R\times F$ for a finite abelian group $F$.
\item There is no trace-preserving outer action
$\alpha:G\curvearrowright R$ such that
$R\rtimes_\alpha G$ is a factor and
$R'\cap(R\rtimes_\alpha G)$ is diffuse.
\end{enumerate}
\end{theorem}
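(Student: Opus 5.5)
We prove the cycle of implications by assembling results already established in this section. The equivalence $(\rm ii)\Leftrightarrow(\rm iii)$ is contained in Proposition~\ref{LCA resonance projection classification}, so we only need to connect $(\rm i)$ and $(\rm iv)$ to these.

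First we show $(\rm ii)\Rightarrow(\rm i)$. Strict outerness always implies outerness and $\Gamma(\alpha)=\widehat G$. Indeed, if $M'\cap(M\rtimes_\alpha G)=\C1$, then $\mathcal Z(M\rtimes_\alpha G)=\C1$, so the crossed product is a factor. This gives full Connes spectrum by the standard identification of $\Gamma(\alpha)$ with the annihilator of the kernel of the dual action on the center; for a factor $M$ the crossed product is a factor exactly when $\Gamma(\alpha)=\widehat G$. Outerness follows from Proposition~\ref{atomic coefficients}, because an inner $\alpha_t$ with $t\neq0$ produces a nonscalar element $v^*u_t$ of the relative commutant. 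For the converse, assume $\Gamma(\alpha)=\widehat G$ and $\alpha$ outer. Then $M\rtimes_\alpha G$ is a factor. By Proposition~\ref{LCA resonance projection classification}, $G$ has the resonance projection property, so Proposition~\ref{resonance projection implies rigidity} gives strict outerness.

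Next we show $(\rm i)\Rightarrow(\rm iv)$. Suppose $\alpha:G\curvearrowright R$ is outer and trace-preserving and $R\rtimes_\alpha G$ is a factor. Then $\Gamma(\alpha)=\widehat G$, so by $(\rm i)$ the action is strictly outer. Hence the relative commutant is $\C1$, which is not diffuse.

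Finally we show $(\rm iv)\Rightarrow(\rm ii)$, arguing by contraposition. If some closed subgroup $K<G$ is neither discrete nor cocompact, then Theorem~\ref{universal LCA counterexample} produces exactly the action that $(\rm iv)$ forbids. This closes the cycle.

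The only point needing care is the translation between ``$M\rtimes_\alpha G$ is a factor'' and ``$\Gamma(\alpha)=\widehat G$'' for an action on a factor. This is the classical Connes--Takesaki description of the center of the crossed product as the fixed points of the dual action on $\mathcal Z(M\rtimes_\alpha G)$. We invoke it rather than reprove it; everything else is direct citation.
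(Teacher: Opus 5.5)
Your proposal is correct and follows the paper's proof step for step. It uses the same cycle: $(\mathrm{ii})\Leftrightarrow(\mathrm{iii})$ from Proposition~\ref{LCA resonance projection classification}; $(\mathrm{ii})\Rightarrow(\mathrm{i})$ via Propositions~\ref{resonance projection implies rigidity} and~\ref{atomic coefficients} together with the factoriality criterion \cite[Theorem~${\rm XI}$.2.7(iv)]{Ta03}; $(\mathrm{i})\Rightarrow(\mathrm{iv})$ directly; and $(\mathrm{iv})\Rightarrow(\mathrm{ii})$ by Theorem~\ref{universal LCA counterexample}.

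One small wording slip: for a factor $M$, $\Gamma(\alpha)$ is the kernel of the restriction of $\widehat\alpha$ to $\mathcal Z(M\rtimes_\alpha G)$, not the annihilator of that kernel. The consequence you actually use, that $M\rtimes_\alpha G$ is a factor if and only if $\Gamma(\alpha)=\widehat G$, is nonetheless correct.
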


\begin{proof}
We first prove $(\rm ii)\Longleftrightarrow(\rm iii)$. Proposition
\ref{LCA resonance projection classification} proves this equivalence.

We next prove $(\rm ii)\Rightarrow(\rm i)$. Assume $(\rm ii)$.
Then $G$ has the resonance projection property.
Let $M$ and $\alpha$ be as in $(\rm i)$. If
$\Gamma(\alpha)=\widehat G$ and $\alpha$ is outer, then
\cite[Theorem ${\rm XI}$.2.7(iv)]{Ta03} shows that
$M\rtimes_\alpha G$ is a factor. Proposition
\ref{resonance projection implies rigidity} implies that $\alpha$ is
strictly outer. Conversely, if $\alpha$ is strictly outer, then
$M\rtimes_\alpha G$ is a factor because its center is contained in
$M'\cap(M\rtimes_\alpha G)$. By
\cite[Theorem ${\rm XI}$.2.7(iv)]{Ta03}, one has
$\Gamma(\alpha)=\widehat G$, while Proposition
\ref{atomic coefficients} shows that $\alpha$ is outer. This proves the
implication.

We next prove $(\rm i)\Rightarrow(\rm iv)$. Assume $(\rm i)$ and
suppose that there is
a trace-preserving outer action $\alpha:G\curvearrowright R$ such that
$R\rtimes_\alpha G$ is a factor and
$R'\cap(R\rtimes_\alpha G)$ is diffuse. Factoriality of
$R\rtimes_\alpha G$ gives
$\Gamma(\alpha)=\widehat G$. Assertion $(\rm i)$ would then make
$\alpha$ strictly outer, contradicting the diffuseness of
$R'\cap(R\rtimes_\alpha G)$. This proves the implication.

Finally, we prove $(\rm iv)\Rightarrow(\rm ii)$ by contraposition.
Suppose that
$(\rm ii)$ fails. There is a closed subgroup
$K<G$ that is neither discrete nor cocompact. Theorem
\ref{universal LCA counterexample} gives a trace-preserving outer
action $\alpha:G\curvearrowright R$ such that
$R\rtimes_\alpha G$ is a factor and
$R'\cap(R\rtimes_\alpha G)$ is diffuse. Thus $(\rm iv)$ fails. This
proves the contrapositive and completes the proof.
\end{proof}

\section{Resonance for the joint bicentralizer action}
\label{sec:joint-bicentralizer-action}

For actions of locally compact abelian groups on $\II_1$ factors with full
Connes spectrum, we prove a resonance theorem for the joint action of the
original group and the dual bicentralizer action. If $\Sigma$ denotes its
spectrum, the relation $p(t')p'(t)=1$ holds for all
$(p,t),(p',t')\in\Sigma$. For flows with full Connes spectrum, the geometry
of this relation gives a discrete spectrum or a spectrum contained in
one coordinate axis. The eigenoperator and fixed point results then
identify the analytic and algebraic bicentralizers in all three cases.
In particular, such outer flows have trivial bicentralizer, proving Theorem~\ref{main flow bicentralizer} and completing the proof
of Theorem~\ref{main flow conjecture}.

A key idea is the interpretation of the bicentralizer conjecture as a fixed point property for the dual bicentralizer action inside the crossed product similar to Theorem \ref{fixed points relative dual bicentralizer compressed}. This idea appears already in \cite{Ma25}. A remarkable technical argument, found by GPT-6, is to apply Theorem \ref{fixed points relative dual bicentralizer compressed} inside a GNS representation built from a singular state on the crossed product.
This provides a finite tracial algebra in which the required commutation
relations hold. The Plancherel function expresses these relations on a
Hilbert space, where comparison of left and right multiplication yields
the resonance identity.

\subsection{Why the resonance property should hold ?}
Let $\alpha : G \curvearrowright (M,\tau)$ be a trace preserving action of a locally compact abelian group with $\Gamma(\alpha)=\widehat{G}$. If the bicentralizer conjecture is true then
$$ \rB(M,\alpha)=\rb(M,\alpha).$$
The following proposition shows that the spectrum of the joint bicentralizer action is the same as the spectrum of the joint action on the relative commutant studied in the previous section and for which we established the resonance property.
\begin{proposition}
Let $$\gamma : \alpha|_{\rb(M,\alpha)} \times \beta_{\rb(M,\alpha})$$
be the joint bicentralizer action restricted to $\rb(M,\alpha)$. Then we have 
$$\Sp(\gamma) = \Sp( (\widetilde{\alpha}  \times \widehat \alpha )|_{M' \cap (M \rtimes_\alpha G}).$$
In particular $\Sp(\gamma) \subset \mathcal{R}_G$.
\end{proposition}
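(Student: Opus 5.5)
The plan is to realize both actions inside the crossed product $\rb(M,\alpha)\rtimes_\alpha G$. By Proposition~\ref{crossed product of bicentralizer}, this algebra equals $\rb(M\subset M\rtimes_\alpha G,\alpha)$ and is generated by $B=M'\cap(M\rtimes_\alpha G)$ together with $\rL_\alpha(G)$. We then compare spectra through an equivariant isomorphism.

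\textbf{Step 1: the dual bicentralizer action on the crossed product.} First I would describe $\beta$ on $\rb(M,\alpha)\rtimes_\alpha G$. Take $p\in\widehat G$ and a net $(w_i)\subset\cU(M)$ with $\partial_\alpha(w_i)\to p$, which exists by Theorem~\ref{approximate cocycle vanishing theorem}. Conjugation $\Ad(w_i)$ then behaves as follows.
\begin{itemize}
\item It fixes $B$ pointwise, since $B$ commutes with $M$.
\item On $u_t$ it gives $w_iu_tw_i^*=w_i\alpha_t(w_i^*)u_t\to p(t)u_t$.
\end{itemize}
So on the relative bicentralizer $\rB(M\subset M\rtimes_\alpha G,\alpha)$ the map $\beta_p$ (Proposition~\ref{relative bicentralizer isomorphism}) is $\widehat\alpha_{p^{-1}}$ on generators: it is the identity on $B$ and multiplies $u_t$ by $p(t)$. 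Restricted to $\rb(M,\alpha)\subset M$ it is the dual bicentralizer action. Likewise, $\widetilde\alpha_r=\Ad(u_r)$ restricts to $\alpha_r$ on $\rb(M,\alpha)$. The difficulty is that $\widetilde\alpha_r$ acts nontrivially on the $u_t$ only through commutation, and trivially on $\rL_\alpha(G)$.

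\textbf{Step 2: an equivariant isomorphism.} The idea is to build an isomorphism between $\rb(M,\alpha)\ovt\rL^\infty$-type data and $B$. I would use the unitary $W=\int^\oplus u_t\,\rd t$, or equivalently the map
\[
x\mapsto \alpha(x)\in\rL^\infty(G)\ovt M
\]
as in the algebraic bicentralizer definition. The goal is that conjugating by the implementing unitaries turns the pair $(\widetilde\alpha,\widehat\alpha)$ on $B\rtimes$-data into $(\alpha,\beta^{-1})$ on $\rb(M,\alpha)$, up to the flip $t\mapsto$ dual coordinate. Concretely, the plan has two parts.
\begin{itemize}
\item Show that $(\rb(M,\alpha)\rtimes_\alpha G,\;\widetilde\alpha\times\beta)$ and $(\rb(M,\alpha)\rtimes_\alpha G,\;\widetilde\alpha\times\widehat\alpha^{-1})$ coincide. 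This holds because the two actions agree on the generators $B$ and $u_t$.
\item Use that $\rb(M,\alpha)\rtimes_\alpha G$ is generated by $B$ (which carries the joint action) and $\rL_\alpha(G)$ (on which the joint action is the translation-type action, with spectrum $\{0\}\times G$).
\end{itemize}

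\textbf{Step 3: comparing spectra.} Spectra of invariant subalgebras of $\rb(M,\alpha)\rtimes_\alpha G$ can be compared using Proposition~\ref{equivariant distribution spectrum} and the cutoff formula.
\begin{itemize}
\item $\Sp(\gamma)$ is the spectrum of $\widetilde\alpha\times\beta$ restricted to the invariant subalgebra $\rb(M,\alpha)$.
\item Elements of $\rb(M,\alpha)$ are limits of products $z\cdot h$ with $z\in B$ and $h\in\rL_\alpha(G)$. Here $h$ contributes frequency only in the $G$-coordinate and commutes appropriately, so the reduction is done via the crossed-product structure: the coefficient $x(t)$ of $b=\int x(t)u_t\,\rd t\in B$ lies in $\rb(M,\alpha)$.
\item The spectrum of $x(\cdot)$ under $\alpha\times\beta$ matches the joint spectrum of $b$, after identifying the $\widehat\alpha$-frequency $t$ with the $\beta$-frequency.
\end{itemize}
Running these coefficient maps in both directions, namely $b\mapsto$ coefficients and coefficients $\mapsto b$ via Proposition~\ref{LCA bicentralizer eigenvector intertwining compressed}-style matrix tricks, should give both inclusions. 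The final inclusion $\Sp(\gamma)\subset\mathcal R_G$ then follows from Corollary~\ref{resonance spectral inclusion}.

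\textbf{Main obstacle.} I expect the main obstacle to be making the coefficient correspondence rigorous in the nondiscrete case. Fourier coefficients exist only distributionally, so I would work with $\mathcal J$ from Proposition~\ref{joint distribution spectrum} and compute covariance under $\beta$ through the limit formula for $\Ad(w_i)$.
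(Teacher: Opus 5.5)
Your framework is the right one, and it is also the paper's. You work in $N=\rb(M,\alpha)\rtimes_\alpha G$, which is generated by $B=M'\cap(M\rtimes_\alpha G)$ and the $u_t$. You also use that the limit $\delta_p=\lim_i\Ad(w_i)$, with $\partial_\alpha(w_i)\to p$, fixes $B$ and sends $u_t$ to $p(t)u_t$.

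The identification you then make in Steps~1 and~2 is false. The map $\delta_p$ is \emph{not} $\widehat\alpha_{p^{-1}}$ on generators:
\begin{itemize}
\item $\widehat\alpha_{p^{-1}}$ is trivial on $\rb(M,\alpha)\subset M$, and it is not trivial on $B$, since $B^{\widehat\alpha}=\cZ(M)$;
\item $\delta_p$ restricts to $\beta_p$ on $\rb(M,\alpha)$ and to the identity on $B$.
\end{itemize}
So the two actions in Step~2 do not agree on $B$. If they coincided on $N$, then $\beta$ would be trivial on $\rb(M,\alpha)$, i.e.\ $\Sp(\gamma)\subset\widehat G\times\{0\}$, which is false in general. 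Checking on the generators $\rb(M,\alpha)$ and $u_t$ actually gives
\[
\delta_p=\widetilde\beta_p\circ\widehat\alpha_{p^{-1}},
\]
where $\widetilde\beta$ is the extension of $\beta$ that fixes every $u_t$. Since $\delta_p$ fixes $B$, this says $\widetilde\beta_p=\widehat\alpha_p$ on $B$. This is exactly the fact behind your phrase ``identifying the $\widehat\alpha$-frequency with the $\beta$-frequency'' in Step~3. You never derive it, and your Step~2 contradicts it.

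Even with this corrected, Step~3 is only a plan. Matching distributional Fourier coefficients and using Theorem~\ref{LCA bicentralizer eigenvector intertwining compressed} controls eigenoperators only. In particular, the inclusion $\Sp(\gamma)\subset\Sp\bigl((\widetilde\alpha\times\widehat\alpha)|_B\bigr)$ is not established on the continuous part of the spectrum.

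The missing ingredient is a second crossed-product decomposition. $N$ is also the crossed product of $B=N^\delta$ by the same unitaries $u_t$, with dual action given by $\delta$. This follows from the Landstad--Nakagami characterization of crossed products, since $\delta$ fixes $B$, scales the $u_t$, and $B$ and the $u_t$ generate $N$. Consider then the commuting triple action $\widehat\alpha\times\widetilde\alpha\times\widetilde\beta$ of $\widehat G\times G\times\widehat G$ on $N$, and compute its spectrum in two ways.
\begin{itemize}
\item Using the elements $xu_r$ with $x\in\rb(M,\alpha)$, the spectrum is $G\times\Sp(\gamma)$.
\item Using the elements $bu_r$ with $b\in B$, the spectrum is $\{(s,p,t)\mid s\in G,\ (p,t)\in\Sp((\widetilde\alpha\times\widehat\alpha)|_B)\}$. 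Here $b$ sits at $(t,p,t)$ because $\widetilde\beta=\widehat\alpha$ on $B$, and $u_r$ only shifts the first coordinate.
\end{itemize}
Comparing the two descriptions gives both inclusions at once, with no coefficient distributions needed. Corollary~\ref{resonance spectral inclusion} then gives $\Sp(\gamma)\subset\mathcal R_G$, as you say.
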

\begin{proof}
    We know that $N=\rb(M,\alpha)\rtimes_\alpha G$ is also the crossed product of $M' \cap (M \rtimes_\alpha G)$ by the same copy of $G$. The dual action with respect to this crossed product decomposition is given by $p \mapsto \widehat{\alpha}_{-p} \widetilde{\beta}_p$.
    
    On $N$, we have three commuting actions $\widehat \alpha$, $\widetilde{\alpha}$ and $\widetilde{\beta}$. This provides a joint action of $$\sigma=\widehat \alpha \times \widetilde{\alpha} \times \widetilde{\beta}  : \widehat{G} \times G \times \widehat{G} \curvearrowright N.$$
    We have
    $$ \widehat{G} \times \Sp(\gamma) = \Sp(\sigma).$$
    Since $M' \cap (M \rtimes_\alpha)$ is the fixed point algebra under the action $p \mapsto \widehat{\alpha}_{-p} \widetilde{\beta}_p$, we also know
    that $$(t,p,t) \in \Sp(\sigma)$$ if and only if  $$ (t,p) \in \Sp( (\widehat \alpha \times \widetilde{\alpha} )|_{M' \cap (M \rtimes_\alpha G}).$$ 
    This shows that
    $$\Sp(\gamma) = \Sp( (\widetilde{\alpha}  \times \widehat \alpha )|_{M' \cap (M \rtimes_\alpha G}).$$
\end{proof}

\subsection{A fixed point interpretation of the bicentralizer conjecture}
Let $\alpha : G \curvearrowright (M,\tau)$ be a trace preserving action of a locally compact abelian group with $\Gamma(\alpha)=\widehat{G}$.

We denote by $$u^\alpha : G \rightarrow \cU(M \rtimes_\alpha G)$$
the natural unitary implementation of $\alpha$ in the crossed product.

We denote by $\widehat{\alpha} : \widehat{G} \curvearrowright M \rtimes_\alpha G$ the dual action of $\alpha$. We denote by $\widetilde{\alpha}=\Ad(u^\alpha)$ the natural extension of $\alpha$ to an inner action on $M \rtimes_\alpha G$.

Fix a large enough cofinal ultrafilter $\omega$. We view
$M\rtimes_\alpha G$ in the common ambient algebra
$M^\omega_\alpha \rtimes_{\alpha^\omega}G$. Since $M^{\omega,\alpha}$ is fixed pointwise by
$\alpha^\omega$, we have
\[
 (M^{\omega,\alpha})'\cap(M^\omega_\alpha \rtimes_{\alpha^\omega}G)
 =((M^{\omega,\alpha})'\cap M_\alpha^\omega)\rtimes_{\alpha^\omega}G.
\]
Taking the intersection with $M \rtimes_\alpha G$, we obtain
\[
 (M^{\omega,\alpha})'\cap(M \rtimes_{\alpha^\omega}G)
 =((M^{\omega,\alpha})'\cap M)\rtimes_{\alpha^\omega}G = \rB(M,\alpha) \rtimes_\alpha G
\]
Therefore, we can think of $\rB(M,\alpha) \rtimes_\alpha G$ as a relative bicentralizer in the semifinite crossed
product:
\[
 \rB(M\subset M\rtimes_\alpha G,\alpha)
 =\rB(M,\alpha)\rtimes_\alpha G.
\]
It is also possible to define a dual bicentralizer action $$\widehat{G} \curvearrowright \rB(M \subset M \rtimes_\alpha G, \alpha)$$
but it is not given by the natural extension $$\widetilde{\beta} : \widehat{G} \curvearrowright \rB(M,\alpha) \rtimes_\alpha G$$ that fixes $u^\alpha$. Indeed, if we take $p \in \widehat{G}$ and $v \in\cU(M^\omega_\alpha)$ such that $$\alpha^\omega_g(v)=\overline{\langle p,g \rangle}v, \qquad (g \in G),$$
then we have $\Ad(v)=\widetilde{\beta}_p \circ \widehat{\alpha}_{p^{-1}}$. Thus, the dual bicentralizer action $$\delta : \widehat{G} \curvearrowright \rB(M \subset M \rtimes_\alpha G, \alpha)$$ is given by
$$ \delta_p = \widetilde{\beta}_p \circ \widehat{\alpha}_{p^{-1}}, \qquad (p \in \widehat{G}).$$

The following proposition highlights the importance of this construction. It is the analog of \cite[Proposition 8.6]{Ma25} (in the case $M=N$).
\begin{proposition}
    We have $\rB(M,\alpha)=\rb(M,\alpha)$ if and only if $$\rB(M \subset M \rtimes_\alpha G, \alpha)^\delta=M' \cap (M \rtimes_\alpha G).$$
\end{proposition}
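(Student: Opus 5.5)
Write $P=M\rtimes_\alpha G$, $\cC=\rB(M\subset P,\alpha)=\rB(M,\alpha)\rtimes_\alpha G$ and $B=M'\cap P$. The plan is to pass the fixed-point condition through the crossed-product decomposition of $\cC$. The key inputs are two facts proved above: Proposition~\ref{crossed product of bicentralizer}, which says $\rb(M\subset P,\alpha)=\rb(M,\alpha)\rtimes_\alpha G$ and is generated by $B$ and $\{u_g\}$; and the analogue of Theorem~\ref{fixed points relative dual bicentralizer compressed} for $\delta$.

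\textbf{Inclusion $B\subset\cC^\delta$.} First check that $B$ always lies in $\cC^\delta$. An element $b\in B$ commutes with $M$, hence with every $v\in\cU(M^\omega_\alpha)$. So if $v$ implements $p$ as in the preceding discussion, then $\delta_p(b)=vbv^*=b$. One can also see this directly from the approximate-unitary formula of Proposition~\ref{relative bicentralizer isomorphism} applied to the relative inclusion $M\subset P$.

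\textbf{Identifying $\cC^\delta$.} Next, $\delta_p(u_g)=\widetilde\beta_p\widehat\alpha_{p^{-1}}(u_g)=p(g)u_g$, and $\delta_p=\widetilde\beta_p$ on $\rB(M,\alpha)$. Hence $\delta$ is the action on the crossed product $\rB(M,\alpha)\rtimes_\alpha G$ that twists the dual action by $\beta$. Using the Fourier-coefficient description (the spectral subspaces of $\widehat\alpha$, or the distribution $\mathcal J$), I would show the following. An element $x\in\cC$ is $\delta$-fixed if and only if its Fourier coefficients $x(g)\in\rB(M,\alpha)$ satisfy $\beta_p(x(g))=p(g)x(g)$ for all $p$. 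By Theorem~\ref{LCA bicentralizer eigenvector intertwining compressed}, this is exactly the intertwining relation $x(g)y=\alpha_g(y)x(g)$ for $y\in M$, which is the condition that $x(g)u_g^{*}$-type coefficients commute with $M$. Rigorously, I would avoid pointwise coefficients: apply Theorem~\ref{fixed points relative dual bicentralizer compressed} to the relative inclusion $M\subset P$, with the relative dual bicentralizer action identified with $\delta$ via the ultrapower implementation above. This gives $\cC^\delta=B$ \emph{unconditionally} in the relative sense.

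At this point I am worried, because the equivalence would then be trivial. So I would reread the definition: $\cC$ is defined via $(M^{\omega,\alpha})'$, and Theorem~\ref{fixed points relative dual bicentralizer compressed} required $\rB(M\subset N,\alpha)$ with $N$ tracial. Here $P$ is only semifinite, so that theorem does not apply directly. This is the genuine content, and I expect it to be the main obstacle.

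\textbf{Plan for the two directions.}

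($\Rightarrow$) If $\rB(M,\alpha)=\rb(M,\alpha)$, then $\cC=\rb(M,\alpha)\rtimes G$, which is generated by $B$ and $\rL_\alpha(G)$. The action $\delta$ fixes $B$ and acts on $\rL_\alpha(G)\cong\rL^\infty(\widehat G)$ by translation, so it acts on $B\rtimes_{\widetilde\alpha} G$-type generators as a dual action. Its fixed points are then $B$, by the dual fixed-point theorem \cite[Theorem~X.2.3]{Ta03}.

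($\Leftarrow$) Conversely, suppose $\cC^\delta=B$. Then the crossed product of $\cC^\delta$ by the translation action generated with $\rL_\alpha(G)$ recovers $\cC$. Concretely, the algebra generated by $\cC^\delta$ and $\rL_\alpha(G)$ is $\delta$-invariant with $\delta$ acting by translation on $\rL_\alpha(G)$. Using Takesaki duality and Landstad-type arguments, I would show it equals $\cC$: an element of $\cC$ is recovered from its $\delta$-spectral components multiplied by $u_g$. This yields $\rB(M,\alpha)\rtimes G=\rb(M,\alpha)\rtimes G$. Finally, compress with $\rE_M$ (the operator-valued weight $\rT_M$ restricted to coefficients) to conclude $\rB(M,\alpha)=\rb(M,\alpha)$.

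\textbf{Main obstacle.} The hardest step is the Landstad-type reconstruction in ($\Leftarrow$), namely showing that $\delta$-fixed points together with $\rL_\alpha(G)$ generate $\cC$. For this I would adapt the corresponding argument of \cite[Proposition~8.6]{Ma25}, using the compatibility of $\delta$ with $\widehat\alpha$ on $\rL_\alpha(G)$ to build a covariant isomorphism $\cC\cong\cC^\delta\rtimes G$.
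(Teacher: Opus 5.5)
Your final plan is correct and is essentially the paper's proof. Both directions follow from the Takesaki--Landstad characterization $\cC=(\cC^\delta\cup\{u_g\mid g\in G\})''$ applied to $\cC=\rB(M\subset P,\alpha)$, together with the fact that $\rb(M\subset P,\alpha)=(B\cup\{u_g\})''$ carries the restriction of $\delta$, which fixes $B$ and multiplies $u_g$ by $p(g)$. The same characterization then identifies $\rb(M\subset P,\alpha)$ with $B\rtimes G$ and gives $\rb(M\subset P,\alpha)^\delta=B$; this is what your ($\Rightarrow$) step implicitly uses. The final step is cleanest by intersecting with $M$ (equivalently, taking $\widehat\alpha$-fixed points) rather than going through $\rT_M$.

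You should drop the ``Identifying $\cC^\delta$'' detour entirely. Its pointwise Fourier-coefficient heuristic would give $\rB(M,\alpha)=\rb(M,\alpha)$ for every such action. It breaks down precisely because $\beta$ may have continuous spectrum, while the eigenoperator theorem controls only the almost periodic part.
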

\begin{proof}
Put $P=M\rtimes_\alpha G$, $R=M'\cap P$, and
$C=\rB(M\subset P,\alpha)=\rB(M,\alpha)\rtimes_\alpha G$.
The preceding construction gives
\[
 \delta_p(u_g)=p(g)u_g,
 \qquad \delta_p(z)=z
 \qquad(p\in\widehat G,\ g\in G,\ z\in M' \cap (M \rtimes_\alpha G).
\]
The second equality also follows directly from the ultrapower
implementation, since every representing unitary lies in $M$ and hence
commutes with $R$. Takesaki's characterization
\cite[Theorem~1.5]{Ma25} gives
\[
 \rB(M \subset M \rtimes_\alpha G)=\bigl(\rB(M \subset M \rtimes_\alpha G)^\delta\cup\{u_g\mid g\in G\}\bigr)''.
\]
On the other hand, by Proposition \ref{crossed product of bicentralizer} gives
\[
 \rb(M \subset M \rtimes_\alpha G)
 =\bigl((M' \cap (M \rtimes_\alpha G))\cup\{u_g\mid g\in G\}\bigr)''.
\]
Thus $(\rb(M \subset M \rtimes_\alpha G),M' \cap (M \rtimes_\alpha G),u)$ is a split $G$-extension and $\delta|_{\rb(M \subset M \rtimes_\alpha G)}$ fixes $M' \cap (M \rtimes_\alpha G)$
while multiplying $u_g$ by $p(g)$. The same characterization therefore
gives $\rb(M,\alpha)\rtimes_\alpha G^\delta=M' \cap (M \rtimes_\alpha G)$.
If $\rB(M,\alpha)=\rb(M,\alpha)$, then  and hence
$\rB(M \subset M \rtimes_\alpha G)^\delta=M' \cap (M \rtimes_\alpha G)$. Conversely, $\rB(M \subset M \rtimes_\alpha G)^\delta=M' \cap (M \rtimes_\alpha G)$ makes the displayed generating
formulas for $\rB(M \subset M \rtimes_\alpha G)$ and $\rb(M \subset M \rtimes_\alpha G)$ identical. Intersecting with $M$,
or taking fixed points of the original dual action $\widehat\alpha$,
gives $\rB(M,\alpha)=\rb(M,\alpha)$.
\end{proof}
So the bicentralizer conjecture for $\alpha$ would be solved if we could apply Theorem \ref{fixed points relative dual bicentralizer compressed} with $N=M \rtimes_\alpha G$. Unfortunately, we cannot apply Theorem \ref{fixed points relative dual bicentralizer compressed} directly because $M \rtimes_\alpha G$ does not carry a normal tracial state. We will partially overcome this difficulty by using a tracial state on the crossed product that is not normal.

\subsection{A singular tracial state on the crossed product}
\label{subsec:singular-tracial-state}
Let $\alpha : G \curvearrowright (M,\tau)$ be a continuous trace-preserving action of a locally compact abelian group with $\Gamma(\alpha)=\widehat G$. We retain this assumption throughout the rest of this section. We use the notation
\[
\lambda(\varphi)=\int_G\varphi(t)u_t\,\rd t
\qquad(\varphi\in\rL^1(G))
\]
and the natural isomorphism
\[
\widehat\lambda:\rL^\infty(\widehat G)\longrightarrow\rL(G)
\]
characterized by
\[
\widehat\lambda(\widehat\varphi)=\lambda(\varphi)
\qquad(\varphi\in\rL^1(G)).
\]
In particular, $\widehat\lambda(\psi)$ is the spectral multiplier of
$u$ corresponding to the convention $u_t(p)=\overline{p(t)}$.

Put $\widetilde\tau=\tau\circ\rT_M$, the faithful normal semifinite
trace on $M\rtimes_\alpha G$.
Let $\rE_{\rL(G)}:M\rtimes_\alpha G\to\rL(G)$ be the canonical
faithful normal conditional expectation from
\eqref{canonical expectation onto group algebra}, determined by
\[
\rE_{\rL(G)}(au_t)=\tau(a)u_t
\qquad(a\in M,\ t\in G).
\]
We identify $\rL(G)$ with $\rL^\infty(\widehat G)$ through
$\widehat\lambda$ and write $\tau_G$ for Haar integration. Then
$\widetilde\tau=\tau_G\circ\rE_{\rL(G)}$.

Since $\widehat G$ is amenable, choose symmetric compact sets
$(F_i)_{i\in\mathcal I}$ with $0<\mu(F_i)<\infty$ such that
\begin{equation}\label{mean Folner sets}
\frac{\mu((F_iK)\setminus F_i)}{\mu(F_i)}\longrightarrow0
\qquad(K\Subset\widehat G,\ 1\in K).
\end{equation}
Fix a cofinal ultrafilter $\mathcal V$ on $\mathcal I$ and put
\begin{equation}\label{mean invariant mean}
m_i(f)=\frac1{\mu(F_i)}\int_{F_i}f(p)\,\rd p,
\qquad
m(f)=\lim_{i\to\mathcal V}m_i(f)
\qquad(f\in\rL^\infty(\widehat G)).
\end{equation}
The F\o lner condition makes the state $m$ translation invariant.
Symmetry of $F_i$ makes it invariant under inversion.

Let $e_i=\widehat\lambda(1_{F_i})$. Define normal states $\rho_i$
and a state $\rho$ on $M\rtimes_\alpha G$ by
\[
\rho_i(x)=\frac{\widetilde\tau(e_i x e_i)}{\mu(F_i)}
=m_i\bigl(\rE_{\rL(G)}(x)\bigr),
\qquad
\rho(x)=m\bigl(\rE_{\rL(G)}(x)\bigr)
=\lim_{i\to\mathcal V}\rho_i(x).
\]
The second expression for $\rho_i$ follows from the bimodularity
of $\rE_{\rL(G)}$. We use the seminorm
$\|x\|_{2,\rho}=\rho(x^*x)^{1/2}$.

\begin{proposition}\label{mean finite tracial realization}
The state $\rho$ has the following properties.
\begin{enumerate}[\rm (i)]
\item The $\widetilde\alpha$-norm continuous part
$(M\rtimes_\alpha G)_c$ is contained in the centralizer of $\rho$.
\item The restriction of $\rho$ to $M$ is $\tau$, and $M$ is contained
in the centralizer of $\rho$.
\end{enumerate}
\end{proposition}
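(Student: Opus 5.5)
We prove both items from the formula $\rho=m\circ\rE_{\rL(G)}$ with $m$ a translation-invariant mean on $\rL^\infty(\widehat G)$, using the Fourier description of $\rE_{\rL(G)}$ and the Følner property \eqref{mean Folner sets}.

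\textbf{Item (ii).} For $a\in M$ we have $\rE_{\rL(G)}(a)=\tau(a)1$, so $\rho(a)=\tau(a)$. For the centralizer claim, let $a\in M$ and $x\in M\rtimes_\alpha G$. Bimodularity gives only $\rE_{\rL(G)}(ax)=\rE_{\rL(G)}(ax)$, so we argue instead with the normal states $\rho_i$. Since $e_i\in\rL(G)$ and $\widetilde\tau$ is a trace,
\[
\rho_i(ax)-\rho_i(xa)=\mu(F_i)^{-1}\widetilde\tau\bigl(x[e_i,a]e_i+xe_i[a,e_i]\bigr),
\]
up to rearrangement. We bound this by $\|x\|\,\|[a,e_i]\|_{2,\widetilde\tau}\,\mu(F_i)^{-1/2}$, using $\|e_i\|_{2,\widetilde\tau}^2=\mu(F_i)$. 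By Plancherel,
\[
\|[a,e_i]\|_{2,\widetilde\tau}^2=\int_{\widehat G}\bigl\|\bigl(1_{F_i}-1_{F_i}(\cdot\,q)\bigr)\ldots\bigr\|^2,
\]
which is more precisely expressed through the spectral decomposition of $a$ for $\alpha$. We first treat $a$ with compact Arveson spectrum $\Sp_\alpha(a)\subset K$. In that case $e_ia=ae'_i$, where $e'_i$ is the spectral projection of a translate $F_iK^{\pm1}$, and hence
\[
\|[a,e_i]\|_2^2\le 4\|a\|^2\mu\bigl((F_iK)\setminus F_i\bigr),
\]
which is $o(\mu(F_i))$ by \eqref{mean Folner sets}. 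Elements with compact spectrum, namely $\alpha_f(a)$ with $\widehat f$ compactly supported, are $\|\cdot\|_2$-dense in $M$. We then pass to a general $a$ by approximation: $|\rho_i(yx)|\le\rho_i(yy^*)^{1/2}\|x\|$ and $\rho_i(yy^*)=\tau(yy^*)$ for $y\in M$, so these approximations are uniform in $i$. Taking the limit along $\mathcal V$ gives $\rho(ax)=\rho(xa)$.

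\textbf{Item (i).} We use the same mechanism with $a$ replaced by $u_t$ or by a norm-continuous $y$. The unitaries $u_t$ commute with $e_i$, so they lie in the centralizer of each $\rho_i$. For $y\in(M\rtimes_\alpha G)_c$, $\widetilde\alpha$-norm continuity lets us approximate $y$ in norm by $\widetilde\alpha_f(y)$ with $\widehat f$ compactly supported. Such an element has compact $\widetilde\alpha$-spectrum $K$. Since $\widetilde\alpha=\Ad(u)$, this means it shifts the spectral projections of $\rL(G)$ by at most $K$: $\widehat\lambda(1_E)\,y'=y'\,\widehat\lambda(1_{EK'})$ for a compact $K'$. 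The commutator estimate $\|[y',e_i]\|_{2,\widetilde\tau}^2=o(\mu(F_i))$ then follows as above, now with the bound $\|y'\|^2$ multiplied by the measure of the Følner boundary. Norm approximation makes the errors uniform in $i$, and we conclude that $\rho(yx)=\rho(xy)$.

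\textbf{Main obstacle.} The step requiring the most care is the precise spectral-shift relation between $e_i$ and elements of compact Arveson spectrum, together with the resulting Følner boundary estimate. For item (ii) the delicate point is that the density is only in $\|\cdot\|_2$, not in norm. We handle this with the Cauchy–Schwarz bound above, in the form $|\rho_i(x(a-a'))|\le\|x\|\,\|a-a'\|_2$, which holds because $\rho_i|_M=\tau$ for every $i$.
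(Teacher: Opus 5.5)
Your strategy is the paper's. You prove $\|[e_i,a]\|_{2,\widetilde\tau}=o(\sqrt{\mu(F_i)})$ for elements of compact spectrum via spectral transfer and the F\o lner condition, and turn this into $|\rho_i(ax)-\rho_i(xa)|\to0$ by Cauchy--Schwarz. You then reach general elements by norm approximation in (i) and by $\|\cdot\|_2$-approximation in (ii), where $\rho_i|_M=\tau$ makes the errors uniform in $i$. Doing this last approximation directly on the values of $\rho_i$, as you do, is equivalent to the paper's identity $\|(a-c)e_i\|_{2,\widetilde\tau}=\sqrt{\mu(F_i)}\,\|a-c\|_2$.

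However, the spectral-shift relation you single out as the main point is stated incorrectly. The identity $e_ia=ae_i'$, with $e_i'$ a spectral projection in $\rL(G)$, holds for eigenoperators, where the relevant set is a genuine translate of $F_i$. The same goes for $\widehat\lambda(1_E)y'=y'\widehat\lambda(1_{EK'})$. Neither holds for a general compact spectrum, where $F_iK$ is a thickening of $F_i$ rather than a translate. For example, take $G=\R$ and $a=1+v$ with $v\neq0$ an eigenoperator at a nontrivial character, so that $\tau(v)=0$. Applying $\rE_{\rL(G)}$ to $e_ia=ae_i'$ forces $e_i'=e_i$. Then $e_iv=ve_i$ forces $F_i$ to be invariant under a nonzero translation, which is impossible for a set of finite positive measure.

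What is true, for $\Sp_{\widetilde\alpha}(a)\subset K$ with $K$ symmetric and $1\in K$, is only the one-sided relation
\[
(1-e_i)ae_i=\widehat\lambda\bigl(1_{(F_iK)\setminus F_i}\bigr)\,ae_i .
\]
This gives $\|(1-e_i)ae_i\|_{2,\widetilde\tau}^2\leq\|a\|^2\mu((F_iK)\setminus F_i)$. Apply it also to $a^*$, and use that the two off-diagonal corners of $[e_i,a]=e_ia(1-e_i)-(1-e_i)ae_i$ are orthogonal in $\rL^2(M\rtimes_\alpha G,\widetilde\tau)$. This yields $\|[e_i,a]\|_{2,\widetilde\tau}^2\leq2\|a\|^2\mu((F_iK)\setminus F_i)$, which is the estimate you actually use. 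With this correction your proof coincides with the paper's.
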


\begin{proof}
We first estimate the commutators of the spectral projections $e_i$.
Suppose that $a\in M\rtimes_\alpha G$ has compact
$\widetilde\alpha$-spectrum, contained in a symmetric compact set
$K\subset\widehat G$ with $1\in K$. Spectral transfer gives
\[
(1-e_i)ae_i
=\widehat\lambda(1_{(F_iK)\setminus F_i})ae_i.
\]
Consequently,
\[
\|(1-e_i)ae_i\|_{2,\widetilde\tau}^2
\leq\|a\|^2\mu((F_iK)\setminus F_i).
\]
The same bound applies to $a^*$. The two off-diagonal corners of
$[e_i,a]$ are orthogonal in $\rL^2(M\rtimes_\alpha G,\widetilde\tau)$,
so
\[
\|[e_i,a]\|_{2,\widetilde\tau}^2
\leq2\|a\|^2\mu((F_iK)\setminus F_i).
\]
It follows that
\begin{equation}\label{mean cut commutators}
\frac{\|[e_i,a]\|_{2,\widetilde\tau}}{\sqrt{\mu(F_i)}}
\longrightarrow0.
\end{equation}
Elements of compact $\widetilde\alpha$-spectrum are norm dense in
$(M\rtimes_\alpha G)_c$. Since
\[
\|[e_i,a-c]\|_{2,\widetilde\tau}
\leq2\sqrt{\mu(F_i)}\,\|a-c\|,
\]
\eqref{mean cut commutators} holds for every $a$ in this continuous part.
Splitting $[e_i,a]$ into its off-diagonal corners and applying
Cauchy--Schwarz gives
\[
|\rho_i(az)-\rho_i(za)|
\leq
\frac{2\|z\|\,\|[e_i,a]\|_{2,\widetilde\tau}}
{\sqrt{\mu(F_i)}}
\qquad(z\in M\rtimes_\alpha G).
\]
Taking the ultralimit proves $(\rm i)$.

The formula for the expectation gives $\rho_i|_M=\tau$, hence
$\rho|_M=\tau$. For $a,c\in M$, traciality of $\widetilde\tau$
and this restriction formula give
\[
\|(a-c)e_i\|_{2,\widetilde\tau}
=\|e_i(a-c)\|_{2,\widetilde\tau}
=\sqrt{\mu(F_i)}\,\|a-c\|_2.
\]
Every $a\in M$ can be approximated in $\|\cdot\|_2$ by elements of
compact $\alpha$-spectrum. Thus \eqref{mean cut commutators} holds
for all $a\in M$, and the preceding estimate proves $(\rm ii)$.
\end{proof}

\subsection{The Plancherel function}
\label{subsec:plancherel-function}
We introduce the Plancherel function on the crossed product, adapted to the norm
$\|\cdot\|_{2,\rho}$. We take the inner product on $\rL^2(M)$
to be linear in the first variable. We identify
$\rL(G)$ with $\rL^\infty(\widehat G)$ through $\widehat\lambda$. We denote by $\rL^\infty(\widehat{G},\rL^2(M))$ the Banach space of bounded measurable fields from $\widehat{G}$ into $\rL^2(M)$. It is the dual of $\rL^1(\widehat{G},\rL^2(M))$ via the bilinear trace pairing.
Hilbert-valued fields are understood to be locally strongly
measurable. Two fields are identified when they agree almost
everywhere on every measurable set of finite Haar measure.

\begin{proposition}\label{mean coefficient map}
There exists a unique weak*-weak* continuous linear map
\[
\vartheta:M\rtimes_\alpha G\longrightarrow
\rL^\infty(\widehat G,\rL^2(M))
\]
such that
$$ \vartheta(a \widehat{\lambda}(\psi))=\psi \widehat{a}, \qquad (a \in M, \; \psi \in \rL^\infty(\widehat G)).$$
The map $\vartheta$ is injective and satisfies
\[
\|\vartheta(x)\|_\infty\leq\|x\|,
\qquad
\|x\|_{2,\rho}^2
=m\bigl(p\longmapsto\|\vartheta(x)(p)\|_2^2\bigr).
\]
If $x\in(M\rtimes_\alpha G)\cap
\rL^2(M\rtimes_\alpha G,\widetilde\tau)$, then
$\vartheta(x)\in\rL^2(\widehat G,\rL^2(M))$ and it is given by the usual
Plancherel identification.

The map $\vartheta$ satisfies the following identities
\[
\begin{aligned}
\vartheta(ax)&=a\vartheta(x)
&& (a\in M,\ x\in M\rtimes_\alpha G),\\
\vartheta(x\widehat\lambda(\psi))&=\psi\vartheta(x)
&& (\psi\in\rL^\infty(\widehat G),\ x\in M\rtimes_\alpha G),\\
\langle\vartheta(x),\vartheta(y)\rangle
&=\rE_{\rL(G)}(y^*x)
&& (x,y\in M\rtimes_\alpha G).
\end{aligned}
\]
Here the inner product of two fields is taken pointwise.
It also satisfies the covariance formula
\[
\vartheta(\widehat\alpha_q(x))(p)
=\vartheta(x)(pq)
\qquad(q\in\widehat G,\ x\in M\rtimes_\alpha G).
\]
If a trace-preserving automorphism $\theta\in\mathrm{Aut}(M)$
commutes with $\alpha$ and $\widetilde\theta$ is its extension to
$M\rtimes_\alpha G$ that fixes $u^\alpha$, then
\[
\vartheta(\widetilde{\theta}(x))
=U^\theta\vartheta(x)
\qquad(x\in M\rtimes_\alpha G),
\]
where $U^\theta\widehat a=\widehat{\theta(a)}$ for $a\in M$.
If $B \subset M$ is a subalgebra that is globally $\alpha$-invariant and $x\in B \rtimes_{\alpha|_B} G$, then the Plancherel function of $x$ takes values in
$\rL^2(B)$.
\end{proposition}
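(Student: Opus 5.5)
The plan is to build $\vartheta$ from the standard representation of the crossed product. First realize $M\rtimes_\alpha G$ on $\rL^2(M)\otimes\rL^2(G)$, and then apply the Fourier transform in the second leg. This identifies the GNS space $\rL^2(M\rtimes_\alpha G,\widetilde\tau)$ with $\rL^2(\widehat G,\rL^2(M))$, because $\widetilde\tau=\tau_G\circ\rE_{\rL(G)}$. Under this identification the Plancherel map $x\mapsto\Lambda_{\widetilde\tau}(x)$ sends $a\widehat\lambda(\psi)$, with $\psi\in\rL^2(\widehat G)\cap\rL^\infty(\widehat G)$, to the field $\psi\widehat a$. Indeed,
\[
\widetilde\tau\bigl(\widehat\lambda(\psi')^*b^*a\widehat\lambda(\psi)\bigr)=\tau(b^*a)\int\overline{\psi'}\psi,
\]
which follows from the fact that $\rE_{\rL(G)}$ is $\rL(G)$-bimodular.

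To define $\vartheta$ on all of $M\rtimes_\alpha G$, I would localize. For a finite-measure set $F\subset\widehat G$, put $e_F=\widehat\lambda(1_F)$ and set
\[
\vartheta(x)1_F:=\Lambda_{\widetilde\tau}(xe_F)\in\rL^2(F,\rL^2(M)).
\]
Right multiplication by $\widehat\lambda$ commutes with restriction, so these local pieces are compatible and patch into a single locally measurable field. The key estimate is the pointwise identity
\[
\langle\vartheta(x),\vartheta(y)\rangle(p)=\rE_{\rL(G)}(y^*x)(p).
\]
On $F$, both sides integrate against any $\psi\in\rL^1(F)$ to $\widetilde\tau\bigl(\widehat\lambda(\psi)e_Fy^*xe_F\bigr)$. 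This identity immediately gives
\[
\|\vartheta(x)(p)\|_2^2\le\|\rE_{\rL(G)}(x^*x)\|\le\|x\|^2 \quad\text{a.e.},
\]
and applying $m$ gives $\|x\|_{2,\rho}^2=m(\|\vartheta(x)\|_2^2)$ by the definition of $\rho$. Injectivity follows from faithfulness of $\rE_{\rL(G)}$: if $\vartheta(x)=0$ then $\rE_{\rL(G)}(x^*x)=0$. The module identities $\vartheta(ax)=a\vartheta(x)$ and $\vartheta(x\widehat\lambda(\psi))=\psi\vartheta(x)$ are immediate from the construction. The uniqueness of $\vartheta$ and the agreement with the usual Plancherel identification on $\rL^2\cap(M\rtimes_\alpha G)$ follow because the span of the $a\widehat\lambda(\psi)$ is weak$^*$ dense, once continuity is established.

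For weak$^*$--weak$^*$ continuity, pair $\vartheta(x)$ with $\xi\in\rL^1(\widehat G,\rL^2(M))$. It suffices to treat $\xi=1_F\psi\,\widehat b$ with $\psi\in\rL^\infty(F)$, since such fields span a dense subspace and $\|\vartheta(x)\|_\infty\le\|x\|$ controls the rest. For these, the pairing equals $\widetilde\tau(b^*xe_F\widehat\lambda(\psi))$ up to conjugation conventions, which is normal in $x$. A bounded net argument combined with Krein--Smulian then gives weak$^*$ continuity.

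The covariance formulas are checked on generators and then extended by continuity. The dual action translates $\widehat\lambda(\psi)$ to $\widehat\lambda(\psi(\cdot\,q))$ and fixes $a$, which gives $\vartheta(\widehat\alpha_q(x))(p)=\vartheta(x)(pq)$. Similarly, $\widetilde\theta(a\widehat\lambda(\psi))=\theta(a)\widehat\lambda(\psi)$ gives $\vartheta\circ\widetilde\theta=U^\theta\vartheta$. For $x\in B\rtimes G$, weak$^*$ density of the span of $b\widehat\lambda(\psi)$ with $b\in B$, together with weak$^*$ closedness of $\rL^\infty(\widehat G,\rL^2(B))$, gives the final claim.

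The main obstacle I expect is the continuity and patching step: making the local definition rigorous on a possibly non-$\sigma$-compact $\widehat G$, and justifying the pointwise inner-product identity. I would handle both through the duality with $\rL^1$ fields on finite-measure sets.
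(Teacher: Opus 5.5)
Your argument is correct, but it is organized differently from the paper's. The paper does not localize at all. It defines a predual map $\omega:\rL^1(\widehat G,\rL^2(M))\to(M\rtimes_\alpha G)_*$ by $\omega(\psi,a)(x)=\int_{\widehat G}\psi(p)\,\rE_{\rL(G)}(ax)(p)\,\rd p$, bounds it with the conditional Cauchy--Schwarz inequality $\|\rE_{\rL(G)}(ax)\|_\infty\leq\|a\|_2\|x\|$, and sets $\vartheta=\omega^*$. Weak$^*$ continuity, contractivity and membership in $\rL^\infty(\widehat G,\rL^2(M))$ are then automatic, with no patching. The inner-product identity and the covariance formulas are checked on products $b\widehat\lambda(\varphi)$ and extended by weak$^*$ continuity. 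The Plancherel agreement follows from the isometry $\int_{\widehat G}\|\vartheta(x)(p)\|_2^2\,\rd p=\widetilde\tau(x^*x)$ together with $\rL^2$-density of the elements $a\lambda(f)$.

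Your route instead takes the Plancherel unitary as input and reads $\vartheta(x)1_F$ off $\Lambda_{\widetilde\tau}(xe_F)$. This makes the pointwise inner-product identity, the module identities and the Plancherel agreement nearly tautological. In particular, for $x\in\mathfrak n_{\widetilde\tau}$ you get $\vartheta(x)1_F=1_F\Lambda_{\widetilde\tau}(x)$ directly, so weak$^*$ density is not what proves that part. The price is a patching step and a separate continuity argument, and both are harmless:
\begin{itemize}
\item Haar measure is decomposable (use cosets of an open $\sigma$-compact subgroup), so the patching goes through.
\item Krein--Smulian is not needed. Since $|\langle\vartheta(x),\xi\rangle|\leq\|x\|\,\|\xi\|_1$, each functional $x\mapsto\langle\vartheta(x),\xi\rangle$ is a norm limit of functionals $x\mapsto\langle x\Lambda_{\widetilde\tau}(e_F),\Lambda_{\widetilde\tau}(c)\rangle$ with $c\in\mathfrak n_{\widetilde\tau}$, and the predual is norm closed.
\end{itemize}
If you carry out the patching by defining $\vartheta(x)$ as a functional on $\rL^1$ fields, as you propose, you essentially land on the paper's duality construction.

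One small correction: in the local inner-product identity, test against $\psi\in\rL^\infty(F)$ rather than $\rL^1(F)$, so that $\widehat\lambda(\psi)$ is a bounded operator. This suffices because both sides lie in $\rL^1(F)$.
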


\begin{proof}
For $\psi\in\rL^1(\widehat G)$ and $a\in M$, define
$\omega(\psi,a)\in (M\rtimes_\alpha G)_*$ by
\[
\omega(\psi,a)(x)=\int_{\widehat G}\psi(p)\rE_{\rL(G)}(ax)(p)\,\rd p.
\]
The conditional Cauchy--Schwarz inequality and
$\rE_{\rL(G)}(aa^*)=\tau(aa^*)1$ give
\[
\|\rE_{\rL(G)}(ax)\|_\infty
\leq\|\rE_{\rL(G)}(aa^*)\|_\infty^{1/2}\|\rE_{\rL(G)}(x^*x)\|_\infty^{1/2}
\leq\|a\|_2\|x\|.
\]
Hence
\[
|\omega(\psi,a)(x)|\leq\|\psi\|_1\|a\|_2\|x\|.
\]
Thus $\omega$ extends to a contractive bilinear map on
$\rL^1(\widehat G)\times \rL^2(M)$, and then to a contraction
\[
\omega:\rL^1(\widehat G)\widehat\otimes \rL^2(M)
=\rL^1(\widehat G,\rL^2(M))\longrightarrow (M\rtimes_\alpha G)_*.
\]
Define $\vartheta=\omega^*$ using the bilinear pairing fixed above.
It is weak*-weak* continuous and contractive, and it satisfies
\[
\int_{\widehat G}\psi(p)
\langle\vartheta(x)(p),\widehat{a^*}\rangle\,\rd p
=\int_{\widehat G}\psi(p)\rE_{\rL(G)}(ax)(p)\,\rd p.
\]
Since $\rE_{\rL(G)}(ab\widehat\lambda(\varphi))=\tau(ab)\varphi$, this gives
\[
\vartheta(b\widehat\lambda(\varphi))=\varphi\widehat b
\qquad(b\in M,\ \varphi\in\rL^\infty(\widehat G)).
\]
The linear span of these products is ultraweakly dense in $M\rtimes_\alpha G$, since
it contains every $bu_t$. This also proves uniqueness.

For $x=a\widehat\lambda(\psi)$ and
$y=b\widehat\lambda(\varphi)$, bimodularity of $\rE_{\rL(G)}$ gives
\[
\langle\vartheta(x),\vartheta(y)\rangle
=\psi\overline\varphi\,\tau(b^*a)
=\rE_{\rL(G)}(y^*x).
\]
After integration against any function in $\rL^1(\widehat G)$,
both sides are separately weak* continuous in $x$ and $y$.
For the left-hand side, this follows because the product of an
$\rL^1$ scalar function and a bounded $\rL^2(M)$-valued field belongs to
$\rL^1(\widehat G,\rL^2(M))$. Ultraweak density therefore extends this
identity to all $x,y\in M\rtimes_\alpha G$. Taking $y=x$ gives
\[
\|\vartheta(x)(p)\|_2^2=\rE_{\rL(G)}(x^*x)(p)
\quad\text{almost everywhere}.
\]
Faithfulness of $\rE_{\rL(G)}$ proves injectivity of $\vartheta$, and applying
$m$ proves the asserted formula for $\|x\|_{2,\rho}$.
Integrating against Haar measure also gives
\[
\int_{\widehat G}\|\vartheta(x)(p)\|_2^2\,\rd p
=\widetilde\tau(x^*x).
\]
Thus $\vartheta$ is an isometry on $(M\rtimes_\alpha G)\cap\rL^2(M\rtimes_\alpha G,\widetilde\tau)$.
For $a\in M$ and $f\in\rL^1(G)\cap\rL^2(G)$, we have
$\vartheta(a\lambda(f))=\widehat f\,\widehat a$.
The linear span of these elements is dense in
$\rL^2(M\rtimes_\alpha G,\widetilde\tau)$, so this isometry agrees with the usual
Plancherel identification.

The two module identities follow by checking them on the products
$b\widehat\lambda(\varphi)$ and using weak* continuity.
The covariance and equivariance formulas follow in the same way from
\[
\widehat\alpha_q(b\widehat\lambda(\varphi))
=b\widehat\lambda\bigl(p\mapsto\varphi(pq)\bigr),
\qquad
\widetilde\theta(b\widehat\lambda(\varphi))
=\theta(b)\widehat\lambda(\varphi).
\]
Finally, the span of $b\widehat\lambda(\varphi)$ with $b\in B$ is
ultraweakly dense in $B\rtimes_{\alpha|_B}G$. Its image lies in the
weak* closed subspace $\rL^\infty(\widehat G,\rL^2(B))$, which
proves the last assertion.
\end{proof}

For all $a \in M$ and $x \in M \rtimes_\alpha G$, we have the left module property $\vartheta(ax)=a\vartheta(x)$. However, the right module property $\vartheta(xa)=\vartheta(x)a$ fails. But it is crucial for us to be able to compute $\vartheta(xa)$. This is achieved by the following proposition.

\begin{proposition}\label{mean coefficient right product}
Take $x\in M\rtimes_\alpha G$ and $a\in M$ such that
$\Sp_{\widehat\alpha}(x)$ and $\Sp_\alpha(a)$ are compact.
Choose $h\in\mathcal D(G)$ and $k\in\mathcal D(\widehat G)$
equal to $1$ on neighborhoods of these respective spectra, and put
\[
\Phi(t,r)=h(t)k(r)\overline{r(t)}.
\]
Then $\Phi\in\mathcal D(G\times\widehat G)$ and
\[
\vartheta(xa)
=\left[p\longmapsto
\int_{\widehat G\times G}\widehat\Phi(q,s)
\vartheta(x)(pq^{-1})\alpha_{-s}(a)\,\rd q\,\rd s\right].
\]
The integral is a Bochner integral in $\rL^2(M)$ for every $p$.
\end{proposition}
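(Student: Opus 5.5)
The plan is to reduce to elementary tensors, prove the formula there by direct computation, and then extend by linearity and weak$^*$ continuity, using the spectral hypotheses to keep all integrals finite.

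\emph{Step 1: reduction to a single commutation.} Every $x$ with compact $\widehat\alpha$-spectrum contained in the region where $h=1$ can be written as a weak$^*$ limit of finite sums $\sum b_j\lambda(f_j)$, with $b_j\in M$ and $f_j$ supported where $h=1$. For example, take $x=\int x(t)u_t\,\rd t$ as a smoothed Fourier integral, using that $\widehat\alpha$-spectrum controls the $u_t$-support. Write $x=\int_G x_t u_t\,\rd t$ formally, where $x_t$ is supported near $\supp(h)$, so that $x=\int h(t)x_tu_t\,\rd t$. Then
\[
xa=\int h(t)\,x_t\,\alpha_t(a)\,u_t\,\rd t .
\]
Since $\Sp_\alpha(a)\subset\{k=1\}$, we have $a=\alpha_{\check k}(a)$, that is, $a=\int \check k(s)\alpha_{-s}(a)\,\rd s$ up to the sign convention. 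Hence
\[
\alpha_t(a)=\int\check k(s)\alpha_{t-s}(a)\,\rd s .
\]
Expanding $\alpha_t(a)$ in this way writes $xa$ as a double integral over $(t,s)$.

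\emph{Step 2: apply $\vartheta$ termwise.} Apply $\vartheta$ using the left module property $\vartheta(b\lambda(f))=\widehat f\,\widehat b$, with $b=x_t\alpha_{\cdot}(a)$. The factor $u_t$ becomes multiplication by the character $\overline{p(t)}$ in the Plancherel variable. Rewrite the product $\overline{p(t)}\,h(t)\,\check k$-terms as a convolution in $p$ against the Fourier transform of $\Phi(t,r)=h(t)k(r)\overline{r(t)}$. The character $\overline{r(t)}$ is exactly what converts the $t$-shift $\alpha_{t}$ into a translation $p\mapsto pq^{-1}$. This produces
\[
\int\widehat\Phi(q,s)\,\vartheta(x)(pq^{-1})\,\alpha_{-s}(a)\,\rd q\,\rd s .
\]
Here I would first verify the identity for $x=b\widehat\lambda(\psi)$ with $\psi=\widehat f$, $f\in\mathcal D(G)$, $h=1$ on $\supp f$. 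In that case $\vartheta(x)=\psi\widehat b$, and both sides can be computed explicitly via Fubini and Fourier inversion in the $(t,r)$ variables.

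\emph{Step 3: membership, extension and Bochner integrability.} The statement $\Phi\in\mathcal D(G\times\widehat G)$ follows from the Schwartz--Bruhat facts already recorded: the product $h\otimes k$ lies in $\mathcal D$, and multiplication by $w_G$ preserves $\mathcal D$. Consequently $\widehat\Phi\in\mathcal S(\widehat G\times G)$ is integrable. Since $\|\vartheta(x)\|_\infty\le\|x\|$ and $\|\alpha_{-s}(a)\|\le\|a\|$, the integrand is bounded in $\rL^2(M)$ by $|\widehat\Phi|\,\|x\|\,\|a\|$, which gives Bochner integrability for each $p$. Measurability in $(q,s)$ comes from the continuity of $s\mapsto\alpha_{-s}(a)$ and local strong measurability of $\vartheta(x)$. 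To pass to general $x$, observe that both sides are weak$^*$ continuous in $x$ on the set of $x$ with spectrum in $\{h=1\}$. The left side is weak$^*$ continuous because $\vartheta$ is and because right multiplication by $a$ is normal. The right side is weak$^*$ continuous after pairing with $\rL^1(\widehat G,\rL^2(M))$, by dominated convergence and an integrable kernel. Finally, approximate $x$ by smoothed elements $\widehat\alpha_{g}(x)$ with $g$ an approximate identity; this preserves spectral support inside $\{h=1\}$ by the Arveson cutoff formula.

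\emph{Expected main obstacle.} I expect the hardest part to be this last density step: approximating an arbitrary $x$ with compact $\widehat\alpha$-spectrum by finite sums $b\lambda(f)$ while keeping the $u_t$-support inside a fixed region where $h=1$, in a topology that the right-hand side respects. To handle it, I would use the Fourier coefficient distribution $\nabla(x)$ from Theorem~\ref{two-sided coefficient distribution}, whose support in the $t$-variable equals $\Sp_{\widehat\alpha}(x)$, and run the computation at the level of distributions rather than through an explicit approximation.
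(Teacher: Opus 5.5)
Your overall architecture matches the paper's: normality in $x$, an explicit check on a weak$^*$-total family, and the Fourier inversion identity $\int_{\widehat G\times G}\widehat\Phi(q,s)q(t)\alpha_{-s}(a)\,\rd q\,\rd s=h(t)\alpha_t(a)$. However, the step you single out as the main obstacle is left open, and as you have set things up it is a genuine gap. You restrict the base case to $b\lambda(f)$ with $\supp f\subset\{h=1\}$. You then need these elements to be weak$^*$-total among all $x$ whose $\widehat\alpha$-spectrum lies where $h=1$. The expansion $x=\int x_tu_t\,\rd t$ does not exist for a general $x$. Smoothing $x$ itself by an approximate identity does not produce finite sums of such $b\lambda(f)$. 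And ``run the computation at the level of $\nabla(x)$'' is not an argument.

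The missing idea is the dual spectral cutoff, which makes distributions unnecessary. Do not restrict the base case. Your own computation, done for arbitrary $f$ (or, as in the paper, for $x=cu_t$ with arbitrary $t\in G$), shows that the right-hand side equals $\vartheta(b\lambda(hf)a)$, respectively $h(t)\vartheta(cu_ta)$. Let $x\mapsto x_h=\widehat\alpha_g(x)$, with $g\in\rL^1(\widehat G)$ and $\widehat g=h$, be the normal cutoff; it sends $cu_t$ to $h(t)cu_t$. Then the right-hand side and $x\mapsto\vartheta(x_ha)$ are two normal maps agreeing on the span of the $cu_t$. That span is a weak$^*$-dense unital $*$-subalgebra of the whole crossed product, so the two maps agree everywhere. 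Finally, $x_h=x$ because $h=1$ on a neighborhood of $\Sp_{\widehat\alpha}(x)$. Equivalently, applying this fixed cutoff to an arbitrary weak$^*$-approximating net is exactly what proves your Step~1 density claim. The paper spells this out in the proof of Proposition~\ref{connes modular right product}; the tracial proof leaves it implicit in ``it is enough to check the identity for $x=cu_t$ with $h(t)=1$''.

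Separately, your justification of normality of the right-hand side does not work. For a weak$^*$-convergent net $x_i\to x$ you only know that $\vartheta(x_i)\to\vartheta(x)$ weak$^*$ in $\rL^\infty(\widehat G,\rL^2(M))$. This gives no pointwise convergence of $\vartheta(x_i)(pq^{-1})$, and dominated convergence is not available for nets anyway. The correct argument is a transposition. By Fubini and the substitution $r=pq^{-1}$, pairing the right-hand side with $Z\in\rL^1(\widehat G,\rL^2(M))$ amounts to testing $\vartheta(x)$ against a single field. That field is obtained by integrating $r\mapsto Z(rq)\alpha_{-s}(a^*)$ against $\overline{\widehat\Phi(q,s)}\,\rd q\,\rd s$. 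It lies in $\rL^1(\widehat G,\rL^2(M))$, with norm at most $\|\widehat\Phi\|_1\|Z\|_1\|a\|$. Normality in $x$ then follows from that of $\vartheta$.
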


\begin{proof}
Both sides of the asserted identity are normal in $x$ after
pairing with any $Z\in\rL^1(\widehat G,\rL^2(M))$. For the right-hand
side, this follows from the normality of $\vartheta$ and Fubini's theorem.
More explicitly, after the change of variable $r=pq^{-1}$, its
predual integrand tests $\vartheta(x)(r)$ against
\[
r\longmapsto Z(rq)\alpha_{-s}(a^*).
\]
This function depends continuously on $(q,s)$ in
$\rL^1(\widehat G,\rL^2(M))$, and its norm is at most
$\|Z\|_1\|a\|$. Multiplying this test field by
$\overline{\widehat\Phi(q,s)}$ and integrating in $(q,s)$ gives
an element of $\rL^1(\widehat G,\rL^2(M))$. Pairing it with $\vartheta(x)$
defines the required normal functional, since the inner product
is linear in its first variable.

Because of the normality of both sides, it is enough to check the asserted identity for $x=cu_t$ where $c \in M$ and $t \in G$ with $h(t)=1$.

The Schwartz--Bruhat calculus gives
$\Phi\in\mathcal D(G\times\widehat G)$ and
$\widehat\Phi\in\rL^1(\widehat G\times G)$.
Since $k=1$ near $\Sp_\alpha(a)$, we have
$\alpha_{\check k}(a)=a$. Fourier inversion therefore gives
\[
\begin{aligned}
\int_{\widehat G\times G}
\widehat\Phi(q,s)q(t)\alpha_{-s}(a)\,\rd q\,\rd s
&=h(t)\int_G\check k(-t-s)\alpha_{-s}(a)\,\rd s\\
&=h(t)\alpha_t(\alpha_{\check k}(a))\\
&=h(t)\alpha_t(a).
\end{aligned}
\]
For $x=c u_t$, the proposed right-hand side is therefore
\[
\overline{p(t)}\widehat c
\int_{\widehat G\times G}
\widehat\Phi(q,s)q(t)\alpha_{-s}(a)\,\rd q\,\rd s
=h(t)\overline{p(t)}\widehat{c\alpha_t(a)}.
\]
This equals $\vartheta(cu_ta)(p)$ whenever $h(t)=1$.
\end{proof}

\subsection{The bicentralizer in the GNS representation}
\label{subsec:bicentralizer-gns}

Put $B=\rB(M,\alpha)$ and denote its dual bicentralizer action by
$\beta:\widehat G\curvearrowright B$. Put
\[
\widetilde B=B\rtimes_\alpha G\subset M\rtimes_\alpha G
\]
and write $\rT_B$ for the restriction of $\rT_M$ to $\widetilde B$.
Since $\alpha|_B$ and $\beta$ commute, $\beta$ extends to an action
$\widetilde\beta$ on $\widetilde B$ fixing every $u_t$. Define
\[
\delta_p=\widetilde\beta_p\circ\widehat\alpha_{p^{-1}},
\qquad B^\dagger=\widetilde B^\delta,
\qquad \alpha^\dagger_t=\Ad(u_t)|_{B^\dagger}.
\]

\begin{proposition}\label{dagger decomposition}
There is a crossed product decomposition
\[
\widetilde B=B^\dagger\rtimes_{\alpha^\dagger}G
\]
and a faithful normal $\alpha^\dagger$-invariant tracial state
$\tau^\dagger$ on $B^\dagger$ such that
\[
\widetilde\tau|_{\widetilde B}
=\tau|_B\circ\rT_B=\tau^\dagger\circ\rT_{B^\dagger},
\]
where
\[
\rT_{B^\dagger}=\int_{\widehat G}\delta_p\,\rd p,
\qquad
\rE_{\rL(G)}(x)=\tau^\dagger(x)1
\quad(x\in B^\dagger).
\]
The dual action is $\widehat{\alpha^\dagger}_p=\delta_{p^{-1}}$.
\end{proposition}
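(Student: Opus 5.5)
The plan is to apply Takesaki duality, in the form of Landstad's characterization of crossed products, to the action $\delta$ of $\widehat G$ on $\widetilde B$. First I check that $\delta$ is a genuine action that commutes with $\widetilde\alpha=\Ad(u)$. Both $\widetilde\beta$ and $\widehat\alpha$ are actions of the abelian group $\widehat G$. They commute, because $\widehat\alpha$ fixes $B$ and scales $u_t$, while $\widetilde\beta$ acts on $B$ and fixes $u_t$. Hence $\delta_p=\widetilde\beta_p\widehat\alpha_{p^{-1}}$ is a continuous action. By the normalization $\widehat\alpha_\nu(u_t)=\overline{\nu(t)}u_t$, we get $\delta_p(u_t)=p(t)u_t$. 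So $(u_t)_{t\in G}$ is a unitary representation in $\widetilde B$ on which $\delta$ acts by the characters.

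Landstad's theorem (Takesaki's characterization, as invoked via \cite[Theorem~1.5]{Ma25} in the previous subsection) then gives the decomposition
\[
\widetilde B=\widetilde B^\delta\rtimes_{\Ad u}G=B^\dagger\rtimes_{\alpha^\dagger}G,
\]
and identifies the dual action with $\delta$ up to inversion. Our conventions give $\widehat{\alpha^\dagger}_p(u_t)=\overline{p(t)}u_t=\delta_{p^{-1}}(u_t)$, so $\widehat{\alpha^\dagger}_p=\delta_{p^{-1}}$. The canonical operator-valued weight onto the fixed points of the dual action is $\rT_{B^\dagger}=\int_{\widehat G}\delta_p\,\rd p$, with Haar measures normalized in Plancherel duality.

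For the trace, note that $\widetilde\tau=\tau_G\circ\rE_{\rL(G)}$ and that $\rE_{\rL(G)}$ is $\widehat\alpha$-equivariant. I will also show that $\rE_{\rL(G)}$ is $\widetilde\beta$-equivariant on $\widetilde B$: for $b\in B$ we have $\tau(\beta_p(b))=\tau(b)$ because $\beta_p$ is trace preserving, and $\widetilde\beta$ fixes $\rL(G)$. It follows that $\rE_{\rL(G)}\circ\delta_p=\widehat\alpha_{p^{-1}}\circ\rE_{\rL(G)}$ on $\widetilde B$. Therefore, for $x\in B^\dagger$, the element $\rE_{\rL(G)}(x)$ is $\widehat\alpha$-invariant in $\rL(G)=\rL^\infty(\widehat G)$. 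Translation invariance forces it to be a scalar, which I call $\tau^\dagger(x)1$.

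With this definition, $\tau^\dagger$ is a normal state. It is faithful because $\rE_{\rL(G)}$ is faithful. It is $\alpha^\dagger$-invariant because $\rE_{\rL(G)}(u_txu_t^*)=u_t\rE_{\rL(G)}(x)u_t^*$. It is tracial because $\rE_{\rL(G)}(xy)$ and $\rE_{\rL(G)}(yx)$ are scalars, and both have the same $\widetilde\tau$-pairing with finite-trace projections of $\rL(G)$, using traciality of $\widetilde\tau$ together with the fact that they commute with $x,y$ only up to smoothing.

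This last point is where I expect the main obstacle, and my first attempt will bypass it as follows. I first prove $\widetilde\tau=\tau^\dagger\circ\rT_{B^\dagger}$ on $\widetilde B_+$, and then obtain traciality of $\tau^\dagger$ from traciality of $\widetilde\tau$, since the dual weight of $\tau^\dagger$ is tracial exactly when $\tau^\dagger$ is an $\alpha^\dagger$-invariant trace. To prove the identity, I test on elements $x=\lambda(f)^*y\lambda(f)$ with $y\in B^\dagger$. Haagerup's formula gives $\rT_{B^\dagger}(x)=\int_{\widehat G}\delta_p(x)\,\rd p$. Applying $\rE_{\rL(G)}$ and the intertwining relation above turns this into an integral of $\widehat\alpha_{p^{-1}}\rE_{\rL(G)}(x)$ over $\widehat G$, and this integral equals $\tau_G(\rE_{\rL(G)}(x))\cdot1$. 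Comparing with $\widetilde\tau(x)=\tau_G(\rE_{\rL(G)}(x))$ gives the identity on a generating set, and normality together with the uniqueness of dual weights (Haagerup) extends it to all of $\widetilde B_+$. The identity $\widetilde\tau|_{\widetilde B}=\tau|_B\circ\rT_B$ is immediate from $\widetilde\tau=\tau\circ\rT_M$.
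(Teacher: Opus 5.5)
Your proposal is correct and follows essentially the same route as the paper: the Landstad (Nakagami) crossed-product characterization applied to $p\mapsto\delta_{p^{-1}}$ and $u$; the intertwining $\rE_{\rL(G)}\circ\delta_p=\widehat\alpha_{p^{-1}}\circ\rE_{\rL(G)}$, which makes $\rE_{\rL(G)}$ scalar on $B^\dagger$ and defines $\tau^\dagger$; the Haar-invariance computation showing $\tau^\dagger\circ\rT_{B^\dagger}=\widetilde\tau|_{\widetilde B}$; and traciality of $\tau^\dagger$ deduced from traciality of its dual weight (the paper cites Haagerup's restriction of the modular group for this last step). The one difference is that the paper runs the Haar-invariance computation directly for every $x\in\widetilde B_+$ as an equality of extended positive elements. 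Since your computation never uses the special form $\lambda(f)^*y\lambda(f)$, you can do the same, which removes the restriction to those elements and the loosely justified extension step via uniqueness of dual weights.
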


\begin{proof}
Our convention for the dual action gives
\[
\delta_p(u_t)=p(t)u_t.
\]
Thus $u_t$ normalizes $B^\dagger$. The von Neumann algebra
crossed-product characterization \cite[Theorem~8.1]{Na77}, applied
to $\widetilde B$, the action $p\mapsto\delta_{p^{-1}}$, and the
representation $u$, gives the claimed decomposition. The resulting
isomorphism is the identity on $B^\dagger$ and sends the canonical
implementing unitaries to $u_t$. Its dual action is therefore
$\widehat{\alpha^\dagger}_p=\delta_{p^{-1}}$.

Since $\beta$ preserves $\tau$, we have
\[
\rE_{\rL(G)}\circ\widetilde\beta_p=\rE_{\rL(G)}
\quad\text{on }\widetilde B.
\]
Hence $\rE_{\rL(G)}$ intertwines $\delta_p$ with the translation
$f(r)\mapsto f(rp^{-1})$ on $\rL^\infty(\widehat G)$.
Its restriction to $B^\dagger$ is scalar and defines a faithful
normal state $\tau^\dagger$. The identity
$\rE_{\rL(G)}\circ\widetilde\alpha_t=\rE_{\rL(G)}$ gives
$\alpha^\dagger$-invariance.

For $x\in\widetilde B_+$, put $f=\rE_{\rL(G)}(x)$. Equivariance,
normality, and Haar invariance give
\[
\begin{aligned}
\tau^\dagger(\rT_{B^\dagger}(x))1
&=\int_{\widehat G}\rE_{\rL(G)}(\delta_p(x))\,\rd p\\
&=\left(\int_{\widehat G}f(p)\,\rd p\right)1.
\end{aligned}
\]
These are equalities of extended positive elements. Thus
\[
\tau^\dagger\circ\rT_{B^\dagger}
=\tau_G\circ\rE_{\rL(G)}=\widetilde\tau|_{\widetilde B}.
\]
The modular group of this dual weight restricts to the modular
group of $\tau^\dagger$ on $B^\dagger$
\cite[Theorem~3.2(ii)]{Ha78a}. Since the dual weight is a trace,
$\tau^\dagger$ is tracial.
\end{proof}

The Plancherel function has a particularly simple form on
$B^\dagger$.

\begin{proposition}\label{mean coefficient orbits}
For every $x\in B^\dagger$, there is a unique
$\xi_x\in\rL^2(B)$ such that
\[
\vartheta(x)=[p\longmapsto U^\beta_p\xi_x].
\]
Moreover,
\[
\tau^\dagger(x^*x)=\|\xi_x\|_2^2.
\]
The map $\widehat x\mapsto\xi_x$ extends to a unitary
\[
V:\rL^2(B^\dagger,\tau^\dagger)
\longrightarrow\rL^2(B,\tau),
\qquad V\widehat1=\widehat1.
\]
It intertwines $U^{\alpha^\dagger}$ with $U^\alpha|_{\rL^2(B)}$.
\end{proposition}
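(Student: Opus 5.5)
The plan is to use the covariance properties of the Plancherel map $\vartheta$ from Proposition~\ref{mean coefficient map}, together with the definition of $B^\dagger$ as the fixed points of $\delta_p=\widetilde\beta_p\circ\widehat\alpha_{p^{-1}}$. Take $x\in B^\dagger\subset\widetilde B=B\rtimes_\alpha G$. By the last assertion of Proposition~\ref{mean coefficient map}, $\vartheta(x)$ takes values in $\rL^2(B)$.

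The first step is to obtain a pointwise orbit formula. The equality $\delta_p(x)=x$ is equivalent to $\widetilde\beta_p(x)=\widehat\alpha_p(x)$. Since $\beta_p$ is a trace-preserving automorphism of $B$ commuting with $\alpha|_B$, the equivariance formula of Proposition~\ref{mean coefficient map}, applied inside $B\rtimes G$, gives $\vartheta(\widetilde\beta_p(x))=U^\beta_p\vartheta(x)$. The covariance formula gives $\vartheta(\widehat\alpha_p(x))(r)=\vartheta(x)(rp)$. Together these yield
\[
\vartheta(x)(rp)=U^\beta_p\bigl(\vartheta(x)(r)\bigr)
\]
for almost every $r$, for each fixed $p$. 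Only a measurable field satisfying this for each $p$ almost everywhere is available, so we pass to a joint statement via Fubini on $\widehat G\times\widehat G$: the set of $(r,p)$ where the identity fails is null. Hence for almost every $r_0$ the identity holds for almost every $p$. Then
\[
\xi_x:=U^\beta_{r_0^{-1}}\vartheta(x)(r_0)
\]
satisfies $\vartheta(x)(p)=U^\beta_p\xi_x$ for almost every $p$. Strong continuity of $U^\beta$ makes the right-hand side a continuous bounded field, and the two fields agree almost everywhere, so they coincide as elements of $\rL^\infty(\widehat G,\rL^2(M))$. Uniqueness of $\xi_x$ follows because two continuous orbit fields that agree almost everywhere agree at $p=1$. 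I expect this measure-theoretic step, which turns a family of almost-everywhere identities into a genuine orbit, to be the main technical point.

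The second step is the norm identity. By Proposition~\ref{mean coefficient map} we have $\|\vartheta(x)(p)\|_2^2=\rE_{\rL(G)}(x^*x)(p)$ almost everywhere. Since $x^*x\in B^\dagger$, Proposition~\ref{dagger decomposition} gives $\rE_{\rL(G)}(x^*x)=\tau^\dagger(x^*x)1$. On the other hand $\|U^\beta_p\xi_x\|_2=\|\xi_x\|_2$, so $\tau^\dagger(x^*x)=\|\xi_x\|_2^2$. Therefore $\widehat x\mapsto\xi_x$ is a well-defined linear isometry $V$ from $\rL^2(B^\dagger,\tau^\dagger)$ into $\rL^2(B)$. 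Moreover $\vartheta(1)=\widehat1$ is constant, so $V\widehat1=\widehat1$.

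Intertwining follows from the left and right module rules. We have $\vartheta(u_txu_t^*)=\vartheta(\widetilde\alpha_t(x))$, and $\widetilde\alpha_t$ is the extension of $\alpha_t$ fixing $u$. The equivariance formula therefore gives $U^\alpha_t\vartheta(x)$, and since $U^\alpha$ commutes with $U^\beta$, this shows $\xi_{\alpha^\dagger_t(x)}=U^\alpha_t\xi_x$.

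It remains to prove that $V$ is surjective. The range is closed, so density suffices, and I would construct preimages directly. Given $b\in B$ with compact $\beta$-spectrum, I would average $\delta$ over $\widehat G$ applied to $b\widehat\lambda(\psi)$ with $\psi\in\rL^1$. That is, set $x=\rT_{B^\dagger}(b\widehat\lambda(\psi))=\int\delta_p(b\widehat\lambda(\psi))\,\rd p$, which lies in $B^\dagger$ when it is bounded. Computing $\vartheta$ of each term via the covariance formulas gives the field $\int U^\beta_p\,\psi(\cdot\,p)\,\widehat b\,\rd p$, whose value at $1$ is $\int\psi(p)U^\beta_p\widehat b\,\rd p=U^\beta(\psi)\widehat b$. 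Vectors of this form are dense in $\rL^2(B)$ as $\psi$ runs through an approximate identity. Boundedness of $x$ follows, for instance, by taking $\psi=\check h^*\!*\check h$-type products and using the operator-valued weight estimates from Section~\ref{sec:spectral-rigidity}, where $\rT_{B^\dagger}(y^*y)$ is bounded for $y=b\widehat\lambda(\varphi)$ with $\varphi\in\rL^2$. This shows that $V$ has dense range and is therefore unitary.
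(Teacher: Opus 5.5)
Your orbit formula, norm identity, $V\widehat1=\widehat1$ and intertwining argument are correct and follow the paper's proof. Your Fubini treatment of the almost-everywhere issue is a legitimate sharpening of the paper's ``translation invariant, hence constant''. The gap is in the surjectivity step.

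\textbf{The boundedness claim does not follow.} The element $\int\delta_p(b\widehat\lambda(\psi))\,\rd p=\rT_{B^\dagger}(b\widehat\lambda(\psi))$ is a bounded element of $B^\dagger$ only if $b\widehat\lambda(\psi)\in\mathfrak m_{\rT_{B^\dagger}}$, and nothing you cite gives this.

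The estimate you invoke, boundedness of $\rT_{B^\dagger}(y^*y)$ for $y=b\widehat\lambda(\varphi)$, only says that $y\in\mathfrak n_{\rT_{B^\dagger}}$. It therefore controls the two-sided smoothing $\widehat\lambda(\bar\varphi)b^*b\widehat\lambda(\varphi)$ of $b^*b$, not the one-sided smoothing of $b$ on which your computation of $\xi_x$ rests.

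A factorization $b\widehat\lambda(\bar\psi_1\psi_2)=(\widehat\lambda(\psi_1)b^*)^*\widehat\lambda(\psi_2)$ would need $\widehat\lambda(\psi_1)b^*\in\mathfrak n_{\rT_{B^\dagger}}$. That is, you would need boundedness of $\int\beta_p(b)\widehat\lambda(|\psi_1(\cdot\,p^{-1})|^2)\beta_p(b)^*\,\rd p$. Since $b\notin B^\dagger$ and $\mathfrak n_{\rT_{B^\dagger}}$ is only a left ideal, this is exactly the one-sided smoothing obstruction the paper points out before Theorem~\ref{two-sided coefficient distribution}.

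Assuming compact $\beta$-spectrum for $b$ does not help. The integrand above is positive, so oscillation in $p$ yields no cancellation. What matters is how $b$ moves the spectrum of $\rL(G)$, i.e.\ its $\alpha$-spectrum.

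\textbf{How to repair it.} Your route can be fixed as follows.
\begin{enumerate}
\item Take $b$ with compact $\alpha$-spectrum $K$ and $\psi$ compactly supported.
\item By spectral transfer, $b\widehat\lambda(\psi)=\widehat\lambda(\psi')b\widehat\lambda(\psi)$, where $\psi'$ is compactly supported and equal to $1$ near $(\supp\psi)K$ (with the appropriate inversion convention).
\item This lies in $\mathfrak n_{\rT_{B^\dagger}}^*\mathfrak n_{\rT_{B^\dagger}}$, because $\rT_{B^\dagger}(\widehat\lambda(|\psi'|^2))=\|\psi'\|_2^2\,1$.
\item Justify $\vartheta(\rT_{B^\dagger}(y))=\int\vartheta(\delta_p(y))\,\rd p$ by monotone convergence on $\mathfrak m_{\rT_{B^\dagger}}^+$ together with weak$^*$ continuity of $\vartheta$.
\item Note also that $\delta_p(\widehat\lambda(\psi))=\widehat\lambda(\psi(\cdot\,p^{-1}))$, so $\xi_x=\int\psi(p^{-1})U^\beta_p\widehat b\,\rd p$. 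These vectors are still dense in $\rL^2(B)$.
\end{enumerate}

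\textbf{The paper's route.} The paper avoids constructing bounded preimages altogether.
\begin{enumerate}
\item Since $\widetilde\tau|_{\widetilde B}=\tau^\dagger\circ\rT_{B^\dagger}$, the vectors $x\widehat\lambda(\psi)$ with $x\in B^\dagger$ span a dense subspace of $\rL^2(\widetilde B,\widetilde\tau)$.
\item Their Plancherel fields are $p\mapsto\psi(p)U^\beta_p\xi_x$.
\item If $\eta\perp\operatorname{ran}V$, the square-integrable field $p\mapsto f(p)U^\beta_p\eta$ is orthogonal to all of these fields, because $\langle U^\beta_p\eta,U^\beta_p\xi_x\rangle=\langle\eta,\xi_x\rangle=0$.
\item Hence this field vanishes, and $\eta=0$.
\end{enumerate}
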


\begin{proof}
For $x\in B^\dagger$, the covariance formulas give
\[
\vartheta(x)(p)
=\vartheta(\delta_q(x))(p)
=U^\beta_q\vartheta(x)(pq^{-1})
\qquad(q\in\widehat G).
\]
Thus the field
$F(p)=U^\beta_{p^{-1}}\vartheta(x)(p)$ is invariant under every
translation, hence it is constant. Write $\xi_x$ for its constant value. This also proves uniqueness.
The inner-product formula and
$\rE_{\rL(G)}(x^*x)=\tau^\dagger(x^*x)1$ give
$\|\xi_x\|_2^2=\tau^\dagger(x^*x)$, so $V$ extends to an isometry.

To prove that it is onto, use the two crossed-product decompositions
of $\widetilde B$. The vectors
$x\widehat\lambda(\psi)$, with $x\in B^\dagger$ and
$\psi\in\rL^2(\widehat G)\cap\rL^\infty(\widehat G)$,
span a dense subspace of
$\rL^2(\widetilde B,\widetilde\tau)$. Under the Plancherel
identification associated with $B\rtimes_\alpha G$, they become
\[
p\longmapsto\psi(p)U^\beta_p\xi_x.
\]
Suppose that $\eta\in\rL^2(B)$ is orthogonal to the range of $V$,
and choose a nonzero
$f\in\rL^2(\widehat G)\cap\rL^\infty(\widehat G)$.
The field $p\mapsto f(p)U^\beta_p\eta$ is orthogonal to every
field in the preceding dense subspace. It is therefore zero, so
$\eta=0$. The range of $V$ is closed and dense, proving surjectivity.
Finally, $\vartheta(1)=\widehat1$ gives $V\widehat1=\widehat1$.
The covariance under $\widetilde\alpha$ and the commutation of
$\alpha|_B$ with $\beta$ give the intertwining assertion.
\end{proof}

We now determine how the state $\rho$ behaves on $B^\dagger$.

\begin{proposition}\label{mean dagger centralizer}
We have $\rho|_{B^\dagger}=\tau^\dagger$, and $B^\dagger$ is contained
in the centralizer of $\rho$.
\end{proposition}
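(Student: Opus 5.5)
The plan is to handle the two assertions separately. The identity $\rho|_{B^\dagger}=\tau^\dagger$ should come straight from the definitions. The centralizer statement will come from a commutator estimate with the F\o lner projections $e_i$, modelled on the proof of Proposition~\ref{mean finite tracial realization}.

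For the first assertion, recall that $\rho=m\circ\rE_{\rL(G)}$. By Proposition~\ref{dagger decomposition}, $\rE_{\rL(G)}(x)=\tau^\dagger(x)1$ for every $x\in B^\dagger$. Since $m$ is a state, $\rho(x)=\tau^\dagger(x)$ follows immediately. Equivalently, Proposition~\ref{mean coefficient orbits} gives $\|\vartheta(x)(p)\|_2=\|\xi_x\|_2$ for all $p$, because $U^\beta$ is unitary. The formula for $\|x\|_{2,\rho}$ in Proposition~\ref{mean coefficient map} then shows $\|x\|_{2,\rho}^2=\tau^\dagger(x^*x)$ directly.

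For the centralizer statement, I would prove $\rho(xz)=\rho(zx)$ for $x\in B^\dagger$ and $z\in M\rtimes_\alpha G$ by showing
\[
\|[e_i,x]\|_{2,\widetilde\tau}^2/\mu(F_i)\to0.
\]
Granting this, the Cauchy--Schwarz bound from the proof of Proposition~\ref{mean finite tracial realization} finishes the argument.

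To compute $[e_i,x]$, use the crossed product decomposition $\widetilde B=B^\dagger\rtimes_{\alpha^\dagger}G$ of Proposition~\ref{dagger decomposition}. In this picture $e_i=\widehat\lambda(1_{F_i})$ lies in the copy of $\rL(G)$ generated by the $u_t$. The Hilbert space $\rL^2(\widetilde B,\widetilde\tau)$ identifies, via the Plancherel map of this decomposition, with $\rL^2(\widehat G,\rL^2(B^\dagger))$. Under this identification, left multiplication by $x\in B^\dagger$ acts fiberwise through the representations $\alpha^\dagger$-twisted at $p$, while right multiplication by $e_i$ is multiplication by $1_{F_i}$.

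The cleaner route is to mimic spectral transfer. Suppose first that $x$ has compact $\alpha^\dagger$-spectrum $K$. Then $(1-e_i)xe_i=\widehat\lambda(1_{F_iK\setminus F_i})\,x\,e_i$, exactly as before. This gives
\[
\|[e_i,x]\|_{2,\widetilde\tau}^2\le 2\|x\|^2\mu(F_iK\setminus F_i),
\]
and the F\o lner condition \eqref{mean Folner sets} handles this case.

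The general case needs a density argument. Elements of compact $\alpha^\dagger$-spectrum are dense in $B^\dagger$ in $\|\cdot\|_{2,\tau^\dagger}$, with norm control, by Fej\'er-type smoothing $\alpha^\dagger_f$. It remains to bound $\|[e_i,x-c]\|_{2,\widetilde\tau}$ by a multiple of $\sqrt{\mu(F_i)}\,\|x-c\|_{2,\tau^\dagger}$. Here traciality of $\widetilde\tau$ and the identity $\widetilde\tau=\tau^\dagger\circ\rT_{B^\dagger}$ give
\[
\|ye_i\|_{2,\widetilde\tau}^2=\widetilde\tau(e_iy^*ye_i)=\mu(F_i)\,\tau^\dagger(y^*y)\qquad(y\in B^\dagger),
\]
since $\rE_{\rL(G)}(y^*y)=\tau^\dagger(y^*y)1$. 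The same bound holds for $e_iy$ by traciality.

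I expect this $\rL^2$ density step to be the main obstacle, because $B^\dagger$ need not lie in the norm continuous part of $\widetilde\alpha$. The point to check is that the $\|\cdot\|_2$-approximation suffices. It does: the commutator bound is linear in $\|x-c\|_{2,\tau^\dagger}$, so the approximation error contributes at most $2\|x-c\|_{2,\tau^\dagger}\sqrt{\mu(F_i)}$ to $\|[e_i,x]\|_{2,\widetilde\tau}$. After dividing by $\sqrt{\mu(F_i)}$ this error is uniformly small, while the compact-spectrum term tends to $0$. This proves the claimed limit and hence the centralizer assertion.
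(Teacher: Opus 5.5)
Your proposal is correct and is essentially the paper's proof. The restriction $\rho|_{B^\dagger}=\tau^\dagger$ comes from the scalar expectation formula $\rE_{\rL(G)}(x)=\tau^\dagger(x)1$, and the centralizer assertion comes from the spectral-transfer commutator estimate of Proposition~\ref{mean finite tracial realization} applied to elements of compact $\alpha^\dagger$-spectrum, followed by approximation in $\|\cdot\|_{2,\tau^\dagger}$ via the identity $\|ye_i\|_{2,\widetilde\tau}=\|e_iy\|_{2,\widetilde\tau}=\sqrt{\mu(F_i)}\,\|y\|_{2,\tau^\dagger}$ for $y\in B^\dagger$; this $\rL^2$ density step is exactly what replaces the norm continuity used in that earlier proposition.
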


\begin{proof}
The scalar expectation formula gives
$\rho_i|_{B^\dagger}=\rho|_{B^\dagger}=\tau^\dagger$.
For $x,y\in B^\dagger$, it follows that
\[
\|(x-y)e_i\|_{2,\widetilde\tau}
=\|e_i(x-y)\|_{2,\widetilde\tau}
=\sqrt{\mu(F_i)}\,\|x-y\|_{2,\tau^\dagger}.
\]
Approximate $x$ in $\|\cdot\|_{2,\tau^\dagger}$ by elements of
compact $\alpha^\dagger$-spectrum and apply the proof of
Proposition~\ref{mean finite tracial realization}. This gives
\eqref{mean cut commutators} for $x$ and puts $x$ in the centralizer
of $\rho$.
\end{proof}

We will pass to a finite tracial von Neumann algebra using the GNS
representation of $\rho$ on the C*-algebra generated by $M$ and
$B^\dagger$. To identify $B^\dagger$ with fixed points of the
relative dual bicentralizer action there, we first need uniform
control on the approximate implementing unitaries.

Corollary~\ref{dual unitary} provides, for each $q\in\widehat G$,
a net $(v_j)_j$ in $\cU(M)$ such that
\begin{equation}\label{mean scalar implementers}
\sup_{t\in K}\|\alpha_t(v_j)-\overline{q(t)}v_j\|_2
\longrightarrow0
\qquad(K\Subset G).
\end{equation}
Proposition~\ref{relative bicentralizer isomorphism} then gives
$v_jbv_j^*\to\beta_q(b)$ in $\|\cdot\|_2$ for every $b\in B$.

\begin{lemma}\label{mean uniform dual orbits}
Let $b\in B$ and $\varepsilon>0$. There are $K\Subset G$ and
$\varepsilon_0>0$, depending only on $b$ and $\varepsilon$, such that
for every $q\in\widehat G$ and $v\in\cU(M)$,
\[
\sup_{t\in K}\|\alpha_t(v)-\overline{q(t)}v\|_2<\varepsilon_0
\quad\Longrightarrow\quad
\sup_{\chi\in\widehat G}
\|v\beta_\chi(b)v^*-\beta_{\chi q}(b)\|_2<\varepsilon.
\]
In particular, every net satisfying \eqref{mean scalar implementers}
satisfies
\begin{equation}\label{mean uniform scalar implementation}
\sup_{\chi\in\widehat G}
\|v_j\beta_\chi(b)v_j^*-\beta_{\chi q}(b)\|_2\longrightarrow0.
\end{equation}
\end{lemma}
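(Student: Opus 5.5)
The plan is to argue by contradiction through the ultrapower, using the uniform continuity built into the definition of $\rB(M,\alpha)$ together with the fact that $\beta_\chi$ is itself produced by approximate implementers. The key observation is that for $v$ approximately implementing $q$ and $w$ approximately implementing $\chi$, the product $vw$ approximately implements $\chi q$, with errors that add uniformly over $t\in K$ and do not depend on $\chi$. Concretely,
\[
\|\alpha_t(vw)-\overline{(\chi q)(t)}vw\|_2
\leq\|\alpha_t(v)-\overline{q(t)}v\|_2+\|\alpha_t(w)-\overline{\chi(t)}w\|_2 .
\]

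First we extract a $\chi$-independent modulus from the proof of Proposition~\ref{relative bicentralizer isomorphism}. Since $b\in B$, the definition of the bicentralizer gives $K\Subset G$ and $\delta>0$ such that $\sup_{t\in K}\|\alpha_t(s)-s\|_2<\delta$ implies $\|sb-bs\|_2<\varepsilon/3$ for every $s\in\cU(M)$. Now take two unitaries $w,w'$ that approximately implement the same character $\chi$ on $K$ within $\delta/2$. Then $w'^*w$ is $\delta$-almost invariant on $K$, so
\[
\|wbw^*-w'bw'^*\|_2=\|(w'^*w)b-b(w'^*w)\|_2<\varepsilon/3 .
\]
Letting $w'$ run along an implementing net for $\chi$, whose existence is given by Corollary~\ref{dual unitary}, yields $\|wbw^*-\beta_\chi(b)\|_2\leq\varepsilon/3$. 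The constants $K$ and $\delta$ depend only on $b$ and $\varepsilon$, not on $\chi$.

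With this modulus in hand, fix $q$ and $v$ with $\sup_{t\in K}\|\alpha_t(v)-\overline{q(t)}v\|_2<\varepsilon_0:=\delta/4$, and fix an arbitrary $\chi$. Choose $w\in\cU(M)$ implementing $\chi$ on $K$ within $\delta/4$. By the displayed inequality, $vw$ implements $\chi q$ within $\delta/2$. Applying the modulus twice gives
\[
\|wbw^*-\beta_\chi(b)\|_2\leq\varepsilon/3,
\qquad
\|vwbw^*v^*-\beta_{\chi q}(b)\|_2\leq\varepsilon/3 .
\]
Conjugation by $v$ is $\|\cdot\|_2$-isometric, so the first estimate also gives $\|v\beta_\chi(b)v^*-vwbw^*v^*\|_2\leq\varepsilon/3$. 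The triangle inequality then yields $\|v\beta_\chi(b)v^*-\beta_{\chi q}(b)\|_2\leq2\varepsilon/3<\varepsilon$, uniformly in $\chi$. The ``in particular'' statement follows at once, since any net satisfying \eqref{mean scalar implementers} eventually satisfies the hypothesis on $K$ with constant $\varepsilon_0$.

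The main obstacle is to make sure the modulus really is uniform, i.e.\ that it comes only from the bicentralizer condition applied to $b$ and not from any choice depending on $\chi$. The argument above secures this because only almost-invariant unitaries $w'^*w$ ever enter the comparison.
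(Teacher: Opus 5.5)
Your argument is correct, but it is organized differently from the paper's proof.

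\textbf{The paper's route.} It first shows that the whole orbit $\{\beta_\chi(b)\}_{\chi\in\widehat G}$ satisfies the bicentralizer condition with a single modulus. For an $\eta$-almost invariant unitary $u$, it conjugates by implementers $w_j$ of $\chi$; the phases cancel in $w_j^*uw_j$, which gives $\sup_\chi\|[u,\beta_\chi(b)]\|_2\leq\varepsilon/2$. It then compares $v$ with an implementing net $(w_j)$ for $q$ and applies this uniform commutator bound to the almost invariant unitaries $w_j^*v$. It concludes with the composition rule $w_j\beta_\chi(b)w_j^*\to\beta_q\beta_\chi(b)=\beta_{\chi q}(b)$ from Proposition~\ref{relative bicentralizer isomorphism}.

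\textbf{Your route.} You make the defining limit of $\beta_\chi$ quantitative and uniform in $\chi$: every unitary implementing $\chi$ within $\delta/2$ on $K$ conjugates $b$ to within $\varepsilon/3$ of $\beta_\chi(b)$. This uses only the bicentralizer modulus of $b$ itself. You then replace the composition rule by the elementary observation that $vw$ implements $\chi q$, with the errors adding uniformly on $K$.

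\textbf{Comparison.} Your version is slightly more self-contained, since it needs neither the commutator estimate for the elements $\beta_\chi(b)$ nor the composition law. The paper's version isolates a structural fact of independent interest: the $\beta$-orbit of $b$ lies in the bicentralizer uniformly. Both arguments rely on Corollary~\ref{dual unitary} to produce approximate implementers of arbitrary characters.

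One cosmetic point: your opening sentence announces an argument by contradiction through the ultrapower. What you actually give is a direct $\varepsilon$--$\delta$ argument, so that sentence should be removed.
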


\begin{proof}
By \eqref{local relative action bicentralizer}, choose $K\Subset G$
and $\eta>0$ such that
\[
\sup_{t\in K}\|\alpha_t(u)-u\|_2<\eta
\quad\Longrightarrow\quad
\|[u,b]\|_2<\varepsilon/2
\qquad(u\in\cU(M)).
\]
Fix such a unitary $u$. For $\chi\in\widehat G$, choose unitaries
$(w_j)_j$ satisfying \eqref{mean scalar implementers} with $q=\chi$.
The scalar phases cancel under conjugation, so
\[
\limsup_j\sup_{t\in K}
\|\alpha_t(w_j^*uw_j)-w_j^*uw_j\|_2
\leq\sup_{t\in K}\|\alpha_t(u)-u\|_2<\eta.
\]
The choice of $\eta$ and the convergence
$w_jbw_j^*\to\beta_\chi(b)$ give
\[
\sup_{\chi\in\widehat G}\|[u,\beta_\chi(b)]\|_2\leq\varepsilon/2.
\]

Take $\varepsilon_0=\eta/2$ and let $v$ satisfy the hypothesis at $q$.
Choose $(w_j)_j$ satisfying \eqref{mean scalar implementers} for this
$q$. Then
\[
\begin{aligned}
\sup_{t\in K}\|\alpha_t(w_j^*v)-w_j^*v\|_2
&\leq\sup_{t\in K}\|\alpha_t(v)-\overline{q(t)}v\|_2\\
&\quad+\sup_{t\in K}\|\alpha_t(w_j)-\overline{q(t)}w_j\|_2
<\eta
\end{aligned}
\]
eventually. Applying the preceding uniform commutator estimate to
$w_j^*v$ gives
\[
\|v\beta_\chi(b)v^*-w_j\beta_\chi(b)w_j^*\|_2
\leq\varepsilon/2
\qquad(\chi\in\widehat G).
\]
For each fixed $\chi$, Proposition~\ref{relative bicentralizer isomorphism}
gives $w_j\beta_\chi(b)w_j^*\to\beta_{\chi q}(b)$.
Passing to the limit gives the bound $\varepsilon/2$ for every $\chi$,
with the same constant. This proves the first assertion, and
\eqref{mean uniform scalar implementation} follows by applying it
to all sufficiently large indices of the given net.
\end{proof}

\begin{proposition}\label{mean scalar action on dagger}
For every $q\in\widehat G$, every $x\in B^\dagger$, and every net
satisfying \eqref{mean scalar implementers}, we have
\[
\|v_jxv_j^*-x\|_{2,\rho}\longrightarrow0.
\]
\end{proposition}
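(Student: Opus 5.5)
Since $\|y\|_{2,\rho}^2=m\bigl(p\mapsto\|\vartheta(y)(p)\|_2^2\bigr)$ by Proposition~\ref{mean coefficient map}, the plan is to work with Plancherel functions. We compute $\vartheta(v_jxv_j^*)$ explicitly and show that it converges to $\vartheta(x)$ in $\rL^2(M)$, uniformly in $p\in\widehat G$. Because $m$ is a mean, uniform convergence of the pointwise norms is enough, so the whole point is to obtain uniformity in $p$.

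First we reduce to a dense set. The map $y\mapsto v_jyv_j^*$ is $\|\cdot\|_{2,\rho}$-contractive: $M$ lies in the centralizer of $\rho$ by Proposition~\ref{mean finite tracial realization}, so $\|vyv^*\|_{2,\rho}=\|y\|_{2,\rho}$ for unitaries $v\in M$. On $B^\dagger$ we also have $\|\cdot\|_{2,\rho}=\|\cdot\|_{2,\tau^\dagger}$ by Proposition~\ref{mean dagger centralizer}, and the norm-one operators $y\mapsto v_jyv_j^*-y$ have norm at most $2$. A $3\varepsilon$-argument therefore reduces the claim to $x$ in a $\|\cdot\|_{2,\tau^\dagger}$-dense subset of $B^\dagger$.

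For the dense subset we use Proposition~\ref{mean coefficient orbits}, which gives $\vartheta(x)(p)=U^\beta_p\xi_x$. The unitary $V$ identifies $\rL^2(B^\dagger)$ with $\rL^2(B)$, so we may take $x$ with $\xi_x=\widehat b$ for $b\in B$ ranging over a dense set. Concretely, we choose elements with compact $\widehat\alpha$-spectrum and compact $\alpha^\dagger$-spectrum, together with $\xi_x=\widehat b$ where $b$ has compact $\alpha$-spectrum. Then $\vartheta(x)(p)=\widehat{\beta_p(b)}$.

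Next we compute the Plancherel function of the conjugate. The left module property gives $\vartheta(v_jxv_j^*)=v_j\vartheta(xv_j^*)$. For the right factor $v_j^*$ we use Proposition~\ref{mean coefficient right product}, which needs $v_j^*$ to have compact $\alpha$-spectrum. We arrange this by smoothing: replace $v_j$ by $\alpha_f(v_j)$ with $\widehat f$ compactly supported near $q$ and equal to $1$ near $q$. By \eqref{mean scalar implementers}, $\alpha_f(v_j)$ is close to $v_j$ in $\|\cdot\|_2$, hence close to a unitary. Working first with the smoothed elements and then passing back, the formula reads
\[
\vartheta(xv_j^*)(p)=\int\widehat\Phi(r,s)\,\widehat{\beta_{pr^{-1}}(b)}\,\alpha_{-s}(v_j^*)\,\rd r\,\rd s .
\]
Since $\alpha_{-s}(v_j^*)\approx q(-s)\,v_j^*$ uniformly for $s$ in compact sets, and $\widehat\Phi$ is integrable with tails controlled uniformly in $p$, the integral becomes approximately $\bigl[\int\widehat\Phi(r,s)\,q(s)^{-1}\beta_{pr^{-1}}(b)\,\rd r\,\rd s\bigr]v_j^*$. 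Fourier inversion should then collapse this to $\beta_{pq^{\pm1}}(b)v_j^*$, the same collapse that occurs in the proof of Proposition~\ref{mean coefficient right product}. This gives $\vartheta(v_jxv_j^*)(p)\approx v_j\beta_{pq'}(b)v_j^*$ uniformly in $p$, for an appropriate shift $q'$ of $q$.

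Finally, Lemma~\ref{mean uniform dual orbits} gives $\sup_\chi\|v_j\beta_\chi(b)v_j^*-\beta_{\chi q}(b)\|_2\to0$. The two shifts in $q$ should cancel, so that $\vartheta(v_jxv_j^*)(p)\to\widehat{\beta_p(b)}=\vartheta(x)(p)$ uniformly in $p$, and applying $m$ finishes the proof.

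The step I expect to be the main obstacle is the bookkeeping in the Fourier-inversion collapse. One has to check that the character phases combine so that the spectral shift coming from the $\widehat\alpha$-side exactly cancels the $\beta$-shift of Lemma~\ref{mean uniform dual orbits}; this cancellation is precisely the role of $\delta_p=\widetilde\beta_p\circ\widehat\alpha_{p^{-1}}$. One also has to keep every error uniform in $p$ while the smoothed $v_j$ are only approximately unitary.
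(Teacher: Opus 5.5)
Your outline is the paper's own argument. You reduce to $x$ of compact $\widehat\alpha$-spectrum using that $M$ lies in the centralizer of $\rho$ and $\rho|_{B^\dagger}=\tau^\dagger$. You then smooth the implementers to $a_j=\alpha_f(v_j^*)$, apply Proposition~\ref{mean coefficient right product}, replace $\alpha_{-s}(a_j)$ by $\overline{q(s)}a_j$, collapse by Fourier inversion, and conclude with Lemma~\ref{mean uniform dual orbits}.

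\textbf{The reduction to $\xi_x=\widehat b$ does not hold as stated.} You cannot simply take $x$ with $\xi_x=\widehat b$, $b\in B$. The map $V$ is only a unitary $\rL^2(B^\dagger,\tau^\dagger)\to\rL^2(B,\tau)$. Nothing guarantees that $V^{-1}(\widehat b)$ is represented by a bounded element of $B^\dagger$, let alone one with compact spectra. Nor is there any reason that $\xi_x$ is a bounded vector for a $\|\cdot\|_{2,\tau^\dagger}$-dense set of $x$. Both the $3\varepsilon$-reduction and the product formula need bounded $x\in B^\dagger$, so density of $\widehat B$ in $\rL^2(B)$ does not help.

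The fix, which is what the paper does, has three parts:
\begin{enumerate}
\item Drop this requirement, together with the unnecessary conditions on the $\alpha^\dagger$- and $\alpha$-spectra, and keep only compact $\widehat\alpha$-spectrum of $x$.
\item Approximate $\xi_x$ by $\widehat b$ inside each estimate. Every operator you apply to $F(p)=U^\beta_p\xi_x$ is bounded uniformly in $j,s,p$, since $\|\xi d\|_2\leq\|\xi\|_2\|d\|$. These operators are right multiplication by $d_{j,s}=\alpha_{-s}(a_j)-\overline{q(s)}a_j$ or by $a_j-v_j^*$, and conjugation by $v_j$. So the replacement costs $O(\|\xi_x-\widehat b\|_2)$ uniformly. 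On $\widehat{\beta_{pr^{-1}}(b)}$ you may then use $\|\beta_{pr^{-1}}(b)d_{j,s}\|_2\leq\|b\|\,\|d_{j,s}\|_2$ and Lemma~\ref{mean uniform dual orbits}.
\item Pass back from $a_j$ to $v_j^*$, which is harmless even uniformly in $p$. For $c\in M$, the inequality $\rE_{\rL(G)}(c^*x^*xc)\leq\|x\|^2\tau(c^*c)1$ gives $\operatorname*{ess\,sup}_p\|\vartheta(xc)(p)\|_2\leq\|x\|\,\|c\|_2$.
\end{enumerate}

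\textbf{The sign bookkeeping you flagged does close.} If $\alpha_t(a)=q(t)a$, then
\[
\vartheta(cu_ta)(p)=\overline{p(t)}\,\widehat{c\alpha_t(a)}=\overline{(pq^{-1})(t)}\,\widehat c\,a=\vartheta(cu_t)(pq^{-1})\,a .
\]
Since $\alpha_t(v_j^*)\approx q(t)v_j^*$, Fourier inversion gives $\int_G\widehat\Phi(r,s)\overline{q(s)}\,\rd s=\widehat h(rq^{-1})$. The cutoff property of $h$ then yields $\vartheta(xa_j)(p)\approx F(pq^{-1})a_j$ uniformly in $p$, so your $q'$ is $q^{-1}$. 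Lemma~\ref{mean uniform dual orbits} with $\chi=pq^{-1}$, after the same approximation of $\xi_x$, gives $v_jF(pq^{-1})v_j^*\approx F(pq^{-1}q)=F(p)$ uniformly in $p$. So the two shifts cancel exactly as you hoped.
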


\begin{proof}
We use the Plancherel function and product formula proved in Propositions~\ref{mean coefficient map},
\ref{mean coefficient orbits}, and \ref{mean coefficient right product}.
Write
\[
F(p)=\vartheta(x)(p)=U^\beta_p\xi_x.
\]
First suppose that $x$ has compact $\widehat\alpha$-spectrum.
Choose $h\in\mathcal D(G)$ equal to $1$ on a neighborhood of this
spectrum. Choose $f\in\mathcal S(G)$ with compactly supported
Fourier transform and $\widehat f(q^{-1})=1$, and put
\[
a_j=\alpha_f(v_j^*).
\]
Then $\sup_j\|a_j\|\leq\|f\|_1$, all the $a_j$ have their
$\alpha$-spectrum in the compact set $\supp\widehat f$, and
\[
\|a_j-v_j^*\|_2\longrightarrow0.
\]
Indeed, $\alpha_t(v_j^*)-q(t)v_j^*\to0$ in $\|\cdot\|_2$
uniformly on compact sets, and $\int_G f(t)q(t)\,\rd t=1$.
Compact approximation of the $\rL^1$ integral justifies this limit
for nets. The same estimates show that
\begin{equation}\label{mean smoothed implementer error}
\sup_{s\in C}
\|\alpha_{-s}(a_j)-\overline{q(s)}a_j\|_2\longrightarrow0
\qquad(C\Subset G).
\end{equation}
Choose $k\in\mathcal D(\widehat G)$ equal to $1$ near
$\supp\widehat f$, so in particular $k(q^{-1})=1$.
Put $\Phi(t,r)=h(t)k(r)\overline{r(t)}$.
The product formula for the Plancherel function gives
\[
\vartheta(xa_j)(p)
=\int_{\widehat G\times G}\widehat\Phi(r,s)
F(pr^{-1})\alpha_{-s}(a_j)\,\rd r\,\rd s.
\]
We claim that replacing $\alpha_{-s}(a_j)$ in this integral by
$\overline{q(s)}a_j$ changes the resulting field by a quantity
tending to zero in the essential supremum of its $\rL^2(M)$ norm.
To see this, fix $b\in B$ and put
$d_{j,s}=\alpha_{-s}(a_j)-\overline{q(s)}a_j$. Uniformly in $p,r,s$,
\[
\|F(pr^{-1})d_{j,s}\|_2
\leq2\|f\|_1\|\xi_x-\widehat b\|_2
+\|b\|\,\|d_{j,s}\|_2.
\]
Since $\widehat\Phi\in\rL^1(\widehat G\times G)$, equation
\eqref{mean smoothed implementer error} and compact approximation
of this integral make the second term tend to zero after integration.
Then approximate $\xi_x$ by $\widehat b$ to prove the claim.

Fourier inversion in the second variable gives
\[
\int_G\widehat\Phi(r,s)\overline{q(s)}\,\rd s
=\widehat h(rq^{-1}).
\]
The spectral cutoff property of $h$ gives
\[
\int_{\widehat G}\widehat h(r)F(pr^{-1})\,\rd r=F(p).
\]
Thus the preceding claim gives
\[
\operatorname*{ess\,sup}_{p\in\widehat G}
\|\vartheta(xa_j)(p)-F(pq^{-1})a_j\|_2\longrightarrow0.
\]
The same approximation of $\xi_x$ by vectors $\widehat b$, using
$\|a_j-v_j^*\|_2\to0$ and a uniform operator norm bound, gives
\[
\operatorname*{ess\,sup}_{p\in\widehat G}
\|F(pq^{-1})(a_j-v_j^*)\|_2\longrightarrow0.
\]
On the other hand,
\[
\|x(a_j-v_j^*)\|_{2,\rho}
\leq\|x\|\,\|a_j-v_j^*\|_2\longrightarrow0.
\]
Combining these estimates with the norm identity for the Plancherel function yields
\[
\bigl\|\vartheta(xv_j^*)-[p\mapsto F(pq^{-1})v_j^*]\bigr\|_m
\longrightarrow0.
\]
Here $\|\cdot\|_m$ denotes the square root of the mean of the squared
$\rL^2$ norm, as in the next subsection.
Lemma~\ref{mean uniform dual orbits} and approximation of $\xi_x$
by $\widehat b$ also give
\[
\sup_{p\in\widehat G}
\|v_jF(pq^{-1})v_j^*-F(p)\|_2\longrightarrow0.
\]
Left $M$-linearity of $\vartheta$ now proves the assertion for $x$
of compact dual spectrum.

The dual action preserves $B^\dagger$ and its normal trace
$\tau^\dagger$. Hence any $x\in B^\dagger$ is a
$\|\cdot\|_{2,\tau^\dagger}$ limit of elements $y\in B^\dagger$
with compact dual spectrum. Since $v_j\in M$ belongs to the
centralizer of $\rho$,
\[
\|v_jxv_j^*-x\|_{2,\rho}
\leq2\|x-y\|_{2,\tau^\dagger}
+\|v_jyv_j^*-y\|_{2,\rho}.
\]
This proves the result for arbitrary $x$.
\end{proof}

\begin{theorem}\label{mean value commutation}
For every $a\in M$ and $x\in B^\dagger$, we have
\[
\|ax-xa\|_{2,\rho}=0.
\]
\end{theorem}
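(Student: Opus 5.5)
The plan is to reduce the statement twice by averaging. First we replace $a$ by an element of $B=\rB(M,\alpha)$, then by an element of $\cZ(M)$, and the commutation then holds trivially. The tools are that $M$ and $B^\dagger$ lie in the centralizer of $\rho$ (Propositions~\ref{mean finite tracial realization} and~\ref{mean dagger centralizer}), and that $B^\dagger$ is asymptotically invariant under conjugation by approximate eigenunitaries (Proposition~\ref{mean scalar action on dagger}). The identity $\rho|_M=\tau$ should let us avoid the product formula of Proposition~\ref{mean coefficient right product} altogether.

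\emph{Step 1: a continuity estimate.} Fix $x\in B^\dagger$ and put $f(a)=\|ax-xa\|_{2,\rho}$ for $a\in M$. This is a seminorm, hence convex. For $y\in M$ we have $\|xy\|_{2,\rho}^2=\rho(y^*x^*xy)=\rho(x^*x\,yy^*)$, because $y$ is in the centralizer of $\rho$. Writing $x^*x\,yy^*=(x^*x\,y)y^*$ and applying Cauchy--Schwarz with $\rho(yy^*)=\tau(yy^*)$ gives $\|xy\|_{2,\rho}\leq\|x\|\,\|y\|_2$. The term $\|yx\|_{2,\rho}$ is handled in the same way. Hence $f(a)-f(c)\leq2\|x\|\,\|a-c\|_2$, so $f$ is $\|\cdot\|_2$-continuous on bounded subsets of $M$.

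\emph{Step 2: conjugation invariance.} Let $q\in\widehat G$ and let $(v_j)$ be a net satisfying \eqref{mean scalar implementers}; the case $q=1$ gives almost invariant unitaries. Since $v_j$ lies in the centralizer of $\rho$, conjugation by $v_j$ is $\|\cdot\|_{2,\rho}$-isometric. Combined with $v_jxv_j^*\to x$ in $\|\cdot\|_{2,\rho}$ (Proposition~\ref{mean scalar action on dagger}), this gives
\[
f(a)=\lim_j\|(v_jav_j^*)x-x(v_jav_j^*)\|_{2,\rho}.
\]
Take $q=1$ and form convex combinations. By convexity of $f$ and Step~1, $f(a)\geq f(y)$ for every $y$ in the $\|\cdot\|_2$-closed convex hull of almost invariant conjugates of $a$. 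Proposition~\ref{conditional expectation relative action bicentralizer} puts $\rE_{B}(a)$ in this hull, so $f(a)\geq f(\rE_B(a))$. The reverse inequality is obtained the same way: $a$ lies in the analogous hull for the element $a-\rE_B(a)$ shifted back. More simply, we prove $f(b)=0$ for every $b\in B$ and then handle $a-\rE_B(a)$ by the same minimizing argument.

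\emph{Step 3: the element $b\in B$.} Apply Step~2 with arbitrary $q$. Proposition~\ref{relative bicentralizer isomorphism} gives $v_jbv_j^*\to\beta_q(b)$ in $\|\cdot\|_2$, so $f(b)=f(\beta_q(b))$ for every $q\in\widehat G$. Convex combinations of $\beta_q(b)$ converge in $\|\cdot\|_2$ to $\rE_{B^\beta}(b)$ by mean ergodicity. Theorem~\ref{fixed points relative dual bicentralizer compressed} with $N=M$ identifies $B^\beta=M'\cap M=\cZ(M)$. By convexity, $f(b)\leq f(z)$ for some $z\in\cZ(M)$, and it remains to show that central elements commute with $x$.

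\emph{Step 4: the main obstacle.} The point is whether $z$ commutes with $B\rtimes_\alpha G$, that is, whether $zu_t=u_tz$. This holds if $\alpha$ acts trivially on $\cZ(M)$, and I would derive that from $\Gamma(\alpha)=\widehat G$ via the standard relation between the Connes spectrum and the action on the center. If this fails in general, the fallback is to use the restricted structure of $B^\dagger$: its elements are $\delta$-invariant, and this should force the Plancherel fields $p\mapsto U^\beta_p\xi_x$ to interact with $\cZ(M)$ only through $\cZ(M)^\alpha$.

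A second delicate point is that Step~2 only gives $f(a)\geq f(\rE_B(a))$. To get $f(a)=0$ one must also control the part $a-\rE_B(a)$. I expect to handle this by applying the same averaging to $x$ instead: use Proposition~\ref{mean scalar action on dagger} together with the trace identity $\rho(yx)=m\bigl(p\mapsto\langle U^\beta_p\xi_x,\widehat{y^*}\rangle\bigr)$, which reduces $\rho(yx)$ to $\tau\bigl(y\,\rE_{\cZ(M)}(\xi_x)\bigr)$. Then expand $f(a)^2$ into its four $\rho$-terms and compare them directly.
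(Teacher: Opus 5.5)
There is a genuine gap, and it lies in the direction of the inequalities. Your $f(a)=\|ax-xa\|_{2,\rho}$ is a continuous seminorm that is invariant under the relevant conjugations. For such a function, averaging can only \emph{lower} its value. Step~2 correctly gives $f(\rE_B(a))\leq f(a)$. In Step~3, however, the same convexity gives $f(z)\leq f(b)$, not $f(b)\leq f(z)$ as you write. So even a proof that central elements commute with $x$ would say nothing about $f(b)$, let alone $f(a)$: a rotation-invariant norm vanishes at the barycenter $0$ of every orbit.

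Treating $a-\rE_B(a)$ by ``the same minimizing argument'' only returns the trivial bound $0=f(0)\leq f(a-\rE_B(a))$. The fallback of expanding $f(a)^2$ does not close the gap either. The diagonal terms reduce by the centralizer property. The cross terms, such as $\rho(x^*a^*xa)=m\bigl(\langle\vartheta(xa),\vartheta(ax)\rangle\bigr)$, require $\vartheta(xa)$, i.e.\ the product formula of Proposition~\ref{mean coefficient right product}. Showing that they match the diagonal terms is essentially the identity of Lemma~\ref{mean tensor commutation}, which the paper deduces \emph{from} the present theorem. The hope in Step~4 is also unfounded: full Connes spectrum does not make $\alpha$ trivial on $\cZ(M)$. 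A free ergodic action on a diffuse abelian algebra has factorial crossed product, hence full Connes spectrum, yet acts nontrivially on the center.

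The missing idea is to put $x$, not $a$, into the commutant. The tool for that is the relative fixed point theorem for an inclusion larger than $M\subset M$. Since $M$ and $B^\dagger$ both lie in the centralizer of $\rho$, the restriction of $\rho$ to the $\rC^*$-algebra they generate is tracial. Its GNS construction gives a finite von Neumann algebra $N$ with a faithful normal trace, into which $M$ and $B^\dagger$ embed normally and trace-preservingly. Proposition~\ref{mean scalar action on dagger} with $q=1$ then says exactly that $\pi(B^\dagger)\subset\rB(M\subset N,\alpha)$. With arbitrary $q$, it says that the dual relative bicentralizer action fixes $\pi(B^\dagger)$ pointwise. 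Theorem~\ref{fixed points relative dual bicentralizer compressed}, applied to the inclusion $M\subset N$, yields $\pi(B^\dagger)\subset M'\cap N$, which is the claim. You invoked that theorem only with $N=M$, where it identifies $B^\beta$ but gives no information about $x$.
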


\begin{proof}
Let $\mathcal A$ be the C*-algebra generated by $M$ and $B^\dagger$.
Both generating algebras lie in the centralizer of $\rho$, which
is a norm-closed $*$-subalgebra. Thus $\rho|_{\mathcal A}$ is tracial.
Its GNS representation $\pi$ generates a finite von Neumann algebra
$N=\pi(\mathcal A)''$ with a faithful normal tracial state $\tau_N$
satisfying $\tau_N(\pi(z))=\rho(z)$ for $z\in\mathcal A$.
The restrictions of $\pi$ to $M$ and $B^\dagger$ are faithful and
trace preserving. They are also normal. Indeed, if $0\leq b_i\uparrow b$
in $M$ and $z\in\mathcal A$, traciality gives
\[
\|\pi((b-b_i)z)\widehat1\|
\leq\|z\|\,\|b-b_i\|_2\longrightarrow0.
\]
The vectors $\pi(z)\widehat1$ are dense, which proves normality on
$M$. The same argument applies to $B^\dagger$ with $\tau^\dagger$.
Identify $M$ with its normal copy in $N$.

Proposition~\ref{mean scalar action on dagger}, with $q=1$, applies
to every net of asymptotically $\alpha$-invariant unitaries in $M$.
The local characterization \eqref{local relative action bicentralizer}
therefore gives
\[
\pi(B^\dagger)\subset\rB(M\subset N,\alpha).
\]
For arbitrary $q\in\widehat G$, the same proposition and the
approximate unitary formula in
Proposition~\ref{relative bicentralizer isomorphism} show that the
dual relative bicentralizer action fixes $\pi(B^\dagger)$ pointwise.
Theorem~\ref{fixed points relative dual bicentralizer compressed} yields
\[
\pi(B^\dagger)\subset M'\cap N.
\]
Consequently,
\[
\|ax-xa\|_{2,\rho}
=\|\pi(a)\pi(x)-\pi(x)\pi(a)\|_{2,\tau_N}=0.\qedhere
\]
\end{proof}

\subsection{Resonance for the joint bicentralizer action}
\label{subsec:joint-bicentralizer-resonance}
In this subsection, we suppose that $M$ is a factor. Then the action
$\beta:\widehat G\curvearrowright B$ is ergodic.
Let $\cH_m$ be the Hilbert space obtained by separation and completion
of $\rL^\infty(\widehat G,\rL^2(B))$ for the seminorm
\[
\|F\|_m^2=m\bigl(p\mapsto\|F(p)\|_2^2\bigr).
\]

\begin{lemma}\label{mean coefficient isometries}
The formulas
\[
L(\widehat b\otimes\widehat x)=b\vartheta(x),
\qquad
R(\widehat b\otimes\widehat x)=\vartheta(x)b
\qquad(b\in B,\ x\in B^\dagger)
\]
extend to linear isometries from
$\rL^2(B)\otimes\rL^2(B^\dagger)$ into $\cH_m$.
The formula
\[
I(F)(p)=U^\beta_p\bigl(F(p^{-1})\bigr)
\]
defines a linear unitary involution on $\cH_m$. If $S$ denotes the
flip on $\rL^2(B)\otimes\rL^2(B)$, then
\[
L(1\otimes V^*)S=IR(1\otimes V^*).
\]
\end{lemma}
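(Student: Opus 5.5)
The plan is to verify all four assertions on a dense set of bounded vectors and then extend by continuity. The two analytic inputs are Proposition~\ref{mean coefficient orbits}, which writes $\vartheta(x)(p)=U^\beta_p\xi_x$ for $x\in B^\dagger$, and the ergodicity of $\beta:\widehat G\curvearrowright B$, which holds because $M$ is a factor (Theorem~\ref{fixed points relative dual bicentralizer compressed}). The basic averaging fact is the following. For $y\in B$, the invariant mean $m$ applied to a coefficient $p\mapsto\tau(c\beta_p(y))$, with $c\in B$ fixed, gives $\tau(c\,\rE_{B^\beta}(y))=\tau(c)\tau(y)$. This is the same averaging used in the construction of $\Pi$ in Lemma~\ref{relative LCA spectral shift maps compressed}, combined with ergodicity.

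\emph{Isometry of $L$ and $R$.} Since $V$ is unitary, vectors $\widehat x$ with $\xi_x=\widehat y$ for $y\in B$ bounded are dense in $\rL^2(B^\dagger)$. It therefore suffices to compute inner products on elementary tensors $\widehat b\otimes\widehat x$ and $\widehat{b'}\otimes\widehat{x'}$ with $\xi_x=\widehat y$ and $\xi_{x'}=\widehat{y'}$. For $L$, the pointwise inner product at $p$ is
\[
\tau\bigl(\beta_p(y')^*b'^*b\beta_p(y)\bigr)=\tau\bigl(b'^*b\,\beta_p(yy'^*)\bigr),
\]
by traciality. Its mean is $\tau(b'^*b)\tau(y'^*y)=\langle\widehat b,\widehat{b'}\rangle\langle\widehat x,\widehat{x'}\rangle$, using $\|\xi_x\|_2^2=\tau^\dagger(x^*x)$ and its polarization. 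For $R$, the analogous computation gives
\[
\tau\bigl(b'^*\beta_p(y'^*y)b\bigr)=\tau\bigl(bb'^*\beta_p(y'^*y)\bigr),
\]
whose mean is $\tau(bb'^*)\tau(y'^*y)$. This is again the product of the two inner products, by traciality of $\tau$. Sesquilinearity then gives isometry on the algebraic tensor product of these dense subspaces, and both maps extend by continuity. For the extension one uses the bound $\|F\|_m\le\|F\|_\infty$ together with the estimates $\|b\vartheta(x)\|_\infty\le\|b\|\,\|\xi_x\|_2$ and $\|\vartheta(x)b\|_\infty\le\|b\|\,\|\xi_x\|_2$ for bounded $b$.

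\emph{The map $I$.} The map $I$ is well defined on bounded fields. It satisfies $\|I(F)(p)\|_2=\|F(p^{-1})\|_2$, so inversion invariance of $m$ gives $\|I(F)\|_m=\|F\|_m$. Hence $I$ preserves null fields and descends to an isometry of $\cH_m$. It is an involution because $I^2F(p)=U^\beta_pU^\beta_{p^{-1}}F(p)=F(p)$, and therefore it is unitary.

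\emph{The identity $L(1\otimes V^*)S=IR(1\otimes V^*)$.} By continuity it suffices to test on $\widehat c\otimes\widehat\eta$ with $c,\eta\in B$. The left side is $L(\widehat\eta\otimes V^*\widehat c)$, which is the field $p\mapsto\eta\,\beta_p(c)$. On the right side, $R(\widehat c\otimes V^*\widehat\eta)$ is the field $p\mapsto\beta_p(\eta)c$. Applying $I$ to it gives
\[
p\mapsto\beta_p\bigl(\beta_{p^{-1}}(\eta)c\bigr)=\eta\,\beta_p(c).
\]
The two sides agree.

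The main subtlety is the approximation step. Here $\xi_x$ is only an $\rL^2$-vector, so products such as $\vartheta(x)b$ must be controlled through bounded approximants $\widehat y$. This uses the $\rL^\infty$ bounds above and the fact that $\|\cdot\|_m$ is dominated by the essential supremum of the $\rL^2$-norm.
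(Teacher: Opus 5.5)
Your proposal is correct and is essentially the paper's proof: you average the coefficients $p\mapsto\tau(c\beta_p(z))$ using ergodicity of $\beta$ and traciality to get isometry of the fields $p\mapsto b\beta_p(y)$ and $p\mapsto\beta_p(y)b$, extend using the uniform bound $\sup_p\|bU^\beta_p\xi\|_2\leq\|b\|\,\|\xi\|_2$, use inversion invariance of $m$ for $I$, and check the flip identity on $\widehat c\otimes\widehat\eta$. One point of phrasing: unitarity of $V$ gives density of $V^*\widehat B$ in $\rL^2(B^\dagger)$, not density of vectors $\widehat x$ with $x\in B^\dagger$ and $\xi_x\in\widehat B$, so you should do the computation as the paper does, for $L(1\otimes V^*)$ and $R(1\otimes V^*)$ on $B\odot B$, and then transport it using your $\rL^\infty$ bounds in the variable $\xi$ — which is what your closing remark already describes.
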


\begin{proof}
First define maps on $B\odot B$ by
\[
L_0(\widehat b\otimes\widehat c)(p)=b\beta_p(c),
\qquad
R_0(\widehat b\otimes\widehat c)(p)=\beta_p(c)b.
\]
For $z\in B$, the mean ergodic theorem and ergodicity give
\[
\frac1{\mu(F_i)}\int_{F_i}\beta_p(z)\,\rd p
\longrightarrow\tau(z)1
\quad\text{in }\rL^2(B).
\]
Consequently, for $b,c,d,e\in B$, traciality gives
\[
\begin{aligned}
\langle L_0(\widehat b\otimes\widehat c),
L_0(\widehat d\otimes\widehat e)\rangle_m
&=m\bigl(p\mapsto\tau(d^*b\beta_p(ce^*))\bigr)\\
&=\tau(d^*b)\tau(e^*c),\\
\langle R_0(\widehat b\otimes\widehat c),
R_0(\widehat d\otimes\widehat e)\rangle_m
&=m\bigl(p\mapsto\tau(bd^*\beta_p(e^*c))\bigr)\\
&=\tau(d^*b)\tau(e^*c).
\end{aligned}
\]
Thus $L_0$ and $R_0$ extend to isometries on
$\rL^2(B)\otimes\rL^2(B)$.
For $\xi\in\rL^2(B)$, approximation by vectors from $B$ and the bound
\[
\sup_p\|bU^\beta_p(\xi)\|_2\leq\|b\|\|\xi\|_2
\]
show that
\[
L_0(\widehat b\otimes\xi)
=[p\mapsto bU^\beta_p(\xi)],
\qquad
R_0(\widehat b\otimes\xi)
=[p\mapsto U^\beta_p(\xi)b].
\]
Since $V\widehat x=\xi_x$ and $V$ is unitary, these are precisely
$L_0=L(1\otimes V^*)$ and $R_0=R(1\otimes V^*)$.
Inversion invariance of $m$ and unitarity of $U^\beta_p$ show that
$I$ preserves the seminorm. Also $I^2=1$, so $I$ extends to a
unitary involution. Finally,
\[
IR_0(\widehat b\otimes\widehat c)(p)
=c\beta_p(b)
=L_0S(\widehat b\otimes\widehat c)(p).
\]
Density proves the stated identity.
\end{proof}

The next proposition uses Theorem~\ref{mean value commutation} which gives
\[
0=\|bx-xb\|_{2,\rho}
=\|\vartheta(bx)-\vartheta(xb)\|_m
\qquad(b\in B,\ x\in B^\dagger).
\]

Let $E_\alpha$ and $E_\beta$ be the spectral measures on $\rL^2(B)$
with the conventions
\[
U^\alpha_s=\int_{\widehat G}\overline{p(s)}\,\rd E_\alpha(p),
\qquad
U^\beta_q=\int_G\overline{q(t)}\,\rd E_\beta(t).
\]

\begin{lemma}\label{mean tensor commutation}
Let $\Omega$ be the unitary on
$\rL^2(B)\otimes\rL^2(B)$ defined by
\[
\Omega=\int_{\widehat G\times G}
p(t)\,\rd(E_\alpha\otimes E_\beta)(p,t),
\]
where $E_\alpha$ acts on the first tensor factor and $E_\beta$ on
the second. Then
\[
L(1\otimes V^*)=R(1\otimes V^*)\Omega^*.
\]
\end{lemma}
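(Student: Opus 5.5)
The plan is to prove the equivalent identity $L_0=R_0\Omega^*$, where $L_0=L(1\otimes V^*)$ and $R_0=R(1\otimes V^*)$ are the isometries from the proof of Lemma~\ref{mean coefficient isometries}. Thus
\[
L_0(\widehat b\otimes\xi)=[p\mapsto bU^\beta_p\xi],\qquad R_0(\widehat b\otimes\xi)=[p\mapsto U^\beta_p(\xi)b].
\]
Both sides are bounded operators into $\cH_m$, since they are isometries composed with a unitary. It therefore suffices to check the identity on a total set of vectors $\widehat b\otimes\xi_x$. Here $b\in B$ has compact $\alpha$-spectrum and $x\in B^\dagger$ has compact $\widehat\alpha$-spectrum (equivalently compact $\widehat{\alpha^\dagger}=\delta_{p^{-1}}$-spectrum). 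Such $b$ are $\|\cdot\|_2$-dense in $B$, and such $x$ are $\|\cdot\|_{2,\tau^\dagger}$-dense in $B^\dagger$. Because $V$ is unitary, the vectors $\widehat b\otimes\xi_x$ span a dense subspace.

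\textbf{Left side.} By left $M$-linearity of $\vartheta$ and Proposition~\ref{mean coefficient orbits}, $L_0(\widehat b\otimes\xi_x)=b\vartheta(x)=\vartheta(bx)$. Theorem~\ref{mean value commutation}, in the form recorded just before this lemma, gives $\vartheta(bx)=\vartheta(xb)$ in $\cH_m$.

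\textbf{Right side.} Compute $\vartheta(xb)$ with Proposition~\ref{mean coefficient right product}. Choose $h\in\mathcal D(G)$ equal to $1$ near $\Sp_{\widehat\alpha}(x)$, $k\in\mathcal D(\widehat G)$ equal to $1$ near $\Sp_\alpha(b)$, and $\Phi(t,r)=h(t)k(r)\overline{r(t)}$. With $F(p)=U^\beta_p\xi_x$ this gives
\[
\vartheta(xb)(p)=\int_{\widehat G\times G}\widehat\Phi(q,s)\,U^\beta_{pq^{-1}}(\xi_x)\,\alpha_{-s}(b)\,\rd q\,\rd s
=R_0\Bigl(\int\widehat\Phi(q,s)\,(U^\alpha_{-s}\otimes U^\beta_{q^{-1}})(\widehat b\otimes\xi_x)\,\rd q\,\rd s\Bigr)(p).
\]
Bringing $R_0$ outside the Bochner integral is justified by boundedness of $R_0$ and $\widehat\Phi\in\rL^1$. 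By the spectral conventions,
\[
U^\alpha_{-s}=\int p(s)\,\rd E_\alpha(p),\qquad U^\beta_{q^{-1}}=\int q(t)\,\rd E_\beta(t).
\]
Hence the operator inside is the joint spectral multiplier
\[
(p,t)\mapsto\int\widehat\Phi(q,s)\,p(s)\,q(t)\,\rd q\,\rd s,
\]
which by Fourier inversion equals $\Phi(t,p)=h(t)k(p)\overline{p(t)}$.

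\textbf{Spectral support.} The vector $\widehat b$ is supported by $E_\alpha$ in $\Sp_\alpha(b)$, where $k=1$. The vector $\xi_x$ is supported by $E_\beta$ in the image of $\Sp_{\widehat\alpha}(x)$ under the identification furnished by $V$, where $h=1$. For the latter, use the covariance $\vartheta(\widehat\alpha_q(x))(p)=\vartheta(x)(pq)$ together with $\vartheta(x)(p)=U^\beta_p\xi_x$: this shows that $V$ intertwines the dual action on $B^\dagger$ with $U^\beta$. Therefore the multiplier acts on $\widehat b\otimes\xi_x$ as $\int\overline{p(t)}\,\rd(E_\alpha\otimes E_\beta)=\Omega^*$. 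This yields $\vartheta(xb)=R_0\Omega^*(\widehat b\otimes\xi_x)$. Combined with the left-side computation, we get $L_0=R_0\Omega^*$ on a dense set, hence everywhere.

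\textbf{Main obstacle.} The main difficulty is bookkeeping with the sign and inverse conventions. One must check carefully three things: the direction of the $\beta$-spectrum of $\xi_x$ relative to $\Sp_{\widehat\alpha}(x)$ (via $\delta_p=\widetilde\beta_p\widehat\alpha_{p^{-1}}$); that Fourier inversion produces $\overline{p(t)}$ rather than $p(t)$; and that the cutoffs $h$, $k$ sit on the correct spectral sets. A mismatch there would turn $\Omega^*$ into $\Omega$. I would verify it first on the elementary test case $x=cu_t$ used in the proof of Proposition~\ref{mean coefficient right product}.
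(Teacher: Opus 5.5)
Your proposal is correct and follows the paper's proof essentially line by line: you pass to $L_0=L(1\otimes V^*)$ and $R_0=R(1\otimes V^*)$, test on $\widehat b\otimes\xi_x$ with compact spectra, and obtain $L_0(\widehat b\otimes\xi_x)=\vartheta(bx)$ and $R_0\Omega^*(\widehat b\otimes\xi_x)=\vartheta(xb)$ from Proposition~\ref{mean coefficient right product} and the $E_\beta$-support of $\xi_x$; you then conclude with Theorem~\ref{mean value commutation} and density. The only slip is your parenthetical ``equivalently compact $\widehat{\alpha^\dagger}$-spectrum'': $\widehat{\alpha^\dagger}_p=\delta_{p^{-1}}$ is trivial on $B^\dagger$, and it is $\widetilde\beta$ that agrees with $\widehat\alpha$ there. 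This does not affect the argument, since you never use that remark.
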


\begin{proof}
Use $L_0=L(1\otimes V^*)$ and $R_0=R(1\otimes V^*)$.
Take $b\in B$ with compact $\alpha$-spectrum and
$x\in B^\dagger$ with compact $\widehat\alpha$-spectrum.
The covariance of the Plancherel function gives
\[
\xi_{\widehat\alpha_q(x)}=U^\beta_q\xi_x,
\]
so the $\beta$-spectrum of $\xi_x$ is contained in
$\Sp_{\widehat\alpha}(x)$.
Choose $h\in\cD(G)$ and $k\in\cD(\widehat G)$ equal to $1$ near
$\Sp_{\widehat\alpha}(x)$ and $\Sp_\alpha(b)$, respectively, and put
\[
\Phi(t,p)=h(t)k(p)\overline{p(t)}.
\]
Fourier inversion and spectral calculus give the Bochner integral
\[
\Omega^*(\widehat b\otimes\xi_x)
=\int_{\widehat G\times G}\widehat\Phi(q,s)
U^\alpha_{-s}\widehat b\otimes U^\beta_{q^{-1}}\xi_x
\,\rd q\,\rd s.
\]
Applying $R_0$ and using the right multiplication formula for
$\vartheta$ yields
\[
\begin{aligned}
R_0\Omega^*(\widehat b\otimes\xi_x)
&=\left[p\mapsto
\int_{\widehat G\times G}\widehat\Phi(q,s)
U^\beta_{pq^{-1}}\xi_x\,\alpha_{-s}(b)
\,\rd q\,\rd s\right]\\
&=\vartheta(xb).
\end{aligned}
\]
These integrals converge uniformly in the $\rL^2(B)$ norm, since
the integrand norm is bounded by
$|\widehat\Phi(q,s)|\|b\|\|\xi_x\|_2$.
On the other hand,
$L_0(\widehat b\otimes\xi_x)=\vartheta(bx)$.
Theorem~\ref{mean value commutation} therefore gives equality of
these vectors in $\cH_m$.
The action $\widehat\alpha$ preserves $B^\dagger$, and smoothing
by functions whose Fourier transforms have compact support makes
such $x$ dense in $\rL^2(B^\dagger)$.
The analogous smoothing makes the allowed $b$ dense in $\rL^2(B)$.
Since $V$ is onto, the tensors under consideration span a dense
subspace. The identity follows by continuity.
\end{proof}

\begin{lemma} \label{resonance for bicentralizers}
Let $\alpha : G \curvearrowright M$ be a continuous action of a locally compact abelian group $G$ on a $\II_1$ factor $M$. Suppose that $\Gamma(\alpha)=\widehat{G}$. Let $$\Sigma =\Sp(\alpha|_B\times\beta)=\Gamma(\alpha|_B \times \beta).$$
For all $(p,t),(p',t')\in\Sigma$, we have
\begin{equation}\label{mean symmetric multiplier}
p(t')p'(t)=1.
\end{equation}
In particular, $p(t)^2= 1$ for all $(p,t) \in \Sigma$ and $\Sigma_0=\{ (p,t) \in \Sigma \mid p(t)=1\}$ is a subgroup of $\Sigma$ of index at most 2.
\end{lemma}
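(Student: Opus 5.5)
The plan is to read the resonance relation off the operator identity of Lemma~\ref{mean tensor commutation}, combined with the involution identity of Lemma~\ref{mean coefficient isometries}. From Lemma~\ref{mean coefficient isometries} we have $L_0S=IR_0$, where $L_0=L(1\otimes V^*)$ and $R_0=R(1\otimes V^*)$. From Lemma~\ref{mean tensor commutation} we have $L_0=R_0\Omega^*$. Since $S^2=1$, these combine into
\[
R_0\Omega^* S = IR_0, \qquad\text{hence}\qquad R_0\Omega^*S\Omega^*S=IR_0\Omega^*S=I^2R_0=R_0 .
\]
Here the middle equality applies the first identity to the vector $\Omega^*S\xi$, in the form $IR_0\Omega^*S = I(IR_0)$. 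Because $R_0$ is an isometry, it follows that $\Omega^*S\Omega^*S=1$ on $\rL^2(B)\otimes\rL^2(B)$. Equivalently,
\[
S\Omega S=\Omega^* .
\]

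Next I would compute $S\Omega S$ spectrally. $\Omega$ is the multiplication operator by $p(t)$ in the joint spectral picture, with $E_\alpha$ on the first leg and $E_\beta$ on the second. Conjugating by the flip gives the multiplication by $p(t)$ with $E_\beta$ on the first leg and $E_\alpha$ on the second. To compare the two, I would pass to the joint spectral measure $E=E_{\alpha\times\beta}$ of the commuting pair $U^\alpha|_{\rL^2(B)}$, $U^\beta$ on $\rL^2(B)$, which lives on $\widehat G\times G$.

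On $\rL^2(B)\otimes\rL^2(B)$, the joint measure $E\otimes E$ lives on pairs $((p,t),(p',t'))$. In these coordinates $\Omega$ is multiplication by $p(t')$, and $S\Omega S$ is multiplication by $p'(t)$. The identity $S\Omega S=\Omega^*$ therefore says that $p(t')p'(t)=1$ holds $E\otimes E$-almost everywhere. Since the function is continuous, it holds on $\supp(E)\times\supp(E)$. The support of $E$ is the spectrum of the unitary representation $U^{\alpha}\times U^\beta$ on $\rL^2(B)$, which coincides with $\Sp(\alpha|_B\times\beta)=\Sigma$ for the trace-preserving action. This is the standard identification of the Arveson spectrum with the $\rL^2$ spectrum, using density of $B$ in $\rL^2(B)$ and faithfulness of $\tau$.

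The main point to check carefully is this identification of $\supp(E)$ with $\Sigma$, together with the sign and duality conventions relating $U^\alpha$, $U^\beta$ and the pairing $\widehat{G\times\widehat G}\cong\widehat G\times G$. If a convention flips a character to its conjugate, the relation becomes $\overline{p(t')p'(t)}=1$, which is the same condition.

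Finally, set $(p',t')=(p,t)$ to get $p(t)^2=1$. The map $(p,t)\mapsto p(t)\in\{\pm1\}$ is a homomorphism on $\Sigma$, since
\[
(pp')(t+t')=p(t)p'(t')\,p(t')p'(t)=p(t)p'(t')
\]
by the relation. Its kernel $\Sigma_0$ therefore has index at most $2$. That $\Sigma$ is a group, equal to $\Gamma$, follows from ergodicity of $\alpha|_B\times\beta$ (which holds since $\beta$ is already ergodic on $B$) together with \eqref{eq:ergodic-spectrum-kernel}.
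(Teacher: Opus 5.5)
Your proposal is correct and matches the paper's proof. You combine $L_0S=IR_0$ and $L_0=R_0\Omega^*$ with $I^2=1$ to get $\Omega(S\Omega S)=1$. The only difference is cosmetic: you cancel the isometry $R_0$, whereas the paper first derives $IL_0=R_0S$ and then cancels $L_0$. From there you read off $p(t')p'(t)=1$ on $\supp(E)\times\supp(E)=\Sigma\times\Sigma$ by the joint spectral calculus, and obtain the index-two statement from the character $(p,t)\mapsto p(t)$, exactly as the paper does.
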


\begin{proof}
Put $L_0=L(1\otimes V^*)$ and $R_0=R(1\otimes V^*)$.
Propositions~\ref{mean coefficient isometries} and~\ref{mean tensor commutation} give
$L_0S=IR_0$ and $L_0=R_0\Omega^*$.
Since $I^2=1$, the first identity also gives $IL_0=R_0S$.
Consequently,
\[
L_0S\Omega^*=IR_0\Omega^*=IL_0
=R_0S=L_0\Omega S.
\]
Injectivity of $L_0$ yields $S\Omega^*=\Omega S$, hence
\[
\Omega(S\Omega S)=1.
\]
Let $E$ be the joint spectral measure of $U^\alpha\times U^\beta$
on $\rL^2(B)$ and put $\Sigma=\Sp(\alpha|_B\times\beta)$.
Faithfulness of $\tau$ identifies $\Sigma$ with the support of $E$.
The operator $\Omega(S\Omega S)$ is the image of the continuous
function
\[
((p,t),(p',t'))\longmapsto p(t')p'(t)
\]
under the spectral calculus for $E\otimes E$.
The support of $E\otimes E$ is $\Sigma\times\Sigma$.
Its multiplier can therefore equal $1$ only if
$p(t')p'(t)=1$ at every point of $\Sigma\times\Sigma$.
Taking $(p',t')=(p,t)$ gives the last assertion.

By ergodicity of $\alpha\times\beta$ and
\eqref{eq:ergodic-spectrum-kernel}, $\Sigma$ is a closed subgroup of $\widehat{G} \times G$.
Equation~\eqref{mean symmetric multiplier} gives
\[
(pp')(t+t')=p(t)p'(t'),
\]
so $d(p,t)=p(t)$ is a continuous character
$d:\Sigma\to\{1,-1\}$. Its kernel $\Sigma_0$ lies in
$\mathcal R_G$ and has index at most two.
\end{proof}

\begin{theorem}\label{bicentralizer conjecture for flows}
Let $\alpha : \R \curvearrowright M$ be a continuous flow on a $\II_1$ factor $M$ with $\Gamma(\alpha)=\R$.  Then we have
\[
\rB(M,\alpha)=\rb(M,\alpha).
\]
and $\alpha \times \beta$ is almost periodic on $\rB(M,\alpha)$. In particular, if $\alpha$ is outer, then $\rB(M,\alpha)=\C1$.
\end{theorem}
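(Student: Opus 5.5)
We combine Lemma~\ref{resonance for bicentralizers} with the geometry of $\R^2$, following the proof of Corollary~\ref{main relative commutant dichotomy}.

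\textbf{Step 1: the spectrum is discrete or lies on an axis.} Put $B=\rB(M,\alpha)$ and $\gamma=\alpha|_B\times\beta$. Since $M$ is a factor, $\beta$ is ergodic by Theorem~\ref{fixed points relative dual bicentralizer compressed}, so $\gamma$ is ergodic. By \eqref{eq:ergodic-spectrum-kernel}, $\Sigma=\Sp(\gamma)$ is then a closed subgroup of $\R^2$. Lemma~\ref{resonance for bicentralizers} gives an index $\leq 2$ subgroup $\Sigma_0\subset\mathcal R_\R$. This subgroup is closed, being the kernel of a continuous character. Proposition~\ref{resonant group for R} therefore shows that $\Sigma_0$ is discrete or contained in a coordinate axis.

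If $\Sigma_0$ is discrete, then $\Sigma$ is discrete, since $\Sigma_0$ has finite index. Suppose instead that $\Sigma_0\subset\R\times\{0\}$ and is nondiscrete. Then $\Sigma_0$ contains points $(p_n,0)$ with $p_n\to0$ and $p_n\neq 0$. Applying \eqref{mean symmetric multiplier} to $(p,t)\in\Sigma$ and $(p_n,0)$ gives $e^{\ri p_nt}=1$ for all $n$, hence $t=0$. So $\Sigma\subset\R\times\{0\}$, and by symmetry the same argument handles the other axis. We thus have three cases: $\Sigma$ discrete, $\Sigma\subset\R\times\{0\}$, or $\Sigma\subset\{0\}\times\R$.

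\textbf{Step 2: the axis cases.} If $\Sigma\subset\R\times\{0\}$, the coordinate restriction formula \eqref{eq:coordinate-restriction} together with \eqref{eq:one-point-spectrum} shows that $\beta$ acts trivially on $B$. Then $B=B^\beta=\C1$ by Theorem~\ref{fixed points relative dual bicentralizer compressed}.

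If $\Sigma\subset\{0\}\times\R$, the same formulas give $B\subset M^\alpha$. Hence $B=B\cap M^\alpha=\rb(M,\alpha)\cap M^\alpha$ by Theorem~\ref{fixed point analytic algebraic bicentralizer}, and this equals $\cZ(M)^\alpha=\C1$ by Proposition~\ref{strong center bicentralizer}.

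In both cases $\rB(M,\alpha)=\C1=\rb(M,\alpha)$ and $\gamma$ is trivially almost periodic.

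\textbf{Step 3: the discrete case.} When $\Sigma$ is discrete, we need to show that $\gamma$ has pure point spectrum on $B$, with eigenoperators generating $B$. This is the main obstacle, because $B$ carries no injective distributional map like $\mathcal J$. We plan to work with the Koopman representation instead.

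The joint spectral measure $E$ of $U^\alpha\times U^\beta$ on $\rL^2(B)$ has support $\Sigma$. Since $\Sigma$ is discrete, $\rL^2(B)$ is the orthogonal sum of the joint eigenspaces $E(\{(p,t)\})\rL^2(B)$. Each eigenspace is nonzero exactly for $(p,t)\in\Sigma$.

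Next we realize these eigenvectors as operators. Smoothing an element $b\in B$ by $\gamma_f$, with $\widehat f$ supported near a single point of $\Sigma$, gives a bounded element of $B$ whose vector lies in one eigenspace. These elements are dense in $\rL^2(B)$, so $\gamma$ is almost periodic and $B$ is generated by eigenoperators, as in Proposition~\ref{equivariant distribution spectrum}(ii).

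An eigenoperator $x$ at $(p,t)$ satisfies $\beta_q(x)=e^{-\ri qt}x$ for all $q$. Theorem~\ref{LCA bicentralizer eigenvector intertwining compressed} then gives $xy=\alpha_t(y)x$ for $y\in M$. Hence $x\in\rb(M,\alpha)$ by Proposition~\ref{algebraic bicentralizer intertwiners}, and therefore $B=\rb(M,\alpha)$.

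\textbf{Outer case.} Suppose $\alpha$ is outer and $\Sigma$ is discrete. By Corollary~\ref{LCA outer iff dual bicentralizer weakly mixing compressed}, $\beta$ is weakly mixing. Since $\gamma$ is almost periodic, every eigenoperator has $t=0$. As in Step 2 this places it in $B\cap M^\alpha$, which is scalar, so $\rB(M,\alpha)=\C1$.
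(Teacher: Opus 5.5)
Your proof is correct and follows the paper's argument. The resonance lemma together with Proposition~\ref{resonant group for R} reduces everything to three cases. The axis cases are handled by Theorem~\ref{fixed points relative dual bicentralizer compressed}, Theorem~\ref{fixed point analytic algebraic bicentralizer} and Proposition~\ref{strong center bicentralizer}, and the discrete case by Theorem~\ref{LCA bicentralizer eigenvector intertwining compressed}. The only differences are that you pass through the index-two subgroup $\Sigma_0$ and the pairwise relation, where the paper applies Proposition~\ref{resonant group for R} directly to $2\Sigma$, and that you spell out almost periodicity via the Koopman representation.

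There is one small slip in the outer case. An eigenoperator with $t=0$ is $\beta$-fixed, so it is scalar because $B^\beta=\C1$, not because it lies in $B\cap M^\alpha$. Also, with your sign convention the intertwining relation should read $xy=\alpha_{-t}(y)x$. Neither point affects the conclusion.
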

\begin{proof}
Put $B=\rB(M,\alpha)$ and
$\Sigma=\Sp(\alpha|_B\times\beta)\subset\R^2$.
Lemma~\ref{resonance for bicentralizers} shows that $\Sigma$ is a
closed subgroup and that $2\Sigma\subset\mathcal R_\R$.
Multiplication by two is a homeomorphism of $\R^2$, so $2\Sigma$
is closed. Proposition~\ref{resonant group for R} therefore implies
that $\Sigma$ is discrete or is contained in one of the two coordinate
axes. We treat all three possibilities.

If $\Sigma\subset\R\times\{0\}$, then $\beta$ is trivial on $B$.
Theorem~\ref{fixed points relative dual bicentralizer compressed} and
factoriality of $M$ give $B=B^\beta=\C1$. If
$\Sigma\subset\{0\}\times\R$, then $\alpha$ is trivial on $B$, and
Theorem~\ref{fixed point analytic algebraic bicentralizer} and Proposition \ref{strong center bicentralizer} give
\[
B=B\cap M^\alpha
=\rb(M,\alpha)\cap M^\alpha= \C.
\]
In the remaining case, the joint spectrum is discrete, hence $\alpha|_B \times \beta$ is almost periodic.
Theorem~\ref{LCA bicentralizer eigenvector intertwining compressed}
then gives $B\subset\rb(M,\alpha)$ and if $\alpha$ is outer, we must have $B=\C$.
\end{proof}

\section{Connes' bicentralizer problem}
\label{sec:connes-diagonal-averaging}

We prove Connes' bicentralizer conjecture for type $\III_1$ factors with
separable predual by adapting the preceding resonance argument to modular
theory. The proof reduces to ruling out a type $\III_1$ factor equal to its
own bicentralizer. In this case, the modular flow and the bicentralizer
flow commute and are weakly mixing. A resonance restriction on their
joint spectrum will contradict these properties.

We replace the singular tracial state by a singular KMS state on the
continuous core. The Plancherel function again identifies a second
crossed-product base, and the relative fixed point theorem gives
commutation in the GNS representation. The KMS identity controls the
modular terms that replace tracial identities. Comparing left and right
multiplication then gives the same spectral restriction as in the tracial
case, and the geometry of the resonance set completes the proof.

Let $M_0$ be a type $\III_1$ factor with separable predual and let
$\varphi_0$ be a faithful normal state. Put
$B=\rB(M_0,\varphi_0)$ and $\psi=\varphi_0|_B$.
By \cite[discussion preceding Theorem~B]{AHHM20}, if $B\ne\C 1$,
then $B$ is a type $\III_1$ factor and $\rB(B,\psi)=B$.
It therefore suffices to rule out this case. We rename $(B,\psi)$
as $(M,\varphi)$ and assume throughout the argument that
\begin{equation}\label{connes self bicentralizing reduction}
\rB(M,\varphi)=M.
\end{equation}

We retain $\sigma^\varphi$ for the modular flow and write
$\beta^\varphi:\R\curvearrowright M$ for the bicentralizer flow
in additive notation. Our parameter is the logarithm of the usual
positive parameter used in \cite{AHHM20, Ma20, Ma25, Ma26}. We use this convention throughout the section,
also for relative bicentralizer flows and their unitary implementations.

The reduction makes the bicentralizer flow act on the whole working
algebra. In particular, the proof of Lemma~\ref{connes scaled smoothing}
applies $\beta^\varphi_{-p}$ to the approximating elements $a_n\in M$
to obtain an estimate uniform in $p$.

We use
\[
\|a\|_\varphi=\|a\varphi^{1/2}\|_2,
\qquad
\|a\|_\varphi^\#=
\bigl(\varphi(a^*a)+\varphi(aa^*)\bigr)^{1/2}.
\]

For an inclusion with expectation $M\subset N$, where $N$ is
$\sigma$-finite, the relative bicentralizer flow is characterized by
\begin{equation}\label{connes scaled characterization}
\|a_n\varphi-e^q\varphi a_n\|\longrightarrow0
\quad\Longrightarrow\quad
a_nx-\beta^{\varphi,N}_q(x)a_n\longrightarrow0
\quad\text{strongly}^*
\end{equation}
for bounded sequences $(a_n)_n$ in $M$ and
$x\in\rB(M\subset N,\varphi)$
\cite[Theorem~A(ii)]{AHHM20}.
Here $(a\varphi)(b)=\varphi(ba)$ and
$(\varphi a)(b)=\varphi(ab)$.
The relative fixed point theorem \cite{Ma26} gives
\begin{equation}\label{connes relative fixed points}
\rB(M\subset N,\varphi)^{\beta^{\varphi,N}}=M'\cap N
\end{equation}
\cite[Theorem~C]{Ma26}. In particular, we have $M^{\beta^\varphi}=\C1$. We also have $M^\varphi=\C$. Moreover both actions are weakly mixing. See \cite[Theorem~A(iv)]{AHHM20} and \cite{Ma20}.

\subsection{A singular KMS state on the continuous core}
\label{subsec:connes-singular-state}

Let $(u_t)_{t\in\R}$ be the canonical unitaries in
$M\rtimes_{\sigma^\varphi}\R$. As in
Subsection~\ref{subsec:plancherel-function}, write
\[
\lambda(f)=\int_\R f(t)u_t\,\rd t,
\qquad
\widehat\lambda:\rL^\infty(\R)\longrightarrow\rL(\R),
\qquad
\widehat\lambda(\widehat f)=\lambda(f).
\]
Our Fourier convention is
$\widehat f(p)=\int_\R f(t)e^{-\ri pt}\,\rd t$, with inverse
measure $\rd p/(2\pi)$. In particular, $u_t(p)=e^{-\ri pt}$ and
\[
\widehat{\sigma^\varphi}_q
   \bigl(\widehat\lambda(k)\bigr)
=\widehat\lambda\bigl(p\mapsto k(p+q)\bigr).
\]
We also write $\sigma_t^\varphi$ for its canonical extension
$\Ad(u_t)$ to the crossed product. The automorphism
$\beta^\varphi_s$ extends by fixing every $u_t$, and we denote
this extension by $\widetilde\beta^\varphi_s$. These extensions commute
with the dual action.

Crossing the $\sigma^\varphi$-equivariant expectation
$\varphi:M\to\C$ with $\R$ gives the faithful normal
conditional expectation
\[
\rE_{\rL(\R)}:M\rtimes_{\sigma^\varphi}\R\longrightarrow\rL(\R),
\qquad
\rE_{\rL(\R)}(au_t)=\varphi(a)u_t.
\]
We identify its range with $\rL^\infty(\R)$ through
$\widehat\lambda$. The dual weight and the canonical trace are
normalized by
\begin{equation}\label{connes core trace formula}
\begin{aligned}
\widetilde\varphi(x)
 &=\int_\R\rE_{\rL(\R)}(x)(p)\,\frac{\rd p}{2\pi},\\
\Tr(x)
 &=\int_\R e^p\rE_{\rL(\R)}(x)(p)\,\frac{\rd p}{2\pi}
\end{aligned}
\qquad(x\in(M\rtimes_{\sigma^\varphi}\R)_+).
\end{equation}
Thus $\widetilde\varphi=\varphi\circ\rT_M$ and its modular group
is the extension $\sigma^\varphi=\Ad(u_t)$.

Fix a free ultrafilter $\mathcal V$ on $\N$. For integers $T\geq1$, put
\begin{equation}\label{connes invariant mean}
m_T(f)=\frac1{2T}\int_{-T}^T f(p)\,\rd p,
\qquad
m(f)=\lim_{T\to\mathcal V}m_T(f).
\end{equation}
The mean $m$ is invariant under translations and reflection.
Define normal states $\rho_T$ and a state $\rho$ by
\begin{equation}\label{connes weighted states}
\rho_T=m_T\circ\rE_{\rL(\R)},
\qquad
\rho=m\circ\rE_{\rL(\R)}.
\end{equation}
Their restrictions to $M$ are $\varphi$. We write
$\|x\|_{2,\rho}=\rho(x^*x)^{1/2}$.

Let $D$ be the self-adjoint generator determined by
$u_t=e^{\ri tD}$, and put $e_T=1_{[-T,T]}(D)$.
The Fourier coordinate is $p=-D$, so
\eqref{connes core trace formula} gives
\[
\rho_T(x)=\frac\pi T
\Tr\bigl(e_Te^{D/2}xe^{D/2}e_T\bigr).
\]
Thus the density $e^D$ cancels the spectral weight in the trace.
Write $(M\rtimes_{\sigma^\varphi}\R)_c$ for the elements with a
norm-continuous $\sigma^\varphi$-orbit.

\begin{proposition}\label{connes KMS realization}
The restriction of $\rho$ to
$(M\rtimes_{\sigma^\varphi}\R)_c$ is a KMS state for
$\sigma^\varphi$. More precisely, if $x$ has compact
$\sigma^\varphi$-spectrum and $y\in M\rtimes_{\sigma^\varphi}\R$,
then
\begin{equation}\label{connes KMS identity}
\rho(xy)=\rho\bigl(y\sigma_{-\ri}^\varphi(x)\bigr).
\end{equation}
\end{proposition}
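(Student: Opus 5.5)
We follow the proof of Proposition~\ref{mean finite tracial realization}, replacing traciality of $\widetilde\tau$ by the trace $\Tr$ together with the density $e^{D}$ of $\widetilde\varphi$ with respect to $\Tr$. The starting point is the formula
\[
\rho_T(z)=\frac\pi T\Tr\bigl(e_Te^{D/2}ze^{D/2}e_T\bigr).
\]
We first prove the KMS identity \eqref{connes KMS identity} at the level of $\rho_T$, up to an error that vanishes as $T\to\infty$, and only then pass to the ultralimit along $\mathcal V$.

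Let $x$ have $\sigma^\varphi$-spectrum in a compact set $K\subset[-k,k]$. Then $x$ is entire, and $\sigma^\varphi_{-\ri}(x)=e^{D}xe^{-D}$ holds on the appropriate domain. Since $\Ad(u_t)=\Ad(e^{\ri tD})$, spectral transfer gives
\[
1_{S}(D)\,x=x\,1_{S+K'}(D)
\]
for a reflected copy $K'$ of $K$. In particular $xe_T=e_{T+k}xe_T$ and $e_Tx=e_Txe_{T+k}$, so every product below involves only bounded functions of $D$ on bounded spectral windows.

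For $y\in M\rtimes_{\sigma^\varphi}\R$ we write
\[
\Tr\bigl(e_Te^{D/2}xye^{D/2}e_T\bigr)
=\Tr\bigl(e_Te^{D/2}x\,e_{T+k}e^{-D/2}\cdot e^{D/2}ye^{D/2}e_T\bigr).
\]
Cyclicity of $\Tr$ is legitimate here because all factors are $\Tr$-finite after the cutoffs: $e_Te^{D}e_T$ is bounded and $\Tr(e_T)<\infty$. Moving $e_Te^{D/2}x e_{T+k}e^{-D/2}$ to the right and using $e^{D/2}\cdots e^{-D/2}$ to produce $\sigma^\varphi_{-\ri}(x)$ turns this into
\[
\Tr\bigl(e_{T+k}e^{D/2}y\,\sigma^\varphi_{-\ri}(x)e^{D/2}e_{T+k}\bigr)
\]
plus boundary terms. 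These boundary terms are supported on $e_{T+k}-e_{T}$ or $e_T-e_{T-k}$ on one side. The mismatch between the window $[-T,T]$ and the window shifted by $K$ therefore contributes terms of the form
\[
\frac{\pi}{T}\Tr\bigl((e_{T+k}-e_{T-k})e^{D/2}(\cdots)e^{D/2}(e_{T+k}-e_{T-k})\bigr),
\]
with a possibly different spectral cutoff on the other side.

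We bound these boundary terms by Cauchy--Schwarz for $\Tr$, exactly as in \eqref{mean cut commutators}. The key input is
\[
\frac{\pi}{T}\Tr\bigl(f\,e^{D}\bigr)=m_T\text{-mass of }\operatorname{supp} f,
\]
which for $f=(e_{T+k}-e_{T-k})$ equals $(\text{const})\,k/T$, because $\Tr(1_S(D)e^{D})=\widetilde\varphi(1_S(D))=\mu(S)/2\pi$ by \eqref{connes core trace formula}. The factors $\|x\|$, $\|y\|$ and $\|\sigma^\varphi_{-\ri}(x)\|\le e^{k}\|x\|$ are bounded independently of $T$. Hence the error is $O(\sqrt{k/T}\cdot\sqrt{1})\,\|x\|e^{k}\|y\|$, more precisely $O(k/T)$ after normalization, and it tends to zero. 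Taking the ultralimit in $T$ gives \eqref{connes KMS identity}.

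Finally, elements of compact $\sigma^\varphi$-spectrum are norm dense in $(M\rtimes_{\sigma^\varphi}\R)_c$. By the standard characterization, the identity \eqref{connes KMS identity} on this dense $\sigma^\varphi$-invariant $*$-subalgebra of entire elements implies that $\rho$ restricted to the continuous part is $\sigma^\varphi$-KMS at inverse temperature $1$. Invariance under $\sigma^\varphi$ also follows directly from the Følner property of $m$.

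The main obstacle is the domain bookkeeping in the cyclicity step: $e^{D/2}$ is unbounded, so cyclicity must be applied only to $\Tr$-Hilbert–Schmidt factors. To keep the spectral windows bounded throughout, we first take $y$ to have compact $\widehat\alpha$-spectrum and extend to general $y$ by density, or else insert the projections $e_{T\pm k}$ explicitly before using cyclicity.
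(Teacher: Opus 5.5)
Your argument is correct and is essentially the paper's: you compress by the spectral windows $e_T$, use cyclicity of $\Tr$ against the density $e^{D}$ on a finite window, and kill the boundary windows of $\rho_T$-mass $O(L/T)$ by Cauchy--Schwarz before passing to the ultralimit. The paper packages the middle step as the KMS condition of the faithful normal state $\rho_T$ on the corner $e_T(M\rtimes_{\sigma^\varphi}\R)e_T$, whose density $(\pi/T)e_Te^D$ has modular group $\Ad(u_t)$, and this is exactly your trace computation.

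The remaining slips are cosmetic. Spectral transfer should read $1_S(D)x=1_S(D)x1_{S+K'}(D)$, and the Cauchy--Schwarz error is $O(\sqrt{k/T})$ rather than $O(k/T)$. The optional density argument in $y$ is unnecessary, and it would not reach arbitrary $y$: compact-spectrum elements are not norm dense in the whole crossed product, and $\rho$ is not normal. Inserting $e_{T\pm k}$ explicitly, as you do in the main computation, already handles every bounded $y$.
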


\begin{proof}
The densities $e_Te^D$ commute with the unitaries $u_t$, so
$\rho_T$ and $\rho$ are $\sigma^\varphi$-invariant.
Suppose that $\Sp_{\sigma^\varphi}(x)\subset[-L,L]$ and $T>L$.
Spectral transfer gives
\[
e_Tx(1-e_T)x^*e_T
\leq\|x\|^2 1_{[-T,-T+L]\cup[T-L,T]}(D),
\]
and the same estimate holds with $x$ replaced by $x^*$.
The state $\rho_T$ assigns mass $L/T$ to the displayed boundary
projection. Since it is supported on $e_T$, Cauchy--Schwarz gives
\begin{equation}\label{connes KMS boundary estimate}
\bigl|\rho_T(xy)-\rho_T(e_Txe_Tye_T)\bigr|
\leq\|x\|\|y\|\sqrt{L/T}.
\end{equation}

On the corner $e_T(M\rtimes_{\sigma^\varphi}\R)e_T$, the
faithful normal state $\rho_T$ has density $(\pi/T)e_Te^D$.
Its modular group is therefore the restriction of
$\sigma^\varphi$. The element $e_Txe_T$ is entire analytic, and
the KMS identity on this corner gives
\[
\rho_T(e_Txe_Tye_T)
=\rho_T\bigl(e_Tye_T\sigma_{-\ri}^\varphi(x)e_T\bigr).
\]
The second boundary estimate, applied to
$\sigma_{-\ri}^\varphi(x)$, bounds the difference between this
last expression and $\rho_T(y\sigma_{-\ri}^\varphi(x))$ by
$\|y\|\|\sigma_{-\ri}^\varphi(x)\|\sqrt{L/T}$.
Taking the ultralimit proves \eqref{connes KMS identity}.
Compact-spectrum elements form a norm dense invariant analytic
$*$-subalgebra of $(M\rtimes_{\sigma^\varphi}\R)_c$, which proves
the KMS assertion.
\end{proof}

\subsection{The Plancherel function}
\label{subsec:connes-plancherel-function}

We use the standard left and right actions of $M$ on
$\rL^2(M,\varphi)$, with inner product linear in the first variable.
For an entire analytic element $a\in M$, the right action satisfies
\begin{equation}\label{connes modular right action}
\varphi^{1/2}\sigma_{\ri/2}^\varphi(a)=a\varphi^{1/2}.
\end{equation}
This identity accounts for the modular translate in the product
formula below.

\begin{proposition}\label{connes modular Plancherel}
There is a unique weak*-weak* continuous linear map
\[
\vartheta:M\rtimes_{\sigma^\varphi}\R
\longrightarrow\rL^\infty(\R,\rL^2(M,\varphi))
\]
such that
\[
\vartheta(a\widehat\lambda(k))(p)=k(p)a\varphi^{1/2}
\qquad(a\in M,\ k\in\rL^\infty(\R)).
\]
We call $\vartheta(x)$ the Plancherel function of $x$.
For $x,y\in M\rtimes_{\sigma^\varphi}\R$, it satisfies
\[
\langle\vartheta(x),\vartheta(y)\rangle
=\rE_{\rL(\R)}(y^*x),
\qquad
\|\vartheta(x)\|_\infty\leq\|x\|.
\]
In particular, $\vartheta$ is injective and
\begin{equation}\label{connes Plancherel mean identity}
\|x\|_{2,\rho}^2
=m\bigl(p\mapsto\|\vartheta(x)(p)\|_2^2\bigr).
\end{equation}
It agrees with the Plancherel identification
\[
\rL^2(M\rtimes_{\sigma^\varphi}\R,\widetilde\varphi)
\longrightarrow
\rL^2\bigl(\R,\rL^2(M,\varphi),\rd p/(2\pi)\bigr)
\]
on bounded square-integrable elements. It also satisfies
\begin{equation}\label{connes Plancherel covariance}
\begin{aligned}
\vartheta(ax)&=a\vartheta(x),\\
\vartheta(x\widehat\lambda(k))&=k\vartheta(x),\\
\vartheta(\widehat{\sigma^\varphi}_q(x))(p)
 &=\vartheta(x)(p+q),\\
\vartheta(\sigma_t^\varphi(x))(p)
 &=\Delta_\varphi^{\ri t}\vartheta(x)(p),\\
\vartheta(\widetilde\beta^\varphi_s(x))(p)
 &=U^{\beta^\varphi}_s\vartheta(x)(p).
\end{aligned}
\end{equation}
Here $U^{\beta^\varphi}_s(a\varphi^{1/2})
=\beta^\varphi_s(a)\varphi^{1/2}$.
\end{proposition}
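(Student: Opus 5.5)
We follow the proof of Proposition~\ref{mean coefficient map} almost verbatim, replacing the trace $\tau$ by $\varphi$ and the tracial vector $\widehat a$ by $a\varphi^{1/2}$.

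\textbf{Construction of $\vartheta$.} For $k\in\rL^1(\R)$ and $a\in M$, define a normal functional on the core by
\[
\omega(k,a)(x)=\int_\R k(p)\,\rE_{\rL(\R)}(ax)(p)\,\frac{\rd p}{2\pi}.
\]
The conditional Cauchy--Schwarz inequality gives
\[
\|\rE_{\rL(\R)}(ax)\|_\infty\leq\varphi(aa^*)^{1/2}\|x\|,
\]
and $\varphi(aa^*)^{1/2}$ is the norm of $a^*$ in $\rL^2(M,\varphi)$. Hence $\omega$ extends to a contraction
\[
\rL^1(\R,\rL^2(M,\varphi))\longrightarrow(M\rtimes_{\sigma^\varphi}\R)_*,
\]
and we set $\vartheta=\omega^*$. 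This map is weak*-weak* continuous and $\|\vartheta(x)\|_\infty\leq\|x\|$. The normalization must be matched with the pairing: we pair $\vartheta(x)(p)$ against $a^*\varphi^{1/2}$, or against its $J$-image depending on the convention. Testing on $x=b\widehat\lambda(k)$ then gives
\[
\rE_{\rL(\R)}(ab\widehat\lambda(k))=\varphi(ab)\,k,
\]
and $\varphi(ab)$ is indeed the inner product $\langle b\varphi^{1/2},a^*\varphi^{1/2}\rangle$. This yields the defining formula. Uniqueness follows because the span of the elements $b u_t$ is ultraweakly dense.

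\textbf{Inner product formula and its consequences.} On products $x=a\widehat\lambda(k)$ and $y=b\widehat\lambda(l)$, bimodularity of $\rE_{\rL(\R)}$ gives
\[
\rE_{\rL(\R)}(y^*x)=\overline l\,k\,\varphi(b^*a),
\]
which is the pointwise inner product of the two fields. After integrating against an $\rL^1$ function, both sides are separately weak* continuous in $x$ and in $y$, so density extends the formula to all $x,y$. Setting $y=x$ and using faithfulness of $\rE_{\rL(\R)}$ gives injectivity. Applying $m$ gives \eqref{connes Plancherel mean identity}. Integrating against $\rd p/(2\pi)$ and using \eqref{connes core trace formula} identifies $\vartheta$ with the Plancherel isometry on bounded square-integrable elements, since the two maps agree on $a\lambda(f)$ and these elements are dense.

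\textbf{Covariance formulas.} Each identity in \eqref{connes Plancherel covariance} is checked on $b\widehat\lambda(k)$ and then extended by weak* continuity.
\begin{itemize}
\item The two module identities are immediate from the defining formula.
\item The dual action translates $k$, which gives the formula for $\widehat{\sigma^\varphi}_q$.
\item For the modular extension we have $\sigma_t^\varphi(b\widehat\lambda(k))=\sigma^\varphi_t(b)\widehat\lambda(k)$, because $\Ad(u_t)$ fixes $\rL(\R)$, which is abelian. Since $\sigma_t^\varphi(b)\varphi^{1/2}=\Delta_\varphi^{\ri t}(b\varphi^{1/2})$, this gives the formula for $\sigma_t^\varphi$.
\item For the bicentralizer flow, $\widetilde\beta^\varphi_s(b\widehat\lambda(k))=\beta^\varphi_s(b)\widehat\lambda(k)$, which gives the last formula.
\end{itemize}
For the $\widetilde\beta^\varphi$ formula to extend from the dense span, one needs $\widetilde\beta^\varphi_s$ to be normal, which holds since it is an automorphism. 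One also needs $U^{\beta^\varphi}_s$ to be a well-defined unitary, which holds because $\beta^\varphi$ preserves $\varphi$ (a fact from \cite{AHHM20}).

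\textbf{Expected main obstacle.} The only delicate point is bookkeeping: getting the Fourier convention, the measure $\rd p/(2\pi)$ and the bilinear versus sesquilinear pairing consistent, so that the defining formula carries exactly $k(p)$ with no modular twist. Everything else is a transcription of the tracial case.
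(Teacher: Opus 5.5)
Your proposal is correct and follows the paper's proof essentially step for step. It uses the same duality construction from the conditional Cauchy--Schwarz bound, extends the inner-product identity by separate weak* continuity in the same way, and checks the covariance formulas on the products $b\widehat\lambda(k)$ as the paper does. The one difference is cosmetic: the paper tests $\rE_{\rL(\R)}(a^*x)$ against $a\varphi^{1/2}$ and applies antilinear Hilbert space duality directly, which removes the pairing and $J$ bookkeeping you flagged as the main obstacle.
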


\begin{proof}
For $f\in\rL^1(\R)$ and $a\in M$, conditional
Cauchy--Schwarz gives
\[
\left|\int_\R f(p)\rE_{\rL(\R)}(a^*x)(p)\,\rd p\right|
\leq\|f\|_1\|a\varphi^{1/2}\|_2\|x\|.
\]
The projective tensor description of
$\rL^1(\R,\rL^2(M,\varphi))$ and Hilbert space duality therefore
give a unique contractive map satisfying
\[
\int_\R f(p)
\langle\vartheta(x)(p),a\varphi^{1/2}\rangle\,\rd p
=\int_\R f(p)\rE_{\rL(\R)}(a^*x)(p)\,\rd p.
\]
All the functionals on the right are normal. Their continuous
extension to the space of integrable test fields proves weak*
continuity of $\vartheta$.
The equality
$\rE_{\rL(\R)}(a^*b\widehat\lambda(k))=\varphi(a^*b)k$
gives the required formula on products. These products span an
ultraweakly dense subspace, proving uniqueness.

For $x=a\widehat\lambda(f)$ and $y=b\widehat\lambda(g)$,
the pointwise inner product is
\[
f\overline g\,\varphi(b^*a)
=\rE_{\rL(\R)}(y^*x).
\]
After integration against an $\rL^1$ function, both sides are
separately weak* continuous in $x$ and $y$. The identity therefore
holds for all bounded $x,y$. Faithfulness of the expectation gives
injectivity, and applying $m$ gives
\eqref{connes Plancherel mean identity}.
Integration against $\rd p/(2\pi)$ gives the dual-weight
$\rL^2$ norm. On the dense set of vectors $a\lambda(f)$ with
$f\in\rL^1(\R)\cap\rL^2(\R)$, the image is
$\widehat f\,a\varphi^{1/2}$, proving the Plancherel assertion.
Finally, the module and covariance formulas hold on products and
extend by weak* continuity.
\end{proof}

\begin{proposition}\label{connes modular right product}
Let $x\in M\rtimes_{\sigma^\varphi}\R$ have compact dual
spectrum, and let $a\in M$ have compact modular spectrum.
Choose $h,k\in\mathcal D(\R)$ equal to $1$ near
$\Sp_{\widehat{\sigma^\varphi}}(x)$ and
$\Sp_{\sigma^\varphi}(a)$, respectively, and put
\[
\Phi(t,r)=h(t)k(r)e^{-\ri rt}.
\]
Using the ordinary Fourier transform on $\R^2$ with Lebesgue
measure, we have
\begin{equation}\label{connes product coefficient}
\vartheta(xa)(p)
=\frac1{(2\pi)^2}\int_{\R^2}\widehat\Phi(q,s)
\vartheta(x)(p-q)\sigma_{-s+\ri/2}^\varphi(a)
\,\rd q\,\rd s.
\end{equation}
The integral converges in $\rL^2(M,\varphi)$ and defines a bounded
measurable field.
\end{proposition}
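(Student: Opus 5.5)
The plan is to follow the proof of Proposition~\ref{mean coefficient right product} closely, with one new feature: the right action of $M$ on $\rL^2(M,\varphi)$ is twisted by the modular group. Both sides of \eqref{connes product coefficient} are normal in $x$ once paired with a test field $Z\in\rL^1(\R,\rL^2(M,\varphi))$. For the left side this follows from the weak*-continuity of $\vartheta$ in Proposition~\ref{connes modular Plancherel}, since $x\mapsto xa$ is normal. For the right side, substitute $r=p-q$. The integrand then pairs $\vartheta(x)(r)$ against the test field $r\mapsto Z(r+q)\,\sigma^\varphi_{-s+\ri/2}(a)^*$, acting on the right.

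Three facts make the right side well defined. First, $a$ has compact modular spectrum, so it is entire. Second, $s\mapsto\sigma^\varphi_{-s+\ri/2}(a)=\sigma^\varphi_{-s}(\sigma^\varphi_{\ri/2}(a))$ is norm bounded by $\|\sigma^\varphi_{\ri/2}(a)\|$, uniformly in $s$. Third, right multiplication by a bounded element is a bounded operator on $\rL^2(M,\varphi)$ through $\xi\mapsto \xi b$ with $\|\xi b\|\le\|b\|\|\xi\|$. Together with $\widehat\Phi\in\rL^1(\R^2)$, which the Schwartz--Bruhat calculus gives because $\Phi\in\mathcal D(\R^2)$, Fubini shows that the right side is a Bochner integral, a bounded measurable field, and normal in $x$. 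It therefore suffices to test on a weak*-total family of elements $x$ with dual spectrum inside the region where $h=1$.

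I would use $x=cu_t$ with $c\in M$ and $t$ near $\Sp_{\widehat{\sigma^\varphi}}(x)$, so that $h(t)=1$. More carefully, smoothing $x$ by $\widehat{\sigma^\varphi}$ with a kernel whose transform is supported where $h=1$ writes $x$ as a weak* limit of integrals $\int c(t)u_t\,\rd t$ supported there. For $x=cu_t$ we have $\vartheta(cu_t)(p)=e^{-\ri pt}c\varphi^{1/2}$, hence $\vartheta(x)(p-q)=e^{-\ri(p-q)t}c\varphi^{1/2}$. Fourier inversion in $q$ gives $\frac{1}{2\pi}\int\widehat\Phi(q,s)e^{\ri qt}\,\rd q=h(t)\,(\text{transform of }k\text{ in }r\text{ at }s\text{ shifted by }t)$, that is, a kernel $\check k(-t-s)$ with the appropriate normalization. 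The $s$-integral then reproduces $h(t)\,\sigma^\varphi_t(\sigma^\varphi_{\ri/2}(a))$, because $k=1$ near $\Sp_{\sigma^\varphi}(a)$, so $\sigma^\varphi_{\check k}$ fixes $\sigma^\varphi_{\ri/2}(a)$; modular analytic continuation preserves the spectrum.

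The right side therefore equals $h(t)e^{-\ri pt}c\varphi^{1/2}\sigma^\varphi_{t+\ri/2}(a)$. By \eqref{connes modular right action} applied to the entire element $\sigma^\varphi_t(a)$, this is $h(t)e^{-\ri pt}c\,\sigma^\varphi_t(a)\varphi^{1/2}$. On the left side, $cu_ta=c\,\sigma^\varphi_t(a)u_t$, and its Plancherel function is exactly $e^{-\ri pt}c\,\sigma^\varphi_t(a)\varphi^{1/2}$. The two sides agree when $h(t)=1$.

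I expect the main obstacle to be bookkeeping rather than substance. The Fourier normalizations ($\rd p/2\pi$, signs in $e^{-\ri rt}$, the factor $(2\pi)^{-2}$) must produce exactly $\sigma^\varphi_{-s+\ri/2}$ and the convolution $p-q$. Justifying the move from $u_t$ to general $x$ with compact dual spectrum also needs care. I would handle that through normality, after inserting the dual-spectral cutoff $h$ in order to restrict to $t$ in $\{h=1\}$.
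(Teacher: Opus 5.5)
Your proposal is correct and follows essentially the same route as the paper: both sides are normal in $x$ after pairing with integrable test fields, the identity is checked on $cu_t$ via Fourier inversion and $\varphi^{1/2}\sigma^\varphi_{\ri/2}(b)=b\varphi^{1/2}$, and a density argument finishes. The only difference is the last step. The paper notes that the right-hand side equals $h(t)\vartheta(cu_ta)$ for every $t$, so it coincides on the whole crossed product with the normal map $x\mapsto\vartheta(x_ha)$, where $x_h$ is the dual spectral cutoff by $h$; it then uses $x_h=x$. This avoids your auxiliary cutoff $g$, equal to $1$ near the dual spectrum and supported in $\{h=1\}$, which you need to place $x$ in the weak$^*$-closed span of $\{cu_t\mid h(t)=1\}$.
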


\begin{proof}
Compact modular spectrum makes $a$ entire analytic. The integrand
is bounded in norm by
$|\widehat\Phi(q,s)|\|x\|\|\sigma_{\ri/2}^\varphi(a)\|$,
so the integral converges. After testing against an integrable
$\rL^2(M,\varphi)$-valued field, the right-hand side is normal in
$x$. Indeed, translation of the test field and right multiplication
by $\sigma_{-s+\ri/2}^\varphi(a)^*$ preserve its $\rL^1$ norm
up to the uniform factor $\|\sigma_{\ri/2}^\varphi(a)\|$.
Fubini's theorem gives a single integrable test field.

For $x=cu_t$, Fourier inversion and the cutoff property of $k$ give
\[
\frac1{(2\pi)^2}\int_{\R^2}\widehat\Phi(q,s)e^{\ri qt}
\sigma_{-s+\ri/2}^\varphi(a)\,\rd q\,\rd s
=h(t)\sigma_{t+\ri/2}^\varphi(a).
\]
By \eqref{connes modular right action}, the proposed right-hand
side is therefore $h(t)\vartheta(cu_ta)(p)$.
Let $x_h$ be the dual spectral cutoff of $x$ with multiplier $h$.
Thus the normal map defined by the right-hand side agrees with
$x\mapsto\vartheta(x_ha)$ on every $cu_t$, and therefore on the
whole crossed product. Since $h=1$ near the dual spectrum of the given
$x$, we have $x_h=x$, proving the formula.
\end{proof}

\subsection{The bicentralizer in the GNS representation}
\label{subsec:connes-gns}

Let $M^\dagger$ be the fixed point algebra in
$M\rtimes_{\sigma^\varphi}\R$ of the action
\[
s\longmapsto\widetilde\beta^\varphi_s\circ\widehat{\sigma^\varphi}_{-s}.
\]
The modular and dual actions preserve $M^\dagger$.
As in Subsection~\ref{subsec:bicentralizer-gns}, this fixed algebra
gives a second crossed-product decomposition.

\begin{proposition}\label{connes dagger state and orbits}
There is a faithful normal state $\varphi^\dagger$ on $M^\dagger$
such that
\[
\rE_{\rL(\R)}(x)=\varphi^\dagger(x)1,
\qquad
\rho_T(x)=\rho(x)=\varphi^\dagger(x)
\qquad(x\in M^\dagger).
\]
Its modular group is $\sigma^{\varphi^\dagger}_t
=\Ad(u_t)|_{M^\dagger}$, and
\[
M\rtimes_{\sigma^\varphi}\R
=M^\dagger\rtimes_{\sigma^{\varphi^\dagger}}\R.
\]
For every $x\in M^\dagger$, there is a unique
$\xi_x\in\rL^2(M,\varphi)$ such that
\[
\vartheta(x)=\bigl[p\longmapsto
U^{\beta^\varphi}_p\xi_x\bigr].
\]
The map $x(\varphi^\dagger)^{1/2}\mapsto\xi_x$ extends to a unitary
\[
V:\rL^2(M^\dagger,\varphi^\dagger)\longrightarrow\rL^2(M,\varphi),
\qquad
V(\varphi^\dagger)^{1/2}=\varphi^{1/2}.
\]
It intertwines the modular unitary groups. The dual action preserves
$M^\dagger$ and $\varphi^\dagger$, and
\[
\xi_{\widehat{\sigma^\varphi}_q(x)}=U^{\beta^\varphi}_q\xi_x.
\]
\end{proposition}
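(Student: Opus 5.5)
The plan is to copy the tracial argument of Propositions~\ref{dagger decomposition} and~\ref{mean coefficient orbits}, with $\varphi$ in place of $\tau$. The dual action convention here is additive. The only new feature is that $\varphi$ is not a trace, so $M^\dagger$ carries a state whose modular group must be identified.

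\textbf{Step 1: the crossed-product decomposition.} Put $\delta_s=\widetilde\beta^\varphi_s\circ\widehat{\sigma^\varphi}_{-s}$. Since $\widetilde\beta^\varphi$ fixes each $u_t$ and the dual action multiplies $u_t$ by a character, we get $\delta_s(u_t)=e^{\ri st}u_t$ up to the sign convention. Hence $u$ is a $\delta$-eigenrepresentation and normalizes $M^\dagger=(M\rtimes_{\sigma^\varphi}\R)^\delta$. Landstad--Nakagami duality \cite[Theorem~8.1]{Na77}, applied to the action $s\mapsto\delta_{-s}$ and the representation $u$, then gives $M\rtimes_{\sigma^\varphi}\R=M^\dagger\rtimes_{\Ad u}\R$, with dual action $\delta_{-s}$.

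\textbf{Step 2: the state $\varphi^\dagger$.} The automorphism $\beta^\varphi$ preserves $\varphi$, being in the centralizer construction of \cite{AHHM20}. Therefore $\rE_{\rL(\R)}\circ\widetilde\beta^\varphi_s=\rE_{\rL(\R)}$, and $\rE_{\rL(\R)}$ intertwines $\delta_s$ with translation by $s$ on $\rL^\infty(\R)$. On $M^\dagger$ the expectation takes translation-invariant values, hence scalar values, and this defines a faithful normal state $\varphi^\dagger$. The formulas $\rho_T=\rho=\varphi^\dagger$ on $M^\dagger$ follow at once, because $m_T$ of a constant is that constant. Next, integrating $\rE_{\rL(\R)}\circ\delta_s$ over $s$ shows $\widetilde\varphi=\varphi^\dagger\circ\int\delta_s\,\rd s/(2\pi)$; this is the same computation as in Proposition~\ref{dagger decomposition}. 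So $\widetilde\varphi$ is also the dual weight of $\varphi^\dagger$ for the second decomposition. By \cite[Theorem~3.2(ii)]{Ha78a}, the modular group of this dual weight restricts on $M^\dagger$ to $\sigma^{\varphi^\dagger}$. Since $\sigma^{\widetilde\varphi}_t=\Ad(u_t)$, we conclude $\sigma^{\varphi^\dagger}_t=\Ad(u_t)|_{M^\dagger}$. I expect this identification, together with checking that the normalizations of Haar measure agree, to be the only delicate point.

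\textbf{Step 3: Plancherel orbits and the unitary $V$.} Take $x\in M^\dagger$. By the covariance formulas \eqref{connes Plancherel covariance},
\[
\vartheta(x)(p)=\vartheta(\delta_q x)(p)=U^{\beta^\varphi}_q\vartheta(x)(p-q).
\]
So the field $p\mapsto U^{\beta^\varphi}_{-p}\vartheta(x)(p)$ is translation invariant, hence equal almost everywhere to a constant $\xi_x$. From $\langle\vartheta(x),\vartheta(x)\rangle=\rE_{\rL(\R)}(x^*x)=\varphi^\dagger(x^*x)$ we get $\|\xi_x\|=\|x(\varphi^\dagger)^{1/2}\|$, so $V$ is an isometry. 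Moreover $\vartheta(1)=\varphi^{1/2}$ gives $V(\varphi^\dagger)^{1/2}=\varphi^{1/2}$.

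For surjectivity, the vectors $x\widehat\lambda(k)$ with $x\in M^\dagger$ span a dense subspace of the $\widetilde\varphi$-$\rL^2$ space, by the second decomposition. Their Plancherel fields are $p\mapsto k(p)U^{\beta^\varphi}_p\xi_x$. If $\eta$ is orthogonal to the range of $V$, then the field $p\mapsto f(p)U^{\beta^\varphi}_p\eta$ is orthogonal to all of these, hence $\eta=0$.

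The remaining covariances follow from the formulas for $\vartheta$. Intertwining of the modular groups comes from $\vartheta(\sigma_t x)=\Delta_\varphi^{\ri t}\vartheta(x)$, together with the fact that $\Delta_\varphi^{\ri t}$ commutes with $U^{\beta^\varphi}$ since the flows commute. The identity $\xi_{\widehat{\sigma^\varphi}_q(x)}=U^{\beta^\varphi}_q\xi_x$ comes from the translation covariance. Finally, $M^\dagger$ is preserved by the modular and dual actions because they commute with $\delta$, and $\varphi^\dagger$ is invariant under them because $\rE_{\rL(\R)}$ is equivariant.
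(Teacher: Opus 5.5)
Your proposal is correct and follows the paper's proof essentially step by step. The conditional expectation intertwines $\delta_s$ with translations, which uses $\varphi\circ\beta^\varphi_s=\varphi$; this is part of the bicentralizer flow theorem of \cite{AHHM20} rather than a ``centralizer construction''. Nakagami's theorem then gives the second decomposition, and the comparison $\varphi^\dagger\circ\rT_{M^\dagger}=\widetilde\varphi$ together with \cite[Theorem~3.2(ii)]{Ha78a} identifies the modular group. Finally, translation invariance of $p\mapsto U^{\beta^\varphi}_{-p}\vartheta(x)(p)$ and the orthogonality argument produce the unitary $V$. Your explicit remark that $\Delta_\varphi^{\ri t}$ commutes with $U^{\beta^\varphi}$ is exactly what the paper leaves implicit when it says ``covariance under $\Ad(u_t)$ gives modular intertwining''.
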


\begin{proof}
The expectation intertwines
$\widetilde\beta^\varphi_s\circ\widehat{\sigma^\varphi}_{-s}$
with translation by $-s$ on $\rL^\infty(\R)$.
Thus its restriction to $M^\dagger$ is scalar.
Faithfulness and normality of the expectation give the state
$\varphi^\dagger$, and the definitions of $\rho_T$ and $\rho$
give the restriction formulas. Bimodularity of the expectation
also gives $\varphi^\dagger\circ\Ad(u_t)=\varphi^\dagger$.

The defining action of $M^\dagger$ sends $u_t$ to
$e^{\ri st}u_t$. The crossed-product characterization
\cite[Theorem~8.1]{Na77} therefore gives a decomposition with
coefficient algebra $M^\dagger$ and implementing group $u_t$.
Its Haar operator-valued weight is
\[
\rT_{M^\dagger}
=\frac1{2\pi}\int_\R
\widetilde\beta^\varphi_s\circ\widehat{\sigma^\varphi}_{-s}\,\rd s.
\]
For positive $z$ in the continuous core, normality and equivariance
of the expectation give
\[
\varphi^\dagger(\rT_{M^\dagger}(z))
=\frac1{2\pi}\int_\R\rE_{\rL(\R)}(z)(p)\,\rd p
=\varphi(\rT_M(z)).
\]
These are equalities of extended positive values.
The modular group of the last dual weight is $\Ad(u_t)$.
Restriction to $M^\dagger$ identifies its modular group and proves
the stated crossed-product decomposition.

For $x\in M^\dagger$, Proposition~\ref{connes modular Plancherel}
gives
\[
\vartheta(x)(p)
=U^{\beta^\varphi}_s\vartheta(x)(p-s).
\]
Consequently the measurable field
$p\mapsto U^{\beta^\varphi}_{-p}\vartheta(x)(p)$ is
translation invariant and is therefore constant almost everywhere.
Call its value $\xi_x$. The inner-product identity gives
\[
\|\xi_x\|_2^2=\varphi^\dagger(x^*x),
\]
so $V$ extends to an isometry.

The two crossed-product decompositions have the same dual weight,
as proved above. In its GNS space, the vectors represented by
$x\widehat\lambda(f)$, where $x\in M^\dagger$ and
$f\in\rL^2(\R)\cap\rL^\infty(\R)$, span a dense subspace.
Their Plancherel functions are
\[
p\longmapsto f(p)U^{\beta^\varphi}_p\xi_x.
\]
Suppose that $\eta\in\rL^2(M,\varphi)$ is orthogonal to the range of $V$,
and choose a nonzero $g\in\rL^2(\R)\cap\rL^\infty(\R)$.
The field $p\mapsto g(p)U^{\beta^\varphi}_p\eta$ is orthogonal
to every field in the preceding dense subspace. It is therefore
zero, and hence $\eta=0$. The isometry $V$ has closed range, so
it is onto. The identity $\vartheta(1)=\varphi^{1/2}$ gives its
value at the state vector. Covariance under $\Ad(u_t)$ gives
modular intertwining. The dual covariance gives the last identity
and shows that the dual action preserves $\varphi^\dagger$.
\end{proof}

We now pass to the GNS representation of the singular KMS state.
Let $\pi$ be the GNS representation of
$\rho|_{(M\rtimes_{\sigma^\varphi}\R)_c}$, let $\xi_\rho$ be its
cyclic vector, and put
\[
N=\pi\bigl((M\rtimes_{\sigma^\varphi}\R)_c\bigr)''.
\]

\begin{proposition}\label{connes normal GNS embeddings}
The vector state of $\xi_\rho$ is faithful and normal on $N$.
We denote it again by $\rho$. The representations of the modular
continuous parts of $M$ and $M^\dagger$ extend to faithful normal
embeddings of these algebras into $N$. Their state restrictions
are $\varphi$ and $\varphi^\dagger$, respectively, and both
embeddings intertwine the modular groups. In particular, the
inclusion $M\subset N$ is with expectation.
For $z\in(M\rtimes_{\sigma^\varphi}\R)_c$ with modular spectrum
contained in $[r_0,r_1]$, one has
\begin{equation}\label{connes KMS spectral estimates}
e^{-r_1}\rho(z^*z)\leq\rho(zz^*)\leq e^{-r_0}\rho(z^*z).
\end{equation}
\end{proposition}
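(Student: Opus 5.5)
The approach is to transport everything to the Hilbert space of $\rho$ through the Plancherel function, and to use the KMS identity of Proposition~\ref{connes KMS realization} wherever the tracial argument used traciality. The steps are taken in the following order: the spectral estimate, faithfulness of the vector state, and then the normal embeddings.

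\textbf{The spectral estimate.} For $z\in(M\rtimes_{\sigma^\varphi}\R)_c$ with compact modular spectrum in $[r_0,r_1]$, the KMS identity \eqref{connes KMS identity} applied with $x=z$ and $y=z^*$ gives
\[
\rho(zz^*)=\rho\bigl(z^*\sigma^\varphi_{-\ri}(z)\bigr).
\]
Write $\sigma^\varphi_{-\ri}(z)=e^{A}z$ in the spectral sense, where $A$ is the analytic generator, i.e.\ use the spectral decomposition of $z$ for $\sigma^\varphi$. Then spectral localization of the positive functional $y\mapsto\rho(z^*y)$ on the spectral subspaces gives the two-sided bound by $e^{-r_1}$ and $e^{-r_0}$; the sign follows the convention that $u_t(p)=e^{-\ri pt}$. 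This is the analogue of the bound for the density $e^{D}$ on the corners $e_T$. The rigorous route is to prove it first for $\rho_T$ on the corner $e_T(M\rtimes_{\sigma^\varphi}\R)e_T$. There the density is $(\pi/T)e_Te^D$, and an element of spectrum in $[r_0,r_1]$ shifts $D$ by an amount in that interval, so the inequality is immediate. Then pass to the ultralimit using the boundary estimate \eqref{connes KMS boundary estimate}. The estimate extends to arbitrary bounded spectrum by norm density.

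\textbf{Faithfulness and normality of the vector state.} Normality of $\omega_{\xi_\rho}$ on $N$ is automatic, since it is a vector state. For faithfulness, I show that $\xi_\rho$ is separating. Since $\rho$ is KMS on the norm-dense analytic subalgebra of compact-spectrum elements, the standard argument applies: KMS states give a GNS vector that is separating for the generated von Neumann algebra, because the modular data extend to $N$ through $\pi(\sigma^\varphi_t)$, which is implemented unitarily by $\rho$-invariance. Concretely, the map $\pi(z)\xi_\rho\mapsto\pi(\sigma^\varphi_{-\ri/2}(z))^*\xi_\rho$ is isometric by the KMS identity. This produces a right action $J\pi(z)J$ commuting with $N$ that has $\xi_\rho$ as a cyclic vector, so $\xi_\rho$ is cyclic for $N'$ and hence separating for $N$.

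\textbf{Normal embeddings of $M$ and $M^\dagger$.} Let $M_c$ and $M^\dagger_c$ denote the modular continuous parts. The restrictions of $\rho$ to them are $\varphi$ and $\varphi^\dagger$, by Proposition~\ref{connes dagger state and orbits} and the fact that $\rho|_M=\varphi$. Normality is proved as in Theorem~\ref{mean value commutation}, with traciality replaced by the KMS control. Take $b_i\uparrow b$ in $M_c$ (bounded increasing nets, using Kaplansky density to reach all of $M$) and $z$ of compact spectrum. I bound $\|\pi((b-b_i)z)\xi_\rho\|$ by moving $z$ to the left through the KMS identity, at the cost of the factor $\|\sigma^\varphi_{-\ri/2}(z)\|$; this gives a bound by $\|\sigma^\varphi_{-\ri/2}(z)\|\,\|b-b_i\|_\varphi^{\#}\to0$. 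Density of such vectors then gives strong continuity. It follows that $\pi|_{M_c}$ extends to a normal representation of $M$. Its vector state is $\varphi$, which is faithful, so the extension is injective. The same argument applies to $M^\dagger$ with $\varphi^\dagger$.

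\textbf{Expectation and the main obstacle.} Intertwining of the modular groups holds because $\pi\circ\sigma^\varphi_t$ is implemented on $N$ and is the modular group of $\rho$ there: the KMS condition passes to $N$ by normality. Thus $\sigma^\rho$ leaves $\pi(M)$ globally invariant and restricts to $\sigma^\varphi$, and Takesaki's theorem yields a $\rho$-preserving normal conditional expectation $N\to M$. The main obstacle is the faithfulness step. It requires checking that the KMS condition, which is only known on the $C^*$-algebra of continuous elements, is genuinely transferred to $N$, and that $\sigma^\rho$ on $N$ agrees with the extension of $\sigma^\varphi$; all the later steps depend on this.
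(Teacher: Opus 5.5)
Your proposal is correct. The faithfulness step and the normal-embedding step are the paper's argument. For faithfulness you use the KMS antiunitary, which gives a right action commuting with $N$ and having $\xi_\rho$ as a cyclic vector. For the embeddings you use the right-multiplication bound against $\|b\|_\varphi$ and extend by Kaplansky density. Injectivity comes from faithfulness of $\varphi$ and $\varphi^\dagger$, then modular intertwining, then Takesaki's theorem.

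You differ on the spectral estimate. The paper proves it last, inside $N$. Once $\sigma^\rho$ is identified with the extension of $\sigma^\varphi$, $\pi(z)\xi_\rho$ has $\log\Delta_\rho$-support in $[-r_1,-r_0]$. The estimate then follows from $\rho(zz^*)=\|\Delta_\rho^{1/2}\pi(z)\xi_\rho\|^2$.

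You prove it first, on the finite corners $e_T(M\rtimes_{\sigma^\varphi}\R)e_T$. There $\rho_T$ is a faithful normal state with density $(\pi/T)e_Te^D$ and modular group $\Ad(u_t)$. The element $w_T=e_Tze_T$ still has spectrum in $[r_0,r_1]$, because $e_T$ commutes with every $u_t$, so the same modular-operator inequality applies to it. The boundary estimate \eqref{connes KMS boundary estimate}, used once for $(z,z^*)$ and once for $(z^*,z)$, removes the cutoffs before the ultralimit.

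Your route makes the inequality independent of identifying $\sigma^\rho$ on $N$, which you flag as the delicate point. The paper's route is shorter, since that identification is standard KMS theory once $\xi_\rho$ is separating.

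Two small corrections.
\begin{itemize}
\item The heuristic functional $y\mapsto\rho(z^*y)$ is not positive, so only your corner argument is an actual proof.
\item With the paper's convention $\rho(xy)=\rho(y\sigma^\varphi_{-\ri}(x))$, the isometry is $\pi(z)\xi_\rho\mapsto\pi(\sigma^\varphi_{\ri/2}(z))^*\xi_\rho$, not the $-\ri/2$ version you wrote. For an eigenoperator with $\sigma^\varphi_t(z)=e^{-\ri rt}z$, your map multiplies norms by $e^{-r}$ instead of preserving them. Accordingly, the constant in the right-multiplication bound is $\|\sigma^\varphi_{\ri/2}(z)\|$. The argument is otherwise unaffected, since both constants are finite for compact-spectrum $z$.
\end{itemize}
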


\begin{proof}
The right multiplication construction for a KMS state gives
\begin{equation}\label{connes KMS right multiplication}
\|\pi(b)\pi(a)\xi_\rho\|
\leq\|\sigma_{\ri/2}^\varphi(a)\|
\|\pi(b)\xi_\rho\|
\end{equation}
for entire analytic $a$ and arbitrary $b$ in the modular continuous
part of the core. These right multiplication operators commute
with the left representation and have a dense orbit on
$\xi_\rho$. Thus $\xi_\rho$ is separating for $N$, and its vector
state is faithful and normal. The KMS identity identifies its
modular group with the extension of $\sigma^\varphi$.

Take a bounded net $(b_i)_i$ in the modular continuous part of
$M$ which converges strongly$^*$ to zero in $M$.
For each compact-spectrum $a$ in the core,
\eqref{connes KMS right multiplication} gives
\[
\|\pi(b_i)\pi(a)\xi_\rho\|
\leq\|\sigma_{\ri/2}^\varphi(a)\|\,\|b_i\|_\varphi
\longrightarrow0.
\]
The same holds for $b_i^*$. Density and uniform boundedness give
$\pi(b_i)\to0$ strongly$^*$. Bounded modular convolutions are
strongly$^*$ dense in $M$ on bounded sets, so $\pi$ extends
normally to $M$. The argument for $M^\dagger$ is identical,
using $\rho|_{M^\dagger}=\varphi^\dagger$.
Both extensions are faithful because their state restrictions are
faithful, and both intertwine the modular groups.
Takesaki's conditional expectation theorem
\cite[Theorem~${\rm IX}$.4.2]{Ta03} gives the expectation onto $M$.

The vector $\pi(z)\xi_\rho$ has spectral support in $[-r_1,-r_0]$
for $\log\Delta_\rho$. The identity
\[
\rho(zz^*)=\|\Delta_\rho^{1/2}\pi(z)\xi_\rho\|^2
\]
proves \eqref{connes KMS spectral estimates}.
\end{proof}

We identify $M$ with its normal image in $N$, and keep $\pi(x)$
for the image of $x\in M^\dagger$.

\begin{lemma}\label{connes scaled smoothing}
Fix $q\in\R$ and a bounded sequence $(a_n)_n$ in $M$ satisfying
\[
\|a_n\varphi-e^q\varphi a_n\|\longrightarrow0.
\]
Then
\[
\sup_{t\in K}\|\sigma_t^\varphi(a_n)-e^{-\ri qt}a_n\|_\varphi^\#
\longrightarrow0
\qquad(K\Subset\R).
\]
The sequence can be replaced, up to an error tending to zero
strongly$^*$, by a bounded sequence with modular spectrum contained
in $[q-1,q+1]$ and with the same scaled predual convergence.
For every $b\in M$, one also has
\[
\sup_{p\in\R}
\|a_n\beta^\varphi_p(b)
-\beta^\varphi_{p+q}(b)a_n\|_\varphi^\#
\longrightarrow0.
\]
\end{lemma}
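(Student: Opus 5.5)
Three assertions need proof: the modular approximate-eigenvector estimate, the smoothing replacement, and the uniform commutation with the bicentralizer orbit. I would take them in that order; each step follows the tracial arguments of Lemma~\ref{mean uniform dual orbits} and Proposition~\ref{mean scalar action on dagger}, with the trace replaced by the state $\varphi$.

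\textbf{Modular eigenvector estimate.} This is the type $\III$ analogue of the passage from $\|a_n\varphi-\varphi a_n\|\to0$ to asymptotic centralization. The scaled condition says $a_n\varphi^{1/2}$ and $e^{q/2}\varphi^{1/2}a_n$ are asymptotically close in $\rL^2(M,\varphi)$. I would obtain this from the Powers--St\o rmer/Araki--Masuda inequality, after factoring $a_n\varphi$ through $|a_n^*|$ and using polar decompositions as in \cite{AHHM20}. The relation $\varphi^{1/2}a\approx e^{-q/2}a\varphi^{1/2}$ then says that $\Delta_\varphi^{1/2}(a_n\varphi^{1/2})\approx e^{-q/2}a_n\varphi^{1/2}$, and likewise for $a_n^*$ with $-q$. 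Boundedness of the sequence together with a spectral-measure argument then concentrates the spectral measure of $\log\Delta_\varphi$ on the vectors $a_n\varphi^{1/2}$ and $a_n^*\varphi^{1/2}$ near $\mp q$. The key inequality is that $\|(\Delta^{1/2}-e^{-q/2})\xi\|$ controls the mass of $\xi$ away from $-q$, because $\lambda\mapsto e^{\lambda/2}$ is injective. Since $\Delta^{\ri t}(a\varphi^{1/2})=\sigma_t(a)\varphi^{1/2}$, this gives the compact-uniform $\|\cdot\|_\varphi^\#$ estimate for $\sigma_t(a_n)-e^{-\ri qt}a_n$.

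\textbf{Smoothing replacement.} Take $f\in\mathcal S(\R)$ with $\widehat f$ supported in $[q-1,q+1]$ and equal to $1$ near $q$, with the sign convention matched to $\sigma$-spectra, and set $a_n'=\sigma^\varphi_f(a_n)$. The sequence $(a_n')_n$ is bounded by $\|f\|_1\sup_n\|a_n\|$. It differs from $a_n=\int f(t)e^{-\ri qt}a_n\,\rd t$ by $\int f(t)\bigl(\sigma_t(a_n)-e^{-\ri qt}a_n\bigr)\rd t$. That error tends to zero in $\|\cdot\|_\varphi^\#$: the first step controls it on compact sets, and tightness of $f$ controls the tails. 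On bounded sets, $\|\cdot\|^\#_\varphi$-convergence is equivalent to strong$^*$ convergence. Scaled convergence is preserved since $\|(a_n-a_n')\varphi\|$ and $\|\varphi(a_n-a_n')\|$ are bounded by $\|\cdot\|_\varphi^\#$.

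\textbf{Uniform bicentralizer estimate.} Here I would copy Lemma~\ref{mean uniform dual orbits}, using the reduction \eqref{connes self bicentralizing reduction}. For fixed $b\in M$, the characterization \eqref{connes scaled characterization} (with $N=M$) gives the pointwise statement $a_n\beta_p(b)-\beta_{p+q}(b)a_n\to0$ strongly$^*$. The difficulty is uniformity in $p$, and I expect this to be the main obstacle.

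The plan for uniformity is as follows. First, argue by contradiction that there exist $\delta>0$ and $\varepsilon>0$ such that every $u$ in the unit ball with $\|u\varphi-\varphi u\|<\delta$ satisfies $\|[u,b]\|_\varphi^\#<\varepsilon$. Otherwise a violating sequence would contradict \eqref{connes scaled characterization} with $q=0$.

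Second, transport this bound to every $\beta_{p}(b)$ with a uniform $\delta$. Choose a sequence $w_k$ implementing the scaling $p$, which exists by the bicentralizer flow theory of \cite{AHHM20}. Conjugating $u$ by $w_k$ changes the centralization defect only by an error tending to zero, so the bound passes to $\beta_{p}(b)$. The same step in the opposite direction needs conjugation $w_k^*uw_k$ with norm control, which holds because the implementers are asymptotically partial isometries with scaled support. I would handle this with polar decomposition, and would also use that $\beta_{-p}$ preserves $\varphi$.

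Third, combine the two. Given $a_n$, compare it with an implementer $w_k$ of $q$: the element $w_k^*a_n$ is approximately centralizing, which yields $\|a_n\beta_p(b)a_n^*-w_k\beta_p(b)w_k^*\|$ small uniformly in $p$. Letting $k\to\infty$ and using pointwise convergence gives the claim with the same constant for all $p$. Applying $\beta^\varphi_{-p}$ to the approximants, as indicated in the paper, is what keeps the constants independent of $p$. Throughout, I would use the KMS-type inequality $\|xa\|_\varphi\le\|\sigma_{-\ri/2}(a)\|\,\|x\|_\varphi$ for elements $a$ of compact modular spectrum, which is why the smoothed $a_n'$ is used here. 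The second step is the delicate part, since $\|\cdot\|_\varphi^\#$ lacks the tracial bi-invariance used in the flow case.
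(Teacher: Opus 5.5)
Your first two steps are sound and agree with the paper in substance. The paper rewrites the scaled condition as ordinary centralization of $e_{12}\otimes a_n$ for the state $(e^q\varphi\oplus\varphi)/(1+e^q)$ on $\mathbb M_2(M)$ and then applies Haagerup's square-root estimate; this is equivalent to your scaled Powers--St\o rmer estimate followed by spectral concentration of $\log\Delta_\varphi$. Your smoothing, and your bound $\|x\varphi\|+\|\varphi x\|\leq2\|x\|_\varphi^\#$ on the defect, are also fine.

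The third step, however, has a genuine gap. You reduce the scaled case to $q=0$ by comparing $a_n$ with an implementer $w_k$ of $q$, transplanting the unitary argument of Lemma~\ref{mean uniform dual orbits}. For $q\neq0$, neither $a_n$ nor $w_k$ can be close to a unitary: evaluating $u\varphi\approx e^q\varphi u$ at $u^*$ gives $1\approx e^q$. Knowing $[w_k^*a_n,\beta^\varphi_p(b)]\approx0$ therefore only tells you something about $w_kw_k^*a_n$, and nothing in the hypotheses forces $w_kw_k^*a_n\approx a_n$. Moreover, the quantity $\|a_n\beta^\varphi_p(b)a_n^*-w_k\beta^\varphi_p(b)w_k^*\|$ that you propose to make small is not the one in the statement. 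It is not small in general either: take $a_n=0$, which satisfies the hypothesis. The transport through implementers of $p$, the polar decompositions and the KMS inequality play no role here.

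The missing idea is the one you mention but never use. Since $\rB(M,\varphi)=M$, each $\beta^\varphi_{-p}$ is a $\varphi$-preserving automorphism of all of $M$. Hence it preserves $\|\cdot\|_\varphi^\#$, and
\[
\|a_n\beta^\varphi_p(b)-\beta^\varphi_{p+q}(b)a_n\|_\varphi^\#
=\|\beta^\varphi_{-p}(a_n)b-\beta^\varphi_q(b)\beta^\varphi_{-p}(a_n)\|_\varphi^\#.
\]
Moreover, $\beta^\varphi_{-p}(a_n)$ has the same norm bound and exactly the same scaled defect $\|a_n\varphi-e^q\varphi a_n\|$. If uniformity failed, you would find $\varepsilon>0$, $n_k\to\infty$ and $p_k\in\R$ such that $c_k=\beta^\varphi_{-p_k}(a_{n_k})$ is bounded with $\|c_k\varphi-e^q\varphi c_k\|\to0$, but $\|c_kb-\beta^\varphi_q(b)c_k\|_\varphi^\#\geq\varepsilon$. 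This contradicts \eqref{connes scaled characterization}, with $N=M$, at the single element $b$ and the given $q$. In other words, run your first sub-step directly at scale $q$ rather than at $q=0$, with its quantifiers corrected to ``for every $\varepsilon$ there is $\delta$''. Then no implementers are needed at all.
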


\begin{proof}
On $\mathbb M_2(M)$, take the state
$(e^q\varphi\oplus\varphi)/(1+e^q)$ and the elements
$e_{12}\otimes a_n$. Their predual commutators tend to zero.
The square-root estimate \cite[Lemma~2.8(b)]{Ha87}, applied also
to the adjoints, gives asymptotic commutation with the square root
of this state. Spectral calculus and
\[
\sup_{t\in K}|v^{\ri t}-1|\leq C_K|v^{1/2}-1|
\qquad(v>0)
\]
give compact-uniform modular invariance in the matrix algebra.
Its modular group acts on the off-diagonal corner by
$e^{\ri qt}\sigma_t^\varphi$, proving the first assertion.

Choose a nonnegative Schwartz function $f$ of integral one with
$\supp\widehat f\subset[-1,1]$, and replace $a_n$ by
\[
\int_\R f(t)e^{\ri qt}\sigma_t^\varphi(a_n)\,\rd t.
\]
The spectral cutoff formula gives the required spectrum and a
uniform norm bound. The first assertion and an $\rL^1$ tail
estimate give convergence of the difference strongly$^*$.
State preservation shows that the scaled predual defect does not
increase.

For the last assertion, apply $\beta^\varphi_{-p}$ to the
displayed error. The resulting elements
$\beta^\varphi_{-p}(a_n)$ have the same norm bound and the
same scaled predual defect, independently of $p$. Failure of
uniform convergence would give a sequence $p_n$ contradicting
the defining characterization of $\beta^\varphi_q$, since
$\rB(M,\varphi)=M$.
\end{proof}

\begin{theorem}\label{connes mean value commutation}
The normal images of $M$ and $M^\dagger$ in the GNS von Neumann
algebra of $\rho$ commute.
\end{theorem}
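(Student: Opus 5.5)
We follow the proof of Theorem~\ref{mean value commutation}, replacing traciality by the KMS property of $\rho$ established in Proposition~\ref{connes KMS realization}. By Proposition~\ref{connes normal GNS embeddings}, $N$ is a von Neumann algebra with faithful normal state $\rho$, $M\subset N$ is with expectation, and $\rho|_M=\varphi$. The target is to show $\pi(M^\dagger)\subset\rB(M\subset N,\varphi)$ and that the relative bicentralizer flow $\beta^{\varphi,N}$ fixes $\pi(M^\dagger)$ pointwise. The relative fixed point theorem \eqref{connes relative fixed points} then gives $\pi(M^\dagger)\subset M'\cap N$, which is the claim.

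Both inclusions will follow from the characterization \eqref{connes scaled characterization} once we prove the analogue of Proposition~\ref{mean scalar action on dagger}. Concretely, for fixed $q\in\R$, every bounded sequence $(a_n)_n$ in $M$ with $\|a_n\varphi-e^q\varphi a_n\|\to0$, and every $x\in M^\dagger$, we want
\[
a_n\pi(x)-\pi(x)a_n\longrightarrow0\quad\text{strongly}^* \text{ in } N .
\]
Taking $q=0$ places $\pi(x)$ in the relative bicentralizer. General $q$ gives $\beta^{\varphi,N}_q(\pi(x))=\pi(x)$. Here we would rely on the uniqueness built into \eqref{connes scaled characterization}: for $y$ in the relative bicentralizer, $a_ny-\beta_q(y)a_n\to 0$, while $a_n y - y a_n\to0$ by the above. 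Hence $(y-\beta_q(y))a_n\to0$, and we need some nondegeneracy of $(a_n)$, for instance $\varphi(a_n^*a_n)$ bounded below, to conclude $\beta_q(y)=y$.

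\emph{Reductions.} By Lemma~\ref{connes scaled smoothing} we may assume that $a_n$ has modular spectrum in $[q-1,q+1]$, that it is approximately a $\sigma^\varphi$-eigenoperator uniformly on compacts in $\|\cdot\|^\#_\varphi$, and that $a_n\beta^\varphi_p(b)-\beta^\varphi_{p+q}(b)a_n\to0$ uniformly in $p$. We may also assume $x$ has compact dual and modular spectrum, since these elements are dense in $\rL^2(M^\dagger,\varphi^\dagger)$ and the state controls the error. Since $\xi_\rho$ is cyclic and separating and the vectors $\pi(z)\xi_\rho$ span a dense set, it suffices to show that
\[
\|(a_n\pi(x)-\pi(x)a_n)\,\pi(z)\xi_\rho\|\longrightarrow 0
\]
for $z$ of compact modular spectrum, together with the same for adjoints. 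By \eqref{connes KMS right multiplication} this reduces to showing $\|a_nx-xa_n\|_{2,\rho}\to0$ and $\|(a_nx-xa_n)^*\|_{2,\rho}\to0$. The adjoint term reduces to the first by \eqref{connes KMS spectral estimates}, since all elements involved have modular spectrum in a fixed compact set.

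\emph{Main estimate.} We compute $\|a_nx-xa_n\|_{2,\rho}$ via \eqref{connes Plancherel mean identity}. We have $\vartheta(a_nx)(p)=a_nU^{\beta^\varphi}_p\xi_x$. Proposition~\ref{connes modular right product} gives $\vartheta(xa_n)(p)$ as an integral of $U^{\beta^\varphi}_{p-s'}\xi_x\,\sigma^\varphi_{-s+\ri/2}(a_n)$. Using approximate eigenness, $\sigma^\varphi_{-s}(a_n)\approx e^{\ri qs}a_n$, and integrating against $\widehat\Phi$ yields $\vartheta(xa_n)(p)\approx U^{\beta^\varphi}_{p+q}\xi_x\,\sigma^\varphi_{\ri/2}(a_n)$, with the shift appearing in the $\beta$-variable. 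Equation \eqref{connes modular right action} converts this to a right action by $a_n$ up to the factor $e^{\pm q/2}$, consistent with the scaled characterization.

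We then approximate $\xi_x$ by $b\varphi^{1/2}$ with $b\in M$ and apply the uniform-in-$p$ bicentralizer flow estimate from Lemma~\ref{connes scaled smoothing}. This should give
\[
\sup_p\|\vartheta(a_nx)(p)-\vartheta(xa_n)(p)\|\longrightarrow 0 .
\]

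\emph{Expected obstacle.} The hardest step is the analytic bookkeeping in this main estimate. The right multiplication in $\rL^2(M,\varphi)$ involves the unbounded $\sigma_{\ri/2}$, and the errors must be controlled uniformly in $p$ with only $\|\cdot\|_\varphi^\#$ bounds available. This is why we must smooth $a_n$ to compact modular spectrum first, and check that the cutoffs $h$, $k$ and the approximation of $\xi_x$ by $b\varphi^{1/2}$ commute with these estimates.
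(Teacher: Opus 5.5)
Your proposal is correct and follows the paper's proof essentially step for step. You smooth $(a_n)_n$ by Lemma~\ref{connes scaled smoothing}, compute $\vartheta(xa_n)$ via Proposition~\ref{connes modular right product}, compare it with $a_nU^{\beta^\varphi}_p\xi_x$ through the uniform-in-$p$ estimate, pass from $\|\cdot\|_{2,\rho}$ to strong$^*$ convergence using the KMS estimates, and conclude with $q=0$, the identification $\beta^{\varphi,N}_q(\pi(x))=\pi(x)$, and \eqref{connes relative fixed points}. The only differences are cosmetic: the paper treats general $x\in M^\dagger$ at the very end by strong$^*$ approximation (normality of $M^\dagger\to N$ and strong closedness of $M'\cap N$) rather than inside the commutator estimate, and it imports the nondegeneracy you flag for the uniqueness of $\beta^{\varphi,N}_q$ from \cite{AHHM20}.
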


\begin{proof}
First take $x\in M^\dagger$ with compact spectrum for both
$\widehat{\sigma^\varphi}$ and $\Ad(u_t)$. Fix $q\in\R$ and
a bounded sequence $(a_n)_n$ satisfying
$\|a_n\varphi-e^q\varphi a_n\|\to0$.
We prove that $[a_n,\pi(x)]\to0$ strongly$^*$ in $N$.
By Lemma~\ref{connes scaled smoothing} and
normality of the embedding of $M$, we may assume
\[
\Sp_{\sigma^\varphi}(a_n)\subset[q-1,q+1].
\]
These elements are entire analytic and
\[
\sup_n\|\sigma_{\ri/2}^\varphi(a_n)\|<\infty.
\]
For example, choose a fixed Schwartz spectral cutoff equal to
one on this interval and apply its convolution formula at
$\ri/2$.

Write $F(p)=\vartheta(x)(p)=U^{\beta^\varphi}_p\xi_x$.
Choose $h,k\in\mathcal D(\R)$ equal to one near
$\Sp_{\widehat{\sigma^\varphi}}(x)$ and $[q-1,q+1]$, respectively,
and put
\[
\Phi(t,r)=h(t)k(r)e^{-\ri rt}.
\]
Proposition~\ref{connes modular right product} gives
\[
\vartheta(xa_n)(p)
=\frac1{(2\pi)^2}\int_{\R^2}\widehat\Phi(r,s)
F(p-r)\sigma^\varphi_{-s+\ri/2}(a_n)\,\rd r\,\rd s.
\]
Right multiplication here is the canonical bounded right action
on $\rL^2(M,\varphi)$. The modular shift is necessary because
$\varphi^{1/2}\sigma_{\ri/2}^\varphi(a)=a\varphi^{1/2}$.

Set
\[
d_{n,s}=\sigma^\varphi_{-s+\ri/2}(a_n)
-e^{\ri qs}\sigma^\varphi_{\ri/2}(a_n).
\]
These elements are uniformly bounded in operator norm.
For $b\in M$, we have
\[
\begin{aligned}
\|F(p-r)d_{n,s}\|_2
&\leq C\|\xi_x-b\varphi^{1/2}\|_2\\
&\quad+\|b\|\,
\|\sigma^\varphi_{-s}(a_n)-e^{\ri qs}a_n\|_\varphi.
\end{aligned}
\]
Indeed, the right action is bounded by $\|d_{n,s}\|$, and
\[
\varphi^{1/2}d_{n,s}
=\bigl(\sigma^\varphi_{-s}(a_n)-e^{\ri qs}a_n\bigr)
\varphi^{1/2}.
\]
The estimate is uniform in $p,r$. Integrate against
$|\widehat\Phi(r,s)|\,\rd r\,\rd s/(2\pi)^2$, use
compact-uniform modular convergence and an $\rL^1$ tail estimate,
and then approximate $\xi_x$ by $b\varphi^{1/2}$. It follows that
replacing $\sigma^\varphi_{-s+\ri/2}(a_n)$ in the product formula
by $e^{\ri qs}\sigma^\varphi_{\ri/2}(a_n)$ changes the field by a
quantity tending to zero in essential supremum norm.

Fourier inversion gives
\[
\int_\R\widehat\Phi(r,s)e^{\ri qs}\,\rd s
=2\pi\widehat h(r+q).
\]
Since $h$ equals one near the dual spectrum of $x$, its spectral
cutoff acts as the identity on $F$. Hence
\[
\operatorname*{ess\,sup}_{p\in\R}
\|\vartheta(xa_n)(p)
-F(p+q)\sigma^\varphi_{\ri/2}(a_n)\|_2
\longrightarrow0.
\]
The last part of Lemma~\ref{connes scaled smoothing}
also gives
\[
\sup_{p\in\R}
\|a_nF(p)-F(p+q)\sigma^\varphi_{\ri/2}(a_n)\|_2
\longrightarrow0.
\]
To justify this for $\xi_x$, approximate it by $b\varphi^{1/2}$.
The approximation errors are uniform in $p$ because both
$(a_n)_n$ and $(\sigma^\varphi_{\ri/2}(a_n))_n$ are uniformly
bounded. On the approximating vector, the expression is
\[
\bigl(a_n\beta^\varphi_p(b)
-\beta^\varphi_{p+q}(b)a_n\bigr)\varphi^{1/2}.
\]
The Plancherel norm identity and left $M$-linearity give
\[
\rho([a_n,x]^*[a_n,x])\longrightarrow0.
\]

All the commutators have their $\Ad(u_t)$-spectra in one fixed
compact interval. The KMS spectral estimates give
$\rho([a_n,x][a_n,x]^*)\to0$ as well. Uniform boundedness and
faithfulness of the normal state on $N$ imply the desired
strong$^*$ convergence. The same conclusion holds for the original
sequence before smoothing, by normality of $M\subset N$.

Taking $q=0$ shows that
$\pi(x)\in\rB(M\subset N,\varphi)$.
For every $q$, the same convergence and pointwise uniqueness in
the characterization of the relative bicentralizer flow
\cite[Lemma~3.4 and proof of Theorem~A(ii)]{AHHM20} show that
\[
\beta^{\varphi,N}_q(\pi(x))=\pi(x).
\]
The relative fixed point theorem \cite[Theorem~C]{Ma26} gives
$\pi(x)\in M'\cap N$.

Both commuting actions preserve $M^\dagger$ and
$\varphi^\dagger$. Bounded convolutions in the two variables
approximate any element of $M^\dagger$ strongly$^*$ by elements
with compact spectra for both actions. The normal embedding of
$M^\dagger$ and strong closure of $M'\cap N$ finish the proof.
\end{proof}

\subsection{Resonance for the joint bicentralizer action}
\label{subsec:connes-resonance}

Let $\mathcal H_m$ be the Hilbert space obtained by separation and
completion of $\rL^\infty(\R,\rL^2(M,\varphi))$ for the seminorm
\[
\|F\|_m^2=m\bigl(p\mapsto\|F(p)\|_2^2\bigr).
\]
For $a,c\in M$, consider the fields
\[
\begin{aligned}
L(a\varphi^{1/2}\otimes c\varphi^{1/2})(p)
 &=a\beta^\varphi_p(c)\varphi^{1/2},\\
R(a\varphi^{1/2}\otimes c\varphi^{1/2})(p)
 &=\beta^\varphi_p(c)a\varphi^{1/2}.
\end{aligned}
\]

\begin{proposition}\label{connes coefficient isometries}
The maps $L$ and $R$ extend to isometries from
$\rL^2(M,\varphi)\otimes\rL^2(M,\varphi)$ into $\mathcal H_m$.
The formula
\[
I(F)(p)=U^{\beta^\varphi}_pF(-p)
\]
defines a linear unitary involution on $\mathcal H_m$.
If $S$ is the tensor flip, then $IR=LS$ and $IL=RS$.
For every $\xi\in\rL^2(M,\varphi)$ and every entire analytic $a\in M$,
\[
\begin{aligned}
L(a\varphi^{1/2}\otimes\xi)
 &=[p\mapsto aU^{\beta^\varphi}_p\xi],\\
R(a\varphi^{1/2}\otimes\xi)
 &=[p\mapsto (U^{\beta^\varphi}_p\xi)\sigma^\varphi_{\ri/2}(a)].
\end{aligned}
\]
The multiplication on the right in the last line is the standard
right action of $M$ on $\rL^2(M,\varphi)$.
\end{proposition}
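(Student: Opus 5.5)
We follow the tracial model, Lemma~\ref{mean coefficient isometries}, and replace the trace by the state $\varphi$. We also keep track of the modular shift in the right action.

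\textbf{Isometry of $L$ and $R$.} First define $L$ and $R$ on $M\varphi^{1/2}\odot M\varphi^{1/2}$ and compute inner products in $\mathcal H_m$. For $L$, the pointwise inner product at $p$ is
\[
\varphi\bigl(\beta^\varphi_p(e)^*d^*a\,\beta^\varphi_p(c)\bigr).
\]
For $R$ it is
\[
\varphi\bigl(d^*\beta^\varphi_p(e^*c)a\bigr),
\]
where the second factor is $d\varphi^{1/2}\otimes e\varphi^{1/2}$. The key input is a mean ergodic statement. Since $\beta^\varphi$ is weakly mixing, hence ergodic, and preserves $\varphi$, the Cesàro averages $m_T(p\mapsto\beta^\varphi_p(z))$ converge weakly to $\varphi(z)1$. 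We do not have a trace, so for $L$ we cannot move $\beta_p$ cyclically. We use instead that $p\mapsto\beta^\varphi_p(c)\varphi^{1/2}$ and $p\mapsto\beta^\varphi_p(e)\varphi^{1/2}$ are orbits of the unitary group $U^{\beta^\varphi}$. Weak mixing of $U^{\beta^\varphi}$ on $\varphi^{1/2}$-complements then gives asymptotic decorrelation under the mean $m$:
\[
m\bigl(\langle a\,U_p\eta, d\,U_p\zeta\rangle\bigr)=\langle a\varphi^{1/2},d\varphi^{1/2}\rangle\langle\eta,\zeta\rangle.
\]
To prove this, write $\eta=\varphi(c)\varphi^{1/2}+\eta_0$ and reduce to showing that matrix coefficients of $U\otimes\overline U$ against the vector $\eta_0\otimes\overline{\zeta_0}$ average to zero. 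This holds because weak mixing of $U$ means $U\otimes\overline U$ has no invariant vectors. The cross terms vanish by ergodicity. The same argument handles $R$, once we write $\beta_p(c)a\varphi^{1/2}$ as $J\sigma_{-\ri/2}(a^*)J\,U_p(c\varphi^{1/2})$ for entire analytic $a$, and then approximate. In both cases we get $\langle a,d\rangle_\varphi\langle c,e\rangle_\varphi$, so both maps extend to isometries.

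\textbf{The involution $I$ and the flip relations.} $I$ is well defined and isometric on $\mathcal H_m$ because $m$ is reflection invariant and each $U^{\beta^\varphi}_p$ is unitary. We have $I^2F(p)=U_pU_{-p}F(p)=F(p)$, so $I$ is a unitary involution. On elementary tensors,
\[
IR(a\otimes c)(p)=U_p\bigl(\beta_{-p}(c)a\varphi^{1/2}\bigr)=c\,\beta_p(a)\varphi^{1/2}=LS(a\otimes c)(p),
\]
using that $\beta^\varphi$ preserves $\varphi$, so $U_p(x\varphi^{1/2})=\beta_p(x)\varphi^{1/2}$. By density, $IR=LS$, and $IL=RS$ follows from $I^2=1$ and $S^2=1$.

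\textbf{The explicit formulas for general $\xi$.} First take $\xi=c\varphi^{1/2}$. The formula for $L$ is then the definition. For $R$, use \eqref{connes modular right action}: $a\varphi^{1/2}=\varphi^{1/2}\sigma_{\ri/2}(a)$, which gives
\[
\beta_p(c)a\varphi^{1/2}=(\beta_p(c)\varphi^{1/2})\sigma_{\ri/2}(a).
\]
Then approximate a general $\xi$ by vectors $c\varphi^{1/2}$. Both sides are continuous in $\xi$, uniformly in $p$: the left side because $L$ and $R$ are isometries, and the right side because $\|a\|$ and $\|\sigma_{\ri/2}(a)\|$ bound the left and right actions. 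Convergence in the sup norm implies convergence in $\|\cdot\|_m$.

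\textbf{Main obstacle.} The expected hardest step is the isometry of $L$ without traciality. Where the tracial case used a cyclic move, here we need the decorrelation statement for $U\otimes\overline U$ under the invariant mean $m$ along the sets $[-T,T]$, which relies on weak mixing of $\beta^\varphi$ rather than mere ergodicity.
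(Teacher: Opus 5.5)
Your treatment of $I$, the flip relations $IR=LS$ and $IL=RS$, and the explicit formulas (via $a\varphi^{1/2}=\varphi^{1/2}\sigma^\varphi_{\ri/2}(a)$ and approximation in $\xi$ uniformly in $p$) is correct and matches the paper. The problem is your argument that $L$ is isometric. After splitting off $\varphi(c)\varphi^{1/2}$ and disposing of the cross terms, you say the remaining coefficients $\langle aU_p\eta_0,dU_p\zeta_0\rangle$, with $U_p=U^{\beta^\varphi}_p$, average to zero. This is false. For $a=d=1$ they are constantly $\langle\eta_0,\zeta_0\rangle$, and in general their mean is $\varphi(d^*a)\langle\eta_0,\zeta_0\rangle$. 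The weak-mixing mechanism also does not apply here. Absence of invariant vectors for $U\otimes\overline U$ controls coefficients against vectors of $\rL^2(M,\varphi)\otimes\overline{\rL^2(M,\varphi)}$, that is, Hilbert--Schmidt operators. Your test object is the bounded operator $d^*a$, which is not of that form. So, as written, the decorrelation identity is not proved.

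What you are missing is that $\varphi$-invariance of $\beta^\varphi$ replaces the cyclic move; you do not need a trace. Equivalently, $U_p^*xU_p=\beta^\varphi_{-p}(x)$ on $\rL^2(M,\varphi)$, so
\[
\langle aU_p\eta,dU_p\zeta\rangle=\langle\beta^\varphi_{-p}(d^*a)\eta,\zeta\rangle .
\]
This is a normal functional evaluated along the orbit of $d^*a$. The mean ergodic theorem and reflection invariance of $m$ then give $\varphi(d^*a)\langle\eta,\zeta\rangle$, using only ergodicity. Your ``main obstacle'' is therefore not an obstacle, and weak mixing is not needed.

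The paper avoids even this computation. It proves $R$ isometric from the pointwise formula you already wrote, $\varphi(d^*\beta^\varphi_p(e^*c)a)$. That formula is already a normal functional of $\beta^\varphi_p(e^*c)$, so your $J\sigma^\varphi_{-\ri/2}(a^*)J$ rewriting is unnecessary. The paper then checks $IR=LS$ on algebraic tensors and concludes that $L=IRS$ is isometric, because $I$ and $S$ are unitary.
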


\begin{proof}
The bounded Ces\`aro averages of $\beta^\varphi_p(z)$ converge ultraweakly
to $\varphi(z)1$, by ergodicity and the mean ergodic theorem.
Consequently, for $a,b,c,d\in M$,
\[
\begin{aligned}
&\langle R(a\varphi^{1/2}\otimes c\varphi^{1/2}),
 R(b\varphi^{1/2}\otimes d\varphi^{1/2})\rangle_m\\
&\qquad=m\bigl(p\mapsto
\varphi(b^*\beta^\varphi_p(d^*c)a)\bigr)
=\varphi(b^*a)\varphi(d^*c).
\end{aligned}
\]
Thus $R$ is isometric. Reflection invariance of $m$ and
unitarity of $U^{\beta^\varphi}_p$ show that $I$ is isometric, and $I^2=1$.
Direct calculation gives $IR=LS$, so $L$ is also isometric
and $IL=RS$.

For analytic $a$, the modular identity
\[
(c\varphi^{1/2})\sigma^\varphi_{\ri/2}(a)
=ca\varphi^{1/2}
\]
gives the last formula on vectors $\xi=c\varphi^{1/2}$.
The uniform bounds
\[
\sup_p\|aU^{\beta^\varphi}_p\xi\|_2\leq\|a\|\|\xi\|_2,
\qquad
\sup_p\|(U^{\beta^\varphi}_p\xi)\sigma^\varphi_{\ri/2}(a)\|_2
\leq\|\sigma^\varphi_{\ri/2}(a)\|\|\xi\|_2
\]
allow approximation by such vectors and prove both formulas.
\end{proof}

Let $E_{\sigma^\varphi}$ and $E_{\beta^\varphi}$ be the spectral measures
on $\rL^2(M,\varphi)$ with
\[
\Delta_\varphi^{\ri r}
=\int_\R e^{-\ri rp}\,\rd E_{\sigma^\varphi}(p),
\qquad
U^{\beta^\varphi}_s=\int_\R e^{-\ri st}\,\rd E_{\beta^\varphi}(t).
\]
Define the unitary
\[
\Omega=\int_{\R^2}e^{\ri pt}
\,\rd(E_{\sigma^\varphi}\otimes E_{\beta^\varphi})(p,t),
\]
where the two spectral measures act on the first and second tensor
factors, respectively.

\begin{proposition}\label{connes tensor commutation}
We have $L=R\Omega^*$.
\end{proposition}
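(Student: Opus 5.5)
The plan is to imitate the proof of Lemma~\ref{mean tensor commutation}, with the modular flow in place of $\alpha$ and Proposition~\ref{connes modular right product} in place of Proposition~\ref{mean coefficient right product}.

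The first step is to get a usable expression for $L$ on the range of $V$. By Proposition~\ref{connes dagger state and orbits}, for $x\in M^\dagger$ we have $\vartheta(x)=[p\mapsto U^{\beta^\varphi}_p\xi_x]$, and $V$ is onto. Proposition~\ref{connes coefficient isometries} then gives, for entire $a\in M$,
\[
L(a\varphi^{1/2}\otimes\xi_x)=[p\mapsto aU^{\beta^\varphi}_p\xi_x]=a\vartheta(x)=\vartheta(ax),
\]
using left $M$-linearity of $\vartheta$. It therefore suffices to show that $R\Omega^*(a\varphi^{1/2}\otimes\xi_x)=\vartheta(xa)$. Once this holds, Theorem~\ref{connes mean value commutation} together with the norm identity \eqref{connes Plancherel mean identity} gives $\|\vartheta(ax)-\vartheta(xa)\|_m=\|ax-xa\|_{2,\rho}=0$, so the two vectors agree in $\mathcal H_m$.

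To compute $R\Omega^*(a\varphi^{1/2}\otimes\xi_x)$, restrict to $a$ with compact modular spectrum and $x\in M^\dagger$ with compact dual spectrum. By the covariance $\xi_{\widehat{\sigma^\varphi}_q(x)}=U^{\beta^\varphi}_q\xi_x$, the $E_{\beta^\varphi}$-spectrum of $\xi_x$ lies in $\Sp_{\widehat{\sigma^\varphi}}(x)$. The $E_{\sigma^\varphi}$-spectrum of $a\varphi^{1/2}$ is controlled by $\Sp_{\sigma^\varphi}(a)$, after matching sign conventions.

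Next, choose $h,k$ and $\Phi(t,r)=h(t)k(r)e^{-\ri rt}$ as in Proposition~\ref{connes modular right product}. Since $h\otimes k$ equals one on the joint spectral support, spectral calculus writes $\Omega^*(a\varphi^{1/2}\otimes\xi_x)$ as the Bochner integral
\[
\frac{1}{(2\pi)^2}\int\widehat\Phi(q,s)\,\Delta_\varphi^{-\ri s}a\varphi^{1/2}\otimes U^{\beta^\varphi}_{-q}\xi_x\,\rd q\,\rd s .
\]
The signs here must be fixed against the conventions for $E_{\sigma^\varphi}$ and $E_{\beta^\varphi}$.

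Then apply $R$ inside the integral. This is legitimate because $R$ is a bounded isometry and the integral converges in norm. Each term is handled by the formula of Proposition~\ref{connes coefficient isometries}: since $\Delta_\varphi^{-\ri s}a\varphi^{1/2}=\sigma^\varphi_{-s}(a)\varphi^{1/2}$ and $\sigma^\varphi_{\ri/2}\sigma^\varphi_{-s}=\sigma^\varphi_{-s+\ri/2}$, the integrand becomes
\[
p\mapsto\bigl(U^{\beta^\varphi}_{p-q}\xi_x\bigr)\sigma^\varphi_{-s+\ri/2}(a)=\vartheta(x)(p-q)\,\sigma^\varphi_{-s+\ri/2}(a).
\]
Integrating, this is exactly formula \eqref{connes product coefficient}, that is, $\vartheta(xa)$. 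This completes the identity on the chosen tensors.

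Finally, I would extend to all vectors by density. Compact-spectrum entire $a$ give vectors $a\varphi^{1/2}$ dense in $\rL^2(M,\varphi)$. Dual spectral cutoffs of elements of $M^\dagger$, which are preserved by the dual action, give $\xi_x$ dense in $\rL^2(M,\varphi)$ because $V$ is unitary. Continuity of the isometries $L$ and $R\Omega^*$ then yields $L=R\Omega^*$.

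The main obstacle is the bookkeeping of signs and conventions. These include the sign relating $E_{\sigma^\varphi}$ to $\Delta_\varphi^{\ri r}$, the sign in the character $e^{-\ri rt}$, and the imaginary shift $\ri/2$ coming from $\varphi^{1/2}\sigma_{\ri/2}(a)=a\varphi^{1/2}$. All of these must line up so that the kernel obtained from $\Omega^*$ coincides with the kernel $\widehat\Phi(q,s)$ in Proposition~\ref{connes modular right product}. I would first verify the identity on elementary cases, $x=cu_t$ heuristically and $a$ a $\sigma^\varphi$-eigenoperator, to fix these signs, and then run the integral argument.
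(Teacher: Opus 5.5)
Your proposal follows the paper's proof essentially step for step: you compute $L(a\varphi^{1/2}\otimes\xi_x)=\vartheta(ax)$ by left $M$-linearity, write $\Omega^*(a\varphi^{1/2}\otimes\xi_x)$ as a Bochner integral with kernel $\widehat\Phi$, apply $R$ and Proposition~\ref{connes modular right product} to get $\vartheta(xa)$, invoke Theorem~\ref{connes mean value commutation} with \eqref{connes Plancherel mean identity}, and conclude by density. The one detail to add is that the paper also takes $x$ in the modular continuous part, smoothing with respect to both commuting flows rather than only the dual action, so that $ax-xa$ lies where the GNS representation of $\rho$ is defined and Theorem~\ref{connes mean value commutation} gives $\|ax-xa\|_{2,\rho}=0$ directly.
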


\begin{proof}
Take $a\in M$ with compact $\sigma^\varphi$-spectrum and
$x\in M^\dagger$ in the modular continuous part with compact
$\widehat{\sigma^\varphi}$-spectrum. Write
$\xi_x=V(x(\varphi^\dagger)^{1/2})$.
The covariance of $\vartheta$ gives
\[
\xi_{\widehat{\sigma^\varphi}_q(x)}=U^{\beta^\varphi}_q\xi_x,
\]
so the $\beta^\varphi$-spectrum of $\xi_x$ is contained in
$\Sp_{\widehat{\sigma^\varphi}}(x)$.
Choose $h,k\in\mathcal D(\R)$ equal to $1$ near
$\Sp_{\widehat{\sigma^\varphi}}(x)$ and
$\Sp_{\sigma^\varphi}(a)$, respectively, and put
\[
\Phi(t,r)=h(t)k(r)e^{-\ri rt}.
\]
With the ordinary Fourier transform on $\R^2$, spectral calculus gives
\[
\Omega^*(a\varphi^{1/2}\otimes\xi_x)
=\frac1{(2\pi)^2}\int_{\R^2}\widehat\Phi(q,s)
\sigma^\varphi_{-s}(a)\varphi^{1/2}
\otimes U^{\beta^\varphi}_{-q}\xi_x\,\rd q\,\rd s.
\]
Proposition~\ref{connes coefficient isometries} and the right multiplication
formula in Proposition~\ref{connes modular right product} give
\[
\begin{aligned}
R\Omega^*(a\varphi^{1/2}\otimes\xi_x)
&=\left[p\mapsto\frac1{(2\pi)^2}
\int_{\R^2}\widehat\Phi(q,s)
(U^{\beta^\varphi}_{p-q}\xi_x)\sigma^\varphi_{\ri/2-s}(a)
\,\rd q\,\rd s\right]\\
&=\vartheta(xa).
\end{aligned}
\]
These integrals converge uniformly in the $\rL^2(M,\varphi)$ norm,
since their integrands have norm at most
$|\widehat\Phi(q,s)|\|\xi_x\|_2
\|\sigma^\varphi_{\ri/2}(a)\|$.
Left $M$-linearity gives
$L(a\varphi^{1/2}\otimes\xi_x)=\vartheta(ax)$.
Theorem~\ref{connes mean value commutation} yields
\[
\|\vartheta(ax)-\vartheta(xa)\|_m
=\|ax-xa\|_{2,\rho}=0.
\]
Smoothing with respect to the two commuting flows makes the
allowed $x$ dense in $\rL^2(M^\dagger,\varphi^\dagger)$.
Modular smoothing makes the allowed $a$ dense in
$\rL^2(M,\varphi)$. Surjectivity of $V$ and continuity prove the
operator identity.
\end{proof}

\begin{proposition}\label{connes joint resonance}
The joint spectrum $\Sigma=\Sp(\sigma^\varphi\times\beta^\varphi)$ satisfies
\begin{equation}\label{connes pairwise resonance}
pt'+p't\in2\pi\Z
\qquad((p,t),(p',t')\in\Sigma).
\end{equation}
In particular, $pt\in\pi\Z$ for every $(p,t)\in\Sigma$.
\end{proposition}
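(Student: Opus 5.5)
The plan is to mirror the proof of Lemma~\ref{resonance for bicentralizers} verbatim, now using the modular versions Proposition~\ref{connes coefficient isometries} and Proposition~\ref{connes tensor commutation}. From Proposition~\ref{connes coefficient isometries} we have $IR=LS$ and $IL=RS$ with $I^2=1$. From Proposition~\ref{connes tensor commutation} we have $L=R\Omega^*$. Combining these gives
\[
LS\Omega^*=IR\Omega^*=IL=RS=L\Omega S,
\]
where the last equality uses $R=L\Omega$, which follows from $L=R\Omega^*$ since $\Omega$ is unitary. Since $L$ is an isometry, hence injective, we obtain $S\Omega^*=\Omega S$. Equivalently,
\[
\Omega\,(S\Omega S)=1
\]
on $\rL^2(M,\varphi)\otimes\rL^2(M,\varphi)$.

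Next I identify this operator with a spectral multiplier. Let $E$ be the joint spectral measure of $(\Delta_\varphi^{\ri\,\cdot},U^{\beta^\varphi})$ on $\rL^2(M,\varphi)$. These two unitary groups commute, because $\sigma^\varphi$ and $\beta^\varphi$ commute. Then $\Omega$ is the image under $E\otimes E$ of the function $((p,t),(p',t'))\mapsto e^{\ri pt'}$, and $S\Omega S$ is the image of $e^{\ri p't}$. Their product is therefore the multiplier $e^{\ri(pt'+p't)}$.

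Since $\varphi^{1/2}$ is cyclic and separating, the support of $E$ is exactly $\Sigma=\Sp(\sigma^\varphi\times\beta^\varphi)$. This is the same identification of the support of the spectral measure with the Arveson spectrum used in the tracial case. The support of $E\otimes E$ is then $\Sigma\times\Sigma$. A continuous multiplier equal to $1$ as an operator must equal $1$ on the support, which gives $pt'+p't\in2\pi\Z$ on $\Sigma\times\Sigma$. Taking $(p',t')=(p,t)$ gives $2pt\in2\pi\Z$, that is, $pt\in\pi\Z$.

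The main obstacle I expect is the sign and normalization bookkeeping. The algebra above is formal once the two propositions are in hand, but one must check that the spectral conventions for $E_{\sigma^\varphi}$ and $E_{\beta^\varphi}$ in the definition of $\Omega$ match the coordinates in which $\Sigma$ is written. A mismatch would turn the relation into $pt'-p't$; this is harmless for the diagonal consequence, but not for the pairwise statement.

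The second point to check is that the support of $E$ equals $\Sigma$ in this non-tracial setting. Here I would use that the vectors $a\varphi^{1/2}$ with $a$ spectrally localized have joint spectral support contained in $\Sp(a)$. Combined with the support equality of Proposition~\ref{equivariant distribution spectrum}, or directly with the faithfulness of $\varphi$, this shows that spectral subspaces of $M$ are nonzero exactly at points of $\Sigma$.
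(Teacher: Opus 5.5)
Your proposal is correct and follows the paper's proof essentially line by line. It uses the same chain $LS\Omega^*=IR\Omega^*=IL=RS=L\Omega S$, injectivity of $L$ to get $\Omega(S\Omega S)=1$, and the reading of this as the continuous multiplier $e^{\ri(pt'+p't)}$ for $E\otimes E$, whose support is $\Sigma\times\Sigma$ by faithfulness of $\varphi$.

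Your sign worry is unnecessary: a change of convention flips a coordinate on both tensor factors at once, so it turns $pt'+p't$ into $\pm(pt'+p't)$ and never into $pt'-p't$.
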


\begin{proof}
Propositions~\ref{connes coefficient isometries} and~\ref{connes tensor commutation} give
$IR=LS$, $IL=RS$, and $L=R\Omega^*$.
Hence
\[
LS\Omega^*=IR\Omega^*=IL=RS=L\Omega S.
\]
Injectivity of $L$ gives $S\Omega^*=\Omega S$, or equivalently
$\Omega(S\Omega S)=1$.

Let $\mathsf E$ be the joint spectral measure on
$\rL^2(M,\varphi)$. Faithfulness of $\varphi$ identifies its support
with $\Sigma$.
The operator $\Omega(S\Omega S)$ is the image of the continuous
function
\[
((p,t),(p',t'))\longmapsto e^{\ri(pt'+p't)}
\]
under the spectral calculus for $\mathsf E\otimes\mathsf E$.
The support of this product spectral measure is
$\Sigma\times\Sigma$, so the function equals $1$ throughout this
set. Taking the same point twice proves the last assertion.
\end{proof}

\begin{theorem}\label{Connes bicentralizer problem}
Let $M$ be a type $\III_1$ factor. Then
\[
\rB(M,\varphi)=\C1
\]
for every faithful normal state $\varphi$ on $M$.
\end{theorem}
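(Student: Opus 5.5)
The plan is to argue by contradiction, using the reduction already set up at the start of Section~\ref{sec:connes-diagonal-averaging}. First reduce to separable predual; I take the passage from a general type $\III_1$ factor to the separable case as standard, e.g.\ via the separable-predual reductions in \cite{AHHM20}. If $\rB(M_0,\varphi_0)\neq\C1$, the discussion preceding \eqref{connes self bicentralizing reduction} produces a type $\III_1$ factor $M$ with a faithful normal state $\varphi$ satisfying $\rB(M,\varphi)=M$. It then suffices to show that this self-bicentralizing situation is impossible. In that situation the modular flow $\sigma^\varphi$ and the bicentralizer flow $\beta^\varphi$ commute. Both are weakly mixing, $M^\varphi=\C1$ and $M^{\beta^\varphi}=\C1$, by \cite[Theorem~A(iv)]{AHHM20} and \cite{Ma20}. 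Hence the joint action $\sigma^\varphi\times\beta^\varphi:\R^2\curvearrowright M$ is ergodic. By \eqref{eq:ergodic-spectrum-kernel}, its spectrum $\Sigma=\Sp(\sigma^\varphi\times\beta^\varphi)$ is a closed subgroup of $\R^2$.

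Next I apply Proposition~\ref{connes joint resonance}, which gives $pt'+p't\in2\pi\Z$ for all $(p,t),(p',t')\in\Sigma$. Taking $(p',t')=(p,t)$ shows that $2\Sigma\subset\mathcal R_\R$. Multiplication by $2$ is a homeomorphism of $\R^2$, so $2\Sigma$ is a closed subgroup. Proposition~\ref{resonant group for R} then leaves three cases: $2\Sigma$, and therefore $\Sigma$, is discrete, or is contained in $\R\times\{0\}$, or is contained in $\{0\}\times\R$.

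Each case contradicts weak mixing.
\begin{itemize}
\item If $\Sigma\subset\R\times\{0\}$, the coordinate restriction formula \eqref{eq:coordinate-restriction} gives $\Sp(\beta^\varphi)\subset\{0\}$. With \eqref{eq:one-point-spectrum} this makes $\beta^\varphi$ trivial, so $M=M^{\beta^\varphi}=\C1$. This is absurd, since $M$ is of type $\III_1$.
\item If $\Sigma\subset\{0\}\times\R$, the same argument makes $\sigma^\varphi$ trivial, so $M=M^\varphi=\C1$, again absurd.
\item If $\Sigma$ is discrete, Proposition~\ref{equivariant distribution spectrum}(ii) shows that $M$ is generated by joint eigenoperators; this requires an injective covariant distribution, and the identity map $M\to\mathcal S'(\R^2,M)$ given by smoothing supplies one. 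In particular there is a nonzero eigenoperator at some point $(p,t)\neq(0,0)$, since otherwise $M=\C1$. Such an element is an eigenvector for $\sigma^\varphi$ and for $\beta^\varphi$. One of the two flows therefore has a nonzero eigenvalue, or else a nonscalar fixed element when the corresponding coordinate is zero. Either possibility contradicts weak mixing.
\end{itemize}

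I expect no step here to be a serious obstacle, since the heavy lifting is done in Proposition~\ref{connes joint resonance}. The one point needing care is the discrete case, where one must produce a genuine eigenoperator in $M$ from discreteness of the spectrum of an $\R^2$-action. I would do this directly: pick a point of $\Sigma$ isolated from the rest, use Arveson spectral localization \cite[Lemma~XI.1.3(v)]{Ta03} to get a nonzero element with spectrum equal to that single point, and conclude with \eqref{eq:one-point-spectrum}. This mirrors the argument in the proof of Proposition~\ref{resonance projection implies rigidity}.
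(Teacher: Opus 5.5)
Your proposal is correct and follows the paper's proof step for step. It uses the reduction \eqref{connes self bicentralizing reduction}, ergodicity of the joint flow to make $\Sigma$ a closed subgroup, Proposition~\ref{connes joint resonance} to get $2\Sigma\subset\mathcal R_\R$, and the trichotomy of Proposition~\ref{resonant group for R}. The only difference is inessential: in the discrete case the paper uses that the joint spectral measure on $\rL^2(M,\varphi)$ has countable support to force almost periodicity, whereas you obtain an eigenoperator in $M$ by Arveson localization; both work.

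The paper does not reduce from general to separable predual; it simply assumes separable predual throughout Section~\ref{sec:connes-diagonal-averaging}. Your preliminary reduction step, and its attribution to \cite{AHHM20}, is therefore an addition that you would need to justify separately.
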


\begin{proof}
By \eqref{connes self bicentralizing reduction}, it suffices to rule out a
type $\III_1$ factor $M$ with $\rB(M,\varphi)=M$.
For such a factor, the modular and bicentralizer flows commute and
are weakly mixing.
Their joint action is ergodic, so
\eqref{eq:ergodic-spectrum-kernel} makes
$\Sigma=\Sp(\sigma^\varphi\times\beta^\varphi)$ a closed subgroup of $\R^2$.
Proposition~\ref{connes joint resonance} gives $pt\in\pi\Z$ for
$(p,t)\in\Sigma$. Hence $2\Sigma$ is a closed subgroup contained
in $\mathcal R_\R$. Proposition~\ref{resonant group for R} shows
that $2\Sigma$, and therefore $\Sigma$, is discrete or contained
in a coordinate axis.

If $\Sigma$ is discrete, its joint spectral measure is supported
on a countable set. The joint unitary representation is therefore
a direct sum of characters, so the modular flow is almost periodic.
Weak mixing forces $\rL^2(M,\varphi)=\C\varphi^{1/2}$.
If $\Sigma$ is contained in a coordinate axis, one of the two
flows is trivial. Ergodicity of that flow again gives $M=\C1$.
Both alternatives contradict the type $\III_1$ assumption.
\end{proof}

\end{document}